\theoremstyle{plain}
\newtheorem{thm}{Theorem}[section]
\newtheorem{cor}[thm]{Corollary}
\newtheorem{lem}[thm]{Lemma}
\newtheorem{prop}[thm]{Proposition}
\theoremstyle{definition}
\newtheorem{obs}[thm]{Observation}
\newtheorem{defn}[thm]{Definition}
\newtheorem{ex}[thm]{Example}
\theoremstyle{remark}
\newtheorem{rem}[thm]{Remark}
\theoremstyle{setup}
\newtheorem{Setup}[thm]{Setup}
\newtheorem*{Key Statement}{Key Statement}
\numberwithin{equation}{section}
\newcommand{\ul}[1]{\underline{#1}}
\newcommand{\ol}[1]{\overline{#1}}
\newcommand{\brutegeq}[1]{\beta_{\geq #1}}
\DeclareMathAlphabet{\mathpzc}{OT1}{pzc}{m}{it}
\def\Rf{\mathop{\mathbf{R}f_*}\nolimits}
\def\Rfcs{\mathop{\mathbf{R}f_*^\cs}\nolimits}
\def\Lf{\mathop{\mathbf{L}f^*}\nolimits}
\DeclareMathOperator{\Perf}{\mathsf{Perf}}
\DeclareMathOperator{\Com}{\mathsf{Com}}
\DeclareMathOperator{\Band}{\mathsf{Band}}
\DeclareMathOperator{\Sing}{\mathsf{Sing}}
\def\GP{\mathop{\rm GP}\nolimits}
\DeclareMathOperator{\SCM}{\mathsf{SCM}}
\DeclareMathOperator{\SL}{\mathsf{SL}}
\DeclareMathOperator{\rk}{rk}
\DeclareMathOperator{\rad}{\mathsf{rad}}
\DeclareMathOperator{\tors}{\mathsf{tors}}
\DeclareMathOperator{\coker}{\mathsf{coker}}
\renewcommand{\ker}{\mathsf{ker}}
\newcommand{\im}{\mathsf{im}}
\renewcommand{\dim}{\mathsf{dim}}
\DeclareMathOperator{\Coh}{\mathsf{Coh}}
\DeclareMathOperator{\VB}{\mathsf{VB}}
\DeclareMathOperator{\MCM}{\mathsf{MCM}}
\DeclareMathOperator{\Pic}{Pic}
\newcommand{\del}{\partial}
\newcommand{\Cone}{\opname{cone}}
\DeclareMathOperator{\krdim}{\mathsf{kr.dim}}
\DeclareMathOperator{\prdim}{\mathsf{pr.dim}}
\DeclareMathOperator{\injdim}{\mathsf{inj.dim}}
\DeclareMathOperator{\gldim}{\mathsf{gl.dim}}
\DeclareMathOperator{\add}{\mathsf{add}}
\DeclareMathOperator{\depth}{\mathsf{depth}}
\DeclareMathOperator{\Mod}{\mathsf{Mod}}
\DeclareMathOperator{\Supp}{\mathsf{Supp}}
\newcommand{\Aus}{\opname{Aus}\nolimits}
\newcommand{\ind}{\opname{ind}}
\DeclareMathOperator{\Hom}{\mathsf{Hom}}
\DeclareMathOperator{\RHom}{\mathsf{RHom}}
\DeclareMathOperator{\Tor}{\mathsf{Tor}}
\DeclareMathOperator{\Ext}{\mathsf{Ext}}
\DeclareMathOperator{\GL}{\mathsf{GL}}
\DeclareMathOperator{\End}{\mathsf{End}}
\DeclareMathOperator{\Spec}{\mathsf{Spec}}
\newcommand{\opname}[1]{\operatorname{\mathsf{#1}}}
\newcommand{\ten}{\otimes}
\newcommand{\lten}{\overset{\opname{L}}{\ten}}
\newcommand{\cHom}{\mathcal{H}\it{om}}
\newcommand{\cEnd}{\mathcal{E}\it{nd}}
\renewcommand{\ker}{\opname{ker}\nolimits}
\newcommand{\thick}{\opname{thick}\nolimits}
\newcommand{\Tria}{\opname{Tria}\nolimits}
\newcommand{\per}{\opname{per}\nolimits}
\renewcommand{\mod}{\opname{mod}\nolimits}
\newcommand{\proj}{\opname{proj}\nolimits}
\renewcommand{\Mod}{\opname{Mod}\nolimits}
\renewcommand{\add}{\opname{add}\nolimits}
\newcommand{\op}{^{op}}
\newcommand{\ca}{{\mathcal A}}
\newcommand{\cb}{{\mathcal B}}
\newcommand{\cc}{{\mathcal C}}
\newcommand{\cd}{{\mathcal D}}
\newcommand{\ce}{{\mathcal E}}
\newcommand{\cf}{{\mathcal F}}
\newcommand{\ci}{{\mathcal I}}
\newcommand{\ck}{{\mathcal K}}
\newcommand{\cl}{{\mathcal L}}
\newcommand{\cm}{{\mathcal M}}
\newcommand{\co}{{\mathcal O}}
\newcommand{\cp}{{\mathcal P}}
\newcommand{\cq}{{\mathcal Q}}
\newcommand{\cs}{{\mathcal S}}
\newcommand{\ct}{{\mathcal T}}
\newcommand{\cu}{{\mathcal U}}
\newcommand{\cv}{{\mathcal V}}
\newcommand{\cx}{{\mathcal X}}
\newcommand{\cy}{{\mathcal Y}}
\newcommand{\cz}{{\mathcal Z}}
\newcommand{\Z}{\mathbb{Z}}
\newcommand{\N}{\mathbb{N}}
\newcommand{\C}{\mathbb{C}}
\renewcommand{\P}{\mathbb{P}}
\newcommand{\T}{\mathbb{T}}
\newcommand{\F}{\mathbb{F}}
\renewcommand{\H}{\mathbb{H}}
\newcommand{\G}{\mathbb{G}}
\newcommand{\I}{\mathbb{I}}
\newcommand{\ra}{\rightarrow}
\newcommand{\la}{\leftarrow}
\newcommand{\id}{\mathbf{1}}
\def\l@section{\@tocline{1}{0pt}{1pc}{}{}}
\def\l@subsection{\@tocline{2}{0pt}{1pc}{4.6em}{}}
\renewcommand{\tocsection}[3]{%
  \indentlabel{\@ifnotempty{#2}{\makebox[2.3em][l]{%
    \ignorespaces\bf#1 #2.\hfill}}}\bf#3}
\renewcommand{\tocsubsection}[3]{%
  \indentlabel{\@ifnotempty{#2}{\hspace*{2.3em}\makebox[2.3em][l]{%
    \ignorespaces#1 #2.\hfill}}}#3}
\newcommand{\kk}{k}
\newcommand{\bsm}{\begin{smallmatrix}}
\newcommand{\esm}{\end{smallmatrix}}
\renewcommand{\mod}{\mathsf{mod}}
\newcommand{\fdmod}{\mathsf{fdmod}}
\newcommand{\kA}{\mathcal{A}}
\newcommand{\kB}{\mathcal{B}}
\newcommand{\kC}{\mathcal{C}}
\newcommand{\kD}{\mathcal{D}}
\newcommand{\kE}{\mathcal{E}}
\newcommand{\kF}{\mathcal{F}}
\newcommand{\kG}{\mathcal{G}}
\newcommand{\kH}{\mathcal{H}}
\newcommand{\kI}{\mathcal{I}}
\newcommand{\kO}{\mathcal{O}}
\newcommand{\kP}{\mathcal{P}}
\newcommand{\kQ}{\mathcal{Q}}
\newcommand{\kK}{\mathcal{K}}
\newcommand{\kM}{\mathcal{M}}
\newcommand{\kN}{\mathcal{N}}
\newcommand{\kT}{\mathcal{T}}
\newcommand{\kS}{\mathcal{S}}
\newcommand{\kR}{\mathcal{R}}
\newcommand{\kU}{\mathcal{U}}
\newcommand{\cP}{\mathsf{P}}
\newcommand{\lar}{\longrightarrow}
\newcommand{\gm}{\mathfrak{m}}
\newcommand{\idm}{\mathfrak{m}}
\renewcommand{\AA}{\mathbb{A}}
\newcommand{\FF}{\mathbb{F}}
\newcommand{\GG}{\mathbb{G}}
\newcommand{\HH}{\mathbb{H}}
\newcommand{\TT}{\mathbb{T}}
\newcommand{\PP}{\mathbb{P}}
\newcommand{\UU}{\mathbb{U}}
\newcommand{\ZZ}{\mathbb{Z}}
\newcommand{\XX}{\mathbb{X}}
\newcommand{\String}[3]{\mathcal{S}_{#1}(#2)[#3]}
\newcommand{\lo}{\gamma}
\newcommand{\ro}{\alpha}
\newcommand{\ru}{\delta}
\newcommand{\lu}{\beta}
\newcommand{\li}{-}
\newcommand{\re}{+}
\renewcommand{\t}[1]{\textnormal{#1}}
\begin{document}
\begin{center}
\thispagestyle{empty} $ $\,
\par\end{center}

\,

\,

\,

\,

\,

\,

\,

\,

\,

\,

\,

\,

\,

\,

\,

\,

\,

\,

\,

\,

\,

\,

\,

\,

\begin{center}
{\large Relative singularity categories}\emph{}\\
\emph{}\\

\par\end{center}

\,

\,

\,\,

\,

\,

\,

\,\,

\,

\,

\,

\,

\,

\,

\begin{center}
Dissertation 
\par\end{center}

\,

\,

\,

\,

\,

\,

\,

\,

\,

\,

\,

\,

\,

\,

\,

\,

\,

\,

\,

\,

\,

\,

\begin{center}
zur 
\par\end{center}

\begin{center}
Erlangung des Doktorgrades (Dr. rer. nat.) 
\par\end{center}

\begin{center}
der 
\par\end{center}

\begin{center}
Mathematisch-Naturwissenschaftlichen Fakult\"at
\par\end{center}

\begin{center}
der 
\par\end{center}

\begin{center}
Rheinischen Friedrich-Wilhelms-Universit\"at Bonn\\
\,
\par\end{center}

\,

\,

\,

\,

\,

\,

\,

\,

\,

\,

\,

\,

\,

\,

\,

\,

\,

\,

\,

\,

\,

\,

\,

\,

\,

\,

\,

\,\,

\,

\,

\,

\,

\begin{center}
vorgelegt von 
\par\end{center}

\,

\begin{center}
Martin Kalck
\par\end{center}

\,

\begin{center}
aus 
\par\end{center}

\,

\begin{center}
Hamburg\\
\,
\par\end{center}

\,

\,

\,

\,

\,

\,

\,

\,

\,

\,

\,

\,

\,

\,

\,

\,

\,

\,

\,

\,

\,

\,

\,

\,

\,

\,

\,

\,

\,

\,

\,

\,

\,

\,

\,

\,

\,

\,

\,

\,

\,

\,

\,

\begin{center}
Bonn 2013
\par\end{center}

\newpage\thispagestyle{empty}  \noindent Angefertigt mit Genehmigung
der Mathematisch-Naturwissenschaftlichen Fakult\"at der Rheinischen
Friedrich-Wilhelms-Universit\"at Bonn\\
\\
\\
\\
\\
\\
\\
\\
\\
\\
\\
\\
\\
\\
\\
\\
\\
\\
\\
\\
\\
\\
\\
\\
\\
\\
\\
\\

1. Gutachter: Prof.~Dr.~Igor Burban

2. Gutachter: Prof.~Dr.~Jan Schr\"oer\\

Tag der Promotion: 29.05.2013\\

Erscheinungsjahr: 2013

\newpage
\thispagestyle{empty}
\begin{center}
\large\bf Summary
\end{center}
\quad \\
In this thesis, we study a new class of triangulated categories associated with singularities of algebraic varieties. For a  Gorenstein ring $A$, Buchweitz introduced the triangulated category $\cd_{sg}(A)=\cd^b(\mod-A)/\Perf(A)$ nowadays called the \emph{triangulated category of singularities}. 
In 2006 Orlov introduced a graded version of these categories relating them with derived categories of coherent sheaves on projective varieties. This construction has already found various applications, for example in the Homological Mirror Symmetry.

The first result of this thesis is a description of $\cd_{sg}(A)$, for the class of Artinian Gorenstein algebras called \emph{gentle}. The main part of this thesis is devoted to the study of the following generalization of $\cd_{sg}(A)$.        
 Let  $X$ be a quasi-projective Gorenstein scheme with isolated singularities, $\cf$ a coherent sheaf on $X$ such that the sheaf of algebras $\ca={\mathcal End}_{X}(\co_{X}\oplus \cf)$ has finite global dimension. Then we have the following embeddings of triangulated categories
\begin{align*}
\cd^b\bigl(\Coh(X)\bigr) \supseteq \Perf(X) \subseteq \cd^b\bigl(\Coh(\XX)\bigr).
\end{align*}
Van den Bergh suggested to regard the ringed space $\XX=(X, \ca)$ as a non-commutative resolution of singularities of $X$. We introduce  the \emph{relative singularity category} \[\Delta_{X}(\XX)=\cd^b\bigl(\Coh(\XX)\bigr)/\Perf(X)\]
as a measure for the difference between $X$ and $\XX$. The main results of this thesis are the following

\noindent
(i) We prove the following localization property of $\Delta_{X}(\XX)$:
\begin{align*}\Delta_{X}(\XX) \cong \bigoplus_{x \in \Sing(X)} \Delta_{\widehat{\co}_{x}}\left(\widehat{\ca}_{x}\right):=  \bigoplus_{x \in \Sing(X)} \frac{\cd^b(\widehat{\ca}_{x}-\mod)}{\Perf(\widehat{\co}_{x})}.
\end{align*}
Thus the study of $\Delta_{X}(\XX)$ reduces to the affine case $X=\Spec(O)$.

\noindent
(ii) We prove Hom-finiteness and idempotent completeness of $\Delta_{O}(A)$ and determine its Grothendieck group.

\noindent
(iii) For the nodal singularity $O=k\llbracket x, y\rrbracket/(xy)$ and its Auslander resolution $A=\End_{O}(O \oplus \mathfrak{m})$, we classify all indecomposable objects of $\Delta_{O}(A)$.

\noindent
(iv) We study relations between $\Delta_{O}(A)$ and $\cd_{sg}(A)$. For a simple hypersurface singularity $O$ and its \emph{Auslander resolution} $A$, we show that these categories determine each other.

\noindent
(v)  The developed technique leads to the following `purely commutative' statement:\[\underline{\underline{\mathsf{SCM}}}(R) \xrightarrow{\cong} \mathcal{D}_{sg}(X) \cong  \bigoplus_{x \in \mathsf{Sing}(X)} \underline{\mathsf{MCM}}(\widehat{\mathcal{O}}_{x}),\]
where $R$ is a  rational surface singularity, $\mathsf{SCM}(R)$ is the Frobenius category of \emph{special} Cohen--Macaulay modules, and $X$ is the \emph{rational double point resolution} of $\mathsf{Spec}(R)$.
\newpage

\tableofcontents
\newpage

\section{Introduction}
This thesis studies triangulated and exact categories arising in singularity theory and representation theory.  
Starting with Mukai's seminal work \cite{Mukai} on Fourier-type equivalences between the derived categories of an abelian variety and its dual variety, derived categories of coherent sheaves have been recognised as interesting invariants of the underlying projective scheme. In particular, the relations to the Minimal Model Program and Kontsevich's Homological Mirror Symmetry Conjecture \cite{Kontsevich} are active fields of current research.

On the contrary, two affine schemes with equivalent bounded derived categories are already isomorphic as schemes, by Rickard's `derived Morita theory' for rings \cite{RickardMorita}. This statement remains true if the bounded derived category is replaced by the subcategory of perfect complexes, i.e.~complexes which are quasi-isomorphic to bounded complexes of finitely generated projective modules. So neither the bounded derived category $\cd^b(\mod-R)$ nor the subcategory of perfect complexes $\Perf(R) \subseteq \cd^b(\mod-R)$ are interesting categorical invariants of a commutative Noetherian ring $R$. In 1987, Buchweitz \cite{Buchweitz87} suggested to study the quotient category  
\begin{align}
\cd_{sg}(R):=\frac{\cd^b(\mod-R)}{\Perf(R)}.
\end{align}
As indicated below, this category is a useful invariant of $R$. It is known as the \emph{singularity category} of $R$. 
If $R$ is a \emph{regular} ring, then every bounded complex admits a finite projective resolution and therefore we have the equality $\Perf(R)=\cd^b(\mod-R)$. In particular, the singularity category of a regular ring vanishes. Moreover, the category of perfect complexes $\Perf(R)$ can be considered as the smooth part of $\cd^b(\mod-R)$ and $\cd_{sg}(R)$ as a measure for the complexity of the singularities of $\Spec(R)$. 

If $R$ is Gorenstein, i.e.~of finite injective dimension as a module over itself, then Buchweitz proved the following  equivalence of triangulated categories
\begin{align}\label{E:Buchweitz}
\ul{\MCM}(R) \longrightarrow \cd_{sg}(R),
\end{align} 
where the left hand side denotes the stable category of maximal Cohen--Macaulay modules. We list some properties of a singularity, which are `detected' by the singularity category.
\begin{itemize}
\item[(a)] Orlov \cite{Orlov11} has shown that two analytically isomorphic singularities have equivalent singularity categories (up to taking direct summands). 
\item[(b)] Let $k$ be an algebraically closed field. A commutative complete Gorenstein $k$-algebra $(R, \mathfrak{m})$ satisfying $k \cong R/\mathfrak{m}$ has an \emph{isolated singularity} if and only if $\cd_{sg}(R)$ is a Hom-finite category, by work of Auslander \cite{Auslander84}.
\item[(c)] Let $S=\C\llbracket z_{0}, \ldots, z_{d}\rrbracket$ and $f \in (z_{0}, \ldots, z_{d}) \setminus \{0\}$. Then Kn\"orrer \cite{Knoerrer} proved the following equivalence of triangulated categories
\begin{align}\label{E:Knoerrer}
\cd_{sg}\bigl(S/(f)\bigr) \longrightarrow \cd_{sg}\bigl(S\llbracket x, y \rrbracket/(f+x^2+y^2)\bigr).
\end{align}
\item[(d)] Kn\"orrer \cite{Knoerrer} and Buchweitz, Greuel \& Schreyer \cite{BuchweitzGreuelSchreyer} showed that a complete hypersurface ring $R=S/(f)$ defines a \emph{simple singularity}, i.e.~a singularity which deforms only into finitely many other singularities, if and only if the singularity category has only finitely many indecomposable objects. 
\end{itemize}
Recently, Orlov \cite{Orlov09} introduced a global version of singularity categories for any Noetherian scheme $X$ and related it to Kontsevich's Homological Mirror Symmetry Conjecture. Moreover, if $X$ has isolated singularities, then the global singularity category is determined by the local singularity categories from above (up to taking direct summands) \cite{Orlov11}.
\medskip

This work consists of four parts, which deal with (relative) singularity categories in singularity theory and representation theory. 
\begin{itemize}
\item Firstly, we study the \emph{relative singularity category} associated with a non-commutative resolution of a Noetherian scheme $X$. This construction is inspired by the constructions of Buchweitz and Orlov and we show that its description reduces to the local situation if $X$ has \emph{isolated} singularities. 
\item In the second part, we investigate these local relative singularity categories. In particular, we give an explicit description of these categories for Auslander resolutions of $\mathbb{A}_{1}$-hypersurface singularities $x_{0}^2+\ldots+x_{d}^2$ in all Krull dimensions. Moreover, we study the relations between the relative and the classical singularity categories using general techniques: for example, dg-algebras and recollements. Our main result shows that the classical and relative singularity categories mutually determine each other in the case of ADE-singularities. 
\item The third part is inspired by the techniques developed in the second part. However, the result  is `purely commutative'. Namely, we give an explicit description of Iyama \& Wemyss' stable category of special Cohen--Macaulay modules over rational surface singularities in terms of the well-known singularity categories of rational double points. 
\item Finally, we describe the singularity categories of finite dimensional gentle algebras. They are equivalent to finite products of $m$-cluster categories of type $\mathbb{A}_{1}$.
\end{itemize}

\subsection{Global relative singularity categories}
Let $X$ be a Noetherian scheme. The \emph{singularity category} of $X$ is the triangulated quotient category $\cd_{sg}(X):=\cd^b(\Coh(X))/\Perf(X)$, where $\Perf(X)$ denotes the full subcategory of complexes which are locally quasi-isomorphic to bounded complexes of locally free sheaves of finite rank. If $X$ has isolated singularities and every coherent sheaf is a quotient of a locally free sheaf, then Orlov \cite{Orlov11} proves the following block decomposition of the idempotent completion\footnote{For example, the singularity category of an irreducible nodal cubic curve is \emph{not} idempotent complete, see \cite[Appendix]{KMV} for a detailed explanation.} $(-)^\omega$ \cite{BalmerSchlichting} of the singularity category
\begin{align}\label{E:OrlovLoc}
\bigl(\cd_{sg}(X)\bigr)^\omega \cong \bigoplus_{x \in \Sing(X)} \cd_{sg}(\widehat{\co}_{x}).
\end{align} 
In other words, it suffices to understand the `local' singularity categories $\cd_{sg}(\widehat{\co}_{x})$ in this case.

Starting with Van den Bergh's works \cite{VandenBergh04, NCCR}, non-commutative analogues of (crepant) resolutions of singularities have been studied intensively in recent years. Non-commutative resolutions are useful even if the primary interest lies in commutative questions: for example, the Bondal--Orlov Conjecture concerning derived equivalences between (commutative) crepant resolutions and the derived McKay-Correspondence \cite{BKR, KapranovVasserot00} led Van den Bergh to the notion of a non-commutative crepant resolution (NCCR). Moreover, moduli spaces of quiver representations provide a very useful technique to obtain commutative resolutions from non-commutative resolutions, see for example~\cite{NCCR, WemyssReconstructionTypeA}.

Globally, we consider the following construction. Let $\cf=\co_{X} \oplus \cf'$ be a coherent sheaf on $X$. This yields a coherent sheaf of $\co_{X}$-algebras $\ca={\mathcal End}_{X}(\cf)$ and a locally ringed space $\XX=(X, \ca)$. For example, Burban \& Drozd \cite{Tilting}Ê  studied \emph{Auslander sheaves} on rational curves with only nodal and cuspidal singularities. The corresponding derived categories $\cd^b(\Coh \XX)$ admit tilting complexes which, in the nodal case, have \emph{gentle} endomorphism algebras.

It is well-known that the triangle functor
\begin{align}
\cf \lten_{X} - \colon \Perf(X) \longrightarrow \cd^b(\Coh \XX)
\end{align}
is fully faithful.
If $\gldim \Coh(\XX)<\infty$, then we consider $\XX$ as a non-commutative resolution of $X$. 
In analogy with the classical construction of Buchweitz and Orlov, it is natural to study the (idempotent completion\footnote{An explicit example of a non-split idempotent may be found in Lemma \ref{L:Non-Split}.} of the) following Verdier
quotient category, which we call \emph{relative singularity category}
\begin{align}
\Delta_{X}(\XX):=\left(\frac{\cd^b(\Coh \XX)}{\Perf(X)}\right)^{\omega}.
\end{align}
In a joint work with Igor Burban \cite{BurbanKalck11}, we obtained the following analogue of Orlov's Localization Theorem \eqref{E:OrlovLoc}. It is the main result of this section, see Theorem \ref{t:main-global}.

\begin{thm} \label{t:main-global-intro}
Let $k$ be an algebraically closed field and let $X$ be a seperated excellent Noetherian scheme with \emph{isolated} singularities $\{x_{1}, \ldots, x_{n}\}$ over $k$, such that every coherent sheaf is a quotient of a locally free sheaf. Let $\cf=\co_{X} \oplus \cf' \in \Coh(X)$ such that $\cf$ is locally free on $X \setminus \Sing(X)$. We set $\ca={\mathcal End}_{X}(\cf)$ and $\XX=(X, \ca)$. 

Then there is an equivalence of triangulated categories
\begin{align}\label{E:BlockDecompIntro}
\Delta_{X}(\XX) \cong \bigoplus_{i=1}^n \Delta_{\widehat{\co}_{x_{i}}}\left(\widehat{\ca}_{x_{i}}\right):=  \bigoplus_{i=1}^n \left(\frac{\cd^b(\widehat{\ca}_{x_{i}}-\mod)}{\Perf(\widehat{\co}_{x_{i}})}\right)^\omega.
\end{align}
\end{thm}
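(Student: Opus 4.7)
The plan is to construct a natural triangulated functor
\[
\Phi \colon \Delta_X(\XX) \longrightarrow \bigoplus_{i=1}^n \Delta_{\widehat{\co}_{x_i}}(\widehat{\ca}_{x_i})
\]
by completing at each singular point, and then to verify that it is an equivalence. For each $x_i$, the stalk--completion functor $\cd^b(\Coh \XX) \to \cd^b(\widehat{\ca}_{x_i}\text{-}\mod)$, $\cm \mapsto \widehat{\co}_{x_i} \ten_{\co_{X,x_i}} \cm_{x_i}$, is exact, sends $\Perf(X)$ into $\Perf(\widehat{\co}_{x_i})$, and hence descends to the Verdier quotients; assembling the components over $i$ and passing to idempotent completions yields $\Phi$.

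To show $\Phi$ is essentially surjective and (more importantly) fully faithful, I would proceed in three steps. First, I would argue that every object of $\Delta_X(\XX)$ is isomorphic to the class of a coherent $\ca$-module whose set-theoretic support lies in $\Sing(X)$. The key input is that on $U = X \setminus \Sing(X)$ the sheaf $\cf$ is locally free, so $\ca|_U = \cEnd_{\co_U}(\cf|_U)$ is Morita equivalent to $\co_U$ and every coherent $\ca|_U$-module is locally a summand of a finite free $\co_U$-module. Combined with the resolution property of $X$, this lets one replace an arbitrary $\cm \in \cd^b(\Coh \XX)$, via a d\'evissage along the filtration by powers of the ideal sheaf of $\Sing(X)$, by a bounded complex of $\ca$-modules supported on $\Sing(X)$, modulo a term coming from $\Perf(X)$.

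Second, because $\Sing(X)$ consists of finitely many \emph{closed} points, any coherent $\ca$-module with support in $\Sing(X)$ splits canonically as a finite direct sum of its localizations at the individual $x_i$, reducing the analysis to the case of a single point. Third, for a fixed $x_i$, I would identify the ``unhatted'' local category $\cd^b(\ca_{x_i}\text{-}\mod)/\Perf(\co_{X,x_i})$ with $\Delta_{\widehat{\co}_{x_i}}(\widehat{\ca}_{x_i})$ via the faithfully flat base change $\co_{X,x_i} \to \widehat{\co}_{x_i}$. The main point is that, after the first reduction, all the relevant modules have support equal to $\{x_i\}$ and are of finite length, on which completion is an exact equivalence that preserves perfectness.

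The main obstacle is the full faithfulness of $\Phi$, since Hom-spaces in $\Delta_X(\XX)$ are computed by a calculus of fractions that \emph{a priori} mixes contributions from distinct singular points. The remedy is twofold: first, show that morphisms in $\cd^b(\Coh \XX)$ between objects supported at distinct closed points $x_i \neq x_j$ vanish, so that Hom-spaces between the would-be blocks are already zero before completion; second, show that any roof representing a morphism in the quotient at a single point $x_i$ can be replaced, modulo $\Perf(X)$, by a roof between $\ca$-modules of finite length supported at $x_i$, where completion is a Hom-preserving equivalence. Combining these reductions with the three steps above gives a bijection on Hom-spaces, and passing to idempotent completions on both sides then yields the equivalence \eqref{E:BlockDecompIntro}.
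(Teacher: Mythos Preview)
Your overall architecture matches the paper's: reduce to objects supported on $\Sing(X)$, split by the isolated points, and pass to completions on finite-length modules. The differences lie in the two technical cores, and in both places your sketch has a genuine gap.

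\textbf{Essential surjectivity.} Your d\'evissage ``along the filtration by powers of the ideal sheaf of $\Sing(X)$'' does not terminate: the filtration $\cm \supseteq \ci\cm \supseteq \ci^2\cm \supseteq \cdots$ is infinite, and the successive quotients being $Z$-supported does not by itself exhibit $\cm$ as an extension of a $Z$-supported complex by something in $\Perf(X)$. What is actually needed is an \emph{extension result for perfect complexes}: given $\cm$, its restriction $\cm|_U$ corresponds, via the Morita equivalence you describe, to a perfect complex on $U$; one must then extend this to a perfect complex on $X$ whose class in the quotient agrees with that of $\cm$. The paper does exactly this via Thomason--Trobaugh \cite[Lemma~5.5.1]{ThomasonTrobaugh}, at the cost of adding a summand --- which is precisely why idempotent completion is required on the global side. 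Your argument does not supply any substitute for this step.

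\textbf{Full faithfulness.} Your plan is to replace an arbitrary roof by one living entirely among $Z$-supported (hence finite-length) $\ca$-modules. This is the right goal, but saying ``replace the roof'' does not by itself produce the replacement: the intermediate object of a roof need not be $Z$-supported, and the vanishing of $\Hom$ between objects with disjoint support in $\cd^b(\Coh\XX)$ does not survive passage to the Verdier quotient without further argument. The paper proves the required factorization concretely (Proposition~\ref{P:FullyFaithf}): given $\varphi\colon \cp^\bullet \to \cc^\bullet$ with $\cp^\bullet \in \cP(X)$ and $\cc^\bullet$ $Z$-supported, one factors $\varphi$ through $\cp^\bullet \otimes \ck^\bullet \in \cP_Z(X)$, where $\ck^\bullet$ is a Koszul-type complex on a surjection from a locally free sheaf onto $\ci^t$. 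This is the substantive input that makes Lemma~\ref{Lem:Fullyfaithful} applicable; your proposal gestures at the conclusion but not at the mechanism.

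Finally, note that the paper runs a second application of the same localization theorem at the \emph{local} level (Corollary~\ref{C:fdtofg}) to pass from $\cd^b(\widehat{\ca}_{x_i}\text{-}\fdmod)$ back up to $\cd^b(\widehat{\ca}_{x_i}\text{-}\mod)$; your Step~3 only handles the completion on finite-length modules and does not address this last identification.
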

\noindent This motivates our study of the \emph{local} relative singularity categories 
\begin{align}
\Delta_{\widehat{\co}_{x_{i}}} (\widehat{\ca}_{x_{i}}):=\bigl(\cd^b(\widehat{\ca}_{x_{i}}-\mod)/\Perf(\widehat{\co}_{x_{i}})\bigr)^{\omega}
\end{align}
which we explain in the next subsection.

\newpage

\subsection{Local relative singularity categories}
\subsubsection{Setup}\label{ss:SetupIntro}
Let $k$ be an algebraically closed field and $(R, \mathfrak{m})$ be a commutative local complete Gorenstein $k$-algebra such that $k\cong R/\mathfrak{m}$. Recall that the full subcategory of \emph{maximal Cohen--Macaulay} $R$-modules may be written as  $\MCM(R)=\left\{\left.M \in \mod-R \, \right| \Ext^i_{R}(M, R)=0 \text{ for all } i>0 \right\}$ since $R$ is Gorenstein. 
Let $M_{0}=R, M_{1}, \ldots, M_{t}$ be pairwise non-isomorphic indecomposable $\MCM$ $R$-modules, $M:=\bigoplus_{i=0}^t M_{i}$ and $A=\End_{R}(M)$. If $\gldim(A)< \infty$ then $A$ is called \emph{non-commutative resolution} (NCR) of $R$ (this notion was recently studied in~\cite{DaoIyamaTakahashiVial12}). For example, if $R$ has only finitely many indecomposables $\MCM$s and $M$ denotes their direct sum, then the \emph{Auslander algebra} $\Aus(\MCM(R)):=\End_{R}(M)$ is a NCR (\cite[Theorem A.1]{Auslander84}).  In analogy with the global situation, there is a fully faithful triangle functor
$
K^b(\proj-R) \rightarrow \cd^b(\mod-A),
$
whose essential image equals $\thick(eA) \subseteq \cd^b(\mod-A)$ for a certain idempotent $e \in A$.

\begin{defn}\label{D:RelSingCat}
The \emph{relative singularity category} is the Verdier quotient category \begin{align}\label{E:DefRelSingCat} \Delta_{R}(A):=\frac{\cd^b(\mod-A)}{K^b(\proj-R)} \cong \frac{\cd^b(\mod-A)}{\thick(eA)}.\end{align} 
\end{defn}

\begin{rem}
(a) \, Our notion of relative singularity categories is a special case of Chen's definition \cite{Chen11}. Moreover, these categories have been studied by Thanhoffer de V\"olcsey \& Van den Bergh \cite{ThanhofferdeVolcseyMichelVandenBergh10} for certain Gorenstein quotient singularities, see Remark \ref{R:Overlap} for more details. Different notions of relative singularity categories were introduced and studied by Positselski \cite{Positselski11} and also by Burke \& Walker \cite{BurkeWalker12}.

\noindent
(b) \, We show (Proposition \ref{P:IdempCompl}) that the quotient category $\cd^b(\mod-A)/K^b(\proj-R)$ is idempotent complete, if $R$ is a complete Gorenstein ring as in the setup above. In particular, Definition \ref{D:RelSingCat} is compatible with the definition given in \eqref{E:BlockDecompIntro}.
\end{rem}

\subsubsection{Main result} It is natural to ask how the notions of classical and relative singularity category  are related. In joint work with Dong Yang \cite{KalckYang12}, we obtained a first answer to this question for Auslander resolutions of $\MCM$--representation finite singularities, see Theorem \ref{t:Classical-versus-generalized-singularity-categories}.

\begin{thm}\label{t:Classical-versus-generalized-singularity-categories-intro}
 Let $R$ and $R'$ be $\MCM$--representation finite complete Gorenstein $k$-algebras with Auslander algebras $A=\Aus(\MCM(R))$ and $A'=\Aus(\MCM(R'))$, respectively. Then the following statements are equivalent.
\begin{itemize}
\item[$(i)$] There is an equivalence $\cd_{sg}(R) \cong \cd_{sg}(R')$ of triangulated categories.
\item[$(ii)$] There is an equivalence $\Delta_{R}\!\left(A\right) \cong \Delta_{R'}\!\left(A'\right)$ of triangulated categories.
\end{itemize}
The implication $(ii) \Rightarrow (i)$ holds more generally for non-commutative resolutions $A$ and $A'$ of arbitrary isolated Gorenstein singularities $R$ and $R'$, respectively.
\end{thm}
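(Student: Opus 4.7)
The plan is to combine two reconstructions. For the direction $(ii) \Rightarrow (i)$ (and the general non-commutative resolution statement), I would realize $\cd_{sg}(R)$ as an intrinsic Verdier quotient of $\Delta_R(A)$. For $(i) \Rightarrow (ii)$ in the MCM-finite case, I would identify $\Delta_R(A)$ with $\per(B)$ for a dg algebra $B$ whose quasi-isomorphism type is controlled by $\cd_{sg}(R) \cong \ul{\MCM}(R)$.

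For the first reduction, let $e \in A$ be the idempotent projecting the additive generator $M = R \oplus M'$ of $\MCM(R)$ onto its $R$-summand, so that $eAe \cong R$. The exact functor $e \cdot - \colon \mod A \to \mod R$ extends to bounded derived categories and sends $eA \mapsto R$; hence it sends $\thick(eA)$ (identified with $\Perf(R)$ in $\cd^b(\mod A)$) into $\Perf(R) \subseteq \cd^b(\mod R)$, and so induces a triangle functor $\Phi \colon \Delta_R(A) \to \cd_{sg}(R)$. I would then identify $\ker(\Phi)$ intrinsically: letting $S_1, \ldots, S_t$ be the simple $A$-modules at the non-free indecomposable MCM summands $M_1, \ldots, M_t$, we have $eS_i = 0$, so $\thick(S_1, \ldots, S_t) \subseteq \ker(\Phi)$. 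Conversely, the recollement attached to the idempotent $e$ (Cline-Parshall-Scott) shows that every $X \in \cd^b(\mod A)$ with $eX \in \Perf(R)$ lies, modulo $\thick(eA)$, in the image of $\cd^b(\mod A/AeA) \hookrightarrow \cd^b(\mod A)$, and the latter is thickly generated by the $S_i$ since $A/AeA$ is artinian. Thus $\Phi$ is a Verdier localization onto $\cd_{sg}(R)$ (after invoking idempotent completeness, cf.\ Proposition \ref{P:IdempCompl}). Any triangle equivalence $\Delta_R(A) \cong \Delta_{R'}(A')$ preserves simple compact objects and their thick closures, so descends to $\cd_{sg}(R) \cong \cd_{sg}(R')$; MCM-finiteness is never used, which also handles the general non-commutative resolution case in the final sentence.

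For $(i) \Rightarrow (ii)$, I pass to dg-enhancements. Take the canonical dg-enhancement of $\cd^b(\mod A)$, form the Drinfeld dg-quotient by a dg-enhancement of $\thick(eA)$, and obtain a dg-enhancement $\cc$ of $\Delta_R(A)$. Set $B := \RHom_{\cc}(S,S)$ with $S := S_1 \oplus \cdots \oplus S_t$. Since $S$ is a compact generator of the compactly generated enlargement of $\Delta_R(A)$, Keller's dg Morita theorem gives $\Delta_R(A) \cong \per(B)$. The key claim is that the quasi-isomorphism type of $B$ depends only on $\cd_{sg}(R)$. In the MCM-finite case, $\overline{M} := M/R$ is an additive generator of $\ul{\MCM}(R) \cong \cd_{sg}(R)$, its stable endomorphism algebra is $H^0(B) \cong A/AeA = \ul{\End}_R(\overline{M})$, and more generally $H^i(B)$ is computed as Ext groups in $\cd_{sg}(R)$ between indecomposable summands of $\overline{M}$; the higher $A_\infty$-multiplications on $H^*(B)$ record the Massey products of $\cd_{sg}(R)$ at $\overline{M}$. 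By Lunts-Orlov uniqueness of dg-enhancements for Hom-finite Krull-Schmidt algebraic triangulated categories, $B$ is determined up to quasi-isomorphism by $\cd_{sg}(R)$. Hence $\cd_{sg}(R) \cong \cd_{sg}(R')$ yields $B \simeq B'$ and $\Delta_R(A) \cong \per(B) \cong \per(B') \cong \Delta_{R'}(A')$.

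The main obstacle I anticipate is the dg-algebra reconstruction step: one must exhibit $B$ as the $A_\infty$-Koszul dual of the stable Auslander algebra and check that this $A_\infty$-structure is intrinsic to $\cd_{sg}(R)$ via Lunts-Orlov uniqueness. By contrast, the recollement argument for $(ii) \Rightarrow (i)$ is more technical than hard, but requires care at the interface between $\cd^b(\mod A)$ and $\cd^b(\mod R)$ because $\gldim R = \infty$ in general.
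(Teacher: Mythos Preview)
Your overall architecture matches the paper's, but both directions have gaps that the paper fills with specific tools you have not invoked.

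For $(ii)\Rightarrow(i)$, the Verdier quotient description $\Delta_R(A)/\thick(S_1,\dots,S_t)\cong\cd_{sg}(R)$ is exactly Proposition~\ref{C:From-gen-to-class}. The problem is your sentence ``any triangle equivalence preserves simple compact objects and their thick closures'': there is no intrinsic notion of ``simple object'' in a triangulated category, so this does not pin down the kernel. The paper's fix (Corollary~\ref{C:Intrinsic}) is to show that $\thick(S_1,\dots,S_t)=\Delta_R(A)_{hf}$, the subcategory of right homologically finite objects; this uses the identification $\Delta_R(A)\cong\per(B)$ with $\cd_{fd}(B)\subseteq\per(B)$, under which $\thick(S_i)=\cd_{fd}(B)=\per(B)_{hf}$. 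Since $(-)_{hf}$ is manifestly invariant under triangle equivalence, the quotient passes through.

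For $(i)\Rightarrow(ii)$, your identification $\Delta_R(A)\cong\per(B)$ with $B=\RHom(S,S)$ is correct and is what the paper does via the recollement (Corollary~\ref{c:restriction-and-induction}). But your claim that ``$H^i(B)$ is computed as Ext groups in $\cd_{sg}(R)$ between indecomposable summands of $\overline M$'' is not right as stated: the $S_i$ vanish in $\cd_{sg}(R)$, so their Massey products there are zero, and $\Hom_{\cd_{sg}(R)}(M_i,M_j[\ast])$ is not $\Ext^\ast_A(S_i,S_j)$. The paper instead computes $\Ext^\ast_A(S_i,S_j)$ directly from the Auslander--Reiten structure of $\MCM(R)$ via the fractional Calabi--Yau property (Theorem~\ref{t:fractionally-cy-property}): the Serre functor on $\thick(S_1,\dots,S_t)$ satisfies $\nu(S_i)\cong S_{\tau(i)}[2]$, which forces $\Ext^\ast_A(S_i,S_j)$ to be concentrated in degrees $0,1,2$ with dimensions read off from the stable AR quiver. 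Koszul duality (Corollary~\ref{c:koszul-double-dual}) then yields $B\simeq\Lambda_{dg}(\underline{\MCM}(R))$, the \emph{dg Auslander algebra} of Definition~\ref{d:dg-aus-alg}, whose underlying graded quiver and differential are visibly determined by the stable AR quiver and its mesh relations. Proposition~\ref{p:dg-aus-alg} confirms this dg algebra is independent of the choice of minimal relations. Thus the reconstruction of $B$ from $\cd_{sg}(R)\cong\underline{\MCM}(R)$ proceeds through AR theory and Koszul duality rather than through Lunts--Orlov uniqueness; the latter would in any case not obviously apply to $\cd_{sg}(R)$, and even if it did you would still need a bridge from the enhancement of the \emph{quotient} $\cd_{sg}(R)$ back to the endomorphism dg algebra of an object that lives upstairs in $\Delta_R(A)$. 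Your anticipated ``main obstacle'' is real, and the paper's resolution is precisely the fractional CY computation you did not have.
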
 
\begin{rem}
(a) \, Kn\"orrer's periodicity theorem \eqref{E:Knoerrer} yields a wealth of non-trivial examples for triangle equivalences $\cd_{sg}(R) \cong \cd_{sg}(R')$. 

\noindent
(b) \, The definition of the relative Auslander singularity category $\Delta_{R}(\Aus(\MCM(R))$ does not involve any choices. Using Theorem \ref{t:Classical-versus-generalized-singularity-categories-intro} and Kn\"orrer's periodicity, there are two (new) canonical triangulated categories associated with any Dynkin diagram of ADE-type. Namely, the relative Auslander singularity category of the even and odd dimensional ADE hypersurface singularities.

\noindent
(c) \, The implication $(ii) \Rightarrow (i)$ may also be deduced from work of Thanhoffer de V\"olcsey \& Van den Bergh \cite{ThanhofferdeVolcseyMichelVandenBergh10}.
\end{rem}

\begin{ex} Let $R=\C\llbracket x \rrbracket/(x^2)$ and $R'=\C\llbracket x, y, z \rrbracket/(x^2+y^2+ z^2)$. Kn\"orrer's equivalence (\ref{E:Knoerrer}) in conjunction with Theorem \ref{t:Classical-versus-generalized-singularity-categories-intro}, yields a triangle equivalence $\Delta_{R}(\Aus(\MCM(R))) \cong \Delta_{R'}(\Aus(\MCM(R')))$, which may be written explicitly as  
\begin{align}
\frac{\cd^b\left(\left.
\begin{xy}
\SelectTips{cm}{10}
\xymatrix{1 \ar@/^10pt/[rr]|{\,\, p\,\,}  && 2 \ar@/^10pt/[ll]|{\,\, i\,\,} }\end{xy}\right/ (pi)
\right)}{K^b(\add P_{1})}
\stackrel{\sim}\longrightarrow
\frac{\cd^b\left(\left.
\begin{xy}
\SelectTips{cm}{10}
\xymatrix{1 \ar@<-1.5pt>@/^10pt/[rr]|{\,\, y\,\,} \ar@<5pt>@/^10pt/[rr]|{\,\, x\,\,} && 2 \ar@<-1.5pt>@/^10pt/[ll]|{\,\, y\,\,} \ar@<5pt>@/^10pt/[ll]|{\,\, x\,\,}}
\end{xy}\right/ (xy-yx)
\right)}{K^b(\add P_{1})}.
\end{align}
\noindent The quiver algebra on the right is the completion of the preprojective algebra of the Kronecker quiver $\begin{xy}\SelectTips{cm}{10}\xymatrix{\circ \ar@/^/[r] \ar@/_/[r] &  \circ}\end{xy}$. Moreover, the derived McKay--Correspondence \cite{KapranovVasserot00, BKR} shows that this algebra is derived equivalent to the derived category of coherent sheaves on the minimal resolution of the completion of the Kleinian singularity $\C^2/\Z_{2}$.
\end{ex}

\subsubsection{Idea of the proof}
We prove Theorem \ref{t:Classical-versus-generalized-singularity-categories-intro} by developing a 
general dg algebra framework. 
To be more precise, let $\ct$ be a $k$-linear Hom-finite idempotent complete algebraic triangulated category with finitely many indecomposable objects. If $\ct$ satisfies a certain (weak) extra assumption\footnote{For example, all `standard' categories, i.e. categories such that the Auslander algebra is given as the quiver algebra of the Auslander--Reiten quiver modulo the ideal generated by the mesh relations, satisfy this assumption.}, then $\ct$ determines a 
dg algebra $\Lambda_{dg}(\ct)$, which we call the \emph{dg Auslander algebra of $\ct$}, see Definition~\ref{d:dg-aus-alg}. In particular, this applies to the stable category $\ct=\ul{\MCM}(R)$. Now, using recollements generated by idempotents, Koszul duality and the fractional Calabi--Yau property \eqref{E:IntroFractCY}, we prove the following key statement, see Theorem \ref{t:main-thm-2}.
\begin{Key Statement} There is an equivalence of triangulated categories
\begin{align}\label{E:KeyStatement}
\Delta_{R}\Bigl(\Aus\bigl(\MCM(R)\bigr)\Bigr) \cong \per\Bigl(\Lambda_{dg}\bigl(\ul{\MCM}(R)\bigr)\Bigr). 
\end{align}
\end{Key Statement}
In particular, this shows that $(i)$ implies $(ii)$. Conversely, written in this language, the quotient functor (\ref{E:class-as-quotient}), induces an equivalence of triangulated categories
\begin{align}\label{E:Cluster-Equiv}
\frac{\per\Big(\Lambda_{dg}\big(\ul{\MCM}(R)\big)\Big)}{\cd_{fd}\Big(\Lambda_{dg}\big(\ul{\MCM}(R)\big)\Big)} \longrightarrow \ul{\MCM}(R).
\end{align} Since the category $\cd_{fd}(\Lambda_{dg}(\ul{\MCM}(R)))$ of dg modules with finite dimensional total cohomology admits an intrinsic characterization inside $\per(\Lambda_{dg}(\ul{\MCM}(R)))$, this proves that $\ul{\MCM}(R)$ is determined by $\Delta_{R}(\Aus(\MCM(R)))$. Hence, (ii) implies (i).

\begin{rem}\label{R:Overlap}
Thanhoffer de V\"olcsey \& Van den Bergh \cite{ThanhofferdeVolcseyMichelVandenBergh10} prove an analogue of the Key Statement \eqref{E:KeyStatement} for `cluster resolutions' of certain Gorenstein quotient singularities $R$. By using an analogue of \eqref{E:Cluster-Equiv}, they show that the stable category of maximal Cohen--Macaulay $R$-modules is a  generalized cluster category in the sense of Amiot and Guo \cite{Amiot09, Guolingyan11a}. This was first proved by Amiot, Iyama \& Reiten \cite{AIR} by different means.

In Krull dimension two, it is well-known that the ADE-singularities are quotient singularities. Moreover, in this case, the Auslander and cluster resolutions coincide (indeed, the sum of all indecomposable MCMs is a $1$-cluster tilting object in a $1$-Calabi--Yau category) and the dg Auslander algebra is the deformed dg preprojective algebra $\Pi(Q, 2, 0)$  as defined by Ginzburg \cite{Ginzburg06} (see also Van den Bergh \cite{VanDenBergh10}). Here $Q$ is a Dynkin quiver of the same type as the singularity $R$.

However, this is the only overlap between the setup of Thanhoffer de V\"olcsey \& Van den Bergh and ours. Indeed, in Krull dimensions different from two the ADE-singularities are never quotient singularities: one dimensional quotient singularities are regular and Schlessinger \cite{Schlessinger} has shown that isolated quotient singularities in dimensions greater than three are \emph{rigid}, i.e.~do not admit any non-trivial (infinitesimal) deformations. On the other hand, it is well-known that ADE-singularities always admit non-trivial deformations.
\end{rem}

\begin{ex}
Let $R=\C\llbracket z_{0}, \ldots, z_{d}\rrbracket/(z_{0}^{n+1} + z_{1}^2 + \ldots + z_{d}^2)$ be an $A_{n}$-singularity of \emph{even} Krull dimension. Then the graded quiver $Q$ of the dg Auslander algebra $\Lambda_{dg}(\ul{\MCM}(R))$ is given as

\begin{equation*}\begin{tikzpicture}[description/.style={fill=white,inner sep=2pt}]
    \matrix (n) [matrix of math nodes, row sep=3em,
                 column sep=2.5em, text height=1.5ex, text depth=0.25ex,
                 inner sep=0pt, nodes={inner xsep=0.3333em, inner
ysep=0.3333em}]
    {  
       1 & 2 & 3 &\cdots& n-1 &n \\
    };
    
    \draw[->] ($(n-1-1.east) + (0mm,1mm)$) .. controls +(2.5mm,1mm) and
+(-2.5mm,+1mm) .. ($(n-1-2.west) + (0mm,1mm)$);
  \node[scale=0.75] at ($(n-1-1.east) + (5.5mm, 3.8mm)$) {$\alpha_{1}$};
  
    \draw[->] ($(n-1-2.west) + (0mm,-1mm)$) .. controls +(-2.5mm,-1mm)
and +(+2.5mm,-1mm) .. ($(n-1-1.east) + (0mm,-1mm)$);
\node[scale=0.75] at ($(n-1-1.east) + (5.5mm, -4.1mm)$) {$\alpha^*_{1}$};

    \draw[->] ($(n-1-2.east) + (0mm,1mm)$) .. controls +(2.5mm,1mm) and
+(-2.5mm,+1mm) .. ($(n-1-3.west) + (0mm,1mm)$);
  \node[scale=0.75] at ($(n-1-2.east) + (5.5mm, 3.8mm)$) {$\alpha_{2}$};
  
    \draw[->] ($(n-1-3.west) + (0mm,-1mm)$) .. controls +(-2.5mm,-1mm)
and +(+2.5mm,-1mm) .. ($(n-1-2.east) + (0mm,-1mm)$);
  \node[scale=0.75] at ($(n-1-2.east) + (5.5mm, -4.1mm)$) {$\alpha^*_{2}$};

    \draw[->] ($(n-1-3.east) + (0mm,1mm)$) .. controls +(2.5mm,1mm) and
+(-2.5mm,+1mm) .. ($(n-1-4.west) + (0mm,1mm)$);
  \node[scale=0.75] at ($(n-1-3.east) + (5.5mm, 3.8mm)$) {$\alpha_{3}$};

    \draw[->] ($(n-1-4.west) + (0mm,-1mm)$) .. controls +(-2.5mm,-1mm)
and +(+2.5mm,-1mm) .. ($(n-1-3.east) + (0mm,-1mm)$);
  \node[scale=0.75] at ($(n-1-3.east) + (5.5mm, -4.1mm)$) {$\alpha^*_{3}$};
   
    \draw[->] ($(n-1-4.east) + (0mm,1mm)$) .. controls +(2.5mm,1mm) and
+(-2.5mm,+1mm) .. ($(n-1-5.west) + (0mm,1mm)$);
  \node[scale=0.75] at ($(n-1-4.east) + (5.5mm, 3.8mm)$) {$\alpha_{n-2}$};

    \draw[->] ($(n-1-5.west) + (0mm,-1mm)$) .. controls +(-2.5mm,-1mm)
and +(+2.5mm,-1mm) .. ($(n-1-4.east) + (0mm,-1mm)$);
  \node[scale=0.75] at ($(n-1-4.east) + (5.5mm, -4.1mm)$) {$\alpha^*_{n-2}$};

    \draw[->] ($(n-1-5.east) + (0mm,1mm)$) .. controls +(2.5mm,1mm) and
+(-2.5mm,+1mm) .. ($(n-1-6.west) + (0mm,1mm)$);
  \node[scale=0.75] at ($(n-1-5.east) + (5.5mm, 3.8mm)$) {$\alpha_{n-1}$};

    \draw[->] ($(n-1-6.west) + (0mm,-1mm)$) .. controls +(-2.5mm,-1mm)
and +(+2.5mm,-1mm) .. ($(n-1-5.east) + (0mm,-1mm)$);
  \node[scale=0.75] at ($(n-1-5.east) + (5.5mm, -4.1mm)$) {$\alpha^*_{n-1}$};

 \draw[dash pattern = on 0.5mm off 0.3mm,->] ($(n-1-1.south) +
    (1.2mm,0.5mm)$) arc (65:-245:2.5mm);
    \node[scale=0.75] at ($(n-1-1.south) + (0mm,-6.5mm)$) {$\rho_{1}$};
 
  \draw[dash pattern = on 0.5mm off 0.3mm,->] ($(n-1-2.south) +
    (1.2mm,0.5mm)$) arc (65:-245:2.5mm);
   \node[scale=0.75] at ($(n-1-2.south) + (0mm,-6.5mm)$) {$\rho_{2}$};
 
   \draw[dash pattern = on 0.5mm off 0.3mm,->] ($(n-1-3.south) +
    (1.2mm,0.5mm)$) arc (65:-245:2.5mm);
   \node[scale=0.75] at ($(n-1-3.south) + (0mm,-6.5mm)$) {$\rho_{3}$};
   
     \draw[dash pattern = on 0.5mm off 0.3mm,->] ($(n-1-5.south) +
    (1.2mm,0.5mm)$) arc (65:-245:2.5mm);
   \node[scale=0.75] at ($(n-1-5.south) + (0mm,-6.5mm)$) {$\rho_{n-1}$};
   
     \draw[dash pattern = on 0.5mm off 0.3mm,->] ($(n-1-6.south) +
    (1.2mm,0.5mm)$) arc (65:-245:2.5mm);
   \node[scale=0.75] at ($(n-1-6.south) + (0mm,-6.5mm)$) {$\rho_{n}$};
\end{tikzpicture}\end{equation*}
where the broken arrows are concentrated in degree $-1$ and the remaining generators, i.e.~ solid arrows and idempotents, are in degree $0$. The continuous $k$-linear differential $d\colon \widehat{kQ} \ra \widehat{kQ}$ is completely specified by sending $\rho_{i}$ to the mesh relation (or preprojective relation) starting at the vertex $i$, e.g.~ $d(\rho_{2})=\alpha_{1}\alpha_{1}^*+\alpha_{2}^*\alpha_{2}$.
\end{ex}
 We include a complete list of the graded quivers of the dg Auslander algebras for ADE--singularities in all Krull dimensions in Subsection \ref{ss:DGAuslander}. In our case, these quivers completely determine the corresponding dg algebra.

\begin{rem} 
 Bridgeland determined a connected component of the stability manifold of $\cd_{fd}\bigl(\Lambda_{dg}(\ul{\MCM}(R))\bigr)$ for ADE-surfaces $R$ \cite{Bridgeland09}. We refer to Subsection \ref{ss:Bridgeland} for more details on this remark. 
\end{rem}

\subsubsection{General properties of relative singularity categories}
In the notations of the setup given in Paragraph \ref{ss:SetupIntro}, we assume that $R$ has an \emph{isolated} singularity and that  $A$ is a NCR of $R$. Let $\ul{A}:=A/AeA \cong\ul{\End}_{R}(M)$ be the corresponding stable endomorphism algebra. Since $R$ is an isolated singularity, $\ul{A}$ is a finite dimensional $k$-algebra. We denote the simple $\ul{A}$-modules by $S_{1}, \ldots, S_{t}$.  Then the relative singularity category $\Delta_{R}(A)=\cd^b(\mod-A)/K^b(\proj-R)$ has the following properties:
\begin{itemize}
\item[(a)] All morphism spaces are finite dimensional over $k$, see \cite
{ThanhofferdeVolcseyMichelVandenBergh10} or Prop. \ref{p:hom-finiteness-of-delta}.
\item[(b)] $\Delta_{R}(A)$ is idempotent complete, see Prop. \ref{P:IdempCompl}. 
\item[(c)] $K_{0}\bigl(\Delta_{R}(A)\bigr)\cong \mathbb{Z}^t$, see Prop. \ref{P:GrothendieckGroup}.
\item[(d)] There is an exact sequence of triangulated categories, see \cite{ThanhofferdeVolcseyMichelVandenBergh10} or Prop.~\ref{C:From-gen-to-class} 
\begin{equation}\label{E:class-as-quotient} \thick(S_{1}, \ldots, S_{t}) = \cd^b_{\ul{A}}(\mod-A) \longrightarrow \Delta_{R}(A) \longrightarrow \cd_{sg}(R), \end{equation}
where $\cd^b_{\ul{A}}(\mod-A)\subseteq \cd^b(\mod-A)$ denotes the full subcategory consisting of complexes with cohomologies in $\mod-\ul{A}$. Moreover, this subcategory admits an intrinsic description inside $\cd^b(\mod-A)$, see \cite{ThanhofferdeVolcseyMichelVandenBergh10}  or Cor. \ref{C:Intrinsic}. 
\item[(e)] If $\add M$ has $d$--almost split sequences \cite{Iyama07a}, then $\cd^b_{\ul{A}}(\mod-A)$ has a Serre functor $\nu$, whose action on the generators $S_{i}$ is given by \begin{align}\label{E:IntroFractCY}\nu^n(S_{i}) \cong S_{i}[n(d+1)],\end{align} where $n=n(S_{i})$ is given by the length of the $\tau_{d}$--orbit of $M_{i}$, see Thm.~\ref{t:fractionally-cy-property}.
\item[(f)] Let $\left(\cd_{\ul{A}}(\Mod-A)\right)^c \subseteq \cd^b_{\ul{A}}(\Mod-A)$ be the full subcategory of compact objects. There is an equivalence of triangulated categories, see Remark \ref{r:CompInterprOfRelSingCat}.
\begin{align}Ê\Delta_{R}(A) \cong \bigl(\cd_{\ul{A}}(\Mod-A)\bigr)^c. \end{align} 
\item[(g)] Let $M_{t+1}, \ldots, M_{s}$ be further indecomposable $\MCM$ $R$-modules and  let $A'=\End_{R}(\bigoplus_{i=0}^s M_{i})$. There exists a fully faithful triangle functor, see Prop. \ref{P:Embedding-of-relative}
\begin{align}
\Delta_{R}(A) \longrightarrow \Delta_{R}(A').
\end{align}
\item[(h)] If $\krdim R=3$ and $\MCM(R)$ has a cluster-tilting object $M$, then $C=\End_R(M)$ is a \emph{non-commutative crepant resolution} of $R$, see~\cite[Section 5]{Iyama07}. If $M'$ is another cluster-tilting object in $\MCM(R)$ and $C'=\End_R(M')$, then \begin{align}-\lten_{C}\Hom_R(M',M)\colon\cd^b(\mod-C)\rightarrow\cd^b(\mod-C')\end{align} is a triangle equivalence (see \emph{loc. cit.} and~\cite[Prop. 4]{Palu09}), which is compatible with the embeddings from $K^b(\proj R)$~\cite[Cor. 5]{Palu09}. Hence one obtains a triangle equivalence
\begin{align}
\Delta_R(C)\longrightarrow\Delta_R(C').
\end{align}
\end{itemize}

\begin{rem}
 The Hom-finiteness in (a) is surprising since (triangulated) quotient categories tend to behave quite poorly in this respect, see Example \ref{Ex:Hom-infinite}.
\end{rem}

\subsubsection{Explicit description of the nodal block} This is joint work with Igor Burban \cite{BurbanKalck11}. 
Let $R=k\llbracket x, y \rrbracket/(xy)$ be the nodal curve singularity, let $A=\End_{R}(R \oplus k\llbracket x \rrbracket \oplus k\llbracket y \rrbracket)$ be the Auslander algebra of $\MCM(R)$ and $C=\End_{R}(R \oplus k\llbracket x \rrbracket) \cong eAe$ be the 'relative cluster-tilted' algebra. Here $e \in A$ denotes the idempotent endomorphism corresponding to the identity of $R \oplus k\llbracket x \rrbracket$.  We give an explicit description of the relative singularity categories
$\Delta_{R}(A)$ and $\Delta_{R}(C)$, respectively. Let us fix some notations. $A$ may be written as the completion, with respect to the arrow ideal, of the path algebra of the following quiver with relations:
\begin{align}
\begin{array}{c}
\begin{xy}
\SelectTips{cm}{}
\xymatrix{
\li \ar@/^/[rr]^{ \ro }  & &  \ast \ar@/^/[ll]^{ \lu }
 \ar@/_/[rr]_{ \ru }
 & &
\ar@/_/[ll]_{ \lo } \re}\end{xy}  \qquad  \ru   \ro  = 0, \quad   \lu   \lo  = 0.
\end{array}
\end{align} 
Similarly, $C$ is given by the completion of the following quiver with relations:
\begin{equation}
\begin{xy}\SelectTips{cm}{}
\xymatrix
{- \ar@/^/[rr]^{ \ro }  & &  \ast \ar@/^/[ll]^{ \lu } \ar@(rd, ru)[]_{[\lo\ru]} 
}\end{xy}  \qquad  [\lo\ru]   \ro  = 0, \quad   \lu   [\lo\ru]  = 0.
\end{equation}

Let $\sigma, \tau \in \{\li, \re\}$ and $l \in \mathbb{N}$. A \emph{minimal string} $\mathcal{S}_{\tau}(l)$ is a complex of indecomposable projective $A$-modules
\begin{align}
\begin{xy}\SelectTips{cm}{}
\xymatrix{
\cdots\ar[r] & 0 \ar[r] & P_{\sigma} \ar[r] & P_{\ast} \ar[r] & \cdots \ar[r] &P_{\ast} \ar[r] &P_{\tau} \ar[r]& 0 \ar[r] &\cdots}
\end{xy}
\end{align}
of length $l+2$ with differentials given by non-trivial paths of minimal possible
length and $P_{\tau}$ located in degree $0$. The main results of this section (see Theorem \ref{T:MainT} and Proposition \ref{P:ClusterAone}) are summarized in the following theorem.
\begin{thm} Let $R=k\llbracket x, y \rrbracket/(xy)$ with Auslander algebra $A$ and relative cluster-tilted algebra $C$. Then the following statements hold:
\begin{itemize}
\item[(a)] The indecomposable objects in $\Delta_{R}(A)$ are precisely the shifts of the indecomposable projective $A$-modules $P_{\pm}$ and the minimal strings $\mathcal{S}_{\pm}(l)$, with $l \in \mathbb{N}$. In particular, $\Delta_{R}(A)$ is of discrete representation type.
\item[(b)] All morphism spaces in $\Delta_{R}(A)$ may be computed explicitly. Moreover, the dimension of $\Hom(X,Y)$ for $X$ and $Y$ indecomposable is at most one. 
\item[(c)] The quiver of irreducible maps has two $\ZZ \AA_{\infty}$-components and two equioriented $\mathbb{A}_{\infty}^\infty$-components.
\item[(d)] There is a full embedding $\Delta_{R}(C) \subseteq \Delta_{R}(A)$. Moreover, the indecomposable objects in the image are the shifts of the indecomposable projective $A$-module $P_{-}$ and the shifts of the minimal strings $\mathcal{S}_{-}(2l)$, with $l \in \mathbb{N}$. 
\end{itemize}
\end{thm}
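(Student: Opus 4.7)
The plan is to exploit the fact that $A$ is a (complete) gentle algebra and use the classification of indecomposable objects of $K^b(\proj A)$ via finite strings, following Bekkert--Merklen. Since $A$ has finite global dimension, $\cd^b(\mod A) \simeq K^b(\proj A)$, and the relative singularity category is the Verdier quotient $K^b(\proj A)/\thick(P_\ast)$, where $P_\ast$ corresponds to the summand $R$. The strategy is thus to identify, inside this quotient, a canonical set of representatives of the indecomposable objects.

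For part~(a), I would first note that the relations $\delta\alpha = 0$ and $\beta\gamma = 0$ force a strong alternation: the only minimal non-trivial paths from $\ast$ to $\ast$ compatible with composition zero are $\alpha\beta$ and $\gamma\delta$, and these must themselves alternate along any minimal string. Starting from an arbitrary indecomposable in $K^b(\proj A)$, I would perform mapping-cone reductions against complexes supported on $P_\ast$ (which vanish in the quotient) in order to strip away both non-minimal differentials and unnecessary terminal copies of $P_\ast$. What remains is either a shift of $P_\pm$, a pure $P_\ast$-complex (zero in $\Delta_R(A)$), or a minimal string $\mathcal{S}_\tau(l)$ as defined. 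Pairwise non-isomorphism in $\Delta_R(A)$ would then be extracted from the localisation sequence of property~(d): the image of $\mathcal{S}_\tau(l)$ in $\cd_{sg}(R) \simeq \ul{\MCM}(R)$, which is semisimple on the two MCMs $k\llbracket x\rrbracket$ and $k\llbracket y\rrbracket$, detects the endpoints of the string, while the cohomological position inside the intrinsically described subcategory $\cd^b_{\ul{A}}(\mod A)$ detects $l$.

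For parts~(b) and~(c), morphisms in $K^b(\proj A)$ between string complexes admit the usual basis of \emph{graph maps}, i.e.\ partial identifications of overlapping substrings; by the gentle/alternation structure two minimal strings can overlap in at most one way, and after quotienting out those maps that factor through $\thick(P_\ast)$ this yields $\dim\Hom(X,Y)\leq 1$ for indecomposable $X,Y$. The AR components are then read off from the Serre functor on $\cd^b_{\ul{A}}(\mod A)$ described in property~(e): its action shifts the parameter $l$ and the cohomological degree in a coupled way, producing the two $\ZZ\AA_\infty$ components (one for each value of the ``final'' endpoint $\tau \in \{-,+\}$) corresponding to the two families of minimal strings, while the orbits $\{P_\pm[n]\}_{n \in \ZZ}$ of shifted indecomposable projectives form the two equioriented $\AA_\infty^\infty$ components.

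For part~(d), the idempotent $e = e_- + e_\ast \in A$ yields $eAe = C$ and a standard pair of adjoint functors; the associated recollement restricts to a fully faithful functor
\[
-\lten_C eA \colon \cd^b(\mod C) \longrightarrow \cd^b(\mod A)
\]
sending the $C$-projective at $\ast$ to $P_\ast^A$, which therefore descends to a fully faithful functor $\Delta_R(C) \hookrightarrow \Delta_R(A)$. Its essential image is spanned by those indecomposable complexes in the classification of~(a) that avoid the vertex $+$, and the alternation analysis from~(a) forces both endpoints of such a string to be $-$, which happens precisely when $l$ is even; combined with the shifts of $P_-$ this gives exactly the list in~(d). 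The main technical obstacle throughout is the reduction step in~(a): one must check rigorously that the described minimal strings are both an \emph{exhaustive} set of representatives and \emph{pairwise non-isomorphic} in the quotient, and this is where the gentle string calculus together with the localisation sequence of property~(d) does most of the work.
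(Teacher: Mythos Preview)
Your reduction strategy for (a)---strip an arbitrary string down to a minimal one via triangles whose cones are supported on $P_\ast$---is exactly what the paper does. But two of your subsequent steps contain real gaps.

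First, your non-isomorphism argument cannot work as stated: every minimal string $\mathcal{S}_\tau(l)$ lies in $\thick(S_+,S_-)$ (Remark~\ref{R:Tria}), which is precisely the \emph{kernel} of the quotient $\Delta_R(A)\to\cd_{sg}(R)$, so all minimal strings map to zero there and the image detects neither their endpoints nor their length. The paper's substitute is the key Lemma~\ref{L:MinS}: each $\mathcal{S}_\tau(l)$ lies in the two-sided orthogonal $^\perp K^b(\add P_\ast)\cap K^b(\add P_\ast)^\perp$, so by Verdier's Lemma~\ref{L:Verdier2} the localisation functor is fully faithful on $\thick(S_+,S_-)$, reducing non-isomorphism of minimal strings to $K^b(\proj A)$. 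Distinguishing the $P_\pm[n]$ from minimal strings then uses the Grothendieck group (Proposition~\ref{P:GrothendieckGroup}), and distinguishing $P_\sigma[n]$ from $P_\tau[m]$ uses the already-computed Hom into a fixed minimal string.

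Second, your description of Hom-spaces in (b)---take the graph-map basis in $K^b$ and kill those maps factoring through $\thick(P_\ast)$---misreads Verdier localisation, which also \emph{creates} morphisms via roofs. Concretely $\Hom_{K^b}(P_\mu,P_\tau[n])=0$ for $n<0$, yet $\Hom_{\Delta_R(A)}(P_\mu,P_\tau[n])\cong k$ for every $n\le 0$ of the correct parity (formula~\eqref{E:HomProjProj}); these morphisms are genuine roofs through auxiliary string complexes, and the paper obtains them by an explicit multi-step reduction of an arbitrary roof to a canonical representative. Your graph-map approach \emph{does} work whenever at least one of $X,Y$ is a minimal string (again via Lemma~\ref{L:Verdier2}), and the paper uses exactly that shortcut there. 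For (c), note finally that $\Delta_R(A)$ itself has no Serre functor---only the subcategory $\thick(S_+,S_-)$ does---so the two $A_\infty^\infty$ components of the $P_\pm[n]$ are read off directly from the irreducible-map computation, not from any Auslander--Reiten theory.
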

\begin{rem}
In combination with Theorem \ref{t:Classical-versus-generalized-singularity-categories-intro}, we get descriptions of the relative singularity categories $\Delta_{S}(A)$, where $S=k\llbracket z_{0}, \ldots, z_{d} \rrbracket/(z_{0}^2+z_{1}z_{2}+z_{2}z_{3}+\ldots+z_{d-1}z_{d}) $ is an odd-dimensional $\mathbb{A}_{1}$-singularity and $A$ is the Auslander algebra of $\MCM(S)$. The $0$-dimensional case $k[x]/(x^2)$ may be treated with the same techniques (see Proposition \ref{P:ZeroAone}). In particular, we obtain descriptions of the relative Auslander singularity categories of all even dimensional $\mathbb{A}_{1}$-singularities as well.
\end{rem}
Using a tilting result of Burban \& Drozd \cite{Tilting}, we apply this result to describe triangulated quotient categories arising from certain gentle algebras. 
More precisely, we assume that $E=E_{n}$ is a cycle of $n$ projective lines intersecting transversally. This is also known as \emph{Kodaira cycle} of projective lines. Let $\mathcal{I}$ be the ideal sheaf of the singular locus of $E$ and $\ca={\mathcal End}_{E}(\co_{E}\oplus \ci)$ be the Auslander sheaf. The bounded derived category $\cd^b(\Coh(\ca))$ of coherent $\ca$-modules has a tilting complex, with gentle endomorphism algebra $\Lambda_{n}$ \cite{Tilting}. For example, if $n=1$, then $\Lambda_{1}$ is given by the following quiver with relations
\begin{align}
\begin{array}{c}
\begin{xy}
\SelectTips{cm}{}
\xymatrix{
\circ \ar@/^/[rr]^{a} \ar@/_/[rr]_{c} & & \circ \ar@/^/[rr]^{b} \ar@/_/[rr]_{d} & & \circ
}
\end{xy} \qquad \quad ba=0=dc.
\end{array}
\end{align}  
Let $\tau=\nu [-1]$ be the Auslander--Reiten translation of $\cd^b(\Lambda_{n}-\mod)$, then the full subcategory of \emph{band} objects
\begin{align}
\Band(\Lambda_{n}):=\left\{X \in \cd^b(\Lambda_{n}-\mod) \big| \tau(X) \cong X\right\} \subseteq \cd^b\left(\Lambda_{n}-\mod\right)
\end{align}
is triangulated, see \cite{Tilting}. Now tilting and the localization result (Theorem \ref{t:main-global-intro}) yield the following description of the corresponding Verdier quotient category 
\begin{align}
\left(\frac{\cd^b(\Lambda_{n}-\mod)}{\Band(\Lambda_{n})}\right)^\omega \cong \bigoplus_{i=1}^n \Delta_{R}(A)
\end{align}
where $R=k\llbracket x, y\rrbracket/(xy)$ and $A$ is the Auslander algebra of $\MCM(R)$.

\subsection{Frobenius categories and Gorenstein rings for rational surface singularities}
The techniques developed in the second part inspired the following `purely commutative' application.

Let $(R, \mathfrak{m})$ be a complete local \emph{rational surface singularity} over an algebraically closed field of characteristic zero. By definition, a singularity is rational if \[H^1(X, \co_{X})=0\] for a resolution of singularities $X \ra \mathsf{Spec}(R)$. Rational surface singularities are Cohen--Macaulay and the class of maximal Cohen--Macaulay modules coincides with the class of \emph{reflexive} modules. Note, that we do \emph{not} assume that $R$ is a Gorenstein ring. Building on work of Cartan \cite{Cartan}, Brieskorn \cite{Brieskorn} has shown that quotient singularities $\C^2/G$ are rational. Here, we can assume without restriction that $G \subseteq \GL_{2}(\C)$ is a \emph{small} subgroup, i.e. $G$ does not contain \emph{pseudo-reflections}. 

Let $\pi\colon Y \ra \mathsf{Spec}(R)$ be the minimal resolution of $\mathsf{Spec}(R)$ and $\{E_{i}\}_{i \in I}$ be the \emph{finite} set of irreducible components of the exceptional curve $E=\pi^{-1}(\mathfrak{m})$. In order to generalize the classical McKay--Correspondence to finite subgroups $G \subseteq \GL(2, \C)$, Wunram \cite{Wunram88} introduced the notion of \emph{special} maximal Cohen--Macaulay modules (SCM)\footnote{A maximal Cohen--Macaulay $R$-module $M$ is \emph{special} if $\Ext^1_{R}(M, R)=0$.}. He proved that the indecomposable non-free SCMs are in natural bijection with the set of irreducible exceptional curves $\{E_{i}\}_{i \in I}$. If $G \subseteq \SL(2, \C)$, then all maximal Cohen--Macaulay modules are special and he recovers the classical McKay--Correspondence.

If $R$ is not Gorenstein, then the exact category of maximal Cohen--Macaulay $R$-modules is \emph{not} Frobenius. In particular, the corresponding stable category is not triangulated and therefore there cannot be a triangle equivalence as in \eqref{E:Buchweitz}. Moreover, Iyama \& Wemyss \cite{IWnewtria} point out that the singularity category $\cd_{sg}(R)$ does not have the Krull--Remak--Schmidt property. 

These problems in the non-Gorenstein situation motivated their study \cite{IWnewtria} of the exact category of special Cohen--Macaulay $R$-modules $\SCM(R)$. As it turns out, $\SCM(R)$ is a Frobenius category and they describe the projective objects of this category in terms of the geometry of the exceptional divisor. In this way, they associate a Krull--Remak--Schmidt  triangulated category, namely the stable category $\ul{\ul{\SCM}}(R)$\footnote{This notation is used to distinguish this quotient from the subcategory $\ul{\SCM}(R) \subseteq \ul{\MCM}(R)$ obtained by only factoring out the projective $R$-modules. }  of this Frobenius category, to any complete rational surface singularity.    

In joint work with Osamu Iyama, Michael Wemyss and Dong Yang \cite{IKWY12}, we gave a description of this triangulated category in terms of finite products of stable categories of ADE-singularities, which explains an observation in \cite{IWnewtria}. Let $X$ be the space obtained from $Y$ by contracting all the exceptional $(-2)$-curves. $X$ has only isolated singularities and these singularities of $X$ are known to be of ADE-type, by work of Artin \cite{Artin66}, see also Proposition \ref{P:RationalDoublePoint}. Accordingly, $X$ is called the \emph{rational double point resolution} of $\Spec(R)$. The minimal resolution $\pi$ factors over $X$:
\begin{align}
Y \xrightarrow{f} X \xrightarrow{g} \Spec(R). 
\end{align}
The following result (see Theorem \ref{C:StandardStableSCM}) is a consequence of our general Frobenius category (Theorem \ref{T:MainIKWY}) and tilting (Theorem \ref{T:Wemyss}) results below.

\begin{thm}\label{C:StandardStableSCM-intro}
Let $R$ be a complete rational surface singularity with rational double point resolution $X$. Denote the singularities of $X$ by $x_{1}, \ldots, x_{n}$. 
Then there are equivalences of triangulated categories
\begin{align}\label{E:SCMdecomposed}
\ul{\ul{\SCM}}(R) \cong \cd_{sg}(X) \cong \bigoplus_{i=1}^n \ul{\MCM}\bigl(\widehat{\co}_{x_{i}}\bigr).
\end{align}  
\end{thm}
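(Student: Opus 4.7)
The plan is to establish the two equivalences in \eqref{E:SCMdecomposed} separately. The right-hand equivalence $\cd_{sg}(X) \cong \bigoplus_{i=1}^n \ul{\MCM}(\widehat{\co}_{x_{i}})$ is formal and I would treat it first; the more substantive left-hand equivalence $\ul{\ul{\SCM}}(R) \cong \cd_{sg}(X)$ should then follow by combining the tilting result (Theorem \ref{T:Wemyss}) with the Frobenius category result (Theorem \ref{T:MainIKWY}).

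For the right-hand equivalence I would apply Orlov's localization theorem \eqref{E:OrlovLoc} to $X$. By Artin's result (Proposition \ref{P:RationalDoublePoint}), every singularity $x_i$ of $X$ is a rational double point, hence an ADE hypersurface surface singularity of finite Cohen--Macaulay representation type. Via Buchweitz's equivalence \eqref{E:Buchweitz}, each $\cd_{sg}(\widehat{\co}_{x_{i}}) \cong \ul{\MCM}(\widehat{\co}_{x_{i}})$ is Hom-finite and Krull--Schmidt, hence idempotent complete, and so is the finite direct sum. Consequently \eqref{E:OrlovLoc} identifies $(\cd_{sg}(X))^{\omega}$ with $\bigoplus_{i=1}^n \ul{\MCM}(\widehat{\co}_{x_{i}})$, and I would verify that in this geometric setup $\cd_{sg}(X)$ is itself already idempotent complete (so that the completion symbol may be dropped), most naturally by showing that the images of the idempotents in the right-hand sum lift back across the localization functor.

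For the left-hand equivalence, the strategy is to realize both sides as the same Verdier quotient of $\cd^b(\mod\Lambda)$ for a common algebra $\Lambda$. Theorem \ref{T:Wemyss} should provide a tilting object relating $X$ to an algebra of the form $\Lambda = \End_R(R \oplus M)$, where $M$ is the direct sum of the indecomposable special Cohen--Macaulay $R$-modules attached to the exceptional curves (via Wunram's correspondence). Under the resulting derived equivalence, the subcategory $\Perf(X) \subseteq \cd^b(\Coh X)$ should correspond to the thick subcategory $\thick(e\Lambda)$ generated by the summand corresponding to $R$, giving $\cd_{sg}(X) \cong \cd^b(\mod\Lambda)/\thick(e\Lambda)$. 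Independently, Theorem \ref{T:MainIKWY} should apply to the Frobenius category $\SCM(R)$ and the description of its projective-injective objects from \cite{IWnewtria} to produce an equivalence $\ul{\ul{\SCM}}(R) \cong \cd^b(\mod\Lambda)/\thick(e\Lambda)$, matching the two descriptions.

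The principal obstacle is the compatibility of the two subcategories being collapsed: I expect to spend effort verifying that, under the tilting equivalence of Theorem \ref{T:Wemyss}, the image of $\Perf(X)$ coincides exactly with the thick subcategory that Theorem \ref{T:MainIKWY} identifies as the projective-injective contribution in $\SCM(R)$. Geometrically this amounts to tracking the $(-2)$-curves contracted by $f\colon Y \to X$ through Wunram's bijection between irreducible exceptional curves and indecomposable non-free SCMs, and confirming that the SCMs which become Frobenius-projective are precisely those corresponding to the summand $e\Lambda \subseteq \Lambda$ on the tilting side. Once this bookkeeping is settled, composing the three equivalences yields the theorem.
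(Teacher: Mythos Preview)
Your architecture is right, but the left-hand equivalence is mis-routed in two places. First, the idempotent: the projective-injective objects of $\SCM(R)$ are not just $\add R$ but $\add P$ with $P = R \oplus \bigoplus_{i \in \cd} M_i$, the sum of $R$ with the SCMs attached to the non-$(-2)$-curves (this is precisely the content of the Iyama--Wemyss description you cite). So the idempotent $e$ in Theorem~\ref{T:Wemyss} corresponds to all of $P$, not to $R$ alone, and $e\Lambda e = \End_R(P)$. Second, the quotient: the tilting in Theorem~\ref{T:Wemyss} gives $\cd^b(\Coh X) \cong \cd^b(\mod\, e\Lambda e)$ (it is $Y$, not $X$, that is derived equivalent to the full reconstruction algebra $\Lambda$), and under this equivalence $\Perf(X)$ corresponds to $K^b(\proj\, e\Lambda e)$ automatically, since both admit intrinsic characterisations. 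Hence $\cd_{sg}(X) \cong \cd_{sg}(e\Lambda e)$ directly. The quotient $\cd^b(\mod\Lambda)/\thick(e\Lambda)$ you wrote down is the \emph{relative} singularity category, which is strictly larger: it surjects onto $\cd_{sg}(e\Lambda e)$ with non-trivial kernel $\thick(\mod\, \Lambda/\Lambda e\Lambda)$.

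With these corrections the two halves match immediately: Theorem~\ref{T:MainIKWY} applied to $\ce = \SCM(R)$ yields $\ul{\ul{\SCM}}(R) \cong \underline{\GP}(e\Lambda e) \cong \cd_{sg}(e\Lambda e)$ via Buchweitz, and your ``principal obstacle'' evaporates---there is no subcategory-matching bookkeeping to do, because both sides are already singularity categories of the same ring. For the right-hand equivalence, rather than arguing directly that $\cd_{sg}(X)$ is idempotent complete, the paper establishes the left-hand equivalence first and inherits idempotent completeness from $\ul{\ul{\SCM}}(R)$ (which is Krull--Schmidt since $R$ is complete local); Orlov's decomposition \eqref{E:OrlovLoc} then applies without the completion symbol.
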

The second equivalence is a consequence of Orlov's localization result \eqref{E:OrlovLoc} and Buchweitz' equivalence  \eqref{E:Buchweitz} - note that $\ul{\ul{\SCM}}(R)$ is idempotent complete since $R$ is a complete local ring. The first equivalence follows from a combination of geometric and algebraic results, which are of independent interest. Before stating them, we note the following consequence of the main result.

\begin{cor}
$\ul{\ul{\SCM}}(R)$ is a $1$-Calabi--Yau category and $[2] \cong \id$.
\end{cor}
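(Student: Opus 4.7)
The plan is to deduce both properties directly from the block decomposition
\begin{align*}
\ul{\ul{\SCM}}(R) \cong \bigoplus_{i=1}^n \ul{\MCM}\bigl(\widehat{\co}_{x_{i}}\bigr)
\end{align*}
established in Theorem \ref{C:StandardStableSCM-intro}. Since both properties (being $1$-Calabi--Yau, and $[2]\cong \id$) are preserved under triangle equivalences and under forming finite direct sums of triangulated categories, it suffices to verify them for each summand $\ul{\MCM}(\widehat{\co}_{x_{i}})$ separately.

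First I would invoke the fact, recalled in Proposition~\ref{P:RationalDoublePoint} (via Artin's theorem), that every singularity $x_{i}$ of the rational double point resolution $X$ is an ADE-surface singularity, i.e.\ a two-dimensional isolated hypersurface singularity of the form $\widehat{\co}_{x_{i}} \cong \C\llbracket x,y,z\rrbracket/(f_{i})$ for some polynomial $f_{i}$ of ADE-type. In particular, each $\widehat{\co}_{x_{i}}$ is a complete local Gorenstein isolated singularity of Krull dimension $d=2$.

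Next, for the Calabi--Yau property, I would appeal to the classical result of Auslander (together with Buchweitz's equivalence~\eqref{E:Buchweitz}) that for a complete local Gorenstein isolated singularity of Krull dimension $d$, the stable category $\ul{\MCM}(\widehat{\co}_{x_{i}})$ is a $(d-1)$-Calabi--Yau triangulated category. With $d=2$, each summand is $1$-Calabi--Yau, hence so is the direct sum $\ul{\ul{\SCM}}(R)$. For the identity $[2]\cong\id$, I would invoke Eisenbud's matrix factorization theorem: for a hypersurface singularity $S=T/(f)$ with $T$ regular, every maximal Cohen--Macaulay $S$-module is described by a $2$-periodic matrix factorization, equivalently the syzygy functor satisfies $\Omega^{2}\cong \id$ on $\ul{\MCM}(S)$. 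Under Buchweitz's equivalence $\ul{\MCM}(S)\cong \cd_{sg}(S)$ this translates into $[2]\cong \id$ on the triangulated category.

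The only slightly delicate point is bookkeeping: one must check that the triangle equivalence of Theorem~\ref{C:StandardStableSCM-intro} intertwines the shift functors on both sides, so that the relation $[2]\cong \id$ transports from the right-hand side to $\ul{\ul{\SCM}}(R)$; this is automatic since triangle equivalences commute with suspension by definition. I do not expect any genuine obstacle here: the entire statement is a formal consequence of the structural theorem combined with two classical facts (Auslander's Calabi--Yau property and Eisenbud's $2$-periodicity) applied to each ADE summand.
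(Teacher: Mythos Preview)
Your proposal is correct and follows essentially the same approach as the paper: the paper deduces both properties from the block decomposition of Theorem~\ref{C:StandardStableSCM-intro}, citing Auslander~\cite{Auslander76} for the $1$-Calabi--Yau property of $\ul{\MCM}$ over ADE-surface singularities and Eisenbud~\cite{Eisenbud80} for $[2]\cong\id$ via matrix factorizations. Your additional remarks on transport along triangle equivalences and direct sums are sound but more detailed than what the paper records.
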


\subsubsection{Ingredients of the proof of Theorem \ref{C:StandardStableSCM-intro}}
Every \emph{algebraic} triangulated category, i.e.~every triangulated category arising in algebraic geometry or representation theory, may be expressed as the stable category of some Frobenius category. We study a special class of Frobenius categories, which contains the category of special Cohen--Macaulay modules over rational surface singularities and many other  categories coming from representation theory. 

A two-sided Noetherian ring $\Lambda$ satisfying $\injdim _{\Lambda}\!\Lambda < \infty$ and 
$\injdim \Lambda_{\Lambda} < \infty$ is called Iwanaga--Gorenstein. It is well-known that the category 
\begin{align}\label{E:GPIntro}
\GP(\Lambda):=\{X\in\mod-\Lambda\mid \Ext^i_\Lambda(X,\Lambda)=0\mbox{ for any }i>0\}.
\end{align}
of \emph{Gorenstein projective} $\Lambda$-modules\footnote{If $\Lambda$ is also commutative, then the notions of Gorenstein projective and maximal Cohen--Macaulay $\Lambda$-modules coincide.} is a Frobenius category, see e.g. Proposition \ref{P:IwanagaFrobenius}. The following theorem (see Theorem \ref{t:main-thm} and also the alternative approach to \eqref{E:IntroStableEq} and part (3) in Theorem \ref{t:alternative-main}) gives a sufficient criterion for a Frobenius category to be of this form.

\begin{thm}\label{T:MainIKWY}
Let $\ce$ be a Frobenius category and assume that there exists $P \in \proj \ce$ such that $\proj \ce \cong \add P$.  Let $E=\End_{\ce}(P)$. If there is an object $M \in \ce$, such that $\End_{\ce}(P \oplus M)$ is Noetherian and has global dimension $n<\infty$, then the following statements hold. 
\begin{itemize}
\item[(1)] $E=\End_\ce(P)$ is an Iwanaga--Gorenstein ring of dimension at most $n$.\\
\item[(2)] We have an equivalence 
\begin{align}\label{E:IntroExEq}
\Hom_{\ce}(P,-)\colon\ce\to\GP(E)
\end{align}
 up to direct summands. It is an equivalence if $\ce$ is idempotent complete. This induces a triangle equivalence 
\begin{align}\label{E:IntroStableEq}
\underline{\ce}\xrightarrow{\sim}\underline{\GP}(E) \, \, \bigl(\cong \cd_{sg}(E)\bigr).
\end{align}
up to direct summands. It is an equivalence if $\ce$ or $\underline{\ce}$ is idempotent complete.\\
\item[(3)] $\ul{\ce}=\thick_{\ul{\ce}}(M)$.
\end{itemize}
\end{thm}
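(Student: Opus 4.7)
The plan is to analyze the Yoneda-type functor $F := \Hom_\ce(P,-)\colon \ce \to \mod E$, using the presentation $E \cong e\Lambda e$, where $\Lambda := \End_\ce(P \oplus M)$ and $e \in \Lambda$ is the idempotent projecting onto the summand $P$. Because $P$ is projective-injective, $F$ is exact on conflations and sends $\add P$ to $\add E$; because $\proj \ce = \add P$ is both a generator and cogenerator (by the Frobenius property), a standard argument---resolve the source and target by conflations with outer terms in $\add P$ and apply exactness of $F$---shows that $F$ is fully faithful and identifies $\add(P \oplus M)$ with $\add(E \oplus M')$, where $M' := F(M)$. In particular this gives a ring isomorphism $\Lambda \cong \End_E(E \oplus M')$.

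\textbf{Part (1) and image in $\GP(E)$.} To prove (1), I would leverage $\gldim \Lambda = n$ and the corner-ring presentation $E = e\Lambda e$ together with the Frobenius structure of $\ce$. For any $X \in \ce$, the $\Lambda$-module $\Hom_\ce(P \oplus M, X)$ admits a projective resolution of length $\le n$ by modules of the form $\Hom_\ce(P \oplus M, T_i)$ with $T_i \in \add(P \oplus M)$; multiplication by the idempotent $e$ is exact and produces an exact sequence
\[
0 \to F(T_n) \to \cdots \to F(T_0) \to F(X) \to 0
\]
in $\mod E$. Running the symmetric argument on the opposite side (using that the opposite Frobenius category has the same projective-injective $P$ and endomorphism algebra $\Lambda^{op}$), and using the Frobenius automorphism to convert the resulting $\add(E \oplus M')$-resolutions into injective coresolutions of $E$, yields $\injdim_E E, \injdim E_E \le n$, which is (1). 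For the first half of (2), I use the Frobenius structure on $\ce$ to produce a complete projective-injective resolution $\cdots \to P_1 \to P_0 \to P^0 \to P^1 \to \cdots$ of $X$ with all terms in $\add P$; applying the exact functor $F$ gives a totally acyclic complex in $\proj E$ with $F(X)$ as the zeroth cocycle, so $F(X) \in \GP(E)$.

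\textbf{Rest of (2), part (3), main obstacle.} For essential surjectivity onto $\GP(E)$ up to summands, any $Y \in \GP(E)$ admits a projective presentation $E^m \twoheadrightarrow Y$ whose kernel is again in $\GP(E)$; using full faithfulness and the equivalence $\add P \cong \add E$, this lifts to a conflation in $\ce$ whose cokernel $X$ satisfies $F(X) \cong Y$ after possibly splitting off a summand, and the idempotent-completeness hypothesis on $\ce$ (or on $\ul{\ce}$) then upgrades this to an honest equivalence. Passing to stable categories yields $\ul{\ce} \cong \ul{\GP}(E) \cong \cd_{sg}(E)$ by Buchweitz' theorem \eqref{E:Buchweitz} applied to the Iwanaga-Gorenstein ring $E$. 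For (3), observe that in $\ul{\ce}$ the $\add P$-summands of each $T_i$ vanish, so the resolution above presents $X$ as an iterated extension of shifts of $M$, proving $\ul{\ce} = \thick_{\ul{\ce}}(M)$. The main technical obstacle I anticipate is establishing the Iwanaga-Gorenstein bound symmetrically on both sides: the one-sided bound comes easily from the $\Lambda$-resolution argument, but matching it on the other side requires careful bookkeeping of the opposite Frobenius category together with the Frobenius involution swapping inflations and deflations---this, together with the delicate distinction between equivalence and equivalence-up-to-summands in (2), is where the real weight of the theorem sits.
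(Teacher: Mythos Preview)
Your overall architecture matches the paper's, but two steps have real gaps.

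\textbf{Gorenstein bound (1).} Your resolution argument only applies to $E$-modules of the form $F(X)$ with $X\in\ce$; to bound $\injdim_E E$ you must show $\Ext^{n+1}_E(Y,E)=0$ for \emph{every} $Y\in\mod E$. The paper's step~(ii) does this by tensoring an arbitrary $Y$ up to the $\Lambda$-module $Y\otimes_E\Hom_\ce(P\oplus M,P)$, resolving projectively over $\Lambda$ (length $\le n$ since $\gldim\Lambda\le n$), and multiplying by the idempotent $e$: this produces an $\add F(P\oplus M)$-resolution of $Y$ of length $\le n$. Combined with the vanishing $\Ext^i_E(F(X),E)=0$ for all $X\in\ce$---which your totally-acyclic-complex argument does establish (full faithfulness identifies $\Hom_E(F(P^\bullet),E)$ with $\Hom_\ce(P^\bullet,P)$, exact because $P$ is injective), though you place it after rather than inside the proof of (1)---a dimension-shift gives the bound. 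There is no ``Frobenius automorphism'' available here; the paper simply runs the dual argument to handle $E^{\rm op}$.

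\textbf{Essential surjectivity in (2).} Lifting a presentation $E^l\to E^m\to Y\to 0$ to a map $f\colon P^l\to P^m$ in $\ce$ is fine, but in an exact category $f$ need not be an inflation (or deflation), so you cannot simply ``take its cokernel'' to produce an object of $\ce$. The paper addresses exactly this obstruction: it uses the full length-$n$ resolution of $Y$ from step~(ii), lifts it to a complex in $\add(P\oplus M)$ (after enlarging terms to absorb summands), and then applies Lemma~\ref{technical lemma} iteratively---this lemma says that if $\Hom_\ce(f,P)$ is surjective then $f$ can be upgraded to an inflation $X\to Y\oplus P'$ by adding a projective summand. The required surjectivity is checked using $\Ext^i_E(Y,E)=0$. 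This inductive construction is where the actual work sits and where the ``up to summands'' caveat originates; your one-step lift does not go through as written.
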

Let $R$ be a rational surface singularity as above and let $M=R \oplus \bigoplus_{i \in I} M_{i}$ be the sum of all indecomposable SCM $R$-modules. Then $\Lambda=\End_{R}(M)$ is called the \emph{reconstruction algebra} of $R$, see \cite{Wemyss10}.
The relationship between Theorem \ref{T:MainIKWY} and the geometry is given by a tilting result of Wemyss (see Theorem \ref{main Db}), which is based on work of Van den Bergh \cite{VandenBergh04}. 
\begin{thm}\label{T:Wemyss}
Let $P$ be an additive generator of $\proj \SCM(R)$ and let $e \in \Lambda$ be the corresponding idempotent endomorphism. In particular, $e\Lambda e=\End_R(P)$. Then there are tilting bundles $\cv_{Y}$ on the minimal resolution $Y$ and $\cv_{X}$ on the rational double point resolution $X$ such that the following diagram commutes
\[
\begin{array}{c}
{\SelectTips{cm}{10}
\xy0;/r.4pc/:
(-10,20)*+{\cd^b(\mod-\Lambda)}="A2",(20,20)*+{\cd^b(\Coh Y)}="A3",
(-10,10)*+{\cd^b(\mod-e\Lambda e)}="a2",(20,10)*+{\cd^b(\Coh X)}="a3",
\ar"A3";"A2"_{\RHom_Y(\cv_Y,-)}^{\sim}
\ar"A2";"a2"_{(-)e}
\ar"a3";"a2"_{\RHom_{X}(\cv_X,-)}^{\sim}
\ar"A3";"a3"^{\Rf}
\endxy}
\end{array}
\]
\end{thm}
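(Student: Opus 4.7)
The plan is to construct $\cv_Y$ first via the Van den Bergh--Wunram--Wemyss machinery, then obtain $\cv_X$ by pushing forward along $f$, and finally verify the commutativity of the square by a single adjunction computation.

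First I would produce $\cv_Y$. By Wunram's generalized McKay correspondence, each indecomposable non-free SCM $M_i$ corresponds to an indecomposable ``full'' locally free sheaf $\cm_i$ on the minimal resolution $Y$ satisfying $\pi_*\cm_i\cong M_i$ and $R^1\pi_*\cm_i=0$. Setting $\cv_Y:=\co_Y\oplus\bigoplus_{i\in I}\cm_i$, one checks that (a) $\cv_Y$ is a classical generator of $\cd^b(\Coh Y)$ (this uses the fact that the $\cm_i$ separate the exceptional components), (b) $\Ext^{>0}_Y(\cv_Y,\cv_Y)=0$ (this is where rationality of $R$ enters, together with $R^1\pi_*\cm_i=0$ and the speciality condition $\Ext^1_R(M_i,R)=0$), and (c) $\End_Y(\cv_Y)\cong\End_R(\pi_*\cv_Y)\cong\End_R(M)=\Lambda$ via the Leray spectral sequence. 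This yields the top horizontal equivalence $\RHom_Y(\cv_Y,-)$.

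Next I would define $\cv_X:=\Rf(\cv_Y e)$ and prove both that it is a tilting bundle on $X$ and that $\End_X(\cv_X)\cong e\Lambda e$. The main point is that, by the explicit description of $\proj\SCM(R)$ in terms of the dual graph, the summands of $\cv_Y$ selected by $e$ are precisely $\co_Y$ together with those $\cm_i$ whose corresponding exceptional curve $E_i$ is not a $(-2)$-curve, i.e.\ is not contracted by $f$. Therefore the adjunction unit $\Lf\Rf(\cv_Y e)\to\cv_Y e$ is a quasi-isomorphism and the higher direct images $R^{>0}f_*(\cv_Y e)$ vanish, so $\cv_X$ is actually a locally free sheaf with $\Lf\cv_X\cong\cv_Y e$. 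Applying $(\Lf,\Rf)$-adjunction gives
\begin{align*}
\Ext^k_X(\cv_X,\cv_X)\cong\Ext^k_Y(\Lf\cv_X,\cv_Y e)\cong e\,\Ext^k_Y(\cv_Y,\cv_Y)\,e,
\end{align*}
which vanishes for $k>0$ and equals $e\Lambda e$ for $k=0$. Generation of $\cd^b(\Coh X)$ by $\cv_X$ follows because $\Rf$ is essentially surjective (it has a right inverse on the relevant subcategory given by $\Lf$ modulo the summands killed by contraction) and $\cv_Y e$ generates the pullback subcategory.

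Finally, commutativity of the square is a one-line computation from adjunction: for $\cf\in\cd^b(\Coh Y)$,
\begin{align*}
\RHom_X(\cv_X,\Rf\cf)\cong\RHom_Y(\Lf\cv_X,\cf)\cong\RHom_Y(\cv_Y e,\cf)\cong\RHom_Y(\cv_Y,\cf)\,e,
\end{align*}
and the right-hand side is exactly the image of $\RHom_Y(\cv_Y,\cf)$ under the functor $(-)e\colon \cd^b(\mod{-}\Lambda)\to\cd^b(\mod{-}e\Lambda e)$, as required.

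I expect the main technical obstacle to be step two: namely, verifying that the summands of $\cv_Y$ selected by $e$ are intrinsically characterised as those sheaves which are pulled back from $X$, so that $\Lf\Rf(\cv_Y e)\simeq\cv_Y e$ with no higher derived contributions. This rests on the precise identification of $\proj\SCM(R)$ with the non-$(-2)$-curves (due to Iyama--Wemyss) and on a vanishing statement for $R^1f_*$ applied to the relevant full sheaves, both of which are specific to the rational surface setting and must be extracted from the geometry of the dual graph; once this is in hand, the rest of the argument is formal.
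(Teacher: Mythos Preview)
Your overall strategy is sound and the commutativity computation at the end is exactly what the paper does. However, your route to $\cv_X$ differs from the paper's, and this creates extra work at the points you flagged.

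The paper does not define $\cv_X$ as $\Rf(\cv_Ye)$. Instead it applies Van den Bergh's tilting theorem a \emph{second time}, now to the morphism $g\colon X\to\Spec R$ (after checking $\mathbf{R}g_*\co_X\cong\co_{\Spec R}$ via the Grothendieck spectral sequence for $\pi=g\circ f$). This hands you a tilting bundle $\cv_X=\co_X\oplus\bigoplus_{i\notin\cs}\cm_i^X$ on $X$ for free, with generation and Ext-vanishing already built in. The remaining work is to identify $f^*\cv_X$ with the summand $\cv_Ye$: one first checks $f^*\cl_i^X\cong\cl_i^Y$ for $i\notin\cs$ (since $f$ is an isomorphism near these divisors), and then shows that pulling back the maximal extension defining $\cm_i^X$ yields the maximal extension defining $\cm_i^Y$, using $\Ext^1_Y(f^*\cm_i^X,\co_Y)\cong\Ext^1_X(\cm_i^X,\Rf\co_Y)=\Ext^1_X(\cm_i^X,\co_X)=0$. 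The endomorphism ring is then computed via the projection formula and Esnault's equivalence.

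Your approach runs in the opposite direction: you push $\cv_Ye$ down and must then verify that $\cv_X$ is tilting. The Ext-vanishing follows from adjunction once you know $\Lf\cv_X\simeq\cv_Ye$, but proving that isomorphism is essentially the same pullback computation the paper does (your ``therefore the adjunction unit is a quasi-isomorphism'' is not automatic from the combinatorics of which curves are contracted; the $\cm_i$ are genuine extensions, not line bundles, and one must argue that these extensions descend). More seriously, your generation argument for $\cv_X$ is thin: ``$\Rf$ is essentially surjective'' does not by itself give that $\cv_X$ generates $\cd^b(\Coh X)$, and since $X$ is singular you cannot fall back on finite global dimension of $e\Lambda e$. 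The paper sidesteps this entirely by getting generation from Van den Bergh's theorem applied to $g$. I would recommend restructuring step two to follow the paper's pull-back direction; the commutativity check you wrote then goes through verbatim.
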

In particular, the reconstruction algebra has finite global dimension, since $Y$ is a \emph{smooth} scheme. Now, we can explain the first triangle equivalence in Theorem \ref{C:StandardStableSCM-intro}. Since $\SCM(R)$ is idempotent complete and $\Lambda$ is Noetherian and of finite global dimension, Theorem \ref{T:MainIKWY} and Buchweitz' \eqref{E:Buchweitz}\footnote{Buchweitz proved the equivalence \eqref{E:Buchweitz} for Iwanaga--Gorenstein rings.} yield triangle equivalences
\begin{align}
\ul{\ul{\SCM}}(R) \cong \ul{\GP}(e\Lambda e) \cong \cd_{sg}(e \Lambda e).
\end{align}
The tilting equivalence $\RHom_{X}(\cv_X,-)\colon \cd^b(\Coh X) \ra \cd^b(\mod-e\Lambda e)$ from Theorem \ref{T:Wemyss} induces a triangle equivalence
\begin{align}
\cd_{sg}(X) \ra \cd_{sg}(e\Lambda e),
\end{align}
which completes the explanation of the first equivalence in \eqref{E:SCMdecomposed}.

\begin{rem}
In general, the category $\SCM(R)$ has many other Frobenius exact structures. More precisely, if we take any subset of the indecomposable SCMs corresponding to exceptional $(-2)$-curves, then there is a Frobenius structure such that these modules become projective-injective as well. All techniques explained in this paragraph apply to this more general setup. In particular, the corresponding stable categories decompose into a direct sum of stable categories of maximal Cohen--Macaulay modules indexed by the singularities of a certain Gorenstein scheme $X'$, which lies  between the minimal resolution and the rational double point resolution, see Corollary \ref{C:ModifiedStableSCM}. 
\end{rem}

\subsection{Singularity categories of gentle algebras} The results in this part are contained in the preprint \cite{Kalck12}.
Gentle algebras are certain finite dimensional algebras, whose module and derived category are well understood: for example, there is a complete classification of indecomposable objects in both categories. Moreover, the class of gentle algebras is closed under derived equivalence \cite{SchroerZimmermann}. 
The following examples provide relations to other fields: Burban \cite{Burban}  obtained a family of gentle algebras which are derived equivalent to an $\mathbb{A}_{n}$-configuration of projective lines. With each triangulation of an unpunctured marked Riemann surface, Assem, Br\"ustle,  Charbonneau-Jodoin \& Plamondon \cite{ABCP} associated a gentle algebra. Moreover, they show that every cluster-tilted algebra of types $\mathbb{A}$ and $\widetilde{\mathbb{A}}$ arises in this way.
 
Geiss \& Reiten \cite{GeissReiten} have shown that gentle algebras are \emph{Iwanaga--Gorenstein} rings. Hence Buchweitz' equivalence \eqref{E:Buchweitz} reduces the computation of the singularity category to the determination of the stable category of Gorenstein projective modules, see \eqref{E:GPIntro} for a definition. This leads to the main result of this part, for which we need some notation.

Let $\Lambda=kQ/I$ be a finite dimensional gentle algebra and denote by $\cc(\Lambda)$ the set of equivalence classes of repetition free cyclic paths $\alpha_{1}\ldots \alpha_{n}$ in $Q$ (with respect to cyclic permutation) such that $\alpha_{i}\alpha_{i+1} \in I$ for all $i$, where we set $n+1=1$. The following proposition is contained as Proposition \ref{P:Main} in the main body of this work.
\begin{prop}\label{P:MainGentle}
Let $\Lambda$ be a gentle algebra. Then 
there is a triangle equivalence  
\begin{align}
\cd_{sg}(\Lambda) \cong \prod_{c \in \cc(\Lambda)} \frac{\displaystyle \cd^b(k-\mod)}{\displaystyle [l(c)]},
\end{align}
where $l(\alpha_{1}\ldots\alpha_{n})=n$ and $\cd^b(k)/[l(c)]$ denotes the triangulated orbit category, \cite{Orbit}. This category is also known as the $(l(c)-1)$-\emph{cluster category} of type $\mathbb{A}_{1}$, \cite{Thomas}.
\end{prop}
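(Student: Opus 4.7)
The strategy is to reduce the computation of $\cd_{sg}(\Lambda)$ to a description of the stable category of Gorenstein projective modules. Since $\Lambda$ is Iwanaga--Gorenstein by the Geiss--Reiten result cited above, Buchweitz' equivalence \eqref{E:Buchweitz} (which holds in general for Iwanaga--Gorenstein rings) provides a triangle equivalence
\begin{align*}
\cd_{sg}(\Lambda) \xrightarrow{\sim} \ul{\GP}(\Lambda),
\end{align*}
so everything reduces to analysing the right-hand side.

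The core of the argument is the classification of the indecomposable non-projective Gorenstein projective $\Lambda$-modules. By the Butler--Ringel classification every indecomposable finite-dimensional $\Lambda$-module is a string or a band module, and for a gentle algebra there is an explicit combinatorial recipe that produces minimal projective resolutions of string modules, expressing each syzygy $\Omega M$ again as a string module obtained by ``extending $M$ along the gentle relations at both ends''. The vanishing $\Ext^i_\Lambda(M,\Lambda)=0$ for all $i>0$ then translates into the requirement that this extension procedure never stabilises to a projective summand on either end; since $Q$ is finite, this forces both ends of the string to enter \emph{cycles} of relations, i.e.\ elements of $\cc(\Lambda)$. A separate (easier) check rules out band modules. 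The upshot is that the indecomposable non-projective Gorenstein projective modules are precisely the string modules $M^{(c)}_i$ attached to a cyclic path $c=\alpha_1\cdots\alpha_n\in\cc(\Lambda)$ and a position $i\in\{1,\dots,n\}$, with $\Omega M^{(c)}_i \cong M^{(c)}_{i+1}$ cyclically.

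Using the standard combinatorial description of $\Hom$-spaces between string modules (graph maps factoring through factor/substring pairs), I would then verify $\ul{\End}_\Lambda(M^{(c)}_i)=k$, that $\ul{\Hom}_\Lambda(M^{(c)}_i,M^{(c')}_j)=0$ whenever $c\neq c'$, and that $\ul{\Hom}_\Lambda(M^{(c)}_i,M^{(c)}_j)=0$ for $i\neq j$. Together with the cyclic syzygy relation this yields the block decomposition
\begin{align*}
\ul{\GP}(\Lambda) \cong \prod_{c \in \cc(\Lambda)} \ct_c,
\end{align*}
where $\ct_c$ is the triangulated subcategory generated by $M^{(c)}_1$.

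Finally, each factor $\ct_c$ is an algebraic, Hom-finite, Krull--Schmidt triangulated category with exactly $n=l(c)$ indecomposables forming a single orbit under the suspension $\Sigma=\Omega^{-1}$ of minimal period $n$, all with endomorphism ring $k$. Such a category is determined up to triangle equivalence and is realised as $\cd^b(k-\mod)/[n]$: the functor sending the unique indecomposable $k \in \cd^b(k-\mod)$ to $M^{(c)}_1$ and intertwining the respective $\Sigma$-actions extends to the desired equivalence. The main obstacle is the classification step: proving precisely which string modules are Gorenstein projective, ruling out band modules, and tracking the syzygy bijection combinatorially; once this description is in place, the morphism computations and the abstract identification of each block are routine.
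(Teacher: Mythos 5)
Your high-level strategy coincides with the paper's: reduce to $\ul{\GP}(\Lambda)$ via Buchweitz's equivalence (using Gei\ss--Reiten's Iwanaga--Gorenstein result), classify the indecomposable Gorenstein projective modules as the string modules $R(c)_i$ attached to relation cycles, verify that the suspension acts cyclically on them with no non-trivial stable Homs, and identify each block with the orbit category $\cd^b(k)/[l(c)]$. The difference lies in the mechanism you propose for the classification step, and this is where there is a genuine gap.

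You propose to track the minimal projective resolution of a string module forward and argue that $\Ext^i_\Lambda(M,\Lambda)=0$ for all $i>0$ "forces both ends of the string to enter cycles of relations." Two problems. First, the description of the syzygy is inaccurate: for a direct string $R(c)_i$ the syzygy does not grow the string at both ends; in the paper's running example $\Omega(R(c)_6) = R(c)_2$, which is a much \emph{shorter} string, because $\Omega$ passes to the other radical summand of the covering projective. Second, and more fundamentally, the logical direction is wrong: $\Ext^i(M,\Lambda)=0$ does not \emph{a priori} tell you anything combinatorial about the resolution. The paper closes this gap by invoking (GP2): a module is Gorenstein projective iff it is a $d$-th syzygy, hence in particular a submodule of a projective. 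This immediately restricts attention to submodules of projectives, after which Lemma \ref{L:TechnicalV} (an indecomposable module whose string contains a "valley" $\alpha^{-1}\beta$ cannot embed into a projective) kills all band modules and all strings with valleys in one stroke; a short argument then disposes of strings with "roofs" and of direct strings whose defining arrow is not on a relation cycle. Your plan never invokes the submodule-of-projective characterization, so the claim that Ext-vanishing forces periodicity is asserted rather than proved, and your "separate (easier) check" for band modules is not in fact available without it. If you add (GP2) to your toolkit, you essentially recover the paper's argument; without it, the classification step is incomplete.
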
 

In particular, we obtain the following derived invariant of gentle algebras, which is a special case of an invariant introduced by Avella-Alaminos \& Gei{\ss} \cite{GeissAvellaAlaminos}.

\begin{cor}\label{C:Invariant} Let $\Lambda$ and $\Lambda'$ be gentle algebras. If there is a triangle equivalence $\cd^b(\Lambda-\mod) \cong \cd^b(\Lambda'-\mod)$, then there is a bijection of sets
\begin{align}
f \colon \cc(\Lambda) \stackrel{\sim}\longrightarrow \cc(\Lambda'),
\end{align}
such that $l(c)=l(f(c))$ for all $c \in \cc(\Lambda)$.
\end{cor}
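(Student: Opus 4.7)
The plan is to promote the equivalence of Proposition \ref{P:MainGentle} to a derived invariant in three steps.

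First, I argue that any triangle equivalence $F \colon \cd^b(\Lambda-\mod) \xrightarrow{\sim} \cd^b(\Lambda'-\mod)$ descends to the singularity categories. The subcategory $\Perf(\Lambda) \subseteq \cd^b(\Lambda-\mod)$ is intrinsically characterized as the full subcategory of objects of finite projective dimension, equivalently those $X$ satisfying $\Hom(X, Y[n]) = 0$ for $|n| \gg 0$ and every $Y \in \cd^b(\Lambda-\mod)$. This property is preserved by $F$, so $F$ restricts to an equivalence $\Perf(\Lambda) \xrightarrow{\sim} \Perf(\Lambda')$ and induces a triangle equivalence $\overline F \colon \cd_{sg}(\Lambda) \xrightarrow{\sim} \cd_{sg}(\Lambda')$.

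Second, applying Proposition \ref{P:MainGentle} on both sides, the equivalence $\overline F$ becomes
\[\prod_{c \in \cc(\Lambda)} \ct_{l(c)} \xrightarrow{\sim} \prod_{c' \in \cc(\Lambda')} \ct_{l(c')}, \qquad \ct_n := \cd^b(k-\mod)/[n].\]
Both products are finite, since $Q$ has only finitely many repetition-free oriented cycles.

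Third, I recover the combinatorial data from the abstract triangulated structure. Each $\ct_n$ is Hom-finite Krull--Schmidt with exactly $n$ isomorphism classes of indecomposables, namely $k[0], k[1], \ldots, k[n-1]$; the shift permutes them in a single cycle, so $\ct_n$ is indecomposable as a triangulated category, and $\ct_n \simeq \ct_m$ forces $n = m$ by counting indecomposables. A finite product of indecomposable Krull--Schmidt triangulated $k$-categories admits a canonical block decomposition, obtained by partitioning indecomposable objects via the equivalence relation $X \sim Y \iff \Hom(X, Y[n]) \neq 0$ for some $n \in \ZZ$. Since any triangle equivalence preserves this relation, it induces a bijection on blocks together with equivalences between corresponding ones. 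Applied to the displayed equivalence above, this yields a bijection $f \colon \cc(\Lambda) \to \cc(\Lambda')$ with $\ct_{l(c)} \simeq \ct_{l(f(c))}$, and the counting argument gives $l(c) = l(f(c))$.

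The main obstacle is the uniqueness of the block decomposition of a finite product of triangulated Krull--Schmidt categories; while standard, it must be articulated carefully, as the relevant blocks need to be read off intrinsically (not from the chosen product presentation) so that they transport along the equivalence. Once this is in place, the remaining steps are formal consequences of Proposition \ref{P:MainGentle} and the intrinsic characterization of $\Perf(\Lambda)$.
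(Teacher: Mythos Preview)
Your proof is correct and follows the same approach as the paper. The paper's argument is the single sentence ``Since derived equivalences induce triangle equivalences between the corresponding singularity categories, Proposition~\ref{P:Main} has the following consequence,'' and you have simply spelled out the two implicit steps it leaves to the reader: the intrinsic characterisation of $\Perf(\Lambda)$ inside $\cd^b(\Lambda-\mod)$, and the recovery of the multiset $\{l(c) : c \in \cc(\Lambda)\}$ from the abstract triangulated structure of $\prod_{c} \ct_{l(c)}$ via the block decomposition and the count of indecomposables in each $\ct_n$.
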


\begin{rem}
Buan and Vatne \cite{BuanVatne} showed that for two cluster-tilted algebras $\Lambda$ and $\Lambda'$ of type $\mathbb{A}_{n}$, for some fixed $n \in \mathbb{N}$, the converse of Corollary \ref{C:Invariant} holds. In other words, two such algebras are derived equivalent if and only if their singularity categories are triangle equivalent.
\end{rem}

Proposition \ref{P:MainGentle} has further consequences. Let $A(S, \Gamma)$ be the Jacobian algebra associated  with a triangulation $\Gamma$ of a surface $S$. Then $A(S, \Gamma)$ is a gentle algebra and the cycles in $\cc(A(S, \Gamma))$ are in bijection with the inner triangles of $\Gamma$, see \cite{ABCP}.
\begin{cor}
The number of direct factors of $\cd_{sg}\bigl(A(S, \Gamma)\bigr)$ equals the number of inner triangles of $\Gamma$.
\end{cor}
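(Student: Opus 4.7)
The plan is to derive this corollary by plugging Proposition \ref{P:MainGentle} into the Jacobian-algebra setting and then matching combinatorial data on the two sides. Since $A(S,\Gamma)$ is gentle (a result of Assem--Br\"ustle--Charbonneau-Jodoin--Plamondon), Proposition \ref{P:MainGentle} applies and yields
\begin{align*}
\cd_{sg}\bigl(A(S,\Gamma)\bigr) \;\cong\; \prod_{c \in \cc(A(S,\Gamma))} \frac{\cd^b(k-\mod)}{[l(c)]}.
\end{align*}
The cardinality of the index set $\cc(A(S,\Gamma))$ is, by the ABCP bijection cited just before the statement, exactly the number of inner triangles of $\Gamma$. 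So the count of factors on the right already matches the claim.

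What remains is to argue that no further decomposition is possible, i.e.\ that each factor $\cd^b(k-\mod)/[l(c)]$ is itself indecomposable as a triangulated category, so that the displayed product is the genuine block decomposition of $\cd_{sg}(A(S,\Gamma))$ into indecomposable direct factors. For this I would invoke the explicit description of the orbit category $\cd^b(k-\mod)/[n]$ (which is the $(n-1)$-cluster category of type $\mathbb{A}_1$, as recalled in the paragraph after Proposition \ref{P:MainGentle}): it has precisely $n$ indecomposable objects forming a single $\tau$-orbit, so its Auslander--Reiten quiver is connected and the category is therefore indecomposable.

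The only real subtlety is the last step, namely verifying indecomposability of each orbit-category factor; once this is granted, the corollary is immediate from Proposition \ref{P:MainGentle} combined with the ABCP bijection between repetition-free oriented cycles with zero-relations in the gentle quiver of $A(S,\Gamma)$ and the inner triangles of the triangulation $\Gamma$. No additional geometry of $S$ enters the proof.
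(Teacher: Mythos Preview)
Your proposal is correct and follows the same route as the paper: apply Proposition~\ref{P:MainGentle} (which is Proposition~\ref{P:Main} in the body) and the ABCP bijection between $\cc(A(S,\Gamma))$ and inner triangles. The paper treats this as an immediate consequence without a separate proof environment.

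One small difference worth noting: in the body of the paper the corollary is phrased as ``the number of direct factors \emph{in the decomposition}~\eqref{E:Decomposition}'', i.e.\ it explicitly counts the factors of the particular product furnished by Proposition~\ref{P:Main}, rather than asserting that this is the block decomposition into indecomposables. With that reading, your final step (indecomposability of each orbit category $\cd^b(k)/[l(c)]$) is not strictly required. Your argument for it is fine and yields a slightly sharper statement, but the paper does not bother with it. Incidentally, ABCP also tells you that every cycle has length three, so all the factors are in fact copies of $\cd^b(k)/[3]$; this is mentioned in the paper but plays no role in the counting.
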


Using a tilting equivalence of Burban \cite{Burban}, we obtain another explanation for the following well-known result,
see Orlov's localization result \eqref{E:OrlovLoc}.
\begin{cor}
Let $\mathbb{X}_{n}$ be an $\mathbb{A}_{n}$-configuration of projective lines  
\begin{align*}
\begin{array}{c}
\begin{tikzpicture} 
\draw (0,0,0) to [bend left=25]  (2,0,0);
\draw (1.5,0,0) to [bend left=25]  (3.5,0,0);
\node at (4, 0, 0) {$\cdots$}; 
\draw (4.5,0,0) to [bend left=25]  (6.5,0,0);
\draw (6,0,0) to [bend left=25]  (8,0,0);
\draw (7.5,0,0) to [bend left=25]  (9.5,0,0);
\end{tikzpicture}
\end{array} 
\end{align*}
There is an equivalence of triangulated categories
\begin{align}
\cd_{sg}(\XX_{n}) \cong \bigoplus_{i=1}^{n-1} \frac{\cd^b(k-\mod)}{[2]}
\end{align}
\end{cor}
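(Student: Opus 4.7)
The plan is to transport the computation to a gentle algebra via Burban's tilting, and then apply Proposition \ref{P:MainGentle}. By \cite{Burban}, there is a tilting bundle $\cT$ on $\XX_{n}$ whose endomorphism algebra $\Lambda_{n}=\End_{\XX_{n}}(\cT)$ is a gentle finite dimensional algebra, giving a triangle equivalence
\begin{align*}
\RHom_{\XX_{n}}(\cT,-)\colon \cd^b(\Coh \XX_{n}) \xrightarrow{\sim} \cd^b(\Lambda_{n}-\mod).
\end{align*}
Since $\cT$ is a locally free sheaf of finite rank, this equivalence identifies the thick subcategory of perfect complexes on $\XX_{n}$ with $\thick(\Lambda_{n})=\Perf(\Lambda_{n})\subseteq \cd^b(\Lambda_{n}-\mod)$. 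Passing to Verdier quotients yields an induced triangle equivalence of singularity categories $\cd_{sg}(\XX_{n}) \cong \cd_{sg}(\Lambda_{n})$.

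Next, applying Proposition \ref{P:MainGentle} to the gentle algebra $\Lambda_{n}$ gives
\begin{align*}
\cd_{sg}(\Lambda_{n}) \cong \prod_{c \in \cc(\Lambda_{n})} \frac{\cd^b(k-\mod)}{[l(c)]}.
\end{align*}
So the work reduces to showing that $\cc(\Lambda_{n})$ consists of exactly $n-1$ equivalence classes of cyclic paths, each of length $2$. The geometric content is transparent: the $n-1$ nodes of $\XX_{n}$ are the only singular points, and under Burban's tilting each node contributes a local "nodal piece" of $\Lambda_{n}$ consisting of two arrows $\alpha,\beta$ in opposite directions satisfying $\alpha\beta=\beta\alpha=0$ (reflecting the local relation $xy=0$ in $k\llbracket x,y\rrbracket/(xy)$). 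Each such piece produces one repetition-free cyclic path of length $2$. The remaining pieces of the quiver correspond to the tilting on individual smooth components $\PP^1$ (of Kronecker type), which are hereditary and contribute no cyclic paths to $\cc(\Lambda_n)$. Hence $|\cc(\Lambda_{n})|=n-1$ and $l(c)=2$ for every $c$, yielding the claimed decomposition.

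The main obstacle is the combinatorial verification in the last step: one must inspect Burban's explicit description of $\Lambda_{n}$ to confirm that the only repetition-free cycles modulo cyclic rotation are the $n-1$ two-cycles arising from the nodes, and that no longer cycles appear once one accounts for the rigidity imposed by the Kronecker-type relations on each $\PP^1$-piece. As a sanity check, the cases $n=1$ (no nodes, $\Lambda_{1}$ is the Kronecker algebra, $\cc(\Lambda_{1})=\emptyset$, matching $\cd_{sg}(\PP^1)=0$) and $n=2$ (one node, one 2-cycle) behave as expected. An alternative cross-check, which bypasses the combinatorial step, is to apply Orlov's localization \eqref{E:OrlovLoc}: each singular point of $\XX_{n}$ has completed local ring $\widehat{\co}_{x}\cong k\llbracket x,y\rrbracket/(xy)$, whose singularity category is easily seen (via the 2-periodic syzygy of the stable MCM category, or directly via matrix factorizations) to be $\cd^b(k-\mod)/[2]$. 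Both routes give the stated equivalence.
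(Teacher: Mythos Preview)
Your proposal is correct and follows essentially the same approach as the paper: transfer to the gentle algebra $\Lambda_n$ via Burban's tilting, then apply Proposition~\ref{P:MainGentle} and count the cycles in $\cc(\Lambda_n)$. The paper carries out the combinatorial step more directly by writing down Burban's explicit quiver for $\Lambda_n$ (vertices $0,1,\ldots,n$, arrows $\gamma_1\colon 0\to 1$, $\gamma_2\colon 0\to n$, and $a_i\colon i\to i{+}1$, $b_i\colon i{+}1\to i$ for $1\le i\le n-1$, with relations $a_ib_i=0=b_ia_i$), from which one reads off immediately that $\cc(\Lambda_n)=\{a_1b_1,\ldots,a_{n-1}b_{n-1}\}$; your heuristic in terms of ``nodal pieces'' and ``Kronecker-type pieces'' is morally right but does not quite match the actual shape of $\Lambda_n$, so citing the explicit presentation is the cleanest way to close the argument.
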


\subsection{Contents and Structure} 
The following picture shows the dependencies between the different sections of this work.  

\bigskip

\begin{tikzpicture}
 \node[draw,rectangle,rounded corners,top color=white, bottom color=white] at (0.25\columnwidth,0) (x) {\begin{minipage}{50pt}
Section 2
\end{minipage} };

 \node[draw,rectangle,rounded corners,top color=white, bottom color=white] at (0.50\columnwidth,0) (x) {\begin{minipage}{50pt}
Section 3
\end{minipage} };

 \node[draw,rectangle,rounded corners,top color=white, bottom color=white] at (0.75\columnwidth,0) (x) {\begin{minipage}{50pt}
Section 4
\end{minipage} };

\path[->, dashed, thick=0.5cm] ($(0.75\columnwidth, -0.4)$) edge [bend left=20] node[scale=0.8, xshift=1.5cm] {motivates}
($(0.5\columnwidth, -3)$);

\path[->, thick=0.5cm] ($(0.5\columnwidth, -0.4)$) edge  node[scale=0.8, xshift=1.2cm] {everything}
($(0.425\columnwidth, -2.6)$);

\path[->, thick=0.5cm] ($(0.255\columnwidth, -0.4)$) edge  node[fill=white, scale=0.8] {\ref{ss:SingCatIwanaga}, \ref{ss:Alternative}, \ref{ss:IdempCompl}}
($(0.415\columnwidth, -2.6)$);

\path[->, thick=0.5cm] ($(0.245\columnwidth, -0.4)$) edge  node[scale=0.8, xshift=-1.5cm] {\ref{ss:FrobeniusBasic}, \ref{ss:SingCatIwanaga}}
($(0.1\columnwidth, -2.6)$);

\path[->, thick=0.5cm] ($(0.25\columnwidth, -0.4)$) edge  node[fill=white, scale=0.8] {\ref{ss:AuslanderSolberg} -- \ref{ss:Alternative}}
($(0.245\columnwidth, -5.6)$);

\path[->, dashed, thick=0.5cm] ($(0.42\columnwidth, -3.4)$) edge  node[scale=0.8, xshift=1.5cm] {Theorem \ref{t:main-thm-2}}
($(0.255\columnwidth, -5.6)$);

 \node[draw,rectangle,rounded corners,top color=white, bottom color=white] at (0.1\columnwidth,-3) (x) {\begin{minipage}{50pt}
Section 7
\end{minipage} };

 \node[draw,rectangle,rounded corners,top color=white, bottom color=white] at (0.42\columnwidth,-3) (x) {\begin{minipage}{50pt}
Section 5
\end{minipage} };

 \node[draw,rectangle,rounded corners,top color=white, bottom color=white] at (0.25\columnwidth,-6) (x) {\begin{minipage}{50pt}
Section 6
\end{minipage} };

\end{tikzpicture}

\medskip
We give a brief outline of the contents of this work. More detailed descriptions can be found at the beginning of each section.
Sections \ref{s:Frobenius} and \ref{s:dg} provide methods from the theory of Frobenius categories and dg algebras, respectively. Although these parts are quite technical in nature, some of the results might be of interest in their own right. The localization result for the global relative singularity categories given in Section \ref{S:Global}, serves as a motivation for the study of the local relative singularity categories in Section \ref{S:Local} but does not depend on results from other parts of the text. In Section \ref{S:Local} the techniques from Section \ref{s:Frobenius} and Section \ref{s:dg} are combined to describe the relative singularity categories for Auslander algebras of MCM-representation finite singularities as categories of perfect complexes over some explicit dg algebra: the dg Auslander algebra. This is the key ingredient in the reconstruction of the relative singularity category from the classical singularity category in this setup. Section \ref{s:Specials} uses the abstract results on Frobenius categories from Section \ref{s:Frobenius} in conjunction with geometric methods to obtain a description of the Iyama \& Wemyss' stable category of special Cohen--Macaulay modules over rational surface singularities. We include a relation to the relative singularity categories from Section \ref{S:Local}  as an aside. This uses Theorem \ref{t:main-thm-2}. Section \ref{S:Gentle} uses Buchweitz' Theorem \ref{T:Buchweitz}  Êand basic properties of Gorenstein projective modules from Section \ref{s:Frobenius} to describe the singularity categories of gentle algebras.

\subsection*{Acknowledgement}
I would like to thank my advisor Igor Burban for many inspiring discussions, helpful remarks, comments and advices, his patience and for his contributions to our joint work.

I am very grateful to my coauthors Osamu Iyama, Michael Wemyss and Dong Yang for sharing their insights and for helpful explanations during our fruitful and pleasant collaboration.
Moreover, I thank Hanno Becker, Lennart Galinat, Wassilij Gnedin, Nicolas Haupt, Jens Hornbostel, Bernhard Keller, Henning Krause, Julian K\"ulshammer, Daniel Labardini-Fragoso, Helmut Lenzing, Jan Schr\"oer, Greg Stevenson and Michel Van den Bergh for helpful remarks and discussions on parts of this work and Stefan Steinerberger for improving the language of this text.

Finally, I would like to thank my family, friends, colleagues and everyone who supported and encouraged me during 
the last years.

This work was supported by the DFG grant Bu--1866/2--1 and the Bonn International Graduate School of Mathematics (BIGS).

\newpage

\section{Frobenius categories}\label{s:Frobenius}

In this section, we study Frobenius categories (a special class of exact categories) from various perspectives.
The Subsections \ref{ss:AuslanderSolberg}, \ref{ss:Iyama} and \ref{ss:Alternative} are based on a joint work with Osamu Iyama, Michael Wemyss and Dong Yang \cite{IKWY12}. Proposition \ref{new Frobenius structure} in Subsection \ref{ss:AuslanderSolberg} is implicitely contained in an article of Auslander \& Solberg \cite{AS93-1}.  Subsections \ref{ss:BuchweitzHappel} to \ref{S:Tale} grew out of a joint work with Dong Yang \cite{KalckYang12}. The results in Subsections \ref{ss:BuchweitzHappel} and \ref{ss:SingCatIwanaga} are well-known (see Keller \& Vossieck \cite{KellerVossieck87} and Buchweitz \cite{Buchweitz87}, respectively), however, our proofs are quite different. Finally, Subsection \ref{ss:IdempCompl} is an extended version of a section of a joint article with Igor Burban \cite{BurbanKalck11}. The remaining parts \ref{ss:FrobeniusBasic}, \ref{Sub:Idemp} and \ref{ss:Schlichting} follow Keller \cite{Keller}, Balmer \& Schlichting \cite{BalmerSchlichting} and Schlichting \cite{Schlichting06}, respectively. 

Let us briefly describe the content of this section. Subsection \ref{ss:FrobeniusBasic} starts with the definition of Frobenius categories. We illustrate this definition with several examples including the category of Gorenstein projective modules\footnote{In the commutative case, these are exactly the maximal Cohen--Macaulay modules.} over (Iwanaga--) Gorenstein rings, which is important throughout this work. Next, we recall the definitions of the stable category of a Frobenius category and its triangulated structure, discovered by Heller \cite{Heller} and Happel \cite{Happel}. Following Balmer \& Schlichting \cite{BalmerSchlichting}, we recall the notion of the idempotent completion (or Karoubian hull) of an additive category in Subsection \ref{Sub:Idemp}. In particular, they equip the idempotent completion of a triangulated category with a natural triangulated structure. This will be important throughout this work. In Subsection \ref{ss:AuslanderSolberg} we explain a method to construct \emph{Frobenius} exact structures on certain exact categories with an Auslander--Reiten type duality. This is applied to obtain new Frobnenius structures on the category of special Cohen--Macaulay modules over rational surface singularities in Section \ref{s:Specials}. Subsection \ref{ss:Iyama} contains Iyama's Morita-type Theorem for Frobenius categories. More precisely, he provides conditions guaranteeing that a given Frobenius category is equivalent to the category of Gorenstein projective modules over an Iwanaga--Gorenstein ring. In combination with Buchweitz' Theorem \ref{T:Buchweitz}, this result yields a description of certain stable Frobenius categories as singularity categories of Iwanaga--Gorenstein rings (Corollary \ref{C:BuchweitzIyama}), which is an essential ingredient in the proof of our main result of Section \ref{s:Specials}. In Subsection \ref{ss:BuchweitzHappel}, we prove a result of Keller \& Vossieck in an alternative and `elementary' way. This is used to establish Buchweitz' triangle equivalence between the stable category of Gorenstein projective modules and the singularity category of an Iwanaga--Gorenstein ring (see Theorem \ref{T:Buchweitz}), which plays a central role in this text. Subsection \ref{S:Tale} serves as a preparation for our alternative proof of the stable version of Iyama's result (Corollary \ref{C:BuchweitzIyama}) in Subsection \ref{ss:Alternative}. Subsection \ref{ss:Schlichting} gives a brief introduction to Schlichting's negative K-theory for triangulated categories admitting a (Frobenius) model. This is applied to prove the idempotent completeness of certain triangulated quotient categories in Subsection \ref{ss:IdempCompl}. We use this result to express the relative singularity categories of complete Gorenstein rings as perfect derived categories of certain dg algebras in Subsection \ref{S:MCM-over-Gor}.

\subsection{Definitions, basic properties and examples} \label{ss:FrobeniusBasic}

We mainly follow Keller's expositions in \cite{Keller} and \cite[Appendix A]{Keller2}.
\begin{defn}\label{D:ExactKeller}
Let $\kE$ be an additive category. A pair of morphisms $A \xrightarrow{i} B \xrightarrow{p} C$ is called \emph{exact} if $i$ is a kernel of $p$ and $p$ is a cokernel of $i$. 

An \emph{exact structure} on $\kE$ consists of a collection $\cs$ of exact pairs, which is closed under isomorphisms and satisfies the axioms below. If $(i, p) \in \cs$, then $(i, p)$ is called \emph{conflation}, $i$ is called \emph{inflation} and $p$ is called \emph{deflation}.  
\begin{itemize}
\item[(Ex0)] The identity $0 \xrightarrow{0} 0$ of the zero object is a deflation.
\item[(Ex1)] Deflations are closed under composition.
\item[(Ex2)] Every diagram $B \xrightarrow{p} C \xleftarrow{c} C'$ in $\ce$, where $p$ is a deflation has a pullback
\begin{align}
\begin{array}{cc}
\begin{xy}
\SelectTips{cm}{}
\xymatrix{
B'  \ar[d]^{b} \ar[r]^{p'} & C \ar[d]^{c} \\
B \ar[r]^{p} & C'
}
\end{xy}
\end{array}
\end{align}
such that $p'$ is a deflation.
\item[(Ex2$^{\rm op}$)] Dually, every diagram $B \xleftarrow{i} C \xrightarrow{c} C'$ in $\ce$, where $i$ is an inflation has a pushout
\begin{align}
\begin{array}{cc}
\begin{xy}
\SelectTips{cm}{}
\xymatrix{
C  \ar[d]^{i} \ar[r]^{c} & C' \ar[d]^{i'} \\
B \ar[r]^{b} & B'
}
\end{xy}
\end{array}
\end{align}
such that $i'$ is an inflation.
\end{itemize} 
Sometimes we consider several different exact structures on a given additive category $\ce$. In these situations, it is convenient to write $(\ce, \cs)$ for a Frobenius category $\ce$ with exact structure $\cs$.
\end{defn}

The following notions will be important throughout this work.
\begin{defn}\label{D:IwanagaGorensteinD}
A two-sided Noetherian ring $\Lambda$ is called \emph{Iwanaga--Gorenstein of dimension $n$} if $\injdim_\Lambda\!\Lambda=n=\injdim \Lambda_\Lambda$\footnote{By a result of Zaks \cite{Zaks}, finiteness of $\injdim_\Lambda\!\Lambda$ and $\injdim \Lambda_\Lambda$ implies that both numbers coincide.}.
The category of \emph{Gorenstein projective} $\Lambda$-modules $\GP(\Lambda)$ is defined as follows
\begin{align}
\GP(\Lambda):=\{X\in\mod-\Lambda\mid \Ext^i_\Lambda(X,\Lambda)=0\mbox{ for any }i>0\}.
\end{align}
\end{defn}

\begin{rem}
If $\Lambda$ is a commutative local Noetherian Gorenstein ring, then there is an equivalence of categories $\MCM(\Lambda) \cong \GP(\Lambda)$, where 
\begin{align} \label{E:MCMvsGP}
\MCM(\Lambda)=\{ M \in \mod-\Lambda \mid \depth_{R}(M)=\dim(M) \}
\end{align}
denotes the category of maximal Cohen--Macaulay $\Lambda$-modules, see e.g. \cite{BrunsHerzog}.
If $\Lambda$ is a selfinjective algebra, then $\GP(\Lambda)=\mod-\Lambda$.
\end{rem}

\begin{ex}\label{E:Exact}
\begin{itemize}
\item[a)] Let $\kB \subseteq \kE$ be a full additive subcategory of an exact category $\kE$. If $\kB$ is closed under extensions (i.e.~for every exact pair $A \xrightarrow{i} B \xrightarrow{p} C$ in $\kE$, with $A$ and $C$ in $\kB$ we have $B \in \kB$), then the collection of those conflations in $\kE$ with all terms in $\kB$ defines an exact structure on $\kB$.
\item[b)] Let $\kA$ be an abelian category and consider the collection of pairs given by \emph{all} short exact sequences in $\kA$. It is well known that this defines an exact structure on $\kA$.
\item[c)] Let $(R, \gm)$ be a local Noetherian ring. By the Depth Lemma (see \cite[Proposition 1.2.9]{BrunsHerzog}), the full subcategory of maximal Cohen--Macaulay modules $\MCM(R) \subseteq \mod-R$ is closed under extensions. Thus a) and b) above imply that $\MCM(R)$ is an exact category.
\item[d)] Let $\Lambda$ be an Iwanaga--Gorenstein ring, then $\GP(\Lambda) \subseteq \mod \Lambda$ is exact by a) and b).
\item[e)] Let $\kC$ be an additive category and $\Com(\kC)$ be the category of complexes over $\kC$. Take the collection of all pairs $(i, p)$ of morphisms in $\Com(\kC)$ such that the components $(i^n, p^n)$ are split exact sequences. This defines an exact structure on $\Com(\kC)$.
\end{itemize}
\end{ex}

\begin{rem}
Call a category \emph{svelte} if it is equivalent to a small category. Let $\kE$ be a svelte exact category, then $\kE$ is equivalent to a full extension closed subcategory of an abelian category $\kA$ (see for example \cite[Proposition A.2]{Keller2}). This shows that the situation encountered in Example \ref{E:Exact}  c) and d) above is actually quite general.
\end{rem}

The definitions of projective and injective objects generalize to exact categories.

\begin{defn}
Let $\kE$ be an exact category. An object $I$ in $\kE$ is called \emph{injective} if the sequence of abelian groups
$\kE(B, I) \xrightarrow{i^*} \kE(A, I) \rightarrow 0$ is exact for every deflation $A \xrightarrow{i} B$ in $\kE$. The category $\kE$ \emph{has enough injective objects} if every object $X$ in $\kE$ admits a conflation
$X \xrightarrow{i_{X}} I(X) \xrightarrow{p_{X}} \Omega^{-1}(X)$
in $\kE$, with $I(X)$ injective.

The notions of a \emph{projective} object and of a category having \emph{enough projective objects} are defined dually. 
\end{defn}

\begin{defn}\label{D:Frobenius}
An exact category $\kE$ is called \emph{Frobenius} category, if it has enough injective and enough projective objects and these two classes of objects \emph{coincide}. The full subcategory of projective-injective objects is denoted by $\proj \kE$

An additive functor $F\colon \kE \rightarrow \kE'$ between Frobenius categories, is called \emph{map of Frobenius categories} if it maps conflations to conflations (in other words $F$ is \emph{exact}) and $F(\proj \kE) \subseteq \proj \kE'$ holds. 
\end{defn}

The following proposition is well-known, see \cite{Buchweitz87}.

\begin{prop}\label{P:IwanagaFrobenius}
Let $\Lambda$ be an Iwanaga--Gorenstein ring, then $\GP(\Lambda)$ satisfies the following properties:
\begin{itemize}
\item[(GP1)] A GP $\Lambda$-module is either projective or of infinite projective dimension.
\item[(GP2)] $M$ is GP if and only if $M \cong \Omega^d(N)$ for some $N\in \mod-\Lambda$, where $d=\injdim \Lambda_{\Lambda}$. In particular, every Gorenstein projective module is a submodule of a projective module and the syzygies $\Omega^n(N)$ are GP for all $n \geq d$ and all $N\in \mod-\Lambda$.
\item[(GP3)] $\GP(\Lambda)$ is a Frobenius category with $\proj \GP(\Lambda)=\proj-\Lambda$.
\end{itemize}
\end{prop}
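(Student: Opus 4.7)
My plan is to build all three statements on two preliminary observations. The first is a \emph{syzygy lemma}: if $0 \to K \to P \to N \to 0$ is exact with $P$ projective and $N$ Gorenstein projective, then $K$ is Gorenstein projective. This is immediate from the dimension shift $\Ext^i_\Lambda(K,\Lambda) \cong \Ext^{i+1}_\Lambda(N,\Lambda) = 0$ for $i \geq 1$. The second is the dual statement obtained from the Iwanaga--Gorenstein hypothesis: for \emph{any} $N \in \mod-\Lambda$ the $d$-th syzygy $\Omega^d N$ is GP, because a projective resolution shifts $\Ext^i(\Omega^d N,\Lambda) \cong \Ext^{i+d}(N,\Lambda)$, and the right-hand side vanishes for $i \geq 1$ since $\injdim \Lambda_\Lambda = d$. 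This already gives one direction of (GP2) and, via iterated first syzygies, the fact that GP modules are closed under syzygies of all orders.

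For the nontrivial direction of (GP2) -- that every GP module $M$ is a $d$-th syzygy -- I would establish that the functor $(-)^\ast = \Hom_\Lambda(-,\Lambda)$ restricts to a duality $\GP(\Lambda) \leftrightarrow \GP(\Lambda^{\mathrm{op}})$. The key ingredients are reflexivity $M \cong M^{\ast\ast}$ and vanishing of $\Ext^i_\Lambda(M^\ast,\Lambda)$, both of which follow by dualizing a projective resolution of $M$ and using that $\Ext^i_\Lambda(M,\Lambda)=0$. Given this, choose a projective presentation $Q \twoheadrightarrow M^\ast$ in $\mod-\Lambda^{\mathrm{op}}$; dualizing produces a short exact sequence $0 \to M \to Q^\ast \to L \to 0$ with $Q^\ast$ projective and $L$ GP. Iterating this embedding $d$ times yields $0 \to M \to P_0 \to \cdots \to P_{d-1} \to N \to 0$, exhibiting $M \cong \Omega^d N$.

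For (GP1), suppose $M$ is GP with $\prdim M = n < \infty$ and take a minimal projective resolution. By the syzygy lemma $\Omega^{n-1}M$ is GP of projective dimension at most $1$, so $0 \to P_n \to P_{n-1} \to \Omega^{n-1}M \to 0$ is a projective resolution. Since $\Ext^1_\Lambda(\Omega^{n-1}M,\Lambda)=0$ and Ext is additive in the second variable, $\Ext^1_\Lambda(\Omega^{n-1}M,P_n)=0$, so the sequence splits; minimality then forces $n = 0$ and $M$ is projective.

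Finally, for (GP3), extension-closure and $\proj-\Lambda \subseteq \GP(\Lambda)$ are clear from the Ext-vanishing definition. Enough projectives in the induced exact structure is furnished by $0 \to \Omega M \to P \to M \to 0$ (all terms GP by the syzygy lemma), and enough injectives by the dualization conflation $0 \to M \to Q^\ast \to L \to 0$ constructed above. Every $P \in \proj-\Lambda$ is injective in $\GP(\Lambda)$ because any conflation $0 \to P \to X \to Y \to 0$ in $\GP(\Lambda)$ satisfies $\Ext^1_\Lambda(Y,P)=0$ and therefore splits; conversely any injective or projective object of $\GP(\Lambda)$ is a summand of an ordinary projective by splitting the respective conflations from enough injectives/projectives, giving $\proj \GP(\Lambda) = \inj \GP(\Lambda) = \proj-\Lambda$. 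The main technical obstacle will be verifying the duality between $\GP(\Lambda)$ and $\GP(\Lambda^{\mathrm{op}})$, since it underpins both (GP2) forward and the existence of enough injectives in (GP3); everything else is a routine Ext-calculation or a splitting argument.
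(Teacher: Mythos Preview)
Your approach coincides with the paper's: both pivot on the duality $\Hom_\Lambda(-,\Lambda)\colon \GP(\Lambda) \leftrightarrow \GP(\Lambda^{\mathrm{op}})$ to build projective coresolutions, from which the forward direction of (GP2) and enough injectives in (GP3) follow, while (GP1) and the backward direction of (GP2) are handled by the same dimension-shift arguments you outline. One minor caution: in (GP1) you invoke a \emph{minimal} projective resolution, which need not exist over an arbitrary Iwanaga--Gorenstein ring; the paper instead simply iterates the splitting step to shorten the resolution down to length zero, and you should do the same.
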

\begin{proof}
Throughout, we need the fact that syzygies $\Omega^n(M)$, $n\geq 0$ of Gorenstein projectives are again GP. This follows from the definition and the long exact Ext-sequence.

 For (GP1), we take $M \in \GP(\Lambda)$ with finite projective dimension. So we have an  exact sequence
 \begin{align}\label{E:ProjResGP}
 0 \ra P_{t} \ra P_{t-1} \ra \cdots \ra P_{1} \ra M \ra 0,
 \end{align}
 which yields an exact sequence $0 \ra P_{t} \ra P_{t-1} \ra \Omega^{t-2}(M) \ra 0$. Since syzygies are GP, this sequence splits and $\Omega^{t-2}(M)$ is projective. Continuing in this way to reduce the length of \eqref{E:ProjResGP}, we see that $M$ is projective.
 
 Next, we show that $\Omega^n(N)$ is GP, for all $N \in \mod-\Lambda$ and all $n \geq d$. Since syzygies of Gorenstein-projectives are Gorenstein-projective, it suffices to treat the case $n=d$. The projective resolution of $N$ may be spliced into short exact sequences
\begin{align}
0 \ra \Omega^n(N) \ra P^n \ra \Omega^{n-1}(N) \ra 0.
\end{align}
In particular, we obtain isomorphisms $\Ext^i_{\Lambda}(\Omega^{n}(N), \Lambda) \cong  \Ext^{i+1}_{\Lambda}(\Omega^{n-1}(N), R)$ for all $i>0$. By definition, $\injdim \Lambda = d$ implies $\Ext^{d+1}(-, \Lambda)=0$. This yields a chain of ismorphisms, for all $i>0$
\begin{align}
\Ext^i_{\Lambda}(\Omega^d(N), \Lambda) \cong \Ext^{i+1}_{\Lambda}(\Omega^{d-1}(N), \Lambda) \cong \ldots \cong
\Ext^{i+d}_{\Lambda}(N, \Lambda) = 0,
\end{align}
which proves the statement. We need some preparation to prove the converse direction.
 We claim that the the following two functors are well-defined and mutually quasi-inverse:
\begin{align}
\begin{array}{c}
\begin{xy}
\SelectTips{cm}{}
\xymatrix{
\GP(\Lambda) \ar@/^/[rr]^{\mathbb{D}=\Hom_{\Lambda}(-, \Lambda)} && \GP(\Lambda\op{}) \ar@/^/[ll]^{\mathbb{D}\op{}=\Hom_{\Lambda\op{}}(-, \Lambda)}
}
\end{xy}
\end{array}
\end{align} 
To prove this, let $M$ be a Gorenstein projective $\Lambda$-module with projective resolution $P^\bullet$
\begin{align}
\cdots \xrightarrow{f_{k+1}} P_{k} \xrightarrow{f_{k}} P_{k-1} \xrightarrow{f_{k-1}} \cdots \xrightarrow{f_{2}} P_{1} \ra M \ra 0
\end{align}
Since all syzygies of $M$ are GP, $\mathbb{D}(P^\bullet)$ is a coresolution of $\mathbb{D}(M)$. Using the direction of (GP2) which we have already proved, we conclude that $\mathbb{D}(M)$ and $\ker \mathbb{D}(f_{k})$ are GP $\Lambda\op{}$-modules. In particular, this shows that $\mathbb{D}$ is well-defined and so is $\mathbb{D}\op{}$ by dual arguments. Moreover, since all the kernels $\ker \mathbb{D}(f_{k})$ are GP, $\mathbb{D}\op{}\mathbb{D}(P^\bullet)$ is a resolution of $\mathbb{D}\op{}\mathbb{D}(M)$. Now, since $\mathbb{D}\op{}\mathbb{D}(P^\bullet)$ is naturally isomorphic to $P^\bullet$, we obtain a natural isomorphism $\mathbb{D}\op{}\mathbb{D}(M) \cong M$. This proves the claim.

The converse direction in (GP2) can be seen in the following way. Given a Gorenstein projective $\Lambda$-module $M$, we may take a projective resolution $P^\bullet$ of the GP $\Lambda\op{}$-module $\mathbb{D}(M)$. Applying $\mathbb{D}\op{}$ yields a projective coresolution $\mathbb{D}\op{}(P^\bullet)$ of $M$. In particular, for any $n\geq 0$ there is a $\Lambda$-module $N$ such that $M \cong \Omega^n(N)$.

We already know that $\GP(\Lambda)$ is an exact category, by Example \ref{E:Exact}. Since syzygies of GP modules are again GP, $\GP(\Lambda)$ has enough projective objects. By definition, the dualities $\mathbb{D}$ and $\mathbb{D}\op{}$ are exact. Hence they send projectives to injectives and vice versa. In particular, $\GP(\Lambda)$ has enough injectives and $\proj \GP(\Lambda)=\proj-\Lambda$.
\end{proof}

\begin{ex}\label{E:Frobenius}
 In the situation of Example \ref{E:Exact} e), define
\[I(X)^n=X^n \oplus X^{n+1}, \, \, d_{I(X)}^n=\begin{pmatrix} 0 & 1 \\ 0 & 0 \end{pmatrix}, \, \, (i_{X})^n=\begin{pmatrix} 1 \\ d_{X}^n \end{pmatrix} \]
\[\bigl(\Omega^{-1}(X)\bigr)^n=X^{n+1}, \, \, d_{\Omega^{-1}(X)}^n=-d_{X}^{n+1}, \, \, p_{X}^n=\begin{pmatrix} -d_{X}^n & 1 \end{pmatrix}.\] One can check that $I(X)$ is injective and we have a conflation in $\Com(\kC)$
$X \xrightarrow{i_{X}} I(X) \xrightarrow{p_{X}} \Omega^{-1}(X)$. Moreover, $I(X)$ is also projective and since $\Omega^{-1}(-)$ is invertible the conflation $(i_{X}, p_{X})$ also shows that $\Com(\kC)$ has enough projectives. In order to see that projective and injective objects coincide, one can proceed as follows: $i_{X}$ splits if and only if $X$ is homotopic to zero if and only if $p_{X}$ splits. By definition, direct summands of projective (respectively injective) objects are projective (respectively injective) and any deflation onto an projective (respectively any inflation from an injective) object splits. Thus we obtain: a complex $X$ is injective if and only if it is homotopic to zero if and only if it is projective. In particular, $\Com(\kC)$ is a Frobenius category.
\end{ex}

The following technical lemma will be used in the proof of Theorem \ref{t:main-thm}.

\begin{lem}\label{technical lemma}
Let $\ce$ be a Frobenius category.  If $f\colon X\to Y$ is a morphism in $\ce$ such that $\Hom_{\ce}(f,P)$ is surjective for all $P \in \proj \ce$, then there exists a conflation
\begin{align}
X\xrightarrow{(f\ 0)^{\rm tr}}Y\oplus P'\to Z
\end{align}
in $\ce$ with a projective-injective object $P'$.
\end{lem}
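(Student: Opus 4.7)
The plan is to use the fact that $\ce$ has enough injective objects, together with the hypothesis on $f$, to reduce to the statement that for any inflation $j\colon X\to I$ and any morphism $f\colon X\to Y$, the map $\begin{pmatrix} f \\ j \end{pmatrix}\colon X\to Y\oplus I$ is again an inflation.

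First I would choose a conflation $X\xrightarrow{i_X} I(X)\to \Omega^{-1}(X)$ with $I(X)\in\proj\ce$, which exists because $\ce$ has enough injectives and $\ce$ is Frobenius. By hypothesis, the map of abelian groups $\Hom_\ce(f,I(X))\colon\Hom_\ce(Y,I(X))\to\Hom_\ce(X,I(X))$ is surjective, so there exists $g\colon Y\to I(X)$ such that $g\circ f=i_X$. This will be the source of the desired factorisation; the projective-injective object appearing in the lemma will be $P':=I(X)$.

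Next I would establish the auxiliary claim that $h:=\begin{pmatrix} f \\ i_X \end{pmatrix}\colon X\to Y\oplus I(X)$ is an inflation. Since $i_X$ is an inflation, it sits in a conflation $X\xrightarrow{i_X} I(X)\to \Omega^{-1}(X)$. Taking its direct sum with the split conflation $Y\xrightarrow{1_Y} Y\to 0$ (which is conflation in any exact category, and whose direct sum with a conflation is a conflation) yields that
\begin{equation*}
\begin{pmatrix} i_X & 0 \\ 0 & 1_Y \end{pmatrix}\colon X\oplus Y\longrightarrow I(X)\oplus Y
\end{equation*}
is an inflation. Pre-composing with the split inflation $\begin{pmatrix} 1_X \\ f \end{pmatrix}\colon X\to X\oplus Y$ (it admits the retraction $(1_X\ 0)$, and split monomorphisms are inflations by axiom (Ex1) applied to the trivial conflation) and using closure of inflations under composition gives that
\begin{equation*}
\begin{pmatrix} i_X \\ f \end{pmatrix}\colon X\longrightarrow I(X)\oplus Y
\end{equation*}
is an inflation, which after swapping summands is exactly $h$.

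Finally, I would transform $h$ into $\begin{pmatrix} f \\ 0 \end{pmatrix}^{\rm tr}$. Consider the automorphism of $Y\oplus I(X)$ given by
\begin{equation*}
\varphi=\begin{pmatrix} 1_Y & 0 \\ -g & 1_{I(X)} \end{pmatrix},\qquad \varphi^{-1}=\begin{pmatrix} 1_Y & 0 \\ g & 1_{I(X)} \end{pmatrix}.
\end{equation*}
Using $g\circ f=i_X$, a direct computation gives
\begin{equation*}
\varphi\circ h = \begin{pmatrix} f \\ -g\circ f + i_X \end{pmatrix} = \begin{pmatrix} f \\ 0 \end{pmatrix}.
\end{equation*}
Since the class of inflations is closed under composition with isomorphisms, $\begin{pmatrix} f \\ 0 \end{pmatrix}^{\rm tr}\colon X\to Y\oplus I(X)$ is an inflation, which by definition fits into a conflation $X\to Y\oplus P'\to Z$ with $P'=I(X)\in\proj\ce$ as required. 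The only conceptual point is the auxiliary claim that $h$ is an inflation; everything else is a straightforward application of the hypothesis and a small matrix trick.
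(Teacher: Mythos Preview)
Your proof is correct and follows essentially the same route as the paper: choose an inflation $i_X\colon X\to I(X)$, use the hypothesis to factor $i_X=g\circ f$, show that $(f\ i_X)^{\rm tr}$ is an inflation, and then conjugate by the triangular automorphism of $Y\oplus I(X)$ to replace $i_X$ by $0$. The only difference is in how the inflation property of $(f\ i_X)^{\rm tr}$ is justified: the paper invokes the pushout axiom (Ex2$^{\rm op}$) together with a reference to Keller, whereas you argue directly by writing it as a composition of a split inflation with a direct sum of inflations.
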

\begin{proof}
Since $\ce$ has enough injective objects, there is an inflation $ X\xrightarrow{i_{X}} I(X)$  in $\ce$ with a projective-injective object $I(X)$. 
By the surjectivity of $\Hom_{\ce}(f,I(X))$, we obtain a morphism $e\colon Y \ra I(X)$ such that $i_{X}=ef$.
By axiom (Ex$2^{op}$) of an exact category, we have a pushout diagram
\begin{align}
\begin{array}{cc}
\begin{xy}
\SelectTips{cm}{}
\xymatrix{
X  \ar[d]_{i_{X}=ef} \ar[r]^{f} & Y \ar[d] \\
I(X) \ar[r] & Z,
}
\end{xy}
\end{array}
\end{align}
which yields a conflation
$X\xrightarrow{(f\ ef)^{\rm tr}}Y\oplus I(X)\xrightarrow{g} Z$ in $\ce$ (see \cite[Proof of Proposition A.1]{Keller2}). There is an isomorphism of exact pairs
\begin{align}
\begin{array}{cc}
\begin{xy}
\SelectTips{cm}{}
\xymatrix{
X \ar[d]^{1} \ar[rr]^{(f\ ef)^{\rm tr}} &&Y \oplus I(X) \ar[rr]^g \ar[d]^{\left( \begin{smallmatrix}1 & 0 \\ -e & 1 \end{smallmatrix} \right)}  && Z \ar[d]^1 \\
X \ar[rr]_{(f\ 0)^{\rm tr}} && Y\oplus I(X) \ar[rr]_{g \circ \left( \begin{smallmatrix}1 & 0 \\ e & 1 \end{smallmatrix} \right)}  && Z.
}
\end{xy}
\end{array}
\end{align}
Hence the lower row is a conflation as well and $P':=I(X)$ completes the proof. 
\end{proof}

\subsubsection{The stable category}\label{SubSub:Stable}
Let throughout $\kE$ be a Frobenius category. 
\begin{defn}
For every two objects $X, Y$ in $\kE$ define \[\kP(X, Y)=\{f \in \Hom_{\kE}(X, Y) \big| f \text{  factors over some projective object} \}.\] The \emph{stable category} $\underline{\kE}$ of $\kE$ has the same objects as $\kE$ and morphisms are given by $\underline{\Hom}_{\kE}(X, Y):=\Hom_{\kE}(X, Y)/\kP(X, Y)$. For a morphism $f \in \Hom_{\kE}(X, Y)$, let $\overline{f}$ be its residue class in $\underline{\Hom}_{\kE}(X, Y)$. 
\end{defn}

\begin{rem}
 $\underline{\kE}$ inherits the structure of an additive category from $\kE$. Moreover, the projective-injective objects in $\kE$ become isomorphic to zero in the stable category.
\end{rem}

Let $X\xrightarrow{f} Y$ be a morphism of $\kE$. Then there are commutative diagrams such that all the rows are conflations
\[
\begin{xy}
\SelectTips{cm}{}
\xymatrix{
X \ar[d]^{f} \ar[rr]^{i_{X}} && I(X) \ar[rr]^{p_{X}} \ar[d]^g && \Omega^{-1}(X) \ar[d]^h \\
Y \ar[rr]^{i_{Y}} && I(Y) \ar[rr]^{p_{Y}} && \Omega^{-1}(Y)
}
\end{xy}
\]
and
\[
\begin{xy}
\SelectTips{cm}{}
\xymatrix{
\Omega(X) \ar[d]^{h'} \ar[rr]^{i_{X}} && P(X) \ar[rr]^{p_{X}} \ar[d]^{g'} && X \ar[d]^f \\
\Omega(Y) \ar[rr]^{i_{Y}} && P(Y) \ar[rr]^{p_{Y}} && Y.
}
\end{xy}
\]
This follows from the definition of injective (respectively projective) objects and the universal property of the cokernel (respectively kernel). One checks that this defines mutually inverse autoequivalences $\Omega$ and $\Omega^{-1}$ of the stable category $\ul{\ce}$.  More precisely, $\Omega(\overline{f})=\overline{h'}$ and $\Omega^{-1}(\overline{f})=\overline{h}$. Moreover, these assignments do \emph{not} depend on the choice of $g'$ and $g$.

We are ready to introduce a triangulated structure with shift functor $S=\Omega^{-1}$ on $\underline{\kE}$ (see Happel \cite{Happel}, who was inspired by Heller \cite{Heller}).

\begin{thm}[Happel--Heller] \label{T:HappelFrob}
Let $X \xrightarrow{u} Y \xrightarrow{v} Z$ be a conflation in $\kE$. Consider the following commutative diagram in $\kE$
\[
\begin{xy}
\SelectTips{cm}{}
\xymatrix{
X \ar[d]^{1_{X}} \ar[rr]^{u} && Y \ar[rr]^{v} \ar[d]^{g} && Z \ar[d]^w \\
X \ar[rr]^{i_{X}} && I(X) \ar[rr]^{p_{X}} && S(X).
}
\end{xy}
\] A sequence of morphisms $A \rightarrow B \rightarrow C \rightarrow S(A)$ in $\ul{\ce}$ is called  \emph{distinguished triangle}, if it is isomorphic to a sequence of the following form
\begin{align}\label{A:StandardTria}
X \xrightarrow{\overline{u}} Y \xrightarrow{\overline{v}} Z \xrightarrow{\overline{w}} S(X).
\end{align}
This class of distinguished triangles defines the structure of a triangulated category on  $\underline{\kE}$. Moreover, a map of Frobenius categories (see Definition \ref{D:Frobenius}) induces a triangulated functor $\underline{F}\colon \underline{\kE} \rightarrow \underline{\kE'}$ between the stable categories.
\end{thm}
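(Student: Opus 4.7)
The plan is to verify first that the prescription of distinguished triangles is well-defined on $\underline{\ce}$, and then check Verdier's axioms one at a time, following the classical pattern but being careful about the Frobenius (rather than abelian) setting.

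First I would show that the class of distinguished triangles does not depend on choices. Given a conflation $X \xrightarrow{u} Y \xrightarrow{v} Z$, the lift $g\colon Y \to I(X)$ extending $i_{X}$ exists because $i_{X}$ is an inflation and $I(X)$ is injective, and the induced $w\colon Z \to S(X)$ exists by the universal property of the cokernel. Two different choices $g, g'$ of such a lift differ by a map $Y \to I(X)$ vanishing on $X$, hence factor through $v$, which forces the induced $w, w'$ to differ by a map factoring through $I(X)$; thus $\overline{w} = \overline{w'}$ in $\underline{\ce}$. In addition, one verifies that any morphism in $\underline{\ce}$ isomorphic to $\overline{u}$ can be represented by a conflation (replace $Y$ by $Y \oplus P$ via Lemma stated just above for projective $P$), so that the isomorphism-closure imposed in the definition is compatible with choosing representatives.

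Next I would verify (TR1)--(TR4). For (TR1) existence: given any $\bar u\colon X \to Y$, replace $u$ by $(u,\,i_{X})^{\mathrm{tr}}\colon X \to Y \oplus I(X)$, which is an inflation because $i_{X}$ is, and form its conflation to get a standard triangle. The triangle on $\id_{X}$ is distinguished because the conflation $X \xrightarrow{i_{X}} I(X) \xrightarrow{p_{X}} S(X)$ produces precisely $X \xrightarrow{\id} X \to 0 \to S(X)$ after passing to $\underline{\ce}$. For (TR3) the morphism axiom: given a map of conflations, the outer square commutes strictly in $\ce$, and one extends it to the injective hulls $I(X) \to I(X')$ using injectivity, producing the required morphism $S(X) \to S(X')$ on cokernels up to the $\kP$-ideal. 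For (TR2) rotation, one uses the standard trick: embed $Y \xrightarrow{v} Z$ into a conflation by forming the pushout of $v$ along $i_{Y}\colon Y \to I(Y)$; comparing the two resulting conflations $X \to Y \to Z$ and $Y \to Z \oplus I(Y) \to S(X)$, one produces an explicit isomorphism in $\underline{\ce}$ between the rotated triangle and a standard one, using the sign conventions hidden in the definition of $\Omega^{-1}$ on morphisms. For (TR4), the octahedral axiom, one uses iterated pushouts of two composable inflations, which exist by axiom (Ex$2^{\mathrm{op}}$), and checks that the four faces of the octahedron arise as genuine conflations in $\ce$.

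The main obstacle I expect is the rotation axiom (TR2): the sign in $d_{\Omega^{-1}(X)} = -d_{X}$, the noncanonical choice of the injective hull $I(Y)$, and the need to reconcile the pushout conflation along $i_{Y}$ with the candidate cone of $v$, all combine into a diagram chase that is delicate but purely formal once one commits to sign conventions. Finally, for the functoriality statement, a map $F\colon \ce \to \ce'$ of Frobenius categories sends conflations to conflations and $\proj \ce$ into $\proj \ce'$, so it sends $\kP$-factorizations to $\kP$-factorizations and hence descends to an additive $\underline{F}$; because $F$ may not send $I_{\ce}(X)$ to $I_{\ce'}(FX)$ on the nose, one identifies $\underline{F}\,\Omega^{-1}_{\ce}$ with $\Omega^{-1}_{\ce'}\,\underline{F}$ via the comparison morphism obtained from injectivity of $I_{\ce'}(FX)$, and checks that standard triangles go to standard triangles.
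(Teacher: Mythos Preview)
The paper does not actually prove this theorem: it is stated as a known result attributed to Happel and Heller (with citations to \cite{Happel} and \cite{Heller}), and the text moves directly to examples afterward. Your sketch is essentially the classical proof from Happel's book and is correct in outline; there is nothing in the paper to compare it against beyond the citation.
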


\begin{ex} The algebra of dual numbers $R=k[x]/(x^2)$ is selfinjective. Hence $\mod-R$ is a Frobenius category. Its stable category $\ul{\mod}-R$ is equivalent to the category of finite-dimensional vector spaces $\mod-k$.
\end{ex}

\begin{ex}
Let $\Com(\kC)$ be the Frobenius category of complexes over an additive category $\kC$ (see Example \ref{E:Frobenius} ). Since in this case the projective-injective objects are precisely those complexes which are homotopic to zero, the stable category is just the homotopy category $K(\kC)$ of complexes over $\kC$. 
\end{ex}

\subsection{Idempotent completion}\label{Sub:Idemp}
In this subsection, we collect some well-known results on idempotent completions (or Karoubian hulls) of 
additive categories. We mainly follow Balmer \& Schlichting \cite{BalmerSchlichting}.

\begin{defn}
An additive category $\kC$ is called \emph{idempotent complete}, if every idempotent endomorphism $e\colon X \rightarrow X$ in $\kC$ splits. In other words, there exists a direct sum decomposition $X \cong Y \oplus Z$ in $\kC$ and a factorisation $e=i_{Y}p_{Y}$, where $i_{Y}$ and $p_{Y}$ denote the canonical inclusion and projection and we identify $X$ with $Y \oplus Z$.
\end{defn}

\begin{ex}\label{E:AbIdempC}
Abelian categories are idempotent complete. Indeed, every idempotent $e \in \End(X)$ yields a direct sum decomposition $X = \im(e) \oplus \im(1-e)$. We obtain $e=i_{\im(e)} p_{\im(e)}.$ 
\end{ex}

\begin{ex}
The category $\kE$ of even dimensional vector spaces over a field $k$ is \emph{not} idempotent complete. For example, the endomorphism \[k^2 \xrightarrow{\begin{pmatrix}Ê1 &0 \\ 0 & 0\end{pmatrix}} k^2\] does \emph{not} split. 
\end{ex}

There is a natural way to embed every additive category into some idempotent complete category.

\begin{defn}
Let $\kC$ be an additive category. The \emph{idempotent completion} $\kC^{\omega}$ of $\kC$ consists of objects $(X, e)$, with $X$ in $\kC$ and $e \in \End_{\kC}(X)$ is an idempotent. A morphism $(X, e) \xrightarrow{\alpha} (Y, f)$ in $\kC^{\omega}$ is a morphism $X \xrightarrow{\alpha} Y$ in $\kC$ satisfying $\alpha e = \alpha = f \alpha$.
\end{defn}

The map $X \mapsto (X, 1)$ may be extended to a functor $\kC \xrightarrow{\iota} \kC^{\omega}$.
The following proposition is well-known, see e.g.~\cite[Proposition 1.3.]{BalmerSchlichting}.

\begin{prop}
In the notations above $\kC^{\omega}$ is an additive, idempotent complete category and $\iota$ is additive and fully faithful.

In particular we may and will consider $\kC$ as a full subcategory of $\kC^{\omega}$.
\end{prop}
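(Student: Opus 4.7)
The plan is to check the three assertions in turn, taking some care about the fact that the identity endomorphism of an object $(X,e)\in\kC^\omega$ is $e$ itself (not $1_X$), since the defining condition $\alpha e=\alpha=f\alpha$ forces $\mathrm{id}_{(X,e)}=e$.

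First I would install an additive structure on $\kC^\omega$. Composition is inherited from $\kC$, and one checks that if $\alpha\colon(X,e)\to(Y,f)$ and $\beta\colon(Y,f)\to(Z,g)$ satisfy the compatibility conditions, then $\beta\alpha$ does too. The abelian-group structure on $\Hom_{\kC^\omega}((X,e),(Y,f))$ is inherited from $\Hom_{\kC}(X,Y)$: the subset cut out by $\alpha e=\alpha=f\alpha$ is plainly a subgroup. A zero object is $(0,0)$, and the biproduct of $(X,e)$ and $(Y,f)$ is $(X\oplus Y,\,e\oplus f)$ with the obvious inclusions and projections coming from those of $X\oplus Y$ in $\kC$ (pre- and post-composed with $e\oplus f$ so as to land in the right Hom-groups). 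Bilinearity of composition is immediate from $\kC$. This handles additivity.

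Next, for idempotent completeness, let $\varepsilon\colon(X,e)\to(X,e)$ be an idempotent in $\kC^\omega$, i.e.\ an element $\varepsilon\in\End_{\kC}(X)$ with $e\varepsilon=\varepsilon=\varepsilon e$ and $\varepsilon^2=\varepsilon$. Then $e-\varepsilon\in\End_{\kC}(X)$ is again idempotent, satisfies $e(e-\varepsilon)=(e-\varepsilon)=(e-\varepsilon)e$, and is orthogonal to $\varepsilon$, since $\varepsilon(e-\varepsilon)=\varepsilon e-\varepsilon^2=0$ and similarly on the other side. I would then verify that the morphisms
\begin{align*}
(X,\varepsilon)\xrightarrow{\varepsilon}(X,e)\xrightarrow{\varepsilon}(X,\varepsilon),\qquad
(X,e-\varepsilon)\xrightarrow{e-\varepsilon}(X,e)\xrightarrow{e-\varepsilon}(X,e-\varepsilon)
\end{align*}
exhibit $(X,e)\cong (X,\varepsilon)\oplus(X,e-\varepsilon)$ as a biproduct in $\kC^\omega$, using $\varepsilon+(e-\varepsilon)=e=\mathrm{id}_{(X,e)}$. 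In particular $\varepsilon$ factors as the composition $(X,e)\twoheadrightarrow(X,\varepsilon)\hookrightarrow(X,e)$, so it splits.

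Finally, for the functor $\iota\colon\kC\to\kC^\omega$, $X\mapsto(X,1_X)$, the compatibility condition $\alpha\cdot 1_X=\alpha=1_Y\cdot\alpha$ is automatic, so the map $\Hom_{\kC}(X,Y)\to\Hom_{\kC^\omega}((X,1),(Y,1))$ is a bijection, and visibly a group homomorphism; hence $\iota$ is additive and fully faithful.

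The only mild obstacle is the bookkeeping point flagged at the outset: one must remember that $e$ plays the role of the identity of $(X,e)$, both when writing down the splitting idempotents $\varepsilon,\,e-\varepsilon$ and when checking that the biproduct structure on $(X\oplus Y,e\oplus f)$ really yields inclusions and projections in $\kC^\omega$. Once that is internalized, every verification reduces to a straightforward manipulation inside $\kC$.
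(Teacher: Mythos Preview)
Your proof is correct and complete. The paper itself does not give a proof of this proposition but merely cites \cite[Proposition 1.3.]{BalmerSchlichting}; your direct verification is the standard one and matches what one finds there, including the biproduct construction $(X\oplus Y,\, e\oplus f)$ (which the paper records separately in Remark~\ref{R:IdempCompl}) and the splitting of an idempotent $\varepsilon$ on $(X,e)$ via $(X,\varepsilon)\oplus(X,e-\varepsilon)$.
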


\begin{rem}\label{R:IdempCompl}
(a) \, The direct sum of $(X, e)$ and $(Y, f)$ is given by \[\left(X \oplus Y, \begin{pmatrix} e & 0 \\ 0 & f \end{pmatrix}\right).\]
\noindent
(b)\, Every additive functor $\FF\colon \kC \rightarrow \kD$ induces an additive functor $\FF^{\omega}\colon \kC^{\omega} \rightarrow \kD^{\omega}$ given by $\FF^{\omega}(X, e)=(\FF(X), \FF(e))$ on objects and $\FF^{\omega}(\alpha)=\FF(\alpha)$ on morphisms.

In particular, $\FF^{\omega}$ is fully faithful (an equivalence), if $\FF$ is fully faithful (an equivalence).
\end{rem}

\begin{ex}
The idempotent completion $\kE^{\omega}$ of the category $\kE$ of even dimensional vector spaces over $k$ is equivalent to the category $k-\mod$ of finite dimensional vector spaces over $k$.
\end{ex}

\begin{defn}\label{D:IdempTria}
Let $\kT$ be a triangulated category. The shift functor $[1]$ may be lifted to a functor $[1]^{\omega} \colon \kT^{\omega} \rightarrow \kT^{\omega}$ (see Remark \ref{R:IdempCompl}). Define a triangle $\Delta$ in $\kT^{\omega}$ to be exact, if there exists another triangle $\Delta'$ such that $\Delta \oplus \Delta'$ defines an exact triangle in $\kT$.
\end{defn}

The following proposition is due to Balmer and Schlichting \cite[Theorem 1.5]{BalmerSchlichting}.

\begin{prop}
Let $\kT$ be a triangulated category. Equipped with the structure of Definition \ref{D:IdempTria}, the idempotent completion $\kT^{\omega}$ is a triangulated category and the inclusion $\iota$ is a triangle functor. This is the unique triangulated structure on $\kT^{\omega}$ such that $\iota$ is triangulated. 

Moreover, triangulated functors $\FF$ yield triangulated functors $\FF^{\omega}$.
\end{prop}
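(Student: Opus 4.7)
The plan is to verify the octahedral axiom and the three triangle axioms (TR1, TR2, TR3) for the class of triangles defined in Definition \ref{D:IdempTria}, and then separately to address uniqueness and functoriality. The axioms TR0 and TR2 (rotation) are essentially free: the rotation of a direct sum of triangles is the direct sum of the rotated triangles, so the class is closed under rotation; likewise the trivial triangle $(X,e) \xrightarrow{1} (X,e) \to 0 \to (X,e)[1]$ is the direct sum of the two trivial triangles on $(X,e)$ and $(X,1-e)$, whose sum is an exact triangle on $(X,1) = \iota X$ in $\kT$.

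The main obstacle is TR1, the completion of a morphism to a triangle, because a morphism $\alpha\colon (X,e) \to (Y,f)$ in $\kT^\omega$ is only a morphism $X \to Y$ in $\kT$ satisfying $f\alpha e = \alpha$, and while we can complete this to a triangle $X \xrightarrow{\alpha} Y \xrightarrow{\beta} Z \xrightarrow{\gamma} X[1]$ in $\kT$, the third object carries no obvious idempotent. The key trick will be to use TR3 in $\kT$: because $f \alpha = \alpha = \alpha e$, the pair $(e,f)$ extends to an endomorphism $g\colon Z \to Z$ of the triangle. This $g$ need not be idempotent, but by iterating (or by the standard argument involving $g^2 - g$ factoring through cones and then passing to the idempotent completion) one shows that some power of $g$ is idempotent, or equivalently that $g$ becomes idempotent inside $\kT^\omega$ after an appropriate direct sum decomposition. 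A cleaner reformulation is to complete the morphism $\begin{pmatrix} \alpha & 0 \\ 0 & 0 \end{pmatrix}\colon X\oplus X \to Y\oplus Y$ in $\kT$ and then split the resulting triangle along the evident idempotents $\begin{pmatrix} e & 0 \\ 0 & 1-e\end{pmatrix}$ and $\begin{pmatrix} f & 0 \\ 0 & 1-f \end{pmatrix}$, using that $\kT^\omega$ is idempotent complete to obtain a third object $(Z,g)$; the desired triangle in $\kT^\omega$ then arises as a direct summand of an exact triangle in $\kT$, which is precisely the definition of an exact triangle in $\kT^\omega$.

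Once TR1 is in hand, TR3 (existence of morphisms between triangles) reduces to the analogous axiom in $\kT$ applied to triangles of which the given ones are direct summands, combined with the projection onto the appropriate idempotent components. The octahedral axiom TR4 is handled in the same spirit: given two composable morphisms in $\kT^\omega$, lift them to composable morphisms in $\kT$, invoke TR4 there, and then extract the required octahedron in $\kT^\omega$ as a direct summand by means of the idempotents on the three starting objects. For uniqueness, suppose $\Delta'$ is another triangulation on $\kT^\omega$ for which $\iota$ is a triangle functor. Every triangle in our class $\Delta$ is by definition a direct summand of $\iota$-images of triangles of $\kT$, hence lies in $\Delta'$ (using that any triangulated category is closed under direct summands of its triangles whenever those summands exist, which they do here since $\kT^\omega$ is idempotent complete). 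Conversely, given any triangle of $\Delta'$ on a morphism $\alpha$, use TR1 for $\Delta$ to build a triangle in $\Delta$ on the same $\alpha$, and then invoke TR3 in $\Delta'$ and the standard cone-uniqueness argument to identify the two. Finally, functoriality of $(-)^\omega$ on triangulated functors is immediate: an additive functor $\FF$ commuting with $[1]$ and sending exact triangles to exact triangles induces $\FF^\omega$ with the same properties, since direct sums, idempotents and shifts are all preserved by the construction.
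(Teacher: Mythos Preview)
The paper does not actually prove this proposition; it simply cites Balmer--Schlichting \cite[Theorem 1.5]{BalmerSchlichting}. So there is no in-paper argument to compare against.

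Your outline is essentially the Balmer--Schlichting proof, and the overall strategy is sound. One point deserves tightening: in your treatment of TR1, the endomorphism $g$ obtained from TR3 satisfies $(g^2-g)^2=0$ (because $g^2-g$ factors both through $\gamma$ and through $\beta$, and $\gamma\beta=0$), but it is not a \emph{power} of $g$ that becomes idempotent---rather a polynomial such as $3g^2-2g^3$. Your phrasing ``some power of $g$ is idempotent'' is inaccurate, though your parenthetical hedge and the alternative $2\times 2$ matrix approach are fine. For uniqueness, you invoke that direct summands of distinguished triangles are distinguished in any triangulated category; this is true but itself requires a short argument (also in Balmer--Schlichting), so it should not be treated as entirely obvious.
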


\begin{rem}
We give an explicit example showing that Verdier quotient categories of idempotent complete triangulated categories need \emph{not} be idempotent complete, see Lemma \ref{L:Non-Split}. However, Balmer and Schlichting show \cite[Theorem 2.8.]{BalmerSchlichting} that the bounded derived category of an idempotent complete exact category is always idempotent complete. In particular, the bounded derived category of an abelian category is idempotent complete. 
\end{rem}

\subsection{Auslander \& Solberg's modification of exact structures} \label{ss:AuslanderSolberg}

In this subsection, we explain a technique, which (under certain technical assumptions) produces Frobenius categories from exact categories, by a modification of the exact structure. This is implicitely contained in  a work of Auslander \& Solberg \cite{AS93-1}. In particular, the technique may be used to obtain new Frobenius structures on a given Frobenius category. We need some preparations.

\begin{defn}
Let $\ce$ be an exact category. A full additive subcategory $\cm \subseteq \ce$ is called \emph{contravariantly finite}, if every object $X$ in $\ce$ admits a \emph{right $\cm$-approximation}, i.e.~there exists an object $M \in \cm$ and a morphism $f\colon M \ra X$, such that the induced map $\Hom_{\ce}(N, M) \to \Hom_{\ce}(N, X)$ is surjective for all $N \in \cm$. Dually, we define the notion of a \emph{covariantly finite} subcategory. We say that $\cm$ is \emph{functorially finite} if it is both covariantly and contravariantly finite.
\end{defn}

The following example is well-known (see e.g.~\cite{AuslanderSmalo80}) and will be important later.

\begin{ex}\label{Ex:FunctFinite}
Let $(R, \mathfrak{m})$ be a commutative local complete Noetherian ring and let $\ce \subseteq \mod-R$ be a full exact subcategory, i.e.~$\ce$ is closed under extensions in $\mod-R$. Let $\cm \subseteq \ce$ be a full additive subcategory with only \emph{finitely} many isomorphism classes of indecomposable objects $M_{1}, \ldots, M_{t}$, then $\cm$ is functorially finite. Indeed, let $M=M_{1} \oplus \ldots \oplus M_{t}$ and let $X \in \ce$, then $\Hom_{R}(M, X)$ is a finitely generated $R$-module. In particular, it is finitely generated as a right $\End_{R}(M)$-module. Let $f_{1}, \ldots, f_{s}$ be a set of generators. Then one can check that 
$
M^s \xrightarrow{\left(f_{1}, \ldots, f_{s}\right)} X
$
is a right $\cm$-approximation. Hence $\cm$ is contravariantly finite. A dual argument shows that it is also covariantly finite. Therefore, $\cm$ is functorially finite.
\end{ex}

 The following result is implicitly included in Auslander \& Solberg's relative homological algebra \cite{AS93-1}. As usual, we denote by $\ol{\ce}$ the factor category of an exact category $\ce$ by the ideal of morphisms, which factor through an injective object. Accordingly, the morphism spaces in $\ol{\ce}$ are denoted by $\ol{\Hom}$.

\begin{prop}\label{new Frobenius structure}
Let $(\ce, \cs)$ be a $k$-linear exact category with enough projectives $\cp$ and enough injectives $\ci$. Assume that there exist an equivalence $\tau\colon\underline{\ce}\to\overline{\ce}$ and
a functorial isomorphism $\Ext^1_{\ce}(X,Y)\cong D\overline{\Hom}_{\ce}(Y,\tau X)$ for any $X,Y\in\ce$.
Let $\cm$ be a functorially finite subcategory of $\ce$ containing $\cp$ and $\ci$ and satisfying $\tau\underline{\cm}=\overline{\cm}$. Then the following statements hold.

\noindent \t{(1)} Let $X\xrightarrow{f} Y\xrightarrow{g} Z$ be a conflation in $(\ce, \cs)$. Then $\Hom_{\ce}(\cm,g)$ is surjective if and only if $\Hom_{\ce}(f,\cm)$ is surjective. We denote the subclass of the exact pairs in $\cs$  satisfying one of these equivalent conditions by $\cs'$.\\
\t{(2)} $(\ce, \cs')$ is a Frobenius category whose projective objects are precisely $\add\cm$. 
\end{prop}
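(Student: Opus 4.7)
For part (1), the strategy is to rewrite each surjectivity through the long exact $\Ext$-sequence and then transport both formulations to the $\overline{\Hom}$-side via the hypothesised Auslander--Reiten duality. Let $\eta\in\Ext^1_{\ce}(Z,X)$ be the class of the conflation. Then $\Hom_{\ce}(M,g)$ is surjective for every $M\in\cm$ if and only if the connecting map $h\mapsto h^*\eta$ from $\Hom_{\ce}(M,Z)$ to $\Ext^1_{\ce}(M,X)$ vanishes for every $M\in\cm$; dually, $\Hom_{\ce}(f,M)$ is surjective for every $M\in\cm$ if and only if $h\mapsto h_*\eta$ from $\Hom_{\ce}(X,M)$ to $\Ext^1_{\ce}(Z,M)$ vanishes for every $M\in\cm$. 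Applying the functorial isomorphisms $\Ext^1_{\ce}(M,X)\cong D\overline{\Hom}_{\ce}(X,\tau M)$ and $\Ext^1_{\ce}(Z,M)\cong D\overline{\Hom}_{\ce}(M,\tau Z)$ and tracing the Yoneda composition through them, both conditions translate into the same statement: viewed as a linear functional on $\overline{\Hom}_{\ce}(X,\tau Z)$, the element $\eta$ annihilates every morphism $X\to\tau Z$ in $\overline{\ce}$ which factors through an object of $\overline{\cm}$. For the first version the assumption $\tau\underline{\cm}=\overline{\cm}$ is used to pass from middle objects of the form $\tau M$ to arbitrary objects of $\overline{\cm}$; for the second this is immediate. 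This yields the equivalence of the two conditions and defines the subclass $\cs'$.

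The exact-category axioms (Ex0)--(Ex2) and their duals for $(\ce,\cs')$ then transfer from those of $(\ce,\cs)$ by routine diagram chases, using part (1) to switch freely between the two characterisations. For instance, given a composition of two $\cs'$-deflations $Y_1\to Y_2\to Y_3$, any morphism $M\to Y_3$ with $M\in\cm$ lifts first to $Y_2$ and then to $Y_1$ by applying the defining surjectivity twice; the pullback of a $\cs'$-deflation along an arbitrary morphism remains a $\cs'$-deflation by a single lift combined with the universal property of the pullback; axiom (Ex$2^{\mathrm{op}}$) is handled dually via the inflation-characterisation. Directly from the definition of $\cs'$, every object of $\cm$ is both projective and injective in $(\ce,\cs')$, so the same holds for $\add\cm$.

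It remains to construct enough injectives and projectives inside $\add\cm$ and to identify all projective-injectives. For $X\in\ce$ choose a left $\cm$-approximation $g\colon X\to M_0$ (available since $\cm$ is covariantly finite) and an inflation $i_X\colon X\to I(X)$ with $I(X)\in\ci\subseteq\cm$ supplied by the enough-injective structure of $\cs$. The standard pushout construction in an exact category produces a conflation
\begin{align*}
X\xrightarrow{\left(\begin{smallmatrix} i_X \\ -g\end{smallmatrix}\right)} I(X)\oplus M_0 \longrightarrow P
\end{align*}
in $\cs$; this conflation lies in $\cs'$, because for any $h\colon X\to N$ with $N\in\cm$ the approximation property of $g$ provides $\beta\colon M_0\to N$ with $\beta g=-h$, and then $(0,\beta)\colon I(X)\oplus M_0\to N$ extends $h$. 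Since $I(X)\oplus M_0\in\add\cm$, this yields enough injectives in $(\ce,\cs')$; the dual construction using a right $\cm$-approximation together with a projective cover in $\cp\subseteq\cm$ yields enough projectives. Finally, if $I$ is any projective-injective object of $(\ce,\cs')$, the $\cs'$-conflation $I\to M\to P$ with $M\in\add\cm$ just constructed splits by injectivity of $I$, so $I$ is a direct summand of $M$, hence lies in $\add\cm$. The delicate step in the whole argument is part (1): one must carefully identify how the Yoneda pairing $\Hom\otimes\Ext^1\to\Ext^1$ and its dual are transported through the AR-isomorphism, in order to see that the two a priori different surjectivity conditions become a single factorisation-annihilation property.
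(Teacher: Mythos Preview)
Your proof is correct and, for part (2), essentially identical to the paper's: the verification of the exact-category axioms, the observation that $\add\cm$ is projective-injective, the construction of enough injectives/projectives via an $\cm$-approximation combined with an $\ci$- (resp.\ $\cp$-) map, and the splitting argument to identify the projective-injectives all match.

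For part (1) your route differs slightly from the paper's. You translate both surjectivity conditions, via the naturality of the AR-isomorphism, into a common ``factorisation-annihilation'' property of the functional $\phi_\eta\in D\overline{\Hom}_{\ce}(X,\tau Z)$ corresponding to $\eta$. The paper instead argues more economically at the level of maps: from the long exact sequence, $\Hom_{\ce}(\cm,g)$ is surjective iff $\Ext^1_{\ce}(\cm,f)$ is injective; applying the functorial isomorphism $\Ext^1_{\ce}(\cm,-)\cong D\overline{\Hom}_{\ce}(-,\tau\cm)$ turns this into surjectivity of $\overline{\Hom}_{\ce}(f,\tau\underline{\cm})=\overline{\Hom}_{\ce}(f,\overline{\cm})$, which (since $f$ is an inflation and $\ci\subseteq\cm$) is equivalent to surjectivity of $\Hom_{\ce}(f,\cm)$. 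This avoids having to track how the Yoneda pairing transports through the duality, which you rightly flag as the delicate step of your version. Both arguments are valid; the paper's is a bit more streamlined.
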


\begin{proof}
(1) Applying $\Hom_{\ce}(\cm,-)$ to $X\xrightarrow{f} Y\xrightarrow{g} Z$, we have an exact sequence
\begin{eqnarray}
\Hom_{\ce}(\cm,Y)\rightarrow\Hom_{\ce}(\cm,Z)\to\Ext^1_{\ce}(\cm,X)\rightarrow\Ext^1_{\ce}(\cm,Y).
\end{eqnarray}
Thus we know that $\Hom_{\ce}(\cm,g)$ is surjective if and only if
$\Ext^1_{\ce}(\cm,f)$ is injective, which is equivalent to $\Ext^1_{\ce}(\ul{\cm},f)$ being injective.
Using the equivalence $\tau$, this holds if and only if $\overline{\Hom}_{\ce}(f,\tau\ul{\cm})$ is surjective, which holds if and only if $\overline{\Hom}_{\ce}(f,\ol{\cm})$ is surjective. Since $f$ is an inflation, this holds if and only if $\Hom_{\ce}(f,\cm)$ is surjective.

(2) Let us check that $\ce$ together with the  subclass of exact pairs $\cs'\subseteq \cs$ satisfying the conditions in (1) also satisfies the axioms of exact categories (see Definition \ref{D:ExactKeller}). The axioms (Ex0)  and (Ex1) for $(\ce, \cs')$ are satisfied, since they hold for the original structure  $(\ce, \cs)$ and compositions of surjections are surjective. Let us check axiom (Ex2). Let $Y \stackrel{p}\ra Z \stackrel{f}\la Z'$ be a diagram of morphisms in $\ce$ and assume that $p$ is a deflation with respect to $\cs'$. By axiom (Ex2) for the pair $(\ce, \cs)$, there exists a pull back diagram
\begin{align*}
\begin{xy}
\SelectTips{cm}{}
\xymatrix{
Y' \ar[d]^{f'} \ar[rr]^{p'} && Z'  \ar[d]^{f}\\
Y \ar[rr]^p && Z
}
\end{xy}
 \end{align*} 
such that $p'$ is a deflation with respect to $\cs$. It remains to show that $\Hom_{\ce}(\cm, p')$ is surjective. Let $T \in \cm$ and let $g\colon  T \ra Z'$ be a morphism in $\ce$. By the surjectivity of $\Hom_{\ce}(\cm, p)$, we obtain a morphism $g'\colon T \ra Y$ such that $pg'=fg$. Now, the universal property of the pull back yields a morphism $h\colon T \ra Y'$ such that $p'h=g$. This shows that $p'$ is a deflation with respect to $\cs'$. Hence, $(\ce, \cs')$ satisfies (Ex2). The axiom ($\text{Ex2}^{\rm \op}$) is satisfied by a dual argument.

By definition of $\cs'$, every object in $\add\cm$ is projective and injective in $(\ce, \cs')$.
We will show that $\ce$ has enough projectives with respect to the exact structure $(\ce, \cs')$.
For any $X\in\ce$, we take a right $\cm$-approximation $f\colon M \to X$ of $X$.
Since $\cm$ contains $\cp$, any morphism from $\cp$ to $X$ factors through $f$.
By the dual of Lemma \ref{technical lemma}, we have a conflation in $(\ce, \cs)$
\begin{align} \label{E:NewEnoughProj}
 Y\to M\oplus P\xrightarrow{(f \ 0)} X.
\end{align}
 with $P\in\cp$. This conflation is actually contained in $\cs'$, since $f$ is a right $\cm$-approximation. In particular, this sequence shows that $(\ce, \cs')$ has enough projective objects.
Dually, we have that $(\ce, \cs')$ has enough injective objects. Take any projective object $Q$ in $(\ce, \cs')$. Then \eqref{E:NewEnoughProj} yields a deflation from $\add \cm$ onto $Q$. Now, \eqref{E:NewEnoughProj} splits and therefore $Q \in \add \cm$. The injective objects can be treated similarly. Summing up, the classes  of projective objects and injective objects in $(\ce, \cs')$ both equal
$\add\cm$. This completes the proof.
\end{proof}

\begin{ex}\label{Ex:Kong}
Consider the finite dimensional $k$-algebra \[T=k[x]/(x^2) \otimes_{k} k(A \xrightarrow{y} B).\] It is isomorphic to the quiver algebra $kQ/R$, where $Q$ is the following quiver
\begin{align}
\begin{xy}
\SelectTips{cm}{}
\xymatrix{
1  \ar@(lu,ld)[]_{\alpha} \ar[rr]^{\beta} && 2 \ar@(ru,rd)[]^{\gamma}
}
\end{xy}
\end{align}
and $R$ is generated by $\alpha^2$, $\gamma^2$ and $\beta\alpha-\gamma\beta$. We claim that the pair \begin{align}\cm:=\add_{T}(P_{1} \oplus P_{2} \oplus I_{2}) \subseteq \mod-T=:\ce \end{align} satisfies the conditions in Proposition \ref{new Frobenius structure}. Indeed, since $T$ is finite dimensional, we may take $\tau\colon \ul{\mod}-T \ra \ol{\mod}-T$ to be the Auslander--Reiten translation. Then the Auslander--Reiten formula (see e.g. \cite{ARS}), yields a functorial isomorphism $\Ext^1_{\ce}(X,Y)\cong D\overline{\Hom}_{\ce}(Y,\tau X)$. Since $P_{2}=I_{1}$ is a projective and injective object, $\cm$ contains all projective objects $\cp$ and all injectives $\ci$. It remains to check that $\tau(I_{2}) \cong P_{1}$ holds. The minimal projective resolution of $I_{2}$ is given by
\begin{align}
0 \ra P_{1} \xrightarrow{\beta \cdot} P_{2} \ra I_{2} \ra 0.
\end{align}
Applying the Nakayama functor $\nu$, yields a map
$
 I_{1} \xrightarrow{\nu(\beta \cdot)} I_{2} 
$
with kernel $P_{1}$. This shows that  $\tau(I_{2}) \cong P_{1}$. Therefore,  $\mod-T$ admits a Frobenius structure with projective-injective objects $\cm$, by Proposition \ref{new Frobenius structure}. Moreover, one can show that the stable category $\ul{\mod}-T$ is triangle equivalent to the triangulated orbit category $\cd^b(kA_{3})/[1]$.
\end{ex}

\subsection{A Morita type Theorem for Frobenius categories }\label{ss:Iyama}

\begin{defn}\label{defNCR}
Let $\ce$ be a Frobenius category with $\proj \ce= \add P$ for some $P \in \proj \ce$.  If there exists  $M\in\ce$ with $P\in\add M$, such that $A:=\End_{\ce}(M)$  is Noetherian and $\gldim(A)<\infty$, then $A$ is a  \emph{noncommutative resolution} of $\ce$.
\end{defn}

The purpose of this section is to show that the existence of a noncommutative resolution puts strong restrictions on $\ce$. The following theorem is based on joint work with Osamu Iyama, Michael Wemyss and Dong Yang \cite{IKWY12}. The strategy of the proof is inspired by \cite[Theorem 2.2.~(a)]{AIR}.

\begin{thm}\label{t:main-thm}
Let $\ce$ be a Frobenius category with $\proj \ce= \add P$ for some $P \in \proj \ce$.  Assume that there exists a noncommutative resolution $\End_{\ce}(M)$ of $\ce$ with $\gldim\End_{\ce}(M)=n$.  Then the following statements hold
\begin{itemize} 
\item[(1)] $E=\End_\ce(P)$ is an Iwanaga--Gorenstein ring of dimension at most $n$.\\
\item[(2)] We have an equivalence $\Hom_{\ce}(P,-)\colon\ce\to\GP(E)$ up to direct summands. It is an equivalence if $\ce$ is idempotent complete. This induces a triangle equivalence 
\[
\underline{\ce}\xrightarrow{\sim}\underline{\GP}(E)
\]
up to direct summands. It is an equivalence if $\ce$ or $\underline{\ce}$ is idempotent complete.\\
\item[(3)] $\ul{\ce}=\thick_{\ul{\ce}}(M)$.
\end{itemize}
\end{thm}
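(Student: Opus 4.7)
The plan is to realize $\ce$ as $\GP(E)$ (up to summands) through the functor $F = \Hom_\ce(P,-)\colon \ce \to \Mod-E$, and then read off all three claims from this identification. The functor $F$ is exact because $P$ is projective in $\ce$, and its restriction to $\add P$ is an equivalence onto $\proj E$ by Yoneda. Crucially, the Frobenius property makes $P$ injective as well, so $\Hom_\ce(-,P)$ is also exact; this two-sided exactness is what ultimately produces complete projective resolutions over $E$.

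The technical backbone is a finite $\add M$-resolution $0 \to M_n \to \cdots \to M_0 \to X \to 0$ for each $X \in \ce$. I would begin with a right $\add M$-approximation $M_0' \to X$, available because $A$ is Noetherian and $\Hom_\ce(M,-)$ sends $\ce$ into finitely generated $A$-modules (compare Example \ref{Ex:FunctFinite}). Since $\proj\ce = \add P \subseteq \add M$, this map satisfies the surjectivity hypothesis of the dual of Lemma \ref{technical lemma}, so after stabilizing by a projective we obtain a conflation $X_1 \to M_0 \to X$ in $\ce$ with $M_0 \in \add M$ whose image under $\Hom_\ce(M,-)$ is short exact. Iterating produces a projective resolution of $\Hom_\ce(M,X)$ over $A$; the hypothesis $\gldim A = n$ forces the $n$-th syzygy to be projective, and this pulls back through the equivalence $\add M \cong \proj A$ to an object $M_n \in \add M$, with the ``up to summands'' caveat in the statement accommodating the case when $\ce$ is not idempotent complete.

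Full faithfulness of $F$ then follows by taking any two-term $\add P$-presentation $P_1 \to P_0 \to X \to 0$ of $X$ (available since $\ce$ has enough projectives $\add P$), applying $\Hom_\ce(-,Y)$ and $\Hom_E(F(-),F(Y))$, and comparing the two left-exact sequences via the equivalence $\add P \cong \proj E$. To see that $F(X) \in \GP(E)$, I would splice an infinite $\add P$-projective resolution of $X$ with the infinite $\add P$-injective coresolution obtained by iterating $X \to I(X) \to \Omega^{-1}X$; this yields a doubly-infinite acyclic $\add P$-complex in $\ce$. Applying $F$ gives a complete projective complex in $\Mod-E$, and applying $\Hom_E(-,E)$ to it is, via $\add P \cong \proj E$, identified with $\Hom_\ce(-,P)$ applied to the original $\add P$-complex, which is exact because $P$ is injective in $\ce$. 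Hence $\Ext^{>0}_E(F(X),E) = 0$, so $F(X) \in \GP(E)$.

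The main obstacle, and the place where $\gldim A = n$ is genuinely used a second time, is to show that $E = eAe$ is Iwanaga--Gorenstein of dimension $\leq n$. My plan is to exploit the adjunction between the functors $(-) \otimes_E eA\colon \Mod-E \to \Mod-A$ and $(-)e = (-)\otimes_A Ae\colon \Mod-A \to \Mod-E$, together with the fact that $Ae$ is a direct summand of $A$, to transfer $\Ext^i_E(N,E)$-vanishing for $i > n$ to $\Ext^i_A(-,Ae)$-vanishing, which follows from $\gldim A = n$; a symmetric argument handles left modules. Once $E$ is Iwanaga--Gorenstein, essential surjectivity of $F\colon \ce \to \GP(E)$ up to summands is obtained by lifting a projective presentation of any $N \in \GP(E)$ across $\add P \cong \proj E$ to a morphism in $\add P$, completing it to a conflation in $\ce$ and identifying $F$ of its cokernel with $N$ (up to a summand). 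This proves (1) and (2), the triangle equivalence in (2) being the induced equivalence on stable Frobenius categories. Finally, (3) is immediate from Step 1: the resolution $0 \to M_n \to \cdots \to M_0 \to X \to 0$ is a sequence of conflations, which become distinguished triangles in $\ul{\ce}$, exhibiting any $X \in \ul{\ce}$ as an iterated extension of shifts of objects in $\add M$; hence $\ul{\ce} = \thick_{\ul{\ce}}(M)$.
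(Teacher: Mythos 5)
Your overall skeleton is close in spirit to the paper's: both proofs are organized around the fully faithful functor $\Hom_\ce(P,-)$, exploit $\gldim A=n$ to produce finite resolutions by objects coming from $\add(P\oplus M)$, and use the Frobenius structure ($P$ both projective and injective) to get $\Ext$-acyclicity over $E$. Constructing a finite $\add M$-resolution inside $\ce$ (where the paper instead resolves inside $\mod\text{-}E$) is a valid variant, provided you justify that $\Hom_\ce(M,-)$ is \emph{fully faithful} on all of $\ce$ (you need this to conclude from ``$\Hom_\ce(M,X_n)$ projective over $A$'' that the splitting of the relevant conflation lifts to $\ce$ and so $X_n\in\add M$); this is true because $P\in\add M$ is a projective generator, but it is not automatic and should be argued. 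Granting that, your derivation of (3) is clean and more direct than the paper's (which deduces (3) \emph{after} (2), using idempotent completion).

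The genuine gap is your argument for essential surjectivity of $F\colon\ce\to\GP(E)$. You propose to lift a two-term projective presentation $P_1\to P_0\to N\to 0$ of $N\in\GP(E)$ to a map $\tilde P_1\to\tilde P_0$ in $\add P$ and ``complete it to a conflation in $\ce$.'' This step is not available: the map $\tilde P_1\to\tilde P_0$ is neither an inflation nor (in general) satisfies the hypothesis of Lemma \ref{technical lemma} or its dual (its $\Hom_\ce(-,P)$-dual is \emph{not} surjective — its image is $\Hom_E(\Omega N, E)\subsetneq\Hom_E(P_1,E)$), so Lemma \ref{technical lemma} does not produce a conflation, and the intended ``cokernel'' $N$ does not yet live in $\ce$. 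The paper's actual argument uses the full-length resolution $0\to Q_n\to\cdots\to Q_0\to N\to 0$ by objects $Q_i\in\add\Hom_\ce(P,P\oplus M)$ (obtained by applying $e$ to a projective $A$-resolution of $N\otimes_E eA$), lifts the whole complex to $\ce$ via density of $F$ on $\add(P\oplus M)$, checks that the $\Hom_\ce(-,P)$-dual of the lifted complex is exact (this uses $N\oplus Y\in\GP(E)$), and only \emph{then} applies Lemma \ref{technical lemma} inductively starting from the left end of the complex; the iterated splicing terminates in an object of $\ce$ whose image under $F$ contains $N$ as a summand. A related imprecision occurs in your proof of (1): ``transferring $\Ext^i_E(N,E)$-vanishing to $\Ext^i_A(-,Ae)$-vanishing'' is not an isomorphism of $\Ext$-groups — the complex $P_\bullet e$ is not a projective $E$-resolution of $N$. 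The argument works only after you observe that each $P_i e\in\add F(P\oplus M)$ satisfies $\Ext^{>0}_E(P_i e,E)=0$ by your Frobenius-duality step, so that dimension shifting along $0\to P_n e\to\cdots\to P_0 e\to N\to 0$ is legitimate; this combination of the two ingredients should be made explicit.
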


\begin{rem}
Not every Frobenius category with a projective generator admits a noncommutative resolution.  Indeed, let $R$ be a local complete normal Gorenstein surface singularity over $\mathbb{C}$ and consider the Frobenius category $\ce=\MCM(R)$. For example, isolated hypersurface singularities $R=\mathbb{C}\llbracket x, y, z\rrbracket/(f)$ satisfy these conditions. Then every noncommutative resolution of $\ce$ in the sense of Definition \ref{defNCR} is an NCR of $R$ in the sense of \cite{DaoIyamaTakahashiVial12}. In particular, if $\MCM(R)$ has a noncommutative resolution, then $\Spec(R)$ has rational singularities, by \cite[Corollary 3.3.]{DaoIyamaTakahashiVial12}. The complete rational Gorenstein surface singularities are precisely the ADE-surface singularities, see e.g. \cite{Durfee} or Section \ref{s:Specials}. In other words, if $R=\mathbb{C}\llbracket x, y, z\rrbracket/(f)$ is an isolated hypersurface singularity $R=\mathbb{C}\llbracket x, y, z\rrbracket/(f)$ which is not of ADE-type, then $\MCM(R)$ does not admit a noncommutative resolution.

This example also shows that the existence of noncommutative resolutions  is not necessary for Theorem \ref{t:main-thm} (2), since $\ce=\MCM(R) =\GP(R)$ for commutative Gorenstein rings $R$.
\end{rem}

\begin{proof}
It is well-known that the functor $\Hom_{\ce}(P,-)\colon \ce\to\mod-E$ is fully faithful. Moreover, it restricts to an equivalence $\Hom_{\ce}(P,-)\colon\add P\to\proj-E$ up to summands.  We can drop the supplementary `up to summands'  if $\ce$ is idempotent complete. We establish (1) in three steps:

(i) We first show that $\Ext^i_E(\Hom_{\ce}(P,X),E)=0$ for any $X\in\ce$ and $i>0$.  Since $\ce$ has enough projective objects, there exists a conflation
\begin{equation}\label{P resolution}
0\to Y\to P'\to X\to 0
\end{equation}
 in $\ce$ with $P'$ projective.
Applying $\Hom_{\ce}(P,-)$, we obtain an exact sequence
\begin{equation}\label{4th syzygy}
0\to\Hom_{\ce}(P,Y)\to\Hom_{\ce}(P,P')\to\Hom_{\ce}(P,X)\to0
\end{equation}
with a projective $E$-module $\Hom_{\ce}(P,P')$.
Applying $\Hom_{\ce}(-,P)$ to \eqref{P resolution} and $\Hom_E(-,E)$ to \eqref{4th syzygy}
respectively and comparing them, 
we have a commutative diagram of exact sequences
\[
\begin{array}{c}
{\SelectTips{cm}{10}
\xy0;/r.35pc/:
(-15,20)*+{\Hom_\ce(P^\prime,P)}="A0",(18,20)*+{\Hom_\ce(Y,P)}="A1",(50,20)*+{0}="A2",
(-15,10)*+{\Hom_E(\Hom_{\ce}(P,P'),E)}="a0",(18,10)*+{\Hom_E(\Hom_{\ce}(P,Y),E)}="a1",(50,10)*+{\Ext^1_E(\Hom_{\ce}(P,X),E)}="a2",(68,10)*+{0,}="a3",
\ar"A0";"A1"
\ar"A1";"A2"
\ar"a0";"a1"
\ar"a1";"a2"
\ar"a2";"a3"
\ar"A0";"a0"^\wr_{\Hom_{\ce}(P, -)}
\ar"A1";"a1"^\wr_{\Hom_{\ce}(P, -)}
\ar"A2";"a2" 
\endxy}
\end{array}
\]
where the vertical arrows are isomorphisms since $\Hom_{\ce}(P, -)$ is fully faithful.

In particular, the diagram shows that $\Ext^1_E(\Hom_{\ce}(P,X),E)=0$.
Since the syzygy of $\Hom_{\ce}(P,X)$ has the same form $\Hom_{\ce}(P,Y)$, we may repeat this argument to obtain $\Ext^i_E(\Hom_{\ce}(P,X),E)=0$ for any $i>0$.

(ii) We show that for any $X\in\mod-E$, there exists an exact sequence
\begin{equation}\label{approximation}
0\to Q_n\stackrel{g_{n}}\to\cdots\stackrel{g_{1}}\to Q_0\stackrel{g_{0}}\to X\to0
\end{equation}
of $E$-modules with $Q_i\in\add\Hom_{\ce}(P,P\oplus M)$.

Define an $A$-module by $\widetilde{X}:=X \otimes_E \Hom_{\ce}(P\oplus M,P)$.
Let $e$ be the idempotent of $A=\End_{\ce}(P\oplus M)$ corresponding to the summand $P$ of $P\oplus M$.
Then we have $eAe=E$ and $\widetilde{X}e=X$.
Since $\gldim(A) \leq n$, there exists a projective resolution
\[0\to P_n\to\cdots\to P_0\to\widetilde{X}\to0.\]
Multiplying by $e$ and using $Ae=\Hom_{\ce}(P,P\oplus M)$, we have the assertion.

(iii) Consider the short exact sequence of $E$-modules $0 \ra \ker(g_{0}) \ra Q_{0} \stackrel{g_{0}}\ra X \ra 0 $ obtained from \eqref{approximation} above. Applying $\Hom_{E}(-, E)$ to this sequence, we obtain a long exact Ext--sequence. By (i), $\Ext^i_{E}(Q_{0}, E)$ for all $i>0$. Hence, $\Ext^{n+1}_E(X,E)\cong \Ext^n_{E}(\ker(g_{0}), E)$ and proceeding inductively, we obtain 
\[\Ext^{n+1}_E(X,E) \cong \Ext^1_{E}(\ker(g_{n-1}), E) =\Ext^1_{E}(Q_{n}, E) \stackrel{(i)} =0\]  for any $X\in\mod-E$.
This shows that the injective dimension of the $E$-module $E$ is bounded above by $n$.
The dual argument shows that the injective dimension of the $E^{\rm op}$-module $E$ is at most $n$.
Thus $E$ is an Iwanaga--Gorenstein ring of dimension at most $n$, which shows (1).\\
(2) By (i) again, we have a functor $\Hom_{\ce}(P,-)\colon \ce\to\GP(E)$, which is fully faithful.  We will now show that it is dense up to taking direct summands.

Let $X\in\GP(E)$.
We claim that there exists a complex 
\begin{equation}\label{add P+M}
M_n\xrightarrow{f_n}\cdots\xrightarrow{f_1} M_0
\end{equation}
in $\ce$ with $M_i\in\add(P\oplus M)$ such that the induced sequence
\begin{equation}\label{(P,P+M)}
0\to\Hom_{\ce}(P,M_n)\xrightarrow{f_n \circ}\hdots\xrightarrow{f_1 \circ}\Hom_{\ce}(P,M_0)\to X\oplus Y\to0
\end{equation}
is exact for some $Y\in\GP(E)$. Indeed, consider the $\add \Hom_{\ce}(P, P \oplus M)$ resolution of $X$ constructed in \eqref{approximation}. Since $\Hom_{\ce}(P, -)$ is dense up to direct summands, there exists $M_{n} \in \add(P \oplus M)$ and $Q_{n}' \in \GP(E)$ such that $\Hom_{\ce}(P, M_{n}) \cong Q_{n} \oplus Q_{n}'$. Similarly, we can find $M_{n-1} \in \add(P \oplus M)$  and $Q_{n-1}' \in \GP(E)$ such that $\Hom_{\ce}(P, M_{n-1}) \cong (Q_{n-1} \oplus Q_{n}') \oplus Q_{n-1}'$. Since $\Hom_{\ce}(P, -)$ is full, the monomorphism
\[
\widetilde{g}_{n}\colon Q_{n} \oplus Q_{n}' \xrightarrow{\left(\begin{smallmatrix} g_{n} & 0 &0 \\ 0 &1 &0 \end{smallmatrix}\right)^{\rm tr}} (Q_{n-1} \oplus Q_{n}') \oplus Q_{n-1}'
\]
has a preimage $f_{n}\colon M_{n} \ra M_{n-1}$. Now, let $M_{n-2} \in\add(P \oplus M)$  and $Q_{n-2}' \in \GP(E)$ such that $\Hom_{\ce}(P, M_{n-2}) \cong (Q_{n-2} \oplus Q_{n-1}') \oplus Q_{n-2}'$. Since \eqref{approximation} is exact, 
\[
0 \ra Q_{n} \oplus Q_{n}' \xrightarrow{\widetilde{g}_{n}} (Q_{n-1} \oplus Q_{n}') \oplus Q_{n-1}' \xrightarrow{\widetilde{g}_{n-1}=\left(\begin{smallmatrix} g_{n-1} & 0 &0 \\ 0 & 0&1 \\0&0&0  \end{smallmatrix}\right)}
(Q_{n-2} \oplus Q_{n-1}') \oplus Q_{n-2}'
\]
is exact as well. Let $f_{n-1}$ be a preimage of $\widetilde{g}_{n-1}$. Then $f_{n-1}f_{n}=0$ holds, since $\Hom_{\ce}(P, -)$ is faithful. One can proceed in this way to construct preimages $M_{n-3},$ $ \ldots, M_{0}$ and $f_{n-2}, \ldots, f_{1}$ for $Q_{n-3}\oplus Q_{n-2}' \oplus Q_{n-3}', \ldots, Q_{0} \oplus Q_{1}' \oplus Q_{0}'$  and  $\widetilde{g}_{n-2}, \ldots,\widetilde{g}_{1}$, respectively. By faithfulness of $\Hom_{\ce}(P, -)$, we get a complex \eqref{add P+M}.  Moreover, by construction, the cokernel of $\widetilde{g}_{1}=\Hom_{\ce}(P, f_{1})$ is isomorphic to $X \oplus Q_{0}'$. Hence setting $Y=Q_{0}'$ proves the claim.

Applying $\Hom_{\ce}(-,P)$ to \eqref{add P+M} and $\Hom_E(-,E)$ to \eqref{(P,P+M)} and comparing them, we have a commutative diagram
\begin{align} \label{ExUpperLine}
\begin{array}{c}
\begin{xy}
\SelectTips{cm}{}
\xymatrix{
\Hom_{\ce}(M_0,P)\ar[r]\ar[d]^\wr_{\Hom_{\ce}(P, -)}&\cdots\ar[r]&\Hom_{\ce}(M_n,P)\ar[d]^\wr_{\Hom_{\ce}(P, -)}\ar[r]&0 \\
\Hom_E(\Hom_{\ce}(P,M_0),E)\ar[r]&\cdots\ar[r]&\Hom_E(\Hom_{\ce}(P,M_n),E)\ar[r]&0 }
\end{xy}
\end{array}
\end{align}
where the lower sequence is exact since $X\oplus Y\in\GP(E)$. Indeed, this implies that all the cohomologies in \eqref{(P,P+M)} are in $\GP(E)$. But $E$ is injective in $\GP(E)$. Hence, $\Hom_{E}(-, E)$ preserves exactness of the sequence \eqref{(P,P+M)}.

Thus the upper sequence is also exact. Applying Lemma \ref{technical lemma} repeatedly to \eqref{add P+M}, we have
a complex
\begin{align}\label{E:ComplexIyama} 
\begin{array}{c}
 M_n\xrightarrow{(f_n\ 0)^{\rm tr}} M_{n-1}\oplus P_{n-1}\xrightarrow{{f_{n-1}\ 0\ 0\choose \ \ 0\ \ \ 1\ 0}^{\rm tr}}
M_{n-2}\oplus P_{n-1}\oplus P_{n-2}\xrightarrow{} \cdots \\ \\
\cdots\to M_0\oplus P_1\oplus P_0\to N
\end{array}
\end{align}
with projective objects $P_i$, which is a glueing of conflations in $\ce$. Indeed, starting on the left end of \eqref{add P+M}, we obtain a short exact sequence $0 \ra M_{n} \xrightarrow{\left({f_{n} \ 0}\right)^{\rm tr}} M_{n-1} \oplus P_{n-1} \stackrel{\pi_{n-1}}\ra M_{n-1}' \ra 0$ in $\ce$, by Lemma \ref{technical lemma}, which may be applied since \eqref{ExUpperLine} is exact. Now, the universal property of the cokernel of $({f_{n} \ 0})^{\rm tr}$ yields a map $\iota_{n-1}$
\begin{align}\label{E:CommDiaIyama}
\begin{xy}
\SelectTips{cm}{}
\xymatrix{
 M_{n} \ar[r]^(0.35){({f_{n} \ 0})^{\rm tr}} & M_{n-1} \oplus P_{n-1} \ar[rr]^{\left( \begin{smallmatrix} f_{n-1} & 0 \\ 0 &1 \end{smallmatrix} \right)} \ar[rd]^{\pi_{n-1}} && M_{n-2} \oplus P_{n-1} \\
&& M_{n-1}' \ar[ru]^{\iota_{n-1}}
}
\end{xy}
\end{align}
Since $P$ is injective in $\ce$, $\Hom_{\ce}(-, P)$ is exact. Hence, \eqref{E:CommDiaIyama} and the exactness of \eqref{ExUpperLine} shows that $\Hom_{\ce}(\iota_{n-1}, P)$ is surjective. We apply Lemma \ref{technical lemma} and proceed in a similar way to construct the complex \eqref{E:ComplexIyama} above.

Applying $\Hom_{\ce}(P, -)$ to \eqref{E:ComplexIyama} and using the exactness of \eqref{(P,P+M)}, yields an isomorphism $X\oplus Y\oplus \Hom_{\ce}(P,P_0)\simeq \Hom_{\ce}(P,N)$. Hence, $\Hom_{\ce}(P, -)$ is dense up to summands.\\
(3) The existence of (\ref{approximation}) implies that the fully faithful functor $\Hom_{\ce}(P, -)$ induces an equivalence between the idempotent completions of $\thick_{\underline{\ce}}(M)$ and $\underline{\GP}(E)$. Hence the idempotent completions of $\thick_{\underline{\ce}}(M)$ and $\underline{\ce}$ are equivalent, by (2). But the former category is a full subcategory closed under taking direct summands in $\underline{\ce}$. Therefore, we already have $\thick_{\underline{\ce}}(M) \cong \underline{\ce}$ before passing to the idempotent completions.
\end{proof}

\begin{rem}
We remark that  \cite[4.2]{Chen12} gives an embedding $\ce\to\GP(\proj \ce)$ in the general setting, where $\proj \ce$ does  not necessarily admit an additive generator $P$. 
\end{rem}

\subsection{The Buchweitz--Happel--Keller--Rickard--Vossieck equivalence}\label{ss:BuchweitzHappel}
\quad 

\noindent
Let throughout $\ce$ be a Frobenius category.
\begin{defn}\label{D:AcyclicFrob} Let $N \in \mathbb{Z}$. A complex $P^\bullet$ of $\ce$-projective objects is called \emph{acyclic in degrees $\leq N$} if there exists conflations $
\begin{xy}\SelectTips{cm}{}
\xymatrix{
Z^n(P^{\bullet}) \ar[r]^(.65){i^{n}} & P^n \ar[r]^(.35){p^{n}} & Z^{n+1}(P^\bullet) }
\end{xy}
$
in $\ce$ such that $d^n_{P^\bullet}=i^{n+1}p^n$ holds for all $n\leq N$. The full subcategory of $K^-(\proj \ce)$ consisting of those complexes which are acyclic in degrees $\leq d$ for some $d \in \mathbb{Z}$ is denoted by $K^{-, b}(\proj \ce)$. This defines a triangulated subcategory of $K^-(\proj \ce)$ (c.f. \cite{KellerVossieck87}).
\end{defn}
\begin{rem}
The definition of the triangulated category $K^{-, b}(\proj \ce)$ depends on the exact structure of the ambient Frobenius category $\ce$ of $\proj \ce$. We refer to Remark \ref{R:Right-bounded-with-bounded-coho} for a discussion of this subtle issue.
\end{rem}

Taking projective resolutions yields a functor $\P\colon \ce \ra K^{-,b}(\proj \ce) $. We need the following dual version of a result of Keller \& Vossieck \cite[Exemple 2.3.]{KellerVossieck87}. Special cases of this result were obtained 
by  Rickard \cite{Rickardstable} for module categories over selfinjective algebras and more generally by
Buchweitz \cite{Buchweitz87} for Gorenstein projective modules over Iwanaga--Gorenstein rings, see Theorem \ref{T:Buchweitz}. A dual result, of Theorem \ref{T:Buchweitz} for finite dimensional Gorenstein algebras was obtained by Happel \cite{Happel91}.
\begin{prop}\label{P:StFrob}
The functor $\P$ induces an equivalence of triangulated categories 
\begin{align}\ul{\P}\colon \ul{\ce} \longrightarrow K^{-, b}(\proj \ce)/K^b(\proj \ce). \end{align}
\end{prop}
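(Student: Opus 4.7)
The strategy is to define $\ul{\P}$, verify it is a well-defined triangle functor, and then exhibit an explicit quasi-inverse built from cycle objects.

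First I would fix, for each $X\in\ce$, a projective resolution $\P(X)\in K^{-,b}(\proj\ce)$ with $Z^0(\P(X))=X$; the standard comparison theorem makes $\P$ a functor. If $f\colon X\to Y$ factors through some $Q\in\proj\ce$, then any lift $\P(f)$ factors through $\P(Q)=Q\in K^b(\proj\ce)$ and is therefore zero in the Verdier quotient, so $\P$ descends to $\ul{\P}\colon\ul{\ce}\to K^{-,b}(\proj\ce)/K^b(\proj\ce)$. For a conflation $X\hookrightarrow Y\twoheadrightarrow Z$ in $\ce$, the horseshoe lemma yields compatible resolutions fitting into a degreewise split short exact sequence $\P(X)\to\P(Y)\to\P(Z)$, whose associated triangle in $K^{-,b}(\proj\ce)$ matches the standard Happel--Heller triangle from Theorem \ref{T:HappelFrob}. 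Applying this to $X\to I(X)\to\Omega^{-1}X$ and using $\P(I(X))=I(X)\in K^b(\proj\ce)$ gives $\ul{\P}(\Omega^{-1}X)\cong\ul{\P}(X)[1]$ in the quotient, so $\ul{\P}$ is triangulated.

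Essential surjectivity follows from the stupid truncation. Given $P^\bullet\in K^{-,b}(\proj\ce)$ acyclic in degrees $\leq N$, set $X:=Z^{N+1}(P^\bullet)\in\ce$. The conflations $Z^n\hookrightarrow P^n\twoheadrightarrow Z^{n+1}$ for $n\leq N$ identify $\sigma^{\leq N}P^\bullet$ with $\P(X)[-N]$ in $K^{-,b}(\proj\ce)$, whereas $\sigma^{>N}P^\bullet$ is a bounded complex of projectives and hence zero in the quotient; the stupid truncation triangle then gives $P^\bullet\cong\ul{\P}(X)[-N]$ in $K^{-,b}(\proj\ce)/K^b(\proj\ce)$.

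For full faithfulness I would construct a quasi-inverse $\mathcal{Z}$. For $P^\bullet$ acyclic in degrees $\leq N$, set $\mathcal{Z}(P^\bullet):=Z^{N+1}(P^\bullet)[-N-1]\in\ul{\ce}$. Since $P^n$ projective-injective becomes zero in $\ul{\ce}$, the conflation $Z^n\hookrightarrow P^n\twoheadrightarrow Z^{n+1}$ yields $Z^{n+1}\cong Z^n[1]$ in $\ul{\ce}$, so $\mathcal{Z}(P^\bullet)$ is independent of the chosen threshold $N$. A chain map $\varphi$ restricts, below the acyclicity thresholds of source and target, to a morphism of cycles whose class in $\ul{\ce}$ is unchanged by chain-homotopies of $\varphi$; hence $\mathcal{Z}$ is a well-defined functor on $K^{-,b}(\proj\ce)$. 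A bounded complex of projectives has vanishing cycles in sufficiently negative degree, so $\mathcal{Z}$ kills $K^b(\proj\ce)$ and descends to the Verdier quotient. The natural isomorphisms $\mathcal{Z}\circ\ul{\P}\cong\id$ and $\ul{\P}\circ\mathcal{Z}\cong\id$ are then immediate from the two displayed identifications. The main obstacle is the functoriality of $\mathcal{Z}$: one must verify that cycle maps are compatible across different acyclicity thresholds under the Happel--Heller identification $\Omega^{-1}=[1]$, and that chain-homotopies of lifts yield cycle maps factoring through projective-injectives of $\ce$. The remaining ingredients (horseshoe lemma, stupid truncation, Theorem \ref{T:HappelFrob}) enter only formally.
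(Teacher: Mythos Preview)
Your strategy is correct and leads to a valid proof, but it differs from the paper's in the treatment of full faithfulness. The paper does not construct a quasi-inverse; instead it argues directly. For faithfulness, given $f\colon M\to N$ with $\P(f)$ factoring through some $Q^\bullet\in K^b(\proj\ce)$, the paper uses an iterated homotopy argument (exploiting injectivity of the $Q^i$) to push the nonzero components of the factoring map to the right until only degree $0$ survives, whence $f$ factors through $Q^0\in\proj\ce$. For fullness, given a roof $P_M^\bullet\xrightarrow{f}E^\bullet\xleftarrow{s}P_N^\bullet$ with $\mathrm{cone}(s)\in K^b(\proj\ce)$, the paper truncates and then runs a similar homotopy argument to replace the roof by an honest chain map $P_M^\bullet\to P_N^\bullet$. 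Your quasi-inverse $\mathcal{Z}$ packages these manipulations more conceptually: it is essentially the ``infinite syzygy'' functor, and the verification that homotopic chain maps induce equal cycle maps in $\ul{\ce}$ amounts to the same use of projective-injectivity.

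One point deserves tightening. You write that $\mathcal{Z}$ kills $K^b(\proj\ce)$ ``and descends to the Verdier quotient'', but for a functor that is not yet known to be triangulated, vanishing on objects of the subcategory is not sufficient: you need $\mathcal{Z}$ to invert every morphism whose cone lies in $K^b(\proj\ce)$. This is immediate here, since such a morphism is an isomorphism in all sufficiently negative degrees and hence induces isomorphisms on cycles there, but it should be said. Once $\mathcal{Z}$ is defined on the quotient and $\mathcal{Z}\circ\ul{\P}\cong\id_{\ul{\ce}}$ is checked, essential surjectivity of $\ul{\P}$ (which you have) forces $\ul{\P}$ to be an equivalence; the separate verification of $\ul{\P}\circ\mathcal{Z}\cong\id$ is then redundant, though harmless.
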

\begin{proof}
We show that $\ul{\P}\colon \ul{\ce} \ra K^-(\proj \ce)/K^b(\proj \ce)$ is a fully faithful triangle functor. This is sufficient since any complex in $K^{-, b}(\proj \ce)$ becomes isomorphic to a shifted projective resolution in $K^{-, b}(\proj \ce)/K^b(\proj \ce)$. Indeed, this may be achieved by brutally truncating the bounded part of the complex, which is not acyclic. Hence the functor $\ul{\P}$ is dense.

In order to show that $\ul{\P}$ commutes with shifts, let $X \in \ce$ and let
\begin{align} \label{E:Confl}
\begin{xy}\SelectTips{cm}{}
\xymatrix{
X \ar[r]^(.4){i} & I(X) \ar[r]^(.4){\pi} & \Omega^{-1}(X)
}
\end{xy}
\end{align}
be a conflation with $I(X)$ projective-injective. We have to construct a natural isomorphism
$\ol{\eta}_{X}\colon \ul{\P}(\Omega^{-1}(X)) \ra \ul{\P}(X)[1]$
in $K^-(\proj \ce)/K^b(\proj \ce)$. Using the conflation (\ref{E:Confl}), $\Omega^{-1}(X)$ has a projective resolution
\begin{align}\label{E:Res}
\begin{array}{cc}
\begin{xy}\SelectTips{cm}{}
\xymatrix{
\cdots \ar[r]^(.4){d^3} & P^2_{\Omega^{-1}(X)} \ar[r]^{d^2} & P^1_{\Omega^{-1}(X)} \ar[rr]^{d^1}  \ar[rd] && I(X) \ar[r]^(.4){\pi} & \Omega^{-1}(X) \ar[r] & 0 \\
&&&X \ar[ru]^(.4){i}
}
\end{xy}
\end{array}
\end{align}
Hence, $X$ has a projective resolution
$
\begin{xy}\SelectTips{cm}{}
\xymatrix{
\cdots \ar[r]^(.4){-d^3} & P^2_{\Omega^{-1}(X)} \ar[r]^{-d^2} & P^1_{\Omega^{-1}(X)} \ar[r] & X \ar[r] & 0
}
\end{xy}
$
and we can define a morphism $\eta_{X}\colon\P(\Omega^{-1}(X)) \rightarrow \P(X)[1]$ in $K^-(\proj \ce)$ as follows
\[
\begin{xy}\SelectTips{cm}{}
\xymatrix{
\cdots \ar[r]^(.4){d^3}  & P^2_{\Omega^{-1}(X)} \ar[r]^{d^2} \ar[d]^1 & P^1_{\Omega^{-1}(X)} \ar[r]^{d^1} \ar[d]^1   & I(X)  \ar[d]\ar[r] & 0 \ar[d]\ar[r] & \cdots \\
\cdots \ar[r]^(.4){d^3} & P^2_{\Omega^{-1}(X)} \ar[r]^{d^2} & P^1_{\Omega^{-1}(X)}  \ar[r] & 0 \ar[r] & 0 \ar[r] & \cdots
}
\end{xy}
\]
One can check that  $\eta_{X}$ is natural in $X$. Since $\Cone(\eta_{X}) \cong I(X)[1] \in K^b(\proj \ce)$ the image $\ol{\eta}_{X}\colon\ul{\P}(\Omega^{-1}(X)) \ra \ul{\P}(X)[1]$ of $\eta_{X}$ in $K^-(\proj \ce)/K^b(\proj \ce)$ is an isomorphism. It is natural since $\eta_{X}$ is natural.

To show that $\P$ respects triangles, recall that a standard triangle $A \xrightarrow{\overline{i}} B \xrightarrow{\overline{p}} C \xrightarrow{\ol{h}} \Omega^{-1}(A)$ in $\ul{\ce}$ is defined via a commutative diagram in $\ce$
\begin{align}\label{E:Comm}
\begin{array}{cc}
\begin{xy}\SelectTips{cm}{}
\xymatrix{
A \ar[d]^1 \ar[rr]^i && B \ar[d]^g \ar[rr]^p && C \ar[d]^h\\
A  \ar[rr]^{i_{A}}  && I(A) \ar[rr]^(.5){p_{A}} && \Omega^{-1}(A)
}
\end{xy}
\end{array}
\end{align}
where both rows are conflations. We have to show that \[\ul{\P}(A) \xrightarrow{\ol{\P(i)}} \ul{\P}(B) \xrightarrow{\ol{\P(p)}} \ul{\P}(C) \xrightarrow{\ol{\eta_{A}\P(h)}} \ul{\P}(A)[1]\] defines a triangle in $K^-(\proj \ce)/K^b(\proj \ce)$. By the definition of the triangulated structure of the Verdier quotient category $K^-(\proj \ce)/K^b(\proj \ce)$ it suffices to show that ${\P}(A) \xrightarrow{{\P(i)}} {\P}(B) \xrightarrow{{\P(p)}} {\P}(C) \xrightarrow{{\eta_{A}\P(h)}} {\P}(A)[1]$
is a triangle in $K^-(\proj \ce)$. To do this, we construct a commutative diagram in $\ce$ in several steps.
\begin{align}\label{E:BigCommDiag}
\begin{array}{cc}
\begin{xy}\SelectTips{cm}{}
\xymatrix@C=35pt{
\vdots \ar[d]^{d_{A}^3} & \vdots \ar[d]^(.4){\left( \begin{smallmatrix} d_{A}^{3} & \phi^2 \\ 0 & d_{C}^3 \end{smallmatrix} \right)} & \vdots \ar[d]^{d_{C}^3} & \vdots \ar[d]^{-d_{A}^2}  & \vdots \ar[d]^{-d_{A}^2} \\
P_{A}^2 \ar[d]^{d_{A}^2} \ar[r]^(.4){\left( \begin{smallmatrix} 1 & 0 \end{smallmatrix} \right)^{\rm tr}}  & P_{A}^2 \oplus P_{C}^2 \ar[d]^(.45){\left( \begin{smallmatrix} d_{A}^{2} & \phi^1 \\ 0 & d_{C}^2 \end{smallmatrix} \right)} \ar[r]^(.6){\left( \begin{smallmatrix} 0 & 1 \end{smallmatrix} \right)}&P_{C}^2 \ar[d]^{d_{C}^2} \ar@{-->}[r]^{-\phi^1} & P_{A}^1 \ar[d]^{-d_{A}^1} \ar[r]^1 & P_{A}^1 \ar[d]^{-d_{A}^1}\\
P_{A}^1 \ar[d]^{d_{A}^1} \ar[r]^(.4){\left( \begin{smallmatrix} 1 & 0 \end{smallmatrix} \right)^{\rm tr}}  & P_{A}^1 \oplus P_{C}^1 \ar[d]^(.45){\left( \begin{smallmatrix} d_{A}^{1} & \phi^0 \\ 0 & d_{C}^1 \end{smallmatrix} \right)} \ar[r]^(.6){\left( \begin{smallmatrix} 0 & 1 \end{smallmatrix} \right)}&P_{C}^1 \ar[d]^{d_{C}^1} \ar@{-->}[r]^{-\phi^0} & P_{A}^0 \ar[d]^{i_{A}\pi_{A}} \ar[r]^1 & P_{A}^0 \ar[d]^{0}\\
P_{A}^0 \ar[d]^{\pi_{A}} \ar[r]^(.4){\left( \begin{smallmatrix} 1 & 0 \end{smallmatrix} \right)^{\rm tr}}  & P_{A}^0 \oplus P_{C}^0 \ar[d]^{\left( \begin{smallmatrix} i\pi_{A} &\pi'_{C} \end{smallmatrix} \right)} \ar[r]^(.6){\left( \begin{smallmatrix} 0 & 1 \end{smallmatrix} \right)}&P_{C}^0 \ar[d]^{\pi_{C}} \ar@{-->}[r]^{g\pi'_{C}} & I(A) \ar[d]^{p_{A}} \ar[r]^0 & 0 \ar[d]\\
A \ar[r]^(.45)i & B \ar[r]^p & C \ar[r]^(.42)h & \Omega^{-1}(A) \ar[r] & 0
}
\end{xy}
\end{array}
\end{align}
Firstly, we take projective resolutions of $A$ and $C$, respectively. Then we can inductively construct a projective resolution of $B$ (Horseshoe Lemma). Next, we resolve $\Omega^{-1}(A)$ as in (\ref{E:Res}). Using the commutativity of (\ref{E:Comm}) and of the  square 
\begin{align}
\begin{array}{cc}
\begin{xy}\SelectTips{cm}{}
\xymatrix{
 P_{A}^0 \oplus P_{C}^0 \ar[d]^{\left( \begin{smallmatrix} i\pi_{A} &\pi'_{C} \end{smallmatrix} \right)} \ar[r]^(.6){\left( \begin{smallmatrix} 0 & 1 \end{smallmatrix} \right)}&P_{C}^0 \ar[d]^{\pi_{C}}  \\
 B \ar[r]^p & C 
}
\end{xy}
\end{array}
\end{align}
on the bottom of diagram \eqref{E:BigCommDiag}, we see that $p_{A}(g\pi'_{C})=hp\pi'_{C}=h\pi_{C}$. This gives us the first broken arrow lifting $h$. The other commutativity relations for the lift of $h$, follow from the fact that the resolution of $B$ is a complex. Finally, the chain map on the right is just $\eta_{A}$. This shows that ${\P}(A) \xrightarrow{{\P(i)}} {\P}(B) \xrightarrow{{\P(p)}} {\P}(C) \xrightarrow{{\eta_{A}\P(h)}} {\P}(A)[1]$ is isomorphic to the standard triangle \[\P(A) \ra \Cone(\phi^{\bullet}) \ra (\P(C)[-1])[1] \xrightarrow{-\phi^{\bullet}[1]} \P(A)[1].\] Hence, $\ul{\P}$ is a triangle functor.

Let us prove that $\ul{\P}$ is faithful. Let $f\colon M \ra N$ be a morphims in $\ce$ such that the lift to the projective resolutions $\P(f)\colon P^\bullet_{M} \ra P^\bullet_{N}$ admits a factorisation $P^\bullet_{M} \xrightarrow{\phi} Q^\bullet \xrightarrow{\psi} P^\bullet_{N},$ where $Q^\bullet \in K^b(\proj \ce)$. We have to show that $f$ factors through $\proj \ce$.
As a first step, we show that if $\phi^{-i}=0$ holds for some $i>1$, then there is a homotopy equivalent chain map $\widetilde{\phi}$ with $\widetilde{\phi}^{-i+1}=0$.

The exactness of $P^\bullet_{M}$ and the injectivity of the $Q^i$ yield a diagram
\[
\begin{xy}\SelectTips{cm}{}
\xymatrix
{
\cdots\ar[r] & P_{M}^{-i} \ar[r]^{d_{M}^{-i}} \ar[dd]^{0} & P_{M}^{-i+1} \ar[dd]^{\phi^{-i+1}} \ar@{-->}[rd] \ar[rrr]^{d_{M}^{-i+1}} &&& P_{M}^{-i+2} \ar[dd]^{\phi^{-i+2}}\ar@/^15.0pt/[llldd]^h \ar[r]^{d_{M}^{-i+2}} & P_{M}^{-i+3} \ar[dd]^{\phi^{-i+3}} \ar[r] & \cdots \\
&&& \im \, d_{M}^{-i+1} \ar@{-->}[rru] \ar@{-->}[ld] \\
\cdots\ar[r] & Q^{-i} \ar[r]^{\del^{-i}} & Q^{-i+1} \ar[rrr]^{\del^{-i+1}} &&& Q^{-i+2} \ar[r]^{\del^{-i+2}} & Q^{-i+3} \ar[r] & \cdots
}
\end{xy}
\]
where $\phi^{-i+1}=h \circ d_{M}^{-i+1}$. Let $\widetilde{\phi}$ be the following chain map
\[
\begin{xy}\SelectTips{cm}{}
\xymatrix
{
\cdots\ar[r] & P_{M}^{-i} \ar[r]^{d_{M}^{-i}} \ar[d]^{0} & P_{M}^{-i+1} \ar[d]^{0} \ar[rrr]^{d_{M}^{-i+1}} &&& P_{M}^{-i+2} \ar[d]_{\phi^{-i+2}-\del^{-i+1}h} \ar[r]^{d_{M}^{-i+2}} & P_{M}^{-i+3} \ar[d]^{\phi^{-i+3}} \ar[r] & \cdots  \\
\cdots\ar[r] & Q^{-i} \ar[r]^{\del^{-i}} & Q^{-i+1} \ar[rrr]^{\del^{-i+1}} &&& Q^{-i+2} \ar[r]^{\del^{-i+2}} & Q^{-i+3} \ar[r] & \cdots
}
\end{xy}
\]
The degree $-1$ map $(\ldots, 0, h, 0, \ldots)$ defines a homotopy between $\phi$ and $\widetilde{\phi}$. Since $Q^{\bullet}$ is left bounded, induction yields a chain map, which is homotopic to $\phi$:
\[
\begin{xy}\SelectTips{cm}{}
\xymatrix
{
\cdots\ar[r] & P_{M}^{-1} \ar[r]^{d_{M}^{-1}} \ar[d]^{0} & P_{M}^{0} \ar[d]^{\widetilde{\phi}^{0}} \ar@{-->}[r]^{\epsilon_{M}} & M \\
\cdots\ar[r] & Q^{-1} \ar[r]^{\del^{-1}} & Q^{0} \ar[r]^{\del^{0}} & Q^{1} \ar[r]^{\del^{1}} & Q^{2} \ar[r] & \cdots
}
\end{xy}
\]
 Now the universal property of the cokernel gives a map $f_{1}\colon M \ra Q^{0}$ such that $\widetilde{\phi}^0=f_{1}\epsilon_{M}$. Let $f_{2}=\epsilon_{N}\psi^{0}$. Then $\psi\widetilde{\phi}$ is a lift of $f_{2}f_{1}$. Thus $f$ and $f_{1}f_{2}$ have homotopic lifts $\P(f)$ and $\psi\widetilde{\phi}$. Hence, $f=f_{2}f_{1}$ and $f$ factors through $Q^0$.

To show that $\ul{\P}$ is full, let  $P_{M}^\bullet \xrightarrow{f} E^\bullet \xleftarrow{s} P_{N}^{\bullet}$ be a morphism in $K^-(\proj \ce)/K^b(\proj \ce)$, i.~e.~$f$ and $s$ are morphisms in $K^-(\proj \ce)$ and $Q^\bullet[-1] \xrightarrow{\phi} P^\bullet_{N} \xrightarrow{s} E^\bullet \ra Q^\bullet$ is a triangle with $Q^\bullet \in K^b(\proj \ce)$. Hence, $E^\bullet$ has the following form
\[\cdots \xrightarrow{\begin{pmatrix} d_{N}^{-2} & \phi^{-1} \\ 0 & d_{Q}^{-2} \end{pmatrix}} P_{N}^{-1} \oplus Q^{-1} \xrightarrow{\begin{pmatrix} d_{N}^{-1} & \phi^{0} \\ 0 & d_{Q}^{-1} \end{pmatrix}} P_{N}^{0} \oplus Q^{0} \xrightarrow{\begin{pmatrix} 0 & d_{Q}^{0} \end{pmatrix}} Q^1 \xrightarrow{\displaystyle{d_{Q}^1}} \cdots
\]
The brutally truncated complex $\brutegeq{0}Q^\bullet$ is a subcomplex of $E^\bullet$. Hence we get a triangle
$\brutegeq{0}Q^\bullet \ra E^\bullet \xrightarrow{q} \widetilde{E}^{\bullet} \xrightarrow{+},$ where $\widetilde{E}^\bullet$ is homotopy equivalent to
\[\cdots \xrightarrow{\begin{pmatrix} d_{N}^{-2} & \phi^{-1} \\ 0 & d_{Q}^{-2} \end{pmatrix}} P_{N}^{-1} \oplus Q^{-1} \xrightarrow{\begin{pmatrix} d_{N}^{-1} & \phi^{0} \end{pmatrix}} P_{N}^{0} \ra 0 \ra \cdots\]
Since $\Cone(q) \in K^b(\proj \ce)$ our original roof is equivalent to $P^{\bullet}_{M} \xrightarrow{qf}\widetilde{E}^{\bullet} \xleftarrow{qs} P_{N}^{\bullet}$. Let $g=qf$. Similar to the proof of faithfulness, we will show that if $g^{-i}=\begin{pmatrix} g_{1}^{-i} & 0 \end{pmatrix}^{\rm tr}$ for some $i>1$, we can find a homotopic chain map $\widetilde{g}$ with $\widetilde{g}^{-i+1}=\begin{pmatrix} \widetilde{g}_{1}^{-i+1} & 0 \end{pmatrix}^{\rm tr}$.

Consider the following diagram
\[
\begin{xy}\SelectTips{cm}{}
\xymatrix
{
\cdots P_{M}^{-i} \ar[r]^{d_{M}^{-i}} \ar[dd]_{\left(\begin{smallmatrix} g_{1}^{-i} \\ 0 \end{smallmatrix}\right)} & P_{M}^{-i+1} \ar[dd]_{\left(\begin{smallmatrix} g_{1}^{-i+1} \\ g_{2}^{-i+1} \end{smallmatrix}\right)} \ar@{-->}[rd]^{\delta} \ar[rr]^{d_{M}^{-i+1}} && P_{M}^{-i+2} \ar[dd]^{\left(\begin{smallmatrix} g_{1}^{-i+2} \\ g_{2}^{-i+2} \end{smallmatrix}\right)}\ar@/^20.0pt/[lldd]^(0.3){\left(\begin{smallmatrix} 0 \\ h \end{smallmatrix}\right)} \ar[r]^{d_{M}^{-i+2}} & P_{M}^{-i+3} \ar[dd]_{\left(\begin{smallmatrix} g_{1}^{-i+3} \\ g_{2}^{-i+3} \end{smallmatrix}\right)}  \cdots \\
&& \im \, d_{M}^{-i+1} \ar@{-->}[ru]^{\iota} \ar@{-->}[ld]_{{\left(\begin{smallmatrix} 0 \\ \beta \end{smallmatrix}\right)}} \\
\cdots  P_{N}^{-i} \oplus Q^{-i} \ar[r]_(.44){\left(\begin{smallmatrix} d_{N}^{-i} & \phi^{-i+1} \\ 0 & d_{Q}^{-i} \end{smallmatrix}\right)} & P_{N}^{-i+1} \oplus Q^{-i+1} \ar[rr]_{\left(\begin{smallmatrix} d_{N}^{-i+1} & \phi^{-i+2} \\ 0 & d_{Q}^{-i+1} \end{smallmatrix}\right)} && P_{N}^{-i+2} \oplus Q^{-i+2} \ar[r]_(.38){\left(\begin{smallmatrix} d_{N}^{-i+2} & \phi^{-i+3} \\ 0 & d_{Q}^{-i+2} \end{smallmatrix}\right)} & P_{N}^{-i+3} \oplus Q^{-i+3} \cdots
}
\end{xy}
\]
Since $g_{2}^{-i+1}d_{M}^{-i}=0$ holds, the universal property of the cokernel yields a morphism $\beta\colon \im \, d_{M}^{-i+1} \ra Q^{-i+1}$ such that $\beta\delta=g_{2}^{-i+1}$. By construction of $P_{M}^{\bullet}$ the differential $d_{M}^{-i+1}$ factors into an deflation $\delta$ followed by an inflation $\iota$. Since $Q^{-i+1}$ is injective, we get a map $h\colon P_{M}^{-i+2} \ra Q^{-i+1}$ satisfying $h\iota=\beta$. Summing up we have $g_{2}^{-i+1}=\beta \delta=h \iota \delta=h d_{M}^{-i+1}.$

Define a degree $0$ map $\widetilde{g}\colon P_{M}^\bullet \ra \widetilde{E}^{\bullet}$ by $\widetilde{g}^j=g^j$ for all $j \notin \{-i+1, -i+2\}$, $\widetilde{g}^{-i+1}=\begin{pmatrix} g_{1}^{-i+1} , & 0\end{pmatrix}^{\rm tr}$ and $\widetilde{g}^{-i+2}=\begin{pmatrix} g_{1}^{-i+2} - \phi^{-i+2} h ,& g_{2}^{-i+2} -d_{Q}^{-i+1}h\end{pmatrix}^{\rm tr}$. One can check that the difference $g-\widetilde{g}$ defines a chain map which is homotopic to zero. Indeed, the homotopy is given by the degree $-1$ map $(\ldots 0, \begin{pmatrix} 0 & h \end{pmatrix}^{\rm tr}, 0 \ldots)$. Hence, $\widetilde{g}$ is a chain map which is homotopic to $g$. Since $Q^\bullet$ is left bounded, induction shows that $g$ is homotopic to the following chain map
\[
\begin{xy}\SelectTips{cm}{}
\xymatrix
{
\cdots \ar[r] &P_{M}^{-3} \ar[r]^{d_{M}^{-3}} \ar[d]_{\left(\begin{smallmatrix} \widetilde{g}_{1}^{-3} \\ 0 \end{smallmatrix}\right)} & P_{M}^{-2} \ar[d]_{\left(\begin{smallmatrix} \widetilde{g}_{1}^{-2} \\ 0 \end{smallmatrix}\right)} \ar[rr]^{d_{M}^{-2}} && P_{M}^{-1} \ar[d]^{\left(\begin{smallmatrix} \widetilde{g}_{1}^{-1} \\ 0 \end{smallmatrix}\right)} \ar[r]^{d_{M}^{-1}} & P_{M}^{0} \ar[d]_{\widetilde{g}^0}   \\
\cdots  \ar[r] & P_{N}^{-3} \oplus Q^{-3} \ar[r]_(.46){\left(\begin{smallmatrix} d_{N}^{-3} & \phi^{-2} \\ 0 & d_{Q}^{-3} \end{smallmatrix}\right)} & P_{N}^{-2} \oplus Q^{-2} \ar[rr]_{\left(\begin{smallmatrix} d_{N}^{-2} & \phi^{-1} \\ 0 & d_{Q}^{-2} \end{smallmatrix}\right)} && P_{N}^{-1} \oplus Q^{-1} \ar[r]_(.55){\left(\begin{smallmatrix} d_{N}^{-1} & \phi^{0} \end{smallmatrix}\right)} & P_{N}^{0}
}
\end{xy}
\]
Thus we obtain a chain map $\overline{g}\colon P_{M}^\bullet \ra P_{N}^\bullet$ whose image $P_{M}^\bullet \xrightarrow{\ol{g}} P_{N}^{\bullet} \xleftarrow{1} P_{N}^\bullet$ in $K^-(\proj \ce)/K^b(\proj \ce)$ is equivalent to
our roof $P^{\bullet}_{M} \xrightarrow{g=qf}\widetilde{E}^{\bullet} \xleftarrow{qs} P_{N}^{\bullet}$. Hence the map $G\colon M \rightarrow N$ induced by $\overline{g}$ is the desired preimage.
\end{proof}

\subsection{Singularity Categories of Iwanaga--Gorenstein rings}\label{ss:SingCatIwanaga}
The following notion was introduced by Buchweitz \cite{Buchweitz87}.

\begin{defn}
Let $R$ be a right Noetherian ring. The Verdier quotient category \[\cd_{sg}(R):=\cd^b(\mod-R)/K^b(\proj-R)\] is called the
\emph{singularity category} of $R$.
\end{defn}

\begin{rem}
 If $R$ is a regular ring (i.e.~$\gldim R < \infty$), every object in $\cd^b(\mod-R)$ admits a bounded free resolution. This yields a triangle equivalence $\cd^b(\mod-R) \cong K^b(\proj-R)$. In other words, the singularity category $\cd_{sg}(R)$ is trivial if $R$ has no singularities. Moreover, this is the reason to consider $K^b(\proj-R)$ as the `smooth part' of $\cd^b(\mod-R)$.

If $R$ is a commutative Noetherian ring, $\cd_{sg}(R)$ can be considered as a measure for the complexity of the singularities of $\Spec(R)$, see e.g.~\cite{Orlov09}.
\end{rem}
Proposition \ref{P:StFrob} has the following consequence.

\begin{cor}\label{C:Buchweitz}
Let $\ce$ be a Frobenius category such that $\proj \ce= \add P$ for some $P \in \proj \ce$.  If $R=\End_{\ce}(P)$ is a right Noetherian ring, then there is a fully faithful triangle functor
\begin{align}\label{E:PreBuchweitz}
\ul{\widetilde{\P}}\colon \ul{\ce} \longrightarrow  \cd_{sg}(R).
\end{align}
\end{cor}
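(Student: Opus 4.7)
My plan is to exhibit $\ul{\widetilde{\P}}$ as the composition
\[
\ul{\ce} \xrightarrow{\;\sim\;} K^{-, b}(\proj \ce)/K^b(\proj \ce) \xrightarrow{\;\bar\Phi\;} K^{-, b}(\proj-R)/K^b(\proj-R) \xrightarrow{\;\sim\;} \cd_{sg}(R),
\]
where the first equivalence is Proposition \ref{P:StFrob}, the middle arrow $\bar\Phi$ is induced by applying $\Phi := \Hom_{\ce}(P,-)$ componentwise to complexes, and the third equivalence uses that $R$ is right Noetherian. The first and third arrows will be equivalences; the content of the statement is then the full faithfulness of $\bar\Phi$.

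My first subtask is to check that $\Phi$ sends $K^{-,b}(\proj \ce)$ into $K^{-,b}(\proj-R)$ and $K^b(\proj \ce)$ into $K^b(\proj-R)$, so that $\bar\Phi$ is a well-defined triangle functor. Both claims follow immediately from the fact that $P$ is projective in $\ce$: a conflation $Z^n \to P^n \to Z^{n+1}$ is sent by $\Hom_{\ce}(P,-)$ to a short exact sequence of right $R$-modules, so Frobenius acyclicity in degrees $\leq N$ in the sense of Definition \ref{D:AcyclicFrob} is converted to ordinary acyclicity in the same range on the $R$-side. The third arrow of the composition is the standard equivalence: right Noetherianity of $R$ guarantees that every object of $\cd^b(\mod-R)$ admits a projective resolution by finitely generated projectives which becomes acyclic in sufficiently low degrees, producing a triangle equivalence $K^{-,b}(\proj-R) \xrightarrow{\sim} \cd^b(\mod-R)$ that restricts to the identity on $K^b(\proj-R)$ and hence descends to the required quotient equivalence.

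The main obstacle is the full faithfulness of $\bar\Phi$. At the level of complexes, the identifications $R = \End_{\ce}(P)$ and $\proj \ce = \add P$ make $\Hom_{\ce}(P,-)\colon \add P \to \proj-R$ fully faithful, so $\Phi$ is a fully faithful triangle embedding of homotopy categories. To descend this to the Verdier quotient, I would run a calculus-of-fractions argument patterned on the fullness step of Proposition \ref{P:StFrob}. Given a roof $\Phi(P_X^{\bullet}) \xleftarrow{s} Q^{\bullet} \to \Phi(P_Y^{\bullet})$ representing a morphism on the right-hand side, the hypothesis $\Cone(s) \in K^b(\proj-R)$ forces $Q^{\bullet}$ to coincide with $\Phi(P_X^{\bullet})$ in all sufficiently low degrees. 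After a homotopy equivalence and a brutal truncation $\brutegeq{N}$ of the bounded upper part, as already performed in the proof of Proposition \ref{P:StFrob}, one obtains an equivalent roof whose middle term lies in the essential image of $\Phi$; this yields fullness. An analogous truncation argument applied to a null-homotopy witness will give faithfulness. The technical heart of the proof is thus the careful brutal-truncation manipulation, which should mirror, and may be read off directly from, the corresponding steps already carried out for Proposition \ref{P:StFrob}.
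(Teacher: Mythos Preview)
Your proposal is correct and follows exactly the same three-step composition as the paper's proof. The only difference is one of detail: the paper simply asserts that the middle functor $K^{-,b}(\proj\ce)/K^b(\proj\ce) \to K^{-,b}(\proj-R)/K^b(\proj-R)$ is fully faithful, whereas you sketch the brutal-truncation argument that justifies this (which amounts to verifying the factorization hypothesis of Lemma~\ref{Lem:Fullyfaithful}).
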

\begin{proof}
The fully faithful functor $\Hom_{\ce}(P, -)\colon \proj \ce \rightarrow \proj-R$ induces an embedding $K^-(\proj \ce) \ra K^-(\proj-R)$. The restriction $K^{-, b}(\proj \ce) \ra K^{-, b}(\proj-R)$ is well defined since $P$ is projective. Now, we can define $\ul{\widetilde{\P}}$ as composition of fully faithful functors
\begin{align}
\begin{xy}\SelectTips{cm}{}
\xymatrix
{
\ul{\ce} \ar[r]^(.3){\displaystyle{\ul{\P}}} & \displaystyle{\frac{K^{-, b}(\proj \ce)}{K^b(\proj \ce)}} \ar[r] & \displaystyle{\frac{K^{-, b}(\proj-R)}{K^b(\proj-R)}} \ar[r]^{\sim} & \displaystyle{\frac{\cd^b(\mod-R)}{K^b(\proj-R)}},
}
\end{xy}
\end{align}
where $\ul{\P}$ is the equivalence from Proposition \ref{P:StFrob} and the last functor is induced by the well-known triangle equivalence $K^{-, b}(\proj-R) \stackrel{\sim}\longrightarrow \cd^b(\mod-R)$.
\end{proof}

Let $R$ be an Iwanaga--Gorenstein ring. Recall from Proposition \ref{P:IwanagaFrobenius} that the category of Gorenstein projective $R$-modules \[\GP(R)=\left\{\left. M \in \mod-R \right| \Ext_{R}^i(M, R)=0 \text{  for all Ê}Êi>0 \right\}\] is a Frobenius category, with $\proj \GP(R)= \proj-R$. If $R$ is commutative, then the notions of Gorenstein projective $R$-modules and maximal Cohen--Macaulay $R$-modules $\MCM(R)$ (see \eqref{E:MCMvsGP}) coincide.  Buchweitz has shown that for these Frobenius categories \eqref{E:PreBuchweitz} is an equivalence, see \cite[Theorem 4.4.1.]{Buchweitz87}.

\begin{thm}\label{T:Buchweitz}
Let $R$ be Iwanaga--Gorenstein. There is a triangle equivalence
\begin{align}
\underline{\GP}(R) \xrightarrow{\ul{\widetilde{\P}}} \cd_{sg}(R)
\end{align}
induced by the inclusion $\GP(R) \subseteq K^{-, b}(\proj-R) \cong \cd^b(\mod-R)$.
\end{thm}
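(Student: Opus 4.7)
The plan is to derive the theorem directly from Corollary \ref{C:Buchweitz} by showing that the intermediate inclusion $K^{-,b}(\proj\GP(R))\hookrightarrow K^{-,b}(\proj-R)$ used in its construction is actually an equality. Since $\GP(R)$ is Frobenius with $\proj\GP(R)=\add R$ (Proposition \ref{P:IwanagaFrobenius}) and $R$ is Noetherian, Corollary \ref{C:Buchweitz} applied with $\ce=\GP(R)$ and $P=R$ immediately produces a fully faithful triangle functor $\ul{\widetilde{\P}}\colon\ul{\GP}(R)\to\cd_{sg}(R)$ as the composition
\begin{align*}
\ul{\GP}(R)\xrightarrow{\sim}\frac{K^{-,b}(\proj\GP(R))}{K^b(\proj\GP(R))}\hookrightarrow\frac{K^{-,b}(\proj-R)}{K^b(\proj-R)}\xrightarrow{\sim}\cd_{sg}(R),
\end{align*}
in which the outer equivalences come from Proposition \ref{P:StFrob} and the standard identification $K^{-,b}(\proj-R)\cong\cd^b(\mod-R)$. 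Essential surjectivity of $\ul{\widetilde{\P}}$ thus reduces to showing that every $P^\bullet\in K^{-,b}(\proj-R)$ already lies in $K^{-,b}(\proj\GP(R))$.

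Fix such a $P^\bullet$ and choose $n_0$ with $H^i(P^\bullet)=0$ for all $i\leq n_0$. To upgrade the module-theoretic short exact sequences $0\to\ker d^i\to P^i\to\ker d^{i+1}\to 0$ to conflations in $\GP(R)$, it suffices to verify that $\ker d^i\in\GP(R)$ for all $i\leq n_0$. Let $d=\injdim R$. For $i\leq n_0-d$, iterating the above exact sequences across the acyclic range identifies $\ker d^i$ with $\Omega^d(\ker d^{i+d})$, the $d$-th syzygy of the $R$-module $\ker d^{i+d}\subseteq P^{i+d}$, so $\ker d^i\in\GP(R)$ by Proposition \ref{P:IwanagaFrobenius}(GP2). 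For the remaining indices $n_0-d<i\leq n_0$, a standard long exact $\Ext^*_R(-,R)$ argument shows that $\GP(R)$ is closed under cokernels in short exact sequences $0\to A\to B\to C\to 0$ of $R$-modules with $A,B\in\GP(R)$; applying this to $0\to\ker d^{i-1}\to P^{i-1}\to\ker d^i\to 0$ and inducting on $i$ starting from $i=n_0-d+1$ then yields $\ker d^i\in\GP(R)$ throughout.

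This establishes the equality $K^{-,b}(\proj\GP(R))=K^{-,b}(\proj-R)$, so the middle map in the chain above is also an equivalence and $\ul{\widetilde{\P}}$ is a triangle equivalence. The main technical obstacle is precisely this matching of the two notions of acyclicity, which uses the Iwanaga--Gorenstein hypothesis essentially through the finiteness of $\injdim R$ in (GP2): without this, sufficiently deep syzygies need not stabilize into $\GP(R)$, and density of $\ul{\widetilde{\P}}$ would fail. All other ingredients are already in place from the excerpt.
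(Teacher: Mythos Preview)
Your overall strategy coincides with the paper's: apply Corollary~\ref{C:Buchweitz} with $\ce=\GP(R)$ and $P=R$, then argue that the inclusion $K^{-,b}(\proj\GP(R))\hookrightarrow K^{-,b}(\proj-R)$ is an equality by invoking (GP2). The paper records exactly this in a footnote, without spelling out the syzygy computation.

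However, your bootstrapping step contains a genuine error. The claim that $\GP(R)$ is closed under cokernels in short exact sequences $0\to A\to B\to C\to 0$ with $A,B\in\GP(R)$ is false. The long exact $\Ext$-sequence only yields $\Ext^j_R(C,R)=0$ for $j\geq 2$; for $j=1$ you get $\Ext^1_R(C,R)\cong\coker\bigl(\Hom_R(B,R)\to\Hom_R(A,R)\bigr)$, which need not vanish. For a concrete counterexample take $R=k\llbracket t\rrbracket$: then $\GP(R)=\proj-R$, and $0\to R\xrightarrow{t}R\to k\to 0$ has $A,B\in\GP(R)$ but $k\notin\GP(R)$.

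Fortunately this step is unnecessary. To place $P^\bullet$ in $K^{-,b}(\proj\GP(R))$ you only need \emph{some} integer $N$ such that the factorizations $Z^n\to P^n\to Z^{n+1}$ are conflations in $\GP(R)$ for all $n\leq N$; you do not need $N=n_0-1$. Your first argument already shows $\ker d^i\in\GP(R)$ for all $i\leq n_0-d$, so taking $N=n_0-d-1$ finishes the proof. Simply delete the induction for $n_0-d<i\leq n_0$ and adjust the choice of $N$ accordingly.
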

\begin{proof}
The main point is that in this situation, $\proj \GP(R)= \add R=\proj-R$. Hence the additive functor $\Hom_{R}(R, -)\colon \proj \GP(R) \ra \proj-R$ considered in the proof of Corollary \ref{C:Buchweitz} is an equivalence. This induces a triangle equivalence
$K^{-, b}(\proj \GP(R))/K^b(\proj \GP(R)) \cong \cd_{sg}(R)$\footnote{This equivalence is actually more subtle than it might seem on first sight. Indeed, the definition of $K^{-, b}(\proj-\GP(R))$ (respectively $K^{-, b}(\proj-R)$) depends on the ambient exact category $\GP(R)$ (respectively $\mod-R$). Since $\GP(R) \subseteq \mod-R$ is an exact subcategory and $\proj \GP(R)=\proj-R$, $K^{-, b}(\proj-\GP(R)) \subseteq K^{-, b}(\proj-R)$. The other inclusion follows from the fact that over an Iwanaga--Gorenstein ring, the syzygies $\Omega^n(X)$ are Gorenstein projective for every $X\in \mod-R$ and all $n >>0$ (see Proposition \ref{P:IwanagaFrobenius} (GP2)). See also Remark \ref{R:Right-bounded-with-bounded-coho}.}.  Therefore, in this situation, the functor $\ul{\widetilde{\P}}$ from Corollary \ref{C:Buchweitz} is a composition of triangle equivalences.
\end{proof}

\begin{rem}\label{R:Right-bounded-with-bounded-coho}
In the situation of Corollary \ref{C:Buchweitz}, assume that there is an additive equivalence $\proj \ce \ra \proj-R$. This induces a triangle equivalence $K^-(\proj \ce) \ra K^-(\proj-R)$. However, the restriction to $K^{-, b}(\proj \ce) \ra K^{-, b}(\proj-R)$ need \emph{not} be essentially surjective. The reason is that the two ambient exact categories $\proj \ce \subseteq \ce$ and $\proj-R \subseteq \mod-R$ may differ. This leads to different notions of acyclicity.

In order to be more concrete, let $R=k[x]/x^2$ and take $\ce=\proj-R$ with the split Frobenius structure, i.e.~$\proj \ce=\ce$. Then the complex of projective $R$-modules
\[C^\bullet \colon \ldots \stackrel{x\cdot}\longrightarrow R \stackrel{x\cdot}\longrightarrow R \stackrel{x\cdot}\longrightarrow \ldots \stackrel{x\cdot}\longrightarrow R \longrightarrow 0 \longrightarrow \ldots\]
is contained in $K^{-, b}(\proj-R) \subseteq K^{-}(\mod-R)$, since the truncated complex
\[\ldots \stackrel{x\cdot}\longrightarrow R \stackrel{x\cdot}\longrightarrow \ldots \stackrel{x\cdot}\longrightarrow R\twoheadrightarrow Z^0(C^\bullet)\cong R/(x) \longrightarrow 0 \longrightarrow \ldots,\]
is composed from exact sequences 
\[0 \ra Z^{n-1}(C^{\bullet}) \cong R/(x) \ra R \ra R/(x) \cong Z^{n}(C^{\bullet}) \ra 0\] in $\mod-R$, where $n \leq 0$. But $R/(x) \notin \proj-R$. Hence, $C^\bullet \notin K^{-,b}(\proj \ce) \subseteq K^-(\proj- R)$.

This explains why the fully faithful functor from Corollary \ref{C:Buchweitz} \[\ul{\widetilde{\P}}\colon 0=\ul{\ce} \longrightarrow  \cd_{sg}(R)\cong \mod-k\] is not dense in this situation.
\end{rem}

A combination of  Buchweitz' Theorem \ref{T:Buchweitz} and Iyama's Theorem \ref{t:main-thm} yields the following corollary.

\begin{cor}\label{C:BuchweitzIyama}
Let $\ce$ be a Frobenius category satisfying the conditions in Theorem \ref{t:main-thm} and let $E=\End_{\ce}(P)$, where $\add P=\proj \ce$.  Then there is a triangle equivalence  (up to direct summands) 
\begin{align}\ul{\ce} \to \cd_{sg}(E).\end{align}
Moreover, if $\ce$ or $\underline{\ce}$ are idempotent complete, then the statement holds without the supplement `up to direct summands'. In particular, idempotent completeness of $\underline{\ce}$ implies idempotent completeness of the singularity category $\cd_{sg}(E).$
\end{cor}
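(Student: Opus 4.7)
The proof proposal is essentially a composition of two major results that have already been established in the excerpt, so the plan is quite short.

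First, I would invoke Theorem \ref{t:main-thm}(1) applied to the Frobenius category $\ce$ together with the noncommutative resolution $\End_\ce(M)$. This immediately yields that $E = \End_\ce(P)$ is Iwanaga--Gorenstein of dimension at most $n = \gldim \End_\ce(M)$. The significance of this step is that it unlocks the applicability of Buchweitz' theorem, which requires exactly the Iwanaga--Gorenstein hypothesis.

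Second, I would apply Theorem \ref{t:main-thm}(2) to obtain the triangle equivalence
\[
\ul{\ce} \xrightarrow{\sim} \ul{\GP}(E)
\]
up to direct summands, with the stronger conclusion (a genuine equivalence) whenever $\ce$ or $\ul{\ce}$ is assumed to be idempotent complete. Then, since $E$ is Iwanaga--Gorenstein by the first step, Buchweitz' Theorem \ref{T:Buchweitz} provides a triangle equivalence
\[
\ul{\GP}(E) \xrightarrow{\sim} \cd_{sg}(E).
\]
Composing these two triangle equivalences yields the desired functor $\ul{\ce} \to \cd_{sg}(E)$, which is an equivalence up to direct summands in general, and a genuine triangle equivalence whenever $\ce$ or $\ul{\ce}$ is idempotent complete.

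Finally, for the concluding assertion, I would simply note that triangle equivalences transport idempotent completeness: if $\ul{\ce}$ is idempotent complete, then so is $\cd_{sg}(E)$, since the two are triangle equivalent via the composition above. There is no real obstacle here; the only subtle point is bookkeeping the hypotheses under which Theorem \ref{t:main-thm}(2) promotes its equivalence from ``up to summands'' to a genuine equivalence, but this is already carefully stated in that theorem and passes through Buchweitz' equivalence without change.
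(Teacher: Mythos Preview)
Your proposal is correct and follows exactly the approach the paper takes: the corollary is stated immediately after Buchweitz' Theorem~\ref{T:Buchweitz} with the remark that it is obtained as ``a combination of Buchweitz' Theorem~\ref{T:Buchweitz} and Iyama's Theorem~\ref{t:main-thm}'', and no further proof is given. Your unpacking of this into the two steps (Iwanaga--Gorensteinness from part~(1), then the composite equivalence $\ul{\ce}\simeq\ul{\GP}(E)\simeq\cd_{sg}(E)$ from part~(2) and Theorem~\ref{T:Buchweitz}) is precisely what is intended.
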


\begin{rem}
We recover Corollary \ref{C:BuchweitzIyama} in Subsection \ref{ss:Alternative}
using a different approach, which is inspired by our study of the (local) relative singularity categories in Section \ref{S:Local}.
Actually, our `alternative approach' came first and inspired Iyama's more general Theorem \ref{t:main-thm}. The alternative approach does \emph{not} show that $E$ is an Iwanaga--Gorenstein ring. However, this property is not used in our description of the stable categories of special Cohen--Macaulay modules over complete rational surface singularities (Theorem \ref{C:StandardStableSCM}), which was the main motivation for the whole story.
 \end{rem}

\subsection{A tale of two idempotents}\label{S:Tale}

\begin{defn}\label{D:quotientfunctor}
A triangulated functor $\F\colon \cc \ra \cd$ is called \emph{triangulated quotient functor}, if the induced functor $\ul{\F}\colon \cc/\ker \F \ra \cd$ is an equivalence of categories.
\end{defn}

\begin{lem}\label{L:Verdier}
Let $\F \colon \cc \ra \cd$ be a triangulated quotient functor with kernel $\ck$. Let $\cu \subseteq \cc$ be a full triangulated subcategory, let $q\colon\cc \ra \cc/\cu$ be the quotient functor and $\cv=\thick(\F(\cu))$. Then $\F$ induces an equivalence of triangulated categories. 
\[\frac{(\cc/\cu)}{\thick\bigl(q(\ck)\bigr)} \longrightarrow \frac{\cd}{\cv}.\]
\end{lem}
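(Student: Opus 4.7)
\emph{Proof plan.} The strategy is to identify both sides of the claimed equivalence with one and the same Verdier quotient, namely $\cc/\cw$, where $\cw := \thick(\cu\cup\ck) \subseteq \cc$.

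The main tool is the iterated quotient identity for Verdier quotients (sometimes called Verdier's second isomorphism theorem): for any pair of thick subcategories $\ca \subseteq \cb \subseteq \ct$ with quotient functor $p\colon \ct \to \ct/\ca$, the functor $p$ induces a canonical triangle equivalence
\[
\ct/\cb \;\xrightarrow{\ \sim\ }\; (\ct/\ca)\big/\thick(p(\cb)),
\]
since $p$ sends $\cb$ to a thick subcategory (up to splitting idempotents) whose objects are exactly those killed by the composition $\ct \to \ct/\cb$. I would first record this identity (citing Neeman or Krause) and then apply it twice.

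\emph{First application:} applied to the pair $\cu \subseteq \cw$ with quotient $q\colon\cc\to\cc/\cu$, it gives a triangle equivalence
\[
\cc/\cw \;\xrightarrow{\ \sim\ }\; (\cc/\cu)\big/\thick\bigl(q(\cw)\bigr).
\]
Since $q(\cu)=0$ and $\cw$ is generated by $\cu\cup\ck$, one has $\thick(q(\cw)) = \thick(q(\ck))$, so the right hand side is exactly $(\cc/\cu)/\thick(q(\ck))$.

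\emph{Second application:} applied to the pair $\ck \subseteq \cw$ with quotient $q_{\ck}\colon\cc\to\cc/\ck$, it yields a triangle equivalence
\[
\cc/\cw \;\xrightarrow{\ \sim\ }\; (\cc/\ck)\big/\thick\bigl(q_{\ck}(\cw)\bigr).
\]
Because $\F$ is a triangulated quotient functor with kernel $\ck$, the induced functor $\ul{\F}\colon \cc/\ck \to \cd$ is a triangle equivalence, and under it $q_{\ck}(\cu)$ corresponds to $\F(\cu)$; hence $\thick(q_{\ck}(\cw)) = \thick(q_{\ck}(\cu))$ maps isomorphically onto $\thick(\F(\cu)) = \cv$. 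Therefore the right hand side is equivalent to $\cd/\cv$.

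Composing the two equivalences produces the triangle equivalence $(\cc/\cu)/\thick(q(\ck)) \xrightarrow{\sim} \cd/\cv$ stated in the lemma. The only remaining step — and the only place where one has to be careful — is to verify that this composite equivalence is indeed the functor naturally induced by $\F$. This is a straightforward diagram chase with the universal properties of the three Verdier quotients involved: the composition $\cc \xrightarrow{\F} \cd \to \cd/\cv$ kills both $\ck$ and $\cu$, hence factors uniquely through $\cc/\cw$, and one checks that this factorization coincides with both intermediate functors constructed above. I do not expect any real obstacle; the content of the lemma is entirely a formal manipulation of Verdier quotients once the iterated quotient identity is in hand.
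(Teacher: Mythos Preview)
Your argument is correct. The paper takes a closely related but differently organised route: instead of introducing the intermediate quotient $\cc/\cw$ with $\cw=\thick(\cu\cup\ck)$ and appealing twice to the iterated-quotient identity, it verifies directly that the induced functor $\ol{\F}\colon \cc/\cu \to \cd/\cv$ satisfies the universal property of the Verdier quotient by $\thick(q(\ck))$. Concretely, given any triangle functor $\G\colon \cc/\cu \to \ct$ annihilating $\thick(q(\ck))$, the paper uses the universal properties of $\F$, of $q$, and of $\cd\to\cd/\cv$ in turn to manufacture the unique factorisation through $\cd/\cv$. In fact the remark immediately following the paper's proof records exactly your ``first application'', the equivalence $(\cc/\cu)/\thick(q(\ck))\simeq \cc/\thick(\cu,\ck)$, so your strategy is explicitly acknowledged there as an alternative. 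Your version is cleaner and more modular once one has the iterated-quotient lemma on hand; the paper's version is marginally more self-contained and makes the induced functor (rather than an abstract composite of equivalences) the primary object, which is what is needed downstream.
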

\begin{proof}
$\F$ induces a triangle functor $\ol{\F}\colon \cc/\cu \ra \cd/\cv$. We have $\thick(q(\ck)) \subseteq \ker(\ol{\F})$. We show that $\ol{\F}$ is universal with this property, i.e.~ given a triangle functor $\G\colon \cc/\cu \ra \ct$ satisfying 
$\thick(q(\ck)) \subseteq \ker(\G)$ there exists a unique (up to unique isomorphism) triangle functor 
$\H \colon \cd/\cv \ra \ct$ such that $\G=\H \circ \ol{\F}$.  We explain the following commutative diagram of triangulated categories and functors
\[
\begin{xy}
\SelectTips{cm}{}
\xymatrix@R=13pt@C=40pt
{
\ck \ar[dd]_{\displaystyle q} \ar[rr] && \cc \ar[rr]^{\displaystyle \F} \ar[dd]_{\displaystyle q} && \cd \ar[dd]^{\displaystyle q'} \ar@{-->}[dl]_(.64){\displaystyle \I_{1}} \\
&&& \ct \\
\thick\left(q(\ck)\right) \ar[rr] && \cc/\cu \ar[ru]^{ \displaystyle \G} \ar[rr]^{\displaystyle \ol{\F}} && \cd/\cv. \ar@{-->}[lu]_{\displaystyle \I_{2}}
}
\end{xy}
\]
The functor $\I_{1}$, satisfying $\I_{1}\circ \F= \G \circ q$, exists by the universal property of $\F$ and $\I_{2}$ exists by the universal property of $q'$ and satisfies $\I_{2} \circ q'= \I_{1}$. Since  $\I_{2}\circ \ol{\F}\circ q=\I_{2} \circ q' \circ \F  =\I_{1}\circ\F=\G \circ q$ the (uniqueness part of the) universal property of $q$ implies that $\I_{2}\circ\ol{\F}=\G$ holds.

In order to show uniqueness of $\I_{2}$, let $\mathbb{H}\colon \cd/\cv \ra \ct$ be a triangle functor such that $\mathbb{H}\circ\ol{\F}=\G$ holds. Then $\mathbb{H} \circ q' \circ \F = \G \circ q= \I_{1}\circ\F$ holds and the universal property of $\F$ implies $\mathbb{H} \circ q'=\I_{1}$. Since $\mathbb{H} \circ q' =\I_{1}= \I_{2} \circ q'$ the universal property of $q'$ yields $\I_{2}=\mathbb{H}$.
\end{proof} 

\begin{rem}
The universal property of the triangulated quotient category may be used to show that there is a triangle equivalence
\begin{align}
\frac{(\cc/\cu)}{\thick(q(\ck))} \xrightarrow{\sim} \frac{\cc}{\thick(\cu, \ck)}.
\end{align}
However, for our purposes it is more convenient to work with the quotient on the left hand side.
\end{rem}

The following proposition is a dual version of \cite[Proposition III.5]{Gabriel62}.

\begin{prop} \label{P:Gabriel}
Let $\ca$ and $\cb$ be abelian categories. Let $\F\colon \ca \ra \cb$ be an exact functor admitting a left adjoint $\G\colon \cb \ra \ca$. 

If the unit of the adjunction $\eta\colon \mathsf{Id}_{\cb} \ra \F\G$ is a natural isomorphism, then the induced functor $\ul{\F}\colon \ca/\ker \F \ra \cb$ is an equivalence of categories.
\end{prop}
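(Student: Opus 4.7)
The plan is to exhibit an explicit quasi-inverse to $\ul{\F}$, namely the composition
\[
\widetilde{\G}\colon \cb \xrightarrow{\G} \ca \xrightarrow{q} \ca/\ker\F,
\]
where $q$ is the Serre quotient functor. One composition is essentially immediate:
$\ul{\F} \circ \widetilde{\G} = \F \circ \G$, and the hypothesis that the unit $\eta\colon \mathsf{Id}_{\cb} \to \F\G$ is a natural isomorphism provides the required natural isomorphism $\ul{\F} \circ \widetilde{\G} \cong \mathsf{Id}_{\cb}$. In particular $\ul{\F}$ is essentially surjective.

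The work is in the other composition. Let $\epsilon\colon \G\F \to \mathsf{Id}_{\ca}$ be the counit of the adjunction. I claim that for every $A \in \ca$, both $\ker(\epsilon_A)$ and $\coker(\epsilon_A)$ lie in $\ker\F$. To see this, apply the triangle identity $\F(\epsilon_A) \circ \eta_{\F(A)} = \mathsf{id}_{\F(A)}$: since $\eta_{\F(A)}$ is an isomorphism by assumption, $\F(\epsilon_A)$ is its inverse and in particular an isomorphism. Factor $\epsilon_A$ as $\G\F(A) \twoheadrightarrow \im(\epsilon_A) \hookrightarrow A$ and use exactness of $\F$: the short exact sequences $0 \to \ker(\epsilon_A) \to \G\F(A) \to \im(\epsilon_A) \to 0$ and $0 \to \im(\epsilon_A) \to A \to \coker(\epsilon_A) \to 0$ map under $\F$ to short exact sequences whose middle map $\F(\epsilon_A)$ is iso, forcing $\F(\ker(\epsilon_A)) = 0 = \F(\coker(\epsilon_A))$.

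By the standard characterisation of isomorphisms in a Serre quotient, a morphism becomes an isomorphism in $\ca/\ker\F$ precisely when its kernel and cokernel lie in $\ker\F$. Hence $q(\epsilon_A)\colon \widetilde{\G}\,\ul{\F}(A) = q(\G\F(A)) \to q(A) = A$ is an isomorphism in $\ca/\ker\F$, natural in $A$. This yields a natural isomorphism $\widetilde{\G} \circ \ul{\F} \cong \mathsf{Id}_{\ca/\ker\F}$, completing the proof that $\ul{\F}$ is an equivalence.

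The argument is essentially bookkeeping around the adjunction; the only genuinely substantive step is the passage from the triangle identity to the conclusion that $\F(\epsilon_A)$ is an isomorphism, which is where the hypothesis on $\eta$ is used. Everything else — factorisation of $\ul{\F}$ through the Serre quotient and the characterisation of isomorphisms in $\ca/\ker\F$ — is standard Gabriel localisation.
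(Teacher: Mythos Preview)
Your proof is correct. The paper does not actually prove this proposition; it merely cites it as a dual version of \cite[Proposition III.5]{Gabriel62}, so there is no comparison to make with the paper's own argument.

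One small remark on presentation: the step where you promote the componentwise isomorphisms $q(\epsilon_A)$ to a natural isomorphism $\widetilde{\G}\circ\ul{\F}\cong\mathsf{Id}_{\ca/\ker\F}$ implicitly uses the 2-universal property of the Serre quotient --- precomposition with $q$ is fully faithful on functor categories, so the natural transformation $q\epsilon\colon (\widetilde{\G}\,\ul{\F})q \to q$ descends uniquely. This is standard, but worth a sentence since morphisms in $\ca/\ker\F$ are genuinely more complicated than those in $\ca$. Everything else is clean.
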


In conjunction with classical Morita theory (see e.g.~\cite[Theorem 8.4.4.]{DrozdKirichenko}), Proposition \ref{P:Gabriel} yields the following well-known corollary (see e.g.~\cite[Proposition 5.9]{Miyachi}).

\begin{cor}\label{C:GabrielMorita}
Let $A$ be a right Noetherian ring and $e \in A$ be an idempotent. Then $\F=\Hom_{A}(eA, -)\colon \mod-A \ra \mod-eAe$ induces an equivalence of abelian categories\footnote{The ring $eAe$ is automatically right Noetherian. Indeed, it is enough to show that sending a right ideal $I \subseteq eAe$ to $IA$, defines an injection from right ideals in $eAe$ to right ideals in $A$. Let $I \subseteq eAe$ be a right ideal and set $J=IA \cap eRe$. Then $J = Je \subseteq IAe = IeAe=I$. Since $I \subseteq J$, the claim follows.}
\[
\ul{\F}\colon \frac{\displaystyle \mod-A}{\displaystyle \mod-A/AeA} \longrightarrow \mod-eAe.
\]
\end{cor}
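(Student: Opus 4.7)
The plan is to apply Proposition \ref{P:Gabriel} directly, with $\ca=\mod-A$, $\cb=\mod-eAe$, and $\F=\Hom_{A}(eA,-)$. The key point is to exhibit a left adjoint $\G$ and verify both the form of $\ker \F$ and the fact that the unit of the adjunction is invertible; everything else is then formal.

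First, I would identify $\F$ concretely with the functor $M\mapsto Me$: for any $M\in\mod-A$, the evaluation map $\Hom_{A}(eA,M)\to Me$, $\varphi\mapsto \varphi(e)$, is a natural isomorphism of right $eAe$-modules. Since $eA$ is a projective right $A$-module, $\F$ is exact. For the left adjoint, I would take $\G=(-)\otimes_{eAe}eA\colon \mod-eAe\to\mod-A$; the tensor-hom adjunction gives $\Hom_{A}(N\otimes_{eAe}eA,M)\cong\Hom_{eAe}(N,\Hom_{A}(eA,M))\cong\Hom_{eAe}(N,\F(M))$, naturally in $N$ and $M$.

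Next, I would compute the unit $\eta_{N}\colon N\to \F\G(N)$. Using the concrete description of $\F$, one has $\F\G(N)=(N\otimes_{eAe}eA)e\cong N\otimes_{eAe}eAe\cong N$, and tracing through the adjunction shows the composite is the identity. Hence $\eta$ is a natural isomorphism. It remains to identify the kernel: $\F(M)=Me=0$ if and only if $MeA=0$, which (since $MeA=M\cdot AeA$ as a two-sided ideal action) is equivalent to $M$ being annihilated by $AeA$, i.e.\ $M\in\mod-A/AeA$ viewed as a full subcategory of $\mod-A$ via restriction along $A\twoheadrightarrow A/AeA$. Thus $\ker \F=\mod-A/AeA$.

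With these ingredients in place, Proposition \ref{P:Gabriel} applies and yields the desired equivalence
\[
\ul{\F}\colon \frac{\mod-A}{\mod-A/AeA}\xrightarrow{\ \sim\ } \mod-eAe.
\]
The main potential obstacle is the verification that $\eta$ is an isomorphism, but this reduces to the canonical identification $N\otimes_{eAe}eAe\cong N$, which is immediate. A secondary subtlety worth noting (and already footnoted in the statement) is that $eAe$ is right Noetherian, so $\mod-eAe$ is indeed an abelian category of the expected form; this is what makes the framework of Proposition \ref{P:Gabriel} applicable.
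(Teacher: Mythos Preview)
Your proof is correct and follows exactly the approach the paper indicates: the paper simply states that the result follows from Proposition~\ref{P:Gabriel} together with classical Morita theory, and you have spelled out precisely those Morita-theoretic details (the left adjoint $\G=(-)\otimes_{eAe}eA$, the unit being an isomorphism via $N\otimes_{eAe}eAe\cong N$, and the identification $\ker\F=\mod-A/AeA$).
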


\begin{prop}\label{P:TwoIdempotents}
Let $A$ be a right Noetherian ring and let $e, f \in A$ be idempotents. The exact functor $\F=\Hom_{A}(eA, -)$ induces a triangle equivalence 
\begin{align}\label{E:twoidempotents2}
\frac{\cd^b(\mod-A)/\thick(fA)}{\thick(q(\mod-A/AeA))} \longrightarrow \frac{\cd^b(\mod-eAe)}{\thick(fAe)},
\end{align}
where $q\colon \cd^b(\mod-A) \ra \cd^b(\mod-A)/\thick(fA)$ is the canonical quotient functor.
\end{prop}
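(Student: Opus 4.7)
The plan is to recognise the desired equivalence as an instance of Lemma~\ref{L:Verdier}. Concretely, I would proceed in two main steps: first, show that $\F=\Hom_{A}(eA,-)=(-)e$ induces a triangulated quotient functor $\cd^b(\mod-A)\to\cd^b(\mod-eAe)$ (in the sense of Definition~\ref{D:quotientfunctor}) whose kernel is $\ck:=\thick(\mod-A/AeA)$; second, apply Lemma~\ref{L:Verdier} with $\cu=\thick(fA)$ and observe that $\F(fA)=fAe$.

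For the first step, the module $eA$ is projective as a right $A$-module, so $\F$ is exact and extends termwise to an exact triangulated functor $\F\colon \cd^b(\mod-A)\to\cd^b(\mod-eAe)$. An object $X^\bullet\in\cd^b(\mod-A)$ lies in $\ker\F$ precisely when $H^i(X^\bullet)e=0$ for all $i$, i.e.\ when every cohomology lies in $\mod-A/AeA\subseteq\mod-A$. Since $\mod-A/AeA$ is a Serre subcategory and a bounded complex can be filtered by its cohomologies, this forces $\ker\F=\ck$. Hence $\F$ factors as $\overline{\F}\colon\cd^b(\mod-A)/\ck\to\cd^b(\mod-eAe)$. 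To upgrade this to an equivalence I would invoke Corollary~\ref{C:GabrielMorita}, which gives the abelian equivalence $\mod-A/(\mod-A/AeA)\cong\mod-eAe$ under $\F$: essential surjectivity of $\overline{\F}$ then follows by resolving an arbitrary object of $\cd^b(\mod-eAe)$ by finitely generated projectives (which are summands of $(eAe)^n=\F(eA)^n$), and fully faithfulness of $\overline{\F}$ is obtained by a standard calculus-of-fractions computation in the Verdier quotient, reduced to the abelian case degree by degree via truncation. This establishes that $\F$ is a triangulated quotient functor with kernel $\ck$.

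For the second step, I set $\cu=\thick(fA)\subseteq\cd^b(\mod-A)=\cc$ and $\cd=\cd^b(\mod-eAe)$. A direct computation gives $\F(fA)=\Hom_{A}(eA,fA)=fAe$, so $\cv:=\thick(\F(\cu))=\thick(fAe)$. Lemma~\ref{L:Verdier} then yields the triangle equivalence
\[
\frac{\cd^b(\mod-A)/\thick(fA)}{\thick(q(\mod-A/AeA))}\stackrel{\sim}\longrightarrow\frac{\cd^b(\mod-eAe)}{\thick(fAe)},
\]
which is exactly \eqref{E:twoidempotents2}. Here $\thick(q(\ck))=\thick(q(\mod-A/AeA))$ since thick subcategories commute with quotient functors on the level of generators.

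The main obstacle I expect is verifying that the abelian localization of Corollary~\ref{C:GabrielMorita} lifts to an equivalence at the level of bounded derived categories, i.e.\ that $\overline{\F}$ is fully faithful. Although this is essentially folklore, a careful argument requires either working out the calculus of fractions explicitly or invoking a Miyachi-type comparison theorem (using that $\F$ is exact and admits the left adjoint $-\otimes_{eAe}eA$, together with the vanishing $\Hom_{\cd^b(\mod-A)}(X,Y[n])=0$ for $X$ with cohomology in $\mod-A/AeA$ and $Y=fA$, etc.). Once this is in hand, the remainder of the proof is a clean application of Lemma~\ref{L:Verdier}.
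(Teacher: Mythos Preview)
Your proposal is correct and follows essentially the same approach as the paper: show that $\F$ is a triangulated quotient functor on $\cd^b(\mod-A)$ with kernel $\thick(\mod-A/AeA)$, then apply Lemma~\ref{L:Verdier} with $\cu=\thick(fA)$. The paper handles the first step in one line by combining Corollary~\ref{C:GabrielMorita} with Miyachi's compatibility result \cite[Theorem~3.2]{Miyachi}, which is precisely the ``Miyachi-type comparison theorem'' you flagged as the main obstacle.
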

\begin{proof}
 Using Miyachi's compatibility result (\cite[Theorem 3.2]{Miyachi}), the equivalence of abelian categories in Corollary \ref{C:GabrielMorita} shows that $\F$ induces a triangulated \emph{quotient} functor
$\F\colon \cd^b(\mod-A) \ra \cd^b(\mod-eAe)$. An application of Lemma \ref{L:Verdier} to $\F$ and $\thick(fA)$ completes the proof.
\end{proof}

\begin{rem}
Proposition \ref{P:TwoIdempotents} contains X.-W.~Chen's \cite[Theorem 3.1]{Chen10} as a special case. Namely, if we set $f=1$ and assume that $\prdim_{eAe}(Ae)< \infty$ holds, then $\thick_{eAe}(Ae) \cong K^b(\proj-eAe)$ since $eAe$ is a $eAe$-direct summand of $Ae$. Hence, (\ref{E:twoidempotents2}) yields a triangle equivalence
$\cd_{sg}(A)/\thick(q(\mod-A/AeA)) \ra \cd_{sg}(eAe)$. Moreover, if every finitely generated $A/AeA$-module has finite projective dimension over $A$ (i.e.~the idempotent $e$ is \emph{singularly-complete} in the terminology of loc.~cit.), we get an equivalence of singularity categories \cite[Corollary 3.3]{Chen10}. \begin{align}\label{E:XiaoWu}Ê\cd_{sg}(A) \ra \cd_{sg}(eAe) \end{align} 
\end{rem}

\begin{ex}[{\cite[Example 4.3.]{XWChenSchurFunctTria}}]
For $n>0$, let $\ck_{n}$ be the path algebra of the following quiver with relation $\alpha^2=0$.
\begin{equation*}
\begin{tikzpicture}[description/.style={fill=white,inner sep=2pt}]
    \matrix (n) [matrix of math nodes, row sep=0.1em,
                 column sep=2.5em, text height=1.5ex, text depth=0.25ex,
                 inner sep=0pt, nodes={inner xsep=0.3333em, inner
ysep=0.3333em}]
    {  & \,  \\
       1 && 2  , \\
       & \, \\
    };
    
    \draw[<-] ($(n-2-1.east) + (0mm,1mm)$) .. controls +(5mm,3mm) and
+(-5mm,+3mm) .. ($(n-2-3.west) + (0mm,1mm)$);
  \node[scale=0.75] at ($(n-2-1.east) + (12.5mm, 5.8mm)$) {$\beta_{1}$};
  
    \draw[->] ($(n-2-3.west) + (0mm,-1mm)$) .. controls +(-5mm,-3mm)
and +(+5mm,-3mm) .. ($(n-2-1.east) + (0mm,-1mm)$);
\node[scale=0.75] at ($(n-2-1.east) + (12.5mm, -6.1mm)$) {$\beta_{n}$};

\path[dotted] ($(n-1-2.south)$) edge ($(n-3-2.north)$);

 \draw[->] ($(n-2-1.west) +
    (.4mm,1.8mm)$) arc (45:315:2.5mm);
    \node[scale=0.75] at ($(n-2-1.west) + (-6.5mm,0mm)$) {$\alpha$};
\end{tikzpicture}
\end{equation*}
One can check that $\ck=\ck_{n}$ is a finite dimensional Iwanaga--Gorenstein $k$-algebra, e.g.~use \cite[Theorem 3.3.]{XWChenSchurFunctTria}. Consider the idempotent $e=e_{1}$. We have $\ck e \cong e\ck e$ as right $e\ck e$-modules. Moreover, the idempotent $e_{1}$ is singularly complete. Indeed, $S_{2}$ is a projective $\ck$-module. Hence we get a triangle equivalence $\cd_{sg}(\ck) \cong \cd_{sg}(k[x]/(x^2))$, by \eqref{E:XiaoWu}. Using Buchweitz' Theorem \ref{T:Buchweitz}, this yields a triangle equivalence $\ul{\GP}(\ck) \cong \ul{\mod}-k[x]/(x^2)$. In particular, $\ck$ has precisely three indecomposable Gorenstein projective modules. Namely, two projectives and one non-projective corresponding to $k \in \cd_{sg}(k[x]/(x^2))$.\end{ex}


The special case $e=f$ in Proposition \ref{P:TwoIdempotents} yields the following corollary.

\begin{cor}\label{P:OneIdempotent}
Let $A$ be a right Noetherian ring and let $e\in A$ be an idempotent. The exact functor $\F=\Hom_{A}(eA, -)$ induces a triangle equivalence 
\begin{align}\label{E:twoidempotents}
\frac{\cd^b(\mod-A)/\thick(eA)}{\thick_{A}(\mod-A/AeA)} \longrightarrow \frac{\cd^b(\mod-eAe)}{\thick(eAe)}=\cd_{sg}(eAe).
\end{align}
\end{cor}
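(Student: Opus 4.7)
The plan is to derive this statement as the direct specialization $f=e$ of Proposition \ref{P:TwoIdempotents}, so essentially no new work is needed beyond verifying that the right-hand side of \eqref{E:twoidempotents2} simplifies to the singularity category of $eAe$. First I would substitute $f=e$ into the conclusion of Proposition \ref{P:TwoIdempotents}; the numerator $\cd^b(\mod-A)/\thick(eA)$ on the left and the quotient by $\thick(q(\mod-A/AeA))$ match \eqref{E:twoidempotents} verbatim, once one observes that $\thick_A(\mod-A/AeA)$ in the statement of the corollary is, by convention, the thick subcategory of the already-formed Verdier quotient $\cd^b(\mod-A)/\thick(eA)$ generated by the image of $\mod-A/AeA$ under $q$.

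The only remaining point is to identify $\cd^b(\mod-eAe)/\thick(eAe)$ with $\cd_{sg}(eAe)$. For this I would note that the regular module $(eAe)_{eAe}$ is a projective generator of $\proj-eAe$, so every finitely generated projective $eAe$-module lies in $\add(eAe)$, and hence $\thick(eAe)=K^b(\proj-eAe)$ inside $\cd^b(\mod-eAe)$. By definition of the singularity category,
\[
\frac{\cd^b(\mod-eAe)}{\thick(eAe)}=\frac{\cd^b(\mod-eAe)}{K^b(\proj-eAe)}=\cd_{sg}(eAe),
\]
which is exactly the right-hand side of \eqref{E:twoidempotents}.

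Since the equivalence itself is induced, as in Proposition \ref{P:TwoIdempotents}, by the exact functor $\F=\Hom_A(eA,-)\colon \mod-A\to\mod-eAe$, no further construction is required. There is no genuine obstacle at this stage: the substantive work, namely combining Miyachi's compatibility theorem (to see that $\F$ descends to a triangulated quotient functor on bounded derived categories with kernel $\thick(\mod-A/AeA)$) with Lemma \ref{L:Verdier} (to quotient out the further thick subcategory $\thick(eA)$), has already been carried out in the proof of Proposition \ref{P:TwoIdempotents}. The corollary is therefore obtained by a two-line argument specializing $f=e$ and invoking the identification $\thick(eAe)=K^b(\proj-eAe)$.
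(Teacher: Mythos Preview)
Your specialization $f=e$ in Proposition \ref{P:TwoIdempotents} is correct and is exactly how the paper proceeds. However, you dismiss as ``convention'' precisely the point that the paper's proof is devoted to. The subscript in $\thick_{A}(\mod-A/AeA)$ means the thick closure is taken inside $\cd^b(\mod-A)$, not inside the Verdier quotient; this is \emph{not} the same notation as $\thick(q(\mod-A/AeA))$ from Proposition \ref{P:TwoIdempotents}. For the corollary to even be well-posed, one needs to know that the quotient functor $q$ is fully faithful on $\thick_{A}(\mod-A/AeA)$, so that this subcategory of $\cd^b(\mod-A)$ can be identified with a thick subcategory of the quotient.

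The paper supplies exactly this missing step: since $eA$ is projective and $\Hom_A(eA,N)=Ne=0$ for every $A/AeA$-module $N$, one has $\thick(eA)\subseteq {}^{\perp}\thick_A(\mod-A/AeA)$ in $\cd^b(\mod-A)$. Lemma \ref{L:Verdier2} then shows that $q$ restricts to a fully faithful functor on $\thick_A(\mod-A/AeA)$, so $q(\thick_A(\mod-A/AeA))\cong\thick_A(\mod-A/AeA)$ and one may drop the $q$ from the formulation of Proposition \ref{P:TwoIdempotents}. This is the entire content of the paper's proof. Your identification $\thick(eAe)=K^b(\proj-eAe)$ is of course correct, but it is already built into the statement of the corollary.
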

\begin{proof}
Since $\thick(eA) \subseteq ^\perp\!\!\thick(\mod-A/AeA)$ holds,  $q\bigl(\thick_{A}(\mod-A/AeA)\bigr)$ is equivalent to $\thick_{A}(\mod-A/AeA)$, by Lemma \ref{L:Verdier2}. Hence we can omit $q$ in \eqref{E:twoidempotents2}. This completes the proof.
\end{proof}

\subsection{Alternative approach to the `stable' Morita type Theorem}\label{ss:Alternative}
Let $\ce$ be a Frobenius category as in Corollary \ref{C:Buchweitz}. Let $M \in \ce$  and let $A=\End_{\ce}(P \oplus M)$. Let $e \in A$ be the idempotent corresponding to the identity endomorphism $1_{P}$ of $P$. 

\begin{thm} \label{t:alternative-main}
 If  $A$ is a right Noetherian ring such that $\prdim_{A}(N)< \infty$ holds for every $N \in \mod-A/AeA$, then there is a commutative diagram
 \begin{align*}
\begin{xy}\SelectTips{cm}{}
\xymatrix{
\frac{\left( \displaystyle\cd^b(\mod-A)/ \thick(eA) \right)}{\displaystyle \thick(\mod-A/AeA)  } \ar[rr]^(.58){\displaystyle \ul{\G}}_(.58){\sim} && \frac{ \displaystyle \cd^b(\mod-eAe) }{\displaystyle K^b(\proj- eAe) } && \,  \ul{\ce} \ar@{_{(}->}[ll]_(.39){\displaystyle \ul{\widetilde{\P}}} \\
\frac{\left( \displaystyle K^b(\proj-A)/ \thick(eA) \right)}{\displaystyle  \thick(\mod-A/AeA) } \ar[rr]^(.58){\displaystyle \ul{\G}^{\rm restr.}}_(.52){\sim} \ar@{^{(}->}[u]^{\displaystyle \I_{1}}&& \frac{ \displaystyle \thick(Ae) }{ \displaystyle K^b(\proj- eAe) } \ar@{^{(}->}[u]^{\displaystyle \I_{2}} && \thick_{\underline{\ce}}(M) \ar[ll]_(.39){\displaystyle \ul{\widetilde{\P}}^{\rm restr.}}^(.47){\begin{smallmatrix} \sim \\ \text{(up to summands)} \end{smallmatrix} } \ar@{^{(}->}[u]^{\displaystyle \I_{3}}
}
\end{xy}
\end{align*}
Moreover, if $A$ has finite global dimension, then the inclusions are equivalences. In particular, this has the following consequences 
\begin{itemize}
\item[(a)] $\ul{\widetilde{\P}}\colon \ul{\ce} \longrightarrow \cd_{sg}(eAe)$ is a triangle \emph{equivalence} (up to summands). If $\underline{\ce}$ is idempotent complete, then the supplement `up to summands' can be omitted. In particular, $\cd_{sg}(eAe)$ is idempotent complete in this case.
\item[(b)]  $\ul{\ce}=\thick_{\ul{\ce}}(M)$. 
\end{itemize}

\end{thm}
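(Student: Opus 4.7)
The plan is to assemble the commutative diagram from two results already established in the text and to extract the consequences by a direct diagram chase. For the top row, Corollary~\ref{P:OneIdempotent} applied to the right Noetherian ring $A$ with idempotent $e$ provides the triangle equivalence $\ul{\G}$, and the right vertical embedding $\ul{\widetilde{\P}}$ is the fully faithful functor of Corollary~\ref{C:Buchweitz}, via $eAe = \End_\ce(P)$.

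For the bottom row, the hypothesis on projective dimensions of $A/AeA$-modules guarantees $\thick_{A}(\mod-A/AeA) \subseteq K^b(\proj-A)$, so $\I_{1}$ is a well-defined embedding. The exact functor $\G = (-)e$ sends $A$ to $Ae$ and $eA$ to $eAe$, hence restricts to $K^b(\proj-A) \to \thick(Ae)$ and kills both $\thick(eA)$ and $\thick(\mod-A/AeA)$; the induced $\ul{\G}^{\rm restr.}$ inherits full faithfulness from $\ul{\G}$ and is essentially surjective because $\thick(Ae)/K^b(\proj-eAe)$ is by definition generated as a thick subcategory by $Ae = \G(A)$. To see that $\ul{\widetilde{\P}}^{\rm restr.}$ is well-defined and essentially surjective up to summands, observe that exactness of $\Hom_\ce(P, -)$ turns a $\ce$-projective resolution of $M$ into an $eAe$-projective resolution of $\Hom_\ce(P, M)$, so $\ul{\widetilde{\P}}(M) \cong \Hom_\ce(P, M)$ in $\cd_{sg}(eAe)$. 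Since $Ae \cong eAe \oplus \Hom_\ce(P, M)$ as right $eAe$-modules and $eAe$ vanishes in $\cd_{sg}(eAe)$, we obtain $Ae \cong \ul{\widetilde{\P}}(M)$ in the Verdier quotient, so $\thick(Ae)/K^b(\proj-eAe)$ is generated as a thick subcategory by the single object $\ul{\widetilde{\P}}(M)$, the image of $M$ under $\ul{\widetilde{\P}}^{\rm restr.}$.

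When $\gldim A < \infty$ we have $K^b(\proj-A) = \cd^b(\mod-A)$, which makes $\I_{1}$ an equivalence; compatibility with the two horizontal equivalences then forces $\I_{2}$ to be an equivalence as well. For consequence~(a), the image of $\ul{\widetilde{\P}}$ contains that of $\ul{\widetilde{\P}}^{\rm restr.}$, which via $\I_{2}$ is essentially surjective up to summands onto the full $\cd_{sg}(eAe)$; thus $\ul{\widetilde{\P}}$ is an equivalence up to summands and, when $\ul{\ce}$ is idempotent complete, its fully faithful image is closed under summands, upgrading this to a genuine equivalence and simultaneously proving that $\cd_{sg}(eAe)$ is idempotent complete. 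For consequence~(b), the same essential surjectivity up to summands identifies the idempotent completion of $\thick_{\ul{\ce}}(M)$ with $\cd_{sg}(eAe)^\omega$, which by (a) equals the idempotent completion of $\ul{\ce}$; since $\thick_{\ul{\ce}}(M)$ is closed under direct summands inside $\ul{\ce}$, this forces the equality $\ul{\ce} = \thick_{\ul{\ce}}(M)$, exactly as in the proof of Theorem~\ref{t:main-thm}(3).

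The main technical obstacle is the careful bookkeeping of `up to summands' throughout, since neither $\ul{\ce}$ nor the relevant Verdier quotients are assumed idempotent complete; the crucial structural input that makes everything work is the decomposition $Ae \cong eAe \oplus \Hom_\ce(P, M)$, which pins down $\ul{\widetilde{\P}}(M)$ as a canonical generator of the bottom-right subcategory $\thick(Ae)/K^b(\proj-eAe)$.
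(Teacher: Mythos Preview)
Your proof is correct and follows essentially the same route as the paper: both assemble the diagram from Corollary~\ref{P:OneIdempotent} and Corollary~\ref{C:Buchweitz}, identify $\ul{\widetilde{\P}}(M)$ with $Ae$ in $\cd_{sg}(eAe)$ via $Ae \cong eAe \oplus \Hom_\ce(P,M)$, and then chase the diagram once $\I_1$ becomes an equivalence under the finite global dimension hypothesis. Your account is slightly more explicit about the decomposition of $Ae$ and the well-definedness of $\I_1$, but the underlying argument is identical.
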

\begin{proof}
Let us explain the  commutative diagram above. The vertical arrows denote the canonical inclusion functors. The functors in the lower row being the restrictions of the functors in the upper row, both squares commute by definition. 
$\ul{\G}$ is the equivalence from Corollary \ref{P:OneIdempotent} and its restriction $\ul{\G}^{\rm restr.}$ is an equivalence since $\G$ maps the generator $A$ to $Ae$. $\ul{\widetilde{\P}}$ denotes the fully faithful functor from Corollary \ref{C:Buchweitz}. Since $\ul{\widetilde{\P}}$ maps $M$ to $\Hom_{\ce}(P, M)$, which is isomorphic to $Ae$ as right $eAe$-modules, up to taking direct summands the restriction $\ul{\widetilde{\P}}^{\rm restr.}$ is a triangle equivalence (note that $\thick_{\ul{\ce}}(M)$ is not necessarily idempotent complete since $\ul{\ce}$ is not assumed to be idempotent complete. But $\thick(eA)$ is always idempotent complete!)

If $A$ has finite global dimension, then the inclusion $K^b(\proj-A) \ra \cd^b(\mod-A)$ is an equivalence. Hence, $\I_{1}$ is an equivalence. 
Thus by commutativity of the left square $\I_{2}$ is an equivalence.  Now, $\I_{3}$ and $\ul{\widetilde{\P}}$ are fully faithful and the right square commutes. Hence up to taking direct summands these two functors are equivalences. In particular, $\I_{3}$ is an equivalence since we have already taken all possible direct summands of direct sums of $M$ in $\ul{\ce}$. This completes the proof. 
\end{proof}

\subsection{Frobenius pairs and Schlichting's negative K-theory}\label{ss:Schlichting}

\begin{defn}\label{D:ExSeqTria}
We call a sequence $\kS \xrightarrow{F} \kT \xrightarrow{G} \kR$ of triangulated categories \emph{exact} if the following properties hold. 

$F$ is fully faithful and the composition $GF$ is zero. Moreover, the induced functor $\kT/\kS \rightarrow \kR$ is fully faithful and every object of $\kR$ occurs as a direct summand of an object in $\kT/\kS$.  
\end{defn}

\begin{ex}
If $\kU \subseteq \kT$ is a full triangulated subcategory then the canonical sequences
$\kU \rightarrow \kT \rightarrow \kT/\kU$ and $\kU \rightarrow \kT \rightarrow (\kT/\kU)^{\omega}$ are exact.
\end{ex}

We need the following statement from \cite{Schlichting06}.

\begin{prop} \label{P:Schlichting}
If $\kS \xrightarrow{F} \kT \xrightarrow{G} \kR$ is exact then the induced sequence of the idempotent completions
$\kS^{\omega} \xrightarrow{F^{\omega}} \kT^{\omega} \xrightarrow{G^{\omega}} \kR^{\omega}$ is exact.
\end{prop}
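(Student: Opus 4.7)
The plan is to verify each of the four conditions of Definition \ref{D:ExSeqTria} for the sequence $\kS^{\omega} \xrightarrow{F^{\omega}} \kT^{\omega} \xrightarrow{G^{\omega}} \kR^{\omega}$ in turn. The first two conditions are essentially formal: since $F$ is fully faithful and the idempotent completion is a $2$-functor which preserves fully faithful functors (see Remark \ref{R:IdempCompl}(b)), $F^{\omega}$ is fully faithful; and $G^{\omega}F^{\omega} = (GF)^{\omega} = 0$ on the nose. The essential surjectivity up to summands will be easy as well: given $(X,e) \in \kR^{\omega}$ with $X \in \kR$, by exactness of the original sequence $X$ is a direct summand of $G(Y)$ for some $Y \in \kT$, say $G(Y) \cong X \oplus X'$ in $\kR$. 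Then $(X,e)$ is visibly a direct summand of $G^{\omega}(Y,1) = (G(Y),1)$ in $\kR^{\omega}$, and the image of $(Y,1)$ in $\kT^{\omega}/\kS^{\omega}$ provides the required preimage up to summands.

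The real work will be in showing that the induced functor $\overline{G^{\omega}}\colon \kT^{\omega}/\kS^{\omega} \longrightarrow \kR^{\omega}$ is fully faithful. My plan is to factor this through the natural comparison functor $\Phi\colon (\kT/\kS)^{\omega} \longrightarrow \kT^{\omega}/\kS^{\omega}$ induced by the universal property of the Verdier quotient and of idempotent completion: since the original quotient functor $q\colon \kT \to \kT/\kS$ induces $q^{\omega}\colon \kT^{\omega} \to (\kT/\kS)^{\omega}$ vanishing on $\kS^{\omega}$, we obtain $\Phi$, and composing with the fully faithful functor $\overline{G}^{\omega}\colon (\kT/\kS)^{\omega} \to \kR^{\omega}$ (fully faithful because $\overline{G}$ is) recovers $\overline{G^{\omega}}$. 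Thus it suffices to show that $\Phi$ is fully faithful; then $\overline{G^{\omega}} = \overline{G}^{\omega} \circ \Phi$ is a composition of fully faithful functors.

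To establish that $\Phi$ is fully faithful, I would argue directly with roofs. An object of $\kT^{\omega}/\kS^{\omega}$ is a pair $(X,e)$ with $X \in \kT$ and $e \in \End_{\kT^{\omega}}(X,1)$ an idempotent, and a morphism $(X,e) \to (Y,f)$ is represented by a roof $(X,e) \xleftarrow{s} (Z,g) \xrightarrow{\alpha} (Y,f)$ in $\kT^{\omega}$ whose cone lies in $\kS^{\omega}$. The key observation is that any such $(Z,g)$ can be replaced (up to isomorphism in $\kT^{\omega}/\kS^{\omega}$) by an object of the form $(Z',1)$ with $Z' \in \kT$: indeed, $(Z,g)$ is a direct summand of $(Z,1)$, and the complementary summand $(Z,1-g)$ becomes isomorphic to itself in both factors, so roofs in $\kT^{\omega}/\kS^{\omega}$ can be straightened to come from roofs in $\kT/\kS$ between objects of the form $(X,e),(Y,f)$. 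Making this precise amounts to checking that the canonical map
\[
\Hom_{(\kT/\kS)^{\omega}}\bigl((X,e),(Y,f)\bigr) \longrightarrow \Hom_{\kT^{\omega}/\kS^{\omega}}\bigl((X,e),(Y,f)\bigr)
\]
is bijective; surjectivity uses the straightening just described, while injectivity uses that a morphism in $\kT^{\omega}/\kS^{\omega}$ represented by a roof coming from $\kT$ which becomes zero must factor through an object of $\kS^{\omega}$, and one pulls that factorisation back to $(\kT/\kS)^{\omega}$ via the same summand trick.

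The main obstacle I expect is precisely this bookkeeping: checking that roofs and null-homotopies in $\kT^{\omega}/\kS^{\omega}$ can be transported to $(\kT/\kS)^{\omega}$ requires verifying that idempotents appearing in the source, target and apex of a roof can be absorbed into the objects without changing the morphism. Once this is done, fully faithfulness of $\Phi$ follows, and combined with the already-noted fully faithfulness of $\overline{G}^{\omega}$ and the essential surjectivity up to summands, the sequence of idempotent completions is exact.
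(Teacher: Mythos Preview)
The paper does not give its own proof of this proposition; it simply cites Schlichting \cite{Schlichting06}. So there is no direct comparison to make, but I can comment on whether your plan would go through.

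Your overall architecture is sound, and the verifications of $F^{\omega}$ being fully faithful, $G^{\omega}F^{\omega}=0$, and essential surjectivity up to summands are correct. There is a small typo: you write $\Phi\colon (\kT/\kS)^{\omega} \to \kT^{\omega}/\kS^{\omega}$, but everything you say afterwards (in particular the factorisation $\overline{G^{\omega}} = \overline{G}^{\omega}\circ\Phi$) forces $\Phi$ to go the other way, $\Phi\colon \kT^{\omega}/\kS^{\omega} \to (\kT/\kS)^{\omega}$, which is indeed what the universal property of the Verdier quotient produces from $q^{\omega}$.

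The real issue is your ``straightening'' step for full faithfulness. You propose to replace the apex $(Z,g)$ of a roof by $(Z,1)$, noting that $(Z,g)$ is a summand of $(Z,1)$. But doing so changes the cone of the quasi-isomorphism: if $s\colon (Z,g)\to X$ has cone in $\kS^{\omega}$, the map $s\colon (Z,1)\to X$ (extended by zero on the complement) has cone differing by $(Z,1-g)[1]$, and there is no reason for $(Z,1-g)$ to lie in $\kS^{\omega}$. Your remark that ``$(Z,1-g)$ becomes isomorphic to itself in both factors'' does not resolve this. The standard fix goes through the cone rather than the apex: write the connecting map as $\delta\colon X\to\mathrm{cone}(s)=(S,h)\in\kS^{\omega}$, compose with the split inclusion $(S,h)\hookrightarrow (S,1)$ to get $\delta'\colon X\to S$ in $\kT$, and let $W$ be its fibre in $\kT$. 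Since the inclusion is split mono, $\delta\circ(W\to X)=0$, so $W\to X$ factors through $(Z,g)\to X$, and the roof is equivalent to one with apex $W\in\kT$ and cone $S\in\kS$. A parallel argument handles injectivity. With this correction your plan is essentially the argument one finds in Balmer--Schlichting.
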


\begin{defn}
Let $\kU \subseteq \kT$ be a full triangulated subcategory. $\kU$ is called \emph{dense} if every object in $\kT$ is a direct summand of an object in $\kU$ and $\kU$ is closed under isomorphisms. 
\end{defn}

\begin{ex}
$\kT \subseteq \kT^{\omega}$ is the most important example of a dense subcategory in our situation. 
\end{ex}

Thomason \cite[Theorem 2.1]{Thomason} gave the following description of the set of dense subcategories. 

\begin{thm}\label{T:Thomason}
Let $\kT$ be a triangulated category. Then there is a bijection of sets
\[K_{0}(-)\colon \{ \text{Dense subcategories of  } \kT  \} \rightarrow \{\text{Subgroups of  } K_{0}(\kT) \},\]
sending a dense subcategory $\kU$ to its Grothendieck group $K_{0}(\kU)$.

In particular, the inclusion of a dense subcategory $\kU \xrightarrow{i} \kT$ yields an inclusion $K_{0}(\kU) \xrightarrow{K_{0}(i)} K_{0}(\kT)$.
\end{thm}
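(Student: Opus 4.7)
The plan is to exhibit an explicit inverse map and verify both composites are the identity. Define
\[\psi\colon\{\text{subgroups of } K_0(\kT)\}\longrightarrow\{\text{dense triangulated subcategories of }\kT\}\]
by letting $\psi(H)$ be the full subcategory of objects $X\in\kT$ with $[X]\in H$. I would first check that $\psi(H)$ is a triangulated subcategory: it is closed under the shift since $[X[1]]=-[X]$, and the relation $[Y]=[X]+[Z]$ coming from any triangle $X\to Y\to Z\to X[1]$ shows that two-out-of-three objects lying in $\psi(H)$ forces the third into $\psi(H)$. Density is immediate: for any $X\in\kT$ the object $X\oplus X[1]$ has class $0\in H$, so $X$ is a summand of an object of $\psi(H)$. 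This shows $\psi$ is well-defined.

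Next I would verify $K_0\circ\psi=\mathrm{id}$. The inclusion $K_0(\psi(H))\subseteq H$ is tautological. For the reverse inclusion, any $h\in H\subseteq K_0(\kT)$ can be written as $[X]-[Y]=[X\oplus Y[1]]$ for some $X,Y\in\kT$, and then $W:=X\oplus Y[1]\in\psi(H)$ represents $h$ inside $K_0(\psi(H))$. This step is essentially formal.

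The harder direction is $\psi\circ K_0=\mathrm{id}$, that is, $\kU=\psi(K_0(\kU))$ for any dense triangulated subcategory $\kU\subseteq\kT$. The inclusion $\subseteq$ is immediate. For the reverse, let $X\in\kT$ with $[X]\in K_0(\kU)$. Writing this class as $[Y]-[Z]=[Y\oplus Z[1]]$ with $Y,Z\in\kU$, I obtain $W\in\kU$ with $[X]=[W]$ in $K_0(\kT)$. The key lemma I would invoke here is the standard fact that $[X]=[W]$ in $K_0(\kT)$ implies the existence of $V\in\kT$ with $X\oplus V\cong W\oplus V$. Using density of $\kU$, enlarge $V$ to $V':=V\oplus V''\in\kU$, so that $X\oplus V'\cong W\oplus V'\in\kU$. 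Finally, apply the split triangle
\[V'\longrightarrow X\oplus V'\longrightarrow X\longrightarrow V'[1],\]
whose outer two vertices lie in $\kU$: since $\kU$ is a triangulated subcategory closed under isomorphism, the third vertex $X$ lies in $\kU$. The "in particular" assertion about injectivity of $K_0(\kU)\hookrightarrow K_0(\kT)$ follows by the same trick applied to $[X]=0$.

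The main obstacle is the key lemma that an equality $[X]=[W]$ in $K_0(\kT)$ can be witnessed by a single stable isomorphism $X\oplus V\cong W\oplus V$. This is where the triangulated structure does real work: an equality in $K_0(\kT)$ a priori only produces a finite formal chain of triangle relations, and one must iteratively absorb each relation $[A]-[B]+[C]=0$ coming from a triangle $A\to B\to C\to A[1]$ into an honest direct-sum identity by adding suitable auxiliary objects. Once this lemma is in hand, the rest of the argument is the bookkeeping sketched above, and the split-triangle step is what forces the use of \emph{triangulated} (rather than merely additive) subcategories.
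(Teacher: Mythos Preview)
The paper does not prove this theorem but simply cites Thomason \cite[Theorem 2.1]{Thomason}; your argument is precisely Thomason's and is correct. You have rightly identified the only nontrivial step—that $[X]=[W]$ in $K_0(\kT)$ implies $X\oplus V\cong W\oplus V$ for some $V$—and your sketch of how to prove it (iteratively absorbing each triangle relation $[B]=[A]+[C]$ into a stable isomorphism) is the standard strategy, after which the split-triangle argument concludes cleanly.
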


This has the following consequences, which will be important in the sequel.

\begin{cor}\label{C:Thomason}
Let $\kT$ be a triangulated category. If $K_{0}(\kT)=K_{0}(\kT^{\omega})$, then $\kT=\kT^{\omega}$. In other words, $\kT$ is idempotent complete.
\end{cor}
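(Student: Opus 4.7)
The plan is to derive this as an immediate consequence of Thomason's bijection (Theorem \ref{T:Thomason}), applied to the ambient idempotent complete category $\kT^{\omega}$. The key observation is that $\kT$ sits inside $\kT^{\omega}$ as a \emph{dense} triangulated subcategory: every object $(X, e) \in \kT^{\omega}$ is a direct summand of $(X, \mathrm{id}_X) = \iota(X) \in \kT$ (via the decomposition $(X,\mathrm{id}) \cong (X,e) \oplus (X,1-e)$), and $\kT$ is closed under isomorphism in $\kT^{\omega}$. Of course $\kT^{\omega}$ is itself a dense (in fact, the full) subcategory of $\kT^{\omega}$.

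Now apply Theorem \ref{T:Thomason} with ambient category $\kT^{\omega}$: the assignment $\kU \mapsto K_0(\kU)$ is a \emph{bijection} between dense triangulated subcategories of $\kT^{\omega}$ and subgroups of $K_0(\kT^{\omega})$. Under this bijection, the dense subcategory $\kT \hookrightarrow \kT^{\omega}$ corresponds to the image of $K_0(\iota)\colon K_0(\kT) \hookrightarrow K_0(\kT^{\omega})$, and the dense subcategory $\kT^{\omega} \hookrightarrow \kT^{\omega}$ corresponds to the full group $K_0(\kT^{\omega})$. The hypothesis $K_0(\kT) = K_0(\kT^{\omega})$ says exactly that these two subgroups coincide, so by injectivity of the Thomason bijection the two dense subcategories coincide, i.e.\ $\kT = \kT^{\omega}$.

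There is essentially no obstacle once Theorem \ref{T:Thomason} is granted; the only point requiring a brief comment is the verification that $\kT$ is dense in $\kT^{\omega}$, which is immediate from the construction of the idempotent completion recalled in Subsection \ref{Sub:Idemp}. The proof is therefore expected to be a short paragraph.
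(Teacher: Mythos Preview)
Your argument is correct and is exactly the intended one: the paper states this corollary immediately after Theorem~\ref{T:Thomason} without proof, precisely because it follows at once by applying Thomason's bijection to the dense subcategory $\kT \subseteq \kT^{\omega}$ as you do.
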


\begin{cor}\label{C:ExactGro}
If $\kS \xrightarrow{F} \kT \xrightarrow{G} \kR$ is an exact sequence of triangulated categories, then  $K_{0}(\kS) \xrightarrow{K_{0}(F)} K_{0}(\kT) \xrightarrow{K_{0}(G)} K_{0}(\kR)$ is an exact sequence of abelian groups.
\end{cor}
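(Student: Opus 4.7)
The plan is to factor $G$ through the Verdier quotient $\kT/\kS$ and combine the classical right-exact sequence for Verdier quotients with Thomason's theorem on dense subcategories (Theorem \ref{T:Thomason}).

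First I would verify the classical fact that for any fully faithful inclusion $\kS \hookrightarrow \kT$ of triangulated categories, the sequence of abelian groups
\begin{equation*}
K_{0}(\kS) \xrightarrow{K_{0}(F)} K_{0}(\kT) \xrightarrow{K_{0}(Q)} K_{0}(\kT/\kS) \longrightarrow 0
\end{equation*}
is exact, where $Q\colon \kT \to \kT/\kS$ is the Verdier quotient functor. Surjectivity of $K_{0}(Q)$ is immediate since $Q$ is essentially surjective. For exactness in the middle, the inclusion $\im K_{0}(F) \subseteq \ker K_{0}(Q)$ follows from $QF = 0$. Conversely, given a class $[X] \in \ker K_{0}(Q)$, one uses that $Q(X) = 0$ in $K_{0}(\kT/\kS)$ means there is a finite sequence of relations of the form $[B] = [A] + [C]$ coming from triangles $A \to B \to C \to A[1]$ in $\kT/\kS$ which, by the definition of morphisms in the Verdier quotient, can be lifted back to triangles in $\kT$ up to adding objects from $\kS$; tracking this shows that $[X]$ lies in $\im K_{0}(F)$.

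Next I would use the hypothesis that the induced functor $\overline{G} \colon \kT/\kS \to \kR$ is fully faithful and that every object of $\kR$ is a direct summand of one in the essential image of $\overline{G}$. This means the essential image of $\overline{G}$ is a dense triangulated subcategory of $\kR$ (after closing under isomorphisms, which does not affect $K_0$). By Thomason's Theorem \ref{T:Thomason}, the induced map $K_{0}(\overline{G})\colon K_{0}(\kT/\kS) \to K_{0}(\kR)$ is \emph{injective}.

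Finally I would assemble the pieces: since $G = \overline{G} \circ Q$, we get $K_{0}(G) = K_{0}(\overline{G}) \circ K_{0}(Q)$, and injectivity of $K_{0}(\overline{G})$ yields $\ker K_{0}(G) = \ker K_{0}(Q) = \im K_{0}(F)$ by the previous paragraph, which is exactly exactness at $K_{0}(\kT)$. The main (and really the only) technical obstacle is the classical Verdier localization statement for $K_0$; the rest is a straightforward concatenation of Thomason's theorem with the definition of an exact sequence of triangulated categories.
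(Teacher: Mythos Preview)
Your proposal is correct and follows essentially the same route as the paper: factor $G$ as the Verdier quotient $\pi\colon \kT \to \kT/\kS$ followed by the fully faithful dense inclusion $\iota\colon \kT/\kS \hookrightarrow \kR$, invoke the classical right-exact sequence $K_0(\kS)\to K_0(\kT)\to K_0(\kT/\kS)\to 0$ (the paper cites \cite[Proposition VIII 3.1]{SGA5} rather than sketching it), and then use Thomason's Theorem \ref{T:Thomason} to conclude that $K_0(\iota)$ is injective.
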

\begin{proof}
By definition of an exact sequence, we have a commutative diagram
\begin{align}
\begin{array}{cc}
\begin{xy}
\SelectTips{cm}{}
\xymatrix{
\kS \ar[rr]^{F} && \kT \ar[rr]^{G} \ar[rd]^{\pi} && \kR \\
&&& \kT/\kS \ar[ru]^{\iota}
}
\end{xy}
\end{array}
\end{align}

such that $\iota$ is fully faithful and $\kT/\kS$ is dense in $\kR$. It is well-known  that the sequence of abelian groups
$K_{0}(\kS) \xrightarrow{K_{0}(F)} K_{0}(\kT) \xrightarrow{K_{0}(\pi)} K_{0}(\kT/\kS) \rightarrow 0$ is exact (see for example \cite[Proposition VIII 3.1]{SGA5}). Theorem \ref{T:Thomason} shows  that $K_{0}(\iota)$ is injective, which proves the claim. 
\end{proof}

Given an exact sequence $\kS \xrightarrow{F} \kT \xrightarrow{G} \kR$ of triangulated categories, one may wonder how to extend the induced exact sequence (Proposition \ref{P:Schlichting} and Corollary \ref{C:ExactGro})
\begin{align}\label{S:ExGro}
K_{0}(\kS^{\omega}) \xrightarrow{K_{0}(F^{\omega})} K_{0}(\kT^{\omega}) \xrightarrow{K_{0}(G^{\omega})} K_{0}(\kR^{\omega})
\end{align}
to the right. This was the motivation for Schlichting to introduce negative K-theory for triangulated categories in \cite{Schlichting06}. We will use this theory to show that certain Verdier quotient categories are idempotent complete, see Proposition \ref{P:IdempCompl}.

To be more precise, Schlichting does \emph{not} define K-groups for triangulated categories directly. Given a triangulated category $\kT$ one has to construct a model $\kM$ to which one can associate a sequence of groups $\{\mathbb{K}_{-n}(\kM)\}_{n \geq 0}.$ One possible class of models considered by Schlichting are Frobenius pairs.
\begin{defn}\label{D:FrobPairs}
A \emph{Frobenius pair} is a pair of Frobenius categories $\bm{\kA}=(\kA, \kA_{0})$ such that $\kA_{0} \subseteq \kA$ is a full subcategory and moreover $\mathsf{proj} \kA_{0} \subseteq \mathsf{proj} \kA$. One can check that the stable category $\underline{\kA}_{0}$ is a full subcategory of the stable category $\underline{\kA}$. In particular, we may define the \emph{derived category} $\cd(\bm{\kA})$ of a Frobenius pair as the Verdier quotient category $\underline{\kA}/\underline{\kA}_{0}$.

A \emph{map of Frobenius pairs} $F\colon (\kA, \kA_{0}) \rightarrow (\kB, \kB_{0})$ is a map of Frobenius categories $F\colon \kA \rightarrow \kB$ such that $F(\kA_{0}) \subseteq \kB_{0}$ holds. We get an induced triangulated functor on the derived categories $\underline{F}\colon \cd(\bm{\kA}) \rightarrow \cd(\bm{\kB})$.

We say that two Frobenius pairs $(\kA, \kA_{0})$, $(\kB, \kB_{0})$ are \emph{equivalent} if there is a map $F\colon (\kA, \kA_{0}) \rightarrow (\kB, \kB_{0})$ of Frobenius pairs such that the induced functor $\underline{F}$ is an equivalence.
\end{defn}

\begin{ex}\label{E:FrobPairs}
\rm{(a)} Let $\bm{\kA}=(\kA, \kA_{0})$ be a Frobenius pair and $\kU \subseteq \cd(\bm{\kA})$ be a full triangulated subcategory of its derived category. Following Schlichting \cite[Section 6.2.]{Schlichting06}, we can construct Frobenius pairs for $\kU$ and $\cd(\bm{\kA})/\kU$ as follows.

Let $\kB \subseteq \kA$ be the full subcategory of objects whose images are contained in $ \cu \subseteq \cd(\bm{\kA})$. Then $\kB$ is closed under extensions since $\kU$ is triangulated. Therefore, $\kB$ inherits an exact structure from $\kA$. Since the projective-injective objects of $\kA$ vanish in $\cd(\bm{\kA})$ they are contained in $\kB$. By definition of the exact structure on $\kB$, these objects are still projective-injective. Since $\kU$ is triangulated $\kB$ is closed under kernels of deflations and cokernels of inflations of $\kA$. Thus using the deflations and inflations from $\kA$ we see that $\kB$ has enough projective and enough injective objects. In particular, every injective (respectively projective) object in $\kB$ is a direct summand of some projective-injective object of $\kA$. Hence, $\mathsf{proj}(\kB)=\mathsf{proj}(\kA)$ and $\kB$ is a Frobenius category. We get Frobenius pairs $\bm{\kB}=(\kB, \kA_{0})$ and $\bm{\kC}=(\kA, \kB)$. The inclusions \begin{align}\label{S:Standard}\bm{\kB} \rightarrow \bm{\kA}  \rightarrow \bm{\kC}\end{align} yield an exact sequence of triangulated categories \begin{align}\kU \rightarrow \cd(\bm{\kA}) \rightarrow \cd(\bm{\kA})/\kU. \end{align}
\rm{(b)} Let $\kE$ be an exact category. Then the category of bounded chain complexes $\Com^b(\kE)$ over $\kE$ admits the structure of a Frobenius category as in Example \ref{E:Frobenius}. Its stable category is the bounded homotopy category $K^b(\kE)$. Let $\Com^b_{\rm ac}(\kE) \subseteq \Com^b(\kE)$ be the full subcategory of complexes which are isomorphic to acyclic complexes in $K^b(\kE)$, see Definition \ref{D:AcyclicFrob}.

One can show that $\bm{\mathsf{Com}^b(\kE)}=(\Com^b(\kE), \Com^b_{\rm ac}(\kE))$ is a Frobenius pair. Its derived category is called the \emph{derived category} of $\kE$, see e.g. \cite[Section 11]{Keller}.

If $\kA=\kE$ is abelian this construction yields the usual bounded derived category $\cd^b(\kA)$. If every conflation in $\kE$ splits (e.g.~if $\kE=\mathsf{proj}-R$ for some ring $R$), then the acyclic complexes become isomorphic to zero in $K^b(\kE)$. In particular, the derived category of $\kE$ equals $K^b(\kE)$ in this case.
\end{ex}

\begin{defn}
We call a sequence $\bm{\kA} \rightarrow \bm{\kB} \rightarrow \bm{\kC}$ of Frobenius pairs \emph{exact} if the induced sequence of derived categories $\cd(\bm{\kA}) \xrightarrow{i} \cd(\bm{\kB}) \xrightarrow{p} \cd(\bm{\kC})$ is exact in the sense of Definition \ref{D:ExSeqTria}.

In particular, the sequence (\ref{S:Standard}) constructed in Example \ref{E:FrobPairs} is exact.
\end{defn}

We end this subsection by stating two results of Schlichting \cite{Schlichting06}, which will be essential in the next subsection.

\begin{thm} \label{T:Schlichting}
Let $\bm{\kA} \rightarrow \bm{\kB} \rightarrow \bm{\kC}$ be an exact sequence of Frobenius pairs. Then there is a long exact sequence of abelian groups
\begin{align}
\begin{array}{cc}
K_{0}(\cd(\bm{\kA})^{\omega}) &\longrightarrow K_{0}(\cd(\bm{\kB})^{\omega}) \longrightarrow K_{0}(\cd(\bm{\kC})^{\omega}) 
\longrightarrow \mathbb{K}_{-1}(\bm{\kA}) \longrightarrow \cdots \\ \\
&\cdots  \longrightarrow \mathbb{K}_{-i}(\bm{\kA}) \longrightarrow \mathbb{K}_{-i}(\bm{\kB}) \longrightarrow \mathbb{K}_{-i}(\bm{\kC}) \longrightarrow \cdots
\end{array}
\end{align}
extending the sequence (\ref{S:ExGro}) to the right.

In particular, in the situation of Example \ref{E:FrobPairs}: if $\kU$ and $\cd(\bm{\kA})$ are idempotent complete, then the long exact sequence starts as follows
\begin{align}
\begin{array}{cc}
\begin{xy}
\SelectTips{cm}{}
\xymatrix@C=4pt{
K_{0}(\kU) \ar[rr] && K_{0}(\cd(\bm{\kA})) \ar[rr] \ar@{->>}[rd] && K_{0}((\cd(\bm{\kA})/\kU)^{\omega}) \ar[rr] && \cdots \\
&&& K_{0}(\cd(\bm{\kA})/\kU) \ar@{>->}[ru]
}
\end{xy}
\end{array}
\end{align}
\end{thm}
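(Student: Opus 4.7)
The plan is to follow Schlichting's construction of non-connective $K$-theory for Frobenius pairs, which proceeds via an iterated suspension construction combined with a Fibration Theorem. First I would recall Schlichting's definition: to each Frobenius pair $\bm{\kA}$ one associates a \emph{flasque} Frobenius pair $\mathbb{F}\bm{\kA}$ (built from countable products of copies of $\kA$ modulo bounded sequences) whose derived category admits an Eilenberg swindle, so that $K_0(\cd(\mathbb{F}\bm{\kA})^\omega)=0$. The inclusion $\bm{\kA} \hookrightarrow \mathbb{F}\bm{\kA}$ extends to an exact sequence $\bm{\kA} \to \mathbb{F}\bm{\kA} \to S\bm{\kA}$ of Frobenius pairs, and one defines the negative $K$-groups inductively by
\begin{align*}
\mathbb{K}_{-n}(\bm{\kA}) := K_0\bigl(\cd(S^{n}\bm{\kA})^\omega\bigr), \qquad n \geq 1,
\end{align*}
so that $\mathbb{K}_{-n-1}(\bm{\kA}) = \mathbb{K}_{-n}(S\bm{\kA})$ essentially by construction.

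Next I would establish the exactness of $K_0$ on idempotent completions for any exact sequence of Frobenius pairs: starting from Proposition \ref{P:Schlichting}, the induced sequence $\cd(\bm{\kA})^\omega \to \cd(\bm{\kB})^\omega \to \cd(\bm{\kC})^\omega$ is exact, and Corollary \ref{C:ExactGro} then yields exactness of the three-term sequence at $K_0(\cd(\bm{\kB})^\omega)$. This gives the first few terms, but the map $K_0(\cd(\bm{\kB})^\omega) \to K_0(\cd(\bm{\kC})^\omega)$ need \emph{not} be surjective; its cokernel embeds into $\mathbb{K}_{-1}(\bm{\kA})$, and Thomason's theorem (Theorem \ref{T:Thomason}) controls this embedding via the density of $\cd(\bm{\kB})^\omega/\cd(\bm{\kA})^\omega$ inside $\cd(\bm{\kC})^\omega$.

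To construct the connecting homomorphism, I would use the flasque resolution functorially in the exact sequence, forming a commutative diagram of Frobenius pairs
\begin{align*}
\begin{array}{ccccc}
\bm{\kA} & \to & \bm{\kB} & \to & \bm{\kC} \\
\downarrow & & \downarrow & & \downarrow \\
\mathbb{F}\bm{\kA} & \to & \mathbb{F}\bm{\kB} & \to & \mathbb{F}\bm{\kC} \\
\downarrow & & \downarrow & & \downarrow \\
S\bm{\kA} & \to & S\bm{\kB} & \to & S\bm{\kC}
\end{array}
\end{align*}
in which each row is again an exact sequence of Frobenius pairs and each column is the defining exact sequence of the suspension. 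Since the middle row has vanishing $K_0$, a snake-lemma argument produces a connecting map $K_0(\cd(\bm{\kC})^\omega) \to K_0(\cd(S\bm{\kA})^\omega) = \mathbb{K}_{-1}(\bm{\kA})$ as well as exactness at both endpoints.

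Finally, I would iterate: apply the same six-term analysis to $S^n\bm{\kA} \to S^n\bm{\kB} \to S^n\bm{\kC}$ for each $n \geq 0$ and splice the pieces, using $K_0(\cd(S^{n+1}\bm{\kA})^\omega) = \mathbb{K}_{-n-1}(\bm{\kA})$ to identify the terms. The main obstacle, and Schlichting's key technical contribution, is verifying that the suspension $S$ preserves exact sequences of Frobenius pairs and that $\mathbb{F}$ behaves well with respect to inclusions of Frobenius pairs (so that the diagram above really exists with exact rows); this requires careful bookkeeping with the double subcategory structure $\kA_0 \subseteq \kA$ defining a Frobenius pair and with the projective-injective objects along the flasque construction.
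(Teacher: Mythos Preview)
The paper does not give its own proof of this theorem: it is simply \emph{stated} as a result of Schlichting (reference \cite{Schlichting06}), introduced by the sentence ``We end this subsection by stating two results of Schlichting \cite{Schlichting06}, which will be essential in the next subsection.'' So there is no proof in the paper to compare against.

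That said, your sketch is a faithful outline of Schlichting's original construction: the flasque Frobenius pair $\mathbb{F}\bm{\kA}$ with its Eilenberg swindle, the suspension $S\bm{\kA}$ as the cofibre, the inductive definition $\mathbb{K}_{-n}(\bm{\kA}) = K_0(\cd(S^n\bm{\kA})^\omega)$, and the splicing of three-term sequences via the $3\times 3$ diagram of suspensions are exactly the ingredients in \cite{Schlichting06}. The point you correctly flag as the main technical obstacle --- that $S$ preserves exact sequences of Frobenius pairs and that the flasque construction is functorial enough to make the diagram commute with exact rows --- is indeed where Schlichting does the real work, and is not something you can expect to reproduce in a sketch. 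If you were to flesh this out, you would also need to be careful that the ``snake-lemma argument'' you invoke is the right one: at the triangulated level one works with the cofinality/density statements (Thomason's theorem) rather than an abelian snake lemma, and the exactness at $K_0(\cd(\bm{\kC})^\omega)$ and at $\mathbb{K}_{-1}(\bm{\kA})$ each require separate arguments in Schlichting's paper.
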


\begin{rem}
The theorem shows that equivalent Frobenius pairs (Definition \ref{D:FrobPairs}) have isomorphic $\mathbb{K}$-groups.
\end{rem}

\begin{thm} \label{T:Schlichting2}
Let $\kA$ be an abelian category. Then \[\mathbb{K}_{-1}(\bm{\mathsf{Com}^b(\kA)})=0,\] where $\bm{\mathsf{Com}^b(\kA)}$ is the Frobenius model of $\cd^b(\kA)$ introduced in Example \ref{E:FrobPairs} (b).
\end{thm}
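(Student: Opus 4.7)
The plan is to prove the vanishing by the standard delooping-and-swindle strategy that Schlichting uses to define and compute negative K-theory. The key auxiliary observation, which we have available from the earlier discussion in the excerpt, is that the bounded derived category $\cd^b(\kA)$ of an abelian category is idempotent complete (by Balmer--Schlichting, since every abelian category is idempotent complete, cf.\ Example \ref{E:AbIdempC}). This means $\cd(\bm{\mathsf{Com}^b(\kA)})^\omega=\cd^b(\kA)$, so no surgery on $K_0$ is needed when passing to idempotent completions on the left-hand term of Theorem \ref{T:Schlichting}.

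First, I would construct an auxiliary ``flasque'' Frobenius pair $\bm{\cF}(\kA)$ together with an exact sequence of Frobenius pairs
\begin{align*}
\bm{\mathsf{Com}^b(\kA)} \longrightarrow \bm{\cF}(\kA) \longrightarrow \bm{\cG}(\kA).
\end{align*}
The model for $\bm{\cF}(\kA)$ is built from complexes over the ind-completion $\Ind(\kA)$ (or, concretely, from bounded complexes of countable direct sums) in such a way that the identity endofunctor is isomorphic to a countable coproduct of itself. The derived category $\cd(\bm{\cG}(\kA))$ is then a ``cone'' category, i.e.\ a quotient of $\cd(\bm{\cF}(\kA))$ by the image of $\cd^b(\kA)$. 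Verifying the three axioms of an exact sequence of Frobenius pairs for this setup is the main bookkeeping step, but it is straightforward once $\bm{\cF}(\kA)$ is defined.

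Second, an Eilenberg swindle along the countable coproduct structure shows that $\mathbb{K}_{-i}(\bm{\cF}(\kA))=0$ for all $i\geq 0$. Indeed, if $F=\bigoplus_{n\ge 1}(-)$ denotes the countable-sum endofunctor on $\bm{\cF}(\kA)$, then $\id \oplus F\cong F$ on the derived category, so on K-theory $[\id]+[F]=[F]$, forcing $[\id]=0$, and the same argument functorializes to each $\mathbb{K}_{-i}$ via the spectrum-level definition.

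Finally, I would plug everything into the long exact sequence of Theorem \ref{T:Schlichting}. Using the vanishing of $\mathbb{K}_0$ and $\mathbb{K}_{-1}$ of $\bm{\cF}(\kA)$ together with idempotent completeness of $\cd^b(\kA)$ and of $\cd(\bm{\cG}(\kA))^{\omega}$ (which follows from the Balmer--Schlichting theorem applied to the abelian model of $\bm{\cG}$), the sequence collapses and gives the surjection
\begin{align*}
K_0(\cd(\bm{\cF}(\kA))^\omega)=0 \longrightarrow K_0(\cd(\bm{\cG}(\kA))^\omega) \longrightarrow \mathbb{K}_{-1}(\bm{\mathsf{Com}^b(\kA)}) \longrightarrow \mathbb{K}_{-1}(\bm{\cF}(\kA))=0,
\end{align*}
so $\mathbb{K}_{-1}(\bm{\mathsf{Com}^b(\kA)})$ is sandwiched between two zeros. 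The hard part is step one: choosing $\bm{\cF}(\kA)$ large enough to carry the swindle while remaining small enough to sit inside the Frobenius-pair framework and to produce an exact sequence whose quotient $\bm{\cG}(\kA)$ is controllable. Everything else is a formal consequence of the machinery already summarized in Theorems \ref{T:Thomason}, \ref{T:Schlichting}, and the idempotent completeness of bounded derived categories of abelian categories.
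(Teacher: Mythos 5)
Your overall strategy (embed into a flasque auxiliary Frobenius pair, run an Eilenberg swindle, plug into the long exact sequence of Theorem~\ref{T:Schlichting}) is indeed the strategy behind Schlichting's theorem, which the paper simply cites without reproducing a proof. Steps one and two are essentially right: take $\tilde\kA$ to be a suitable category of countable ind-objects of $\kA$ with $\kA\hookrightarrow\tilde\kA$ a Serre subcategory closed under subobjects, so that the flasque model $\bm{\cF}(\kA)$ is $\bm{\mathsf{Com}^b(\tilde\kA)}$ and the localization sequence
\[
\bm{\mathsf{Com}^b(\kA)}\longrightarrow\bm{\cF}(\kA)\longrightarrow\bm{\cG}(\kA)
\]
arises from the Verdier quotient $\cd^b_\kA(\tilde\kA)\to\cd^b(\tilde\kA)\to\cd^b(\tilde\kA/\kA)$ together with the identification $\cd^b(\kA)\cong\cd^b_\kA(\tilde\kA)$ (Lemma~\ref{Lem:KashiwaraSchapira}). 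The swindle on countable coproducts then kills all $\mathbb{K}_{-i}(\bm{\cF}(\kA))$.

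There is, however, a genuine gap in the last step. The four-term piece of the long exact sequence you write out reads
\[
0=K_0\bigl(\cd(\bm{\cF}(\kA))^\omega\bigr)\longrightarrow K_0\bigl(\cd(\bm{\cG}(\kA))^\omega\bigr)\longrightarrow\mathbb{K}_{-1}\bigl(\bm{\mathsf{Com}^b(\kA)}\bigr)\longrightarrow\mathbb{K}_{-1}\bigl(\bm{\cF}(\kA)\bigr)=0,
\]
so what you actually get is an \emph{isomorphism} $K_0\bigl(\cd(\bm{\cG}(\kA))^\omega\bigr)\cong\mathbb{K}_{-1}\bigl(\bm{\mathsf{Com}^b(\kA)}\bigr)$, not a sandwich between two zeros. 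Idempotent completeness of $\cd(\bm{\cG}(\kA))^\omega$ contributes nothing here: plenty of idempotent complete triangulated categories have nonzero $K_0$. What is missing is the observation that the quotient abelian category $\tilde\kA/\kA$ \emph{also} admits countable coproducts (the exact quotient functor $\tilde\kA\to\tilde\kA/\kA$ commutes with colimits), hence $\bm{\cG}(\kA)=\bm{\mathsf{Com}^b(\tilde\kA/\kA)}$ carries its own Eilenberg swindle. This forces $K_0\bigl(\cd(\bm{\cG}(\kA))^\omega\bigr)=0$ and gives the vanishing you want. Without this second swindle, the argument only relocates the unknown group rather than annihilating it.
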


\subsection{Idempotent completeness of quotient categories} \label{ss:IdempCompl}
Let $\ce$ be an idempotent complete Frobenius category, such that  $\proj \ce$ admits an additive generator $P$. Let $F' \in \ce$. We assume that $A=\End_{\ce}(P \oplus F' )$ is a right Noetherian ring and let $e=\id_{P} \in A$. 

We give two criteria, which imply idempotent completeness of the Verdier quotient $K^b(\proj-A)/\thick(eA)$. The proof uses Schlichting's negative $\mathbb{K}$-theory, as introduced in the previous Subsection \ref{ss:Schlichting}. We need some preparation. Let $S$ be a commutative local complete Noetherian ring. The next lemma is well-known.

\begin{lem}\label{L:stable-complete}
Let $R$ be an $S$-algebra which is finitely generated as an $S$-module. Then the stable category $\ul{\MCM}(R)$ is idempotent complete.
\end{lem}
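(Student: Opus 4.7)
The plan is to reduce this to the well-known Krull--Schmidt property of $\MCM(R)$ and then transfer that property to the stable category. The main observation is that a Krull--Schmidt additive category is automatically idempotent complete, since idempotent endomorphisms split via the unique decomposition into indecomposables with local endomorphism rings.

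First, I would verify that $\MCM(R)$ itself is a Krull--Schmidt category. For any $M \in \MCM(R)$, the module $M$ is finitely generated over $R$, hence finitely generated over $S$ (because $R$ is finitely generated over $S$). Consequently $\End_S(M)$ is a finitely generated $S$-module, and the subring $\End_R(M) \subseteq \End_S(M)$ is therefore also finitely generated as an $S$-module (since $S$ is Noetherian). It is a standard fact that a module-finite algebra over a commutative local complete Noetherian ring is semi-perfect: such an algebra is $\mathfrak{m}_S$-adically complete, and complete Noetherian semi-local rings are semi-perfect. In particular idempotents split in $\End_R(M)$, and since a direct summand of an MCM-module is MCM (the depth condition is preserved by summands), $\MCM(R)$ is closed under the summands produced in this way. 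Combined with the uniqueness of direct sum decompositions furnished by semi-perfect endomorphism rings, this shows $\MCM(R)$ is Krull--Schmidt.

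Next, I transfer the Krull--Schmidt property to $\underline{\MCM}(R)$. The endomorphism ring $\underline{\End}_R(M) = \End_R(M)/\mathcal{P}(M,M)$ is a quotient of the semi-perfect ring $\End_R(M)$, hence it is itself semi-perfect (quotients of semi-perfect rings are semi-perfect). Moreover, given a Krull--Schmidt decomposition $M = M_1 \oplus \cdots \oplus M_n$ in $\MCM(R)$, the image in $\underline{\MCM}(R)$ is obtained by discarding the projective summands; the remaining non-projective $M_i$ are still indecomposable in $\underline{\MCM}(R)$ (their stable endomorphism rings are nonzero quotients of the local rings $\End_R(M_i)$, hence local). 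Thus every object of $\underline{\MCM}(R)$ admits a Krull--Schmidt decomposition into indecomposables with local endomorphism rings.

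Finally, in any Krull--Schmidt additive category, idempotents split: given $\overline{e}^2 = \overline{e}$ in $\underline{\End}_R(M)$, pick a Krull--Schmidt decomposition of $M$ in $\underline{\MCM}(R)$; then $\overline{e}$ corresponds under this decomposition to a family of idempotents in local rings, each of which is $0$ or $1$, so $\overline{e}$ splits as the projection onto the corresponding subsum of indecomposables. This establishes that $\underline{\MCM}(R)$ is idempotent complete. I do not anticipate any serious obstacle; the only subtle point is the semi-perfectness of module-finite algebras over $S$, which is classical.
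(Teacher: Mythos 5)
Your route is genuinely different from the paper's, and correct up to a fixable gap at the very end. The paper lifts $\overline{e}$ to a genuine idempotent $\epsilon\in\End_R(M)$ by first proving the inclusion $\cp(M)\subseteq\rad\End_R(M)$ (checked component-by-component on a decomposition of $M$ into indecomposables with local endomorphism rings) and then invoking the idempotent-lifting property of module-finite algebras over complete local $S$; it splits $\epsilon$ inside $\MCM(R)$, which is closed under summands in $\mod-R$, and pushes the splitting down. You instead prove that $\ul{\MCM}(R)$ is a Krull--Schmidt category (semi-perfect stable endomorphism rings, decomposition into objects with local endomorphism rings) and appeal to the general principle that Krull--Schmidt implies idempotent complete. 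Both arguments hinge on the same algebraic input --- module-finite algebras over a complete local Noetherian ring are semi-perfect / allow idempotent lifting --- but yours stays entirely in the stable category, whereas the paper's goes up to $\MCM(R)$ and back down.

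The gap is in your last paragraph: given a decomposition $M\cong M_1\oplus\cdots\oplus M_k$ in $\ul{\MCM}(R)$ with local $\ul{\End}(M_i)$, you claim that $\overline{e}$ ``corresponds under this decomposition to a family of idempotents in local rings, each of which is $0$ or $1$.'' This is not automatic. The element $\overline{e}$ is a $k\times k$ matrix $(\overline{e}_{ij})$ with $\overline{e}_{ij}\colon M_j\to M_i$, and an idempotent matrix need not be diagonal; neither are the diagonal entries $\overline{e}_{ii}$ idempotent nor the off-diagonal entries zero in general. What is true --- and this is precisely the content of the theorem that a Krull--Schmidt additive category is idempotent complete --- is that $\overline{e}$ is \emph{conjugate} by a unit of $\ul{\End}_R(M)$ to a diagonal idempotent $\sum_{i\in J}\overline{e}_i$: since $\ul{\End}_R(M)$ is semi-perfect (as you proved), the projective right module $\overline{e}\cdot\ul{\End}_R(M)$ is a direct sum of basic indecomposable projectives $\overline{e}_i\cdot\ul{\End}_R(M)$, and matching this against the complementary decomposition for $1-\overline{e}$ produces the conjugating unit. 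With that conjugation supplied --- or with a clean citation of the Krull--Schmidt $\Rightarrow$ idempotent complete lemma in place of the proof sketch you give --- the argument closes.
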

\begin{proof}
Let $M \in \MCM(R)$ without projective direct summands and let $e \in \ul{\End}_{R}(M)$ be an idempotent endomorphism. We claim that there exists an idempotent $\epsilon \in \End_{R}(M)$, which is mapped to $e$ under the canonical projection $\End_{R}(M) \ra \ul{\End}_{R}(M)$. In other words, we need some lifting property for idempotents. This is known to hold for any $S$-algebra $B$, which is finitely generated as an $S$-module and any two-sided ideal $I \subseteq \rad B$ \cite[Proposition 6.5 and Theorem 6.7]{CurtisReiner}. In particular, to prove our claim it thus suffices to show $\cp(M) \subseteq \rad \End_{R}(M)$, where $\cp(M)$ is the two-sided ideal of endomorphisms factoring through a projective $R$-module. Assume that there exists $f \in \cp(M) \setminus \rad \End_{R}(M)$. This means that there is a $g \in \End_{R}(M)$ such that $\id_{M} - gf$ is not invertible. Hence, $\id_{M} - gf$ is not surjective by \cite[Proposition 5.8]{CurtisReiner}. Let $M=\bigoplus_{i=1}^t M_{i}$ be a decomposition of $M$ into indecomposable modules and denote by $\iota_{i}$ and $\pi_{j}$ the canonical inclusions and projections, respectively. Without loss of generality, we can assume that there exists some $i$ such that $\pi_{i} (\id_{M} - gf)\colon M \ra M_{i}$ is not surjective. Hence, $\pi_{i} (\id_{M} - gf)\iota_{i}=\id_{M_{i}}-\pi_{i}gf\iota_{i}=\id_{M_{i}}-(g_{1i}f_{i1}+ \cdots + g_{ii}f_{ii}+ \cdots + g_{ti}f_{ti})$ is not surjective. Here, $f_{ij}=\pi_{j}f\iota_{i}$ and $g_{ij}=\pi_{j}g\iota_{i}$. Since $f \in \cp(M)$ we have $g_{ji}f_{ij} \in \cp(M_{i})$ for $j=1, \cdots, t.$ Since $M_{i}$ is indecomposable $\End_{R}(M_{i})$ is local \cite[Proposition 6.10]{CurtisReiner}. It follows that $\cp(M_{i}) \subseteq \rad \End_{R}(M_{i})$, for otherwise $M_{i}$ is a direct summand of a projective module, which contradicts our assumptions on $M$. Hence, $\pi_{i}gf\iota_{i}=\sum_{j=1}^t g_{ji}f_{ij} \in \rad \End_{R}(M_{i})$ and therefore $\id_{M_{i}}-\pi_{i}gf\iota_{i}$ is an isomorphism. Contradiction. Thus $f \in \rad \End_{R}(M)$.

To show that $\ul{\MCM}(R)$ is idempotent complete, let $M \in \ul{\MCM}(R)$. We can assume that $M$ has no projective direct summands. Let $e \in \ul{\End}_{R}(M)$ be an idempotent endomorphism. By the considerations above, $e$ lifts to an idempotent $\epsilon \in \End_{R}(M)$. $\MCM(R)$ is idempotent complete since it is closed under direct summands in the abelian and hence idempotent complete category $\mod-R$. Thus we have a direct sum decomposition $M \cong N_{1} \oplus N_{2}$ such that $\epsilon=\iota_{1}\pi_{1}$, where $\iota_{1}$ and $\pi_{1}$ denote the canonical inclusion and projection of $N_{1}$, respectively. Passing to the stable category yields the desired factorization of $e$.  \end{proof}

\begin{prop}\label{P:IdempCompl}
In the notations above assume that one of the following holds
\begin{itemize}
\item[(i)] $\gldim(A)<\infty$ and $\ul{\ce}$ is idempotent complete.
\item[(ii)] $eAe$ is an Iwanaga--Gorenstein $S$-algebra, which is finitely generated as an $S$-module.
\end{itemize}
Then the triangulated quotient category $\cd^b(\mod-A)/\thick(eA)$ is idempotent complete. In particular, this holds for $K^b(\proj-A)/\thick(eA)$.\end{prop}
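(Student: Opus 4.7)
The plan is to reduce the statement to the vanishing of a single negative $K$-group using Schlichting's machinery (Subsection \ref{ss:Schlichting}). Set $Q:=\cd^b(\mod-A)/\thick(eA)$. First I will observe that both $\thick(eA)$ and $\cd^b(\mod-A)$ are idempotent complete: the latter by the Balmer--Schlichting theorem applied to the abelian category $\mod-A$, and the former because the exact functor $\Hom_A(eA,-)$ identifies $\thick(eA)=K^b(\add eA)$ with $K^b(\proj-eAe)$, which is idempotent complete by the same Balmer--Schlichting result applied to the idempotent complete additive category $\proj-eAe$. Applying Schlichting's long exact sequence (Theorem \ref{T:Schlichting}) to the exact sequence $\thick(eA)\to\cd^b(\mod-A)\to Q$, using $\mathbb{K}_{-1}(\bm{\cd^b(\mod-A)})=0$ (Theorem \ref{T:Schlichting2}), and comparing with the standard right exact Grothendieck group sequence, yields a short exact sequence
\begin{align*}
0 \to K_0(Q) \to K_0(Q^\omega) \to \mathbb{K}_{-1}(\bm{\thick(eA)}) \to 0.
\end{align*}
By Thomason's theorem (Corollary \ref{C:Thomason}), $Q$ is idempotent complete if and only if $\mathbb{K}_{-1}(\bm{\thick(eA)})=0$.

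The main work is therefore to prove this vanishing. I will run the same Schlichting/Thomason comparison a second time on the sequence
\begin{align*}
\thick(eA)\cong K^b(\proj-eAe)\to\cd^b(\mod-eAe)\to \cd_{sg}(eAe),
\end{align*}
which produces a short exact sequence $0\to K_0(\cd_{sg}(eAe))\to K_0(\cd_{sg}(eAe)^\omega)\to \mathbb{K}_{-1}(\bm{\thick(eA)})\to 0$. Hence it suffices to show $\cd_{sg}(eAe)$ is itself idempotent complete in each of the two cases. This is the main obstacle.

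Under hypothesis (i), finite global dimension of $A$ immediately ensures $\prdim_A(N)<\infty$ for every $N\in\mod-A/AeA$, so Theorem \ref{t:alternative-main} produces a triangle equivalence $\ul{\widetilde{\P}}\colon\ul{\ce}\to\cd_{sg}(eAe)$ (the "up to summands" qualifier drops because $\ul{\ce}$ is assumed idempotent complete), and idempotent completeness transfers. Under hypothesis (ii), Buchweitz's equivalence (Theorem \ref{T:Buchweitz}) gives $\cd_{sg}(eAe)\cong\ul{\GP}(eAe)$, and the proof of Lemma \ref{L:stable-complete} carries over verbatim to the (not necessarily commutative) Iwanaga--Gorenstein $S$-algebra $eAe$, since its only input is the lifting of idempotents modulo the radical for $S$-algebras that are finitely generated as $S$-modules, together with the fact that $\GP(eAe)\subseteq\mod-eAe$ is closed under direct summands.

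Finally, for the "in particular" assertion: in case (i), $\gldim A<\infty$ gives $K^b(\proj-A)=\cd^b(\mod-A)$, so the claim is automatic. In case (ii), I will rerun the first Schlichting comparison with $\cd^b(\mod-A)$ replaced by $K^b(\proj-A)$, using the Frobenius pair model for $K^b(\proj-A)$ inherited from $\bm{\cd^b(\mod-A)}$ via the construction in Example \ref{E:FrobPairs}(a); the resulting short exact sequence again has $\mathbb{K}_{-1}(\bm{\thick(eA)})$ on the right, and this was shown to vanish above, so Thomason concludes the argument.
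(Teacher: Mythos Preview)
Your approach is correct and matches the paper's: both reduce to showing that $\mathbb{K}_{-1}$ of a Frobenius model for $\thick(eA)$ vanishes via Schlichting's long exact sequence, then prove this vanishing by running the analogous sequence on the $eAe$-side after establishing that $\cd_{sg}(eAe)$ is idempotent complete (via Theorem~\ref{t:alternative-main} in case~(i), Buchweitz plus Lemma~\ref{L:stable-complete} in case~(ii)). The one step you pass over too quickly is the comparison of Frobenius pair models: the $\mathbb{K}_{-1}$ in your first exact sequence comes from the model of $\thick(eA)$ inherited from $\bm{\mathsf{Com}^b(A)}$, while the one in your second sequence comes from the model of $K^b(\proj-eAe)$ inherited from $\bm{\mathsf{Com}^b(eAe)}$; these are a priori different Frobenius pairs with the same derived category, so equality of their negative $\mathbb{K}$-groups needs justification. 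The paper handles this explicitly by showing that both pairs are equivalent, as Frobenius pairs, to the common model $(\Com^b(\add eAe),\Com^b_{\mathsf{ac}}(\add eAe))$, using $-\otimes_{eAe}eA$ on the $A$-side. For the ``in particular'' clause the paper takes a shortcut you do not: it observes that $K^b(\proj-A)/\thick(eA)$ sits as a summand-closed full subcategory of $\cd^b(\mod-A)/\thick(eA)$ and therefore inherits idempotent completeness directly; your rerun of Schlichting in case~(ii) is also valid.
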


\begin{proof}
We claim that it is sufficient to show that $\cd^b(\mod-A)/\thick(eA)$ is idempotent complete. Indeed, $K^b(\proj-A) \subseteq \cd^b(\mod-A)$ is closed under direct summands. Hence the same holds for $K^b(\proj-A)/\thick(eA) \subseteq \cd^b(\mod-A)/\thick(eA)$, since the additive structure of the quotient is inherited from the additive structure of the `nominator'.

In both cases (i) and (ii), the triangulated quotient category $\cd_{sg}(eAe)=\cd^b(\mod-eAe)/K^b(\proj-eAe)$ is idempotent complete. In the first case, this follows from Theorem \ref{t:main-thm} or Theorem \ref{t:alternative-main} and the idempotent completeness of $\ul{\ce}$. In the second case, this follows from Buchweitz' Theorem \ref{T:Buchweitz} Êand Lemma \ref{L:stable-complete}.

We  deduce the idempotent completeness of $\cd^b(\mod-A)/\thick(eA)$ from the idempotent completeness of $\cd^b(\mod-eAe)/K^b(\proj-eAe)$.

Let $\bm{\mathsf{Com}^b(eAe)}=(\Com^b(\mod-eAe), \Com_{\mathsf{ac}}^b(\mod-eAe))$ be the Frobenius pair associated to $\cd^b(\mod-eAe)$ and $\bm{\cb}=(\cb, \Com_{\mathsf{ac}}^b(\mod-eAe))$ be the Frobenius pair associated to the full triangulated subcategory $\thick(eAe)\subseteq \cd^b(\mod-eAe)$, see Example \ref{E:FrobPairs} (a).

Another possibility to realize $\thick(eAe) \cong K^b(\proj-eAe)$ as the derived category of a Frobenius pair is $\bm{\mathsf{Com}^b(\mathsf{add} \, \, eAe)}=(\Com^b(\add eAe), \Com_{\mathsf{ac}}^b(\add eAe))$.

 Let us show that these two realizations are equivalent. The inclusion of Frobenius pairs $\bm{\mathsf{Com}^b(\mathsf{add} \, \,  eAe)} \rightarrow \bm{\cb}$ induces a triangle functor between the corresponding derived categories $K^b(\add eAe) \xrightarrow{F} \thick(eAe)$. It follows from the definition of $\thick(eAe)$ that $F$ is dense. 
 
We note that $K^b(\add eAe) \rightarrow \ul{\cb}$ is fully faithful, since $\ul{\cb} \subseteq K^b(\mod-eAe)$.
 Using  $K^b(\add eAe) \subseteq ^{\perp}\!\!(K^b_{\mathsf{ac}}(\mod-eAe))$ and Lemma \ref{L:Verdier2}, we obtain that $F$ is fully faithful.

 Let us look at some part of the long exact sequence associated to the exact sequence of Frobenius pairs $\bm{\cb} \rightarrow \bm{\mathsf{Com}^b(eAe)} \rightarrow (\Com^b(\mod-eAe), \cb)$.
\[
\begin{xy}\SelectTips{cm}{}
\xymatrix{
K_0\bigl(\cd^b(\mod-eAe)\bigr) \ar@{->>}[r]  \ar@{->>}[d] & K_0\bigl(\cd_{sg}(eAe)^{\omega})  \ar[r]^(0.6)0 & \mathbb{K}_{-1}(\bm{\cb}) \ar[r] &
\mathbb{K}_{-1}(\bm{\mathsf{Com}^b(eAe)}) \\
 K_0\bigl(\cd_{sg}(eAe)\bigr) \ar[ru]^{\cong}
}
\end{xy}
\]
Since $\mathbb{K}_{-1}(\bm{\mathsf{Com}^b(eAe)})$ vanishes by Schlichting's Theorem \ref{T:Schlichting2} and the sequence is exact, we obtain $\mathbb{K}_{-1}(\bm{\cb})=0$ and therefore $\mathbb{K}_{-1}(\bm{\mathsf{Com}^b(\mathsf{add} \, \,  eAe)})=0$, since these Frobenius pairs are equivalent.

The situation $\thick(eA) \subseteq \cd^b(\mod-A)$ may be treated similarly. The only difference is that we do not know yet whether the quotient $\cd^b(\mod-A)/\thick(eA)$ is idempotent complete. Let $\bm{\cc}=(\cc, \Com^b_{\mathsf{ac}}(\mod-A)) \rightarrow \bm{\mathsf{Com}^b(A)} \rightarrow (\Com^b(\mod-A), \cc)$ be the corresponding exact sequence of Frobenius pairs. The associated long exact sequence is given as follows

\begin{align} \label{S:Second}
\begin{array}{cc}
\begin{xy}\SelectTips{cm}{}
\xymatrix@C=0.8pc{
K_0\bigl(\cd^b(\mod-A)\bigr) \ar[r]  \ar@{->>}[d] & K_0\!\left(\left(\frac{ \cd^b(\mod-A)}{ \thick(eA)}\right)^{\!\! \omega \, }\right)  \ar[r] & \mathbb{K}_{-1}(\bm{\cc}) \ar[r] &
\mathbb{K}_{-1}(\bm{\mathsf{Com}^b(A)}) \\
 K_0\!\left(\frac{ \cd^b(\mod-A)}{ \thick(eA)}\right) \ar@{>->}[ru]^{\displaystyle \iota}
}
\end{xy}
\end{array}
\end{align}
As above, one can show that the Frobenius pair $\bm{\cc}$ is equivalent to the Frobenius pair $\bigl(\Com^b(\add eA ), \Com^b_{\mathsf{ac}}(\add eA)\bigr)$. Moreover, there is an equivalence of Frobenius pairs:
\[
-\, \otimes_{eAe} eA \colon \bigl(\Com^b(\add eAe), \Com^b_{\mathsf{ac}}(\add eAe)\bigr)
\longrightarrow
\bigl(\Com^b(\add eA), \Com^b_{\mathsf{ac}}(\add eA)\bigr)
\]
In particular, we get $\mathbb{K}_{-1}(\bm{\cc})=\mathbb{K}_{-1}(\bm{\mathsf{Com}^b(\mathsf{add} \, \,  eAe)})=0$, which implies idempotent completeness of $\cd^b(\mod-A)/\thick(eA)$ as explained in \cite[Remark 1]{Schlichting06}.

For the convenience of the reader, we briefly recall the argument: $\iota$
is an isomorphism by the exactness of the sequence (\ref{S:Second}). Hence Corollary \ref{C:Thomason} shows that
the canonical inclusion $\cd^b(\mod-A)/\thick(eA) \rightarrow \left(\cd^b(\mod-A)/\thick(eA)\right)^\omega$ is an equivalence
of triangulated categories, i.e.~the triangulated category $\cd^b(\mod-A)/\thick(eA)$ is idempotent complete.
\end{proof}

\newpage

\section{DG algebras and their derived categories}\label{s:dg}
 
This section is based on joint work with Dong Yang \cite{KalckYang12}. Except for the Hom-finiteness result (Proposition \ref{p:hom-finiteness-of-per}), which generalizes work of Amiot \cite{Amiot09}
and Guo \cite{Guolingyan11a} and the parts of Subsection \ref{ss:Recollements} on recollements generated by idempotents (in particular Corollary \ref{c:restriction-and-induction}), most of these results are known to the experts.

Subsection \ref{ss:dg-alg} collects well-known notions about dg algebras and their derived categories. Our main reference is Keller's article \cite{Keller94}. In Subsection \ref{ss:nakayama-functor}, we study the Nakayama--functor on the derived category of a dg algebra in some detail. In particular, we prove a form of Serre duality in this setup, which is used in Subsection \ref{ss:fract-cy}. We recall the notions of $t$-strutures and co-$t$-structures in Subsection \ref{ss:nonpositive-dg-alg-1}. The latter are used to prove that the perfect derived category $\per(B)$ of a non-positive dg $k$-algebra is Hom-finite, provided its $0$-th cohomology is finite dimensional and $\per(B)$ contains the derived category of dg $B$-modules with finite dimensional total cohomology $\cd_{fd}(B)$ (see Proposition \ref{p:hom-finiteness-of-per}). Consequently, the relative singularity categories of Gorenstein isolated singularities are Hom-finite (see Proposition \ref{p:hom-finiteness-of-delta}). This was independently proved by Thanhoffer de V\"olcsey \& Van den Bergh \cite{ThanhofferdeVolcseyMichelVandenBergh10} in a quite different way. Moreover, Hom-finiteness makes Koszul duality work smoothly (Corollary \ref{c:koszul-double-dual}), which we need in the proof of Theorem \ref{t:main-thm-2}. The results on $t$-structures in combination with recollements generated by idempotents are applied to obtain dg descriptions of relative singularity categories in Corollary \ref{c:restriction-and-induction}. In Subsection \ref{ss:minimal-relation}, we recall the notion of a set of minimal relations for complete quiver algebras. This is necessary to define the dg Auslander algebra in Subsection \ref{ss:Independance}. Subsection \ref{ss:dual-bar-construction} collects definitions and results on Koszul duality, which are needed to prove Theorem \ref{t:main-thm-2}. Although we are mainly interested in dg algebras, it turns out to be necessary to introduce Koszul duality in the more general framework of $A_{\infty}$-algebras. We mainly follow Lu, Palmieri, Wu \& Zhang  \cite{LuPalmieriWuZhang04,LuPalmieriWuZhang08}  Êand Lef{\`e}vre-Hasegawa \cite{Lefevre03}. In Subsection \ref{ss:Recollements}, we first recall some general statements on recollements of triangulated categories \cite{BeilinsonBernsteinDeligne82}. In particular, we briefly discuss the equivalent notion of triangulated TTF triples appearing in the work of Nicol{\'a}s \cite{Nicolas} and Nicol{\'a}s \& Saorin \cite{NicolasSaorin09}. Moreover, the connection to the theory of Bousfield (co-)localisation functors is explained, see e.g.~Neeman \cite{Neeman99}. These techniques are applied to construct a recollement for every pair $(A, e)$, where $A$ is a $k$-algebra and $e \in A$ is an idempotent. More precisely, the recollement involves the unbounded derived categories of $eAe$, $A$ and of a certain dg algebra $B$, which exists by Nicol{\'a}s \& Saorin's work. Using Neeman's generalization \cite{Neeman92a} of the localization theorems of Thomason \& Trobaugh and Yao, passing to the subcategories of compact objects yields a description of the relative singularity category $\Delta_{eAe}(A)$ in terms of the perfect derived category of $B$, see Corollary \ref{c:restriction-and-induction}. For an ADE-singularity $R$, we show that this dg algebra may be constructed from the stable category of maximal Cohen--Macaulay $R$-modules, see Subsections \ref{ss:Independance} -- \ref{ss:DGAuslander}.

\subsection{Notations}\label{ss:notation}
Let $k$ be an algebraically closed field. Let $D=\Hom_k( - , k)$ denote the
$k$-dual. When the input is a graded $k$-module, $D$ means the
graded dual. Namely, for $M=\bigoplus_{i\in\mathbb{Z}}M^i$, the graded dual $DM$
has components $(DM)^i=\Hom_k(M^{-i},k)$.
\newcommand{\Add}{\opname{Add}}
\subsubsection*{Generating subcategories/subsets}

Let $\ca$ be an additive $k$-category. Let $\cs$ be a subcategory or a subset of objects of $\ca$. We denote by $\add_{\ca}(\cs)$
(respectively, $\Add_{\ca}(\cs)$) the smallest full subcategory of
$\ca$ which contains $\cs$ and which is closed under taking finite
direct sums (respectively, all existing direct sums) and taking
direct summands.

If $\ca$ is a triangulated category, then 
$\thick_{\ca}(\cs)$ (respectively, $\Tria_{\ca}(\cs)$) denotes the smallest
triangulated subcategory of $\ca$ which contains $\cs$ and which is
closed under taking direct summands (respectively, all existing
direct sums).

When it does not cause confusion, we omit the subscripts and write
the above notations as $\add(\cs)$, $\Add(\cs)$, $\thick(\cs)$ and
$\Tria(\cs)$.

\subsubsection*{Derived categories of abelian categories}

Let $\ca$ be an additive $k$-category. 
Let $*\in\{\emptyset,-,+,b\}$ be a boundedness condition. Denote by $K^*(\ca)$ the homotopy
category of complexes of objects in $\ca$ satisfying the boundedness
condition $*$.

Let $\ca$ be an abelian $k$-category. 
Denote by $\cd^*(\ca)$ the derived category of complexes of objects
in $\ca$ satisfying the boundedness condition $*$.

Let $R$ be a $k$-algebra. Without further remark, by an $R$-module
we mean a right $R$-module. Denote by $\Mod-R$ the category of
$R$-modules, and denote by $\mod-R$ (respectively, $\proj-R$) its
full subcategory of finitely generated $R$-modules (respectively,
finitely generated projective $R$-modules). We will also
consider the category $\fdmod-R$ of those $R$-modules which are
finite-dimensional over $k$. We often view
$K^b(\proj-R)$ as a triangulated subcategory of $\cd^*(\Mod-R)$.

\subsection{Definitions}\label{ss:dg-alg}

A \emph{dg $k$-algebra} $A$ is a $\mathbb{Z}$-graded $k$-algebra with a $k$-linear differential $d=d_{A}$ of degree $1$ satisfying the graded Leibniz rule:
\begin{align}
d(ab)=d(a)b + (-1)^{\deg(a)}ad(b),
\end{align}
where $a, b \in A$ are homogeneous elements. In particular, every associative $k$-algebra may be viewed as a dg $k$-algebra concentrated in degree $0$ with trivial differential $d=0$. A \emph{right dg $A$-module} $M$ is a $\ZZ$-graded right $A$-module with a  $k$-linear differential $d_{M}$ of degree $1$, satisfying
\begin{align}
d_{M}(ma)=d_{M}(m)a + (-1)^{\deg(m)}md_{A}(a),
\end{align}
where $m \in M$ is a homogeneous element of degree $\deg(m)$.  Every dg $A$-module may be viewed as a complex of $k$-modules. In particular, there are associated cohomology groups $H^i(M)$. If $A$ is concentrated in degree zero, then a dg $A$-module is just a complex of $A$-modules. A \emph{morphism} $f\colon M \ra N$ between dg $A$-modules is an $A$-linear map of degree zero, such that
\begin{align}
f\circ d_{M}(m)=d_{N}\circ f(m),
\end{align}  
for all $m \in M$. In other words, $f$ is a chain map. In particular, $f$ induces morphisms  
$
H^i(f)\colon H^i(M) \ra H^i(N)
$
between the corresponding cohomology groups. $f$ is a \emph{quasi-isomorphism} if $H^i(f)$ is an isomorphism, for all $i \in \ZZ$. The category of dg $A$-modules is denoted by $\cc(A)$.

Now, the \emph{derived category}
$\cd(A)$ of dg $A$-modules is obtained from $\cc(A)$ 
by formally inverting all quasi-isomorphisms. It is a
triangulated category with shift functor being the shift of
complexes $[1]$, see Keller \cite{Keller94}. If $A$ is concentrated in degree zero, then $\cd(A)$ is the usual unbounded derived category $\cd(\Mod-A)$ of right $A$-modules.

An object $X \in \cd(A)$ is called $\emph{compact}$ (or \emph{small} in Keller's terminology) if the functor $\Hom_{\cd(A)}(X, -)$ commutes with all (set-indexed) direct sums in $\cd(A)$.  Since $\Hom_{\cd(A)}(A, -)\cong \Hom_{\ck(A)}(A, -) \cong H^0(-)$ (see the next paragraph) and taking cohomologies commutes with direct sums, $A$ is compact when considered as a right dg $A$-module. The smallest triangulated subcategory $\thick(A) \subseteq \cd(A)$ which contains $A$ and is closed under taking direct summands consists of compact objects, by the five lemma. It follows from ideas of Ravenel \cite{Ravenel} that these are \emph{all} the compact objects of $\cd(A)$, see also \cite[Section 5]{Keller94}. In the sequel, we denote $\thick(A)$ by $\per(A)$ and call it the category of perfect dg $A$-modules. If $A$ is concentrated in degree $0$, then $\per(A)$ may be identified with the bounded homotopy category of finitely generated projective $A$-modules $K^b(\proj-A)$.

Moreover, one can consider the full subcategory $\cd_{fd}(A) \subseteq \cd(A)$ consisting of those dg $A$-modules $M$ whose total cohomology $\bigoplus_{i \in \ZZ} H^i(M)$
is finite-dimensional. In case $A$ is a finite dimensional algebra concentrated in degree zero, the category $\cd_{fd}(A)$ can be identified with the bounded derived category of finite dimensional $A$-modules $\cd^b(\mod-A)$.

\begin{ex}
Let $A=k[x]$ be the polynomial ring over a field $k$. Then $\cd_{fd}(A)$ is strictly contained in the perfect category $\per(A)=K^b(\proj-A)$. Conversely, if $A$ is a finite dimensional algebra, then $\per(A) \subseteq \cd_{fd}(A)$ always holds. Moreover, this inclusion is strict if and only if $A$ has infinite global dimension. 
\end{ex}

\subsubsection{Homotopy categories and derived functors}\label{sss:DerFunct}
Let $M$, $N$ be dg $A$-modules. Define the complex $\cHom_A(M,N)$ componentwise as
\begin{eqnarray*}\cHom_A^i(M,N)=\left.\left\{f\in\prod_{j\in\mathbb{Z}}\Hom_k(M^j,N^{i+j}) \, \right|
\, f(ma)=f(m)a\right\},\end{eqnarray*} with
differential given by $d(f)=d_N\circ f-(-1)^i f\circ d_M$ for
$f\in\cHom_A^i(M,N)$. The complex $\cEnd_A(M)=\cHom_A(M,M)$ with product given by the
composition of maps  is a dg $k$-algebra. Moreover, the $0$-cocycles $Z^0(\cHom_A(M,N))$ of $\cHom_A(M,N)$ are just the morphisms in the category of dg $A$-modules $\cc(A)$. On the other hand, keeping the objects and taking the $0$-th cohomology group $H^0(\cHom_A(M,N))$ as morphism space, we obtain the homotopy category $\ck(A)$ of dg $A$-modules. If $A$ is concentrated in degree $0$, then $\ck(A)$ is the homotopy category of complexes of $A$-modules $K(\Mod-A)$.

There is another way to define the homotopy category $\ck(A)$, which will be needed later. Namely, $\cc(A)$ admits an exact Frobenius category structure and $\ck(A)$ is the corresponding stable category. More precisely, define a sequence $X \ra Y \ra Z$ in $\cc(A)$ to be exact, if it is \emph{split}-exact as a sequence of graded $A$-modules. Then the projective-injective objects are precisely the null-homotopic dg $A$-modules, i.e.~direct summands of dg $A$-modules of the form $\Cone(X \xrightarrow{\id_{X}} X)$ for a dg $A$-module $X$, see Keller \cite[Section 2.2.]{Keller94}.

A dg $A$-module $N$ is called \emph{acyclic} if $H^i(N)=0$ for all $i \in \mathbb{Z}$. Acyclic dg $A$-modules form a triangulated subcategory ${\mathcal Ac}(A)$ of $\ck(A)$. Taking the triangulated quotient category $\ck(A)/{\mathcal Ac}(A)$ yields another description of the derived category $\cd(A)$. We denote the corresponding quotient functor by $Q\colon \ck(A) \ra \cd(A)$.

Objects $X \in \ck(A)$ such that $\Hom_{\ck(A)}(X, {\mathcal Ac}(A))=0$ are called \emph{$K$-projective} and the corresponding triangulated subcategory is denoted by $\ck_{p}(A)$. Since we have a natural isomorphism $\Hom_{\ck(A)}(A, -)\cong H^0(-)$ by definition of $\ck(A)$, $A$ is a $K$-projective object. Hence, $\thick_{\ck(A)}(A) \subseteq \ck_{p}(A)$. Keller \cite[Theorem 3.1.]{Keller94} shows that every object $X \in \ck(A)$ fits into a triangle
\begin{align}
pX \ra X \ra aX \ra pX[1],
\end{align} 
with $pX \in \ck_{p}(A)$ and $aX \in {\mathcal Ac}(A)$. In particular, $pX \ra X$ is a quasi-isomorphism, which is often called \emph{$K$-projective resolution} of $X$. If $A$ is concentrated in degree $0$ and $X$ is a \emph{right bounded} complex of $A$-modules, then $pX$ is given as a projective resolution of $X$. However, for unbounded $X \in K(\Mod-A)$, the cofibrant resolution is given by Spaltenstein's $K$-projective resolution, \cite{Spaltenstein}. 
Dually, the objects which are right orthogonal to ${\mathcal Ac}(A)$ are called \emph{$K$-injective} and the above statements may be dualized.

We will use the following results from~\cite{Keller94}. Let $A$ and $B$ be dg $k$-algebras.
\begin{itemize}
 \item[--] Every dg $A$-module $M$ has a natural structure of dg $\cEnd_A(M)$-$A$-bimodule.
 \item[--] If $M$ is a dg $A$-$B$-bimodule, then there is an adjoint pair of functors between Frobenius categories (in the sense of Definition \ref{D:Frobenius})
\begin{align}
\begin{xy}
\SelectTips{cm}{}
\xymatrix{\cc(A)\ar@<.7ex>[rr]^{-\otimes_A M}&&\cc(B)\ar@<.7ex>[ll]^{\cHom_B(M,-)}.}
\end{xy}
\end{align}
For a dg $A$-modules $X$ one defines $X \otimes_A M$ in several steps. Firstly, one can form the usual tensor product of complexes of $k$-modules $X \otimes_k M$. Next, the $k$-submodule $U$ generated by elements of the form $xa \otimes m - x \otimes am$ is invariant under the differential of $X \otimes_k M$ and under the right action of $B$. Thus the factor module $(X \otimes_k M)/U$ is a right dg $B$-module, which is denoted by $X \otimes_A M$, see also \cite[Subsection 2.6]{KellerPap}. As in the case of ordinary algebras, the left derived functor of $-\otimes_A M$ is defined by passing to a resolution first:
\begin{align}
-\lten_A M \colon \cd(A) \xrightarrow{p} \ck_{p}(A) \xrightarrow{-\otimes_A M} \ck(B) \xrightarrow{Q} \cd(B).
\end{align}
Similarly, the right derived functor of $\cHom_B(M,-)$ is defined by
\begin{align}
\RHom_B(M,-) \colon \cd(B) \xrightarrow{i} \ck_{i}(B) \xrightarrow{\cHom_B(M,-)} \ck(A) \xrightarrow{Q} \cd(A).
\end{align}
The derived functors form again an adjoint pair:
\[
\begin{xy}
\SelectTips{cm}{}
\xymatrix{\cd(A)\ar@<.7ex>[rr]^{-\lten_A M}&&\cd(B)\ar@<.7ex>[ll]^{\RHom_B(M,-)}.}
\end{xy}
\]

 \item[--] Let $f:A\rightarrow B$ be a quasi-isomorphism of dg algebras. Then the induced triangle functor $-\lten_A B:\cd(A)\rightarrow
\cd(B)$ is an equivalence. A quasi-inverse is given by the restriction $\cd(B)\rightarrow\cd(A)$ along $f$. It can be 
written as $-\lten_B B=\RHom_B(B,-)$ where $B$ is considered as a dg $B$-$A$-bimodue respectively dg $A$-$B$-bimodule via $f$.
These equivalences restrict to equivalences between $\per(A)$ and $\per(B)$ and between $\cd_{fd}(A)$ and $\cd_{fd}(B)$.
By abuse of language, by a quasi-isomorphism we will also mean a zigzag of quasi-isomorphisms.
\end{itemize}

\subsection{The Nakayama functor}\label{ss:nakayama-functor}

Let $A$ be a dg $k$-algebra. We
consider the functor of Frobenius categories $\nu=D\cHom_A(-,A)\colon \cc(A)\rightarrow\cc(A)$. It is clear that
$\nu(A)=D(A)$ holds. Moreover, for dg $A$-modules $M$ and $N$ there is a bifunctorial map
\begin{align}\label{E:Naka}
\begin{array}{c}
\begin{xy}
\SelectTips{cm}{}
\xymatrix@R=0.5pc{D\cHom_A(M,N)\ar[r] & \cHom_A(N,\nu(M))\\
\varphi\ar@{|->}[r]&\bigl(n\mapsto (f\mapsto \varphi(g))\bigr)}
\end{xy}
\end{array}
\end{align} where
$f\in\cHom_A(M,A)$ and $g\colon m\mapsto nf(m)$.  If we let $M=A$, then $(\ref{E:Naka})$ is an isomorphism. Taking the zeroth cohomology of \eqref{E:Naka}, yields a bifunctorial isomorphism in the homotopy category $\ck(A)$
\begin{align}\label{E:Naka2}
D\Hom_{\ck(A)}(A,N) \cong \Hom_{\ck(A)}(N,\nu(A)),
\end{align}
which may be extended to any $M \in \thick_{\ck(A)}(A)$. These objects $M$ are $K$-projective (since $A$ is) and $\nu$ maps $M$ into $\ck_{i}(A)$, the subcategory of $K$-injective objects (since this holds for $A$, see \cite[Subsection 10.4]{Keller94}). Using the orthogonality properties of $\ck_{p}(A)$ and $\ck_{i}(A)$ (see the discussion above) together with Lemma \ref{L:Verdier2}, we obtain bifunctorial isomorphisms
\begin{align}\label{E:Naka3}
\begin{array}{cc}
\Hom_{\ck(A)}(M,N) \cong \Hom_{\cd(A)}(M,N) \\ \\ \quad  \Hom_{\ck(A)}(N,\nu(M)) \cong \Hom_{\cd(A)}(N,\nu(M))
\end{array}
\end{align}
induced by the quotient functor $Q\colon \ck(A) \ra \cd(A)$, where $M \in \thick_{\ck(A)}(A)$. Putting \eqref{E:Naka2} and \eqref{E:Naka3} together, we obtain a chain of bifunctorial isomorphisms
\begin{align*}
D\Hom_{\cd(A)}(M,N) \cong D\Hom_{\ck(A)}(M,N) \cong  \Hom_{\ck(A)}(N,\nu(M)) \cong \Hom_{\cd(A)}(N,\nu(M))
\end{align*}
 The image of $\thick_{\ck(A)}(A)$ under the quotient functor $Q$ is $\per(A)$. Summing up, there is a 
 binatural isomorphism for $M\in\per(A)$ and $N\in\cd(A)$:
\begin{align}\label{E:AR-formula}
\begin{xy}
\SelectTips{cm}{}
\xymatrix{D\Hom_{\cd(A)}(M,N)\cong\Hom_{\cd(A)}(N,\nu(M))}
\end{xy}\! \! ,
\end{align}
where by an abuse of notation $\nu=\mathsf{L}\nu$ denotes the left derived functor. This is the \emph{Nakayama functor} and \eqref{E:AR-formula} is a form of Serre duality.

\newpage

\subsection[Non-positive dg algebras]{Non-positive dg algebras: $t$-structures, 
co-$t$-structures \\ and Hom-finiteness}\label{ss:nonpositive-dg-alg-1}
\subsubsection*{Truncations}
Let $\ca$ be an abelian $k$-category. For $i\in\mathbb{Z}$ and for a
complex $M$ of objects in $\ca$, we define the \emph{standard
truncations} $\sigma^{\leq i}$ and $\sigma^{>i}$ by 

\begin{align*}(\sigma^{\leq i}M)^j=\begin{cases} M^j & \text{ if
} j<i,\\ \ker d_M^i & \text{ if } j=i,\\ 0 & \text{ if }
j>i,\end{cases} &&&& (\sigma^{>
i}M)^j=\begin{cases} 0 & \text{ if } j<i,\\
{\displaystyle \frac{M^i}{\ker
d_M^i} }& \text{ if } j=i,\\ M^j & \text{ if } j>i,\end{cases}
\end{align*}
and the
\emph{brutal truncations} $\beta_{\leq i}$ and $\beta_{\geq i}$ by
\begin{align*}
(\beta_{\leq i}M)^j=\begin{cases} M^j & \text{ if } j\leq i,\\
0 & \text{ if } j>i,\end{cases} &&&~~
(\beta_{\geq i}M)^j=\begin{cases} 0 & \text{ if } j<i,\\
M^j & \text{ if } j\geq i.\end{cases}
\end{align*}
Their respective differentials are inherited from $M$.
Notice that $\sigma^{\leq i}(M)$ and $\beta_{\geq i}(M)$ are
subcomplexes of $M$ and $\sigma^{>i}(M)$ and $\beta_{\leq i-1}(M)$
are the corresponding quotient complexes. Thus we have two sequences,
which are componentwise short exact,
\begin{align*}
0 \ra \sigma^{\leq i}(M)\ra M\ra \sigma^{>i}(M)\ra 0 & \quad  \text{  and } & 0\ra \beta_{\geq i}(M)\ra  M\ra \beta_{\leq i-1}(M)\ra  0.
\end{align*}
 Moreover, taking standard truncations behaves well with respect to cohomology.
\begin{align*}
H^j(\sigma^{\leq i}M)=\begin{cases} H^j(M) & \text{ if } j\leq
i,\\ 0 & \text{ if } j>i,\end{cases} &&&&
H^j(\sigma^{>i}M)=\begin{cases} 0 & \text{ if } j\leq i,\\
H^j(M) & \text{ if } j>i.\end{cases}
\end{align*}

\subsubsection{$t$-structures}
Let $\cc$ be a triangulated $k$-category with shift functor $[1]$. A
\emph{$t$-structure} on $\cc$ (\cite{BeilinsonBernsteinDeligne82})
is a pair $(\cc^{\leq 0},\cc^{\geq 0})$ of strictly (i.e.~ closed under isomorphisms) full
subcategories such that
\begin{itemize}
\item[(T1)] $\cc^{\leq 0}[1]\subseteq\cc^{\leq 0}$ and
$\cc^{\geq 0}[-1]\subseteq\cc^{\geq 0}$,
\item[(T2)] $\Hom(M,N[-1])=0$ for $M\in\cc^{\leq 0}$
and $N\in\cc^{\geq 0}$,
\item[(T3)] for each $M\in\cc$, there is a triangle $M'\rightarrow
M\rightarrow M''\rightarrow M'[1]$ in $\cc$ with $M'\in\cc^{\leq 0}$
and $M''\in\cc^{\geq 0}[-1]$.
\end{itemize}
The \emph{heart} $\cc^{\leq 0}\cap\cc^{\geq 0}$ of the $t$-structure $(\cc^{\leq 0},\cc^{\geq 0})$ is an abelian category, see \cite{BeilinsonBernsteinDeligne82}. Examples of $t$-structures may be obtained from Proposition \ref{p:standard-t-str} below (see also Example \ref{Ex:TandCoT}).

Let $A$ be a dg $k$-algebra such that $A^i=0$ for $i>0$. Such a
dg algebra is called a \emph{non-positive dg algebra}. The canonical projection $A\rightarrow H^0(A)$ is a homomorphism of
dg algebras. We view a module over $H^0(A)$ as a dg module over $A$ via this homomorphism.
This defines a natural functor $\Phi\colon \Mod-H^0(A)\rightarrow \cd(A)$.

\begin{prop}\label{p:standard-t-str}
Let $A$ be a non-positive dg $k$-algebra.
\begin{itemize}
\item[(a)] \emph{(\cite[Theorem 1.3]{HoshinoKatoMiyachi02}, \cite[Section
2.1]{Amiot09} and \cite[Section 5.1]{KellerYang11})} Let $\cd^{\leq
0}$ respectively $\cd^{\geq 0}$ denote the full subcategory of
$\cd(A)$ which consists of objects $M$ such that $H^i(M)=0$ for
$i>0$ respectively for $i<0$. Then $(\cd^{\leq 0},\cd^{\geq 0})$ is
a $t$-structure on $\cd(A)$. Moreover,  $H^0$ defines an
equivalence from the heart to $\Mod-H^0(A)$, and the natural functor $\Phi\colon\Mod-H^0(A)\rightarrow \cd(A)$
induces a quasi-inverse to this equivalence. We will identify $\Mod-H^0(A)$ with the heart via these equivalences.
\item[(b)]  The $t$-structure in (a) restricts to a $t$-structure on
$\cd_{fd}(A)$ whose heart is $\fdmod-H^0(A)$.
Moreover, as a triangulated category $\cd_{fd}(A)$ is generated by the heart.
\end{itemize}
\end{prop}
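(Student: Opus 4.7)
The plan is to verify the three axioms of a $t$-structure for part (a), identify the heart, and then restrict to the finite dimensional setting in part (b).

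For part (a), axiom (T1) is immediate since shifting a complex by $[1]$ shifts its cohomology, so $H^{i}(M[1])=H^{i+1}(M)$ vanishes for $i>-1$ whenever $H^{i}(M)$ vanishes for $i>0$. The crux is the existence of the truncation triangle in (T3). The key observation, which repeatedly uses the non-positivity of $A$, is that for every $i\in\mathbb{Z}$ the standard truncation $\sigma^{\leq i}M$ is in fact a sub-dg-module of $M$: for a homogeneous element $x\in(\ker d_{M}^{i})\subseteq M^{i}$ and a homogeneous $a\in A^{j}$ with $j\leq 0$, one has $xa\in M^{i+j}$, which lies entirely inside $\sigma^{\leq i}M$ when $j<0$; when $j=0$, the Leibniz rule together with $d(a)\in A^{1}=0$ and $d(x)=0$ forces $d(xa)=0$, so $xa\in\ker d_{M}^{i}$. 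Therefore the componentwise short exact sequence $0\to\sigma^{\leq 0}M\to M\to\sigma^{>0}M\to 0$ is an exact sequence in $\cc(A)$, which in the Frobenius structure of Subsection~\ref{sss:DerFunct} may be completed to a conflation and induces a triangle in $\cd(A)$; this yields (T3). For (T2) I will take $M\in\cd^{\leq 0}$, $N\in\cd^{\geq 0}$, replace $M$ by a $K$-projective resolution (whose brutal truncation $\beta_{\leq 0}$ is again $K$-projective thanks to non-positivity of $A$, cf.~the argument for $\sigma^{\leq i}$) and then compute morphisms in $\cd(A)$ as homotopy classes, directly seeing that any chain map $M\to N[-1]$ is null-homotopic because every relevant component of $N[-1]$ is acyclic in the needed range.

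Next I identify the heart $\ch=\cd^{\leq 0}\cap\cd^{\geq 0}$. The functor $H^{0}$ sends $\ch$ to $\Mod{-}H^{0}(A)$ because for $M\in\ch$ the $0$-cocycles form an $H^{0}(A)$-module (this uses once more that $A^{j}=0$ for $j>0$, so multiplication by elements of $A^{j}$ with $j<0$ on $Z^{0}(M)$ lands in $M^{j}<0$ which, together with acyclicity in that range, ensures the $H^{0}(A)$-action is well-defined). Conversely, the composition $A\twoheadrightarrow H^{0}(A)$ defines $\Phi$, and I will verify that $\Phi$ and $H^{0}$ are mutually quasi-inverse. The essential point is that for a dg module $M\in\ch$ the composite of the standard truncations $\sigma^{\leq 0}M\hookleftarrow\sigma^{\leq 0}\sigma^{\geq 0}M\to\sigma^{\geq 0}M$ provides a zigzag of quasi-isomorphisms connecting $M$ to its $0$-cohomology viewed as a dg module via $\Phi$.

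For part (b), I restrict the truncations to $\cd_{fd}(A)$. If $\bigoplus_{i\in\mathbb{Z}}H^{i}(M)$ is finite dimensional, then so are the cohomologies of $\sigma^{\leq 0}M$ and $\sigma^{>0}M$, because these truncations only rearrange and project the original cohomology. Hence the truncation triangle stays in $\cd_{fd}(A)$, giving the induced $t$-structure with heart $\fdmod{-}H^{0}(A)$. The generation statement follows by induction on the cohomological amplitude: any $M\in\cd_{fd}(A)$ with $H^{i}(M)=0$ outside some interval $[a,b]$ fits into a triangle $\sigma^{\leq b-1}M\to M\to(H^{b}M)[-b]\to\sigma^{\leq b-1}M[1]$ where the right-hand term lies in the heart (shifted), and the left-hand term has smaller amplitude, so iterating reduces to objects of the heart.

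The main obstacle will be the careful verification that $\Phi$ is an equivalence onto the heart: one must check that quasi-isomorphisms between $\Phi(M)$ and $\Phi(N)$ correspond exactly to $H^{0}(A)$-linear maps, which requires combining the vanishing in (T2) with the explicit truncation zigzag above. All subsequent arguments are then standard consequences of the non-positivity of $A$, which guarantees that brutal and standard truncations behave like genuine sub- and quotient dg modules.
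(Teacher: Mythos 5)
Your overall strategy matches the paper's: use non-positivity of $A$ to show that the standard truncations $\sigma^{\leq 0}$ and $\sigma^{>0}$ produce genuine sub- and quotient dg $A$-modules (this gives (T3)), identify the heart with $\Mod-H^{0}(A)$, and in part (b) restrict the truncation triangle to $\cd_{fd}(A)$ and run an induction on cohomological amplitude for the generation statement. The paper's own proof is terse on (T2) and the heart identification (it cites Amiot \cite[Prop.\ 2.3]{Amiot09}), so it is good that you try to supply these directly. Part (b) is fine; truncating off the top cohomology rather than the bottom, as you do, is cosmetically different from the paper but works equally well.

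That said, there are two places where your version needs repair. First, in your verification of (T2) you assert that the brutal truncation $\beta_{\leq 0}P$ of a $K$-projective dg module $P$ is again $K$-projective ``thanks to non-positivity of $A$.'' This is not justified and is not the standard argument: $\beta_{\leq 0}$ does produce a dg $A$-submodule when $A$ is non-positive, but it does not preserve $K$-projectivity in general. The correct route is to first replace $M$ by $\sigma^{\leq 0}M$ (quasi-isomorphic and concentrated in degrees $\leq 0$) and then take a \emph{semi-free} resolution $P\rightarrow\sigma^{\leq 0}M$ built inductively from shifts $A[i]$ with $i\geq 0$; since $A$ is non-positive this resolution stays in degrees $\leq 0$. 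Likewise replace $N[-1]$ by $\sigma^{>0}(N[-1])$, concentrated in degrees $\geq 1$. Then $\cHom_{A}(P,\sigma^{>0}(N[-1]))$ vanishes in degrees $\leq 0$, so $H^{0}=0$ and (T2) follows. Second, the zigzag you write for the heart identification, $\sigma^{\leq 0}M\hookleftarrow\sigma^{\leq 0}\sigma^{\geq 0}M\to\sigma^{\geq 0}M$, has the left arrow pointing the wrong way: $\sigma^{\geq 0}M$ is a quotient of $M$, not a subobject, and $\sigma^{\leq 0}\sigma^{\geq 0}M$ is not naturally a sub-dg-module of $\sigma^{\leq 0}M$. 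The usable zigzag for $M$ in the heart is
\[M\;\longleftarrow\;\sigma^{\leq 0}M\;\longrightarrow\;\sigma^{>-1}\sigma^{\leq 0}M\;\longrightarrow\;\Phi\bigl(H^{0}(M)\bigr),\]
where the first map is the inclusion, the second the canonical quotient, and the last one collapses the two-term complex $M^{-1}/\ker d^{-1}\hookrightarrow\ker d^{0}$ onto its cokernel $H^{0}(M)$; each arrow is a quasi-isomorphism because $M$ has cohomology concentrated in degree $0$. The Leibniz-rule argument you sketch does show that the $A$-action on $H^{0}(M)$ factors through $H^{0}(A)$, but your statement about ``elements of $A^{j}$ with $j<0$ landing in $M^{j}$'' is not the relevant point; what matters is that $Z^{0}(M)\cdot d_{A}(b)\subseteq\im d^{-1}_{M}$ for $b\in A^{-1}$.
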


\begin{proof} (a)  To show that $(\cd^{\leq 0},\cd^{\geq 0})$ is a $t$-structure on $\cd(A)$, it suffices to show condition (T3). Let $M$ be a dg
$A$-module. Thanks to the assumption that $A$ is non-positive, the
standard truncations $\sigma^{\leq 0}M$ and $\sigma^{>0}M$ are again
dg $A$-modules. Hence we have a distinguished triangle
\begin{eqnarray}\label{e:triangle-standard-truncation} \sigma^{\leq
0}M\longrightarrow M\longrightarrow \sigma^{>0}M\longrightarrow \sigma^{\leq
0}M[1]\end{eqnarray} in $\mathcal{D}(A)$. This proves (T3).

For the second statement, we refer to  \cite[Proposition 2.3.]{Amiot09}.

(b) For the first statement, it suffices to show that, the standard truncations are endo-functors of
$\cd_{fd}(A)$. This is true because $H^*(\sigma^{\leq 0}M)$ and
$H^*(\sigma^{>0}M)$ are $k$-subspaces of $H^*(M)$.

To show the second statement, let $M\in\cd_{fd}(M)$. Suppose that
for $m\geq n$ we have $H^{n}(M)\neq 0$, $H^{m}(M)\neq 0$ but
$H^i(M)=0$ for $i \notin [n, m]$. We prove that $M$ is generated by the
heart by induction on $m-n$. If $m-n=0$, then a shift of $M$ is in the heart.
Now suppose $m-n>0$. The standard truncations yield a triangle
\[
\sigma^{\leq n}M \longrightarrow M \longrightarrow \sigma^{>n}M \longrightarrow \sigma^{\leq n}M[1].
\]
Now the cohomologies of $\sigma^{\leq n}M$ are concentrated in degree
$n$, and hence $\sigma^{\leq n}M$ belongs to a shifted copy of the heart. 
By induction hypothesis, $\sigma^{>n}(M)$ is
generated by the heart. Therefore, $M$ is generated by the heart.
\end{proof}

\subsubsection{Co-$t$-structures}
Let $\cc$ be as above. A
\emph{co-$t$-structure} on $\cc$~\cite{Pauksztello08}  (or
\emph{weight structure}~\cite{Bondarko10}) is a pair $(\cc_{\geq
0},\cc_{\leq 0})$ of strictly full subcategories of $\cc$ satisfying the following conditions
\begin{itemize}
\item[(C1)] both $\cc_{\geq 0}$ and $\cc_{\leq
0}$ are closed under finite direct sums and direct summands,
\item[(C2)] $\cc_{\geq 0}[-1]\subseteq\cc_{\geq 0}$ and
$\cc_{\leq 0}[1]\subseteq\cc_{\leq 0}$,
\item[(C3)] $\Hom(M,N[1])=0$ for $M\in\cc_{\geq 0}$
and $N\in\cc_{\leq 0}$,
\item[(C4)] for each $M\in\cc$ there is a triangle $M'\rightarrow
M\rightarrow M''\rightarrow M'[1]$ in $\cc$ with $M'\in\cc_{\geq 0}$
and $M''\in\cc_{\leq 0}[1]$.
\end{itemize}
Examples may be obtained from Proposition \ref{p:standard-co-t-str} below (see also Example \ref{Ex:TandCoT}).

It follows from the definition that $\cc_{\leq 0}=\cc_{\geq 0}^\perp[-1]$. Indeed, by property (C3) $\cc_{\leq 0} \subseteq \cc_{\geq 0}^\perp[-1]$. Conversely, if $M$  is in  $\cc_{\geq 0}^\perp[-1]$, then we may consider the triangle in (C4) for $M[1]$. The triangle splits and thus (C1) implies that $M[1] \in \cc_{\leq 0}[1]$.
The
\emph{co-heart} is defined as the intersection $\cc_{\geq
0}~\cap~\cc_{\leq 0}$.

\begin{lem}\label{l:stupid-truncation}
 \emph{(\cite[Proposition 1.3.3.6]{Bondarko10})} For $M\in\cc_{\leq 0}$, there exists a distinguished triangle
$M' \ra M\ra M''\ra M'[1]$ with $M'\in\cc_{\geq 0}\cap\cc_{\leq 0}$ and $M''\in\cc_{\leq
0}[1]$.
\end{lem}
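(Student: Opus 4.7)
The plan is to apply axiom (C4) directly to $M$ and then show that the ``left piece'' one obtains automatically lies in the co-heart. More precisely, axiom (C4) provides a distinguished triangle
\begin{equation*}
M' \longrightarrow M \longrightarrow M'' \longrightarrow M'[1]
\end{equation*}
with $M' \in \cc_{\geq 0}$ and $M'' \in \cc_{\leq 0}[1]$, which is exactly the triangle in the statement apart from the extra requirement that $M' \in \cc_{\leq 0}$. So the whole proof reduces to verifying that $M' \in \cc_{\leq 0}$ when $M$ itself is assumed to lie in $\cc_{\leq 0}$.

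The key auxiliary fact I would establish first is that $\cc_{\leq 0}$ is closed under extensions. For this I would use the orthogonality description $\cc_{\leq 0} = \cc_{\geq 0}^{\perp}[-1]$ (equivalently $\cc_{\leq 0}[1] = \cc_{\geq 0}^{\perp}$), which follows from (C3) together with the splitting/summand-closure argument using (C1): namely, if $N \in \cc_{\geq 0}^{\perp}[-1]$, applying (C4) to $N[1]$ gives a triangle whose connecting morphism vanishes by orthogonality, so the triangle splits and (C1) puts $N[1]$ into $\cc_{\leq 0}[1]$. Given this characterization, for any triangle $X \to Y \to Z \to X[1]$ with $X, Z \in \cc_{\leq 0}$ and any $N \in \cc_{\geq 0}$, the long exact sequence $\Hom(N, X[1]) \to \Hom(N, Y[1]) \to \Hom(N, Z[1])$ has vanishing outer terms, so $Y \in \cc_{\leq 0}$.

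Now I would rotate the triangle from (C4) to
\begin{equation*}
M''[-1] \longrightarrow M' \longrightarrow M \longrightarrow M''.
\end{equation*}
By (C2), $\cc_{\leq 0}$ is closed under positive shifts, and since $M'' \in \cc_{\leq 0}[1]$ we get $M''[-1] \in \cc_{\leq 0}$. Together with the hypothesis $M \in \cc_{\leq 0}$ and the extension-closure established above, this forces $M' \in \cc_{\leq 0}$. Combined with $M' \in \cc_{\geq 0}$ from the original application of (C4), we conclude $M' \in \cc_{\geq 0} \cap \cc_{\leq 0}$, which completes the proof.

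The main (and only nontrivial) obstacle is the extension-closure of $\cc_{\leq 0}$, since once this is in place the rest is a one-line rotation argument. Everything else is a direct unpacking of the co-$t$-structure axioms, and no truncation functor or heart-level machinery is required.
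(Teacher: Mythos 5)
Your proof is correct and follows essentially the same route as the paper: apply (C4), rotate to $M''[-1]\to M'\to M\to M''$, and use the equality $\cc_{\leq 0}=\cc_{\geq 0}^\perp[-1]$ (which the paper establishes just before the lemma) to conclude $M'\in\cc_{\leq 0}$. You simply make the extension-closure of $\cc_{\leq 0}$ explicit as an intermediate step, which the paper leaves implicit; also note that $M''[-1]\in\cc_{\leq 0}$ follows directly from $M''\in\cc_{\leq 0}[1]$ without any appeal to (C2).
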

\begin{proof}
Consider the triangle in (C4). It remains to show that $M' \in \cc_{\leq 0}$. Using the equality $\cc_{\leq 0}=\cc_{\geq 0}^\perp[-1]$ from above together with the shifted triangle $M''[-1] \ra M' \ra M \ra M''$, where $M''[-1]$, $M \in \cc_{\leq 0}$ proves the claim. 
\end{proof}

\noindent Let $A$ be a non-positive dg $k$-algebra. Let $\cp_{\geq 0}$
respectively $\cp_{\leq 0}$ denote the smallest full subcategory of
$\per(A)$ which contains $A[i]$ for $i\leq 0$ respectively
$i\geq 0$ and is closed under taking extensions and direct
summands. 

\begin{prop}\label{p:standard-co-t-str}
$(\cp_{\geq 0},\cp_{\leq 0})$ is a co-$t$-structure of $\per(A)$, with co-heart $\add(A)$.
\end{prop}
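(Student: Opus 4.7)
The plan is to verify the four axioms (C1)--(C4) of a co-$t$-structure and then identify the co-heart with $\add(A)$; the essential input throughout is non-positivity, expressed as $\Hom_{\cd(A)}(A, A[n]) = H^n(A) = 0$ for $n \geq 1$. Axiom (C1) holds by construction. For (C2), the generating set $\{A[i] : i \leq 0\}$ is closed under the shift $[-1]$, and since closure under extensions and direct summands is preserved by shifts, one obtains $\cp_{\geq 0}[-1] \subseteq \cp_{\geq 0}$; the inclusion $\cp_{\leq 0}[1] \subseteq \cp_{\leq 0}$ is symmetric. For (C3), the key vanishing on generators is $\Hom_{\cd(A)}(A[i], A[j+1]) = H^{j+1-i}(A) = 0$ for $i \leq 0$ and $j \geq 0$, which uses $j+1-i \geq 1$. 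One extends this to the full subcategories by the standard two-step argument: for fixed $M = A[i]$, the subcategory $\{N \in \per(A) : \Hom(M,N[1]) = 0\}$ is closed under extensions (via long exact $\Hom$-sequences) and direct summands (via additivity), so it contains $\cp_{\leq 0}$; then symmetrize by fixing $N \in \cp_{\leq 0}$ and varying $M$ to conclude.

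For (C4), which is the main technical step, I would proceed by induction on the length of a cellular filtration of $M \in \per(A) = \thick(A)$, where by a cellular filtration I mean a sequence $0 = M_0 \to M_1 \to \cdots \to M_r = \widetilde{M}$ of cones with each $M_i/M_{i-1} \cong A[n_i]$, and $M$ is a summand of $\widetilde{M}$. The inductive step for adding a cell $A[n_r]$ splits into two cases. If $n_r \leq 0$, then $A[n_r] \in \cp_{\geq 0}$, and the composition $A[n_r] \to M_{r-1}[1] \to M''_{r-1}[1]$ vanishes by (C3) since $A[n_r] \in \cp_{\geq 0}$ and $M''_{r-1}[1] \in \cp_{\leq 0}[1]$; hence the connecting map $A[n_r] \to M_{r-1}[1]$ lifts to a map $A[n_r] \to M'_{r-1}[1]$, and defining $M'$ as the resulting extension of $A[n_r]$ by $M'_{r-1}$ places $M' \in \cp_{\geq 0}$ by closure under extensions, while a $3 \times 3$ completion shows that $M''$ coincides with $M''_{r-1}$, still in $\cp_{\leq 0}[1]$. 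The case $n_r \geq 1$ is symmetric: take $M' = M'_{r-1}$, and let $M''$ be the extension of $A[n_r] \in \cp_{\leq 0}[1]$ by $M''_{r-1} \in \cp_{\leq 0}[1]$ obtained from the composition $A[n_r] \to M_{r-1}[1] \to M''_{r-1}[1]$. Passing from $\widetilde{M}$ to its direct summand $M$ is handled by exploiting the vanishing $\Hom(\cp_{\geq 0}, \cp_{\leq 0}[1]) = 0$ to lift the idempotent $p \colon \widetilde{M} \to \widetilde{M}$ to a compatible map of truncation triangles, and then splitting via idempotent completeness of $\per(A)$ together with closure of $\cp_{\geq 0}$ and $\cp_{\leq 0}[1]$ under direct summands.

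Finally, for the co-heart, the inclusion $\add(A) \subseteq \cp_{\geq 0} \cap \cp_{\leq 0}$ is immediate. For the converse, take $M$ in the co-heart; then (C3) gives $H^k(M) = \Hom_{\cd(A)}(A, M[k]) = 0$ for $k \geq 1$, so $H^0(M)$ is a finitely generated $H^0(A)$-module. Choose a surjection $H^0(A)^n \twoheadrightarrow H^0(M)$ and lift it to $f \colon A^n \to M$ in $\cd(A)$; extending to a triangle $K \to A^n \xrightarrow{f} M \to K[1]$, a cohomological analysis using the long exact sequence and the vanishings $H^{\geq 1}(M) = 0 = H^{\geq 1}(A^n)$ shows $H^k(K) = 0$ for $k \geq 1$. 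The main obstacle is then to show that this triangle splits, yielding $M \in \add(A)$; this requires further (C3)-type consequences for $M \in \cp_{\geq 0}$ and can be formalized via a minimal semi-free resolution argument, or alternatively deduced from Bondarko's general theorem that a co-$t$-structure generated by a compact object with no positive self-extensions has co-heart equal to the additive closure of that generator.
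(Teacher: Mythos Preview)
The paper does not argue directly: its entire proof is a two-line citation to Bondarko \cite[Propositions 5.2.2 and 6.2.1]{Bondarko10} and Keller--Nicol\'as. You are instead attempting to reprove those results from scratch. Your checks of (C1)--(C3) are correct, and the octahedral-based induction for (C4) along a cellular filtration is the standard mechanism and matches how Bondarko proceeds on the ``pre-summand'' level.

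The genuine gap is the passage to direct summands in (C4). The vanishing $\Hom(\cp_{\geq 0}, \cp_{\leq 0}[1]) = 0$ does let you lift the idempotent $p$ on $\widetilde{M}$ to a \emph{map} of truncation triangles, but lifts are not unique here: uniqueness would require the stronger vanishing $\Hom(\cp_{\geq 0}, \cp_{\leq 0}) = 0$, which fails (e.g.\ $\Hom(A,A)\neq 0$). Concretely, if $ap'_1 = ap'_2$ then $p'_1 - p'_2$ factors through $\widetilde{M}''[-1] \in \cp_{\leq 0}$, and there is no reason for this to be zero. Hence there is no guarantee that the lifted endomorphisms $p', p''$ are idempotent, and you cannot simply split them using idempotent completeness of $\per(A)$. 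This is a well-known subtlety in the construction of weight structures, and Bondarko's argument does handle it with additional care; your sketch glosses over it.

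For the co-heart you yourself flag the obstacle (``the main obstacle is then to show that this triangle splits'') and ultimately defer to Bondarko, which is precisely what the paper does from the outset. So your proposal amounts to a partial unpacking of Bondarko's argument: correct and instructive through (C3) and the cellular part of (C4), but with a real hole at the summand step and an acknowledged incompleteness in the co-heart identification.
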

\begin{proof}
 This follows from~\cite[Proposition 5.2.2, Proposition 6.2.1]{Bondarko10}, see also \cite{KellerNicolas11}.
\end{proof}

\noindent Objects in $\cp_{\leq 0}$ are characterised by
the vanishing of the positive cohomologies:
\begin{cor}\label{c:description-of-coaisle-by-cohomologies}
 $\cp_{\leq 0}=\{M\in\per(A)\, | \, H^i(M)=0\text{ for any } i>0\}$.
\end{cor}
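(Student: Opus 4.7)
The inclusion $(\subseteq)$ is immediate: each generator $A[i]$ of $\cp_{\leq 0}$ with $i\geq 0$ satisfies $H^{j}(A[i])=H^{i+j}(A)=0$ for all $j>0$ by non-positivity of $A$ (since $i+j>0$), and this vanishing is preserved under direct summands and, via the long exact cohomology sequence, under extensions; hence it holds throughout $\cp_{\leq 0}$.

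For $(\supseteq)$, fix $M\in\per(A)$ with $H^{>0}(M)=0$. The plan is to apply the co-$t$-structure truncation (C4) to $M$, obtaining a triangle
\[
M'\longrightarrow M\longrightarrow M''\longrightarrow M'[1]
\]
with $M'\in\cp_{\geq 0}$ and $M''\in\cp_{\leq 0}[1]\subseteq\cp_{\leq 0}$. Applying the already-proved first inclusion to $M''[-1]\in\cp_{\leq 0}$ gives $H^{j}(M'')=0$ for all $j\geq 0$, so the long exact cohomology sequence collapses in positive degrees to $H^{j}(M')\cong H^{j}(M)=0$ for $j\geq 1$. It therefore suffices to prove the sub-claim that any $M'\in\cp_{\geq 0}\cap\per(A)$ with $H^{>0}(M')=0$ already lies in $\add(A)\subseteq\cp_{\leq 0}$; closure of $\cp_{\leq 0}$ under extensions will then force $M\in\cp_{\leq 0}$ and complete the proof.

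The sub-claim is where the non-positivity of $A$ is decisively used. For indices $j<k$ one has
\[
\Hom_{\cd(A)}(A[-k],A[-j][1])\cong H^{k-j+1}(A)=0,
\]
since $k-j+1\geq 2$. Thus at each level of the Bondarko weight filtration $F_{0}\subset F_{1}\subset\cdots\subset F_{N}=M'$ (with $F_{k}/F_{k-1}\in\add(A[-k])$, finite by perfectness of $M'$), the connecting morphism $F_{k}/F_{k-1}\to F_{k-1}[1]$ vanishes; inductively the triangles split and $M'\cong\bigoplus_{k=0}^{N}X_{k}$ with $X_{k}\in\add(A[-k])$. Writing $X_{k}$ as a summand of $A[-k]^{r_{k}}$ cut out by an idempotent $e_{k}\in\Mat_{r_{k}}(H^{0}(A))$, one has $H^{k}(X_{k})=e_{k}\cdot H^{0}(A)^{r_{k}}$, which is nonzero as soon as $e_{k}\neq 0$. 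The hypothesis $H^{>0}(M')=0$ thus forces $X_{k}=0$ for every $k\geq 1$, leaving $M'=X_{0}\in\add(A)$.

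The main obstacle in carrying this out is the Bondarko splitting argument: one needs finiteness of the weight filtration (from $M'\in\per(A)$) together with the careful propagation of the vanishing $\Hom(A[-k],A[-j][1])=0$ through all intermediate extensions in $F_{k-1}$, so that the connecting morphism at level $k$ really does vanish. Once this structural decomposition of $M'$ is in place, the cohomological input $H^{>0}(M')=0$ kills all higher-weight summands in one clean step.
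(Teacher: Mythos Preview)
Your argument is essentially correct, but it is far more elaborate than what the situation requires, and it leans on machinery (Bondarko's weight Postnikov towers) that the paper does not develop explicitly. The paper's proof is a two-line d\'evissage using an identity you have not exploited: just before the co-$t$-structure axioms, the paper observes that for any co-$t$-structure one has $\cc_{\leq 0}=\cc_{\geq 0}^\perp[-1]=(\cc_{\geq 0}[-1])^\perp$. Applied here, $\cp_{\leq 0}=(\cp_{\geq 0}[-1])^\perp$. Now $\cp_{\geq 0}[-1]$ is by definition generated under extensions and summands by $\{A[-i]:i\geq 1\}$, and $\Hom(A[-i],M)=H^i(M)$. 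So $M\in(\cp_{\geq 0}[-1])^\perp$ iff $H^i(M)=0$ for all $i>0$, and both inclusions fall out at once.

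By contrast, your $(\supseteq)$ direction first passes through the (C4) truncation, then invokes a finite weight filtration of $M'\in\cp_{\geq 0}$ with graded pieces in $\add(A[-k])$, then a splitting argument, then a nondegeneracy argument on $H^0(A)$. Each step is valid (the filtration exists by boundedness of the weight structure on $\per(A)$, which is what Bondarko's cited results give; the splitting uses $H^{k-j+1}(A)=0$ correctly; and $e_k\neq 0$ does force $e_k\cdot H^0(A)^{r_k}\neq 0$ since $1\in H^0(A)$). But none of this is needed: the orthogonality characterisation of $\cp_{\leq 0}$ turns the problem into a tautology. What your approach buys is an explicit structural statement about objects of $\cp_{\geq 0}$ with vanishing positive cohomology (they lie in $\add(A)$), which is stronger than required; what the paper's approach buys is brevity and self-containment.
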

\begin{proof} Let $\cs$ be the category on the right.
 By the preceding proposition, $\cp_{\leq 0}=\cp_{\geq 0}^{\perp}[-1]=(\cp_{\geq 0}[-1])^\perp$.
In particular, for $M\in\cp_{\leq 0}$ and $i<0$ this implies that 
$\Hom(A[i],M)=0$. Hence, $H^i(M)=0$ holds for any
$i>0$ and $M$ is in $\cs$. Conversely, if $H^i(M)=\Hom(A[-i],M)=0$
for any $i>0$, then it follows by d\'evissage that $\Hom(N,M)=0$ for any $N\in\cp_{\geq
0}[-1]$. This shows that $M$ is contained in $\cp_{\leq 0}$.
\end{proof}

\begin{ex}\label{Ex:TandCoT}  We consider the path algebra $A=kQ$ of the graded quiver 
\begin{equation*}
\begin{tikzpicture}[description/.style={fill=white,inner sep=2pt}]
    \matrix (n) [matrix of math nodes, row sep=3em,
                 column sep=2.5em, text height=1.5ex, text depth=0.25ex,
                 inner sep=0pt, nodes={inner xsep=0.3333em, inner
ysep=0.3333em}]
    {  
       Q\colon& + && -, \\
    };
    
    \path[dash pattern = on 0.5mm off 0.3mm, ->] ($(n-1-2.east) + (0mm,1mm)$) edge [bend left=15] node[fill=white, scale=0.75] [midway] {$a_{+}$} ($(n-1-4.west) + (0mm,1mm)$);

    \path[dash pattern = on 0.5mm off 0.3mm, ->] ($(n-1-4.west) + (0mm,-1mm)$) edge [bend left=15] node[fill=white, scale=0.75] [midway] {$a_{-}$}($(n-1-2.east) + (0mm,-1mm)$);
\end{tikzpicture}   
\end{equation*}
where $a_{-}$ and $a_{+}$ are both in degree $-1$. $A$ may be viewed as a dg algebra with trivial differential\footnote{$A$ is the dg Auslander algebra of an odd dimensional simple singularity $R$ of type $A_{1}$, see Subsection \ref{ss:Independance} and Paragraph \ref{ss:OddTypeA}. In particular, equation \eqref{E:rel-sing-cat-as-per} shows that $\per(A)$ is triangle equivalent to the relative Auslander singularity category $\Delta_{R}(\Aus(R))$, which we describe in Subsection \ref{ss:NodalBlock} by elementary means.}. We depict the Auslander--Reiten quivers of $\cd_{fd}(A)$ and $\per(A)$ below. We want to write down the standard $t$- and co-$t$-structures ($(\cd^{\leq 0}, \cd^{\geq 0})$ respectively $(\cp_{\geq 0}, \cp_{\leq 0})$) on $\per(A)$. In particular, one can see that $(\cd^{\leq 0}, \cd^{\geq 0})$ restricts to a $t$-structure on $\cd_{fd}(A)$. 

We start with the Auslander--Reiten quiver of this subcategory. For $l \geq0$ define the dg $A$-module $T_{\mp}^{2l+1}$ as the cone of the morphism $P_{\pm}(2l+1) \xrightarrow{(a_{\pm}a_{\mp})^{2l}a_{\pm}\cdot} P_{\mp}$ and similarly $T_{\pm}^{2l}:=\mathsf{cone}(P_{\pm}(2l) \xrightarrow{(a_{\mp}a_{\pm})^{2l}\cdot} P_{\pm})$. The indecomposable objects in $\cd_{fd}(A)$ are just the degree shifts of the $T^l_{\pm}(s)$, $l \geq 1$ and $s \in \Z$. Now, the Auslander--Reiten quiver of $\cd_{fd}(A)$ consists of two $\Z A_{\infty}$-components, where the Auslander--Reiten translation $\tau$ acts as degree shift $(1)$.
\begin{equation*}
\begin{tikzpicture}[description/.style={fill=white,inner sep=2pt}, font=\tiny]
    \matrix (n) [matrix of math nodes, row sep=0.5em,
                 column sep=0.0em, text height=3.5pt, text depth=0.5ex, 
                 inner sep=0pt, nodes={inner xsep=0.3em, inner
ysep=0.3333em}]
    {  \cdots && \cdots  && \cdots && \cdots  && \cdots && \cdots  && \cdots\\
       & T^4_{\mp} && T^4_{\pm}(-1) && T^4_{\mp}(-2) && T^4_{\pm}(-3) && T^4_{\mp}(-4) && T^4_{\pm}(-5) \\
       \cdots && T^3_{\mp} && T^3_{\pm}(-1) && T^3_{\mp}(-2) && T^3_{\pm}(-3) && T^3_{\mp}(-4) && \cdots \\
       &T^2_{\pm}(1) && T^2_{\mp} && T^2_{\pm}(-1) && T^2_{\mp}(-2) && T^2_{\pm}(-3) && T^2_{\mp}(-4)  \\
       \cdots && T^1_{\pm}(1) && T^1_{\mp} && T^1_{\pm}(-1) && T^1_{\mp}(-2) && T^1_{\pm}(-3) && \cdots   \\   
    };



\path[-, color=red,line width=1pt] ($(n-5-1)+(-5mm, -3mm)$) edge ($(n-5-5)+(5mm, -3mm)$);

\path[-, color=red,line width=1pt]  ($(n-5-5)+(5mm, -3mm)$) edge [bend right=30]  ($(n-5-5)+(7mm, -1mm)$);

\path[-, color=red,line width=1pt]   ($(n-5-5)+(7mm, -1mm)$) edge [bend right=30]  ($(n-5-5)+(5mm, 2mm)$);

\path[-, color=red,line width=1pt]   ($(n-5-5)+(5mm, 2mm)$) edge ($(n-1-1)+(5mm, 2mm)$);


\path[dash pattern = on 0.5mm off 0.3mm,  -, color=red,line width=1pt] ($(n-5-13)+(5mm, -2mm)$) edge ($(n-5-5)+(-6mm, -2mm)$);

\path[dash pattern = on 0.5mm off 0.3mm, -, color=red,line width=1pt]  ($(n-5-5)+(-6mm, -2mm)$) edge [bend left=30]  ($(n-5-5)+(-8mm, 0mm)$);

\path[dash pattern = on 0.5mm off 0.3mm, -, color=red,line width=1pt]   ($(n-5-5)+(-8mm, 0mm)$) edge [bend left=30]  ($(n-5-5)+(-6mm, 3mm)$);

\path[dash pattern = on 0.5mm off 0.3mm, -, color=red,line width=1pt]   ($(n-5-5)+(-6mm, 3mm)$) edge ($(n-1-9)+(-6mm, 3mm)$);

\path[-, color=green,line width=1pt] ($(n-5-1)+(-5mm, -4mm)$) edge ($(n-5-5)+(5.5mm, -4mm)$);

\path[-, color=green,line width=1pt]  ($(n-5-5)+(5.5mm, -4mm)$) edge [bend right=25]  ($(n-5-5)+(8.5mm, -1.5mm)$);

\path[-, color=green,line width=1pt]   ($(n-5-5)+(8.5mm, -1.5mm)$) edge [bend right=30]  ($(n-5-5)+(7mm, 2mm)$);

\path[-, color=green,line width=1pt]   ($(n-5-5)+(7mm, 2mm)$) edge ($(n-1-1)+(7mm, 2mm)$);

 
\path[dash pattern = on 0.5mm off 0.3mm, -, color=green,line width=1pt] ($(n-5-13)+(5mm, -3mm)$) edge ($(n-5-9)+(-6mm, -3mm)$);

\path[dash pattern = on 0.5mm off 0.3mm, -, color=green,line width=1pt]  ($(n-5-9)+(-6mm, -3mm)$) edge [bend left=30]  ($(n-5-9)+(-8mm, -1mm)$);

\path[dash pattern = on 0.5mm off 0.3mm, -, color=green,line width=1pt]   ($(n-5-9)+(-8mm, -1mm)$) edge [bend left=30]  ($(n-5-9)+(-6mm, 2mm)$);

\path[dash pattern = on 0.5mm off 0.3mm, -, color=green,line width=1pt]   ($(n-5-9)+(-6mm, 2mm)$) edge ($(n-1-13)+(-6mm, 2mm)$);


\path[->] (n-3-1) edge (n-4-2);
\path[->] (n-4-2) edge (n-5-3);

\path[->] (n-1-1) edge (n-2-2);
\path[->] (n-2-2) edge (n-3-3);
\path[->] (n-3-3) edge (n-4-4);
\path[->] (n-4-4) edge (n-5-5);

\path[->] (n-1-3) edge (n-2-4);
\path[->] (n-2-4) edge (n-3-5);
\path[->] (n-3-5) edge (n-4-6);
\path[->] (n-4-6) edge (n-5-7);

\path[->] (n-1-5) edge (n-2-6);
\path[->] (n-2-6) edge (n-3-7);
\path[->] (n-3-7) edge (n-4-8);
\path[->] (n-4-8) edge (n-5-9);

\path[->] (n-1-7) edge (n-2-8);
\path[->] (n-2-8) edge (n-3-9);
\path[->] (n-3-9) edge (n-4-10);
\path[->] (n-4-10) edge (n-5-11);

\path[->] (n-1-9) edge (n-2-10);
\path[->] (n-2-10) edge (n-3-11);
\path[->] (n-3-11) edge (n-4-12);
\path[->] (n-4-12) edge (n-5-13);

\path[->] (n-1-11) edge (n-2-12);
\path[->] (n-2-12) edge (n-3-13);

\path[->] (n-5-1) edge (n-4-2);
\path[->] (n-4-2) edge (n-3-3);
\path[->] (n-3-3) edge (n-2-4);
\path[->] (n-2-4) edge (n-1-5);

\path[->] (n-5-3) edge (n-4-4);
\path[->] (n-4-4) edge (n-3-5);
\path[->] (n-3-5) edge (n-2-6);
\path[->] (n-2-6) edge (n-1-7);

\path[->] (n-5-5) edge (n-4-6);
\path[->] (n-4-6) edge (n-3-7);
\path[->] (n-3-7) edge (n-2-8);
\path[->] (n-2-8) edge (n-1-9);

\path[->] (n-5-7) edge (n-4-8);
\path[->] (n-4-8) edge (n-3-9);
\path[->] (n-3-9) edge (n-2-10);
\path[->] (n-2-10) edge (n-1-11);

\path[->] (n-5-9) edge (n-4-10);
\path[->] (n-4-10) edge (n-3-11);
\path[->] (n-3-11) edge (n-2-12);
\path[<-] (n-1-13) edge (n-2-12);

\path[->] (n-3-1) edge (n-2-2);
\path[->] (n-2-2) edge (n-1-3);

\path[->] (n-5-11) edge (n-4-12);
\path[<-] (n-3-13) edge (n-4-12);



\end{tikzpicture}   
\end{equation*}

\newpage
\noindent
The AR-quiver of the category $\per(A)$ has two additional $A_{\infty}^\infty$-components.

\begin{equation*}
\begin{tikzpicture}[description/.style={fill=white,inner sep=2pt}, font=\tiny]
    \matrix (n) [matrix of math nodes, row sep=3em,
                 column sep=2.2em, text height=1.5ex, text depth=0.25ex,
                 inner sep=0pt, nodes={inner xsep=0.3333em, inner
ysep=0.3333em}]
    {  
       \cdots & P_{\pm}(3) & P_{\mp}(2) & P_{\pm}(1) & P_{\mp}(0) & P_{\pm}(-1) & P_{\mp}(-2) & P_{\pm}(-3) & \cdots  \\
    };
    
\path[dash pattern = on 0.5mm off 0.3mm, -, color=green,line width=1pt] ($(n-1-9)+(2mm, 2mm)$) edge ($(n-1-5)+(-3mm, 2mm)$);
\path[dash pattern = on 0.5mm off 0.3mm, -, color=green,line width=1pt] ($(n-1-9)+(2mm, -2.5mm)$) edge ($(n-1-5)+(-3mm, -2.5mm)$);
\draw[dash pattern = on 0.5mm off 0.3mm, -, color=green,line width=1pt] ($(n-1-5)+(-3mm, 2mm)$) arc (90: 270: 2.25mm);

\path[-, color=green,line width=1pt] ($(n-1-1)+(-5mm, 4mm)$) edge ($(n-1-5)+(3mm, 4mm)$);
\path[-, color=green,line width=1pt] ($(n-1-1)+(-5mm, -4mm)$) edge ($(n-1-5)+(3mm, -4mm)$);
\draw[-, color=green,line width=1pt] ($(n-1-5)+(3mm, -4mm)$) arc (-90: 90: 4mm);

\path[-, color=red,line width=1pt] ($(n-1-1)+(-5mm, 3mm)$) edge ($(n-1-5)+(3mm, 3mm)$);
\path[-, color=red,line width=1pt] ($(n-1-1)+(-5mm, -3mm)$) edge ($(n-1-5)+(3mm, -3mm)$);
\draw[-, color=red,line width=1pt] ($(n-1-5)+(3mm, -3mm)$) arc (-90: 90: 3mm);

\draw[->] (n-1-1) -- (n-1-2);
\draw[->] (n-1-2) -- (n-1-3);
\draw[->] (n-1-3) -- (n-1-4);
\draw[->] (n-1-4) -- (n-1-5);
\draw[->] (n-1-5) -- (n-1-6);
\draw[->] (n-1-6) -- (n-1-7);
\draw[->] (n-1-7) -- (n-1-8);
\draw[->] (n-1-8) -- (n-1-9);

\end{tikzpicture}   
\end{equation*}
Here, everything to the left of the straight red line (respectively green line) is contained in $\cd^{\leq 0}$ (respectively $\cp_{\leq 0}$). Moreover, everything to the right of the dashed red line (respectively green line) is contained in $\cd^{\geq 0}$ (respectively $\cp_{\geq 0}$). 

Note, that $\cd^{\geq 0}$ is contained in $\cd_{fd}(A)$. In particular, the $t$-structure $(\cd^{\leq 0}, \cd^{\geq 0})$ and its restriction to $\cd_{fd}(A)$ have the same heart $\cd^{\leq 0} \cap\cd^{\geq 0} = \add T^1_{-} \oplus T^1_{+}$. Since there are no non-zero morphisms between the two simples $T^1_{-}$ and $T^1_{+}$, we see that the heart is equivalent to $\mod-k \times k$ as predicted by Proposition \ref{p:standard-t-str} (note that $H^0(A) \cong k \times k$).

The coheart $\cp_{\leq 0} \cap\cp_{\geq 0}$ of $(\cp_{\geq 0}, \cp_{\leq 0})$ is $\add P_{+} \oplus P_{-} = \add A$ in agreement with Proposition \ref{p:standard-co-t-str}. Moreover, the equality stated in Corollary \ref{c:description-of-coaisle-by-cohomologies} may be observed in this example.

\end{ex}

\subsubsection{Hom-finiteness}\label{ss:nonpositive-dg-alg-2}

Let $A$ be a dg algebra. The subcomplex $\sigma^{\leq 0}A$ inherits a dg algebra structure from $A$.
If $H^i(A)=0$ for any $i>0$, then the embedding $\sigma^{\leq 0}A\hookrightarrow A$ is a quasi-isomorphism of dg algebras and it suffices to study $\sigma^{\leq 0}A$.

We generalise~\cite[Lemma 2.5 \& Prop. 2.4]{Amiot09}
and~\cite[Lemma 2.4 \& Prop. 2.5]{Guolingyan11a}.

\begin{prop}\label{p:hom-finiteness-of-per}
 Let  $A$ be a dg $k$-algebra such that
\begin{itemize}
 \item[--] $A^i=0$ for any $i>0$,
 \item[--] $H^0(A)$ is finite-dimensional,
 \item[--] $\cd_{fd}(A)\subseteq\per(A)$.
\end{itemize}
Then $H^i(A)$ is finite-dimensional for any $i$. Consequently,
$\per(A)$ is Hom-finite.
\end{prop}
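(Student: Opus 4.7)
The plan is to show by induction on $n \geq 0$ that $H^{-n}(A)$ is finite-dimensional over $k$, and then deduce Hom-finiteness of $\per(A)$ by a dévissage argument. The base case $n = 0$ is part of the hypothesis. For the inductive step, I would work with the tower of standard truncations $A_{(n)} := \sigma^{\leq -n} A$ and the associated triangles
\[
A_{(n+1)} \longrightarrow A_{(n)} \longrightarrow H^{-n}(A)[n] \longrightarrow A_{(n+1)}[1].
\]
Assuming inductively that $H^{-j}(A)$ is finite-dimensional for $0 \leq j \leq n-1$, each module $H^{-j}(A)$ (viewed as a dg $A$-module in degree $0$ via $A \to H^0(A)$) lies in $\cd_{fd}(A)$ and hence in $\per(A)$ by the standing hypothesis $\cd_{fd}(A) \subseteq \per(A)$. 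Starting from $A_{(0)} = A \in \per(A)$ and walking up the tower, one concludes $A_{(n)} \in \per(A)$.

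The crux is then to bound $H^{-n}(A) = H^{-n}(A_{(n)})$. By construction $H^i(A_{(n)}) = 0$ for $i > -n$, so $A_{(n)}[-n]$ belongs to $\per(A)$ and has vanishing cohomology in positive degrees. By Corollary \ref{c:description-of-coaisle-by-cohomologies}, this places $A_{(n)}[-n]$ in $\cp_{\leq 0}$. Now I apply Lemma \ref{l:stupid-truncation} to $A_{(n)}[-n]$ to obtain a triangle
\[
P \longrightarrow A_{(n)}[-n] \longrightarrow X \longrightarrow P[1]
\]
with $P \in \add(A)$ (the co-heart) and $X \in \cp_{\leq 0}[1]$. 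Shifting by $[n]$ and taking cohomology in degree $-n$, the relevant piece of the long exact sequence reads
\[
H^0(P) \longrightarrow H^{-n}(A_{(n)}) \longrightarrow H^0(X).
\]
Here $H^0(P)$ is a summand of some finite direct sum of copies of $H^0(A)$, hence finite-dimensional, while $H^0(X) = 0$ because $X[-1] \in \cp_{\leq 0}$ forces $H^j(X) = 0$ for $j \geq 0$. Therefore $H^{-n}(A)$ is a quotient of a finite-dimensional space, which closes the induction.

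Once finite-dimensionality of every $H^i(A)$ is established, Hom-finiteness of $\per(A)$ follows by a standard dévissage: every $M \in \per(A) = \thick(A)$ is built from finitely many shifts $A[i]$ by iterated cones and direct summands, so for $M, N \in \per(A)$ the space $\Hom_{\cd(A)}(M,N)$ is a subquotient of a finite direct sum of groups of the form $\Hom_{\cd(A)}(A[i], A[j]) = H^{j-i}(A)$, each of which is finite-dimensional. The main obstacle is the inductive interplay between the $t$-structure (used to define the tower $A_{(n)}$ and to place the successive quotients $H^{-j}(A)$ in $\cd_{fd}(A)$) and the co-$t$-structure (used via the stupid truncation lemma to extract a copy of a projective in the co-heart whose zeroth cohomology dominates $H^{-n}(A)$); the hypothesis $\cd_{fd}(A) \subseteq \per(A)$ is precisely what allows these two structures to communicate by feeding the inductively-constructed finite-dimensional modules back into $\per(A)$.
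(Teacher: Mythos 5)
Your proof is correct and takes essentially the same approach as the paper's: show inductively that $\sigma^{\leq -n}A \in \per(A)$, pass to a shift lying in $\cp_{\leq 0}$, and apply Lemma~\ref{l:stupid-truncation} to extract a summand $P \in \add(A)$ whose $H^0$ surjects onto $H^{-n}(A)$. The only cosmetic difference is that you reach $\sigma^{\leq -n}A \in \per(A)$ by climbing a tower of one-step truncations whose successive cones are $H^{-j}(A)[j]$, whereas the paper does it in a single step by noting that $\sigma^{>-n-1}A$ has all cohomology finite-dimensional and concentrated in a bounded range, hence lies in $\cd_{fd}(A) \subseteq \per(A)$ directly.
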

\begin{proof}
 It suffices to prove the following induction step: 
 if $H^i(A)$ is finite-dimensional for $-n\leq i\leq 0$, then
$H^{-n-1}(A)$ is finite-dimensional.

To prove this claim, we consider the triangle induced by the standard truncations 
\[
\sigma^{\leq -n-1}A \longrightarrow A\longrightarrow \sigma^{> -n-1}A\longrightarrow (\sigma^{\leq -n-1}A)[1].
\]
Since $H^i(\sigma^{> -n-1}A)=H^i(A)$ for $i\geq -n$, it follows by
the induction hypothesis that $\sigma^{> -n-1}A$ belongs to
$\cd_{fd}(A)$, and hence to $\per(A)$ by the third assumption on
$A$. Therefore, $\sigma^{\leq -n-1}A\in\per(A)$. By
Corollary~\ref{c:description-of-coaisle-by-cohomologies},
$(\sigma^{\leq -n-1}A)[-n-1]\in \cp_{\leq 0}$. Moreover,
Lemma~\ref{l:stupid-truncation} and
Proposition~\ref{p:standard-co-t-str} imply that there is a triangle
\[
M' \longrightarrow (\sigma^{\leq -n-1}A)[-n-1] \longrightarrow M'' \longrightarrow M'[1]
\]
with $M'\in\add(A)$ and $M''\in\cp_{\leq 0}[1]$. It follows from
Corollary~\ref{c:description-of-coaisle-by-cohomologies} that
$H^0(M'')=0$. Thus applying $H^0$ to the triangle above, we
obtain an exact sequence
\[
H^0(M')\longrightarrow  H^0((\sigma^{\leq -n-1}A)[-n-1])=H^{-n-1}(A) \longrightarrow 0.
\]
Now $H^0(M')$ is finite-dimensional because $M'\in\add(A)$ and
$H^0(A)$ has finite dimension by assumption. Thus $H^{-n-1}(A)$ is finite-dimensional.
\end{proof}

\subsection{Minimal relations}\label{ss:minimal-relation}

Let $Q$ be a finite quiver. Denote by $\widehat{kQ}$ the \emph{complete path algebra} of $Q$, i.e.~
the completion of the path algebra $kQ$ with respect to the $\mathfrak{m}$-adic topology, where $\mathfrak{m}$ is the ideal
of $kQ$ generated by all arrows. Namely, $\widehat{kQ}$ is the inverse limit in the category of algebras of the inverse system $\{kQ/\mathfrak{m}^n,
\pi_n:kQ/\mathfrak{m}^{n+1}\rightarrow kQ/\mathfrak{m}^n\}_{n\in\mathbb{N}}$, where $\pi_n$ is the canonical projection.
Later we will also work with complete path algebras of graded quivers: they are defined as above with the inverse limit taken in
the category of graded algebras. We refer to this as the \emph{graded completion}.

The complete path algebra $\widehat{kQ}$ has a natural topology, the $J$-adic topology for $J$ the ideal
generated by all arrows. Let $I$ be a closed ideal of $\widehat{kQ}$ contained in $J^2$ and let $A=\widehat{kQ}/I$. For a vertex $i$ of $Q$, let $e_i$ denote the trivial path at $i$. A \emph{set of minimal relations} of $A$ (or of $I$) is a finite subset $R$ of $\bigcup_{i,j\in Q_0}e_iIe_j$ such that  $I$ coincides with the closure $\overline{(R)}$ of the ideal of $\widehat{kQ}$ generated by $R$  but not with $\overline{(R')}$ for any proper subset $R'$ of $R$. For completeness, we include the following result, which is known to the experts and generalises~\cite[Proposition 1.2]{Bongartz83} (cf. also \cite[Section 6.9]{Keller11}). 

\begin{prop}\label{p:minimal-relation} Let $i$ and $j$ be vertices of $Q$.
If $e_{i}Re_{j}=\{r_{1}, \ldots, r_{s}\}$, then the equivalence classes $\ol{r}_{1}, \ldots, \ol{r}_s$ 
form a basis of $e_i(I/(IJ+JI))e_j$. In particular, the cardinality of $e_iRe_j$ does not depend on the choice of $R$.
\end{prop}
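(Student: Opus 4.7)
My plan is to derive the proposition from the following key lemma, which is independent of the minimality hypothesis: \emph{for any finite subset $R \subseteq \bigcup_{i,j \in Q_0} e_i I e_j$, the set $R$ generates $I$ as a closed ideal if and only if $\{\ol{r} : r \in R\}$ spans $I/(IJ + JI)$ as a $k$-vector space}. Granting this lemma, the proposition follows from a standard minimality argument: minimality of $R$ becomes equivalent to $\{\ol{r} : r \in R\}$ being a minimal spanning set, hence a basis, of $I/(IJ+JI)$. The $S$-bimodule decomposition $I/(IJ+JI) = \bigoplus_{i,j \in Q_0} e_i(I/(IJ+JI))e_j$ (with $S = \bigoplus_{i \in Q_0} k e_i$) then refines this basis into bases of the individual $(i,j)$-components, since each $r \in R$ lies in a single piece $e_{i_r} I e_{j_r}$. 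This gives both the basis claim and invariance of $|e_i R e_j|$, which equals $\dim_k e_i(I/(IJ+JI))e_j$.

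For the spanning direction of the key lemma, any $x \in e_i I e_j$ is a $J$-adic limit of elements $x_n = \sum_l a^{(n)}_l r_{k(l,n)} b^{(n)}_l$ in the (non-closed) two-sided ideal generated by $R$. Decomposing each $a^{(n)}_l = \alpha^{(n)}_l + \tilde{a}^{(n)}_l$ with $\alpha^{(n)}_l \in S$ and $\tilde{a}^{(n)}_l \in J$, and similarly for $b^{(n)}_l$, all cross terms in the expansion of $a^{(n)}_l r_{k(l,n)} b^{(n)}_l$ land in $IJ + JI$. Multiplying by $e_i$ on the left and $e_j$ on the right kills every contribution outside $e_i R e_j$, so $\ol{e_i x_n e_j}$ lies in the finite-dimensional subspace $W := \mathrm{span}_k\{\ol{r} : r \in e_i R e_j\}$. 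Since $W$ is finite-dimensional and hence closed in the quotient topology, passing to the limit gives $\ol{x} \in W$.

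The converse direction of the key lemma will be the main obstacle, and requires a Nakayama-type argument adapted to the complete setting. Assuming $\{\ol{r} : r \in R\}$ spans $I/(IJ+JI)$ and letting $N := \ol{(R)}$ be the closed ideal generated by $R$, one obtains $I = N + IJ + JI$, so the $B$-sub-bimodule $M := I/N$ of $B/N$ (with $B = \widehat{kQ}$) satisfies $M = JM + MJ$. Iterating $m$ times yields $M \subseteq \sum_{p+q=m} J^p M J^q \subseteq (J^m + N)/N$, using $J^p \cdot B \cdot J^q \subseteq J^{p+q}$ since $J$ is a two-sided ideal. Thus $M \subseteq \bigcap_{m \geq 0} (J^m + N)/N$. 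The proof will be completed by verifying $\bigcap_{m \geq 0}(J^m + N) = N$ in $B$ --- equivalently, that the quotient topology on $B/N$ is separated. This is where closedness of $N$ in the $J$-adically complete and separated algebra $B$ is essential; it is a standard fact, but it is the technically most delicate step and the point at which the topological hypothesis on $I$ is indispensable.

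Finally, to conclude linear independence from the key lemma: if some nontrivial relation $\sum_{r \in R} \lambda_r \ol{r} = 0$ held with $\lambda_{r_0} \neq 0$, then $\{\ol{r} : r \in R \setminus \{r_0\}\}$ would still span $I/(IJ+JI)$, and by the key lemma $R \setminus \{r_0\}$ would still topologically generate $I$, contradicting the minimality of $R$. The claim about the cardinality of $e_i R e_j$ being independent of the choice of $R$ is then immediate, since $\dim_k e_i(I/(IJ+JI))e_j$ is an invariant of $I$.
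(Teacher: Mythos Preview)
Your proof is correct and takes a genuinely different route from the paper's. The paper only argues linear independence, and does so by a direct iterative construction: assuming a dependence $\sum_a \lambda_a r_a \in IJ+JI$ with $\lambda_1 \neq 0$, it solves for $r_1$ as a ``contracting'' function of itself, $r_1 = f(r_1)$, and iterates to produce a Cauchy sequence in $(R\setminus\{r_1\})$ converging to $r_1$, contradicting minimality. Your approach instead isolates a clean abstract statement --- the equivalence between topological generation of $I$ and spanning of $I/(IJ+JI)$ --- and derives everything from it via a Nakayama-type argument. This is more conceptual and makes the analogy with minimal generating sets over local rings transparent; the paper's argument is essentially the same Nakayama iteration unwound into an explicit Cauchy sequence. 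What your version buys is a reusable lemma and a cleaner logical structure; what the paper's buys is that it never has to name or verify closedness of $IJ+JI$ separately, since the iteration works directly in $\widehat{kQ}$.

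One small point to be aware of: your limit argument for spanning (``$W$ is finite-dimensional and hence closed in the quotient topology'') tacitly assumes that $IJ+JI$ is closed, so that the quotient is Hausdorff. The paper effectively makes the same assumption --- it writes elements of $IJ+JI$ as possibly infinite sums $\sum_{\gamma\in\Gamma} c_\gamma r c^\gamma$ over an arbitrary index set $\Gamma$ --- so this is the intended interpretation. With that reading (or after replacing your sequential-limit argument by writing $x\in I$ directly as a single convergent sum $\sum_\gamma a_\gamma r_\gamma b_\gamma$, which is how closed ideals in $\widehat{kQ}$ are described), your spanning argument goes through without further issue.
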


\begin{proof}
We only have to show that $\ol{r}_{1}, \ldots, \ol{r}_s$ are linearly independent. Otherwise, there would be elements $\lambda_1,\ldots,\lambda_s$ in $k$ and a relation
$
\sum_{a=1}^s \lambda_a r_a = 0 ~(\mod IJ+JI),
$
where without loss of generality $\lambda_{1} \neq 0$.
In other words, there exists an index set $\Gamma$ and elements $c_{\gamma} \in e_{i}\widehat{kQ}$ and $c^{\gamma} \in \widehat{kQ}e_{j}$ such that 
for any  fixed $\gamma \in \Gamma$ at least one of $c_{\gamma}$ and $c^{\gamma}$ belongs to $J$ and such that
$
\sum_{a=1}^s\lambda_a r_a = \sum_{r \in R} \sum_{\gamma\in \Gamma} c_{\gamma}r c^{\gamma}
$
holds.  Then we have
\begin{eqnarray*}
r_1 & = & -\lambda_1^{-1}\sum_{a=2}^s\lambda_a r_a + \lambda_1^{-1}\sum_{r \in R} \sum_{\gamma\in \Gamma} c_{\gamma}r c^{\gamma}.
\end{eqnarray*}
 The right hand side is a function $f(r_1)$ in $r_{1}$. We define a sequence $f_{1}=f(0), f_{2}=f(f(0)),\ldots$ of elements in the ideal $(R \setminus \{r_1\})$. One can check that $f_{n}$ defines a Cauchy sequence in the $J$-adic topology and $\lim_{n} f_{n}=r_{1}$. Hence, $r_{1} \in \overline{(R \setminus \{r_1\})}$,  contradicting the minimality of $R$.
\end{proof}

For non-complete presentations of algebras, this result fails in general, see for example~\cite[Example 4.3]{Plamondon11b}.

\subsection{Koszul duality}\label{ss:dual-bar-construction} 
Inspired by work of Beilinson, Ginzburg \& Schechtman \cite{BeilinsonGinzburgSchechtman}, Keller \cite{Keller94} defined the Koszul dual $A'$ of a dg algebra $A$. Under certain conditions on $A$, there is a quasi-isomorphism between $A$ and the double Koszul dual $A''$, see for example \cite{Keller94}. Sometimes the Koszul dual is easier to describe than the original algebra and using the quasi-isomorphism $A \ra A''$, this might give some insight into the structure of the original algebra.

Although our primary interest lies in the study of dg algebras, it will be necessary to work with a more general definition of Koszul duality for $A_{\infty}$-algebras. This gives us the freedom to pass to the ($A_{\infty}$ quasi-isomorphic) \emph{minimal model} of the Koszul dual. It is an $A_\infty$-algebra, whose underlying graded vector space is quite accessible in our applications. Applying Koszul duality to this $A_{\infty}$-algebra allows us to describe our original dg algebra up to a dg quasi-isomorphism, which is sufficient since we are only interested in derived categories. Our main references are~\cite{LuPalmieriWuZhang04,LuPalmieriWuZhang08,Lefevre03}.



 An $A_\infty$-algebra $A$ is a graded $k$-vector
space endowed with a family of homogenous $k$-linear maps  $\{b_n\colon(A[1])^{\ten
n}\rightarrow A[1]|n\geq 1\}$ of degree
$1$  satisfying the following identities
\begin{eqnarray}\label{e:multiplications}
\sum_{j+k+l=n}b_{j+1+l}(id^{\ten j}\ten b_k\ten id^{\ten l})=0,
\qquad n\geq 1.
\end{eqnarray}
The maps $b_{i}$ are called \emph{(higher) multiplications}.
It follows from this definition that $b_{1}^2=0$. In other words, $b_{1}$ is a differential, which in addition satisfies the graded Leibniz rule with respect to the multiplication $b_{2}$. Moreover, $b_{2}$ is associative up to a homotopy.
For example, after an appropriate shift of the multiplication, a dg
algebra can be viewed as an $A_\infty$-algebra with vanishing $b_n$
for $n\geq 3$. In particular, an associative algebra corresponds to an $A_{\infty}$-algebra with $b_{n}=0$ for all $n \neq 2$. $A$ is said to be \emph{minimal} if $b_1=0$. Now, suppose that either $A$ satisfies
\begin{itemize}
\item[--] $A^i=0$ for all $i<0$,
\item[--] $A^0$ is the product of $r$ copies of the base field $k$ for some positive integer $r$,
\item[--]
$b_n(a_1\ten\cdots\ten a_n)=0$ if one of $a_1,\ldots,a_n$ belongs to
$A^0$ and $n\neq 2$.
\end{itemize}
or $A$ satisfies
\begin{itemize}
\item[--]$A^i=0$ for all $i>0$,
\item[--] $H^0(A)\cong \widehat{kQ}/\overline{(R)}$, for a finite quiver $Q$ and a  set $R$ of minimal relations,
\item[--] $b_n(a_1\ten\cdots\ten a_n)=0$ if one of $a_1,\ldots,a_n$ is the trival path at some vertex and $n\neq 2$.
\end{itemize}
In particular, $A$ is \emph{strictly unital} by the third condition in both setups. Let $K=A^0$ in the former case and $K=H^0(A)/\rad H^0(A)$ in the latter case. In both cases, there is an injective
homomorphism $\eta \colon K\rightarrow A$ and surjective homomorphism $\varepsilon \colon A\rightarrow K$ of $A_\infty$-algebras.
Denote by $\bar{A}=\ker\varepsilon$. Note that $\bar{A}$ inherits the structure of an $A_\infty$-algebra.
The \emph{bar construction} of $A$, denoted by $BA$, is the graded vector space
\[T_{K}(\bar{A}[1])=K\oplus \bar{A}[1]\oplus\bar{A}[1]\ten_{K}\bar{A}[1]\oplus\ldots.\]
It is naturally a coalgebra with comultiplication defined by
splitting the tensors, i.e. $\Delta\colon BA \ra BA \otimes BA$ is determined by
\begin{align}
\Delta(a_{1}, \ldots, a_{n})= (a_{1}, \ldots, a_{n}) \otimes 1 +  \sum_{i=1}^{n-1} (a_{1}, \ldots, a_{i}) \otimes (a_{i+1}, \ldots, a_{n}) + 1 \otimes (a_{1}, \ldots, a_{n})
\end{align}

Moreover, $\{b_n|n\geq 1\}$ uniquely extends
to a differential on $BA$ which makes it a dg coalgebra. The
\emph{Koszul dual} of $A$ is the graded $k$-dual of $BA$:
\[E(A)=B^{\#}A:=D(BA).\]
Then $E(A)$ is a dg algebra and 
as a graded algebra $E(A)=\hat{T}_{K}(D(\bar{A}[1]))$ is the graded completion (with respect to the ideal generated by $D(\bar{A}[1])$) of the tensor algebra of $D(\bar{A}[1])=\Hom_k(\bar{A}[1],k)$
over $K$. Its differential $d$ is the unique continuous $k$-linear map satisfying the graded Leibniz rule and taking 
$f \!\in D(\bar{A}[1])$ to $d(f)\! \in B^{\#}\!A$,~defined by
\[d(f)(a_1\ten\cdots \ten a_n)=f(b_n(a_1\ten \cdots \ten a_n)),~~a_1,\ldots,a_n\in \bar{A}[1].\]
Let $\mathfrak{m}$ be the ideal of $E(A)$ generated by $D(\bar{A}[1])$. Then $A$ being minimal amounts to saying that
$d(\mathfrak{m})\subseteq \mathfrak{m}^2$ holds true.

\begin{lem}\label{L:QuisAndKoszul}
If $C$ and $C'$ are $A_\infty$ quasi-isomorphic, then $E(C)$ and $E(C')$ are quasi-isomorphic as dg algebras.
\end{lem}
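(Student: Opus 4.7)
The plan is to factor the statement through the bar construction: first show that an $A_{\infty}$ quasi-isomorphism $f\colon C\to C'$ induces a quasi-isomorphism of dg coalgebras $Bf\colon BC\to BC'$, and then observe that applying the graded $k$-dual $D$ preserves quasi-isomorphisms since $k$ is a field.

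First, I would recall how an $A_{\infty}$-morphism $f=\{f_n\}_{n\geq 1}\colon C\to C'$ gives rise to a morphism $Bf\colon BC\to BC'$ of graded $K$-coalgebras. By definition, a graded coalgebra morphism out of a cofree coalgebra like $T_K(\bar{C}[1])$ is uniquely determined by its projection to the cogenerators $\bar{C}'[1]$, and one takes this projection to be $\sum_{n\geq 1}f_n$ on $\bar{C}[1]^{\otimes n}$. The $A_{\infty}$-relations for $f$ are precisely the assertion that $Bf$ commutes with the differentials built from $\{b_n\}$ and $\{b_n'\}$, so $Bf$ is a morphism of dg coalgebras. Dualising via $D$ then yields a continuous morphism of dg algebras $E(f):=D(Bf)\colon E(C')\to E(C)$.

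The main step is to show that $Bf$ is a quasi-isomorphism when $f_1$ is. For this I would equip both $BC$ and $BC'$ with the increasing, exhaustive filtration by tensor length,
\begin{align*}
F_p(BC)=\bigoplus_{n\leq p}\bar{C}[1]^{\otimes_K n},
\end{align*}
which is respected by $Bf$. The higher multiplications $b_n$ for $n\geq 2$ strictly decrease the tensor length, so on the associated graded $\gr^F BC$ the induced differential is the one built only from $b_1$, and $\gr^F(Bf)$ restricts on $\bar{C}[1]^{\otimes n}$ to $f_1^{\otimes n}$. Since $f_1$ is a quasi-isomorphism of complexes of $k$-vector spaces and $k$ is a field, the K\"unneth formula implies that each $f_1^{\otimes n}$ is a quasi-isomorphism, so $\gr^F(Bf)$ is a quasi-isomorphism. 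Under the connectivity hypotheses in force (either $C,C'$ are concentrated in non-negative degrees with $C^0=C'^0=K$, or in non-positive degrees with $H^0$ as described, so that $\bar{C}$ and $\bar{C}'$ lie in strictly positive resp.\ negative degrees after the shift $[1]$), the filtration $F$ is bounded below in each total cohomological degree and the associated spectral sequence converges; a comparison argument then promotes the quasi-isomorphism on the $E_1$-page to a quasi-isomorphism $Bf$ itself.

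Finally, since $D=\Hom_k(-,k)$ is exact on $k$-vector spaces (as $k$ is a field) it sends quasi-isomorphisms of complexes of $k$-modules to quasi-isomorphisms; applied componentwise and passing to graded duals, this shows that $E(f)=D(Bf)\colon E(C')\to E(C)$ is a quasi-isomorphism of dg algebras. The main obstacle I foresee is the convergence of the spectral sequence in the second step; this is where the standing hypotheses on $C$ and $C'$ (connectivity and finite-dimensionality of the degree-zero part) are essential, and where one could alternatively cite the standard result of Lef\`evre-Hasegawa \cite{Lefevre03} that the bar functor sends $A_{\infty}$ quasi-isomorphisms to quasi-isomorphisms of dg coalgebras.
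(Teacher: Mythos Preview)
Your approach is correct and takes a more direct route than the paper's. The paper argues via the bar--cobar adjunction: the natural $A_\infty$ quasi-isomorphism $A\to\Omega B(A)$ from \cite{Lefevre03}, combined with $f$, shows that $\Omega(Bf)\colon\Omega B(C)\to\Omega B(C')$ is a quasi-isomorphism, which by Lef\`evre's definition makes $Bf$ a \emph{weak equivalence} of dg coalgebras; the paper then dualises. You bypass the cobar construction and the weak-equivalence formalism entirely, showing directly that $Bf$ is a quasi-isomorphism of complexes via the tensor-length filtration before dualising. This is more elementary and makes explicit a step the paper leaves implicit (namely that the weak equivalence $Bf$ is actually a quasi-isomorphism, so that graded dualising---which respects quasi-isomorphisms but not weak equivalences in general---applies).

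One point deserves care. In the non-negatively graded case your degree count is slightly off: with $(M[1])^i=M^{i+1}$ one has $(\bar C[1])^0=C^1$, so $\bar C[1]$ lies only in non-negative (not strictly positive) degrees, and the length filtration on $BC$ need not be bounded above in a fixed cohomological degree; naive spectral-sequence convergence is then not immediate. The cleanest repair avoids convergence issues altogether: show by induction on $p$ that each $F_p(Bf)$ is a quasi-isomorphism (the restricted filtration on $F_p$ is bounded, so the usual comparison theorem applies), and then use $Bf=\colim_p F_p(Bf)$ together with exactness of filtered colimits over a field. In the non-positively graded case your argument works as written, since $\bar C[1]$ then sits in strictly negative degrees and the filtration is locally finite in each degree.
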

\begin{proof}
By \cite[1.3.3.6 and 2.3.4.3]{Lefevre03} (see also \cite[Proposition 1.14]{LuPalmieriWuZhang08}), for any $A_{\infty}$-algebra $A$ there is a \emph{natural} $A_{\infty}$ quasi-isomorphism $A \ra \Omega B(A)$, where $\Omega$ denotes the so called \emph{cobar construction}. In particular, this yields an $A_{\infty}$ quasi-ismorphism $\Omega B(C) \ra \Omega B(C')$. Hence by definition (see the paragraph before \cite[Theorem 1.3.1.2]{Lefevre03}), there is a weak equivalence between dg coalgebras $w\colon B(C) \ra B(C')$. Dualizing $w$ yields the claim.
\end{proof}
It is well-known that the Koszul dual $E(A)$ admits another interpretation, which is sometimes more accessible.
We need the following statement, see  \cite[Lemma 11.1]{LuPalmieriWuZhang04}.

\begin{prop}\label{Lemma 11.1}
 Let $A$ be an $A_{\infty}$-algebra, which satisfies the properties above. If the multiplications $b_{i}$ of $A$ vanish for all $i \geq 3$ (i.e. $A$ comes from a dg algebra), then there is a quasi-isomorphism of dg algebras
 \begin{align}
 E(A)\cong \RHom_{\cd(A)}(K,K), 
 \end{align}
 where $K$ is viewed as an dg $A$-module
via the homomorphism $\varepsilon$ and $\RHom_{\cd(A)}(K,K)$ denotes the dg endomorphism algebra, i.e. $\RHom_{\cd(A)}(K,K) \cong \cHom_{A}(P_{K}, P_{K})$, where $P_{K}$ is a $K$-projective resolution of $K$. 

In particular, $H^*(E(A))$ is isomorphic to
$\bigoplus_{i\in\mathbb{Z}}\Hom_{\cd(A)}(K,K[i])$.
\end{prop}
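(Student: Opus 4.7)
My plan is to realise both sides as endomorphisms of a common resolution and transport the algebra structure across the resulting quasi-isomorphisms. The key tool is the (reduced) two-sided bar resolution of $K$ as a dg $A$-module.

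First I would construct this resolution explicitly. Since $A$ is a dg algebra augmented over the (semisimple) ring $K$ via $\varepsilon\colon A\to K$, set
\[
\mathbf{B}(A) = BA \otimes_{K} A = \bigl(K \oplus \bar{A}[1] \oplus \bar{A}[1]\ten_K \bar{A}[1]\oplus\cdots\bigr)\ten_K A,
\]
equipped with the standard two-sided bar differential built from $b_1$ and $b_2$. The canonical augmentation $\mathbf{B}(A)\to K$, sending $(a_1,\ldots,a_n)\otimes a$ to $\varepsilon(a)$ if $n=0$ and to $0$ otherwise, is a quasi-isomorphism (the usual contracting homotopy along $A$ works verbatim). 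Since $K$ is semisimple, each $\bar A[1]^{\otimes n}\otimes_K A$ is a summand of a free $A$-module, and the increasing filtration by tensor-length exhibits $\mathbf{B}(A)$ as $K$-projective in the sense of Paragraph \ref{sss:DerFunct}. Thus $\mathbf{B}(A)$ is a $K$-projective resolution of $K$, and
\[
\RHom_{\cd(A)}(K,K) \;\simeq\; \cHom_A\bigl(\mathbf{B}(A),\mathbf{B}(A)\bigr)
\]
as dg algebras (composition of endomorphisms on the right).

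Second, I would compute this endomorphism dg algebra using the tensor-hom adjunction, which at the level of graded $k$-modules gives
\[
\cHom_A\bigl(BA\ten_K A,\; M\bigr) \;\cong\; \cHom_K\bigl(BA,\; M\bigr)
\]
naturally in a dg $A$-module $M$. Applying this with $M=K$ yields $\cHom_A(\mathbf{B}(A),K)\cong \cHom_K(BA,K) = D(BA) = E(A)$, and unravelling the bar differential shows that this identification is an isomorphism of \emph{complexes} onto $E(A)$. The quasi-isomorphism $\mathbf{B}(A)\to K$ induces a quasi-isomorphism
\[
\pi\colon \cHom_A\bigl(\mathbf{B}(A),\mathbf{B}(A)\bigr) \longrightarrow \cHom_A\bigl(\mathbf{B}(A),K\bigr)\cong E(A),
\]
because $\mathbf{B}(A)$ is $K$-projective.

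Third, and this is the main obstacle, I need to promote $\pi$ to a \emph{morphism of dg algebras}. The target $E(A)$ carries its algebra structure as the convolution dual of the coalgebra $(BA,\Delta)$, i.e.\ the product of $f,g\in DBA$ is $(f\cdot g)(a_1,\ldots,a_n)=\sum f(a_1,\ldots,a_i)g(a_{i+1},\ldots,a_n)$. To match this with composition on the source, I would factor every endomorphism $\varphi\in\cHom_A(\mathbf{B}(A),\mathbf{B}(A))$ through the adjunction isomorphism $\cHom_A(\mathbf{B}(A),\mathbf{B}(A))\cong \cHom_K(BA,\mathbf{B}(A))$, thereby identifying it with its ``corestriction'' along the coproduct $\Delta$: namely, composition of $A$-linear endomorphisms corresponds, via the adjunction, to the convolution
\[
\varphi\ast\psi \;=\; \mu\circ(\varphi\ten\psi)\circ\Delta,
\]
where $\mu$ is the multiplication on $\mathbf{B}(A)$. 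Post-composing with the augmentation then sends convolution on $\cHom_K(BA,\mathbf{B}(A))$ to convolution on $\cHom_K(BA,K)=E(A)$, so $\pi$ is a dg algebra morphism. The cohomological statement $H^\ast E(A)\cong\bigoplus_i\Hom_{\cd(A)}(K,K[i])$ is then immediate. The subtle points will be checking signs in the bar differential when translating between composition and convolution, and verifying that $\mathbf{B}(A)$ is genuinely $K$-projective (not merely $K$-flat), for which the semisimplicity of $K$ is essential.
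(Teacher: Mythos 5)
Your overall strategy---resolving $K$ by the reduced two-sided bar resolution $\mathbf{B}(A)=BA\otimes_K A$ and using the tensor-hom adjunction to identify $\cHom_A(\mathbf{B}(A),K)$ with $E(A)$---is precisely the line taken in the references the paper cites (LPWZ, Lemma~11.1, which defers to Lef\`evre-Hasegawa and F\'elix--Halperin--Thomas); the paper itself supplies no independent argument. Your steps 1 and 2 are sound.

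Step 3 contains a genuine gap. As a preliminary matter, $\mathbf{B}(A)=BA\otimes_K A$ is not a dg algebra: the tensor-algebra product on $BA$ does not commute with the bar differential, which is a coderivation rather than a derivation, so the appeal to ``the multiplication $\mu$ on $\mathbf{B}(A)$'' is vacuous. More seriously, the evaluation map $\pi\colon \cHom_A(\mathbf{B}(A),\mathbf{B}(A))\to \cHom_A(\mathbf{B}(A),K)\cong E(A)$ is \emph{not} multiplicative. Unwinding the adjunction, for $A$-linear homogeneous $\varphi,\psi$ one finds that $\pi(\varphi\circ\psi)=\pi(\varphi)\ast\pi(\psi)$ holds precisely when the adjoint $\tilde\psi=\psi(-\otimes 1)\colon BA\to\mathbf{B}(A)$ is $BA$-colinear for the comodule structure $\Delta\otimes\mathrm{id}_A$ on $\mathbf{B}(A)$; a general homogeneous $A$-linear $\psi$ does not satisfy this. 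So $\pi$ is a quasi-isomorphism of complexes, but not a morphism of dg algebras, and the convolution/co-Kleisli argument you sketch does not repair this.

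The standard remedy runs in the opposite direction. Define $\iota\colon E(A)\to \cHom_A(\mathbf{B}(A),\mathbf{B}(A))$ by $\iota(f)(b\otimes a)=\sum b_{(1)}\otimes\eta(f(b_{(2)}))\,a$, using Sweedler notation for $\Delta$ and the unit $\eta\colon K\to A$. Coassociativity of $\Delta$ gives $\iota(f\ast g)=\iota(f)\circ\iota(g)$, and the counit axiom gives $\iota(1_{E(A)})=\mathrm{id}$, so $\iota$ is a morphism of graded algebras; one must still verify by a sign-chase against the twisted bar differential on $\mathbf{B}(A)$ that $\iota$ is a chain map. Granting this, $\pi\circ\iota=\mathrm{id}_{E(A)}$ by the counit axiom, and since $\pi$ is a quasi-isomorphism of complexes ($\mathbf{B}(A)$ being $K$-projective), $\iota$ is a quasi-isomorphism of dg algebras, which is what the proposition asserts. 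Equivalently, one restricts $\pi$ to the dg subalgebra of $BA$-colinear $A$-linear endomorphisms---identified with the twisted convolution algebra $\cHom_K^\tau(BA,A)$ via cofreeness of $\mathbf{B}(A)$ as a $BA$-comodule---where the obstruction to multiplicativity vanishes; this is effectively what the cited sources do.
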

\begin{proof}
We note that the setup in \cite{LuPalmieriWuZhang04} is slightly different. However,
the proof of \cite[Lemma 11.1]{LuPalmieriWuZhang04} only relies on statements from \cite{Lefevre03} and \cite{FelixHalperinThomas}, which use setups compatible with ours.
\end{proof}

 The minimal model of $E(A)$ (in the sense of \cite{Kadeishvili80}) is called
the \emph{$A_\infty$-Koszul dual}
of $A$ and is denoted by $A^*$. Proposition \ref{Lemma 11.1} has the following consequence (again the proof in 
\cite{LuPalmieriWuZhang04} applies to our setup).

\begin{thm}\label{t:koszul-double-dual}
 \emph{(\cite[Theorem 11.2]{LuPalmieriWuZhang04})} Let $A$ be an $A_\infty$-algebra as above.
If the space $A^i$ is finite-dimensional for each $i\in\mathbb{Z}$,
then $E(E(A))$ is $A_\infty$ quasi-isomorphic to $A$. In particular, $A$ is $A_\infty$ quasi-isomorphic to $E(A^*)$.
\end{thm}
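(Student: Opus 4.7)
The plan is to construct an explicit comparison $A_\infty$ quasi-isomorphism $A \to E(E(A))$ by combining two standard facts: the unit of the bar--cobar adjunction, and the interchange of bar and cobar under graded $k$-linear duality in the locally finite setting. Unwinding definitions, one has $E(E(A)) = D\bigl(B(D(BA))\bigr)$, so it is natural to aim for the chain of $A_\infty$ quasi-isomorphisms
\begin{align*}
A \;\xrightarrow{\;\eta_A\;}\; \Omega(BA) \;\xrightarrow{\;\sim\;}\; D\bigl(B(D(BA))\bigr) \;=\; E(E(A)),
\end{align*}
where $\Omega$ denotes the cobar construction on coaugmented conilpotent dg coalgebras and $\eta_A$ is the unit of the bar--cobar adjunction.

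The first arrow $\eta_A$ is the natural $A_\infty$ quasi-isomorphism of $A_\infty$-algebras already invoked in the proof of Lemma~\ref{L:QuisAndKoszul} (via \cite[1.3.3.6, 2.3.4.3]{Lefevre03}); no further work is needed there. The second arrow is built from the graded-dual identifications
\begin{align*}
D(V \otimes W) \;\cong\; D(V) \otimes D(W), \qquad D(D(V)) \;\cong\; V,
\end{align*}
applied degreewise to each piece of the tensor coalgebra $T^c(\bar A[1])$, and uses the fact that the duality functor $D$ interchanges the tensor algebra and tensor coalgebra functors, taking products to coproducts. In concrete terms, once $BA$ is locally finite-dimensional, the natural pairing $\Omega(BA) \otimes B(D(BA)) \to k$ is non-degenerate degreewise, yielding an isomorphism of dg algebras between $\Omega(BA)$ and $D(B(D(BA)))$ (the bar and cobar differentials are exchanged precisely because they are both dualized from the structure maps $\{b_n\}$ of $A$, cf.\ \cite[\S 11]{LuPalmieriWuZhang04}).

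For the hypothesis to feed into the second step, one must check that local finiteness of $A$ propagates to local finiteness of $BA$. This is where the standing conventions of Subsection~\ref{ss:dual-bar-construction} enter: in either of the two setups (positive or non-positive $A$ with $A^0$ or $H^0(A)$ semisimple and arrows in strictly positive/negative degree after reduction), the reduced $\bar A[1]$ is concentrated in strictly positive (resp.\ strictly negative) degrees, so each graded piece of $T^c(\bar A[1]) = \bigoplus_n (\bar A[1])^{\otimes n}$ involves only finitely many tensor powers $n$ and, by finite-dimensionality of each $A^i$, is itself finite-dimensional. This is the main technical point of the argument; everything else is a formal manipulation.

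Finally, the ``in particular'' statement follows from what has already been established: by definition $A^*$ is $A_\infty$ quasi-isomorphic to $E(A)$, so Lemma~\ref{L:QuisAndKoszul} yields a quasi-isomorphism of dg algebras $E(E(A)) \simeq E(A^*)$, and composing with the $A_\infty$ quasi-isomorphism $A \to E(E(A))$ constructed above gives the desired $A \simeq E(A^*)$. The main obstacle is the bookkeeping in the second step: one has to track signs, degree shifts, and the completion that appears in $E(A) = \hat T_K(D(\bar A[1]))$, and verify that the resulting pairing is actually compatible with both differentials on the nose; this is routine but unenlightening and follows \cite[\S 11]{LuPalmieriWuZhang04} in the locally finite setting.
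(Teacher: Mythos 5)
The paper offers no proof for this statement: it is attributed directly to \cite[Theorem 11.2]{LuPalmieriWuZhang04} with the parenthetical remark after Proposition~\ref{Lemma 11.1} that ``the proof in \cite{LuPalmieriWuZhang04} applies to our setup''. So the relevant comparison is whether your self-contained argument is actually sound, and there is a genuine gap at the crux. You claim that the non-degenerate degreewise pairing $\Omega(BA)\otimes B(D(BA))\to k$ ``yields an isomorphism of dg algebras between $\Omega(BA)$ and $D(B(D(BA)))$''. Unwinding definitions with $K$-linear graded duals, the right-hand side is the \emph{completed} tensor algebra $\hat T_K(\overline{BA}[-1])$, whereas $\Omega(BA)=T_K(\overline{BA}[-1])$ is the uncompleted one, and the pairing only gives the inclusion of the latter into the former. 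These genuinely differ degreewise in precisely the relevant setup: for non-positive $A$ with $H^0(A)=\widehat{kQ}/\overline{(R)}$ not semisimple, $\overline{BA}^{-1}\cong\bar A^0\neq 0$, so $(\overline{BA}[-1])^0\neq 0$ and all tensor powers contribute to degree $0$; thus $\Omega(BA)^0$ is infinite-dimensional while $E(E(A))^0$ is its (strictly larger) degreewise product. A degreewise non-degenerate pairing between $V$ and $W$ identifies $W$ with $D(V)$ only when both are degreewise finite-dimensional, which fails here. Showing that the inclusion $\Omega(BA)\hookrightarrow E(E(A))$ is nevertheless a quasi-isomorphism is exactly the non-formal part of the double-dual theorem and is the very thing \cite{LuPalmieriWuZhang04} address (there by working with an extra Adams/weight bigrading and Adams-connectedness); by asserting an isomorphism you have outsourced the hard step to a false identification.

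There is a secondary error that points to the same misunderstanding: you assert that in the first setup $\bar A[1]$ is concentrated in strictly positive degrees, so that $BA$ is locally finite by inspection of tensor powers. In fact $\bar A$ sits in degrees $\geq 1$ there, so $\bar A[1]$ sits in degrees $\geq 0$ (not $\geq 1$), and as soon as $\bar A^1\neq 0$ the degree-$0$ piece of $BA$ receives contributions from every tensor power. So $BA$ need not be locally finite in the first setup without further hypotheses, and the asserted degree shift by which you deduce local finiteness is off by one. The remaining steps (use of the unit $\eta_A:A\to\Omega(BA)$ from \cite{Lefevre03}, and the deduction of the ``in particular'' clause via Lemma~\ref{L:QuisAndKoszul}) are fine, but the middle step cannot be repaired by formal manipulations alone; you would need either to import the Adams-graded framework of \cite{LuPalmieriWuZhang04} or to give a convergence/filtration argument establishing that the completion map is a quasi-isomorphism under the stated finiteness hypotheses.
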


If $A$ is a dg algebra, then the $A_\infty$ quasi-isomorphism in the theorem
can be replaced by a quasi-isomorphism of dg algebras, see~\cite[Proposition 2.8]{Lunts10}.

\begin{cor} \label{c:koszul-double-dual}
Let $A$ be an $A_\infty$-algebra as above.
If the space $H^i(A)$ is finite-dimensional for each $i\in\mathbb{Z}$,
then $E(E(A))$ is $A_\infty$ quasi-isomorphic to $A$. In particular, $A$ is $A_\infty$ quasi-isomorphic to $E(A^*)$.
\end{cor}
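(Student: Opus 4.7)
The plan is to reduce the corollary to Theorem~\ref{t:koszul-double-dual} by passing to the minimal model. The obstruction is that Theorem~\ref{t:koszul-double-dual} requires each \emph{component} $A^i$ to be finite-dimensional, whereas here we only have finite-dimensionality of the cohomology $H^i(A)$. So the first step I would take is to invoke Kadeishvili's minimal model theorem to produce an $A_\infty$-algebra $A_{\min}$ whose underlying graded vector space is $H^*(A)$, equipped with a minimal $A_\infty$-structure (i.e.\ with $b_1 = 0$), together with an $A_\infty$ quasi-isomorphism $A_{\min} \to A$. Under our standing hypotheses on $A$ (non-negatively or non-positively graded, with the appropriate condition on $A^0$ respectively $H^0(A)$, and strict unitality off of $K$), this minimal model inherits the same structural properties, so it belongs to the class of $A_\infty$-algebras for which $E(-)$ is defined in the sense of Subsection~\ref{ss:dual-bar-construction}.

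Next, by hypothesis $H^i(A)$ is finite-dimensional for every $i \in \mathbb{Z}$, so the graded vector space underlying $A_{\min}$ is finite-dimensional in each degree. Hence Theorem~\ref{t:koszul-double-dual} applies directly to $A_{\min}$ and yields an $A_\infty$ quasi-isomorphism
\begin{equation*}
E(E(A_{\min})) \simeq A_{\min}.
\end{equation*}

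It remains to transport this back to $A$. For this I would apply Lemma~\ref{L:QuisAndKoszul} twice: the $A_\infty$ quasi-isomorphism $A_{\min} \to A$ gives a quasi-isomorphism of dg algebras $E(A) \to E(A_{\min})$, which in turn gives a quasi-isomorphism $E(E(A_{\min})) \to E(E(A))$. Composing with the quasi-isomorphism above and with $A_{\min} \to A$ produces a zigzag of $A_\infty$ quasi-isomorphisms $E(E(A)) \simeq E(E(A_{\min})) \simeq A_{\min} \simeq A$, which establishes the first assertion. For the second assertion, observe that by definition $A^* = (E(A))_{\min}$ is the minimal model of $E(A)$, so $A^* \simeq E(A)$ as $A_\infty$-algebras; applying Lemma~\ref{L:QuisAndKoszul} once more gives $E(A^*) \simeq E(E(A)) \simeq A$.

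The main obstacle in this plan is purely bookkeeping: one must check that the minimal model $A_{\min}$ indeed satisfies the structural assumptions imposed in Subsection~\ref{ss:dual-bar-construction} (concentration of degrees, the form of the degree-zero part, and strict unitality on $K$), so that $E(A_{\min})$ is defined and Theorem~\ref{t:koszul-double-dual} is applicable. In the non-positively graded case this requires some care since $H^0(A) \cong \widehat{kQ}/\overline{(R)}$ need not coincide with $K = H^0(A)/\rad H^0(A)$; however, since Kadeishvili's construction is compatible with the decomposition induced by $\eta$ and $\varepsilon$, the transferred $A_\infty$-structure on $H^*(A)$ still vanishes whenever one of its arguments lies in $K$, which is exactly the strict unitality condition required.
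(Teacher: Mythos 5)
Your proposal is correct and follows the same route as the paper's own (quite terse) proof: pass to the Kadeishvili minimal model $H^*(A)$, whose components are finite-dimensional by hypothesis so that Theorem~\ref{t:koszul-double-dual} applies, then transport the conclusion back along the quasi-isomorphism via Lemma~\ref{L:QuisAndKoszul}. Your remarks on why the minimal model still lies in the class of $A_\infty$-algebras for which $E(-)$ is defined (preservation of the grading constraints, of $H^0$, and of strict unitality under Kadeishvili transfer) fill in bookkeeping that the paper leaves implicit, but introduce no new ideas beyond the published argument.
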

\begin{proof}
After passing to the minimal model $H^*(A)$ of $A$, we may apply Theorem \ref{t:koszul-double-dual}. Lemma \ref{L:QuisAndKoszul} yields the claim, since $H^*(A)$ and $A$ are $A_{\infty}$ quasi-isomorphic.
\end{proof}

Moreover, we can describe the graded algebra underlying $E(A)$ in terms of quivers. Recall that $K$ is a product of $r$ copies of $k$. Let $e_1,\ldots,e_r$ be the standard basis of $K$. Define $Q$ as the following graded quiver:
\begin{itemize}
\item[--] the set of vertices is $\{1,\ldots,r\}$, 
\item[--] the set of arrows degree $m$ from $i$ to $j$ is given by a $k$-basis of the degree $m$ component of $e_jD(\bar{A}[1])e_i$. 
\end{itemize}
Then
as a graded algebra $E(A)$ is the graded completion $\widehat{kQ}$ (with respect to the ideal generated by the arrows) of the path algebra $kQ$ of the graded quiver $Q$.


\subsection{Recollements}\label{ss:Recollements}

In this section our object of study is the triangle quotient $K^b(\proj-A)/\thick(eA)$, where $A$ is an algebra and $e\in A$
is an idempotent. By Keller's Morita theorem for triangulated categories~\cite[Theorem 3.8 b)]{Keller06d}, the idempotent completion
of this category is equivalent to the perfect derived category $\per(B)$ of some dg algebra $B$. However, to determine $B$ explicitly is a difficult task in general. It is well-known that the unbounded derived category of \emph{all} $A$-modules is much better suited to obtain (abstract) existence theorems (for example of adjoint functors), due to the existence of set-indexed direct sums. Then Neeman's theorem allows us to deduce results on the bounded and finitely generated level by passing to compact objects. 

\medskip
Using the general theory on
recollements we can choose $B$ such that there is a homomorphism of dg algebras $A\rightarrow B$, the restriction
$\cd(B) \rightarrow \cd(A)$ along which is fully faithful. This helps us to study properties of
$K^b(\proj-A)/\thick(eA)$ and under some conditions to construct $B$ explicitely. The results on recollements obtained in this section are of independent interest.

\smallskip

\noindent Following~\cite{BeilinsonBernsteinDeligne82}, a \emph{recollement} of triangulated categories is a diagram
\begin{align}
\begin{xy}
\SelectTips{cm}{}
\xymatrix{\ct''\ar[rr]|{i_*=i_!}&&\ct\ar[rr]|{j^!=j^*}\ar@/^15pt/[ll]^{i^!}\ar@/_15pt/[ll]_{i^*}&&\ct'\ar@/^15pt/[ll]^{j_*}\ar@/_15pt/[ll]_{j_!}}
\end{xy}
\end{align}
of
triangulated categories and triangle functors
such that
\begin{itemize}
\item[1)] $(i^*,i_*=i_!,i^!)$ and
$(j_!,j^!=j^*,j_*)$ are adjoint triples;
\item[2)] $j_!,i_*=i_!,j_*$ are fully faithful;
\item[3)] $j^* i_*=0$;
\item[4)] for every object $X$ of $\ct$ there exist two distinguished triangles
\[i_!i^!X \rightarrow X \rightarrow j_*j^*X \rightarrow i_!i^!X[1] \, \,
\text{ and } \, \,  j_!j^!X \rightarrow X \rightarrow
i_*i^*X \rightarrow j_!j^!X[1],\] where the morphisms starting from and
ending at $X$ are the units and counits.
\end{itemize}

We need the following well-known lemma, see e.g. \cite{Miyachi}.

\begin{lem}\label{L:Miyachi}
Let $F\colon \ct \ra \ct'$ be a triangulated functor, such that the induced functor $\ol{F}\colon \ct/\ker(F) \ra \ct'$ is full. Then $\ol{F}$ is faithful.

In particular, if $F$ is full and dense, then $\ol{F} \colon \ct/\ker(F) \ra \ct'$ is an equivalence. In other words, $F$ is a quotient functor, see Definition \ref{D:quotientfunctor}.
\end{lem}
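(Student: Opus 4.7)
The plan is to exploit that $\ker(\ol{F})$ is automatically trivial on objects, deduce that $\ol{F}$ is conservative, and then convert fullness into faithfulness by a splitting argument on the cone of a morphism killed by $\ol{F}$.

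First I would observe that any object $X$ of $\ct/\ker(F)$ with $\ol{F}(X)=0$ satisfies $F(X)=0$, hence $X\in\ker(F)$; but objects of $\ker(F)$ are isomorphic to $0$ in $\ct/\ker(F)$ (since $0\to X$ has cone $X[1]\in\ker(F)$, so becomes invertible after localising). Thus $\ker(\ol{F})$ contains only zero objects, and a standard argument applied to the cone of a morphism shows that $\ol{F}$ is \emph{conservative}: any morphism $\phi$ in $\ct/\ker(F)$ whose image $\ol{F}(\phi)$ is an isomorphism must itself be an isomorphism.

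Now suppose $\alpha\colon A\to B$ in $\ct/\ker(F)$ satisfies $\ol{F}(\alpha)=0$. Complete it to a triangle
\[
A\xrightarrow{\alpha} B\xrightarrow{\beta} C\xrightarrow{\gamma} A[1].
\]
Applying $\ol{F}$ gives a triangle in $\ct'$ whose first map is zero, so it splits, and $\ol{F}(\beta)$ is a split monomorphism with some section $\tau\colon\ol{F}(C)\to\ol{F}(B)$. Using the hypothesis that $\ol{F}$ is full, lift $\tau$ to a morphism $\overline{\tau}\colon C\to B$ in $\ct/\ker(F)$ with $\ol{F}(\overline{\tau})=\tau$. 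Then the endomorphism $\overline{\tau}\circ\beta$ of $B$ satisfies $\ol{F}(\overline{\tau}\beta)=\tau\,\ol{F}(\beta)=\id_{\ol{F}(B)}$; by conservativity, $\overline{\tau}\beta$ is an isomorphism of $B$ in $\ct/\ker(F)$. Hence $\rho:=(\overline{\tau}\beta)^{-1}\overline{\tau}$ is a genuine retraction of $\beta$, so $\beta$ is a split monomorphism. Rotating the distinguished triangle one step, $\beta$ being a split mono forces $\alpha=0$, proving faithfulness.

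The main obstacle is the apparent circularity: at the level of morphisms, lifting $\tau$ only yields $\ol{F}(\overline{\tau}\beta-\id_B)=0$, which is itself an instance of the faithfulness we wish to establish. The point of the argument is that we never need this morphism-level identity—only the invertibility of $\overline{\tau}\beta$, which is an \emph{object-level} statement detected by the conservative functor $\ol{F}$ via its cone. For the second assertion, if $F$ is in addition dense, then so is $\ol{F}$ (because $\ol{F}(X)=F(X)$ on objects), and a dense fully faithful triangle functor is an equivalence, so $F$ is a triangulated quotient functor in the sense of Definition~\ref{D:quotientfunctor}.
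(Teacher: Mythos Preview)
Your proof is correct, and the core idea---using fullness of $\ol{F}$ to lift a section of the relevant map in the triangle on the vanishing morphism, then arguing that the lifted composite is invertible---is the same as the paper's. The packaging, however, differs in a way worth noting. The paper works directly with the calculus of roofs: given a fraction $X\stackrel{s}{\leftarrow}T\stackrel{f}{\to}Y$ with $F(f)=0$, it builds the triangle $Z\stackrel{p}{\to}T\stackrel{f}{\to}Y$ in $\ct$, lifts a right inverse of $F(p)$ through $\ol{F}$ to obtain a roof $T\stackrel{t}{\leftarrow}R\stackrel{r'}{\to}Z$, and observes that $q=pr'$ has $\mathsf{cone}(q)\in\ker(F)$ and $fq=0$, which is exactly the roof-level criterion for $\varphi=0$. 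You instead work entirely inside $\ct/\ker(F)$: you isolate conservativity of $\ol{F}$ as a separate preliminary step, then run the splitting argument on the triangle of $\alpha$ taken in the quotient, invoking conservativity to upgrade ``$\ol{F}(\overline{\tau}\beta)=\id$'' to ``$\overline{\tau}\beta$ is invertible''. Your framing avoids any explicit manipulation of fractions and makes the logical structure (conservative $+$ full $\Rightarrow$ faithful, for triangulated functors) more transparent; the paper's version is more hands-on and perhaps closer to how one would first discover the argument. Both are short and neither is substantially more general. One small remark: in the second assertion you should say explicitly that $F$ full forces $\ol{F}$ full (which is immediate, since every morphism in $\ct'$ lifts already to $\ct$ and hence to the quotient), so that the first part applies.
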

\begin{proof}
Take a morphism $\varphi=(X \stackrel{s}\leftarrow T \stackrel{f}\rightarrow Y)$ from $X$ to $Y$ in $\ct/\ker(F)$ such that $\ol{F}(\varphi)=0$. Thus $0=\ol{F}(\varphi)= F(f)F(s)^{-1}$ and therefore $F(f)=0$. 

We have to show that $\varphi=0$ in $\ct/\ker(F)$. In other words, we need a morphism $q$ in $\ct$ with $\mathsf{cone}(q) \in \ker(F)$ and $fq=0$.

$f\colon T \ra Y$ is contained in a triangle $Z \stackrel{p}\ra T \stackrel{f}\ra Y \ra Z[1]$ in $\ct$. Applying $F$ to this triangle, we see that $F(p)$ admits a right inverse $r \colon F(T) \ra F(Z)$. Since $\ol{F}$ is full, there exists a morphism
$\rho=(T \stackrel{t}\leftarrow R \stackrel{r'}\ra Z)$ in the quotient $\ct/\ker(F)$ such that $\ol{F}(\rho)=r$. Now, 
$1_{F(T)}=F(p) r =F(p) F(r') F(t)^{-1}=F(p r') F(t)^{-1}$ shows that $F(pr')$ is invertible in $\ct'$. Thus $\mathsf{cone}(pr') \in \ker(F)$. Hence we may take $q=pr'$. Indeed, we have $fq=(fp) r' = 0 \cdot r' =0$, since $p$ and $f$ are consecutive arrows in a triangle. This shows that $\ol{F}$ is faithful. The second statement follows directly from this.
\end{proof}

Let us deduce some well-known consequences from the definition of a recollement.

\begin{lem} \label{L:PropRecoll} Using the notations and assumptions in the definition above the following statements hold:
\begin{itemize}
\item[(a)] There are functorial isomorphisms $j^! j_{!} \cong j^* j_{*} \cong 1_{\ct'}$ and $i^* i_{*} \cong i^! i_{!} \cong 1_{\ct''}$.
\item[(b)] $i^*j_{!}=i^!j_{*}=0$.
\item[(c)] $j^*$, $i^*$ and $i^!$ are quotient functors. They induce triangle equivalences 
\begin{align*}
j^*\colon \ct/i_{*}(\ct'') \ra \ct',\, i^*\colon \ct/j_{!}(\ct') \ra \ct'' \, \text{ and } \,\, i^!\colon \ct/j_{*}(\ct') \ra \ct'',
\end{align*}
 respectively. 
\item[(d)] $(\im \, j_!,\im \, i_*)$ and $(\im \, i_*,\im \, j_*)$ are two \emph{stable}  Ê$t$-structures of $\ct$, i.e.~$t$-structures with triangulated aisles.
\end{itemize}
\end{lem}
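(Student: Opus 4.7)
My approach exploits standard facts about adjunctions and the two triangles appearing in axiom (4) of a recollement.

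For part (a), I would invoke the well-known principle that in an adjunction $(F,G)$, the left adjoint $F$ is fully faithful iff the unit $1 \to GF$ is an isomorphism, and the right adjoint $G$ is fully faithful iff the counit $FG \to 1$ is an isomorphism. Applied to the two adjoint triples, full faithfulness of $i_* = i_!$ forces the counit $i^*i_* \to 1_{\ct''}$ (from $(i^*,i_*)$) and the unit $1_{\ct''} \to i^!i_!$ (from $(i_!,i^!)$) to be isomorphisms; similarly, full faithfulness of $j_!$ and $j_*$ yields $1_{\ct'} \cong j^!j_!$ and $j^*j_* \cong 1_{\ct'}$.

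For part (b), I would compute with the adjunctions and use the assumed vanishing $j^*i_* = 0$. For any $X \in \ct'$ and $Y \in \ct''$,
\begin{align*}
\Hom_{\ct''}(i^*j_!X, Y) \cong \Hom_{\ct}(j_!X, i_*Y) \cong \Hom_{\ct'}(X, j^!i_*Y) = \Hom_{\ct'}(X, j^*i_*Y) = 0,
\end{align*}
so $i^*j_! = 0$ by Yoneda. The argument for $i^!j_* = 0$ is dual, using $(j^*,j_*)$ and $(i_!,i^!)$.

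For part (c), I will first identify the kernels using axiom (4). If $j^*X = 0$, then also $j^!X = 0$, so the first triangle degenerates to $X \cong i_*i^*X \in \im(i_*)$; together with $j^*i_* = 0$ this gives $\ker(j^*) = i_*(\ct'')$. Similarly, the second triangle yields $\ker(i^*) = j_!(\ct')$ and $\ker(i^!) = j_*(\ct')$. Next I will invoke Lemma \ref{L:Miyachi}: it suffices to show each functor is full and essentially surjective. Essential surjectivity follows immediately from (a) (e.g. $j^*j_* \cong 1_{\ct'}$ shows $j^*$ is dense). Fullness is a consequence of having a fully faithful adjoint on the appropriate side; for $j^*$, the counit $\varepsilon: j^*j_* \to 1_{\ct'}$ is an iso, so for any $\varphi: j^*X \to j^*Y$ the adjoint map $\widetilde{\varphi}: X \to j_*j^*Y$ combined with the unit $\eta_Y: Y \to j_*j^*Y$ gives (after completing $\eta_Y$ to a triangle whose third term lies in $\ker(j^*)$ by a direct calculation with the recollement triangles) a morphism in $\ct/\ker(j^*)$ lifting $\varphi$; this is the standard argument showing that a Bousfield localization is a Verdier quotient. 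The cases $i^*$ and $i^!$ are analogous, using $i^*i_* \cong 1$ and $i^!i_! \cong 1$. This delicate verification of fullness (and hence the equivalence) is the main technical point of the lemma.

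For part (d), each pair consists of full triangulated subcategories that are stable under $[\pm 1]$ (since $j_!, j_*, i_*$ are triangle functors). The semi-orthogonality $\Hom(j_!X, i_*Y) \cong \Hom(X, j^*i_*Y) = 0$ and $\Hom(i_*X, j_*Y) \cong \Hom(j^*i_*X, Y) = 0$ follows from $j^*i_* = 0$, and the decomposition axiom of a $t$-structure is exactly provided by the two triangles in axiom (4) of the recollement, once one checks that $j_!j^!X \in \im(j_!)$ and $i_*i^*X \in \im(i_*)$, which is tautological.
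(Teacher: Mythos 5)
Your proposal is correct. Parts (a), (b), and (d) follow the same reasoning as the paper: for (a) you invoke the unit/counit criterion for full faithfulness of an adjoint, while the paper unwinds the same thing as an explicit chain of Hom-isomorphisms plus Yoneda, and parts (b) and (d) coincide in substance. The real content of your write-up is part (c), where you are in fact more careful than the paper. The paper cites the second statement of Lemma \ref{L:Miyachi} and asserts that density and fullness of $j^*$, $i^*$, $i^!$ ``follow immediately from (a)''; but a Bousfield localisation such as $j^*$ need not be full as a functor on $\ct$. For instance, with $A$ the path algebra of $1\to 2$ and $e=e_1$, the comparison map $\Hom_{\cd(A)}(S_1,P_1)\to\Hom_{\cd(eAe)}(j^*S_1,j^*P_1)$ is $0\to k$, so $j^* = \RHom_A(eA,-)$ is not full even though it has the fully faithful right adjoint $j_*$. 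What is actually needed, and what the first statement of Lemma \ref{L:Miyachi} requires, is fullness of the \emph{induced} functor $\ct/\ker j^*\to\ct'$. Your roof argument --- sending $\varphi\colon j^*X\to j^*Y$ to the fraction $X\xrightarrow{\widetilde\varphi}j_*j^*Y\xleftarrow{\eta_Y}Y$ and checking that $\Cone(\eta_Y)\in\ker j^*$ because the triangle identity forces $j^*\eta_Y$ to be invertible --- supplies precisely this, and is the standard proof that a Bousfield localisation realises the Verdier quotient. So your proof not only reaches the same conclusion but closes a gap in the paper's one-line appeal to Lemma \ref{L:Miyachi}. One small slip: to conclude $X\cong i_*i^*X$ from $j^*X=0$ you should invoke the \emph{second} glueing triangle $j_!j^!X\to X\to i_*i^*X$; the first gives $X\cong i_!i^!X$, which also suffices since $i_!=i_*$.
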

\begin{proof}
(a) Since $j_{!}$ is fully faithful and right adjoint to $j^!$, we get a chain of natural isomorphisms for all $X$ and $Y$ in $\ct'$
\begin{align*}
\Hom_{\ct'}(X, Y) \cong \Hom_{\ct}(j_{!}(X), j_{!}(Y)) \cong \Hom_{\ct'}(j^!j_{!}(X), Y).
\end{align*}
Hence we obtain a natural isomorphism $j^! j_{!} \cong 1_{\ct'}$ by Yoneda's Lemma. The other isomorphisms can be shown analogously.

\noindent (b) Since $j^*i_{*}=0$ and using the adjunctions, we obtain a chain of natural isomorphisms for $X$ in $\ct'$ and $Y$ in $\ct''$
\begin{align*}
\Hom_{\ct''}(i^*j_{!}(X), Y) \cong \Hom_{\ct}(j_{!}(X), i_{*}(Y)) \cong \Hom_{\ct'}(X, j^* i_{*}(Y)) =0. 
\end{align*}
Hence taking $Y=i^*j_{!}(X)$ proves the first statement. The second follows similarly.

\noindent (c) By the well-known Lemma \ref{L:Miyachi}, to see that the functors are indeed quotient functors, it suffices to show that they are dense and full. But this follows immediately from (a). Thus it suffices to identify their respective kernels. We only show that $\ker j^*= \ct''$ the other cases are treated analogously. By conditions 2) and 3) above, we know that $\ct'' \cong i_{*}(\ct'') \subseteq \ker j^*$. The other inclusion follows directly from the first triangle in 4). 

\noindent (d) Since $\im \, j_!$ and $\im \, i_*$ are full triangulated subcategories, it suffices to show that $\Hom_{\ct}(\im \, j_{!}, \im \, i_{*})=0$ and the existence of the distinguished `glueing' triangles. The first statement follows from 1) and 3). The second statement follows from the second triangle in 4). The other pair is treated in a similar way.  
\end{proof}

\subsubsection{Recollements and TTF triples}

\begin{def}\label{D:TTF}
Let $\ct$ be a triangulated category. A \emph{ triangulated torsion torsionfree triple (TTF triple)}  is a triple $(\cx, \cy, \cz)$ of full subcategories of $\ct$, such that $(\cx, \cy)$ and $(\cy, \cz)$ are two $t$-structures on $\ct$.
\end{def}

\begin{rem}
One can check that the components of a TTF triple $(\cx, \cy, \cz)$ are triangulated subcategories, see e.g. \cite{Nicolas}.
\end{rem}

 A recollement yields a TTF triple $(\im \, j_{!}, \im \, i_{*}, \im \, j_{*})$, by Lemma \ref{L:PropRecoll} (d). Conversely any TTF triple $(\cx, \cy, \cz)$ on $\ct$ yields a recollement in the following way, see~\cite[Section 2.1]{NicolasSaorin09}. We use the notation from loc.~cit.: for a $t$-structure $(\cu, \cv)$ on $\ct$ we denote by $u$ and $v$ the canonical inclusions of $\cu$ and $\cv$, respectively. Moreover, $\tau_{\cu}$ is the right adjoint to $u$ and $\tau^\cv$ is the left adjoint to $v$. Then
\begin{align}
\begin{xy}
\SelectTips{cm}{}
\xymatrix{\cy \ar[rr]|{y}&&\ct\ar[rr]|{\tau_{\cx}}\ar@/^15pt/[ll]^{\tau_{\cy}}\ar@/_15pt/[ll]_{\tau^\cy}&&\cx\ar@/^15pt/[ll]^{z \tau^\cz x}\ar@/_15pt/[ll]_{x}}
\end{xy}
\end{align} 
 is a recollement of triangulated categories \cite[Proposition 4.2.4]{Nicolas} and one can check that these two construction are mutually inverse to each other (up to a natural notion of equivalence).
 
\subsubsection{Bousfield (co-)localisation and recollements}
We follow Neeman's treatment in \cite[Chapter 9]{Neeman99}.
\begin{defn}
Let $\ct$ be a triangulated category and $\cu \subseteq \ct$ be a thick subcategory. We say that a \emph{Bousfield localisation functor exists} for $\cu \subseteq \ct$, if there is a right  adjoint functor $Q_{\rho}$ of the natural quotient functor $Q\colon \ct \ra \ct/\cu$. $Q_{\rho}$ is called the \emph{Bousfield localisation functor}. Dually, a left adjoint $Q_{\lambda}$ of $Q$ is called \emph{Bousfield colocalisation functor}.
\end{defn}

The following two propositions give a relation to recollements, see Beilinson, Bernstein \& Deligne \cite[Section 1.4.4]{BeilinsonBernsteinDeligne82} and also \cite[Proposition 9.1.18]{Neeman99}.

\begin{prop}
In the notations above:
\begin{itemize}
\item[(a)] There exists a Bousfield localisation functor for $\cu \subseteq \ct$ if and only if the inclusion $I \colon \cu \ra \ct$ has a right adjoint $I_{\rho}$.
\item[(b)] Dually, there exists a Bousfield colocalisation functor for $\cu \subseteq \ct$ if and only if the inclusion $I\colon \cu \ra \ct$ has a left adjoint $I_{\lambda}$. 
\end{itemize}
\end{prop}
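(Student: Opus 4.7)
The plan is to prove (a) by the standard triangle-completion argument relating the unit of the quotient adjunction to the counit of the inclusion adjunction; part (b) will follow by passing to the opposite category. Throughout I will use that, since $\cu \subseteq \ct$ is thick, the kernel of the Verdier quotient $Q\colon\ct\ra\ct/\cu$ is exactly $\cu$, and that $\cu^{\perp}:=\{X\in\ct\mid\Hom_{\ct}(IC,X)=0\text{ for all }C\in\cu\}$ maps fully faithfully into $\ct/\cu$ under $Q$.

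For the forward direction of (a), I will assume that the right adjoint $Q_{\rho}\colon\ct/\cu\ra\ct$ of $Q$ exists. For each $X\in\ct$, I will consider the unit $\eta_{X}\colon X\ra Q_{\rho}QX$ and embed it into a triangle
\[
I_{\rho}X \ra X \xrightarrow{\eta_{X}} Q_{\rho}QX \ra I_{\rho}X[1].
\]
Applying $Q$ shows that $Q(\eta_{X})$ is an isomorphism (a standard fact for units of a fully faithful right adjoint, which $Q_{\rho}$ is, since $QQ_{\rho}\cong\mathsf{Id}$ via the counit), hence $QI_{\rho}X=0$, so $I_{\rho}X\in\cu$. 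For $C\in\cu$ the adjunction gives
\[
\Hom_{\ct}(IC,Q_{\rho}QX)\cong\Hom_{\ct/\cu}(QIC,QX)=0,
\]
and the long exact $\Hom$-sequence then yields $\Hom_{\ct}(IC,I_{\rho}X)\cong\Hom_{\ct}(IC,X)$ naturally in $C$ and $X$. Functoriality of $I_{\rho}$ follows from the fact that the triangle is determined up to unique isomorphism by the map $\eta_{X}$ together with the vanishing $\Hom_{\ct}(\cu,Q_{\rho}Q(-))=0$. This exhibits $I_{\rho}$ as right adjoint to $I$.

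For the converse, assume $I_{\rho}\colon\ct\ra\cu$ is right adjoint to $I$. For $X\in\ct$ I will complete the counit $II_{\rho}X\ra X$ to a triangle
\[
II_{\rho}X \ra X \ra LX \ra II_{\rho}X[1].
\]
Applying $\Hom_{\ct}(IC,-)$ for $C\in\cu$ and using that the counit $II_{\rho}\ra\mathsf{Id}$ is the universal arrow from $\cu$ into $X$ shows that $\Hom_{\ct}(IC,LX)=0$, i.e.\ $LX\in\cu^{\perp}$. Since $Q$ restricted to $\cu^{\perp}$ is fully faithful, the functor $L\colon\ct\ra\cu^{\perp}\subseteq\ct$ descends (via $Q$ and the inverse of $Q|_{\cu^{\perp}}$) to a well-defined functor $Q_{\rho}\colon\ct/\cu\ra\ct$, and the triangle itself is the glueing triangle exhibiting the unit $X\ra Q_{\rho}QX$. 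The adjunction isomorphism $\Hom_{\ct/\cu}(QX,QY)\cong\Hom_{\ct}(X,Q_{\rho}QY)$ then follows from the orthogonality $\Hom_{\ct}(\cu,LY)=0$ and the defining property of the Verdier quotient. Part (b) is obtained by applying (a) to the opposite category $\ct^{\mathrm{op}}$, under which right adjoints become left adjoints and the localisation becomes a colocalisation; no separate argument is needed.

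The main obstacle is bookkeeping rather than substance: one has to verify that the assignments $X\mapsto I_{\rho}X$ (respectively $X\mapsto LX$) are canonically functorial, which rests on the uniqueness of cones of morphisms between objects whose $\Hom$ in one direction vanishes. Once this verification is made, both directions of the equivalence reduce to the orthogonality $\Hom_{\ct}(\cu,\cu^{\perp})=0$ together with the universal property of the Verdier quotient.
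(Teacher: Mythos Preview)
Your argument is correct and is precisely the standard triangle-completion proof found in Neeman's book \cite[Proposition 9.1.18]{Neeman99}. Note that the paper itself does not supply a proof of this proposition; it merely states the result and refers to \cite[Section 1.4.4]{BeilinsonBernsteinDeligne82} and \cite[Proposition 9.1.18]{Neeman99}, so there is nothing to compare against beyond observing that you have reproduced the cited argument.
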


\begin{prop}\label{P:AbstractRecoll}
In the notations above, assume that there exists a Bousfield localisation and colocalisation functor for $\cu \subseteq \ct$. Then there exists a recollement of triangulated categories
\begin{align}
\begin{xy}
\SelectTips{cm}{}
\xymatrix{\cu \ar[rr]|{I}&&\ct\ar[rr]|{Q}\ar@/^15pt/[ll]^{I_{\rho}} \ar@/_15pt/[ll]_{I_{\lambda}}&&\ct/\cu \ar@/^15pt/[ll]^{Q_{\rho}} \ar@/_15pt/[ll]_{Q_{\lambda}}}
\end{xy}
\end{align}

\end{prop}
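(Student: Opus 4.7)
\textbf{Proof plan for Proposition \ref{P:AbstractRecoll}.} The strategy is to verify the four defining conditions of a recollement directly from the two adjunctions and the fact that $\cu$ is the kernel of $Q$. Condition 1) is built into the hypotheses, since the existence of the Bousfield (co)localisation functor is equivalent to the existence of $I_\rho$ and $I_\lambda$, and $(Q_\lambda, Q, Q_\rho)$ is an adjoint triple by definition. Condition 3) is automatic: $Q \circ I = 0$ since every object of $\cu$ becomes zero in the Verdier quotient $\ct/\cu$.

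For condition 2), the functor $I$ is fully faithful because it is the inclusion of a full subcategory. For $Q_\rho$ and $Q_\lambda$, I would invoke the standard categorical fact that a right (respectively left) adjoint of a functor is fully faithful if and only if the corresponding counit (respectively unit) is an isomorphism. Here this follows from the universal property of the Verdier quotient: for any $Y \in \ct/\cu$ and any $X \in \ct$, applying $Q$ to the counit $QQ_\rho Y \to Y$ and using the adjunction identities shows that $QQ_\rho \cong \mathsf{Id}_{\ct/\cu}$, and dually $QQ_\lambda \cong \mathsf{Id}_{\ct/\cu}$. Alternatively, one can cite \cite[Proposition 9.1.18]{Neeman99} at this point.

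For condition 4), the first glueing triangle is constructed as follows. Given $X \in \ct$, let $\eta_X \colon X \to Q_\rho QX$ be the unit of the $(Q, Q_\rho)$ adjunction, and complete it to a triangle
\begin{align*}
C \longrightarrow X \xrightarrow{\eta_X} Q_\rho Q X \longrightarrow C[1].
\end{align*}
Applying $Q$ and using $QQ_\rho \cong \mathsf{Id}$ shows that $Q(\eta_X)$ is an isomorphism, hence $Q(C)=0$ and $C \in \cu$. Writing $C = I(C')$, the map $C \to X$ corresponds by the $(I, I_\rho)$ adjunction to a morphism $C' \to I_\rho X$. One checks this is an isomorphism by comparing with the counit $I I_\rho X \to X$: the composite $II_\rho X \to X \to Q_\rho Q X$ is adjoint to $Q(II_\rho X) = 0 \to Q X$ and hence vanishes, so $II_\rho X \to X$ factors uniquely through the fibre $C$, giving an inverse. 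The second glueing triangle is constructed dually, starting from the counit $Q_\lambda Q X \to X$ and identifying the cocone with $II_\lambda X$.

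The main obstacle is the identification of the cone/cocone in the fourth condition with the expected composite $I I_\rho X$ (respectively $I I_\lambda X$); this relies on carefully juggling the units and counits of three different adjunctions, which is the only non-formal step. Everything else reduces either to the defining properties of Bousfield (co)localisation or to standard facts about adjoints between a triangulated category and its Verdier quotient. A clean way to avoid ad hoc triangle chasing would be to phrase the argument in terms of the equivalence between recollements and TTF triples (Definition \ref{D:TTF}): the two $t$-structures $(\im\, Q_\lambda, \im\, I)$ and $(\im\, I, \im\, Q_\rho)$ yield the required triangles as the truncation triangles, and this TTF triple then produces the recollement by the construction recalled before Definition \ref{D:TTF}.
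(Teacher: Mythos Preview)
Your proposal is correct and follows essentially the same approach as the paper's proof: verify conditions 1)--4) of a recollement, with 1) and 3) immediate, 2) coming from standard adjunction facts (the paper cites \cite[Corollary 9.1.9]{Neeman99}), and 4) obtained by completing the unit/counit maps to triangles and identifying the third term. The paper simply outsources the content of condition 4) to \cite[Theorem 9.1.13]{Neeman99} and its dual, whereas you spell out that identification explicitly.
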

\begin{proof}
$Q_{\rho}$ is fully faithful by the adjointness property together with \cite[Corollary 9.1.9]{Neeman99}. The fully faithfulness of $Q_{\lambda}$ follows by a dual argument. Since by definition $I$ is fully faithful and $Q \circ I=0$, only property 4) remains to be shown. This follows from \cite[Theorem 9.1.13]{Neeman99} and its dual version.
\end{proof}
\begin{rem}
A similar result was proved by Cline, Parshall \& Scott in \cite[Theorem 1.1.]{Cline-Parshall-Scott88}.
\end{rem}

\subsubsection{Recollements generated by idempotents}
In the algebraic setting, recollements of derived module categories $\cd(\Mod-A)$ generated by idempotents $e \in A$ are of particular interest,
see for example~\cite{Cline-Parshall-Scott88,ClineParshallScott96,ClineParshallScott97,KoenigNagase09}.
The idempotents considered in these papers
are special idempotents.  

\medskip



The following is well-known, see e.g.~~\cite[Example 4.16]{Krause}.

\begin{prop}\label{P:AdjTriple}
Let $A$ be a $k$-algebra and $e \in A$ be an idempotent. Then there is a triple of adjoint triangle functors
\begin{align}\label{E:AdjTriple}
\begin{xy}
\SelectTips{cm}{}
\xymatrix{\cd(A)\ar[rrrr]|{\begin{smallmatrix} j^!= \RHom_{A}(eA, -) \\ = \\ j^*=-\lten_{A}Ae \end{smallmatrix} }&&&&\cd(eAe)\ar@/_25pt/[llll]_{j_!=-\lten_{eAe}eA}\ar@/^25pt/[llll]^{ j_* =\RHom_{eAe}(Ae, - )}}
\end{xy},
\end{align}
i.e.~$(j_{!}, j^!)$ and $(j^*, j_{*})$ are adjoint pairs. Moreover, $j_{!}$ is fully faithful and $j^*=j^!$ is a triangulated quotient functor.
\end{prop}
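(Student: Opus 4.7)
The three adjoint pairs should be produced by the standard derived tensor–Hom adjunctions applied to the bimodules $eA$ (an $A$-$eAe$-bimodule) and $Ae$ (an $eAe$-$A$-bimodule), exactly as recalled in Subsubsection~\ref{sss:DerFunct}. Thus I would first write down the two general adjunctions
\begin{align*}
\Hom_{\cd(eAe)}(X\lten_{eAe}eA,-)\cong\Hom_{\cd(A)}(X,\RHom_A(eA,-)), \\
\Hom_{\cd(A)}(-\lten_{A}Ae,Y)\cong\Hom_{\cd(eAe)}(-,\RHom_{eAe}(Ae,Y)),
\end{align*}
which by definition produce the adjoint pairs $(j_!,j^!)$ and $(j^*,j_*)$ of Proposition~\ref{P:AdjTriple}.

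The second step is to identify $j^!$ with $j^*$. Since $e$ is an idempotent, $eA$ is a direct summand of $A$ as a right $A$-module, hence projective; consequently $\RHom_A(eA,-)$ is computed without resolution, and the natural map
\[
-\lten_A Ae \;\longrightarrow\; \RHom_A(eA,-),
\]
which on modules is the classical isomorphism $M\otimes_A Ae\cong Me=\Hom_A(eA,M)$, is an isomorphism of triangle functors. This gives $j^!\cong j^*$ as required.

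Next I would prove that $j_!$ is fully faithful. By standard adjunction theory this is equivalent to showing that the unit $\id_{\cd(eAe)}\to j^!j_!$ is an isomorphism. Compute, for $X\in\cd(eAe)$,
\[
j^!j_!(X)=\RHom_A(eA,X\lten_{eAe}eA)\cong (X\lten_{eAe}eA)\cdot e=X\lten_{eAe}(eAe)\cong X,
\]
where the first isomorphism uses the projectivity of $eA$ and the Hom–tensor identification $\Hom_A(eA,-)\cong e\cdot(-)$ just established, and one checks that this chain of natural isomorphisms is the unit map. The same calculation carried out on the other side will also show $j^*j_!\cong\id_{\cd(eAe)}$.

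Finally, to show that $j^*=j^!$ is a triangulated quotient functor, I would invoke Lemma~\ref{L:Miyachi}: it suffices to verify that $j^*$ is full and dense. Density is immediate from $j^*j_!\cong\id$, since every $X\in\cd(eAe)$ is isomorphic to $j^*(j_!X)$. For fullness, given a morphism $f\colon X\to Y$ in $\cd(eAe)$, apply $j_!$ to obtain $j_!(f)\colon j_!X\to j_!Y$ in $\cd(A)$; the natural isomorphism $j^*j_!\cong\id$ identifies $j^*(j_!(f))$ with $f$, producing the required preimage. Lemma~\ref{L:Miyachi} then yields an equivalence $\cd(A)/\ker(j^*)\xrightarrow{\sim}\cd(eAe)$, which is precisely the assertion that $j^*$ is a quotient functor.

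\textbf{Main obstacle.} No step is really hard; the subtlest point is the clean identification $j^!\cong j^*$ together with the bimodule bookkeeping needed to match the two natural isomorphisms arising from the two different adjunctions — one must be careful that the comparison map $-\lten_A Ae\to\RHom_A(eA,-)$ is compatible with the bimodule structure of $eA$, so that the resulting identification intertwines the two adjunctions and places $j^!=j^*$ in both adjoint triples simultaneously.
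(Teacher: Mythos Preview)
Your proposal is correct and follows essentially the same route as the paper. The only real difference is that the paper outsources the full faithfulness of $j_!$ to an external reference (J{\o}rgensen's \cite[Theorem 1.6]{JorgensenRecoll}), whereas you verify it directly by computing the unit $\id\to j^!j_!$; for the quotient functor statement both you and the paper reduce to Lemma~\ref{L:Miyachi} via the isomorphism $j^!j_!\cong\id$ (the paper does this by pointing to the argument of Lemma~\ref{L:PropRecoll} (a) and (c)). Your version is more self-contained, the paper's is shorter by citation.
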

\begin{proof}
Already on the abelian level, we have a triple of adjoint functors
\begin{align*}
\begin{xy}
\SelectTips{cm}{}
\xymatrix{\Mod-A \ar[rrrr]|{\begin{smallmatrix} \Hom_{A}(eA, -) \\ = \\ -\ten_{A}Ae \end{smallmatrix} }&&&& \Mod-eAe \ar@/_25pt/[llll]_{-\ten_{eAe}eA}\ar@/^25pt/[llll]^{ \Hom_{eAe}(Ae, - )}}
\end{xy},
\end{align*}
by the adjunction formula. Deriving this, yields the adjoint triple \eqref{E:AdjTriple} above. It follows from \cite[Theorem 1.6.]{JorgensenRecoll} that $j_{!}$ is fully faithful. Now, using the same arguments as in the proof of Lemma \ref{L:PropRecoll} (a) and (c), we deduce that $j^!$ is a quotient functor. 
\end{proof}

\begin{cor}
The adjoint triple in Proposition \ref{P:AdjTriple} extends to a recollement
\begin{align}\label{E:RecollFirstStep}
\begin{xy}
\SelectTips{cm}{}
\xymatrix{\cd_{A/AeA}(A)\ar[rr]&&\cd(A)\ar[rr]|{\, \,  j^!=j^* \, \, }\ar@/^15pt/[ll] \ar@/_15pt/[ll] &&\cd(eAe)\ar@/^15pt/[ll]|{\, \, j_* \, \,}\ar@/_15pt/[ll]|{\, \, j_! \, \,}}
\end{xy},
\end{align}
\end{cor}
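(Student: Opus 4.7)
The plan is to invoke Proposition \ref{P:AbstractRecoll} with $\ct = \cd(A)$ and $\cu = \cd_{A/AeA}(A)$, so the first thing I would do is identify this subcategory as $\ker j^*$. Since $e\in A$ is idempotent, the left $A$-module $Ae$ is a direct summand of $A$, hence projective and in particular flat. Therefore the derived functor $j^*=-\lten_A Ae$ agrees with the underived tensor product $-\otimes_A Ae$, and for every $X\in\cd(A)$ one has $H^i(j^*X)\cong H^i(X)\cdot e$. Consequently $j^*X=0$ if and only if each $H^i(X)$ is annihilated by the two-sided ideal $AeA$, i.e.\ lies in $\mathrm{Mod}\,A/AeA$. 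Thus $\ker j^*=\cd_{A/AeA}(A)$.

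Next I would verify that the Bousfield localization and colocalization functors for the inclusion $\cd_{A/AeA}(A)\hookrightarrow\cd(A)$ both exist. By Proposition \ref{P:AdjTriple}, $j^*$ is a triangulated quotient functor with kernel $\cd_{A/AeA}(A)$, and it fits into an adjoint triple $(j_!,j^*=j^!,j_*)$ in which $j_!$ and $j_*$ are fully faithful. Under the canonical equivalence $\cd(A)/\cd_{A/AeA}(A)\xrightarrow{\sim}\cd(eAe)$ induced by $j^*$, the functors $j_*$ and $j_!$ become, respectively, the right and left adjoints to the quotient functor $Q\colon\cd(A)\to\cd(A)/\cd_{A/AeA}(A)$. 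Hence both Bousfield (co)localization functors exist.

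Applying Proposition \ref{P:AbstractRecoll} now produces the recollement, with the left-hand functors $i_*=i_!$, $i^*$ and $i^!$ given by the canonical inclusion and by the constructions extracted from the glueing triangles
\begin{equation*}
j_!j^*X\longrightarrow X\longrightarrow i_*i^*X\longrightarrow j_!j^*X[1],\qquad i_!i^!X\longrightarrow X\longrightarrow j_*j^*X\longrightarrow i_!i^!X[1],
\end{equation*}
whose arrows are the adjunction unit and counit. That these triangles really land in $\cd_{A/AeA}(A)$ follows from $j^*j_!\cong\mathrm{id}\cong j^*j_*$, which forces $j^*$ to annihilate the third terms.

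The only genuine content is the kernel computation in the first step; everything else is a direct invocation of the abstract recollement machinery developed earlier in this subsection. The main care needed is simply the observation that, because $Ae$ is projective over $A$, no derived subtlety enters the description of $\ker j^*$, so the naive cohomological criterion suffices.
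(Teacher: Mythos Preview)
Your proof is correct and follows exactly the same approach as the paper: identify $\ker j^*$ as $\cd_{A/AeA}(A)$, note that $j^*$ has both adjoints from Proposition~\ref{P:AdjTriple}, and apply Proposition~\ref{P:AbstractRecoll}. The paper merely asserts that ``one checks directly'' that the kernel is $\cd_{A/AeA}(A)$; your flatness argument via $H^i(j^*X)\cong H^i(X)e$ is precisely the intended direct check.
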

\begin{proof}
Proposition \ref{P:AdjTriple} shows that $j^*$ has a left and a right adjoint. One checks directly that it's kernel is $\cd_{A/AeA}(A)$. Now, Proposition \ref{P:AbstractRecoll} completes the proof.
\end{proof}
So far we have a good understanding of the right hand side of the recollement above. Before we continue to make the left hand side more concrete, let us pause for a moment to see which problems one might encounter.
\begin{rem}\label{R:CPS}
Let $A$ be a $k$-algebra and let $e \in A$ be an idempotent. Cline, Parshall \& Scott observed that the following conditions are equivalent, see \cite[Remark 1.2 and Example 1.3.]{Cline-Parshall-Scott88}. 

\noindent (i) The forgetful functor $\Mod-A/AeA \ra \Mod-A$ induces a fully faithful triangle functor $i_{*}\colon \cd(A/AeA) \ra \cd(A)$.

\noindent (ii) There is a triangle equivalence $\cd(A/AeA) \cong \cd_{A/AeA}(A)$. In particular, there is a recollement of triangulated categories
\begin{align*}
\begin{xy}
\SelectTips{cm}{}
\xymatrix{\cd(A/AeA)\ar[rr]|{\, \, i_*=i_! \, \,}&&\cd(A)\ar[rr]|{\, \,  j^!=j^* \, \, }\ar@/^15pt/[ll]|{\, \, i^! \, \,}\ar@/_15pt/[ll]|{\, \, i^*\, \, }&&\cd(eAe)\ar@/^15pt/[ll]|{\, \, j_* \, \,}\ar@/_15pt/[ll]|{\, \, j_! \, \,}}
\end{xy},
\end{align*}
In this situation, the functors on the left are explicitly given by
\[\begin{array}{ll}
i^*=-\lten_A  A/AeA, &
i_*=\RHom_{A/AeA}(A/AeA,-), \\
i_!=-\lten_{A/AeA}A/AeA, & 
i^!=\RHom_A(A/AeA,-).
\end{array}\]
\end{rem}

\begin{ex} \label{Ex:RecollByIdemp1} 
We consider the Auslander algebra $A=\Aus(k[x]/x^3)$ of $\mod-k[x]/(x^3)$. It may be written as the following quiver with relations
\begin{align}
\begin{xy}
\SelectTips{cm}{}
\xymatrix{
1 \ar@/^15pt/[rr]^a && 2 \ar@/^15pt/[rr]^b \ar@/^15pt/[ll]^c &&  \ar@/^15pt/[ll]^d 3
} 
\end{xy} \qquad  \qquad  \text{ca=0=ac-db}.
\end{align}
Here, the indecomposable $k[x]/(x^3)$-modules $k$, $k[x]/(x^2)$ and $k[x]/(x^3)$ correspond to the vertices $1$, $2$ and $3$, respectively. 

\noindent (i) \, \, Let $e=e_{1}+e_{2}$ be the idempotent corresponding to the vertices $1$ and $2$. As an $A$-module $A/AeA$ is isomorphic to $S_{3}$. Since $\Ext^i_{A}(S_{3}, S_{3})=0$ holds for all $i>0$, the functor $i_{*}\colon \cd(A/AeA) \ra \cd(A)$ is fully faithful. In particular, we obtain a recollement of triangulated categories $\bigl( \cd(k), \cd(A), \cd(eAe)\bigl)$, by Remark \ref{R:CPS} above. In this case, $eAe$ is isomorphic to the Auslander algebra of $k[x]/(x^2)$. One can check that more generally there are recollements 
\begin{align}
\begin{xy}
\SelectTips{cm}{}
\xymatrix{\cd(k)\ar[rr]|(.43){\, \, i_*=i_! \, \,}&&\cd\bigl(\Aus(k[x]/x^n)\bigr)\ar[rr]|{\, \,  j^!=j^* \, \, }\ar@/^15pt/[ll]|{\, \, i^! \, \,}\ar@/_15pt/[ll]|{\, \, i^*\, \, }&&\cd\bigl(\Aus(k[x]/x^{n-1})\bigr)\ar@/^15pt/[ll]|{\, \, j_* \, \,}\ar@/_15pt/[ll]|{\, \, j_! \, \,}}
\end{xy},
\end{align}
which are obtained by an analogous construction. 

\noindent (ii) \, \, Let $e=e_{2} \in A$. Then the simple $A$-module $S_{1}$ is in the image of $i_{*}$. One checks that $\Ext^2_{A}(S_{1}, S_{1}) \cong k$. Hence the functor $i_{*}\colon \cd(k \times k) \cong \cd(A/AeA) \ra \cd(A)$ is not full and there is no recollement of derived module categories as in (i).

\noindent (iii) \, \, Let $e=e_{3} \in A$. The simple $A$-module $S_{1}$ is in the image of $i_{*} \colon \cd(A/AeA) \ra \cd(A)$. Using $A/AeA \cong k(
\begin{xy}
\SelectTips{cm}{10}
\xymatrix{
1 \ar@/^5pt/[r]|{\, \, \alpha \, \,} & 2 \ar@/^5pt/[l]|{\, \, \beta \, \, }}
\end{xy}
)/(\alpha \beta, \beta \alpha)
$, 
we see that $\Ext^4_{A/AeA}(S_{1}, S_{1}) \cong k$ holds. Since $A$ has global dimension $2$, the functor $i_{*}$ is not faithful and there is no recollement of derived module categories as in (i).

\noindent (iv) \, \, Example (iii) may be seen as a special case of the following situation. Let $\ce$ be a $k$-linear Frobenius category with finitely many indecomposable objects $E_{0}, \ldots, E_{t}$ and Hom-finite stable category $\underline{\ce}$. Let $A=\End_{\ce}(\bigoplus_{i=0}^t E_{i})$ be the Auslander algebra of $\ce$ and $e \in A$ be the idempotent corresponding to the identity endomorphism of the projective generator of $\ce$. Then the stable Auslander algebra $\underline{A}:= A/AeA \cong \ul{\End}_{\ce}(\bigoplus_{i=0}^t E_{i})$ is a finite-dimensional selfinjective $k$-algebra, see e.g. \cite[Proposition 4.10]{IWnewtria}. In particular, if $\ul{A}$ is not semi-simple, then it has infinite global dimension. Let us assume that this is true and that $A$ has finite global dimension (e.g.~$\ce=\MCM(R)$, for some local complete Gorenstein $k$-algebra $(R, \mathfrak{m})$, with $k \cong R/\mathfrak{m}$ and an isolated singularity in $\mathfrak{m}$, see \cite[Theorem A.1]{Auslander84}). Then the forgetful functor $i_{*} \colon \cd(A/AeA) \ra \cd(A)$ is not faithful. Indeed, the infinite global dimension of $A/AeA$ implies that for all $j \in \N$ there exists $i>j$ such that 
$\Ext_{A/AeA}^i(\bigoplus_{k=1}^s S_{k}, \bigoplus_{k=1}^s S_{k}) \neq 0$, where the $S_{i}$ form a complete set of simple $A/AeA$-modules.   
\end{ex}

 The next result uses the technique of dg algebras to make the recollement \eqref{E:RecollFirstStep} more explicit. 
Recall that a $k$-algebra $A$ can be viewed as a dg $k$-algebra concentrated in degree $0$ and in this case $\cd(A)=\cd(\Mod-A)$.

\begin{prop}\label{p:recollement-from-projective-general-case}
Let $A$ be a $k$-algebra and $e\in A$ an idempotent. There is a
dg $k$-algebra $B$ with a homomorphism of dg $k$-algebras
$f \colon A\rightarrow B$ and a recollement of derived categories
\begin{align}
\begin{xy}
\SelectTips{cm}{}
\xymatrix{\cd(B)\ar[rr]|{\, \, i_*=i_! \, \,}&&\cd(A)\ar[rr]|{\, \,  j^!=j^* \, \, }\ar@/^15pt/[ll]|{\, \, i^! \, \,}\ar@/_15pt/[ll]|{\, \, i^*\, \, }&&\cd(eAe)\ar@/^15pt/[ll]|{\, \, j_* \, \,}\ar@/_15pt/[ll]|{\, \, j_! \, \,}}
\end{xy},
\end{align}
such that the following conditions are satisfied
\begin{itemize}
 \item[(a)] the
adjoint triples $(i^*,i_*=i_!,i^!)$ and $(j_!,j^!=j^*,j_*)$ are given by
\[\begin{array}{ll}
i^*=-\lten_A B, & j_!=-\lten_{eAe} eA,\\
i_*=\RHom_{B}(B,-), &
j^!=\RHom_{A}(eA,-),\\
i_!=-\lten_{B}B, & j^*=-\lten_A Ae,\\
i^!=\RHom_A(B,-),& j_*=\RHom_{eAe}(Ae,-),
\end{array}\]
where $B$ is considered as an $A$-$A$-bimodule via the morphism $f$;
 \item[(b)] $B^i=0$ for $i>0$;
 \item[(c)] $H^0(B)$ is isomorphic to $A/AeA$.
\end{itemize}
\end{prop}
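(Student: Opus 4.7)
The plan is to combine the recollement \eqref{E:RecollFirstStep} produced in Proposition~\ref{P:AdjTriple} with a dg algebra enhancement of its left-hand term, and then to extract (b) and (c) from the canonical `glueing' triangle attached to the free module $A$.

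First, I would appeal to the theory of dg enhancements of smashing subcategories, as discussed in Subsection~\ref{ss:Recollements}. The inclusion $\cd_{A/AeA}(A)\hookrightarrow\cd(A)$ fits into the recollement \eqref{E:RecollFirstStep}, and the realisation results of Nicol\'as and of Nicol\'as--Saor\'in (see \cite{Nicolas,NicolasSaorin09}) guarantee the existence of a dg $k$-algebra $B$ and a homomorphism of dg $k$-algebras $f\colon A\to B$ such that the restriction functor along $f$ is fully faithful with essential image $\cd_{A/AeA}(A)$. The condition that restriction be fully faithful means precisely that $f$ is a homological epimorphism, so $i_*=i_!$ is identified with the restriction $\RHom_B(B,-)=-\lten_B B$. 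Its standard left and right adjoints along a morphism of dg algebras are $i^*=-\lten_A B$ and $i^!=\RHom_A(B,-)$, which are exactly the formulas in~(a); the right-hand adjoint triple is the one already produced in Proposition~\ref{P:AdjTriple}.

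Second, to verify (b) and (c), I would apply the first glueing triangle of condition~4) of a recollement to the object $A\in\cd(A)$:
\[
j_!j^*A\longrightarrow A\longrightarrow i_*i^*A\longrightarrow (j_!j^*A)[1].
\]
Here $j^*A=A\lten_A Ae=Ae$, hence $j_!j^*A=Ae\lten_{eAe}eA$ with structure map the multiplication $\mu\colon Ae\otimes_{eAe}eA\to A$; on the other hand $i^*A=A\lten_A B=B$, and $i_*i^*A$ is simply $B$ regarded as an object of $\cd(A)$ via $f$. Hence the underlying complex of $B$ sits in a distinguished triangle
\[
Ae\lten_{eAe}eA\longrightarrow A\longrightarrow B\longrightarrow (Ae\lten_{eAe}eA)[1]
\]
in $\cd(A)$. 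Running the long exact cohomology sequence and using that $A$ is concentrated in degree $0$ together with the vanishing $H^n(Ae\lten_{eAe}eA)=\Tor^{eAe}_{-n}(Ae,eA)=0$ for $n>0$, I would immediately obtain $H^n(B)=0$ for all $n>0$, which is~(b), and an identification of $H^0(B)$ with $\coker(\mu)=A/AeA$, which is~(c).

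The main obstacle lies in the first step: one must ensure that the smashing subcategory $\cd_{A/AeA}(A)\subseteq\cd(A)$ is not merely equivalent, as a triangulated category, to $\cd(B)$ for some dg algebra $B$, but that this equivalence is implemented by restriction along an honest morphism of dg algebras $A\to B$, so that all functors in the resulting recollement take the tensor-Hom form displayed in (a). This compatibility is precisely what the Nicol\'as--Saor\'in realisation provides, by producing $B$ as (a model for) the dg endomorphism algebra of a cofibrant bimodule resolution of the object $i_*i^*A$ featuring in the triangle above; once this point is granted, the cohomological computation in the last paragraph is formal.
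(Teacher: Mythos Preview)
Your approach is essentially identical to the paper's: invoke Nicol\'as--Saor{\'i}n to realise the left-hand side of the recollement \eqref{E:RecollFirstStep} as $\cd(B')$ for a dg algebra $B'$ equipped with a morphism $f'\colon A\to B'$, and then apply the glueing triangle to $A$ to compute the cohomology of $B'$ via the long exact sequence. The only point you have glossed over is the distinction between condition~(b) as stated, namely $B^i=0$ for $i>0$, and what your cohomology argument actually yields, namely $H^i(B)=0$ for $i>0$. The paper closes this gap by replacing the dg algebra $B'$ produced by Nicol\'as--Saor{\'i}n with its standard truncation $B:=\sigma^{\leq 0}B'$ and $f:=\sigma^{\leq 0}f'$; since $H^i(B')$ vanishes in positive degrees, the inclusion $\sigma^{\leq 0}B'\hookrightarrow B'$ is a quasi-isomorphism of dg algebras, so nothing is lost. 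This is a routine fix, but without it your sentence ``$H^n(B)=0$ for all $n>0$, which is~(b)'' is not literally correct.
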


\begin{rem}
This result is known to hold in greater generality, see~\cite[Section 2 and 3]{Dwyer02} (which uses different terminologies). For convenience, we include a proof.
\end{rem}

\begin{proof}
The recollement \eqref{E:RecollFirstStep} yields a TTF triple on $\cd(A)$, by Lemma \ref{L:PropRecoll} (d). Since $k$ is a field, $A$ is a flat $k$-algebra. Therefore \cite[Theorem 4]{NicolasSaorin09} in conjunction with the first paragraph after Lemma 4 in loc.~cit. yields a dg $k$-algebra $B'$ together with a morphism of dg $k$-algebras $f'\colon A \ra B'$ such that there is a recollement of derived categories

\[
\begin{xy}
\SelectTips{cm}{}
\xymatrix{\cd(B')\ar[rr]|{\, \, i_*=i_! \, \, }&&\cd(A)\ar[rr]|{\, \, j^!=j^* \, \,}\ar@/^15pt/[ll]|{\, \, i^! \, \,}\ar@/_15pt/[ll]|{\, \, i^* \, \,}&&\cd(eAe)\ar@/^15pt/[ll]|{\, \, j_* \, \,}\ar@/_15pt/[ll]|{\, \, j_! \, \,}}
\end{xy}
\]
and the adjoint triples $(j_!,j^!=j^*,j_*)$ and  $(i^*,i_*=i_!,i^!)$
are given as in (a) (with $B$ replaced by $B'$). We claim that $H^i(B')=0$ for $i>0$ and that $H^0(f')$ induces an isomorphism of algebras
$A/AeA\cong H^0(B')$. Then taking $B=\sigma^{\leq 0}B'$
and $f=\sigma^{\leq 0}f'$ finishes the proof for (a), (b) and (c).

In order to prove the claim, we take the distinguished triangle associated to $A$.
\begin{eqnarray}\label{e:canonical-triangle-projective}
\begin{array}{cc}
\begin{tikzpicture}[description/.style={fill=white,inner sep=2pt}]
    \matrix (n) [matrix of math nodes, row sep=1em,
                 column sep=2.5em, text height=1.5ex, text depth=0.25ex,
                 inner sep=0pt, nodes={inner xsep=0.3333em, inner
ysep=0.3333em}]
    {  
       Ae\lten_{eAe}eA & A & B' & Ae\lten_{eAe}eA[1] \\
       j_!j^!(A) && i_*i^*(A) \\
    };
\draw[->] (n-1-1.east) -- node[scale=0.75, yshift=2.5mm] [midway] {$\varphi$} (n-1-2.west);    
\draw[->] (n-1-2.east) -- node[scale=0.75, yshift=2.5mm] [midway] {$f'$} (n-1-3.west);  
\draw[->] (n-1-3.east) -- (n-1-4.west); 
\draw[-] ($(n-1-1.south)+(-0.5mm,0)$) -- ($(n-2-1.north)+(-0.5mm,0)$);
\draw[-] ($(n-1-1.south)+(.5mm,0)$) -- ($(n-2-1.north)+(0.5mm,0)$);
\draw[-] ($(n-1-3.south)+(-1mm,1mm)$) -- ($(n-2-3.north)+(-1mm,0)$);
\draw[-] ($(n-1-3.south)+(0mm,1mm)$) -- ($(n-2-3.north)+(0mm,0)$);
\end{tikzpicture}
\end{array}\end{eqnarray}
By applying $H^0$ to the triangle (\ref{e:canonical-triangle-projective}), we obtain a long exact cohomology sequence
\[
\begin{tikzpicture}[description/.style={fill=white,inner sep=2pt}]
    \matrix (n) [matrix of math nodes, row sep=1em,
                 column sep=2.25em, text height=1.5ex, text depth=0.25ex,
                 inner sep=0pt, nodes={inner xsep=0.3333em, inner
ysep=0.3333em}]
    {  
       \, & H^i(Ae\lten_{eAe}eA) & H^i(A) & H^i(B') & H^{i+1}(Ae\lten_{eAe}eA) & \,  \\
    };
\draw[dotted, -] (n-1-1.east) -- (n-1-2.west);    
\draw[->] (n-1-2.east) -- node[scale=0.75, yshift=3mm] [midway] {$H^i(\varphi)$} (n-1-3.west);  
\draw[->] (n-1-3.east) -- node[scale=0.75, yshift=3mm] [midway] {$H^i(f')$} (n-1-4.west); 
\draw[->] (n-1-4.east) -- (n-1-5.west); 
\draw[dotted, -] (n-1-5.east) -- (n-1-6.west); 
\end{tikzpicture}
\]
If $i>0$, both $H^i(A)$ and $H^{i+1}(Ae\lten_{eAe}eA)$ are trivial, and hence $H^i(B')$ is trivial.
If $i=0$, then $H^0(B')\cong H^0(A)/\im(H^0(\varphi))$.
But $H^0(Ae\lten_{eAe}eA)\cong Ae\ten_{eAe}eA$ and the image of $H^0(\varphi)$ is precisely $AeA$.
Therefore, $H^0(f')\colon A\rightarrow H^0(B')$ induces an isomorphism $H^0(B')\cong A/AeA$, which is clearly a homomorphism of algebras.
\end{proof}

\begin{cor}\label{c:restriction-and-induction} Keep the assumptions and notations as in Proposition~\ref{p:recollement-from-projective-general-case}.
\begin{itemize}
 \item[(a)]
The functor
$i^*$ induces an equivalence of triangulated categories 
\begin{align} \label{E:Neeman} \left(K^b(\proj-A)/\thick(eA)\right)^\omega\stackrel{\sim}{\longrightarrow}\per(B), \end{align}
where $(-)^\omega$ denotes the idempotent completion (see \cite{BalmerSchlichting} and Subsection \ref{Sub:Idemp}).

 \item[(b)]  Let $\cd_{fd,A/AeA}(A)$ be the full subcategory of $\cd_{fd}(A)$ consisting of
complexes with cohomologies supported on $A/AeA$. The functor $i_*$
induces a triangle equivalence
$\cd_{fd}(B)\stackrel{\sim}{\longrightarrow}\cd_{fd,A/AeA}(A)$.
Moreover, the latter category coincides with
$\thick_{\cd(A)}(\fdmod-A/AeA)$.
\end{itemize}
\end{cor}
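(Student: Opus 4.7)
\textbf{Proof plan for Corollary~\ref{c:restriction-and-induction}.}

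For part (a), the strategy is to apply Neeman's localization theorem for compactly generated triangulated categories (\cite{Neeman92a}) to the recollement constructed in Proposition~\ref{p:recollement-from-projective-general-case}. First I would observe that $\cd(A)$ is compactly generated by $A$, with compact objects $\per(A) = K^b(\proj-A)$, and that the thick subcategory $\Tria_{\cd(A)}(eA)$ is generated by the compact object $eA \in \per(A)$. The key identification to make is $\im(j_!) = \Tria_{\cd(A)}(eA)$: the inclusion $\supseteq$ is clear since $j_!(eAe) = eA$ and $j_!$ preserves coproducts; the reverse follows because every object of $\cd(eAe)$ lies in $\Tria(eAe)$. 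Consequently, the recollement identifies $\cd(B)$ with the Verdier quotient $\cd(A)/\Tria(eA)$ via $i^*$. Neeman's theorem then asserts that $\cd(A)/\Tria(eA)$ is compactly generated and its subcategory of compact objects is (canonically and up to the idempotent completion) equivalent to $(\per(A)/\thick(eA))^\omega$. Since the compact objects of $\cd(B)$ are $\per(B)$, we obtain the desired equivalence (\ref{E:Neeman}).

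For part (b), the approach is more elementary. The functor $i_* = \RHom_B(B,-)$ is just restriction along the dg algebra homomorphism $f\colon A \to B$, so it preserves the underlying complex of $k$-vector spaces; in particular $H^i(i_*(M)) \cong H^i(M)$ as $k$-modules for every $M \in \cd(B)$. Thus $i_*$ sends $\cd_{fd}(B)$ into $\cd_{fd}(A)$, and since $j^*i_* = 0$ the image lies in $\cd_{fd,A/AeA}(A)$. Conversely, every $X \in \cd_{fd,A/AeA}(A)$ satisfies $j^*(X) = 0$ (because $H^i(X)e = H^i(X \lten_A Ae) = 0$ as each $H^i(X)$ is annihilated by $AeA$), so by the recollement $X$ lies in the essential image of the fully faithful functor $i_*$, and the preimage has the same finite-dimensional cohomologies, hence lies in $\cd_{fd}(B)$.

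It remains to show $\cd_{fd,A/AeA}(A) = \thick_{\cd(A)}(\fdmod-A/AeA)$. The inclusion $\supseteq$ is immediate from the fact that $\cd_{fd,A/AeA}(A)$ is a thick subcategory of $\cd(A)$ containing $\fdmod-A/AeA$. For the converse, given $X \in \cd_{fd,A/AeA}(A)$, I would induct on the number of nonzero cohomology groups using the standard $t$-structure on $\cd(A)$. For each $i$ the cohomology $H^i(X)$ is a finite-dimensional module over $A/AeA$, hence lies in $\fdmod-A/AeA$. The standard truncation triangles $\sigma^{\leq i}X \to X \to \sigma^{> i}X \to \sigma^{\leq i}X[1]$ reduce the claim to the case where $X$ has cohomology concentrated in a single degree, which is then (a shift of) an object of $\fdmod-A/AeA$.

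The principal technical obstacle is (a): one must invoke Neeman's compact-object criterion for Verdier quotients, and one must verify carefully that the functor produced there coincides with the functor induced by $i^*$. The need to pass to the idempotent completion on the left reflects the fact that $K^b(\proj-A)/\thick(eA)$ need not itself be idempotent complete, as exploited systematically elsewhere in this thesis (see Proposition~\ref{P:IdempCompl}).
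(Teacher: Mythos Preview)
Your proposal is correct and follows essentially the same approach as the paper. For part (a) you and the paper both identify $\im(j_!)=\Tria(eA)$, deduce $\cd(B)\simeq\cd(A)/\Tria(eA)$ from the recollement, and then invoke Neeman's compact-object localization theorem; for part (b) the paper packages the truncation argument as Proposition~\ref{p:standard-t-str}(b) applied to $\cd_{fd}(B)$ and then transports it via $i_*$, whereas you run the same standard-truncation induction directly in $\cd(A)$, but the substance is identical.
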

\begin{proof}
(a) Since $j_{!}(eAe)=eAe \lten_{eAe} eA \cong eA$, $eAe$ generates $\cd(eAe)$ and $j_{!}$ commutes with direct sums, we obtain $\im \, j_{!}=\Tria(eA)$. Hence, Lemma \ref{L:PropRecoll} (c) shows that
$i^*$ induces a triangle equivalence \begin{align} \label{E:BigQuot} \cd(A)/\Tria(eA)\cong\cd(B). \end{align} As a projective $A$-module $eA$ is compact in $\cd(A)$. By definition, $\Tria(eA)$ is the smallest localizing subcategory containing $eA$. Since $\cd(A)$ is compactly generated, Neeman's interpretation (and generalization) \cite[Theorem 2.1]{Neeman92a} of Thomason \& Trobaugh's and Yao's Localization Theorem shows that  restricting \eqref{E:BigQuot} to the subcategories of compact objects yields a triangle equivalence $K^b(\proj-A)/\thick(eA) \ra \per(B)$ up to direct summands\footnote{Without the compactness assumptions, Neeman's Theorem may fail in general. For example, let $A$ be the Auslander algebra of $R=k[x]/(x^2)$  and let $e=e_{2} \in A$ be the idempotent corresponding to the identity endomorphism of the projective injective object $k[x]/(x^2)$. Then the simple $A$-module $S_{2}$ is a compact object in $\cd(A)$, since $A$ has finite global dimension. But the image $j^*(S_{2})=\RHom_{A}(eA, S_{2}) \cong k[x]/(x)$ is not perfect over $R$. We see that the quotient functor $j^*$ does not respect compact objects. In particular, Neeman's Theorem fails for $\cd_{A/AeA}(A) \subseteq \cd(A)$. Note that the former category is compactly generated, since $\cd_{A/AeA}(A) \cong \cd(B)$. However, it cannot be expressed as the smallest localizing subcategory of a set of compact objects in $\cd(A)$. To see this, consider the unbounded complex
\begin{align}
P_{2}^*= \, \cdots \ra P_{2} \ra P_{2} \ra \cdots \ra P_{2} \ra \cdots,
\end{align} 
with maps defined by mapping the top of $P_{2}$ to the socle of $P_{2}$. All cohomologies of this complex are isomorphic to the simple $A$-module $S_{1}$. In particular, they are contained in $\Mod-A/AeA$. Since there are no non-zero $A$-module homomorphisms $S_{1} \ra P_{2}$, passing to the homotopy category of injective $A$-modules shows that $\Hom_{\cd(A)}(S_{1}[s], P_{2}^*)=0$ for all $s \in \Z$. Hence, $P_{2}^* \notin \Tria(S_{1})$, for otherwise the orthogonality would imply that $P_{2}^*=0$, contradicting the fact that $P_{2}^*$ has non-zero cohomologies. But all perfect (=compact) objects in $\cd(A)$ which are contained in $\cd_{A/AeA}(A)$ are already contained in $\Tria(S_{1})$. This proves our claim. 
}. Hence the equivalence \eqref{E:Neeman} follows. 

(b) By construction of the dg algebra $B$ in Proposition \ref{p:recollement-from-projective-general-case},  $i_*$ induces a triangle equivalence between $\cd(B)$ and $\cd_{A/AeA}(A)$, i.e.~the full subcategory of $\cd(A)$ consisting of
complexes of $A$-modules which have cohomologies supported on $A/AeA$.
Moreover, $i_*$ restricts to a triangle equivalence between $\cd_{fd}(B)$
and $i_*(\cd_{fd}(B))$. The latter category is contained in
$\cd_{fd}(A)$ because $i_*$ is the restriction along the
homomorphism $f\colon A\rightarrow B$. So $i_*(\cd_{fd}(B))$ is contained in
$\cd_{fd}(A)\cap\cd_{A/AeA}(A)=\cd_{fd,A/AeA}(A)$, which is equivalent to $\thick_{\cd(A)}(\fdmod-A/AeA)$. By
Proposition~\ref{p:standard-t-str} (b), $\fdmod-H^0(B)$ generates
$\cd_{fd}(B)$. But $i_*$ induces an equivalence from $\fdmod-H^0(B)$
to $\fdmod-A/AeA$. Therefore,
$i_*(\cd_{fd}(B))=\thick_{\cd(A)}(\fdmod-A/AeA)$, and hence
$i_*(\cd_{fd}(B))=\thick_{\cd(A)}(\fdmod-A/AeA)=\cd_{fd,A/AeA}(A)$.
\end{proof}

\begin{rem}\label{r:CompInterprOfRelSingCat}
The triangle equivalences \eqref{E:Neeman} and $\cd(B) \cong \cd_{A/AeA}(A)$ show that 
\begin{align}
\left(\cd_{A/AeA}(A)\right)^{c} \cong \left(K^b(\proj-A)/\thick(eA)\right)^\omega.
\end{align}
In particular, if $A$ is a non-commutative resolution of a complete Gorenstein singularity $R$, then the relative singularity category $\Delta_{R}(A) \cong \cd^b(\mod-A)/\thick(eA)$ is idempotent complete by Proposition \ref{P:IdempCompl}. Hence there is a triangle equivalence
\begin{align}
\left(\cd_{A/AeA}(A)\right)^{c} \cong \Delta_{R}(A).
\end{align}
\end{rem}

\newpage

\section{Global relative singularity categories}\label{S:Global}
In this section, we associate a triangulated quotient category - called the \emph{relative singularity category} - to a (non-commutative) resolution of a scheme $X$. This is similar in spirit and has relations to the
triangulated category of singularities of Buchweitz and Orlov. The main result of this section is a certain localization property, which reduces the description of the relative singularity category to the case of affine schemes. Our result generalizes a theorem obtained by Orlov in the commutative setting \cite{Orlov11}. This is joint work with Igor Burban \cite{BurbanKalck11}.
\subsection{Definition}
We briefly introduce the setup for this section.
Throughout this section, let $k$ be an algebraically closed field. Let $X$ be a separated excellent Noetherian  scheme over $\kk$ of finite Krull dimension such that any
coherent sheaf on $X$ is a quotient of a locally free sheaf. We are mainly interested in two special cases: $X$ is quasi-projective or the spectrum of a local \emph{complete} ring $(R, \mathfrak{m})$. Let  $Z=\Sing(X)$ be
 the singular locus of $X$. Let $\kF'$ be a coherent sheaf on $X$, $\kF = \kO \oplus \kF'$ and $\kA := {\mathcal End}_X(\kF)$. We consider the non-commutative ringed space $\XX = (X, \kA)$. Our main interest lies in the  category of coherent $\kA$-modules $\Coh(\XX)$ (or sometimes $\Coh(\kA)$), which consists of coherent sheaves of $\co_{X}$-modules, which have an $\kA$-module structure.
 Note that $\kF \cong \kA e$ is a \emph{locally projective} coherent left $\kA$--module, where $e \in \kA$ is the idempotent corresponding to the identity of $\kO$.
\begin{ex} \label{E:AuslanderSheaf}
Let $X$ be a reduced curve with only nodal and cuspidal singularities, i.e. the completion $\widehat{\co}_{x}$ of the local ring of a point $x \in \Sing(X)$ is either isomorphic to $k\llbracket x, y \rrbracket/(xy)$ (node) or $k\llbracket x, y \rrbracket/(x^2-y^3)$ (cusp). For example, the irreducible cubic curves in $\P^2$ defined by the homogeneous polynomials $Y^2Z-X^2(X+Z)$ (nodal) and $X^2Z-Y^3$ (cuspidal) are of this form. Let $\cf'=\ci$ be the ideal sheaf of the singular locus (with respect to its reduced scheme structure). Then $\ca={\mathcal End}_{X}(\cf)$ is called the \emph{Auslander sheaf} of $X$. The completions of the local rings $\widehat{\ca}_{x}$ are Morita equivalent to the Auslander algebra of the category of maximal Cohen--Macaulay $\co_{x}$-modules $\MCM(\co_{x})$. In particular, $\widehat{\ca}_{x}$ has global dimension at most $2$, by work of Auslander \& Roggenkamp \cite{AuslanderRoggenkamp} (see also Auslander \cite[Theorem A.1]{Auslander84}). This implies that $\Coh(\kA)$ has global dimension at most $2$, see for example \cite[Theorem 1 (4)]{Tilting}. We will come back to this example and give an explicit  description of the corresponding relative singularity category in the special case where $X$ has only nodal singularities in Subsection \ref{ss:NodalBlock}.
\end{ex} 
 
\begin{defn}
A complex $C$ of coherent $\co_{X}$-modules is called \emph{perfect} if it is isomorphic in  $\cd^b(\Coh(X))$ to a bounded complex of locally free sheaves of finite rank. The full subcategory of perfect complexes is denoted by $\Perf(X)$. If $X=\Spec(R)$ is affine, then $\Perf(X)\cong \thick(R)$.
\end{defn}

\begin{prop}\label{Prop:Embedding}
 There is a full embedding given by the derived functor
\[\FF:=\kF \lten_{X}- \colon \Perf(X) \rightarrow \cd^b(\Coh(\XX)).\]
\end{prop}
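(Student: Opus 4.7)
The plan is to identify an exact right adjoint $\GG$ to $\FF$ and to verify that the unit $\mathsf{id} \to \GG \FF$ is an isomorphism on $\Perf(X)$; by a standard argument this implies that $\FF$ is fully faithful.

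First I would make $\FF$ precise and check that it lands in $\cd^b(\Coh \XX)$. The sheaf $\kF$ carries its natural $\kA$-module structure, and the idempotent $e \in \kA$ corresponding to the projection $\kF \twoheadrightarrow \kO_X \hookrightarrow \kF$ yields an isomorphism of $\kA$-$\kO_X$-bimodules $\kF \cong \kA e$. In particular $e\kF = \kO_X$ as a subsheaf of $\kF$ and $e \kA e = \cEnd_X(\kO_X) = \kO_X$. For $P \in \Perf(X)$, working locally with a bounded resolution of $P$ by locally free sheaves of finite rank shows that $\FF(P) = \kF \lten_X P$ is represented by a bounded complex of coherent $\kA$-modules, and hence defines an object of $\cd^b(\Coh \XX)$.

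Next I would introduce the candidate right adjoint $\GG \colon \cd^b(\Coh \XX) \to \cd^b(\Coh X)$, $\GG(\kM) := e\kM$. Because $e$ is an idempotent, the underlying functor $e(-) \cong \Hom_{\kA}(\kF,-)$ on $\Coh \XX$ is exact (it picks out a direct summand), so no derivation is required and $\GG$ extends termwise to the bounded derived category. The classical tensor-hom adjunction, together with the exactness of $\GG$ and the perfectness of $P$, then yields a bifunctorial isomorphism
\[
\Hom_{\cd^b(\Coh \XX)}(\FF(P), \kM) \cong \Hom_{\cd^b(\Coh X)}(P, \GG(\kM))
\]
for every $P \in \Perf(X)$ and $\kM \in \cd^b(\Coh \XX)$.

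The crucial computation is then the unit of this adjunction. For $P \in \Perf(X)$ one has
\[
\GG\FF(P) \;=\; e\bigl(\kF \lten_X P\bigr) \;\cong\; (e\kF) \lten_X P \;\cong\; \kO_X \lten_X P \;\cong\; P,
\]
where the first isomorphism uses that multiplication by $e$ acts componentwise on a finite locally free resolution of $P$ and the second uses $e\kF = \kO_X$. Tracing through the adjunction shows that under this identification the unit becomes the identity, so $\Hom_{\cd^b(\Coh X)}(P, Q) \to \Hom_{\cd^b(\Coh \XX)}(\FF(P), \FF(Q))$ is an isomorphism for all $P, Q \in \Perf(X)$, which gives full faithfulness. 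The hard part will be making these derived manipulations rigorous in the non-commutative sheaf setting, in particular verifying that $e(-)$ commutes with $\lten_X$ against $\kF$; this however reduces to a fiberwise assertion on a locally free resolution of $P$ and should present no essential obstacle.
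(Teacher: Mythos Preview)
Your argument is correct and rests on the same underlying algebraic fact as the paper's, namely that $e\kA e \cong \kO_X$ (equivalently, $eAe \cong \End_A(Ae)$ on each affine piece), but you package it differently. The paper proceeds locally: it first checks full faithfulness on the generators $\mathsf{Vect}(X)\subseteq \Perf(X)$ by restricting to an open affine cover, where the claim reduces to the isomorphism $eAe \cong \End_A(Ae)$, and then extends to all of $\Perf(X)$ by d\'evissage (Beilinson's Lemma), since $\mathsf{Vect}(X)$ generates $\Perf(X)$ as a triangulated category. Your approach instead identifies the global right adjoint $\GG = e(-)$ and shows the unit is an isomorphism in one stroke. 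The advantage of the paper's route is that it makes the local nature of the computation explicit and avoids any discussion of derived adjunctions in the sheafy setting; the advantage of yours is that it is cleaner once the adjunction is in place and does not require a separate d\'evissage step. Either way the key input is the same idempotent identity.
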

\begin{proof}
For the full subcategory $\mathsf{Vect}(X) \subseteq \Perf(X)$ of locally free sheaves of finite rank, fully faithfulness may be checked on an open affine cover, where it follows from the algebra isomorphism $eAe \cong \End_{A}(Ae)$. Here $A=\ca(U)$ for an open affine subset $U \subseteq X$ and $e \in A$ is the idempotent satisfying $\kF(U)=Ae$. Since $\mathsf{Vect}(X)$ generates $\Perf(X)$ as a triangulated category, the claim follows from Beilinson's Lemma.
See also \cite[Theorem 2]{Tilting}.
\end{proof}

Denote by $\cP(X) \subseteq \cd^b(\Coh(\XX))$ the essential image of $\Perf(X)$ under the embedding $\FF$. $\cP(X)$ admits the following local characterization, see \cite[Prop. 2]{Tilting}.
\begin{align}
\cP(X) = \Bigl\{\kH^\bullet \in
\cd^b\bigl(\Coh(\XX)\bigr) \, \Big| \, \kH_x^\bullet \in
\mathrm{Im}\Bigl(K^b\bigl(\add(\kF_x)\bigr) \lar  \cd^b\bigl(\kA_x-\mod\bigr)\Bigr)\Bigr\}
\end{align}

We are now able to give the main definition of this section.
\begin{defn}
In the notations of the setup above, the idempotent completion $(-)^\omega$ (see Subsection \ref{Sub:Idemp}) of the triangulated quotient category
\begin{align}
\Delta_{X}(\XX):=\left(\frac{\cd^b(\Coh(\XX))}{\Perf(X)}\right)^\omega \cong \left(\frac{\cd^b(\Coh(\XX))}{\cP(X)}\right)^\omega
\end{align}
is called \emph{(global) relative singularity category} of $\XX$ relative to $X$.
\end{defn}

\begin{rem}
If the abelian category $\Coh(\XX)$ has finite global dimension, then one may view $\cd^b(\Coh(\XX))$ as a categorical resolution of $X$ in the spirit of works of Van den Bergh \cite{VandenBergh04}, Kuznetsov \cite{Kuznetsov} and Lunts \cite{Lunts}. Moreover, it is common to view $\Perf(X)$ as the smooth part of the category $\cd^b(\Coh(X))$.
Hence, $\Delta_{X}(\XX)$ is a measure for the size of the resolution relative to the smooth part $\Perf(X)$. This may be summarized as follows
\begin{align}
\cd^b(\Coh(\XX)) \overset{\text{resolution}}{\supseteq} \Perf(X) \overset{\text{smooth part}}{\subseteq} \cd^b(\Coh(X)).
\end{align}
Recall that the quotient category corresponding to the embedding on the left is the \emph{triangulated category of singularities} $\cd_{sg}(X)$ of Buchweitz and Orlov. It is natural to study the other quotient category $\Delta_{X}(\XX)$ on the left as well as their mutual relations.
 \end{rem}

\subsection{The localization property}
The aim of this subsection is to prove the following localization result.
\begin{thm}\label{t:main-global}
In the notations of the setup above, assume additionally that $\cf$ is locally free on $X\setminus \Sing(X)$. Then there is an equivalence of triangulated categories
\begin{align}
\Delta_{X}(\XX) \cong \bigoplus_{i=1}^n \Delta_{\widehat{\co}_{x_{i}}}\left(\widehat{\ca}_{x_{i}}\right):=  \bigoplus_{i=1}^n \left(\frac{\cd^b(\widehat{\ca}_{x_{i}}-\mod)}{\Perf(\widehat{\co}_{x_{i}})}\right)^\omega.
\end{align}
\end{thm}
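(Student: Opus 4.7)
\textbf{Proof proposal for Theorem \ref{t:main-global}.}

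The plan is to build the comparison functor from completions, reduce to a local-global statement about membership in $\cP(X)$, and then verify fully faithfulness and essential surjectivity separately. Fix the singular points $\{x_{1},\ldots,x_{n}\}=\Sing(X)$. For each $i$, localisation at $x_{i}$ followed by $\mathfrak{m}_{x_{i}}$-adic completion is an exact functor $\Coh(\XX)\to\widehat{\ca}_{x_{i}}\mathrm{-}\mod$; passing to bounded derived categories it sends $\Perf(X)$ into $\Perf(\widehat{\co}_{x_{i}})$, since perfectness is preserved by flat base change. Hence each $\widehat{(-)}_{x_{i}}$ descends to the Verdier quotient and, after taking idempotent completions (which is functorial by Remark \ref{R:IdempCompl}), we obtain
\begin{align*}
\Phi\colon \Delta_{X}(\XX)\longrightarrow \bigoplus_{i=1}^{n}\Delta_{\widehat{\co}_{x_{i}}}\!\left(\widehat{\ca}_{x_{i}}\right).
\end{align*}

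The technical core is the following local-to-global lemma: an object $\kH^{\bullet}\in\cd^{b}(\Coh(\XX))$ lies in $\cP(X)$ if and only if $\widehat{\kH}^{\bullet}_{x_{i}}\in\Perf(\widehat{\co}_{x_{i}})$ for each $i=1,\ldots,n$. One direction is immediate. For the converse, I would use the local characterisation of $\cP(X)$ recalled just after Proposition \ref{Prop:Embedding}: for any closed point $x\notin\Sing(X)$ the sheaf $\cf$ is locally free, so $\ca_{x}$ is Morita equivalent to $\co_{x}$ and the image of $K^{b}(\add \cf_{x})$ in $\cd^{b}(\ca_{x}\mathrm{-}\mod)$ coincides with the image of $\Perf(\co_{x})\cong \cd^{b}(\co_{x}\mathrm{-}\mod)$ (the latter equality uses that $\co_{x}$ is regular at smooth points). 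At each singular point $x_{i}$, faithfully flat descent along $\co_{x_{i}}\to\widehat{\co}_{x_{i}}$ combined with the Morita correspondence $\add \widehat{\cf}_{x_{i}}\leftrightarrow \proj-\widehat{\co}_{x_{i}}$ identifies the condition on $\widehat{\kH}^{\bullet}_{x_{i}}$ with $\kH^{\bullet}_{x_{i}}\in\mathrm{Im}(K^{b}(\add \cf_{x_{i}}))$. A Noetherian argument reducing from stalks at closed points to stalks at all points then produces the desired global membership in $\cP(X)$.

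With this lemma in hand, fully faithfulness of $\Phi$ reduces to a computation with roofs in the quotient. Given a morphism $f\colon\kF^{\bullet}\to\kG^{\bullet}$ in $\cd^{b}(\Coh(\XX))$ that vanishes in every $\Delta_{\widehat{\co}_{x_{i}}}(\widehat{\ca}_{x_{i}})$, one may for each $i$ complete a diagram $\kF^{\bullet}\xleftarrow{s_{i}}\widetilde{\kF}^{\bullet}_{i}\to\kF^{\bullet}$ with $s_{i}$ a quasi-isomorphism and cone of $\widetilde{\kF}^{\bullet}_{i}\to\kG^{\bullet}$ sent into $\Perf(\widehat{\co}_{x_{i}})$; by the local-to-global lemma these local witnesses can be assembled (using that morphisms of coherent sheaves glue from data at finitely many points and on the complement of those points, where everything is already perfect) into a single $s\colon\widetilde{\kF}^{\bullet}\to\kF^{\bullet}$ whose cone lies in $\cP(X)$, showing $f=0$ in $\Delta_{X}(\XX)$. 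Fullness is proved analogously by lifting a roof at each $\widehat{\co}_{x_{i}}$ to a roof over $X$, using that coherent $\widehat{\ca}_{x_{i}}$-modules are, up to Artin-type approximation, restrictions of coherent $\ca$-modules supported on a neighbourhood of $x_{i}$.

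Essential surjectivity is where I expect most of the work. Given a tuple $(\widehat{M}_{i})_{i}$ with $\widehat{M}_{i}\in\cd^{b}(\widehat{\ca}_{x_{i}}\mathrm{-}\mod)$, the strategy is to reduce to the case where each $\widehat{M}_{i}$ is a single finitely generated module, then truncate modulo a sufficiently high power $\mathfrak{m}_{x_{i}}^{N}$ — which only modifies $\widehat{M}_{i}$ by an object of $\Perf(\widehat{\co}_{x_{i}})$ after taking high enough syzygies, and hence does not change its class in the relative singularity category — to obtain a module of finite length supported at $x_{i}$. Such a finite length module is automatically the completion of a coherent $\ca$-module supported at $x_{i}$, and these local contributions for distinct singular points can be added because their supports are disjoint; this produces a preimage in $\Delta_{X}(\XX)$. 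The passage to the idempotent completion is exactly what allows us to realise an arbitrary direct summand of such a lift, so $\Phi$ is essentially surjective after idempotent completion. The main obstacle is the syzygy/truncation reduction in this last paragraph — one must verify that replacing $\widehat{M}_{i}$ by a finite length module really changes its class only by a perfect complex over $\widehat{\co}_{x_{i}}$, which relies on the isolated singularity hypothesis and on the Gorenstein-like behavior of $\widehat{\ca}_{x_{i}}$ as an order over $\widehat{\co}_{x_{i}}$.
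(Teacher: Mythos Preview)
Your local-to-global lemma is correct --- it is essentially the local characterisation of $\cP(X)$ recalled after Proposition~\ref{Prop:Embedding} --- but it only shows that $\Phi$ detects zero objects, not that it is faithful. The real gap is the ``assemble local witnesses into a single $s$'' step: a roof witnessing $\widehat{f}_{x_{i}}=0$ lives in $\cd^{b}(\widehat{\ca}_{x_{i}}-\mod)$, and there is no descent mechanism for derived-category data along the collection $\{\Spec\widehat{\co}_{x_{i}}\}_{i}\cup\{U\}$; formal neighbourhoods of closed points together with the open complement do not form a cover for which you have gluing in the derived category, and you have produced no compatibility data on overlaps. The essential surjectivity argument has a parallel gap: modding out by $\mathfrak{m}_{x_{i}}^{N}$ changes $\widehat{M}_{i}$ by the submodule $\mathfrak{m}_{x_{i}}^{N}\widehat{M}_{i}$, which has no reason to be perfect over a singular $\widehat{\co}_{x_{i}}$, and the ``Gorenstein-like behaviour of $\widehat{\ca}_{x_{i}}$'' you appeal to is precisely what is not assumed in this generality.

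The paper sidesteps both problems by routing through the intermediate category $\cd^{b}_{Z}(\Coh(\XX))/\cP_{Z}(X)$ with $Z=\Sing(X)$. Fully faithfulness of $\cd^{b}_{Z}/\cP_{Z}\hookrightarrow\cd^{b}/\cP$ is proved by a Koszul-complex factorisation trick (Proposition~\ref{P:FullyFaithf}), and essential surjectivity after idempotent completion comes from Thomason--Trobaugh's extension theorem for perfect complexes on $U=X\setminus Z$ (Theorem~\ref{T:keylocaliz}). The block decomposition then drops out of the honest \emph{abelian} equivalence $\Coh_{Z}(\XX)\cong\prod_{i}A_{i}-\fdmod$, so no gluing of roofs is ever required. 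Passage to completions is trivial on finite-length modules (Lemma~\ref{L:Lemma2.9}), and the final step from $\widehat{A}_{i}-\fdmod$ to $\widehat{A}_{i}-\mod$ is obtained by re-running the same localisation theorem in the affine case $X=\Spec(\widehat{O}_{i})$ (Corollary~\ref{C:fdtofg}).
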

\begin{rem}
If $\co_{x_{i}}$ is a complete Gorenstein ring and $\ca_{x_{i}}$ has finite global dimension, then the quotient category $\cd^b(\widehat{\ca}_{x_{i}}-\mod)/\Perf(\widehat{\co}_{x_{i}})$ is idempotent complete by Proposition \ref{P:IdempCompl}.  
\end{rem}

The proof takes several steps, which are enclosed in the lemmas and propositions below.

\begin{lem}[{\cite[Proposition 1.7.11]{KashiwaraSchapira2}}]\label{Lem:KashiwaraSchapira}
Let $\kB$ be an abelian category and $\kC \subseteq \kB$ be a full abelian subcategory such that the following properties hold.
\begin{itemize}
\item[i)] If $V \rightarrow W \rightarrow X \rightarrow Y \rightarrow Z$ is exact in $\kB$ and $V, W, Y, Z$ are in $\kC$ then $X$ is in $\kC$.
\item[ii)] For every monomorphism $\phi\colon M \rightarrow N$ with $M$ in $\kC$ and $N$ in $\kB$, there exists $K$ in $\kC$ and a morphism $\psi\colon N \rightarrow K$ such that the composition $\psi\phi$ is a monomorphism.
\end{itemize}
Then the canonical functor $\cd^b(\kC) \rightarrow \cd^b_{\kC}(\kB)$ is an equivalence of triangulated categories. Here $\cd^b_{\kC}(\kB)$ denotes the full subcategory consisting of complexes with cohomology in $\kC$.
\end{lem}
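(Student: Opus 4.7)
The plan is to reduce the statement to the following resolution lemma: every complex $X^\bullet \in \Com^b_{\kC}(\kB)$ (bounded over $\kB$ with cohomology in $\kC$) admits a quasi-isomorphism $X^\bullet \to Y^\bullet$ with $Y^\bullet \in \Com^b(\kC)$. Once this is in hand, the equivalence follows by a routine calculus-of-fractions argument.

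To prove the resolution lemma I would proceed by a double induction on the amplitude of $X^\bullet$ and on the length of the rightmost nontrivial piece. Assume $X^\bullet$ is concentrated in degrees $[a,b]$. The terminal cohomology $H^b(X^\bullet)$ lies in $\kC$ by assumption, so the cocycle $Z^b = X^b/\im d^{b-1}$ is related to objects of $\kC$ via the exact sequence $0 \to B^b \to Z^b \to H^b \to 0$. Applying condition (ii) to a suitable monomorphism with source in $\kC$ (first $H^b \hookrightarrow K^b$ for some $K^b \in \kC$, then propagating leftwards via the kernels/images appearing in $X^\bullet$), one produces in one step a new complex $\widetilde X^\bullet$ with $\widetilde X^b \in \kC$ together with a quasi-isomorphism $X^\bullet \to \widetilde X^\bullet$. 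Condition (i), applied to the five-term exact sequences comparing old and new terms, guarantees that the intermediate terms introduced in this process remain in $\kC$, and that $\widetilde X^\bullet$ still has cohomology in $\kC$. Induction then replaces the truncation $\sigma^{< b}\widetilde X^\bullet$ by a complex in $\Com^b(\kC)$ and a standard cone/pushout construction glues the result to $\widetilde X^b$ to yield the desired $Y^\bullet \in \Com^b(\kC)$.

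Given the resolution lemma, essential surjectivity of $\cd^b(\kC) \to \cd^b_{\kC}(\kB)$ is immediate. For fullness, a morphism between objects of $\Com^b(\kC)$ in $\cd^b_{\kC}(\kB)$ is represented by a roof $X^\bullet \xleftarrow{s} Z^\bullet \xrightarrow{f} Y^\bullet$ with $s$ a quasi-isomorphism and $Z^\bullet \in \Com^b_{\kC}(\kB)$; applying the resolution lemma to $Z^\bullet$ (relative to the morphism $s$) yields an equivalent roof inside $\Com^b(\kC)$. Faithfulness is dual: a map $f\colon X^\bullet \to Y^\bullet$ of complexes in $\kC$ that becomes zero in $\cd^b_{\kC}(\kB)$ factors, up to homotopy, through a quasi-isomorphism in $\Com^b_{\kC}(\kB)$, which the resolution lemma refines to one in $\Com^b(\kC)$, so that $f$ already vanishes in $\cd^b(\kC)$.

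The main obstacle is the resolution lemma. The subtle point is not the existence of a single extension from condition (ii), but organising these extensions compatibly along the differentials of $X^\bullet$ so that the resulting complex is genuinely quasi-isomorphic to $X^\bullet$ and its terms and cohomology remain in $\kC$. This is exactly the role of condition (i): it is the closure property that ensures all the "middle" objects arising in the comparison exact sequences between $X^\bullet$ and the modified complex are themselves in $\kC$, so the inductive step can be iterated. Boundedness of $Y^\bullet$ is automatic since the only cohomology we must accommodate lives in finitely many degrees.
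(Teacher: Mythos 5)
The paper cites this lemma directly from Kashiwara--Schapira without giving a proof, so there is no paper argument to compare yours against; your attempt must stand on its own. Your overall strategy is the standard and correct one: reduce to a resolution lemma (every $X^\bullet \in \Com^b_{\kC}(\kB)$ admits a quasi-isomorphism to some $Y^\bullet \in \Com^b(\kC)$), then deduce essential surjectivity, fullness and faithfulness via the calculus of fractions.

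Your sketch of the resolution lemma, however, has a genuine directional error. Condition (ii) takes a \emph{monomorphism} $M\hookrightarrow N$ with $M\in\kC$, $N\in\kB$ and produces a map $N\to K$ into a $\kC$-object with $M\to K$ still a mono; this is tailored to building the quasi-isomorphism $X^\bullet\to Y^\bullet$ from the \emph{left} (lowest degree $a$): there $\ker d_X^a=H^a(X^\bullet)$ is a genuine subobject of $X^a$ lying in $\kC$, (ii) gives $Y^a$, and pushing out $d_X^a$ along $X^a\to Y^a$ yields a new target into which a $\kC$-object (namely the cokernel $Y^a/f^a(\ker d_X^a)$, in $\kC$ by (i)) maps monomorphically, so (ii) applies again. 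You instead start at the \emph{top} degree $b$ and "propagate leftwards," but there $H^b(X^\bullet)=X^b/B^b$ is a \emph{quotient} of $X^b$, not a subobject, so there is no mono with domain in $\kC$ and codomain $X^b$ to feed into (ii); the mono "$H^b\hookrightarrow K^b$" you invoke never touches $X^b$ and is vacuous since $H^b$ already lies in $\kC$. (You also misstate $Z^b$: in the top degree $\ker d^b=X^b$, while $X^b/\im\,d^{b-1}$ is $H^b$, so the exact sequence you write down is garbled.) Projecting $X^b\twoheadrightarrow H^b$ does not help either, since the induced differential $X^{b-1}\to H^b$ is then zero and the degree-$(b-1)$ cohomology of the modified complex is wrong; repairing this would require a dual (epi) version of (ii), which you do not have. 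Finally, the claims that "condition (i) guarantees the intermediate terms remain in $\kC$" and that "boundedness of $Y^\bullet$ is automatic" are precisely the nontrivial parts of the lemma: in the correct left-to-right construction, (i) must be invoked to show that the cokernel fed back into (ii) at each step lies in $\kC$, and again to show that once the construction passes degree $b$ one can truncate $Y^\bullet$ at a finite degree with terms still in $\kC$. Neither is formal, and a proof that asserts them without verification has not addressed the actual content of the statement.
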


We apply this lemma to prove the following.

\begin{lem}\label{P:DercCatandSupp}
Let $X$ and $\XX=(X, \kA)$ be as in the setup above and $Z \subseteq X$ be a closed subscheme.
Denote by $\Coh_{Z}(\XX) \subseteq \Coh(\XX)$ the full subcategory of coherent left $\kA$-modules with support contained in $Z$ and by $\cd^b_{Z}(\Coh(\XX)) \subseteq \cd^b(\Coh(\XX))$ the full subcategory of complexes whose cohomology is supported in $Z$. Then the canonical functor
\[\cd^b\bigl(\Coh_{Z}(\XX)\bigr) \rightarrow \cd^b_{Z}\bigl(\Coh(\XX)\bigr)\] is an equivalence of triangulated categories.
\end{lem}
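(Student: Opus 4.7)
The strategy is to apply the Kashiwara--Schapira lemma (Lemma~\ref{Lem:KashiwaraSchapira}) with $\kB = \Coh(\XX)$ and $\kC = \Coh_{Z}(\XX)$; this reduces the proof to checking the two hypotheses (i) and (ii) of that lemma.

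Hypothesis (i) is a direct support computation. Given a 5-term exact sequence $V \to W \to X' \to Y \to Z'$ in $\Coh(\XX)$ with $V, W, Y, Z'$ supported on $Z$, write $L = \im(W \to X') = \ker(X' \to Y)$. Since $L$ is a quotient of $W$, its support lies in $Z$; since $X'/L \cong \im(X' \to Y)$ is a subsheaf of $Y$, its support also lies in $Z$. The short exact sequence $0 \to L \to X' \to X'/L \to 0$ then forces $\Supp(X') \subseteq Z$.

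For hypothesis (ii), let $\phi\colon M \hookrightarrow N$ be a monomorphism with $M \in \Coh_{Z}(\XX)$ and $N \in \Coh(\XX)$, and let $\ci \subseteq \co_{X}$ denote the ideal sheaf of $Z$ (say with its reduced structure). Because $\kA = {\mathcal End}_{X}(\cf)$ is an $\co_{X}$-algebra, the structure map $\co_{X} \to \kA$ lands in the centre, so $\ci^{n} N$ is an $\kA$-submodule of $N$ for every $n \geq 0$. I will take $K := N/\ci^{n} N$ for a suitably large $n$ and let $\psi$ be the canonical surjection. By construction $K$ is a coherent $\kA$-module annihilated by $\ci^{n}$, hence $\Supp(K) \subseteq V(\ci) = Z$ and $K \in \Coh_{Z}(\XX)$. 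It remains to arrange that $M \cap \ci^{n} N = 0$, so that $\psi\phi$ is injective, and for this I invoke the Artin--Rees lemma. Locally on an affine open $U = \Spec R \subseteq X$, Artin--Rees applied to the inclusion of finitely generated $R$-modules $M(U) \subseteq N(U)$ and the ideal $I = \ci(U)$ yields $k_{U}$ with $M(U) \cap I^{n+k_{U}} N(U) \subseteq I^{n} M(U)$ for all $n \geq 0$. Since $M$ is coherent and set-theoretically supported on $Z$, there exists $m_{U}$ with $I^{m_{U}} M(U) = 0$, hence $M(U) \cap I^{m_{U}+k_{U}} N(U) = 0$. Using that $X$ is Noetherian, and therefore quasi-compact, a finite affine cover together with the maximum of the local exponents produces a single global $n$ with $M \cap \ci^{n} N = 0$.

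The main technical point is really just the Artin--Rees step; everything else is routine bookkeeping once one notes that centrality of $\co_{X}$ in $\kA$ allows us to work throughout with the commutative ideal $\ci$ rather than a two-sided $\kA$-ideal, so that the commutative Noetherian theory applies verbatim on affine pieces and then globalises by quasi-compactness.
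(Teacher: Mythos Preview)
Your proof is correct and follows the same overall scheme as the paper---both apply Lemma~\ref{Lem:KashiwaraSchapira}---but the verification of hypothesis~(ii) is genuinely different. The paper works on an affine cover, takes an injective envelope $M_i \hookrightarrow I_i$ over each $A_i = \kA|_{U_i}$, extends $\phi_i$ along it to get $\alpha_i\colon N_i \to I_i$, and then uses the fact that injective envelopes localise ($E(M)_{\mathfrak p}\cong E(M_{\mathfrak p})$) to see that $K_i=\im(\alpha_i)$ is supported on $Z$; the global $K$ is then obtained by pushing forward and taking the image inside $\bigoplus j(i)_*\widetilde{K_i}$. Your argument instead produces $K=N/\ci^nN$ directly as a global coherent $\kA$-quotient and reduces injectivity of $\psi\phi$ to $M\cap\ci^nN=0$, which you get from Artin--Rees on affine pieces plus quasi-compactness. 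Your route is more elementary (no injective envelopes, no localisation lemma for them) and yields a cleaner global $K$; the paper's route is perhaps more conceptual in that it explains \emph{why} such a $K$ must exist via injectivity, and would also work in settings where one does not have a convenient central ideal cutting out $Z$.
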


\begin{proof}
We want to apply Lemma \ref{Lem:KashiwaraSchapira} to $\Coh_{Z}(\XX) \subseteq \Coh(\XX)$. The first property is satisfied since localization is an exact functor. To prove the second property let $\phi\colon \kM \rightarrow \kN$ be a monomorphism in $\Coh(\XX)$ and let $\kM$ be supported on $Z$. Let $\{U_{i}=\Spec(O_{i})\}$ be an open affine cover of $X$. We consider the restrictions $M_{i}=\kM\big|_{U_{i}}$ and $N_{i}=\kN\big|_{U_{i}}$ as finitely generated $A_{i}$-modules, where $A_{i}:=\kA\big|_{U_{i}}$. In particular, we have monomorphisms $\phi_{i}\colon M_{i} \ra N_{i}$. Let  $\theta_{i}\colon M_{i} \rightarrow I_{i}$ be an injective envelope of $M_{i}$. Then there exists a morphism $\alpha_{i}\colon N_{i} \rightarrow I_{i}$ such that
$\alpha_{i} \phi_{i} = \theta_{i}$. Note that $K_{i} := \mathsf{Im}(\alpha_{i})$ is a left Noetherian $A_{i}$-module.
As in \cite[Lemma 3.2.5]{BrunsHerzog} one can  show that for any $\mathfrak{p} \in \Spec(O_{i})$ and any $M \in A_{i}-\mod$ we
have: $E(M)_\mathfrak{p} \cong E(M_\mathfrak{p})$, where $E(-)$ denotes the injective envelope. Hence, $(K_{i})_{\mathfrak{p}}=0$ for all $\mathfrak{p} \notin \Supp(M_{i}) \subseteq Z$. Let $j(i)\colon (U_{i}, \kA\big|_{U_{i}}) \ra (X, \kA)$. Then $\kK':= \bigoplus_{i } j(i)_{*} \widetilde{K_{i}}$ is an quasi-coherent $\kA$-module and $\Supp(\kK') \subseteq Z$. Moreover, the $\alpha_{i}$ yield a morphism $\alpha\colon \kN \ra \kK'$ such that $\alpha\phi$ is a monomorphism. Then $\kK=\mathsf{Im}(\alpha)$ is coherent, since $\kN$ is coherent and $\kK'$ is quasi-coherent.  This shows the second property and completes the proof.
\end{proof}

\begin{rem}
One has to be careful in 
 Lemma \ref{P:DercCatandSupp}. It is important that $\kA$ is a coherent module over its center $Z(\kA)$. This is satisfied in our situation since $\kA$ is a coherent $\kO$-module and $\kO \subseteq Z(\kA)$. Let $(O,\mathfrak{m},k)$ be a $k$-algebra, $X=\Spec(O)$ be the corresponding affine scheme and $Z=\{\mathfrak{m}\}$. Then Lemma \ref{P:DercCatandSupp} Êyields an equivalence
\begin{align}\label{F:Equiv}
\cd^b(A-\fdmod) \rightarrow \cd^b_{\rm fd}(A-\mod),
\end{align}
where $A=\End_{O}(F)$ is the endomorphism algebra of a finitely generated $O$-module $F$ and hence finitely generated over $O$. In particular, $A$ is finitely generated over its center $Z(A) \supseteq O$.
 
If  $A$ is assumed to be just a left Noetherian $k$-algebra, the canonical functor (\ref{F:Equiv}) need not be an equivalence in general. The following example was pointed out by Bernhard Keller. 

\begin{ex}
Let $\mathfrak{g}$
be a finite dimensional simple Lie algebra  over $\mathbb{C}$ and $U = U(\mathfrak{g})$ its universal enveloping algebra. Then $U$ is left Noetherian, see for instance \cite[Section I.7]{McConnellRobson}.
 By Weyl's complete reducibility theorem, the category $U-\fdmod$ of finite dimensional
left $U$--modules is semi-simple. However, higher extensions between finite dimensional modules
do not necessarily vanish in $U-\mod$, see for instance \cite[p. 122]{Humphreys}.
 In particular, the canonical functor \[\cd^b(U-\fdmod) \rightarrow \cd^b_{\rm fd}(U-\mod)\] is \emph{not full}.
 \end{ex}
 \end{rem}

We need the following lemma, see {\cite[Proposition 1.6.10]{KashiwaraSchapira2}}.

\begin{lem}\label{Lem:Fullyfaithful}
Let $\kB$ and $\kU$ be full triangulated subcategories of a triangulated category $\kC$ and set $\kU_{\kB}:= \kB \cap \kU$. Assume that any morphism $\phi\colon U \rightarrow B$ with $U \in \kU$ and $B \in \kB$ factors through an object in $\kU_{\kB}$. Then the canonical functor
\[\frac{\kB}{\kU_{\kB}} \rightarrow \frac{\kC}{\kU}\] is fully faithful.
\end{lem}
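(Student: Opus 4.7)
The plan is to verify fullness and faithfulness separately using the Verdier calculus of fractions, representing morphisms in $\kC/\kU$ and $\kB/\kU_{\kB}$ by roofs and exploiting the hypothesis to replace objects of $\kU$ sitting in $\kC$ by objects of $\kU_{\kB}$ sitting inside $\kB$.

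\textbf{Faithfulness.} Let $\ol{f}\colon B' \to B''$ be a morphism in $\kB$ whose image in $\kC/\kU$ is zero. Then there exists $t\colon C \to B'$ in $\kC$ with $\mathrm{cone}(t) \in \kU$ and $ft = 0$. Embedding $t$ in a distinguished triangle $C \xrightarrow{t} B' \xrightarrow{\pi} U \to C[1]$ with $U \in \kU$, the vanishing $ft=0$ yields a factorisation $f = g\pi$ for some $g\colon U \to B''$. By hypothesis, $g$ factors as $U \xrightarrow{h} U' \xrightarrow{g'} B''$ with $U' \in \kU_{\kB}$, so $f = g' \circ (h\pi)$, where $\phi := h\pi\colon B' \to U'$ lies entirely in $\kB$. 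Completing $\phi$ to a triangle $V \xrightarrow{q} B' \xrightarrow{\phi} U' \to V[1]$ inside $\kB$, we get $\mathrm{cone}(q) \cong U' \in \kU_{\kB}$ and $fq = g'\phi q = 0$, whence $\ol{f} = 0$ already in $\kB/\kU_{\kB}$.

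\textbf{Fullness.} A morphism $B' \to B''$ in $\kC/\kU$ is represented by a roof $B' \xleftarrow{s} C \xrightarrow{f} B''$ with $\mathrm{cone}(s) \in \kU$. Complete $s$ to a distinguished triangle $U_{0} \xrightarrow{\alpha} C \xrightarrow{s} B' \to U_{0}[1]$ with $U_{0} \in \kU$. The composite $f\alpha\colon U_{0} \to B''$ goes from $\kU$ to $\kB$, so by hypothesis factors as $f\alpha = \gamma\beta$ with $\beta\colon U_{0} \to U''$ and $\gamma\colon U'' \to B''$ for some $U'' \in \kU_{\kB}$. Embed $\gamma$ in a triangle $U'' \xrightarrow{\gamma} B'' \xrightarrow{h} D \to U''[1]$; since $U'', B'' \in \kB$ the cone $D$ lies in $\kB$, and since $U''[1] \in \kU$ the map $h$ becomes invertible both in $\kC/\kU$ and in $\kB/\kU_{\kB}$. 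The equality $f\alpha = \gamma\beta$ is exactly the commutativity needed to apply (TR3), producing a morphism of triangles whose middle square reads $hf = gs$ for some $g\colon B' \to D$. Consequently $h^{-1}g = fs^{-1}$ in $\kC/\kU$, and the roof $B' \xrightarrow{g} D \xleftarrow{h} B''$, which lives entirely in $\kB$ and has $\mathrm{cone}(h) \in \kU_{\kB}$, defines a morphism in $\kB/\kU_{\kB}$ mapping to the original one.

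\textbf{Main obstacle.} The only delicate point is the production of the map $g\colon B' \to D$ in the fullness argument; it requires that the ``correction'' object $U''$ found via the hypothesis can be fitted into a triangle $U'' \to B'' \to D$ inside $\kB$, so that (TR3) applies simultaneously in $\kC$ and places $D$ inside $\kB$. Both features follow from the two inputs of the lemma: $\kB$ is a full \emph{triangulated} subcategory (so cones of maps in $\kB$ stay in $\kB$), and $\kU_{\kB}$ is large enough to factor all $\kU \to \kB$ morphisms. Once this step is in place, the rest is a routine application of the calculus of fractions.
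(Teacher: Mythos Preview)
Your proof is correct and follows the standard calculus-of-fractions argument. Note, however, that the paper does not supply its own proof of this lemma: it simply cites \cite[Proposition 1.6.10]{KashiwaraSchapira2}. So there is no in-paper argument to compare against; your proof is essentially the proof one finds in that reference, carried out via explicit roof manipulations and (TR3).
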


\begin{prop}\label{P:FullyFaithf}
Let $\cd^b_Z\bigl(\Coh(\XX)\bigr)$ be the full subcategory of $\cd^b\bigl(\Coh(\XX)\bigr)$
consisting of complexes whose cohomology is supported in $Z$ and
$\cP_Z(X) = \cP(X) \cap  \cd^b_Z\bigl(\Coh(\XX)\bigr)$. Then the canonical functor
\[
\HH\colon \quad
\frac{\cd^b_Z\bigl(\Coh(\XX)\bigr)}{\cP_Z(X)} \lar
\frac{\cd^b\bigl(\Coh(\XX)\bigr)}{\cP(X)}
\]
is fully faithful.
\end{prop}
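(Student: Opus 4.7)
The plan is to apply Lemma~\ref{Lem:Fullyfaithful} with $\kC=\cd^b(\Coh(\XX))$, $\kU=\cP(X)$, and $\kB=\cd^b_Z\bigl(\Coh(\XX)\bigr)$, noting that $\kU_\kB=\kU\cap\kB=\cP_Z(X)$ by definition. What remains to verify is the factoring hypothesis of that lemma: every morphism $\phi\colon U\to B$ with $U\in\cP(X)$ and $B\in\cd^b_Z(\Coh(\XX))$ admits a factorization $U\to T\to B$ with $T\in\cP_Z(X)$.

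I would first reduce this to a purely commutative factoring problem on $X$. Writing $U=\FF(P)$ with $P\in\Perf(X)$, the functor $\FF=\kF\lten_{\co_X}(-)$ has an exact right adjoint $\mathsf{G}=\cHom_\kA(\kF,-)$, because $\kF=\kA e$ is a locally projective $\kA$-module; this right adjoint preserves the support condition, so $\mathsf{G}(B)$ lies in $\cd^b_Z(\Coh(X))$, and the unit $P\to\mathsf{G}\FF(P)$ is an isomorphism (compare Proposition~\ref{Prop:Embedding}). By adjunction $\phi$ corresponds to a morphism $\tilde\phi\colon P\to \mathsf{G}(B)$ in $\cd^b(\Coh(X))$, and any factorization $\tilde\phi=P\to P'\to\mathsf{G}(B)$ with $P'\in\Perf_Z(X)$ translates into the required factorization $\phi=\FF(P)\to\FF(P')\to B$ by applying $\FF$ and post-composing with the counit $\FF\mathsf{G}(B)\to B$. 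Here $\FF(P')\in\cP_Z(X)$ because $\kF$ contains $\co_X$ as a direct summand and is therefore a faithful $\co_X$-module, so the support of $\FF(P')$ equals that of $P'$. The question thus reduces to the commutative statement that any $\tilde\phi\colon P\to Q$ with $P\in\Perf(X)$ and $Q\in\cd^b_Z(\Coh(X))$ factors through some $P'\in\Perf_Z(X)$.

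To handle the commutative factoring, by Lemma~\ref{P:DercCatandSupp} I may represent $Q$ by a bounded complex of coherent sheaves supported on $Z$, and the hypothesis that every coherent sheaf on $X$ is a quotient of a locally free sheaf of finite rank provides a bounded-above locally free resolution $\widetilde Q\to Q$. Since $P$ is perfect, the morphism $\tilde\phi$ lifts to a chain map $P\to\widetilde Q$ (unique up to homotopy), and as $P$ is bounded this lift factors through a brutal truncation $P':=\brutegeq{-N}\widetilde Q$ for $N$ sufficiently large; this truncation is a bounded complex of locally free sheaves of finite rank, hence perfect. The delicate step, which is the main obstacle, is to arrange that $P'$ has cohomology supported on $Z$, i.e.~that $P'\in\Perf_Z(X)$: a naïve brutal truncation introduces a new cohomology sheaf $\ker(d^{-N}_{\widetilde Q})$ that coincides with a syzygy of $Q$ on $X\setminus Z$ and is therefore generally \emph{not} supported on $Z$. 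Overcoming this requires a Koszul-type modification of the resolution using generators of the ideal sheaf of $Z$, combined with the finite Krull dimension of $X$ to bound the length of the construction; the Noetherian hypothesis, the finiteness of $\dim X$, and the existence of enough finite-rank locally free sheaves all enter precisely at this stage.
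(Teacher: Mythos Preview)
Your reduction to the commutative statement via the adjunction $(\FF,\mathsf G)$ is a legitimate alternative to the paper's route; the paper instead uses the adjunction $(\eta^*,\eta_*)$ for the change of rings $\kA\to\kA/\kI^t$ directly in $\cd^b(\Coh(\XX))$, but after unwinding it too reduces to a purely commutative factoring on $X$. So your first half is fine.

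The gap is in the commutative step. Your attempt to resolve $Q$ by locally free sheaves and then brutally truncate does not work, and you correctly diagnose why: the extra cohomology of the truncation has no reason to be supported on $Z$. However, the remedy is \emph{not} to ``Koszul-modify the resolution of $Q$'' and it does not require finite Krull dimension. The point is that you never need to resolve $Q$ at all. Instead, represent $Q$ by a bounded complex of sheaves annihilated by $\kI^t$ for some $t\ge1$ (Lemma~\ref{P:DercCatandSupp}), so $Q=\eta_*Q'$ for the closed immersion $\eta\colon(Z,\kO_X/\kI^t)\hookrightarrow X$. By the $(\eta^*,\eta_*)$ adjunction, $\tilde\phi$ factors as $P\xrightarrow{\xi_P}\eta_*\eta^*P\xrightarrow{\eta_*\psi}Q$, so it suffices to factor the \emph{unit} $\xi_P\colon P\to\kO_X/\kI^t\otimes_X P$ through $\Perf_Z(X)$. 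Now choose a surjection $\kK\twoheadrightarrow\kI^t$ with $\kK$ locally free of finite rank $n$ and form the Koszul complex $\kK^\bullet$ of the composite $\kK\to\kI^t\hookrightarrow\kO_X$; this is a bounded complex of locally free sheaves with $H^0(\kK^\bullet)\cong\kO_X/\kI^t$, and it is exact outside $\Supp(\kO_X/\kI^t)\subseteq Z$. Tensoring the canonical factorisation $\kO_X[0]\to\kK^\bullet\to\kO_X/\kI^t[0]$ with $P$ gives
\[
P\;\longrightarrow\;\kK^\bullet\otimes_X P\;\longrightarrow\;\kO_X/\kI^t\otimes_X P,
\]
and $P':=\kK^\bullet\otimes_X P$ is perfect with cohomology supported on $Z$. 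The boundedness of $\kK^\bullet$ comes from the finite rank of $\kK$ (i.e.\ from $\kI^t$ being coherent), not from the Krull dimension of $X$.
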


\begin{proof} Our approach is inspired by a recent paper of Orlov \cite{Orlov11}.  By Lemma \ref{Lem:Fullyfaithful} it is sufficient to show that for any $\kP^\bullet \in \cP(X)$,
$\kC^\bullet \in \cd^b_Z\bigl(\Coh(\XX)\bigr)$ and $\varphi \colon \kP^\bullet \rightarrow \kC^\bullet$ there
exists $\kQ^\bullet \in \cP_Z(X)$ and a factorization
\[
\begin{xy}\SelectTips{cm}{}
\xymatrix{
\kP^\bullet \ar[rr]^{\varphi} \ar[dr]_{\varphi'} & & \kC^\bullet \\
& \kQ^\bullet \ar[ur]_{\varphi''}&
}
\end{xy}
\]
By Lemma \ref{P:DercCatandSupp}, we know that the functor $\cd^b\bigl(\Coh_{Z}(\XX)\bigr)
\rightarrow \cd^b_{Z}\bigl(\Coh(\XX)\bigr)$ is an equivalence of categories. So without loss of generality, we may
 assume that $\kC^\bullet$ is a bounded complex of objects of $\Coh_Z(\XX)$.
Let $\kI=\kI_{Z}$ be the ideal sheaf of  $Z$. Then there is a $t \ge 1$ such that
$\kI^t$ annihilates every term of  $\kC^\bullet$. Consider the ringed space
$\ZZ = \bigl(Z, \kA/\kI^t)$. Then we have a morphism of ringed spaces $\eta \colon \ZZ \rightarrow \XX$ and
an adjoint pair
\[
\left\{
\begin{array}{ll}
\eta_* = \mathsf{forget}\colon             & D^-\bigl(\Coh(\ZZ)\bigr) \rightarrow D^-\bigl(\Coh(\XX)\bigr) \\
\eta^* = \kA/\kI^t \otimes_\kA \,-\, \colon & D^-\bigl(\Coh(\XX)\bigr) \rightarrow D^-\bigl(\Coh(\ZZ)\bigr).
\end{array}
\right.
\]
By our choice of $t$, there exists $\kE^\bullet \in \cd^b\bigl(\Coh(\ZZ)\bigr)$ such that
$\kC^\bullet = \eta_*(\kE^\bullet)$. Moreover, we have an isomorphism
$
\gamma \colon \Hom_{\ZZ}\bigl(\eta^*\kP^\bullet, \kE^\bullet\bigr) \lar
\Hom_{\XX}\bigl(\kP^\bullet, \eta_*(\kE^\bullet)\bigr)
$
such that for $\psi \in \Hom_{\ZZ}\bigl(\eta^*\kP^\bullet, \kE^\bullet\bigr)$ the corresponding
morphism $\varphi = \gamma(\psi)$ fits into the commutative diagram
\[
\begin{xy}\SelectTips{cm}{}
\xymatrix{
\kP^\bullet \ar[rr]^{\xi_{\kP^\bullet}} \ar[rd]_{\varphi} & & \eta_* \eta^* \kP^\bullet  \ar[ld]^{\eta_*(\psi)} \\
& \eta_* \kE^\bullet
}
\end{xy}
\]
where $\xi \colon \mathbbm{1}_{D^-(\XX)} \rightarrow \eta_* \eta^*$ is the unit of adjunction.
Thus it is sufficient to find a factorization of the morphism $\xi_{\kP^\bullet}$ through
an object of $\cP_Z(X)$.

By Definition of $\cP(X)$, there exists a bounded complex of locally free
$\kO_X$--modules $\kR^\bullet$ such that $\kP^\bullet \cong \kF \otimes_X \kR^\bullet$ in
$\cd^b\bigl(\Coh(\XX)\bigr)$. Note
that we have the following commutative diagram in the category $\mathsf{Com}^b(\XX)$
of bounded complexes of coherent left $\kA$--modules:
\[
\begin{xy}\SelectTips{cm}{}
\xymatrix
{
\kF \otimes_X \kR^\bullet \ar[rr]^{\mathbbm{1} \otimes \theta_{\kR^\bullet}} \ar[rd]_{\zeta_{\kR^\bullet}} & & \kF \otimes_X \bigl(\kO/\kI^t \otimes_X \kR^\bullet\bigr) \ar[ld]^{\cong} \\
& \kA/\kI^t \otimes_\kA \bigl(\kF \otimes_X \kR^\bullet\bigr) &
}
\end{xy}
\]
where $\zeta_{\kR^\bullet} = \xi_{\kP^\bullet}$ in $D^-\bigl(\Coh(\XX)\bigr)$ and
$\theta_{\kR^\bullet} \colon \kR^\bullet \rightarrow \kO/\kI^t \otimes_X \kR^\bullet$ is the canonical map.
Since any coherent sheaf on $X$ is a quotient of a locally free sheaf. Let $\pi\colon \kK \rightarrow \kI^t$ be an epimorphism with $\kK$ locally free of rank $n$ and $\iota\colon \kI^t \rightarrow \kO$ be the embedding. Let $d_{1}=\iota\pi\colon \kK \rightarrow \kO$. The Koszul complex $\kK^{\bullet}$ associated to $d_{1}$ is defined as
\[\kK^\bullet=(0 \rightarrow \Lambda^n \kK \xrightarrow{d_{n}} \Lambda^{n-1}\kK \xrightarrow{d_{n-1}} \cdots \xrightarrow{d_{2}} \Lambda^1 \kK \xrightarrow{d_{1}} \kO \rightarrow 0)\] with differentials \[d_{p}(t_{1} \wedge \ldots \wedge t_{p})= \sum_{j=1}^p (-1)^{j-1} d_{1}(t_{j}) t_{1} \wedge \ldots \wedge \widehat{t_{j}} \wedge \ldots \wedge t_{p}.\] Let $\kO$ be concentrated in degree $0$ in $\kK^\bullet$. By definition $\kK^\bullet$ is a complex of locally free coherent $\kO$-modules with $\kH^0(\kK^\bullet) \cong \kO/\kI^t$. Moreover, using for example \cite[Proposition IV.2.1. c)]{FultonLang} $\kK^{\bullet}$ is exact outside $\Supp(\kO/\kI^t) \subseteq Z$. In particular the cohomologies of 
$\kK^\bullet$ are supported in $Z$.
Hence we have a factorization of the canonical morphism
$\kO \rightarrow \kO/\kI^t$ in the category of complexes  $\mathsf{Com}^b\bigl(\Coh(X)\bigr)$:
$\kO[0] \rightarrow \kK^\bullet \rightarrow \kO/\kI^t[0]$, which induces  a factorization
\[
\kR^\bullet \lar \kK^\bullet \otimes_X \kR^\bullet \lar \kO/\kI^t \otimes_X \kR^\bullet
\]
of the canonical map $\theta_{\kR^\bullet}$. The complex
$\kK^\bullet \otimes_X \kR^\bullet$ is perfect since both $\kK^{\bullet}$ and $\kR^\bullet$ are perfect. Moreover its cohomology is supported at $Z$. This may be checked on an open affine cover $\{U_{i}\}$ where $\kR^{\bullet}|_{U_{i}}$ consists of free $\kO_{U_{i}}$-modules. Hence we get
the factorization of the adjunction unit  $\xi_{\kP^\bullet}$
\[
\kP^\bullet \cong \kF  \otimes_X \kR^\bullet \lar \kQ^\bullet := \kF \otimes_X \bigl(\kK^\bullet \otimes_X \kR^\bullet\bigr)
 \lar \kA/\kI^t \otimes_\kA \bigl(\kF \otimes_X \kR^\bullet\bigr) \cong \kA/\kI^t \otimes_\kA \kP^\bullet
\]
 This concludes the proof.
\end{proof}

\begin{thm}\label{T:keylocaliz}
Using the notations and assumptions from Theorem \ref{t:main-global}, there is an equivalence of triangulated categories
\[
\HH^\omega\colon \quad
\left(
\frac{\cd^b_Z\bigl(\Coh(\XX)\bigr)}{\cP_Z(X)}
\right)^\omega
\lar
\left(
\frac{\cd^b\bigl(\Coh(\XX)\bigr)}{\cP(X)}
\right)^\omega
\]
\end{thm}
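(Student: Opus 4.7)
The plan is to verify separately that $\HH^\omega$ is fully faithful and that every object on the right-hand side is a direct summand of an object in the essential image of $\HH$. Combined, these two properties imply that $\HH^\omega$ is an equivalence of triangulated categories (see Remark \ref{R:IdempCompl}).

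Full faithfulness of $\HH$ has already been established in Proposition \ref{P:FullyFaithf}, and fully faithfulness is preserved by passage to idempotent completions, so $\HH^\omega$ is fully faithful. The crux is therefore essential surjectivity up to direct summands. Concretely, for each object $\kC^\bullet \in \cd^b(\Coh(\XX))$ I would like to produce a perfect complex $\kP^\bullet \in \cP(X)$ together with a morphism $\varphi \colon \kP^\bullet \to \kC^\bullet$ whose cone lies in $\cd^b_Z(\Coh(\XX))$. Given such data, the associated distinguished triangle forces $\kC^\bullet \cong \mathsf{cone}(\varphi)[-1]$ in the quotient $\cd^b(\Coh(\XX))/\cP(X)$, placing $\kC^\bullet$ in the essential image of $\HH$.

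To construct such $\kP^\bullet$ and $\varphi$, let $j\colon U:=X\setminus Z\hookrightarrow X$ denote the open embedding. Since $\kF$ is locally free on $U$ by hypothesis, $j^*\kA={\mathcal End}_U(j^*\kF)$ is Morita-equivalent to $\co_U$ via $j^*\kF\otimes_{j^*\kA}(-)$, so $j^*\kC^\bullet$ corresponds under this equivalence to a bounded complex $\kE^\bullet\in\cd^b(\Coh(U))$. Because $U$ is regular, $\kE^\bullet$ is automatically perfect. Using the resolution property of $X$ together with a Thomason--Trobaugh style extension argument, $\kE^\bullet$ extends---possibly after replacing it by a shifted direct summand---to a perfect complex $\kR^\bullet\in\Perf(X)$ with $j^*\kR^\bullet$ quasi-isomorphic to $\kE^\bullet$. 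Setting $\kP^\bullet:=\kF\lten_X\kR^\bullet\in\cP(X)$ then yields an object with $j^*\kP^\bullet\cong j^*\kC^\bullet$.

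The main obstacle is the remaining lifting step: promoting this open-locus quasi-isomorphism to an honest morphism $\varphi\colon \kP^\bullet\to\kC^\bullet$ in $\cd^b(\Coh(\XX))$. The strategy is to mimic the Koszul-complex trick already used in the proof of Proposition \ref{P:FullyFaithf}: express the restriction map
\[\Hom_{\cd^b(\Coh(\XX))}(\kP^\bullet,\kC^\bullet)\longrightarrow \Hom_{\cd^b(\Coh(U,j^*\kA))}(j^*\kP^\bullet,j^*\kC^\bullet)\]
as a colimit over truncations by powers of the ideal $\kI_Z$, and exploit perfectness of $\kP^\bullet$ to realize the target quasi-isomorphism as the image of a genuine morphism $\varphi$, possibly after enlarging $\kP^\bullet$ by a perfect summand. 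By construction $j^*\mathsf{cone}(\varphi)=0$, so $\mathsf{cone}(\varphi)\in\cd^b_Z(\Coh(\XX))$ as required. It is precisely the potential need to split off such auxiliary perfect summands in this lifting step that explains why the equivalence holds only after passing to idempotent completions.
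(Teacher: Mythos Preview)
Your overall strategy matches the paper's: full faithfulness via Proposition~\ref{P:FullyFaithf}, then essential surjectivity up to summands by restricting to $U=X\setminus Z$, using the Morita equivalence over $U$ (since $\kF\big|_U$ is locally free), and invoking Thomason--Trobaugh to extend a perfect complex on $U$ to one on $X$ after adding a summand. The divergence is in the final lifting step, and there your argument has a genuine gap.

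You insist on producing an \emph{honest morphism} $\varphi\colon \kP^\bullet\to\kC^\bullet$ in $\cd^b(\Coh(\XX))$ whose restriction to $U$ is the given isomorphism. In general no such morphism exists: the restriction map
\[
\Hom_{\cd^b(\Coh(\XX))}(\kP^\bullet,\kC^\bullet)\longrightarrow \Hom_{\cd^b(\Coh(\UU))}(j^*\kP^\bullet,j^*\kC^\bullet)
\]
is not surjective---the obstruction lives in $\Hom(\kP^\bullet,\mathbf{R}\Gamma_Z\kC^\bullet[1])$, which has no reason to vanish. Your proposed ``colimit over truncations by powers of $\kI_Z$'' does not address this; and the Koszul trick of Proposition~\ref{P:FullyFaithf} solved a different problem (factoring an already given map through $\cP_Z(X)$), not the production of a lift.

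The paper sidesteps this entirely. It establishes, via Gabriel's localisation and Miyachi's theorem, an equivalence $\cd^b(\Coh(\XX))/\cd^b_Z(\Coh(\XX))\cong\cd^b(\Coh(\UU))$. An isomorphism $\PP(\kF\otimes\kR^\bullet)\cong\kM^\bullet\oplus\widetilde\kM^\bullet$ in this Verdier quotient is then, by the calculus of fractions, a \emph{roof}: there is an object $\kT^\bullet\in\cd^b(\Coh(\XX))$ together with two morphisms $\kT^\bullet\to\kF\otimes\kR^\bullet$ and $\kT^\bullet\to\kM^\bullet\oplus\widetilde\kM^\bullet$, both with cones in $\cd^b_Z(\Coh(\XX))$. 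Passing to the quotient by $\cP(X)$, the first triangle identifies $\kT^\bullet$ with its $Z$-supported cone, and then the second triangle exhibits $\kM^\bullet\oplus\widetilde\kM^\bullet$ as an extension of two $Z$-supported objects, hence in the essential image of $\HH$. No single lift is ever needed---only the roof that comes for free from the quotient description.
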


\begin{proof}
Proposition \ref{P:FullyFaithf} implies that the functor $\HH^\omega$ is fully faithful. Hence
we have to show it is essentially surjective, which we deduce from the following

\vspace{1mm}
\noindent
\underline{Statement}. For any
$ \kM^\bullet \in \cd^b\bigl(\Coh(\XX)\bigr)/{\cP(X)}
$ there exist  $ \widetilde\kM^\bullet \in
{\cd^b\bigl(\Coh(\XX)\bigr)}/{\cP(X)}
$  and $
\kN^\bullet \in \cd^b_Z\bigl(\Coh(\XX)\bigr)/{\cP_Z(X)}
$ such that  $\kM^\bullet \oplus \widetilde\kM^\bullet \cong \HH(\kN^\bullet).
$

\vspace{1mm}
\noindent
Indeed any object in the idempotent completion $(\cd^b(\Coh(\XX))/{\cP}(X))^{\omega}$ has the form $(\kM^\bullet, e_{\kM^\bullet})$, where $\kM^\bullet$ is an object in $(\cd^b(\Coh(\XX))/{\rm P}(X))$ and $ e_{\kM^\bullet}$ is an idempotent endomorphism. The statement above yields $\widetilde\kM^{\bullet}$ in $\cd^b(\Coh(\XX))/\cP(X)$ and $\kN^\bullet$ in $\cd^b_{Z}(\Coh(\XX))/\cP_{Z}(X)$ with $\HH(\kN^{\bullet}) \cong \kM^\bullet \oplus \widetilde\kM^\bullet$. The idempotent $e_{\kM^\bullet}$ induces an idempotent endomorphism
\[e_{\kM^\bullet \oplus \widetilde\kM^\bullet} \colon \kM^\bullet \oplus \widetilde\kM^\bullet \xrightarrow{\begin{pmatrix} e_{\kM} & 0 \\ 0 & 0 \end{pmatrix}} \kM^\bullet \oplus \widetilde\kM^\bullet\]
and one can check that $(\kM^\bullet \oplus \widetilde\kM^\bullet, e_{\kM^\bullet \oplus \widetilde\kM^\bullet}) \cong (\kM^\bullet, e_{\kM^\bullet})$ holds in the idempotent completion. Since $\HH$ is fully faithful there exists an idempotent endomorphism $e_{\kN^\bullet}$ of $\kN^\bullet$ with $\HH(e_{\kN^\bullet})=e_{\kM^\bullet \oplus \widetilde\kM^\bullet}$. Using the considerations above this yields $\HH^{\omega}((\kN^\bullet, e_{\kN^\bullet})) \cong (\kM^{\bullet}, e_{\kM^\bullet})$, which completes the argument.

We turn to the proof of the statement above. Let $U=X \setminus Z$ be the complement of $Z$ and $\UU=(U, \kA\big|_{U})$ the corresponding (non-commutative) ringed space. The restriction defines an exact functor $\iota^*\colon \Coh(\XX) \rightarrow \Coh(\UU)$ whose kernel is the Serre subcategory $\Coh_{Z}(\XX)$. The universal property of Serre quotient categories yields an exact functor  $\overline{\iota^*}\colon \Coh(\XX)/\Coh_{Z}(\XX) \rightarrow \Coh(\UU)$ such that the following diagram commutes
\[
\xymatrix{
\Coh(\XX) \ar[rd]^\pi \ar[rr]^{\iota^*} && \Coh(\UU) \\
& \frac{\displaystyle \Coh(\XX)}{\displaystyle \Coh_{Z}(\XX)} \ar[ru]^{\overline{\iota^*}} 
}
\]
\cite[Example III.5. a)]{Gabriel62} shows that $\overline{\iota^*}$ is an equivalence. Since all functors in the diagram above are exact we can pass directly to the derived categories and obtain a commutative diagram there. We use the same symbols for the functors in this diagram. The triangulated functor $\pi\colon \cd^b(\Coh(\XX)) \rightarrow \cd^b(\Coh(\XX)/\Coh_{Z}(\XX))$ has kernel $\cd^b_{Z}(\Coh(\XX))$. Thus the universal property of Verdier quotient categories yields a commutative diagram
\[
\xymatrix{
\cd^b(\Coh(\XX)) \ar[rr]^\pi \ar[rd]^{\PP} && \cd^b\left(\frac{\displaystyle \Coh(\XX)}{\displaystyle \Coh_{Z}(\XX)}\right) \\
&  \frac{\displaystyle \cd^b(\Coh(\XX))}{\displaystyle \cd^b_{Z}(\Coh(\XX))} \ar[ru]^{\overline{\pi}} 
}
\]
where $\PP$ is the canonical projection and $\overline{\pi}$ is an equivalence by Miyachi's theorem \cite[Theorem 3.2.]{Miyachi}.
Summing up, we have the following diagram of categories and functors
\[
\begin{xy}\SelectTips{cm}{}
\xymatrix{
\frac{\displaystyle \cd^b\bigl(\Coh(\XX)\bigr)}{\displaystyle \cd^b_Z\bigl(\Coh(\XX)\bigr)}
\ar[rrrr]^{\overline{\pi}} &&&&
\cd^b\left(\frac{\displaystyle \Coh(\XX)}{\displaystyle \Coh_Z(\XX)}\right) \ar[dd]^{\overline{\iota^*}} \\
\cd^b\bigl(\Coh(\XX)\bigr) \ar[u]^{\PP} & & \\
\Perf(X) \ar[u]^{\kF \lten_X \,-\,} \ar[rr]^{\iota^*} && \Perf(U)
\ar[rr]^{\kF\big|_{U} \lten_U \,-\,}  && \cd^b\bigl(\Coh(\mathbb{U})\bigr) \\
}
\end{xy}
\]
By the commutativity of the diagrams above the `upper composition' is just $\iota^* \circ ({\kF \lten_X \,-\,})$, whereas the `lower composition' is ${\iota^*(\kF) \lten_X \,\iota^*(-)\,}.$ 
Thus \cite[Formula (3.12)]{Huybrechts} both compositions $\Perf(X) \rightarrow \cd^b\bigl(\Coh(\mathbb{U})\bigr)$ are isomorphic.
The functor
\[\kF\big|_U \lten_U \,-\,\colon \quad
\Perf(U) = \cd^b\bigl(\Coh(U)\bigr) \lar  \cd^b\bigl(\Coh(\UU)\bigr)\]
has a left adjoint \[\mathbbm{R}{\mathcal Hom}_{\UU}(\kF\big|_{U}, -)\colon \cd^b(\Coh(\UU)) \lar \cd^b(\Coh(U)).\] 
We claim that the unit and counit of adjunction are equivalences. This may be checked locally, where it follows from Morita theory, since we assume the coherent sheaf $\kF\big|_U$ to be locally free.

Let $\kM^{\bullet}$ be an arbitrary object in $\cd^b(\Coh(\XX))/\cP(X)$ and recall that the functors $\overline{\pi}$, $\overline{\iota^*}$ and $\FF_{U}:=\kF\big|_{U} \lten_U \,-\,$ are equivalences of categories. Application of these functors to $\kM^\bullet$ yields a perfect complex $\kS^\bullet:=\FF_{U}^{-1} \overline{\iota^*} \overline{\pi} (\kM^\bullet)$ on $U$. By a result of Thomason and Trobaugh \cite[Lemma 5.5.1]{ThomasonTrobaugh}, there exist $\widetilde\kS^\bullet \in \Perf(U)$
and $\kR^\bullet \in \Perf(X)$ such that
$
\iota^* \kR^\bullet \cong \kS^\bullet \oplus \widetilde\kS^\bullet.
$
Now we can `go back' and define $\widetilde\kM^\bullet:=\overline{\pi}^{-1}(\overline{\iota^*})^{-1}\FF_{U}(\widetilde\kS^\bullet)$. By the commutativity of the big diagram above we obtain a chain of isomorphisms in $\cd^b(\Coh(\XX))/\cP(X)$
\[
\PP(\kF^\bullet \otimes \kR^{\bullet}) \cong \overline{\pi}^{-1}\bigl(\overline{\iota^*}\bigr)^{-1}\FF_{U} \iota^* (\kR^\bullet) \cong
\overline{\pi}^{-1}\bigl(\overline{\iota^*}\bigr)^{-1}\FF_{U}(\kS^\bullet \oplus \widetilde\kS^\bullet) \cong \kM^\bullet \oplus \widetilde\kM^\bullet
\]

The definition of (iso-)morphisms in a Verdier quotient category shows that the last statement is equivalent to the following fact. There exist  $\kT^\bullet \in  \cd^b\bigl(\Coh(\XX)\bigr)$
and a pair of distinguished triangles
\[
\kC_\xi^\bullet  \lar \kT^\bullet \stackrel{\xi}\lar  \kM^\bullet \oplus \widetilde\kM^\bullet \lar
\kC_\xi^\bullet[1] \quad
\mathrm{and}
\quad
\kC_\theta^\bullet  \lar \kT^\bullet \stackrel{\theta}\lar \kF^\bullet \otimes \kR^{\bullet}  \lar \kC_\theta^\bullet[1]
\]
in $\cd^b\bigl(\Coh(\XX)\bigr)$ such that
$\kC_\xi^\bullet$ and $\kC_\theta^\bullet$   belong to the category $\cd^b_Z\bigl(\Coh(\XX)\bigr)$.
Since $\kR^\bullet$ is perfect, $\kF^\bullet \otimes \kR^{\bullet}$ is an object in $\cP(X)$. The triangle on the right shows that $\kC_\theta^\bullet$ and $\kT^\bullet$ are isomorphic in the Verdier quotient
$\cd^b\bigl(\Coh(\XX)\bigr)/\cP(X)$. Now the left triangle above yields a distinguished triangle 
\[
\kC_\xi^\bullet \stackrel{\alpha}\lar \kC_\theta^\bullet \lar \kM^\bullet \oplus \widetilde\kM^\bullet \lar
\kC_\xi^\bullet[1]
\]
in  $\cd^b\bigl(\Coh(\XX)\bigr)/\cP(X)$. By Proposition \ref{P:FullyFaithf},
the functor $\HH\colon \, \cd^b_Z\bigl(\Coh(\XX)\bigr)/\cP_Z(X) \rightarrow
\cd^b\bigl(\Coh(\XX)\bigr)/\cP(X)$ is fully faithful, hence $\kM^\bullet \oplus \widetilde\kM^\bullet$
belongs to the essential image of $\HH$.
This concludes the proof of the statement.
\end{proof}
\begin{Setup}
Let us introduce some notations and state some assumptions which we use in the remainder of this section.
\begin{itemize} 
\item We assume that $X$ has only \emph{isolated singularities} and denote
$Z = \Sing(X) = \bigl\{x_1, \dots, x_p \bigr\}$. 

\item For any $1 \le i \le p$, let
$O_i := \kO_{x_i}$ with maximal ideal $\mathfrak{m}_i$, $A_i = \kA_{x_i}$
and $F_i = \kF_{x_i}$. 
\item Let $\widehat{O}_i = \varprojlim O_i/ \idm_i^t O_i$, $\widehat{A}_i:= \varprojlim A_i/ \idm_i^t A_i$ and $\widehat{F}_i:= \varprojlim F_i/ \idm_i^t F_i$ be the completions. 
In particular, $\widehat{A}_i \cong \End_{\widehat{O}_i}(\widehat{F}_i)$. 
\item For a local Noetherian ring $(R, \mathfrak{m})$, we denote by $R-\fdmod$ the category of $R$-modules of finite length\footnote{If $R$ is a $k$-algebra, with $k \cong R/\mathfrak{m}$, then $R-\fdmod$ is equivalent to the subcategory of those $R$-modules, which are finite dimensional over $k$. For example, this holds for local rings $\co_{x}$ of closed points $x \in X$, where $X$ is of finite type over $k$.}. Let $\Lambda$ be an $R$-algebra, which is finitely generated as an $R$-module. Then $\Lambda-\fdmod$ denotes the category of finitely generated left $\Lambda$-modules, which are contained in $R-\fdmod$ when considered as $R$-modules. 
\item Let $\Perf_{\rm fd}(O_{i})$ (respectively $\Perf_{\rm fd}(\widehat{O}_{i})$) denote the full subcategories of $\cd^b(O_{i}-\fdmod)$ (respectively $\cd^b(\widehat{O}_{i}-\fdmod)$) consisting of complexes which admit a bounded free resolution in $\cd^b(O_{i}-\mod)$ (respectively $\cd^b(\widehat{O}_{i}-\mod)$). 
\item Let $\cP_{i}$, $\widehat{\cP}_{i}$  respectively $\widetilde{\cP}_{i}$ be the essential images of the functors $F_{i}\otimes_{O_{i}} - \colon \Perf_{\rm fd}(O_{i}) \rightarrow \cd^b(A_{i}-\fdmod)$, $\widehat{F}_{i}\otimes_{\widehat{O}_{i}} - \colon \Perf_{\rm fd}(\widehat{O}_{i}) \rightarrow \cd^b(\widehat{A}_{i}-\fdmod)$ respectively $\widehat{F}_{i}\otimes_{\widehat{O}_{i}} - \colon \Perf(\widehat{O}_{i}) \rightarrow \cd^b(\widehat{A}_{i}-\mod)$.
\end{itemize}
\end{Setup}

Since $X$ has isolated singularities Theorem \ref{T:keylocaliz} yields the following block decomposition of the relative singularity category.

\begin{cor}\label{C:BlockDecomp}
There is an equivalence of triangulated categories \[\Delta_{X}(\XX)=\left(\frac{\cd^b(\Coh(\XX))}{\cP(X)}\right)^\omega \cong \prod_{i=1}^p \left(\frac{\cd^b(A_{i}-\fdmod)}{\cP_{i}} \right)^{\omega}.\]
\end{cor}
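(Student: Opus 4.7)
The proof proceeds by combining the two reduction theorems already available with a routine block decomposition of sheaves supported on a finite set of isolated closed points. First, by Theorem~\ref{T:keylocaliz} applied to $Z=\Sing(X)=\{x_{1},\dots,x_{p}\}$, I would replace $\Delta_{X}(\XX)$ by the idempotent completion of $\cd^{b}_{Z}(\Coh(\XX))/\cP_{Z}(X)$. Lemma~\ref{P:DercCatandSupp} then identifies the numerator with $\cd^{b}(\Coh_{Z}(\XX))$, reducing everything to an analysis of the abelian category $\Coh_{Z}(\XX)$ of coherent $\ca$-modules set-theoretically supported on $Z$.

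The next step is the block decomposition. Because the $x_{i}$ are pairwise distinct closed points of $X$, every coherent sheaf $\ch\in\Coh_{Z}(\XX)$ canonically splits as $\ch=\bigoplus_{i=1}^{p}\ch_{i}$ according to support, and the stalk functor at $x_{i}$ gives an equivalence $\Coh_{\{x_{i}\}}(\XX)\simeq A_{i}\text{-}\fdmod$: indeed, coherence together with support at the single closed point $\idm_{i}\subseteq O_{i}$ forces finite length. Passing to derived categories yields
\[
\cd^{b}(\Coh_{Z}(\XX)) \;\cong\; \prod_{i=1}^{p}\cd^{b}(A_{i}\text{-}\fdmod).
\]

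It then remains to identify $\cP_{Z}(X)$ with $\prod_{i}\cP_{i}$ under this equivalence. Since morphisms between sheaves supported at distinct singular points vanish, the full subcategory $\cP_{Z}(X)$ decomposes as a direct sum of its stalk-components, so it suffices to treat one $i$ at a time. One inclusion is immediate from functoriality: for $\cr^{\bullet}\in\Perf(X)$ with $\cf\lten_{X}\cr^{\bullet}$ supported on $Z$, stalkification at $x_{i}$ gives $F_{i}\otimes_{O_{i}}\cr^{\bullet}_{x_{i}}$, with $\cr^{\bullet}_{x_{i}}\in\Perf_{\mathrm{fd}}(O_{i})$. For the reverse inclusion, given $R\in\Perf_{\mathrm{fd}}(O_{i})$, I would globalise $R$ to a perfect complex $\widetilde{R}$ on $X$ with cohomology concentrated at $\{x_{i}\}$: this can be done by the Koszul construction already used inside the proof of Proposition~\ref{P:FullyFaithf}, tensoring a Koszul complex of a set of generators of the ideal sheaf of $x_{i}$ with a bounded complex of locally free sheaves lifting a free resolution of the generators of $R$ over $O_{i}$. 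Then $\cf\lten_{X}\widetilde{R}$ lies in $\cP_{Z}(X)$ and has the prescribed stalk at $x_{i}$ (and trivial stalks at the other singular points).

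Finally, Verdier quotients and idempotent completions both commute with finite products, so combining the last two displays yields the claimed equivalence. The one delicate point is the globalisation step: lifting a perfect complex on $\Spec(O_{i})$ supported at $\idm_{i}$ to a perfect complex on all of $X$ with support reduced to $\{x_{i}\}$. The isolatedness of the singularities and the Koszul trick should handle this, but some care is required to verify that the globalised complex remains in the image of $\cf\lten_{X}-$ and has no unwanted support at the other points $x_{j}$.
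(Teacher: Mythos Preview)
Your overall structure matches the paper's exactly: apply Theorem~\ref{T:keylocaliz}, then Lemma~\ref{P:DercCatandSupp}, then the block decomposition of $\Coh_{Z}(\XX)$ along the isolated singular points, and finally check that $\cP_{Z}(X)$ corresponds to $\prod_{i}\cP_{i}$ under this decomposition.

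The difference lies in that last identification, specifically the inclusion $\prod_{i}\cP_{i}\subseteq\cP_{Z}(X)$. The paper does not globalise at all. It relies on the local characterisation of $\cP(X)$ displayed immediately after Proposition~\ref{Prop:Embedding}: an object $\kH^{\bullet}\in\cd^{b}(\Coh(\XX))$ lies in $\cP(X)$ if and only if each stalk $\kH^{\bullet}_{x}$ lies in the image of $K^{b}(\add(\kF_{x}))\to\cd^{b}(\kA_{x}\text{-}\mod)$. Given $(C_{1},\dots,C_{p})\in\prod_{i}\cP_{i}$, the corresponding object $C\in\cd^{b}_{Z}(\Coh(\XX))$ has $C_{x_{i}}\cong C_{i}$ (which is in that image by the very definition of $\cP_{i}$) and $C_{x}=0$ for $x\notin Z$; the local criterion then gives $C\in\cP(X)$ immediately. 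Nothing needs to be lifted back to $X$.

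Your proposed Koszul globalisation is the one genuinely risky step, and you are right to flag it. Producing a perfect complex $\widetilde{R}$ on $X$ whose stalk at $x_{i}$ recovers the given $R\in\Perf_{\mathrm{fd}}(O_{i})$ is a Thomason--Trobaugh--type extension problem, and the construction you sketch does not obviously do it: tensoring with a Koszul complex for the ideal of $x_{i}$ will change the stalk at $x_{i}$ rather than reproduce $R$, and lifting the entries of $R$ to locally free sheaves on $X$ with controlled differentials is not straightforward. The local characterisation of $\cP(X)$ makes this entire manoeuvre unnecessary.
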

\begin{proof}
Consider the exact equivalence
\[\Coh_{Z}(\XX) \rightarrow \prod_{i=1}^p A_{i}-\fdmod\] given by $\kG \mapsto (\kG_{x_{1}}, \cdots, \kG_{x_{p}})$.   Thus we get an induced equivalence of the corresponding derived categories
$\cd^b(\Coh_{Z}(\XX)) \rightarrow \prod_{i=1}^p \cd^b(A_{i}-\fdmod)$ and using Lemma \ref{P:DercCatandSupp}  this gives an equivalence $\cd^b_{Z}(\Coh(\XX)) \rightarrow \prod_{i=1}^p \cd^b(A_{i}-\fdmod)$. In view of Theorem \ref{T:keylocaliz}, it remains to show that the full subcategory $\cP_{Z}(X)=\cP(X) \cap \cd^b_{Z}(\Coh(\XX))$ is identified with $\prod_{i=1}^p \cP_{i}$ under this equivalence. Up to isomorphism, $\cP_{Z}(X)$ consists of complexes with entries in $\add(\kF)$ and cohomologies supported on $Z$. In particular, all cohomologies are finite dimensional. Localizing in $x_{i}$ yields complexes with entries in $\add(F_{i})$ and finite dimensional cohomologies. But these complexes form precisely the subcategory $\cP_{i}$. The claim follows.  
\end{proof}

We show that the Verdier quotients $\cd^b(A_{i}-\fdmod)/\cP_{i}$ do not change when we pass to the completion.

\begin{lem}\label{L:Lemma2.9}
The completion functor induces an equivalence of triangulated categories
\[\frac{\cd^b(A_{i}-\fdmod)}{\cP_{i}} \rightarrow \frac{\cd^b(\widehat{A}_{i}-\fdmod)}{\widehat{\cP}_{i}}\]
\end{lem}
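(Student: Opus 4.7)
My plan is to factor the verification into two parts: first identify the ambient derived categories and then match the perfect subcategories under this identification.

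First, I would observe that since $A_i$ is finitely generated over $O_i$, any $M \in A_i\text{-}\mathsf{fdmod}$ is annihilated by some power $\mathfrak{m}_i^n A_i$. Consequently $M$ is a module over $A_i/\mathfrak{m}_i^n A_i$, and the natural map induces an isomorphism $A_i/\mathfrak{m}_i^n A_i \cong \widehat{A}_i/\widehat{\mathfrak{m}}_i^n \widehat{A}_i$. Both categories $A_i\text{-}\mathsf{fdmod}$ and $\widehat{A}_i\text{-}\mathsf{fdmod}$ therefore coincide with $\bigcup_n \mathsf{mod}\text{-}(A_i/\mathfrak{m}_i^n A_i)$, so completion (or equivalently, restriction along $A_i \to \widehat{A}_i$) is an equivalence of abelian categories. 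This lifts at once to an equivalence of bounded derived categories $\cd^b(A_i\text{-}\mathsf{fdmod}) \xrightarrow{\sim} \cd^b(\widehat{A}_i\text{-}\mathsf{fdmod})$.

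Next, I would verify that under this equivalence $\cP_i$ corresponds to $\widehat{\cP}_i$. Since $O_i \to \widehat{O}_i$ is flat, there is a natural isomorphism $\widehat{F_i \otimes_{O_i} P^\bullet} \cong \widehat{F}_i \otimes_{\widehat{O}_i} \widehat{P^\bullet}$ for $P^\bullet \in \Perf_{\rm fd}(O_i)$, and $\widehat{P^\bullet}$ clearly belongs to $\Perf_{\rm fd}(\widehat{O}_i)$; this gives the inclusion of essential images in one direction. For the reverse inclusion, I need that every object of $\Perf_{\rm fd}(\widehat{O}_i)$ is quasi-isomorphic to the completion of some object of $\Perf_{\rm fd}(O_i)$. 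The key observation is that a system of parameters $x_1, \dots, x_d$ of $O_i$ is also a system of parameters of $\widehat{O}_i$, and the Koszul complex $\mathrm{Kos}(x_1,\dots,x_d)$ over $O_i$ completes to the Koszul complex over $\widehat{O}_i$ with the same parameters. Since $\Perf_{\rm fd}(R)$ is generated as a thick subcategory of $\Perf(R)$ by such a Koszul complex (for $R = O_i$ or $\widehat{O}_i$), completion at the level of thick subcategories is essentially surjective, giving the matching $\cP_i \leftrightarrow \widehat{\cP}_i$.

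Combining the two steps, the induced functor on Verdier quotients is a well-defined triangle functor; it is essentially surjective because the functor on the ambient categories is, and it is fully faithful because $\cP_i$ and $\widehat{\cP}_i$ correspond to each other under the identification (so the kernels of the two localisations coincide). The main obstacle I anticipate is the second paragraph: one must argue carefully that every perfect complex of $\widehat{O}_i$-modules with finite length cohomology descends to $O_i$, at least up to quasi-isomorphism. If the thick-generation argument via the Koszul complex were not available, an alternative would be a direct d\'evissage: every such complex fits, by standard truncation, into a tower of extensions of shifted finite length modules (which already descend), and the extension classes live in groups that match between $O_i$ and $\widehat{O}_i$ by flatness.
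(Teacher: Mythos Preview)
Your argument is correct, and the first step---identifying $\cd^b(A_i\text{-}\fdmod)$ with $\cd^b(\widehat{A}_i\text{-}\fdmod)$ via completion---is exactly what the paper does. The second step, however, proceeds differently. To match $\Perf_{\rm fd}(O_i)$ with $\Perf_{\rm fd}(\widehat{O}_i)$, the paper gives an intrinsic characterisation of perfectness inside $\cd^b(R\text{-}\mathsf{mod})$: a bounded complex $X$ is perfect if and only if $\Hom_R(X, R/\mathfrak{m}[n]) = 0$ for all $n \gg 0$. This is proved directly using minimal free resolutions, and since the condition involves only morphisms in $\cd^b(R\text{-}\fdmod)$, it is manifestly preserved under the ambient equivalence. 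Your approach via thick generation by the Koszul complex is also valid, but it relies on a heavier input---essentially the local case of Hopkins--Neeman, or a separate argument that any perfect complex supported at $\mathfrak{m}$ lies in $\thick(K(x_1,\ldots,x_d))$. The paper's route is more self-contained; yours has the advantage of making the generator explicit, which can be useful in other contexts. Your proposed d\'evissage alternative is yet a third route and would also work, though the bookkeeping of extension classes under completion needs care.
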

\begin{proof}
It suffices to show that the following diagram is commutative, the horizontal arrows define equivalences and the vertical arrows define full embeddings.
\[
\xymatrix{
\Perf_{fd}(O_{i}) \ar[d]_{F_{i} \otimes_{O_{i}} -} \ar[rr] && \Perf_{fd}(\widehat{O}_{i}) \ar[d]^{\widehat{F}_{i} \otimes_{\widehat{O}_{i}} -}\\
\cd^b(A_{i}-\fdmod) \ar[rr] && \cd^b(\widehat{A}_{i}-\fdmod)
}
\]
The horizontal functors are induced by the completion functors. The diagram is commutative since completion commutes with taking tensor products. The vertical arrows are full embeddings by Proposition \ref{Prop:Embedding}. 

The completion functors $O_{i}-\fdmod \ra \widehat{O}_{i}-\fdmod$ and $A_{i}-\fdmod \ra \widehat{A}_{i}-\fdmod$, are equivalences since we restrict to finite dimensional modules. This yields triangle equivalences 
$\cd^b(O_{i}-\fdmod) \ra \cd^b(\widehat{O}_{i}-\fdmod)$ and $\cd^b(A_{i}-\fdmod) \ra \cd^b(\widehat{A}_{i}-\fdmod)$
In particular, the restriction $\Perf_{fd}(O_{i}) \rightarrow \Perf_{fd}(\widehat{O}_{i})$ is fully faithful. In order to see that it is an equivalence, it remains to show that the completion of a non-perfect complexes cannot become perfect. This follows from the following characterization of perfect complexes. 

\underline{Claim.} Let $(R, \mathfrak{m})$ be a local ring. Then $X \in \cd^b(R-\mod)$ is a perfect complex if and only if $\Hom_{R}(X, R/\mathfrak{m}[n])=0$ for all large enough $n$.

The latter condition is indeed invariant under taking completions by the fully faithfulness of the completion functor.

Let us prove the claim. Assume that $X$ is not perfect. $X$ admits a free resolution $Y$, which is bounded below and has bounded cohomology. By \cite[Proposition 1.3.1]{BrunsHerzog}, we may replace $Y$ by a quasi-isomorphic free resolution $M$ which satisfies the following minimality condition $d_{M}^i(M^i) \subseteq \mathfrak{m}M^{i+1}$ for all small enough $i$. Then the following map of complexes
\[
\xymatrix
{
\cdots \ar[r] &  M^{i-1} \ar[d] \ar[r] & M^i \ar[d]\ar[r] & M^{i+1} \ar[d] \ar[r] & \cdots \\
\cdots \ar[r] & 0 \ar[r]& R/\mathfrak{m} \ar[r] &0 \ar[r]& \cdots
}
\]
is non-zero in the homotopy category for all small enough $i$. The other direction of the claim is clear.
\end{proof}

We need the following special case of Theorem \ref{T:keylocaliz}.
\begin{cor}\label{C:fdtofg}
The canonical functor
\[\left(\frac{\cd^b(\widehat{A}_{i}-\fdmod)}{\widehat{\cP}_{i}}\right)^{\omega} \rightarrow \left(\frac{\cd^b(\widehat{A}_{i}-\mod)}{\widetilde{\cP}_{i}}\right)^{\omega}\] is an equivalence of triangulated categories.
\end{cor}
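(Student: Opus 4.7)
The plan is to recognize the corollary as Theorem \ref{T:keylocaliz} applied to the affine complete local scheme $X' = \Spec(\widehat{\co}_{x_i})$ equipped with the sheaf of algebras $\ca' = \widetilde{\widehat{A}_i}$ associated with the coherent sheaf $\cf' = \widetilde{\widehat{F}_i}$. A complete local Noetherian $k$-algebra is separated, excellent, of finite Krull dimension, and on an affine scheme every coherent sheaf is a quotient of a finite free one, so the blanket hypotheses of Theorem \ref{T:keylocaliz} are satisfied. Furthermore, $\mathrm{Sing}(X') = \{\mathfrak{m}_i\} =: Z'$ since $X$ has an isolated singularity at $x_i$, and the local freeness of $\cf$ on $X\setminus \Sing(X)$ is preserved after localization and completion because flatness is preserved under base change and the non-closed primes of $\widehat{\co}_{x_i}$ lie above regular points of the localization $O_i[\mathfrak{m}_i^{-1}]$ (where $F_i$ is locally free by assumption). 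Hence Theorem~\ref{T:keylocaliz} applies and yields a triangle equivalence
\begin{align*}
\left(\frac{\cd^b_{Z'}\bigl(\Coh(X',\ca')\bigr)}{\cP_{Z'}(X')}\right)^{\omega} \stackrel{\sim}{\longrightarrow} \left(\frac{\cd^b\bigl(\Coh(X',\ca')\bigr)}{\cP(X')}\right)^{\omega}.
\end{align*}

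Next, I will identify both sides with the categories appearing in the statement. Since $X'$ is affine, the global sections functor yields an equivalence $\Coh(X',\ca') \cong \widehat{A}_i-\mod$, whence $\cd^b(\Coh(X',\ca')) \cong \cd^b(\widehat{A}_i-\mod)$ and $\cP(X') \cong \widetilde{\cP}_i$ by the very definition of $\widetilde{\cP}_i$ as the essential image of $\widehat{F}_i \otimes_{\widehat{\co}_{x_i}} -$ on $\Perf(\widehat{\co}_{x_i})$. For the left-hand side, Lemma~\ref{P:DercCatandSupp} gives a triangle equivalence $\cd^b(\Coh_{Z'}(X',\ca')) \stackrel{\sim}{\to} \cd^b_{Z'}(\Coh(X',\ca'))$, and the category $\Coh_{Z'}(X',\ca')$ of coherent $\widehat{A}_i$-modules set-theoretically supported at $\mathfrak{m}_i$ is precisely $\widehat{A}_i-\fdmod$ (a finitely generated $\widehat{\co}_{x_i}$-module supported at the maximal ideal is of finite length, and conversely). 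Thus $\cd^b_{Z'}(\Coh(X',\ca')) \cong \cd^b(\widehat{A}_i-\fdmod)$.

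It remains to verify that under this last identification the subcategory $\cP_{Z'}(X') = \cP(X') \cap \cd^b_{Z'}(\Coh(X',\ca'))$ corresponds to $\widehat{\cP}_i$. By construction, objects in $\cP_{Z'}(X')$ are of the form $\widehat{F}_i \otimes_{\widehat{\co}_{x_i}} R^\bullet$ with $R^\bullet \in \Perf(\widehat{\co}_{x_i})$ having the property that the total tensor product has finite-dimensional cohomology. Since $\widehat{F}_i = \widehat{\co}_{x_i} \oplus \widehat{F}_i'$, the module $R^\bullet$ itself is a direct summand of $\widehat{F}_i \otimes_{\widehat{\co}_{x_i}} R^\bullet$ in the $\widehat{\co}_{x_i}$-linear derived category, so its cohomology is automatically finite-dimensional; conversely, any $R^\bullet \in \Perf_{\rm fd}(\widehat{\co}_{x_i})$ produces a complex whose tensor product with $\widehat{F}_i$ has finite-dimensional cohomology, as $\widehat{F}_i$ is finitely generated. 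Consequently $\cP_{Z'}(X') \cong \widehat{\cP}_i$. Plugging these identifications into the equivalence $\mathbb{H}^\omega$ above yields exactly the canonical equivalence stated in the corollary. The only point requiring genuine care is the identification of the perfect subcategories, which I address by using the direct-summand structure of $\cf = \co_X \oplus \cf'$; everything else reduces to matching notation.
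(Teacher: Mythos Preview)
Your proof is correct and follows exactly the same route as the paper: apply Theorem~\ref{T:keylocaliz} to the affine scheme $X'=\Spec(\widehat{O}_i)$ with the sheaf of algebras $\widetilde{\widehat{A}_i}$, and then use the global sections equivalence $\Coh(\XX')\cong \widehat{A}_i\text{-}\mod$ (together with Lemma~\ref{P:DercCatandSupp}) to translate both sides into the module-theoretic form of the statement. The paper's proof is considerably terser---it simply sets up the affine situation and invokes Theorem~\ref{T:keylocaliz} without spelling out the verification of the hypotheses (isolated singularity of $\widehat{O}_i$, local freeness of $\widehat{F}_i$ on the punctured spectrum) or the identification $\cP_{Z'}(X')\cong\widehat{\cP}_i$; you have filled in precisely these points, and your direct-summand argument for the latter identification is clean and correct.
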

\begin{proof}
Let $O=\widehat{O}_{i}$ and $A=\widehat{A}_{i}$.
Let $X=\Spec(O)$ and $Z=\{\mathfrak{m}_{i}\}\subseteq X$. $A$ is a finitely generated $O$-module. Thus the sheafification $\kA:=\widetilde{A}$ is a coherent $\kO_{X}$-algebra. Let $\XX=(X, \kA)$. The global section functor induces an equivalence of abelian categories 
\[\Gamma(-)\colon \Coh(\XX) \rightarrow A-\mod,\]
which identifies the full subcategory $\Coh_{Z}(\XX) \subseteq \Coh(\XX)$ with $A-\fdmod \subseteq A-\mod$. Now Theorem \ref{T:keylocaliz} proves the claim.  
\end{proof}

Using the results in this section, we are able to give a proof of Theorem \ref{t:main-global} above.

\begin{proof}
We have a chain of triangle equivalences
\begin{align}\label{E:KeyChain}
\begin{array}{cc}
\Delta_{X}(\XX)=\left(\frac{\displaystyle \cd^b(\Coh(\XX))}{\displaystyle \cP(X)}\right)^\omega &\cong {\displaystyle \prod_{i=1}^p } \left(\frac{\displaystyle \cd^b(A_{i}-\fdmod)}{\displaystyle \cP_{i}} \right)^{\omega} \\ &\cong {\displaystyle \prod_{i=1}^p }\left(\frac{\displaystyle \cd^b(\widehat{A}_{i}-\fdmod)}{\displaystyle \widehat{\cP}_{i}}\right)^{\omega} \\ &\cong {\displaystyle \prod_{i=1}^p }
\left(\frac{\displaystyle \cd^b(\widehat{A}_{i}-\mod)}{\displaystyle \widetilde{\cP}_{i}}\right)^{\omega}.
\end{array}
\end{align}
The first equivalence holds by Corollary \ref{C:BlockDecomp}. For the second equivalence, we pass to the idempotent completions in Lemma \ref{L:Lemma2.9} and the last equivalence is proved in Corollary \ref{C:fdtofg}. 
\end{proof}

\newpage

\section{Local relative singularity categories} \label{S:Local}

The first subsection is an extended version of a joint work with Igor Burban \cite{BurbanKalck11}. The remaining parts of this section do not depend on the first subsection and are based on a joint work with Dong Yang \cite{KalckYang12}. 

Subsection \ref{ss:NodalBlock} gives an explicit description of the relative singularity categories of the Auslander and cluster resolutions of simple hypersurface singularities of type $\mathbb{A}_{1}$ in Krull dimension one and zero. Using Theorem \ref{t:Classical-versus-generalized-singularity-categories} this yields descriptions of the relative singularity categories of Auslander resolutions of simple hypersurface singularities of type $\mathbb{A}_{1}$ in all Krull dimensions. In particular, we describe all the indecomposable objects and all morphism spaces between them. Moreover, using a tilting result of Burban \& Drozd \cite{Tilting} together with the localization property of the global relative singularity category (Theorem \ref{t:main-global}) yields a relation to gentle algebras.

In Subsections \ref{ss:fract-cy} and \ref{ss:Independance} the dg algebra techniques from Section \ref{s:dg} are applied to study Frobenius categories (see Section \ref{s:Frobenius}). The examples, which we have in mind are of finite representation type: namely, the category of maximal Cohen--Macaulay modules over ADE-singularities and the category of finite dimensional modules over finite dimensional \emph{representation-finite} selfinjective algebras.
The key statement is Theorem \ref{t:main-thm-2}, which describes the relative singularity category of an Auslander resolution as the perfect derived category of a certain dg algebra, which we call the \emph{dg Auslander algebra}. The proof relies on a fractional CY-property of certain simple modules over the Auslander algebra, which we explain in Subsection \ref{ss:fract-cy} and which is related to work of Keller \& Reiten \cite{KellerReiten07}. Thanhoffer de V\"olcsey and Van den Bergh \cite{ThanhofferdeVolcseyMichelVandenBergh10} obtained similar descriptions of relative singularity categories for cluster resolutions of certain Gorenstein quotient singularities. However, the only examples we have in common are the ADE-singularities in Krull dimension two. We refer to Remark \ref{R:Overlap} for more details.

We apply the techniques developed in Subsection \ref{ss:Independance}  and Section \ref{s:dg} to study  relative singularity categories over Gorenstein rings in Subsection \ref{S:MCM-over-Gor}. The main result shows that for Auslander resolutions of ADE-singularities the singularity category and the relative singularity category mutually determine each other.

Subsection \ref{ss:DGAuslander} contains a complete list of the dg Auslander algebras for ADE-singularities in all Krull dimensions.

In Subsection \ref{ss:Bridgeland}, we remark on the derived category $\cd_{fd}(B_{Q})$ of dg modules with finite dimensional total cohomology over the dg Auslander algebra $B_{Q}$ of an even dimensional singularity of Dynkin type $Q$. It is an intrinsically defined subcategory of the corresponding relative singularity category and Bridgeland has determined its stability manifold \cite{Bridgeland09}.

\subsection{An elementary description of the nodal block}\label{ss:NodalBlock}
The aim of this subsection is to give a description of the relative singularity category $\Delta_{X}(\XX)$, where $X$ is a reduced curve with only nodal singularities and $\XX=(X, \ca)$ is the non-commutative ringed space considered in Example \ref{E:AuslanderSheaf}, i.e. $\ca$ is the Auslander sheaf of $X$. By Theorem \ref{t:main-global}, it remains to describe the local block
\begin{align}\label{Eq:DeltaOnd}
\Delta_{O_{nd}}(A_{nd})=\frac{\cd^b(A_{nd}-\mod)}{K^b(\proj-O_{nd})} \cong \frac{\cd^b(A_{nd}-\mod)}{\thick(A_{nd}e)},
\end{align}
which we call the \emph{nodal block} and denote $\Delta_{nd}$.
Here, $O_{nd}=\widehat{\co}_{s}=k\llbracket x, y \rrbracket/(xy)$ and $A_{nd}=\widehat{\ca}_{s}=\End_{O_{nd}}(O_{nd} \oplus k\llbracket x \rrbracket \oplus k\llbracket y \rrbracket)$ is the Auslander algebra of $\MCM(O_{nd})$. Since $O_{nd}$ is Gorenstein and $A_{nd}$ has finite global dimension, the quotient category defined in \eqref{Eq:DeltaOnd} is idempotent complete by Proposition \ref{P:IdempCompl}.
It is convenient to write $A_{nd}$ as the arrow ideal completion of the path algebra of the following
quiver with relations $\vec{Q}_{nd}$, see \cite[Remark 1]{Tilting}
\begin{equation}\label{E:nodalquiver}
\begin{xy}\SelectTips{cm}{}
\xymatrix
{- \ar@/^/[rr]^{ \ro }  & &  \ast \ar@/^/[ll]^{ \lu }
 \ar@/_/[rr]_{ \ru }
 & &
\ar@/_/[ll]_{ \lo } +}\end{xy}  \qquad  \ru   \ro  = 0, \quad   \lu   \lo  = 0.
\end{equation}
This algebra belongs to the class of \emph{nodal} algebras. In particular, there is an explicit construction of all indecomposable objects in the homotopy category $K^b(\proj-A_{nd})$, by work of Burban \& Drozd \cite{Nodal}. Since $A_{nd}
$ has finite global dimension, we obtain a description of indecomposables in $\cd^b(A_{nd}-\mod)$. 

By definition, the objects in the quotient category $\Delta_{nd}$ are just the objects of the category $\cd^b(A_{nd}-\mod)$. However, some of the indecomposable objects in $\cd^b(A_{nd}-\mod)$ are isomorphic to zero or decomposable, when viewed as objects in the quotient. Discarding these indecomposables of $\cd^b(A_{nd}-\mod)$, we are able to describe the indecomposable objects of $\Delta_{nd}$.
 In order to describe morphisms in the quotient category, we either use a Lemma  of Verdier (see Lemma \ref{L:Verdier2}) to reduce to a computation in the homotopy category of projectives or a direct calculation with `roofs of morphisms' in the quotient category.
\subsubsection{Description of strings in $K^b(\proj-A_{nd})$}
 By work of Burban \& Drozd  \cite{Nodal}, the indecomposable objects in $K^b\bigl(\proj-A_{nd}\bigr)$ are explicitly known. They are either \emph{band} or \emph{string} objects.
We do not describe the band objects below, since they are contained in $K^b\bigl(\add(P_{*})\bigr)$ and thus are isomorphic to zero  in the quotient $\Delta_{nd}$. The string objects  can be described in the following way. Let $\ZZ\vec{A}_{\infty}^{\infty}$ be the oriented graph obtained by orienting the edges in a $\ZZ^2$--grid as indicated in Example \ref{E:Grid} below.
Let $\vec{\theta} \subseteq \ZZ \vec{A}_{\infty}^{\infty}$ be a finite oriented subgraph of type $A_{n}$ for a certain $n \in \mathbb{N}$.
 Let $\Sigma$ and $T$ be the terminal vertices of $\vec\theta$ and $\sigma, \tau \in \{\li, *, \re\}$. We insert the projective
 modules  $P_{\sigma}$ and  $P_{\tau}$ at the vertices $\Sigma$ and $T$ respectively. Next, we plug
 in $P_{*}$ at all intermediate vertices of $\vec\theta$.
  Finally, we put maps (given by multiplication with non-trivial paths in $\vec{Q}_{nd}$) on the arrows between the corresponding indecomposable projective modules. This has to be done
  in such a way that the composition of two subsequent arrows is always zero. Additionally, at the vertices
  where $\vec\theta$ changes orientation, the inserted paths  have to  be `alternating'. This means that if one adjacent path
  involves $\alpha$ or $\beta$ then the second should involve $\gamma$ or $\delta$.
    Taking a direct sum of modules and maps in every  column of the constructed diagram,
     we get a complex of projective $A_{nd}$--modules $\kS$, which is called \emph{string}.
\begin{ex}\label{E:Grid} Let $k, l, n, m$ be integers $\geq 1$. The following picture illustrates the construction of a string in $K^b(\proj-A_{nd})$.
\[
\begin{xy}\SelectTips{cm}{}
\xymatrix@!=1pc
{
& & \circ \ar[rd] & & P_{\li} \ar[rd]^{\cdot \lu(\ro\lu)^n} && \circ \\
\ar[ddd] &  \circ \ar[ru] \ar[rd]&&  \circ \ar[ru] \ar[rd] && P_{*} \ar[ru] \ar[rd]     \\
&&   \circ \ar[ru] \ar[rd] && P_{*} \ar[ru]^{\cdot(\lo\ru)^m} \ar[rd] && \circ  \\
&   \circ \ar[ru] \ar[rd]&&  P_{*} \ar[ru]^{\cdot(\ro\lu)^l} \ar[rd]^{\cdot(\lo\ru)^k} && \circ \ar[ru] \ar[rd]   \\
&& \circ \ar[ru]  & & P_{*} \ar[ru] &&\circ  \\
\kS = &\cdots \ar[r] &0\ar[r]&P_{*} \ar[r]^{d_{1}} & { \underset{\displaystyle P_{*}^{\oplus 2}}{\overset{\displaystyle P_{-}}{\oplus}}}\ar[r]^{d_{2}} &
P_{*} \ar[r] & 0 \ar[r] &\cdots }\end{xy}
\]
where $d_{1}=\begin{pmatrix} 0 & \cdot(\ro\lu)^l & \cdot(\lo\ru)^k\end{pmatrix}^{\mathsf{tr}}$ and $d_{2}=\begin{pmatrix} \cdot\lu(\ro\lu)^n & \cdot(\lo\ru)^m & 0 \end{pmatrix}.$
\end{ex}

In order to state our classification of indecomposable objects in $\Delta_{nd}$, we need to introduce the following family of strings in $K^b(\proj-A_{nd})$.

\begin{defn} \label{D:MinS}
Let $\sigma, \tau \in \{\li, \re\}$ and $l \in \mathbb{N}$. A \emph{minimal string} $\mathcal{S}_{\tau}(l)$ is a complex of indecomposable projective $A_{nd}$--modules
\[
\begin{xy}\SelectTips{cm}{}
\xymatrix{
\cdots\ar[r] & 0 \ar[r] & P_{\sigma} \ar[r] & P_{\ast} \ar[r] & \cdots \ar[r] &P_{\ast} \ar[r] &P_{\tau} \ar[r]& 0 \ar[r] &\cdots}
\end{xy}
\]
of length $l+2$ with differentials given by non-trivial paths of minimal possible
length and $P_{\tau}$ located in degree $0$. Note, that $\sigma$ is uniquely determined by $\tau$ and $l$:
\[\begin{cases} \sigma=\tau \quad \text{  if } l \text{ is even,}   \\  \sigma \ne \tau \quad \text{  if } l  \text{ is odd.} \end{cases}\]
\end{defn}

\begin{ex} The two complexes depicted below are minimal strings:
\begin{itemize}
\item $\mathcal{S}_{\re}(1)=\begin{xy}\SelectTips{cm}{}\xymatrix{
\cdots \ar[r]& 0  \ar[r] & \, 0 \, \,  \ar[r] & P_{\li} \ar[r]^{\cdot  \lu } & P_{\ast} \ar[r]^{\cdot  \lo }  &P_{\re} \ar[r]& 0 \ar[r] &\cdots}\end{xy}$
\item $\mathcal{S}_{\re}(2)=\begin{xy}\SelectTips{cm}{}\xymatrix{
\cdots\ar[r] & 0 \ar[r] & P_{\re} \ar[r]^{\cdot  \ru } & P_{\ast} \ar[r]^{\cdot  \ro  \lu } &P_{\ast} \ar[r]^{\cdot  \lo }  &P_{\re} \ar[r]& 0 \ar[r] &\cdots}\end{xy}$
\end{itemize}
\end{ex}

We show that minimal strings  remain indecomposable when viewed as objects in the quotient $\Delta_{nd}$. First, we need the following Lemma due to Verdier, which plays an important role in the sequel, see \cite[Proposition II.2.3.3]{Verdier}.

\begin{lem} \label{L:Verdier2}
Let $\kT$ be a triangulated category and let $\kU \subseteq \kT$ be a full triangulated subcategory. Let $Y$ be an object in $^{\perp}\kU = \bigl\{T \in \kT \big| \Hom_{\kT}(T, \kU)=0\bigr\}$ and let
\[
\PP\colon  \Hom_{\kT}(Y, X) \longrightarrow \Hom_{\kT/\kU}(Y, X)
\]
be the map induced by the localization functor.
Then $\PP$ is bijective for all $X$ in $\kT$.
\end{lem}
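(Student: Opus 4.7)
The plan is to use the calculus of fractions description of the Verdier quotient $\kT/\kU$: a morphism $Y \to X$ in $\kT/\kU$ is represented by a roof $Y \xleftarrow{s} Z \xrightarrow{f} X$ where $s$ sits in a triangle $Z \xrightarrow{s} Y \xrightarrow{t} C \to Z[1]$ with $C \in \kU$. The map $\PP$ sends $g \in \Hom_\kT(Y,X)$ to the class of the roof $Y \xleftarrow{\mathrm{id}} Y \xrightarrow{g} X$. The whole proof rests on the following single observation: if $s\colon Z \to Y$ has cone in $\kU$ and $Y \in {}^\perp \kU$, then $s$ is a split epimorphism in $\kT$. Indeed, in the triangle $Z \xrightarrow{s} Y \xrightarrow{t} C \to Z[1]$ the map $t$ lies in $\Hom_\kT(Y, C) = 0$, since $C \in \kU$ and $Y \in {}^\perp \kU$; the long exact sequence obtained by applying $\Hom_\kT(Y,-)$ to this triangle then shows that $\mathrm{id}_Y$ factors through $s$, so there exists $r\colon Y \to Z$ with $s r = \mathrm{id}_Y$.

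For surjectivity of $\PP$, I would take an arbitrary roof $Y \xleftarrow{s} Z \xrightarrow{f} X$ representing a morphism in $\kT/\kU$ and, using the retraction $r$ produced above, replace it by the equivalent roof $Y \xleftarrow{sr} Z \xrightarrow{fr} X$, which by $sr = \mathrm{id}_Y$ is nothing but the image of $fr \in \Hom_\kT(Y,X)$ under $\PP$. For injectivity, suppose $\PP(g) = 0$ for some $g\colon Y \to X$. By the definition of the equivalence relation on roofs this is equivalent to the existence of a morphism $s\colon Z \to Y$ with $\mathrm{cone}(s) \in \kU$ satisfying $g \circ s = 0$. Applying the observation above to this $s$ yields a retraction $r\colon Y \to Z$ with $sr = \mathrm{id}_Y$, and hence $g = g \circ s \circ r = 0$.

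There is no real obstacle here; the argument is entirely formal once one knows Verdier's construction of $\kT/\kU$. The only point that needs minor care is to verify that the retraction $r$ may indeed be used to form an equivalent roof in the sense of the calculus of fractions, which is immediate because $\mathrm{id}_Y$ trivially has cone $0 \in \kU$.
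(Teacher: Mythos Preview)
Your proof is correct and is the standard argument. The paper does not actually give a proof of this lemma; it simply attributes the result to Verdier and cites \cite[Proposition II.2.3.3]{Verdier}. Your argument via the calculus of fractions---using that $Y \in {}^\perp\kU$ forces any $s\colon Z \to Y$ with cone in $\kU$ to be a split epimorphism---is exactly how one would prove the cited statement directly, so there is nothing to compare.
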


\noindent
There is a dual result for $Y$ in $\kU^{\perp}$.

\begin{lem}\label{L:MinS}
Let $\tau \in \{+, -\}$ and $l \in \mathbb{N}$. Then any minimal string
$\kS=\kS_{\tau}(l)$ belongs to  $^\perp\hspace{-1pt}K^b\bigl(\add(P_{*})\bigr)  \cap K^b\bigl(\add(P_{*})\bigr)^\perp$. Moreover,  $\kS$ is indecomposable in $\Delta_{nd}.$
\end{lem}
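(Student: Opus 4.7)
The plan is to apply Verdier's Lemma~\ref{L:Verdier2} to reduce Hom computations in $\Delta_{nd}$ involving $\kS$ to computations in $K^b(\proj-A_{nd})$. Once the two perpendicularity assertions are established, Verdier's lemma gives $\End_{\Delta_{nd}}(\kS) \cong \End_{K^b(\proj-A_{nd})}(\kS)$. Since $A_{nd}$ is a semi-perfect Noetherian ring, $K^b(\proj-A_{nd})$ is a Krull--Schmidt category, and by the Burban--Drozd classification of string objects recalled above, $\kS$ is indecomposable in $K^b(\proj-A_{nd})$; hence $\End_{K^b(\proj-A_{nd})}(\kS)$ is local, and $\kS$ is indecomposable in $\Delta_{nd}$ as well.

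The bulk of the argument is therefore the vanishing of $\Hom_{K^b(\proj-A_{nd})}(\kS, P_*[n])$ and $\Hom_{K^b(\proj-A_{nd})}(P_*[n], \kS)$ for every $n \in \mathbb{Z}$. A chain map $\kS \to P_*[n]$ is, by degree reasons, determined by a single morphism $f\colon \kS^{-n} \to P_*$ satisfying the cocycle condition $f \circ d_\kS^{-n-1} = 0$, and it is null-homotopic iff $f = h \circ d_\kS^{-n}$ for some $h\colon \kS^{-n+1} \to P_*$. The key structural ingredients are: (i) the identification $e_* A_{nd} e_* \cong O_{nd} = k\llbracket x, y \rrbracket/(xy)$ under $x = \alpha\beta$ and $y = \gamma\delta$, yielding $\Ann_{O_{nd}}(x) = (y)$ and $\Ann_{O_{nd}}(y) = (x)$; and (ii) each of the Hom modules $\Hom(P_{\pm}, P_*)$ and $\Hom(P_*, P_{\pm})$ is cyclic over $O_{nd}$, isomorphic to $k\llbracket x \rrbracket$ or $k\llbracket y \rrbracket$, and generated by the corresponding length-$1$ end differential of $\kS$.

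I then split on the position $-n$. Positions outside the support of $\kS$ are trivial. At the leftmost boundary $\kS^{-l-1} = P_\sigma$ there is no cocycle constraint and every $f$ is a multiple of the generator $d_\kS^{-l-1}$ of the cyclic $O_{nd}$-module $\Hom(P_\sigma, P_*)$, hence a coboundary. At the rightmost boundary $\kS^0 = P_\tau$ no homotopy is available, but the cocycle condition $f \circ d_\kS^{-1} = 0$, interpreted in the cyclic module $\Hom(P_\tau, P_*)$ generated by the length-$1$ map $d_\kS^{-1}$, forces $f = 0$. For an internal position $\kS^{-n} = P_*$ the alternation built into Definition~\ref{D:MinS} ensures that $d_\kS^{-n-1}$ and $d_\kS^{-n}$ correspond under (i) to multiplication by $x$ and $y$ in opposite assignments, so the cocycle condition forces $f \in (y)$ (respectively $(x)$), and commutativity of $O_{nd}$ then provides a homotopy $h$ with $f = h \circ d_\kS^{-n}$. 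The vanishing $\Hom_{K^b(\proj-A_{nd})}(P_*[n], \kS) = 0$ is proved by the formally dual case analysis. The main bookkeeping obstacle is the degenerate case $l = 1$, where the two length-$1$ end differentials are adjacent with no intermediate $P_*$ term; here the relations $\delta\alpha = 0$ and $\beta\gamma = 0$ underpinning the alternation must be invoked directly to verify the required annihilator--generator matching.
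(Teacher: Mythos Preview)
Your proposal is correct and follows the same approach as the paper: the paper reduces to showing $\Hom_{K^b(\proj-A_{nd})}(P_*[m],\kS)=0=\Hom_{K^b(\proj-A_{nd})}(\kS,P_*[m])$ (invoking the long exact Hom-sequence to handle all of $K^b(\add(P_*))$), declares this ``a direct computation,'' and then deduces indecomposability exactly as you do via Verdier's lemma and the Krull--Remak--Schmidt property of $K^b(\proj-A_{nd})$. Your contribution is a careful outline of that direct computation.

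One small inaccuracy: for $l=1$ the complex $\kS_\tau(1)$ is $P_\sigma\to P_*\to P_\tau$, so it \emph{does} have an intermediate $P_*$ term; what is special is that neither adjacent differential is of the form $P_*\to P_*$. More generally, your ``internal position'' argument as phrased (both neighbouring differentials given by multiplication by $x$ or $y$ in $e_*A_{nd}e_*$) only applies verbatim to positions $2\le n\le l-1$; at $n=1$ and $n=l$ one of the differentials is a length-$1$ arrow to or from $P_\pm$, and you need the cyclicity of $\Hom(P_\pm,P_*)$ and $\Hom(P_*,P_\pm)$ (your ingredient (ii)) rather than the annihilator description in $O_{nd}$ alone. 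This is a routine adjustment and does not affect the validity of your strategy.
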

\begin{proof} 
Using the long exact Hom-sequence, it suffice to show that
\[\Hom_{K^b(\proj-A_{nd})}\bigl(P_{*}[m], \kS\bigr)=0=\Hom_{K^b(\proj-A_{nd})}\bigl(\kS, P_{*}[m]\bigr) \] holds for all $m \in \ZZ$. This is a direct computation.

Let us prove the second statement.
Since $\kS$ is a string object it is 
 indecomposable in $K^b(\proj-A_{nd})$. Since $A_{nd}$ is finitely generated as an $O_{nd}$-algebra and $O_{nd}
$ is complete, it follows that $K^b(\proj-A_{nd})$ is a Krull--Remak--Schmidt category, see \cite[Appendix A]{Nodal}. In particular, the endomorphism ring $\End_{K^b(\proj-A_{nd})}(\kS)$ is local.
 Lemma \ref{L:Verdier2} yields an algebra isomorphism
\begin{align*}
\End_{\Delta_{nd}}(\kS) \cong \End_{K^b(\proj-A_{nd})}(\kS).
\end{align*}
This shows that $\kS$ has a local endomorphism ring in $\Delta_{nd}$, which implies indecomposability.
\end{proof}

\begin{rem}\label{R:Tria}
The projective resolutions of the simple $A_{nd}$-modules $S_{+}$ and $S_{-}$ are
\[
0 \rightarrow  P_{-} \stackrel{\cdot \beta}\longrightarrow  P_{*} \stackrel{\cdot \gamma}\longrightarrow  P_{+} \rightarrow S_{+} \rightarrow  0 \quad \text{and} \quad  0 \rightarrow  P_{+} \stackrel{\cdot \delta}\longrightarrow  P_{*} \stackrel{\cdot \alpha}\longrightarrow  P_{-} \rightarrow  S_{-} \rightarrow 0.
\]
Thus in the derived category $S_{\pm} \cong \mathcal{S}_{\pm}(1)$ are isomorphic to minimal strings. Let $\rho, \sigma, \tau \in \{\li, \re\}$ and $l \in \mathbb{N}$. The cone of
\[
\begin{xy}\SelectTips{cm}{}
\xymatrix@C=1.5pc{
\kS_{\tau}(l)  &&&0 \ar[r]& P_{\sigma} \ar[r]^{\cdot d_{3}} \ar[d]^{\mathsf{id}}& P_{*} \ar[r]^{\cdot d_{4}} &\cdots \ar[r]^{\cdot d_{l+2}}& P_{*} \ar[r]^{\cdot d_{l+3}} & P_{\tau} \ar[r] & 0 \\
\kS_{\sigma}(1)[l+1] & 0 \ar[r] & P_{\rho} \ar[r]^{\cdot d_{1}} & P_{*} \ar[r]^{\cdot d_{2}} & P_{\sigma} \ar[r] & 0
} \end{xy}
\]
 is isomorphic to the following minimal string
\[
\begin{xy}\SelectTips{cm}{}
\xymatrix{
\kS_{\tau}(l+1)[1] &0 \ar[r]& P_{\rho} \ar[r]^{\cdot d_{1}} & P_{*} \ar[r]^{\cdot d_{2}d_{3}} &\cdots \ar[r]^{\cdot d_{l+2}}& P_{*} \ar[r]^{\cdot d_{l+3}} & P_{\tau} \ar[r] & 0. 
}\end{xy}
\]
Hence the minimal strings are generated by $S_{+}$ and $S_{-}$. In other words, $\kS_{\tau}(l)[n]$ is contained in  $\thick(S_{+}, S_{-}) \subseteq \cd^b(A_{nd}-\mod),$ for all $\tau \in \{\li, \re\}$, $l \in \mathbb{N}$ and $n \in \ZZ$.
\end{rem}

\subsubsection{Main result}

For each $n \in \mathbb{Z}$ define a bijection $\delta_{n}\colon \{-, +\} \ra \{-, +\}$ as follows: $\delta_{n}=\mathrm{id}$ if $n \equiv 0 \, \mod \, 2$ and $\delta_{n} \ne \mathrm{id}$ otherwise.
 
The following theorem is the main result of this subsection.

\begin{thm}\label{T:MainT} We use the notations from above.

\noindent
(a) Let $X$ be an indecomposable complex in $K^b\bigl(\proj-A_{nd}\bigr).$ Then the  image of $X$ in $\Delta_{nd}$ is either zero or isomorphic to one of the following objects
 \[ P_{\sigma}[n] \oplus P_{\tau}[m], \quad P_{\tau}[n] \quad {or} \quad \kS_{\tau}(l)[n], \quad
\text{ where } m, n \in \mathbb{Z},  l \in \mathbb{N} \text{ and } \sigma, \tau \in \{+, -\}.
 \]

\noindent
(b) Let $\sigma, \tau, \mu \in \{+, -\}$,  $n \in \ZZ$ and $l, l' \in \mathbb{Z}_{>0}$. We have the following isomorphisms:
\begin{align}\label{E:HomProjProj}
\Hom_{\Delta_{nd}}\bigl(P_{\mu}, P_{\tau}[n]\bigr) \cong \begin{cases} k \quad \text{  if }n \leq 0 \text{ and } \mu=\delta_{n}(\tau), \\ 0 \quad \text{  otherwise.}\end{cases}
\end{align}

\begin{align}\label{E:HomFromProj}
\Hom_{\Delta_{nd}}\bigl(P_{\mu}[n], \mathcal{S}_{\tau}(l)\bigr)\cong \begin{cases} k \quad \text{  if } 0 \leq n < l \text{ and } \mu=\delta_{n}(\tau),  \\ 0 \quad \text{  otherwise.}\end{cases}
\end{align}

\begin{align}\label{E:HomToProj}
\Hom_{\Delta_{nd}}\bigl( \mathcal{S}_{\tau}(l), P_{\mu}[n]\bigr)\cong \begin{cases} k \quad \text{  if } 2 \leq n \leq l+1 \text{ and } \mu \ne \delta_{n}(\tau),  \\ 0 \quad \text{  otherwise.}\end{cases}
\end{align}

\begin{align} \label{E:HomStringString}
\Hom_{\Delta_{nd}}\bigl( \mathcal{S}_{\tau}(l), \mathcal{S}_{\mu}(l')[n]\bigr)\cong \begin{cases} k \quad \text{  if }  n \leq 0; \, l \geq l'+n \geq 1 \text{ and }\mu = \delta_{n}(\tau),  \\ 
 k \quad \text{  if }  n \geq 2;\, l' \geq l+2-n \geq 1 \\ \quad \, \, \,  \text{ and }\mu \ne \delta_{n}(\tau),  \\
0 \quad \text{  otherwise.}\end{cases}
\end{align}

\noindent
(c) The set of indecomposable objects in $\Delta_{nd}$ is given by \[\bigl\{P_{\sigma}[n], \kS_{\tau}(l)[m] \, \big| \, \sigma, \tau \in \{+, -\}, \, n, m \in \ZZ, \, l \in \mathbb{N}\bigr\}.\] Moreover, the objects from this set  are pairwise non-ismorphic   in $\Delta_{nd}$.
\end{thm}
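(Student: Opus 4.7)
For part (a), I will invoke Burban--Drozd's description of the indecomposable objects of $K^b(\proj{-}A_{nd})$ as either \emph{bands} or \emph{strings}. Bands are complexes of copies of $P_\ast$, hence lie in $K^b(\add P_\ast)=K^b(\proj{-}O_{nd})$ and become zero in $\Delta_{nd}$. For a general string associated to a finite oriented subgraph $\vec{\theta}\subseteq\mathbb{Z}\vec{A}_\infty^\infty$, I argue by induction on the number of ``change of direction'' vertices of $\vec{\theta}$. At a corner vertex, two consecutive arrows involve the ``alternating'' paths; using the resolutions of $S_\pm$ from Remark~\ref{R:Tria} one sees that the corresponding local piece fits into a triangle with a copy of $P_\ast$, and therefore the string splits at that corner in $\Delta_{nd}$ into two shorter sub-strings glued through a shifted $P_\pm$. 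Iterating this splitting reduces any string to a direct sum whose summands are minimal strings $\kS_\tau(l)[n]$, shifted indecomposable projectives $P_\pm[n]$, or (in the extreme case where $\vec\theta$ is a zig-zag of length one on both sides of a single interior $P_\ast$-vertex) a sum $P_\sigma[n]\oplus P_\tau[m]$.

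For part (b), the key input is Verdier's Lemma~\ref{L:Verdier2} together with Lemma~\ref{L:MinS}, which asserts that every minimal string lies both in $^{\perp}K^b(\add P_\ast)$ and in $K^b(\add P_\ast)^\perp$. This allows me to identify three of the four Hom-groups in (b) with the corresponding Hom-groups in $K^b(\proj{-}A_{nd})$ and then read them off directly from the explicit form of the minimal strings: the formulas \eqref{E:HomFromProj}, \eqref{E:HomToProj} and \eqref{E:HomStringString} follow by straightforward chain-map computations, taking into account how the parity of the shift forces the endpoint labels via the bijections $\delta_n$. The subtle case is \eqref{E:HomProjProj}, the morphisms $P_\mu\to P_\tau[n]$ in $\Delta_{nd}$, because neither $P_+$ nor $P_-$ lies in the relevant perpendicular subcategories. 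Here I would use the Krull--Remak--Schmidt property of $\Delta_{nd}$ (the quotient of a Krull--Remak--Schmidt category by a thick subcategory, with idempotent completeness guaranteed by Proposition~\ref{P:IdempCompl}) and proceed via the calculus of fractions: every roof $P_\mu\xleftarrow{s}Y\xrightarrow{f}P_\tau[n]$ can be replaced, after normalizing $Y$, by a genuine chain map out of a minimal-string-like coresolution of $P_\mu$ obtained by iteratively splicing the short exact sequences of Remark~\ref{R:Tria}. A bookkeeping with these coresolutions shows that the only non-vanishing Hom spaces occur for $n\le 0$ with the parity-compatible endpoint $\mu=\delta_n(\tau)$, and are one-dimensional.

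For part (c), combining (a) with the non-vanishing Hom-groups computed in (b) gives the list of indecomposables. Pairwise non-isomorphism follows because the Hom spaces in (b) distinguish shifted projectives from shifted minimal strings (the former admit non-trivial self-extensions in negative degrees while the latter do not), and among minimal strings the pair $(l,\tau,n)$ is reconstructed from the position and width of the Hom ``support'' given in \eqref{E:HomStringString}.

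The main obstacle I anticipate is the verification in part (a) that a general string with several corners really decomposes into the claimed summands in $\Delta_{nd}$: one must keep track of signs and of the precise paths at the corner vertices so that the local triangles produced from the resolutions of $S_+$ and $S_-$ assemble into a genuine direct-sum decomposition in the Verdier quotient, rather than merely a filtration. The second delicate point is the roof-level computation of \eqref{E:HomProjProj}, since the source $P_\mu$ is unavoidable and one must argue that two \emph{a priori} different roofs with the same target become equal in $\Delta_{nd}$; idempotent completeness (Proposition~\ref{P:IdempCompl}) and the Krull--Remak--Schmidt property will be used to conclude that the dimension is at most one in each degree.
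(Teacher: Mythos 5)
Your overall strategy is aligned with the paper's: Burban--Drozd classification of strings and bands, Verdier's Lemma~\ref{L:Verdier2} combined with Lemma~\ref{L:MinS} to push three of the four Hom computations back into $K^b(\proj{-}A_{nd})$, and a direct roof calculation for $\Hom_{\Delta_{nd}}(P_\mu, P_\tau[n])$. However there are two genuine gaps.

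\textbf{Gap in part (a): non-minimal differentials.} Your induction only splits a string at ``change of direction'' vertices (the corners $(\star)$ and $(\star\star)$). But a string $\kS$ with $\sigma,\tau\in\{+,-\}$ can be \emph{linearly oriented with no corner at all} and still fail to be a minimal string, because one of its differentials is given by a path that winds around a loop-vertex more than once --- for instance $P_{-} \xrightarrow{\cdot\beta\alpha\beta} P_{*} \xrightarrow{\cdot\gamma} P_{+}$. Your iteration never touches such a string, so you would wrongly classify it as a minimal string. The paper's step~3 handles exactly this: if a differential $d=\cdot p$ is ``non-minimal'' (the path $p$ contains $\delta\gamma$ or $\beta\alpha$ as a subpath), then the corresponding chain-map component factors through a shifted $P_{*}$, giving a triangle $\kS'\to\kS''\to\kS\to\kS'[1]$ whose connecting morphism vanishes in $\Delta_{nd}$, whence $\kS\cong\kS'[1]\oplus\kS''$. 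You need this additional reduction step or the classification in (a) is incomplete.

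\textbf{Gap in part (c): the distinguishing criterion is wrong as stated.} You claim shifted projectives ``admit non-trivial self-extensions in negative degrees while the latter do not.'' But by your own formula~\eqref{E:HomStringString}, for $l\ge 2$ the minimal string $\kS_\tau(l)$ does admit non-zero $\Hom_{\Delta_{nd}}(\kS_\tau(l),\kS_\tau(l)[n])$ for suitable even $n<0$ (namely $1-l\le n\le 0$). The correct dichotomy is between \emph{bounded} and \emph{unbounded} Hom-support: $\Hom_{\Delta_{nd}}(P_\mu,P_\mu[n])\cong k$ for all even $n\le 0$, whereas the Hom-support of any minimal string is a finite interval. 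This would repair your argument, but the paper avoids the issue entirely by a Grothendieck-group computation: $[P_\pm]$ generates a summand of $K_0(\Delta_{nd})\cong\Z^2$ not hit by $\thick(S_+,S_-)$, whereas every minimal string lies in $\thick(S_+,S_-)$ by Remark~\ref{R:Tria}; so the two families cannot overlap. Also, the invocations of idempotent completeness and Krull--Remak--Schmidt for $\Delta_{nd}$ in your treatment of~\eqref{E:HomProjProj} are superfluous: the paper's roof normalization (choose $Q$ indecomposable without $P_*$-summands, truncate so $Q$ ends at degree $-n$, make the differentials minimal) pins down a canonical representative and then observes that a non-minimal $g$ factors through $P_*[n]$ --- no KRS argument in the quotient is needed.
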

\begin{proof}
Note, that the strings with $P_{\sigma}=P_{\tau}=P_{*}$ vanish in $\Delta_{nd}$. Therefore,
in what follows we may and shall  assume that $\sigma$ or $\tau \in \{\li, \re\}$.

\medskip
\noindent
\textit{Proof of (a).}
Let $\kS \in K^b(\proj-A_{nd})$ be a string as defined above.

\noindent
1. If $P_{\tau}=P_{*}$  and $ \vec\theta=\Sigma \rightarrow \cdots$ hold, then there exists a distinguished triangle
\[
\kS \stackrel{f}\lar   P_{\sigma}[n] \lar \mathsf{cone}(f) \lar \kS[1]
\]
with $\mathsf{cone}(f) \in K^b\bigl(\add(P_{*})\bigr)$, yielding an isomorphism $\kS \cong P_{\sigma}[n]$ in $\Delta_{nd}$. Similarly, if $\vec\theta = \Sigma \leftarrow \cdots$ holds, then we obtain a triangle
$
P_{\sigma}[n] \stackrel{f}\lar    \kS \lar \mathsf{cone}(f) \lar  P_{\sigma}[n+1]
$
with $\mathsf{cone}(f) \in K^b\bigl(\add(P_{*})\bigr)$ and hence an isomorphism $\kS \cong P_{\sigma}[n]$ in $\Delta_{nd}.$

\medskip
\noindent
2. We may assume that $\sigma, \tau \in \{\li, \re\}$. If the graph $\vec{\theta}$ defining
$\kS$ is \emph{not} linearly oriented (i.e.~contains a subgraph ($\star$) $\begin{xy}\SelectTips{cm}{}\xymatrix{\circ \ar[r] & \circ & \circ \ar[l]}\end{xy}$ or ($\star\star$) $\begin{xy}\SelectTips{cm}{}\xymatrix{\circ & \circ \ar[r] \ar[l] &\circ}\end{xy}$),
then there exists a distinguished triangle  of the following form
\[
\begin{xy}\SelectTips{cm}{}
\xymatrix{
(\star) & P_{*}[s] \ar[r] &\kS \ar[r]& \kS' \oplus \kS'' \ar[r]& P_{*}[s+1]&  \\
(\star\star) & P_{*}[s-1] \ar[r] &\kS' \oplus \kS'' \ar[r] & \kS \ar[r] & P_{*}[s] }\end{xy}\]
and therefore $\kS \cong \kS' \oplus \kS'' \cong P_{\sigma}[n] \oplus P_{\tau}[m]$ is decomposable in $\Delta_{nd}$.

\medskip
\noindent
3. Hence without loss of generality, we may assume
$\sigma, \tau \in \{\li, \re\}$ and $\vec{\theta}$ to be  linearly oriented.
If $\kS$ has a `non-minimal' differential $d=\cdot p$ (i.e. the path $p$ in $\vec{Q}_{nd}$ contains $\ru \lo$ or $\lu \ro$ as a subpath), then we consider the following morphism of complexes
\[\begin{xy}\SelectTips{cm}{}\xymatrix{\kS' \ar[d]_f & P_{\sigma} \ar[r] &P_{*} \ar[r] & \cdots \ar[r] & P_{*}  \ar[d]^d\\
\kS'' &&&& P_{*} \ar[r] & P_{*} \ar[r] & \cdots \ar[r] &P_{*} \ar[r] & P_{\tau}}\end{xy}\]
which can be  completed to a distinguished triangle in $K^b\bigl(\proj-A_{nd}\bigr)$
\[
\kS' \stackrel{f}\lar  \kS'' \lar \kS \lar \kS'[1].
\] By our assumption on $d$,  the morphism $f$ factors through $P_{*}[s]$ for some $s \in \ZZ$ and therefore vanishes in $\Delta_{nd}$. Hence we have  a decomposition $\kS \cong \kS'[1] \oplus \kS'' \cong P_{\sigma}[n] \oplus P_{\tau}[m]$ in $\Delta_{nd}$.

\medskip
\noindent
4. If $\sigma, \tau \in \{\li, \re\}$, $\vec{\theta}$ is linearly oriented and $\kS$ has only minimal differentials, then $\kS$ is a minimal string. This  concludes the proof of part (a) of Theorem \ref{T:MainT}.

\noindent
\textit{Proof of (b).} 
We begin with the isomorphism \eqref{E:HomProjProj}.
Every morphism $P_{\sigma} \rightarrow P_{\tau}[n]$ in $\Delta_{nd}$ is given by a roof $P_{\sigma} \xleftarrow{f} Q \xrightarrow{g} P_{\tau}[n]$, where $f, g$ are morphisms in $K^b(\proj-A_{nd})$ and $\mathsf{cone}(f) \in K^b\bigl(\add (P_{*})\bigr)$. By a common abuse of terminology, we call $f$ a \emph{quasi-isomorphism}. Our aim is to find a convenient representative in each equivalence class of roofs. It turns out that $\sigma \in \{+, -\}$ and $n \in \ZZ$ determine $Q$ and $f$ of our representative and $g$ is either $0$ or determined by $\tau$ up to a  scalar. The proof is devided into several steps.

\medskip
\noindent
1. Without loss of generality, we may assume that $Q$ has no direct summands from
 $K^b\bigl(\add (P_{*})\bigr)$. Indeed, if
 $Q \cong Q' \oplus Q''$ with $Q'' \in K^b\bigl(\add (P_{*})\bigr)$, then the diagram
\[
\begin{xy}\SelectTips{cm}{}
\xymatrix{&&Q' \ar[rrd]^{g'} \ar[lld]_{f'} \ar[d]^{\left(\bsm \mathsf{id} \\ 0 \esm\right)} \\
P_{\sigma} && Q' \oplus Q'' \ar[ll]_{f=\left(\bsm f' & f'' \esm\right)} \ar[rr]^{g=\left(\bsm g' & g'' \esm\right)} && P_{\tau}[n]}
\end{xy}
\]
yields  an equivalence of roofs $P_{\sigma} \xleftarrow{f} Q \xrightarrow{g} P_{\tau}[n]$ and
$P_{\sigma} \xleftarrow{f'} Q' \xrightarrow{g'} P_{\tau}[n]$.

\medskip
\noindent
2. Using our assumptions on $f$ and $Q$ in conjunction with the description of indecomposable strings in $K^b(\proj-A_{nd})$, it is not difficult to see that  $Q$  can (without restriction) taken to be an indecomposable string with $\tau=*$ and $f$ to be of the following form:
\[
\begin{xy}\SelectTips{cm}{}
\xymatrix@R=0.3pc{
&\cdots \ar[r] & P_{*}  \ar[r]& P_{*} \ar[r] & P_{*} \ar[rdd] \\ Q \ar[ddd]_f \\
&P_{\sigma} \ar[dd]_{\mathsf{id}} \ar[r] & P_{*} \ar[r] & \cdots \ar[r] & P_{*} \ar[r] & P_{*} \\ \\
P_{\sigma }&P_{\sigma}}
\end{xy}
\]

\noindent
3.  Without loss of generality, we may assume that $Q$ is constructed from a linearly oriented graph $\vec{\theta}$. Indeed, otherwise we may consider the truncated complex $Q^{\leq}$ defined in the diagram below and replace our roof by an equivalent one.
\[
\begin{xy}\SelectTips{cm}{}
\xymatrix@R=0.3pc{
&\cdots \ar[r]& P_{*} \ar[r]& P_{*} \ar[r] & P_{*} \ar[rdd] \\
Q \\
&P_{\sigma} \ar[r]^{d_{1}} & P_{*} \ar[r]^{d_{2}} & \cdots \ar[r]^{d_{n-1}} & P_{*} \ar[r]^{d_{n}} & P_{*} \\ \\
Q^{\leq} \ar[uuu]^q&P_{\sigma} \ar[uu]^{\mathsf{id}}\ar[r]^{d_{1}} & P_{*} \ar[r]^{d_{2}} \ar[uu]^{\mathsf{id}} & \cdots \ar[r]^{d_{n-1}} & P_{*} \ar[r]^{d_{n}} \ar[uu]^{\mathsf{id}} & P_{*} \ar[uu]^{\mathsf{id}} }
\end{xy}
\]
\[
\begin{xy}\SelectTips{cm}{}
\xymatrix{&&Q^\leq \ar[rrd]^{gq} \ar[lld]_{fq} \ar[d]^{q} \\
P_{\sigma} &&Q \ar[ll]_{f} \ar[rr]^{g} && P_{\tau}[n]}
\end{xy}
\]
In particular,  $n >0 $ implies that $\Hom_{\Delta_{nd}}\bigl(P_{\sigma}, P_{\tau}[n]\bigr)=0$ holds.

\medskip
\noindent
4. By the above reductions, $g$ has the following form:
\[
\begin{xy}\SelectTips{cm}{}
\xymatrix{
Q \ar[d]^g &  P_{\sigma} \ar[r] & P_{*} \ar[r] & \cdots \ar[r] & P_{*} \ar[r] & P_{*} \ar[r]\ar[d]^{g} & P_{*} \ar[r] & \cdots \\
P_{\tau}[n] & &&&& P_{\tau}
}\end{xy}
\]
We may truncate again so that $Q$ ends at degree $-n$:
\[
\begin{xy}\SelectTips{cm}{}
\xymatrix{&&Q^{\leq -n} \ar[rrd] \ar[lld]  \\
P_{\sigma} &&Q \ar[ll] \ar[rr] \ar[u]&& P_{\tau}[n]}
\end{xy}
\]

\noindent
5. Next, we may  assume that $Q$ has minimal differentials (see  Definition \ref{D:MinS}) and thus is  uniquely determined by $\sigma$ and $n$. Indeed, otherwise there exists  a quasi-isomorphism:
\[
\begin{xy}\SelectTips{cm}{}
\xymatrix{
Q'  \ar[d]^q & P_{\sigma} \ar[r] \ar[d]^{\mathsf{id}}& P_{*} \ar[r] \ar[d]^{\mathsf{id}} & \cdots \ar[r] &P_{*}  \ar[r]^{d'} \ar[d]^{\mathsf{id}}& P_{*}  \ar[d]^{d''}\ar[r] &0 \ar[d]  \\
Q & P_{\sigma} \ar[r] & P_{*} \ar[r] & \cdots \ar[r] & P_{*} \ar[r]^d & P_* \ar[r] & P_* \ar[r] & \cdots
}\end{xy}
\]

\noindent
6. Summing up, our initial roof can be replaced by an equivalent one  of  the following form
\[
\begin{xy}\SelectTips{cm}{}
\xymatrix{
P_{\sigma} & P_{\sigma} \\
Q \ar[u]^f \ar[d]_g & P_{\sigma} \ar[u]^{\mathsf{id}} \ar[r] & P_{*} \ar[r] & \cdots \ar[r] &P_{*} \ar[d]_g \\
P_{\tau}[n] & &&& P_{\tau}
}
\end{xy}
\]
If $g$ is not minimal (i.e.~not given by multiplication with a single arrow), then it factors over $P_{*}[n]$ and therefore vanishes in $\Delta_{nd}$. Thus the morphism space $\Hom_{\Delta_{nd}}(P_{\sigma}, P_{\tau}[n])$ is at most one dimensional. Moreover, $g$ can be non-zero only if $n$ has the right parity.

\medskip
\noindent
7. Consider a roof $P_{\sigma} \xleftarrow{f} Q \xrightarrow{g} P_{\tau}[n]$
as in the previous step and assume that $g$ is non-zero and minimal. We want to show that the roof defines a non-zero homomorphism in $\Delta_{nd}$. We have a triangle
$ Q \stackrel{g}\rightarrow  P_{\tau}[n] \rightarrow  \String{\tau}{-n}{n} \rightarrow Q[1]$ in $K^b\bigl(A_{nd}-\mod\bigr)$ yielding a triangle
$P_{\sigma} \rightarrow  P_{\tau}[n] \rightarrow  \String{\tau}{-n}{n} \rightarrow P_{\sigma}[1]$ in $\Delta_{nd}$. Since $\String{\tau}{-n}{n}$ is indecomposable (see Lemma \ref{L:MinS}),  the map is non-zero. The claim follows.

\smallskip
Lemma \ref{L:Verdier2} and Lemma \ref{L:MinS} reduce the computation of morphism spaces in \eqref{E:HomFromProj}, \eqref{E:HomToProj} and  \eqref{E:HomStringString}  to a computation in the homotopy category $K^b(\proj-A_{nd})$. Using this \eqref{E:HomFromProj} and \eqref{E:HomToProj} can be checked directly.  Every minimal string may be presented as a cone of a morphism $P_{\sigma}[n] \rightarrow P_{\tau}[m]$ in $\Delta_{nd}$ (as in step 7). In conjunction with the isomorphisms \eqref{E:HomFromProj} and \eqref{E:HomToProj} and the long exact Hom-sequence, this can be used to verify  \eqref{E:HomStringString}.

\medskip
\noindent
\textit{Proof of (c).}
The first statement follows from part (a) together with Lemma \ref{L:MinS} and part (b).

For the second part, we use the description of the Grothendieck group of a local relative singularity category given in Proposition \ref{P:GrothendieckGroup}:
for  $X \in \thick(S_{+}, S_{-})$ we have $[X] =   n \cdot  \bigl([P_{+}]+[P_{-}]\bigr) \in K_{0}(\Delta_{nd})$ for a certain $n \in \mathbb{Z}$. Thus the images of indecomposable projective $A_{nd}$-modules $P_{+}$ and $P_{-}$ are not contained in $\thick(S_{+}, S_{-})$.
 By Lemma \ref{L:Verdier2} and the classification of indecomposable strings in $K^b(\proj-A_{nd})$, it remains to show that $P_{\sigma}[n] \cong P_{\tau}[m]$ implies $\sigma=\tau$ and $n=m$. Assume that $n>m$ holds. Then using Lemma \ref{L:Verdier2} again, we obtain
\[\Hom_{\Delta_{nd}}\bigl(P_{\sigma}[n], \String{\sigma}{1}{n}\bigr)\cong k \ne 0 =\Hom_{\Delta_{nd}}\bigl(P_{\tau}[m], \String{\sigma}{1}{n}\bigr).\] This is a contradiction. Similarly, the assumption $\sigma \ne \tau$ leads to a contradiction.
\end{proof}

\begin{rem}
Using Theorem \ref{t:Classical-versus-generalized-singularity-categories} together with Theorem \ref{T:MainT}, we obtain explicit descriptions of the relative singularity categories $\Delta_{R}(A)$, where $R$ is an odd dimensional $A_{1}$-singularity and $A$ is the Auslander algebra of $\MCM(R)$.
\end{rem}

\begin{cor}\label{C:indinTria}
The indecomposable objects of the triangulated subcategory \[\thick(S_-, S_+) \subset
\cd^b(A_{nd}-\mod)\] are precisely the shifts of the minimal strings $\kS_\tau(l)$.
\end{cor}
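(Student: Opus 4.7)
My plan is to prove the two inclusions separately. One direction is essentially free: Remark \ref{R:Tria} shows that every shift $\kS_\tau(l)[n]$ belongs to $\thick(S_-,S_+)$, and each such object is indecomposable in $\cd^b(A_{nd}-\mod)$ as a string object; moreover, Theorem \ref{T:MainT}(c) ensures that these shifts are pairwise non-isomorphic already in $\Delta_{nd}$, and hence a fortiori in $\cd^b(A_{nd}-\mod)$.

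For the converse, I would take an indecomposable $X \in \thick(S_-,S_+)$ and examine its image under the quotient functor $Q\colon \cd^b(A_{nd}-\mod) \to \Delta_{nd}$. The first key step is to identify $\thick_{\Delta_{nd}}(S_-,S_+)$ with the additive closure of $\{\kS_\tau(l)[n]\}$: it contains these objects by Remark \ref{R:Tria}, and by Theorem \ref{T:MainT}(a) the only other indecomposables of $\Delta_{nd}$ are shifts of the projectives $P_\pm$, which are ruled out by the $K_0$-argument from the proof of Theorem \ref{T:MainT}(c). Therefore $Q(X) \cong Y_1 \oplus \cdots \oplus Y_m$ in $\Delta_{nd}$, with each $Y_i$ a shift of a minimal string, and I set $Y := \bigoplus_i Y_i \in \cd^b(A_{nd}-\mod)$.

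When $Q(X) \neq 0$, I would exploit Lemma \ref{L:MinS}, which places $Y$ in ${}^\perp K^b(\add P_*) \cap K^b(\add P_*)^\perp$; Lemma \ref{L:Verdier2} then yields bijections $\Hom_{\cd^b}(X,Y) \cong \Hom_{\Delta_{nd}}(X,Y)$ and $\Hom_{\cd^b}(Y,X) \cong \Hom_{\Delta_{nd}}(Y,X)$, and in particular an algebra isomorphism $\End_{\cd^b}(Y) \cong \End_{\Delta_{nd}}(Y)$. Lifting the isomorphism $Q(X) \cong Y$ and its inverse componentwise to morphisms $g\colon X \to Y$ and $f\colon Y \to X$ in $\cd^b(A_{nd}-\mod)$, the composite $gf$ already equals $\id_Y$ in $\cd^b$, since this holds after applying $Q$ and $Q$ is injective on $\End(Y)$. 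Thus $Y$ is a direct summand of $X$ in $\cd^b(A_{nd}-\mod)$, and indecomposability of $X$ forces $m=1$ and $X \cong Y_1$, a shift of a minimal string.

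The main obstacle will be the remaining case $Q(X) = 0$, i.e., $X \in K^b(\add P_*)$. To exclude it, I would observe that $P_*$ is projective and $\Hom_{\cd^b}(P_*, S_\pm[n]) = 0$ for every $n$, so a d\'evissage along triangles gives $\Hom_{\cd^b}(P_*, X[n]) = 0$ for every $n$. Since $\End(P_*) = e_* A_{nd} e_* \cong O_{nd}$, the tautological equivalence $K^b(\add P_*) \cong K^b(\proj-O_{nd})$ sends $X$ to a bounded complex of finitely generated projective $O_{nd}$-modules whose cohomology vanishes. Over the local ring $O_{nd}$ any bounded acyclic complex of finitely generated projectives is null-homotopic, so $X = 0$ in $\cd^b(A_{nd}-\mod)$, contradicting indecomposability. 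The most delicate ingredient in the whole plan is the identification of $\thick_{\Delta_{nd}}(S_-,S_+)$ carried out in step two; everything else reduces to the structural properties of minimal strings already established in Lemma \ref{L:MinS} and Theorem \ref{T:MainT}.
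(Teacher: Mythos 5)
Your argument is correct and follows the paper's underlying strategy, but it is more circuitous. The paper's proof first observes (by d\'evissage from Lemma \ref{L:MinS}) that \emph{every} object of $\thick(S_-,S_+)$ lies in $^\perp K^b(\add P_*) \cap K^b(\add P_*)^\perp$, so Lemma \ref{L:Verdier2} makes the restricted quotient functor $Q|\colon \thick(S_-,S_+) \to \Delta_{nd}$ fully faithful at once; then it compares indecomposables directly with the list in Theorem \ref{T:MainT}(a) and uses the $K_0$-argument to discard the projectives. You recover the same core facts, but you apply Lemma \ref{L:Verdier2} only to the target $Y$ and then hand-lift an isomorphism $Q(X)\cong Y$ to exhibit $Y$ as a direct summand of $X$; and you treat $Q(X)=0$ as a separate case, running it through the equivalence $K^b(\add P_*)\cong K^b(\proj O_{nd})$ and the contractibility of bounded acyclic complexes of projectives. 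That extra case is correct but superfluous: once you know $Q|$ is fully faithful on all of $\thick(S_-,S_+)$ (which follows exactly as above), $Q(X)=0$ gives $\End(X)\cong\End(Q(X))=0$, hence $X=0$, with no need for the $O_{nd}$-module detour. So the two proofs use the same two pillars (Lemma \ref{L:Verdier2} / Lemma \ref{L:MinS} on one side, Theorem \ref{T:MainT} plus the $K_0$-argument on the other); the paper simply packages them as a single full-faithfulness statement, whereas you re-derive it object by object.
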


\begin{proof}
By Remark \ref{R:Tria}, we know that all minimal strings belong to $\thick(S_{-}, S_+)$.
Hence we just have to prove that there are no other indecomposable objects.
According to Lemma \ref{L:Verdier2} and  Lemma \ref{L:MinS},
the functor $\thick(S_{-}, S_+) \rightarrow \Delta_{nd}$ is fully faithful.
Therefore, the indecomposable objects of the category $\thick(S_{-}, S_+)$ and its essential 
image in $\Delta_{nd}$
are the same. By Theorem \ref{T:MainT}, all indecomposable objects of $\Delta_{nd}$ are known and
the shifts of the objects  $P_{+}$ and $P_{-}$ are not contained in $\thick(S_{-}, S_{+})$. Hence, 
the minimal strings are the only indecomposable objects of $\thick(S_{-}, S_+)$.
\end{proof}

\subsubsection{The cluster resolution and its relative singularity category}
In this paragraph, we describe another non-commutative resolution of the nodal curve singularity $O_{nd}=k\llbracket x, y \rrbracket/(xy)$. Consider the stable category of maximal Cohen--Macaulay modules $\ul{\MCM}(O_{nd})$. It is a $2$-Calabi--Yau category \cite{Auslander76} and  has only two indecomposable objects $k\llbracket x \rrbracket$ and $k\llbracket y \rrbracket$, which are interchanged by the shift functor $\Omega^{-1}$. Moreover, there are no non-trivial homomorphisms between these two indecomposable objects. In particular, each of $k\llbracket x \rrbracket$ and $k\llbracket y \rrbracket$ is a $2$-cluster tilting object in $\ul{\MCM}(O_{nd})$, see \cite{BIKR}. It follows from work of Iyama \cite{Iyama07} (see also Subsection \ref{ss:fract-cy}), that the algebra
$C_{nd}=\End_{O_{nd}}(O_{nd} \oplus k\llbracket x \rrbracket)$ has global dimension $3$. In particular, $C_{nd}$ is a non-commutative resolution of $O_{nd}$, which we call the \emph{cluster resolution}. Note, that there is an isomorphism of algebras $C_{nd}\cong eA_{nd} e$, where $A_{nd}$ is the Auslander algebra of $O_{nd}$ and $e \in A_{nd}$ is the idempotent endomorphism corresponding to the identity endomorphism of $O_{nd} \oplus k\llbracket x \rrbracket$. This yields a description of $C_{nd}$ as the completion of the following quiver with relations
\begin{equation}
\begin{xy}\SelectTips{cm}{}
\xymatrix
{- \ar@/^/[rr]^{ \ro }  & &  \ast \ar@/^/[ll]^{ \lu } \ar@(rd, ru)[]_{[\lo\ru]} 
}\end{xy}  \qquad  [\lo\ru]   \ro  = 0, \quad   \lu   [\lo\ru]  = 0.
\end{equation}
Moreover, there is a fully faithful functor
\begin{align*}
F\colon \cd^b(C_{nd}-\mod) \xrightarrow{ A_{\scriptsize nd} e \lten_{C_{\scriptsize nd}} -}  \cd^b(A_{nd}-\mod),
\end{align*}
which induces a fully faithful functor $\overline{F} \colon\Delta_{O_{nd}}(C_{nd}) \to \Delta_{O_{nd}}(A_{nd})$ between the corresponding relative singularity categories, see Proposition \ref{P:Embedding-of-relative} below for the general statement. The functor $F$ sends the indecomposable projective $C_{nd}$-modules to the corresponding indecomposable projective $A_{nd}$-modules. As a consequence, we can describe the image of the embedding $\overline{F}$ as follows:

\begin{prop}\label{P:ClusterAone}
The indecomposable objects in the image of \[\overline{F} \colon\Delta_{O_{nd}}(C_{nd}) \to \Delta_{O_{nd}}(A_{nd})\] are the shifts $P_{-}[n]$ and $\String{-}{2l}{n}$, with $l \in \mathbb{N}$ and $n \in \mathbb{Z}$.
\end{prop}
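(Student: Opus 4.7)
The plan is to identify the essential image of $F$ inside $\cd^b(A_{nd}\text{-}\mod)$ with an explicit full subcategory, and then read off the resulting indecomposables in the quotient $\Delta_{O_{nd}}(A_{nd})$ via the classification already established in Theorem~\ref{T:MainT}.

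First I would observe that since $C_{nd}$ has finite global dimension, the equality $\cd^b(C_{nd}\text{-}\mod)=K^b(\proj\text{-}C_{nd})=K^b(\add(P^C_{-}\oplus P^C_{*}))$ holds, where $P^C_{-}$ and $P^C_{*}$ are the indecomposable projective $C_{nd}$-modules at the vertices $-$ and $*$, respectively. The bimodule $A_{nd}e$ decomposes as $P^A_{-}\oplus P^A_{*}$ when viewed as a left $A_{nd}$-module, and one has $F(P^C_i)=A_{nd}e\otimes^{L}_{C_{nd}}e_i C_{nd}=A_{nd}e_i=P^A_i$ for $i\in\{-,*\}$. Consequently the essential image of $F$ in $\cd^b(A_{nd}\text{-}\mod)=K^b(\proj\text{-}A_{nd})$ equals the full triangulated subcategory $K^b(\add(P^A_{-}\oplus P^A_{*}))$, and therefore the essential image of $\overline{F}$ in $\Delta_{O_{nd}}(A_{nd})$ equals the image of $K^b(\add(P^A_{-}\oplus P^A_{*}))$ under the Verdier quotient.

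Next I would apply the Burban--Drozd classification to enumerate the indecomposable objects of $K^b(\add(P^A_{-}\oplus P^A_{*}))$: these are bands (which lie in $K^b(\add P^A_{*})$ and hence vanish in $\Delta_{O_{nd}}(A_{nd})$) and strings whose defining graph $\vec{\theta}$ is decorated only with projectives from $\{P^A_{-},P^A_{*}\}$. For such strings, the four reduction steps in the proof of Theorem~\ref{T:MainT}(a) apply inside the subcategory $K^b(\add(P^A_{-}\oplus P^A_{*}))$, because the mapping cones used in Steps~1--3 only involve shifted copies of $P^A_{*}$. This shows that every indecomposable object in the image of $\overline{F}$ is, in $\Delta_{O_{nd}}(A_{nd})$, isomorphic to a shift $P^A_{\sigma}[n]$ with $\sigma\in\{-,*\}$, or to a minimal string $\String{\tau}{l}{n}$ both of whose endpoints lie in $\{P^A_{-},P^A_{*}\}$. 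Discarding the zero summands (those with $\sigma=*$ or with any endpoint equal to $P^A_{*}$) leaves exactly the shifts of $P^A_{-}$ and the minimal strings with \emph{both} endpoints equal to $P^A_{-}$. By Definition~\ref{D:MinS}, the latter condition ($\sigma=\tau=-$) forces $l$ to be even, so these strings are precisely the $\String{-}{2l}{n}$ with $l\in\mathbb{N}$.

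For the converse inclusion I would simply note that $P^A_{-}[n]=F(P^C_{-})[n]$ tautologically lies in the image, and that for $\sigma=\tau=-$ and $l=2l'$ every entry of $\String{-}{2l}{n}$ belongs to $\add(P^A_{-}\oplus P^A_{*})$, so this minimal string is also in the essential image of $F$. Combining this with the fully faithfulness of $\overline{F}$ (Proposition~\ref{P:Embedding-of-relative}) and Theorem~\ref{T:MainT}(c), which asserts that the objects $P^A_{-}[n]$ and $\String{-}{2l}{n}$ are pairwise non-isomorphic and indecomposable in $\Delta_{O_{nd}}(A_{nd})$, completes the identification. The only genuinely delicate point is the bookkeeping in the middle step: one must track which endpoints can occur after each of the four reductions in the proof of Theorem~\ref{T:MainT}(a), in order to be sure that no hidden copy of $P^A_{+}$ slips in during the replacement procedure. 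This is a direct but notationally heavy check and, having been carried out in the proof of Theorem~\ref{T:MainT}, restricts cleanly to the subcategory $K^b(\add(P^A_{-}\oplus P^A_{*}))$.
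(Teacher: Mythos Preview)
Your argument is correct and follows the approach the paper sketches but does not spell out: the paper only notes that $F$ sends the indecomposable projective $C_{nd}$-modules to the corresponding indecomposable projective $A_{nd}$-modules and then states the proposition ``as a consequence'', with the remark afterwards suggesting the alternative of redoing the analysis of Theorem~\ref{T:MainT} directly for $C_{nd}$. Your identification of the essential image of $F$ with $K^b(\add(P^A_{-}\oplus P^A_{*}))$ and the subsequent restriction of the reduction steps of Theorem~\ref{T:MainT}(a) to this subcategory is exactly the right way to fill in the omitted details, and your parity observation via Definition~\ref{D:MinS} correctly isolates the even-length minimal strings.
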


\begin{rem}
 Alternatively, one can obtain a description of the category $\Delta_{O_{nd}}(C_{nd})$ by adapting the method which we used to describe the category $\Delta_{nd}$ in the previous paragraph.
\end{rem}

\subsubsection{The relative singularity category of the zero dimensional $A_{1}$-singularity}
Let $R=k[x]/(x^2)$ be the zero dimensional $A_{1}$-singularity. Its Auslander algebra $A=\End_{R}(R \oplus k)$ may be written as a quiver with relations:
\begin{equation}
\begin{xy}\SelectTips{cm}{}
\xymatrix
{1 \ar@/^/[rr]^{ a }  & &  2 \ar@/^/[ll]^{ b } 
}\end{xy}  \qquad  ab=0,
\end{equation}
where the vertex $1$ corresponds to $R$ and the vertex $2$ corresponds to $k$. Since $R$ is a representation-finite ring, the Auslander algebra $A$ has global dimension $2$ by work of Auslander \cite[Sections III.2 and III.3]{Auslander71}. In particular, $A$ is a non-commutative resolution of $R$. We want to describe the corresponding  relative singularity category
\begin{align}
\Delta_{R}(A)=\frac{\cd^b(A-\mod)}{K^b(\proj-R)} \cong \frac{\cd^b(A-\mod)}{K^b(\add P_{1})}.
\end{align}
For every positive integer $l \in \mathbb{N}$, we define a complex of projective $A$-modules as follows
\begin{align}
\mathcal{S}(l)= \cdots \ra 0 \ra P_{2} \xrightarrow{\cdot a} P_{1} \xrightarrow{\cdot ba} P_{1} \xrightarrow{\cdot ba}  \cdots \xrightarrow{\cdot ba} P_{1} \xrightarrow{\cdot b} P_{2} \ra 0 \ra \cdots
\end{align}

The following proposition summarizes the properties of $\Delta_{R}(A)$. It can be proved along the lines of the proof of Theorem \ref{T:MainT}.

\begin{prop} \label{P:ZeroAone}
We use the notations from above.

\noindent
(a) The set of indecomposable objects in $\Delta_{R}(A)$ is given by \[\bigl\{P_{2}[n], \kS(l)[m] \, \big|  \, n, m \in \ZZ, \, l \in \mathbb{N}\bigr\}.\] Moreover, the objects from this set  are pairwise non-ismorphic   in $\Delta_{R}(A)$.

\noindent
(b) Let  $n \in \ZZ$ and $l, l' \in \mathbb{Z}_{>0}$. We have the following isomorphisms:
\begin{align}
\Hom_{\Delta_{R}(A)}\bigl(P_{2}, P_{2}[n]\bigr) \cong \begin{cases} k \quad \text{  if }n \leq 0, \\ 0 \quad \text{  otherwise.}\end{cases}
\end{align}

\begin{align}
\Hom_{\Delta_{R}(A)}\bigl(P_{2}[n], \mathcal{S}(l)\bigr)\cong \begin{cases} k \quad \text{  if } 0 \leq n < l,  \\ 0 \quad \text{  otherwise.}\end{cases}
\end{align}

\begin{align}
\Hom_{\Delta_{R}(A)}\bigl( \mathcal{S}(l), P_{2}[n]\bigr)\cong \begin{cases} k \quad \text{  if } 2 \leq n \leq l+1,  \\ 0 \quad \text{  otherwise.}\end{cases}
\end{align}

\begin{align}
\Hom_{\Delta_{R}(A)}\bigl( \mathcal{S}(l), \mathcal{S}(l')[n]\bigr)\cong \begin{cases} k \quad \text{  if }  n \leq 0 < l'+n \leq l,  \\ 
 k \quad \text{  if } 2 \leq n \leq l+1 < n+l',  \\
0 \quad \text{  otherwise.}\end{cases}
\end{align}
\end{prop}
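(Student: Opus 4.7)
The proof would follow the scheme of Theorem~\ref{T:MainT}. The algebra $A$ is finite-dimensional and gentle, with the single relation $ab = 0$ and only candidate cycle the loop $ba$; since $(ba)^2 = b(ab)a = 0$, no band modules exist, so the indecomposable objects of $K^b(\proj\text{-}A) \simeq \cd^b(\mod\text{-}A)$ are all string complexes in the sense of~\cite{Nodal}.

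One then imitates the four reduction steps of the proof of Theorem~\ref{T:MainT}(a): a non-linearly oriented string splits off a shift of $P_2$ through a standard triangle whose third term lies in $K^b(\add P_1)$; a string whose support graph has at least one end at $P_1$ is isomorphic in $\Delta_R(A)$ to a shift of $P_2$ after cone-truncation against that endpoint; a linearly oriented string with both ends $P_2$ and a non-minimal internal differential factors through $K^b(\add P_1)$, hence splits in the quotient. What survives is exactly the family $\mathcal{S}(l)$ of minimal strings together with the shifts of $P_2$, proving~(a). The analogue of Lemma~\ref{L:MinS}---that every $\mathcal{S}(l)$ lies in ${}^{\perp}K^b(\add P_1) \cap K^b(\add P_1)^{\perp}$---is then a direct verification using $\Hom_{K^b}(P_1[n], C^\bullet) = H^{-n}(\Hom_A(P_1, C^\bullet))$, reducing to the acyclicity of the complex of one-dimensional spaces $\Hom_A(P_1, \mathcal{S}(l))$. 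Combining this with Verdier's Lemma~\ref{L:Verdier2} identifies the three Hom-spaces in~(b) involving a minimal string with the corresponding ones in $K^b(\proj\text{-}A)$, which are then a combinatorial bookkeeping exercise analogous to the one leading to (\ref{E:HomFromProj})--(\ref{E:HomStringString}).

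The Hom-space $\Hom_{\Delta_R(A)}(P_2, P_2[n])$ cannot be accessed via Verdier's lemma, since $P_2 \notin K^b(\add P_1)^{\perp}$. For this I would transcribe the multi-step roof analysis in the proof of Theorem~\ref{T:MainT}(b): every roof $P_2 \leftarrow Q \to P_2[n]$ can be normalised to one where $Q$ is a minimal string of length $|n|+1$ with $P_2$ in top degree, the left leg is the canonical quasi-isomorphism modulo $K^b(\add P_1)$, and the right leg is right multiplication by a non-zero path in $A$ of minimal length. Such a path exists if and only if $n \le 0$, yielding the one-dimensionality of the Hom-space on exactly this range.

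The main obstacle lies in part~(c). Unlike the nodal case, here the projective resolution $0 \to P_1 \to P_2 \to S_2 \to 0$ of $S_2$ collapses in $\Delta_R(A)$ to an isomorphism $P_2 \cong S_2$; consequently $[P_2] = [S_2]$ in $K_0(\Delta_R(A))$ and the Grothendieck-group separation employed in Theorem~\ref{T:MainT}(c) is unavailable. I would instead use the graded endomorphism dimension $h_X(n) := \dim_k \Hom_{\Delta_R(A)}(X, X[n])$ as an invariant. By the formulas of~(b), $h_{P_2[m]}(n) = 1$ for every $n \le 0$, so is non-vanishing on an infinite half-line, whereas $h_{\mathcal{S}(l)[m']}(n)$ is supported on the finite set $\{-l+1, \ldots, 0\} \cup \{2, \ldots, l+1\}$. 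This at once separates the shifts of $P_2$ from all minimal strings, distinguishes minimal strings of different lengths by the width of the support window, and recovers the shift parameters from its position.
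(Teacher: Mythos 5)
Your reading of parts (a) and (b) follows the paper's intended route (``along the lines of Theorem~\ref{T:MainT}''), and the reduction of strings, the application of Lemma~\ref{L:Verdier2}, and the roof analysis for $\Hom(P_2,P_2[n])$ are all the right ingredients.

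The issue lies in part~(c), and it is two-sided. You are correct that a literal transcription of the $K_0$ argument from Theorem~\ref{T:MainT}(c) breaks down: since $P_2\cong S_2$ in $\Delta_R(A)$, one cannot conclude $P_2\notin\thick(S_2)$, which is what the nodal argument does. But the conclusion one actually needs is that $P_2[n]\not\cong\mathcal{S}(l)[m]$, and a direct computation of classes \emph{does} still deliver this: $[P_2[n]]=\pm[P_2]$, whereas $[\mathcal{S}(l)[m]]=\pm\bigl(1+(-1)^{l+1}\bigr)[P_2]\in\{0,\pm 2[P_2]\}$, and $[P_2]$ generates $K_0(\Delta_R(A))\cong\ZZ$. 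So the $K_0$ separation is only unavailable in the ``abstract'' form $P_2\notin\thick(S_2)$, not as a concrete class computation. Your $h_X$-invariant accomplishes the same separation, so this is a harmless detour.

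The genuine gap is the last sentence of your proposal, where you claim that $h_X(n)=\dim_k\Hom_{\Delta_R(A)}(X,X[n])$ ``recovers the shift parameters from its position.'' This is false: $h_{X[m]}(n)=\dim\Hom(X[m],X[m+n])=\dim\Hom(X,X[n])=h_X(n)$, so $h_X$ is invariant under shift and cannot tell $X$ apart from $X[m]$. Your invariant distinguishes the type (projective shift vs.\ minimal string) and the length $l$, but leaves the parameters $n$ in $P_2[n]$ and $m$ in $\mathcal{S}(l)[m]$ entirely undetermined. For the minimal strings this is repaired for free by Lemma~\ref{L:Verdier2}: they lie in ${}^\perp K^b(\add P_1)\cap K^b(\add P_1)^\perp$, so the localization functor is fully faithful on the subcategory they span, and the Krull--Remak--Schmidt classification of strings in $K^b(\proj\text{-}A)$ gives pairwise non-isomorphism of the $\mathcal{S}(l)[m]$. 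For the shifts of $P_2$ one must argue as at the end of the proof of Theorem~\ref{T:MainT}(c): if $P_2[n]\cong P_2[m]$ with $n>m$, then by part~(b) and Lemma~\ref{L:Verdier2}, $\Hom_{\Delta_R(A)}(P_2[n],\mathcal{S}(1)[n])\cong k$ while $\Hom_{\Delta_R(A)}(P_2[m],\mathcal{S}(1)[n])=0$, a contradiction. Without this final step part~(c) is not proved.
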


\begin{rem}
Using Theorem \ref{t:Classical-versus-generalized-singularity-categories} together with Proposition \ref{P:ZeroAone}, we obtain explicit descriptions of the relative singularity categories $\Delta_{R}(A)$, where $R$ is an even dimensional $A_{1}$-singularity and $A$ is the Auslander algebra of $\MCM(R)$.
\end{rem}

\subsubsection{Connection with the category $\bigl(\cd^b(\Lambda-\mod)/\Band(\Lambda)\bigr)^\omega$}\label{S:Quivers}
\noindent
Let  $\Lambda$  be the path algebra of the following quiver with relations
\begin{equation}\label{E:gentlequiver}
\begin{xy}\SelectTips{cm}{}
\xymatrix
{
1 \ar@/^/[rr]^{a} \ar@/_/[rr]_{c} & & 2 \ar@/^/[rr]^{b} \ar@/_/[rr]_{d} & & 3
}
\end{xy}
\qquad ba = 0,  \quad dc =  0
\end{equation}
and  $\Band(\Lambda)$ be the full subcategory of $\cd^b(\Lambda-\mod)$ consisting of those objects, which are
invariant under the Auslander--Reiten translation $\tau$  in $\cd^b(\Lambda-\mod)$. Recall \cite{Happel88} that $\tau$ is the composition of the Serre functor with the negative shift $[-1]$ and that the Serre functor is given as the derived functor of the Nakayama functor $D\Lambda \otimes_{\Lambda} -$. By \cite[Corollary 6]{Tilting},
the subcategory $\Band(\Lambda)$ is triangulated.
Hence we can define the triangulated category
\[\widetilde{\Delta}_{nd} := \bigl(\cd^b(\Lambda-\mod)/\Band(\Lambda)\bigr)^\omega,\]
i.e.~the idempotent completion of the Verdier quotient $\cd^b(\Lambda-\mod)/\Band(\Lambda)$ (see Subsection \ref{Sub:Idemp}).
The main goal of this section is to show that $\widetilde{\Delta}_{nd}$ and $\Delta_{nd}$ are triangle equivalent.
\begin{lem}
The indecomposable projective $\Lambda$-modules are pairwise isomorphic in $\widetilde{\Delta}_{nd}.$
\end{lem}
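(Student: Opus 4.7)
The strategy is to construct, for each of the consecutive pairs $(P_1,P_2)$ and $(P_2,P_3)$, a short exact sequence of left $\Lambda$-modules
\[
0\longrightarrow P_j\longrightarrow P_i\longrightarrow B\longrightarrow 0
\]
in which the cokernel $B$ is a band module for $\Lambda$. Because band modules are invariant under the Auslander--Reiten translate $\tau$ of $\cd^b(\Lambda\text{-}\mathsf{mod})$, they lie in $\Band(\Lambda)$, and the associated distinguished triangle $P_j\to P_i\to B\to P_j[1]$ exhibits $P_i\cong P_j$ in $\widetilde{\Delta}_{nd}$. Transitivity of isomorphism then yields the lemma.

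The explicit maps come straight from the arrow ideal. Under the identification $\Hom_\Lambda(\Lambda e_i,\Lambda e_j)\cong e_i\Lambda e_j$, for a fixed $\lambda\in k^*$ I would consider
\[
\phi_\lambda\colon P_2\longrightarrow P_1,\quad e_2\mapsto\lambda a-c,
\qquad
\psi_\lambda\colon P_3\longrightarrow P_2,\quad e_3\mapsto\lambda b-d.
\]
A direct computation in the basis of paths shows that both maps are injective and that their cokernels are the two-dimensional band modules $B_\lambda$ (with dimension vector $(1,1,0)$, with $a$ acting as $1$ and $c$ as $\lambda$) and $C_\lambda$ (with dimension vector $(0,1,1)$, with $b$ acting as $1$ and $d$ as $\lambda$), corresponding to the cyclic strings $ac^{-1}$ and $bd^{-1}$ of $\Lambda$. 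These are precisely the band modules $B(ac^{-1},\lambda,1)$ and $B(bd^{-1},\lambda,1)$ at the mouth of the tubes parametrising the two homogeneous families of band modules of $\Lambda$.

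The key and most delicate step is to verify that $B_\lambda$ and $C_\lambda$ really do belong to $\Band(\Lambda)$, i.e.~are $\tau$-invariant in $\cd^b(\Lambda\text{-}\mathsf{mod})$. The cleanest route is to invoke the Burban--Drozd tilting equivalence \cite{Tilting}, which identifies $\cd^b(\Lambda\text{-}\mathsf{mod})\cong \cd^b(\Coh\ca)$ for $\ca$ the Auslander sheaf on the nodal cubic $E_1$, under which $\Band(\Lambda)$ corresponds to $\Perf(E_1)\subseteq \cd^b(\Coh\ca)$; the band modules $B_\lambda$ and $C_\lambda$ transform into line bundles supported on the smooth locus of $E_1$, and as such are manifestly perfect. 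Alternatively, one can establish the $\tau$-invariance directly by applying the Nakayama functor to the two-term projective resolution $[P_j\to P_i]$ representing each cokernel and exhibiting an explicit quasi-isomorphism with the original complex.

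\textbf{Main obstacle.} The entire content of the proof is concentrated in the verification that the two band modules $B_\lambda$ and $C_\lambda$ are $\tau$-invariant; once this is granted, the construction of the triangles and the transitivity argument are essentially formal. I expect the geometric route via \cite{Tilting} to be the most economical, as it reduces a computation in the derived category of a non-hereditary gentle algebra to a standard observation about line bundles on a nodal cubic.
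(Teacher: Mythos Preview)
Your proposal is correct and takes essentially the same approach as the paper: write down the two short exact sequences $0\to P_2\to P_1\to B\to 0$ and $0\to P_3\to P_2\to C\to 0$ with band cokernels, then pass to triangles in the quotient. The paper simply uses the specific value $\lambda=1$ and asserts in one line that the cokernels are bands, without further justification of their $\tau$-invariance.

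The step you flag as ``most delicate'' is in fact the standard fact that band modules over a string (in particular gentle) algebra lie in homogeneous tubes, hence satisfy $\tau M\cong M$ already in the module category; since $\Lambda$ has finite global dimension, the derived $\tau=\nu[-1]$ agrees with the classical one on non-projective modules, and no recourse to the tilting equivalence of \cite{Tilting} or an explicit Nakayama computation is needed. Your suggested routes would also work, but are more than the situation demands.
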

\begin{proof} Complete the following exact sequences of $\Lambda$-modules
\begin{align*}
0 \longrightarrow P_{2} \longrightarrow P_{1} \longrightarrow \left(\begin{xy}\SelectTips{cm}{}\xymatrix{k \ar@/^/[r]^1 \ar@/_/[r]_1 & k \ar@/^/[r] \ar@/_/[r] &0}\end{xy}\right) \longrightarrow 0 \\
0 \longrightarrow P_{3} \longrightarrow P_{2} \longrightarrow \left(\begin{xy}\SelectTips{cm}{}\xymatrix{0 \ar@/^/[r] \ar@/_/[r] & k \ar@/^/[r]^1 \ar@/_/[r]_{1} & k}\end{xy}\right) \longrightarrow 0
\end{align*}
to triangles in $\cd^b(\Lambda-\mod)$ and note that the modules on the right-hand side  are bands.
\end{proof}
\noindent
Let $P \in \widetilde{\Delta}_{nd}$ be the common image of the indecomposable projective $\Lambda$--modules.
\begin{lem}\label{L:Non-Split}
The endomorphisms of $P$, which are given by the roofs
\begin{equation}\label{E:Idempotents}
e_{+}= P_{1} \xleftarrow{\cdot(a+c)} P_{2} \xrightarrow{\cdot a} P_{1} \quad  \text{     and     } \quad e_{-}=P_{1} \xleftarrow{\cdot(a+c)}
P_{2} \xrightarrow{\cdot c} P_{1}
\end{equation}
satisfy $e_{-}e_{+}=0=e_{+}e_{-}$ and $e_{-}+e_{+}=\mathsf{id}_{P}$ and thus are \emph{idempotent}.
In particular, we have a direct sum decomposition $P \cong P^+ \oplus P^-$, where $P^{+}=(P, e_{+})$ and $P^{-}=(P, e_{-})$.
\end{lem}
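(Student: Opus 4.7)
The plan is to verify the three algebraic identities $e_-+e_+=\mathrm{id}_P$, $e_+e_-=0$ and $e_-e_+=0$ directly at the level of roofs in the Verdier localisation $\cd^b(\Lambda-\mathsf{mod})/\mathsf{Band}(\Lambda)$, and then deduce idempotency and the splitting from idempotent completeness of $\widetilde{\Delta}_{nd}$. First I have to check that the common left leg $\cdot(a+c)\colon P_2\to P_1$ really is a quasi-isomorphism in the localising sense, i.e.~its cone lies in $\mathsf{Band}(\Lambda)$. A direct computation of the cokernel of $\cdot(a+c)$ yields a two-dimensional representation supported at the vertices $1,2$ on which $a$ and $c$ act as $1$ and $-1$; this is precisely one of the band modules appearing in the previous lemma, so $e_\pm$ are bona fide morphisms of $\widetilde{\Delta}_{nd}$.

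Next I would verify $e_++e_-=\mathrm{id}_P$. Since both roofs share the apex $P_2$ and the same left leg, their sum is represented by $P_1 \xleftarrow{\cdot(a+c)} P_2 \xrightarrow{\cdot(a+c)} P_1$. Any roof whose two legs coincide is equivalent to the identity roof via the morphism of roofs given by that common leg $\cdot(a+c)\colon P_2 \to P_1$ mapping down to $P_1 \xleftarrow{\mathrm{id}} P_1 \xrightarrow{\mathrm{id}} P_1$, so this step is purely formal.

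The main step is the orthogonality calculation. To compute $e_+\circ e_-$ I must produce a common refinement of the right leg $\cdot c$ of $e_-$ and the left leg $\cdot(a+c)$ of $e_+$. The clever choice is to take the refinement $P_3$ with both auxiliary maps equal to $\cdot b\colon P_3\to P_2$; using the relation $ab=0$ one finds $cb=(a+c)b$, so the square commutes. Moreover $\cdot b\colon P_3\to P_2$ is itself a quasi-isomorphism in our sense, since its cokernel is the band $Y$ appearing in the exact sequence $0\to P_3\to P_2\to Y\to 0$ of the previous lemma. The composite is therefore represented by $P_1\xleftarrow{\cdot cb} P_3\xrightarrow{\cdot ab=0} P_1$, which vanishes because the right leg is zero. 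The identity $e_-\circ e_+=0$ is obtained symmetrically by taking the refinement $\cdot d\colon P_3\to P_2$ and invoking the dual relation $dc=0$, producing the roof $P_1\xleftarrow{\cdot ad} P_3\xrightarrow{\cdot cd=0} P_1$.

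Finally, combining the three identities gives $e_\pm^2=e_\pm(e_++e_-)=e_\pm$, so both morphisms are orthogonal idempotents. Since $\widetilde{\Delta}_{nd}$ is by definition the idempotent completion, these split and yield the decomposition $P\cong P^+\oplus P^-$ with $P^\pm=(P,e_\pm)$. The main obstacle is not any of the individual roof manipulations but the conceptual one of choosing the correct refinements: once one spots that $P_3$ with either $\cdot b$ or $\cdot d$ provides a quasi-isomorphism and that the relations $ba=0$ and $dc=0$ force exactly the mixed terms in $(a+c)b$ and $(a+c)d$ that are needed, the rest is routine.
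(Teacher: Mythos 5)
Your outline follows the paper's strategy closely — verify $e_++e_-=\mathrm{id}_P$ on the nose, show $e_\pm e_\mp=0$ by producing a refinement through $P_3$, and then deduce the splitting from idempotent completeness of $\widetilde{\Delta}_{nd}$. The treatment of $e_++e_-=\mathrm{id}_P$ and the final deduction are fine, and your check that $\cdot(a+c)$ has cone in $\Band(\Lambda)$ is correct in substance (though the cokernel has $a,c$ acting by $1,-1$, which is a different band from the one in the previous lemma, where $a,c$ both act by $1$).

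The orthogonality computation, however, has a genuine gap. You take the refinement $P_3\to P_2$ to be $\cdot b$ (resp.\ $\cdot d$ for the other order), and assert that this is a quasi-isomorphism by identifying its cokernel with the band $Y$ in the exact sequence $0\to P_3\to P_2\to Y\to 0$ of the previous lemma. That identification is wrong: the band $Y$ has \emph{both} $b$ and $d$ acting by $1$, so the map $P_3\to P_2$ in that sequence must kill $b-d$, i.e.\ it is $\cdot(b-d)$ (up to sign), not $\cdot b$. The cokernel of $\cdot b\colon P_3\to P_2$ is instead the representation $(0,k,k)$ with $b$ acting by $0$ and $d$ by $1$ — a \emph{string} module, which is not $\tau$-invariant, hence not in $\Band(\Lambda)$. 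Consequently $\cdot b$ is not an admissible denominator, the outer left leg $\cdot(bc)$ is not a quasi-isomorphism, and the diagram $P_1\xleftarrow{\cdot(bc)}P_3\xrightarrow{0}P_1$ is not a roof in $\widetilde{\Delta}_{nd}$ at all; your argument that the composite vanishes because the right leg is zero therefore does not go through. The same objection applies to $\cdot d$ in the symmetric calculation. The fix is to use an \emph{asymmetric} refinement with the quasi-isomorphism leg genuinely in $\Band$: the paper takes $P_3\to P_2\oplus P_2$ with legs $\cdot(b+d)$ and $\cdot d$, so that the middle square commutes (using $ba=0$ and $dc=0$) and the outer left leg is $\cdot(a+c)\circ\cdot(b+d)=\cdot(bc+da)$, a composite of two quasi-isomorphisms (the cone of $\cdot(b+d)$ is the band with $b,d$ acting by $1,-1$), while the outer right leg is $\cdot(c)\circ\cdot(d)=\cdot(dc)=0$. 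The step you mark as routine is precisely where the care is required: the relations $ba=0$ and $dc=0$ force the mixed-term cancellation only if one leg of the refinement picks up the sum $b+d$, not a single arrow.
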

\begin{proof} It is clear that $e_{-}+e_{+}=\mathsf{id}_{P}$. The equality $e_{+}e_{-}=0$ follows
from the  diagram
\begin{align*}
e_{+}e_{-}=\begin{split}\begin{xy}\SelectTips{cm}{}\xymatrix{&&P_{3} \ar[rd]^{\cdot d} \ar[ld]_{\cdot(b+d)} \\
                              &P_{2} \ar[rd]^(.44){\cdot a} \ar[ld]_{\cdot(a+c)}&& P_{2} \ar[rd]^{\cdot c} \ar[ld]_{\cdot(a+c)} \\
                              P_{1}&&P_{1}&&P_{1}}\end{xy}\end{split}=\, \, P_{1} \xleftarrow{\cdot(da+bc)} P_{3} \xrightarrow{0} P_{1}.
\end{align*}
The second equality $e_{-}e_{+}=0$ follows from  a similar calculation. Hence,  $e_{\pm}^2 = e_{\pm}(\mathsf{id}_{P}-e_{\mp})=e_{\pm}$.
\end{proof}

\noindent
Next, note the following easy but useful result.
\begin{lem}\label{L:fractCY}
Let $\kA$ be an abelian category and let $\mathbb{S}\colon \cd^b(\kA) \rightarrow \cd^b(\kA)$ be a triangle equivalence. If $X_{1}, X_{2} \in \cd^b(\kA)$ and $n_{1}, n_{2}, m_{1}, m_{2} \in \ZZ$ satisfy
\[ \mathbb{S}^{m_{1}}X_{1} \cong X_{1}[n_{1}], \quad  \quad \mathbb{S}^{m_{2}}X_{2} \cong X_{2}[n_{2}]
 \quad \text{ and  } \quad  d=m_{1}n_{2}-m_{2}n_{1} \ne 0,\] then
$\Hom_{\cd^b(\kA)}(X_{1}, X_{2}) = 0 = \Hom_{\cd^b(\kA)}(X_{2}, X_{1}).$
\end{lem}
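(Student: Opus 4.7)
The strategy is to convert the two fractional Calabi--Yau type periodicities into a $d$-periodicity of the Hom-spaces and then kill them by a sufficiently large shift, invoking only the standard $t$-structure on $\cd^b(\kA)$ (Proposition \ref{p:standard-t-str}).

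First, I would iterate the hypotheses to produce a common exponent of $\mathbb{S}$ that shifts both $X_1$ and $X_2$. A straightforward induction on $k$ starting from $\mathbb{S}^{m_i} X_i \cong X_i[n_i]$, together with the fact that shifts commute with the triangle equivalence $\mathbb{S}$, yields $\mathbb{S}^{k m_i} X_i \cong X_i[k n_i]$ for every $k \in \ZZ$. Specialising to $k = m_2$ and $k = m_1$ gives $\mathbb{S}^{m_1 m_2} X_1 \cong X_1[m_2 n_1]$ and $\mathbb{S}^{m_1 m_2} X_2 \cong X_2[m_1 n_2]$. Since $\mathbb{S}^{m_1 m_2}$ is an autoequivalence, it induces a bijection on Hom-spaces, and after cancelling a common shift this becomes
\[
\Hom_{\cd^b(\kA)}(X_1, X_2) \xrightarrow{\sim} \Hom_{\cd^b(\kA)}(X_1, X_2[d]),
\]
with $d = m_1 n_2 - m_2 n_1$. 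Iterating this isomorphism forward with $\mathbb{S}^{m_1 m_2}$ and backward with its inverse gives $\Hom(X_1, X_2) \cong \Hom(X_1, X_2[kd])$ for every $k \in \ZZ$.

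The final step is to choose $k$ so that the right-hand side is forced to vanish. Pick integers $b_1$ and $a_2$ with $X_1 \in \cd^{\leq b_1}$ and $X_2 \in \cd^{\geq a_2}$ in the standard $t$-structure on $\cd^b(\kA)$; these exist because $X_1$ and $X_2$ have bounded cohomology. Then $X_2[N] \in \cd^{\geq a_2 - N}$, and as soon as $a_2 - N \geq b_1 + 1$, the (shifted) $t$-structure orthogonality axiom (T2) gives $\Hom(X_1, X_2[N]) = 0$. Because $d \neq 0$, one can choose $k \in \ZZ$ with $kd \leq a_2 - b_1 - 1$ (take $k$ very negative if $d > 0$ and very positive if $d < 0$), and this yields $\Hom(X_1, X_2) = 0$. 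The vanishing of $\Hom(X_2, X_1)$ follows by exchanging the roles of $X_1$ and $X_2$: this only flips the sign of $d$, hence preserves the hypothesis $d \neq 0$.

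No serious obstacle is anticipated. Once the periodicity isomorphism is written down, the conclusion is a few lines, using only the triangle-equivalence property of $\mathbb{S}$ together with the most basic orthogonality built into the standard $t$-structure on $\cd^b(\kA)$; the hypothesis $d \neq 0$ is what makes the periodicity non-trivial and hence allows the shift to be pushed past the bounds of $X_1$ and $X_2$.
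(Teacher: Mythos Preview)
Your proof is correct and follows essentially the same route as the paper: apply $\mathbb{S}^{\pm m_1 m_2}$ to obtain $\Hom(X_1,X_2)\cong\Hom(X_1,X_2[kd])$ for all $k$, then use boundedness to force vanishing. The only cosmetic difference is that the paper phrases the last step as ``no negative Ext groups between objects of $\kA$'' rather than invoking the $t$-structure axiom (T2), and your reference to Proposition~\ref{p:standard-t-str} is slightly off (that proposition treats non-positive dg algebras, not abelian categories), but the standard $t$-structure on $\cd^b(\kA)$ needs no citation anyway.
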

\begin{proof} By the symmetry of the claim, it suffices to show that $\Hom_{\cd^b(\kA)}(X_{1}, X_{2})$ vanishes. Since $\mathbb{S}$ is an equivalence, we have a chain of isomorphisms
\begin{align*}
\Hom_{\cd^b(\kA)}(X_{1}, X_{2}) &\cong \Hom_{\cd^b(\kA)}\bigl(\mathbb{S}^{\pm m_{1}m_{2}}X_{1},\mathbb{S}^{\pm m_{1}m_{2}} X_{2}\bigr) \\ &\cong   \Hom_{\cd^b(\kA)}\bigl(X_{1}[\pm m_{2}n_{1}], X_{2}[\pm m_{1}n_{2}]\bigr)
         \\ &\cong \Hom_{\cd^b(\kA)}\bigl(X_{1}, X_{2}[\pm d]\bigr)  \\ &\cong \Hom_{\cd^b(\kA)}\bigl(X_{1}, X_{2}[\pm kd]\bigr)
\end{align*}
for all $k \in \mathbb{N}$.  Hence the claim follows from the boundedness of $X_{1}$ and $X_{2}$ together with the fact that there are no non-trivial Ext--groups $\Ext^{-n}_{\kA}(A_{1}, A_{2}) \cong \Hom_{\cd^b(\kA)}(A_{1}, A_{2}[-n])$, where $A_{1}, A_{2} \in \kA$ and $n$ is a positive integer.
\end{proof}
\noindent
A direct calculation in $\cd^b(\Lambda-\mod)$ yields the following result.
\begin{lem}\label{L:Xplusminus}
Let $\mathbb{S}\colon \cd^b(\Lambda-\mod) \rightarrow \cd^b(\Lambda-\mod)$ be the Serre functor,
\[X_{+} =\begin{xy}\SelectTips{cm}{}\xymatrix{k \ar@/^/[r]^1 \ar@/_/[r]_0 & k \ar@/^/[r]^0 \ar@/_/[r]_{1} &k} \end{xy} \quad \text{ and } \quad
X_{-}=\begin{xy}\SelectTips{cm}{}\xymatrix{k \ar@/^/[r]^{0} \ar@/_/[r]_1 & k \ar@/^/[r]^1 \ar@/_/[r]_{0} &k}\end{xy}.\]
Then $\mathbb{S}(X_{\pm}) \cong X_{\mp}[2].$
In particular, $X_{\pm}$ are $\frac{4}{2}$-fractionally Calabi--Yau objects, i.e. $\mathbb{S}^2(X_{\pm}) \cong X_{\pm}[4]$
\end{lem}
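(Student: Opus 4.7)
\medskip

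\noindent
\emph{Proof plan.} By the evident $\mathbb{Z}/2$-symmetry of $\Lambda$ which swaps $a\leftrightarrow c$ and $b\leftrightarrow d$, the two claims $\mathbb{S}(X_{+})\cong X_{-}[2]$ and $\mathbb{S}(X_{-})\cong X_{+}[2]$ are interchanged, so it suffices to prove the first one. The idea is to compute $\mathbb{S}(X_{+})$ by resolving $X_{+}$ projectively and applying the Nakayama functor $\nu$, which sends $P_{i}$ to $I_{i}$ and extends to the derived Serre functor on $\cd^b(\Lambda\mathsf{-mod})$.

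First I would determine the minimal projective resolution of $X_{+}$. The top of $X_{+}$ is the simple $S_{1}$, so there is a projective cover $P_{1}\twoheadrightarrow X_{+}$ sending the idempotent $e_{1}$ to the generator at vertex $1$; since $a$ acts as $1$ and $c$ as $0$ on $X_{+}$, the kernel is the submodule of $P_{1}$ generated by $c$, which is annihilated by $d$ (because $dc=0$) and has dimension vector $(0,1,1)$. Its projective cover is $P_{2}\to P_{1}$ given by right multiplication with $c$, whose kernel in turn is the submodule generated by $d$, i.e. $P_{3}$ mapping into $P_{2}$ by right multiplication with $d$. Thus
\begin{equation*}
 0 \longrightarrow P_{3}\xrightarrow{\,\cdot d\,} P_{2}\xrightarrow{\,\cdot c\,} P_{1}\longrightarrow X_{+}\longrightarrow 0
\end{equation*}
is the minimal projective resolution. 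In particular $\prdim X_{+}=2$, so $\mathbb{S}(X_{+})$ is represented by the two-term complex of injectives obtained by applying $\nu$:
\begin{equation*}
 \mathbb{S}(X_{+})\;\cong\;\bigl[\,I_{3}\xrightarrow{\,\nu(\cdot d)\,}I_{2}\xrightarrow{\,\nu(\cdot c)\,}I_{1}\,\bigr],
\end{equation*}
placed in cohomological degrees $-2,-1,0$.

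Next I would compute the injective modules combinatorially. Since $I_{i}$ has $j$-component equal to the span of paths from $j$ to $i$, one finds $\underline{\dim}\,I_{1}=(1,0,0)$, $\underline{\dim}\,I_{2}=(2,1,0)$ and $\underline{\dim}\,I_{3}=(2,2,1)$. The additivity of dimension vectors gives $\underline{\dim}\,I_{3}-\underline{\dim}\,I_{2}+\underline{\dim}\,I_{1}=(1,1,1)=\underline{\dim}\,X_{-}$, which matches the expected answer $X_{-}[2]$. To finish I must show that the complex above is quasi-isomorphic to $X_{-}$ concentrated in degree $-2$, i.e. that $\nu(\cdot c)$ is surjective, $\nu(\cdot d)$ is injective, and $\ker\nu(\cdot c)/\mathrm{im}\,\nu(\cdot d)\cong X_{-}$. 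Using $\Hom_{\Lambda}(e_{i}\Lambda,\Lambda)=\Lambda e_{i}$ and the identification $I_{i}=D(\Lambda e_{i})$, the map $\nu(\cdot c)\colon I_{2}\to I_{1}$ is the $k$-dual of left multiplication by $c$ on $\Lambda e_{1}\to\Lambda e_{2}$, and similarly for $\nu(\cdot d)$. A direct inspection on basis vectors of paths shows surjectivity and injectivity, and that the middle cohomology is the representation with dimension vector $(1,1,1)$ where $a$ and $d$ act as zero and $b,c$ as $1$—exactly $X_{-}$.

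The computation is entirely explicit and linear-algebraic, so there is no genuine obstacle; the only place to be careful is keeping track of conventions in the identification $\nu(P_{i})=I_{i}$ and in the induced maps on injectives, so that one correctly recognises the middle cohomology as $X_{-}$ rather than $X_{+}$. Once this is done, the isomorphism $\mathbb{S}(X_{+})\cong X_{-}[2]$ follows, and the second isomorphism $\mathbb{S}(X_{-})\cong X_{+}[2]$ is obtained by the symmetry described at the beginning, completing the proof and in particular the claim that $X_{\pm}$ are $\tfrac{4}{2}$-fractionally Calabi--Yau.
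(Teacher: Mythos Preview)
Your overall strategy is exactly what the paper intends by ``a direct calculation'': take the minimal projective resolution and apply the Nakayama functor. Your resolution
\[
0 \longrightarrow P_{3}\xrightarrow{\,\cdot d\,} P_{2}\xrightarrow{\,\cdot c\,} P_{1}\longrightarrow X_{+}\longrightarrow 0
\]
is correct, as are the dimension vectors of the $I_i$.

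However, the conditions you write down for the complex $I_3\to I_2\to I_1$ (in degrees $-2,-1,0$) are wrong and internally inconsistent. You ask for $\nu(\cdot d)$ injective and $\ker\nu(\cdot c)/\mathrm{im}\,\nu(\cdot d)\cong X_{-}$; but injectivity of $\nu(\cdot d)$ would force $H^{-2}=0$, so the only nonzero cohomology would sit in degree $-1$ and you would obtain $X_{-}[1]$, not $X_{-}[2]$. In fact your own Euler-characteristic check already rules this out: the alternating sum $(1,1,1)$ equals $\underline{\dim}\,H^{-2}-\underline{\dim}\,H^{-1}+\underline{\dim}\,H^{0}$, so a nonzero $H^{-1}$ would contribute with a minus sign.

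What actually happens is that $\nu(\cdot d)\colon I_3\to I_2$ is \emph{not} injective. In the dual path basis it sends $d^*\mapsto e_2^*$, $(da)^*\mapsto a^*$ and kills $e_3^*,\,b^*,\,(bc)^*$. Hence $\mathrm{im}\,\nu(\cdot d)=\langle e_2^*,a^*\rangle=\ker\nu(\cdot c)$, so $H^{-1}=0$, while $H^{-2}=\ker\nu(\cdot d)=\langle (bc)^*,b^*,e_3^*\rangle$. On this submodule of $I_3$ the arrows act by $c\colon(bc)^*\mapsto b^*$, $b\colon b^*\mapsto e_3^*$ and $a,d$ act as zero --- this is precisely $X_{-}$. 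So the cohomology is $X_{-}$ concentrated in degree $-2$, and $\mathbb{S}(X_+)\cong X_{-}[2]$ follows. Once you correct which map has the kernel (the leftmost, not the middle), the argument goes through.
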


\begin{cor}
The following composition of the inclusion and projection functors
\[\thick(X_{+}, X_{-}) \hookrightarrow
\cd^b(\Lambda-\mod) \lar
 \frac{\displaystyle \cd^b(\Lambda-\mod)}{\displaystyle \Band(\Lambda)}\]
 is fully faithful.
\end{cor}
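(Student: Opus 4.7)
The plan is to reduce the claim to Verdier's Lemma (Lemma \ref{L:Verdier2}) by showing the orthogonality
\[
\thick(X_+, X_-) \subseteq {}^\perp \Band(\Lambda).
\]
Once this is established, Lemma \ref{L:Verdier2} will give, for any $Y \in \thick(X_+, X_-)$ and any $Z \in \cd^b(\Lambda\text{-}\mod)$ (in particular for $Z \in \thick(X_+, X_-)$), a bijection
\[
\Hom_{\cd^b(\Lambda\text{-}\mod)}(Y, Z) \xrightarrow{\;\sim\;} \Hom_{\cd^b(\Lambda\text{-}\mod)/\Band(\Lambda)}(Y, Z),
\]
which is exactly full faithfulness of the composition in the corollary.

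To prove the orthogonality, I would first use a d\'evissage argument: since $\thick(X_+, X_-)$ is the smallest thick subcategory containing $X_+$ and $X_-$, and $\Band(\Lambda)$ is triangulated, it suffices to verify that $\Hom(X_{\pm}[n], B) = 0$ for every $n \in \ZZ$ and every band object $B \in \Band(\Lambda)$. The key inputs are the two fractional Calabi--Yau type relations that are already available. From Lemma \ref{L:Xplusminus} we have $\mathbb{S}^2(X_\pm) \cong X_\pm[4]$, and by the very definition of $\Band(\Lambda)$ together with $\tau = \mathbb{S}[-1]$, every $B \in \Band(\Lambda)$ satisfies $\mathbb{S}(B) \cong B[1]$.

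Now I would apply Lemma \ref{L:fractCY} to the pair $(X_1, X_2) = (X_\pm, B)$ with exponents $(m_1, n_1) = (2, 4)$ and $(m_2, n_2) = (1, 1)$. The discriminant is
\[
d \;=\; m_1 n_2 - m_2 n_1 \;=\; 2 \cdot 1 - 1 \cdot 4 \;=\; -2 \;\ne\; 0,
\]
so the lemma gives $\Hom(X_\pm, B) = 0$, and after shifting by $n$ the same argument yields $\Hom(X_\pm[n], B) = 0$ for every $n \in \ZZ$. This establishes the desired orthogonality and hence the corollary.

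There is essentially no obstacle: the proof is a clean assembly of Verdier's Lemma, d\'evissage, the fractional Calabi--Yau property of $X_\pm$ from Lemma \ref{L:Xplusminus}, and the defining property of band objects. The only point requiring a bit of care is the d\'evissage step, where one uses that both sides of the vanishing $\Hom(-, B) = 0$ are cohomological in the first argument and closed under shifts and direct summands, so vanishing on the generators $X_\pm$ propagates to the whole thick subcategory.
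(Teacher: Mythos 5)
Your proof is correct and follows essentially the same route as the paper: apply Lemma~\ref{L:fractCY} with the fractional CY relations $\mathbb{S}^2(X_\pm)\cong X_\pm[4]$ from Lemma~\ref{L:Xplusminus} and $\mathbb{S}(B)\cong B[1]$ for band objects, get a nonzero discriminant, and conclude orthogonality; then invoke Lemma~\ref{L:Verdier2}. The one small difference is that you only establish $\thick(X_+,X_-)\subseteq{}^\perp\Band(\Lambda)$ (which, as you note, already suffices for full faithfulness via Verdier's lemma applied to $Y$ in the left orthogonal), while the paper records membership in both $^\perp\Band(\Lambda)$ and $\Band(\Lambda)^\perp$ — the latter comes for free from Lemma~\ref{L:fractCY}; you also make explicit the standard d\'evissage from $X_\pm$ to $\thick(X_+,X_-)$, which the paper leaves implicit.
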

\begin{proof}
Lemma \ref{L:Xplusminus} and Lemma \ref{L:fractCY} applied to the Serre functor $\mathbb{S}$ in $\cd^b(\Lambda-\mod)$ imply that
$X_{\pm} \in  ^{\perp}\!\!\Band(\Lambda) \cap \Band(\Lambda)^{\perp}$. Hence the claim
 follows from Lemma \ref{L:Verdier2}.
\end{proof}
\begin{thm} \label{T:Maintilde}
There exists an equivalence of triangulated categories
\[
\GG \colon \, \frac{\displaystyle \cd^b(A_{nd}-\mod)}{\displaystyle K^b\bigl(\add(P_{*})\bigr)} \longrightarrow
\left(\frac{\displaystyle \cd^b(\Lambda-\mod)}{\displaystyle \Band(\Lambda)}\right)^{\omega}.
\]
\end{thm}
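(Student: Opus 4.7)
The plan is to deduce the theorem by combining three ingredients: the Burban--Drozd tilting theorem for the Auslander sheaf on a nodal projective curve \cite{Tilting}, the localization Theorem \ref{t:main-global} for global relative singularity categories, and the idempotent completeness statement of Proposition \ref{P:IdempCompl}. The strategy is to realize both sides of the desired equivalence as two different presentations of the (idempotent completion of the) relative singularity category $\Delta_{E}(\mathbb{E})$ of the Auslander sheaf $\ca$ on an irreducible nodal projective curve $E$ with a unique node.

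First, I would work with the irreducible rational nodal projective curve $E$ (e.g.~the cubic $V(Y^2Z - X^2(X+Z)) \subseteq \PP^2$) equipped with its Auslander sheaf $\ca = {\mathcal End}_{E}(\co_{E} \oplus \ci)$ as in Example \ref{E:AuslanderSheaf}, and set $\mathbb{E} = (E,\ca)$. By the Burban--Drozd tilting theorem \cite{Tilting}, $\cd^b(\Coh(\mathbb{E}))$ carries a tilting complex whose endomorphism algebra is precisely the gentle algebra $\Lambda$ from \eqref{E:gentlequiver}. The resulting triangle equivalence
\[
T \colon \cd^b(\Coh(\mathbb{E})) \stackrel{\sim}{\longrightarrow} \cd^b(\Lambda-\mod)
\]
commutes with the Serre functors on both sides and therefore with the Auslander--Reiten translations. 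This already gives a canonical candidate for $\GG$.

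The core of the argument, and where the main obstacle sits, is to show that $T$ identifies the full triangulated subcategory $\cP(E) \subseteq \cd^b(\Coh(\mathbb{E}))$ (the essential image of $\Perf(E)$ under $\cf \lten_{E} -$, see Proposition \ref{Prop:Embedding}) with $\Band(\Lambda) \subseteq \cd^b(\Lambda-\mod)$. Since $T$ intertwines the AR-translations $\tau$, it takes $\tau$-invariant objects to $\tau$-invariant objects; by definition the latter on the $\Lambda$-side form $\Band(\Lambda)$. On the geometric side, $\omega_{E} \cong \co_{E}$ for the irreducible rational nodal cubic, so the Serre functor is $\mathbb{S} = -\otimes\omega_{E}[1] = [1]$ and $\tau = \id$ on $\Perf(E)$; one then has to verify the converse inclusion, namely that any $\tau$-invariant object on the $\Lambda$-side comes from a perfect complex on $E$. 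This is precisely the classification of band objects in \cite{Tilting}, which shows that the indecomposable $\tau$-invariant complexes in $\cd^b(\Lambda-\mod)$ are exactly the images under $T$ of the locally free sheaves of finite rank (respectively their shifts). Granting this matching from \cite{Tilting}, we obtain a triangle equivalence on the Verdier quotients
\[
\left(\frac{\cd^b(\Coh(\mathbb{E}))}{\cP(E)}\right)^{\!\omega}
\stackrel{\sim}{\longrightarrow}
\left(\frac{\cd^b(\Lambda-\mod)}{\Band(\Lambda)}\right)^{\!\omega}
\]
after passing to idempotent completions; the left hand side is by definition $\Delta_{E}(\mathbb{E})$.

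Finally, I would apply the localization Theorem \ref{t:main-global} to $\mathbb{E}$. The curve $E$ has exactly one singular point $s$, at which the completed local ring is isomorphic to $O_{nd} = k\llbracket x,y\rrbracket/(xy)$ and the completed Auslander sheaf is isomorphic to $A_{nd}$. Theorem \ref{t:main-global} therefore yields
\[
\Delta_{E}(\mathbb{E}) \;\cong\; \Delta_{O_{nd}}(A_{nd}) \;=\; \left(\frac{\cd^b(A_{nd}-\mod)}{K^b(\add P_{\ast})}\right)^{\!\omega}.
\]
By Proposition \ref{P:IdempCompl}, applied to the Gorenstein ring $O_{nd}$ and its non-commutative resolution $A_{nd}$ (which has finite global dimension), the inner quotient $\cd^b(A_{nd}-\mod)/K^b(\add P_{\ast})$ is already idempotent complete, so the outer $(-)^\omega$ is redundant. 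Combining the two displayed equivalences produces the desired triangle equivalence $\GG$. The single delicate point of the whole argument is the identification $T(\cP(E)) = \Band(\Lambda)$; once this is in hand, everything else is a formal assembly of the above results.
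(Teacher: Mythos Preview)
Your proposal is correct and follows essentially the same route as the paper: both arguments realize each side of the equivalence as a presentation of $\Delta_E(\mathbb{E})$ for the irreducible nodal cubic, invoke the Burban--Drozd tilting equivalence from \cite{Tilting} together with the identification of $\cP(E)$ with $\Band(\Lambda)$, and then use the localization Theorem~\ref{t:main-global} (the paper cites the chain \eqref{E:KeyChain} from its proof) plus Proposition~\ref{P:IdempCompl} to land at the local block without an extra idempotent completion. One small caution: your heuristic for the inclusion $T(\cP(E))\subseteq\Band(\Lambda)$ via $\omega_E\cong\co_E$ concerns the Serre functor on $\Perf(E)$, whereas the relevant $\tau$ lives on $\cd^b(\Coh(\mathbb{E}))$; since you ultimately cite \cite{Tilting} for the full identification $T(\cP(E))=\Band(\Lambda)$ (as the paper does, referring to \cite[Section~7 and Corollary~6]{Tilting}), this does not affect the validity of the argument.
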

\begin{proof}
Let $E = V(zy^2 - x^3 - x^2 z) \subset \mathbb{P}^2$ be a nodal cubic curve and
$\kF' = \kI$ be the ideal sheaf of the singular point of $E$. Let
$\kF = \kO \oplus \kI$, $\kA = {\mathcal End}_E(\kF)$  and $\mathbb{E} = (E, \kA)$.
By a result of Burban and Drozd \cite[Section 7]{Tilting}, there exists a triangle  equivalence
\[\mathbb{T}\colon \cd^b\bigl(\Coh (\mathbb{E})\bigr) \lar  \cd^b(\Lambda-\mod)\]
identifying
the image of the category $\Perf(E)$ with the category $\Band(\Lambda)$. Moreover, by
\cite[Proposition 12]{Tilting}, the functor   $\TT$ restricts to an equivalence
$\thick(S_{+}, S_{-}) \rightarrow \thick(X_{+}, X_{-}).$ In combination with \eqref{E:KeyChain}, we obtain
the following commutative diagram of categories and functors
\begin{equation}\label{E:importantdiagram}
\begin{array}{c}
\begin{xy}\SelectTips{cm}{}
\xymatrix{
\frac{\displaystyle \cd^b(A_{nd}-\mod)}{\displaystyle K^b\bigl(\add(P_{*})\bigr)} \ar@/^{10pt}/[rrd]^-\GG & & \\
\left(\frac{\displaystyle \cd^b(A_{nd}-\fdmod)}{\displaystyle K^b_{\mathsf{fd}}\bigl(\add(P_{*})\bigr)}\right)^{\omega}  \ar[r]^-{\sim} \ar[u]^-\sim &\left(\frac{\displaystyle \cd^b\bigl(\Coh(\mathbb{E})\bigr)}{\displaystyle \cP(E)}\right)^{\omega} \ar[r]^-\sim & \left(\frac{\displaystyle \cd^b(\Lambda-\mod)}{\displaystyle \Band(\Lambda)}\right)^{\omega} \\
 \cd^b(A_{nd}-\fdmod) \ar[u]^-{\mathsf{can}} \ar[r] & \cd^b\bigl(\Coh(\mathbb{E})\bigr) \ar[u]^-{\mathsf{can}} \ar[r]^-{\TT}  &  \cd^b(\Lambda-\mod) \ar[u]_-{\mathsf{can}} \\
 & \thick(S_{+}, S_{-})  \ar@{^{(}->}[u]\ar[r]^{\sim} &
 \thick(X_{+}, X_{-})\ar@{^{(}->}[u]
}\end{xy}
\end{array}
\end{equation}
where $\GG\colon \Delta_{nd} \rightarrow \widetilde{\Delta}_{nd}$ is the induced  equivalence of triangulated categories.\end{proof}

\begin{lem}\label{L:IndtildeDn}
The indecomposable objects of the triangulated category $\widetilde{\Delta}_{nd}$ are
\begin{itemize}
\item $P^{\pm}[n] \cong \GG\bigl(P_{\pm}[n]\bigr), \quad n \in \ZZ;$
\item the indecomposables of the full subcategory  $\thick(X_{+}, X_{-})\cong \GG\bigl(\thick(S_{+}, S_{-})\bigr).$
\end{itemize}
\end{lem}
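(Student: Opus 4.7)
The plan is to transport the classification of indecomposables from $\Delta_{nd}$ (which is available in full by Theorem \ref{T:MainT}(c)) through the triangle equivalence $\GG$ of Theorem \ref{T:Maintilde}, and then to identify the two families of images.

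By Theorem \ref{T:MainT}(c) every indecomposable object of $\Delta_{nd}$ is (up to isomorphism) either a shift of $P_{+}$ or $P_{-}$, or a shift of a minimal string $\kS_{\tau}(l)$, and these are pairwise non-isomorphic. Since $\GG$ is an equivalence, the same dichotomy holds for $\widetilde{\Delta}_{nd}$: its indecomposables are the images $\GG(P_{\pm}[n])$ together with the images of the shifts of minimal strings. For the second family, Corollary \ref{C:indinTria} says that the shifts of minimal strings exhaust the indecomposables of $\thick_{\cd^b(A_{nd}-\mod)}(S_{+},S_{-})$. The commutative diagram \eqref{E:importantdiagram} of Theorem \ref{T:Maintilde} shows that the Burban--Drozd tilting $\TT$ identifies $\thick(S_{+},S_{-})$ with $\thick(X_{+},X_{-})$ and that the vertical canonical projections restrict to fully faithful functors on these two thick subcategories (use Lemma \ref{L:Verdier2} together with the fact, already exploited after Lemma \ref{L:Xplusminus}, that $X_{\pm}$ lie in ${}^{\perp}\Band(\Lambda)\cap\Band(\Lambda)^{\perp}$). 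Hence $\GG$ sends the shifts of minimal strings bijectively onto the indecomposables of $\thick(X_{+},X_{-})\subseteq\widetilde{\Delta}_{nd}$, proving the second bullet.

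It remains to show $\GG(P_{\pm}[n])\cong P^{\pm}[n]$, which is the genuine point of the lemma. The strategy is to go the other way: trace the three indecomposable projective $\Lambda$-modules through $\GG^{-1}$. By the lemma preceding Lemma \ref{L:Non-Split} all three projective $\Lambda$-modules become isomorphic, in $\widetilde{\Delta}_{nd}$, to a single object $P$. Via the Burban--Drozd equivalence $\TT^{-1}$ and the localization column of \eqref{E:importantdiagram}, the common image $P$ corresponds to the direct sum of the two indecomposable projective $A_{nd}$-modules supported at the node which do not belong to $\add(P_{*})$, namely $P_{+}\oplus P_{-}$ (the summand $P_{*}$ is killed in $\Delta_{nd}$ by definition). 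Thus $\GG^{-1}(P)\cong P_{+}\oplus P_{-}$. On the other hand Lemma \ref{L:Non-Split} gives a non-trivial decomposition $P\cong P^{+}\oplus P^{-}$ in $\widetilde{\Delta}_{nd}$. Since $\Delta_{nd}$ is Krull--Remak--Schmidt (the endomorphism rings of its indecomposables are local by Theorem \ref{T:MainT}(b)), so is $\widetilde{\Delta}_{nd}$ via $\GG$, and the two decompositions of $P$ must match up to reordering; after relabelling $P^{+}\leftrightarrow P^{-}$ if necessary we obtain $\GG(P_{\pm})\cong P^{\pm}$, whence $\GG(P_{\pm}[n])\cong P^{\pm}[n]$ for every $n\in\ZZ$. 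Finally, the two families of indecomposables listed in the statement are mutually distinct in $\widetilde{\Delta}_{nd}$, because under $\GG^{-1}$ they correspond to the mutually distinct families $\{P_{\pm}[n]\}$ and $\{\kS_{\tau}(l)[m]\}$ of $\Delta_{nd}$ from Theorem \ref{T:MainT}(c).

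The only non-routine step is the identification $\GG(P_{\pm})\cong P^{\pm}$: the abstract equivalence $\GG$ immediately tells us that $\GG(P_{\pm})$ is indecomposable and lies outside $\thick(X_{+},X_{-})$, but pinning it down to the specific summand $P^{\pm}$ of $P$ requires the book-keeping above through the tilting diagram \eqref{E:importantdiagram}, together with Krull--Remak--Schmidt to make the matching of summands unambiguous.
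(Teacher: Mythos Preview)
Your overall strategy---transport the classification of Theorem~\ref{T:MainT}(c) through $\GG$ and then match the two families---is exactly what the paper does, and the treatment of the second bullet (minimal strings $\leftrightarrow$ indecomposables of $\thick(X_+,X_-)$) is fine.

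There is, however, a genuine gap in the first bullet. You write ``after relabelling $P^{+}\leftrightarrow P^{-}$ if necessary we obtain $\GG(P_{\pm})\cong P^{\pm}$''. But $P^{+}$ and $P^{-}$ are \emph{not} interchangeable labels: they are the specific summands $(P,e_{+})$ and $(P,e_{-})$ cut out by the explicit idempotents of Lemma~\ref{L:Non-Split}. The lemma asserts the precise identification $\GG(P_{+})\cong P^{+}$ and $\GG(P_{-})\cong P^{-}$, so Krull--Remak--Schmidt alone, which only gives the matching up to permutation, does not suffice. The paper resolves this by a direct Hom-calculation in $\widetilde{\Delta}_{nd}$: one checks that the obvious map $P_{1}\to X_{+}$ induces a non-zero morphism $P^{+}=(P,e_{+})\to X_{+}$, while $\Hom_{\widetilde{\Delta}_{nd}}(P^{+},X_{-})=0$. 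Since $\GG(S_{\pm})\cong X_{\pm}$ (via $\TT(S_{\pm})\cong X_{\pm}$) and $\Hom_{\Delta_{nd}}(P_{+},S_{+})\cong k$ while $\Hom_{\Delta_{nd}}(P_{+},S_{-})=0$ by~\eqref{E:HomFromProj}, this pins down $\GG(P_{+})\cong P^{+}$ rather than $P^{-}$.

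A secondary point: your argument that $\GG^{-1}(P)\cong P_{+}\oplus P_{-}$ is stated rather than proved. The paper makes this concrete by observing that the projective resolution $0\to P_{-}\oplus P_{+}\to P_{*}\to S_{*}\to 0$ gives $S_{*}[-1]\cong P_{+}\oplus P_{-}$ in $\Delta_{nd}$, and then uses the explicit fact $\TT(S_{*})\cong P_{3}[1]$ from \cite{Tilting} to conclude $\GG(P_{+}\oplus P_{-})\cong P$. Your ``localization column'' heuristic points in the right direction but is not an argument as written.
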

\begin{proof}
Consider the projective resolution of the simple $A_{nd}$-module $S_{*}$
\[\begin{xy}\SelectTips{cm}{}\xymatrix{0 \ar[r] & P_{\li} \oplus P_{\re} \ar[r]^(0.6){\left(\bsm \cdot \beta & \cdot \delta \esm\right)} & P_{*} \ar[r] & S_{*} \ar[r] & 0}\end{xy}.\]
Completing it to a distinguished triangle yields an isomorphism
$S_{*}[-1] \cong P_{+} \oplus P_{-}$ in $\Delta_{nd}$. In the notations
 of the diagrams (\ref{E:importantdiagram}) and (\ref{E:gentlequiver}) we have $\TT(S_{*}) \cong P_{3}[1]$ and therefore
\[\GG(P_{+} \oplus P_{-}) \cong \GG(S_{*}[-1]) \cong P \cong (P^+ \oplus P^-).\]
Recall that $X_{+}\cong \bigl(P_{3} \stackrel{\cdot d}\longrightarrow P_{2} \stackrel{\cdot c}\longrightarrow P_{1}\bigr)$, where $P_{1}$ is located in degree $0$ and $P^\pm := (P, e_{\pm})
\in \widetilde{\Delta}_{nd}$, with $e_{\pm}$ as defined in (\ref{E:Idempotents}). A direct calculation shows that the obvious morphism from $P_{1}$ to $X_{+}$ induces a non-zero morphism $P^+=(P, e_{+}) \rightarrow X_{+}$ in $\widetilde{\Delta}_{nd}$, whereas $\Hom_{\widetilde{\Delta}_{nd}}(P^{+}, X_{-})=0$. Moreover, it was shown in \cite{Tilting} that $\TT(S_{\pm})\cong X_{\pm}$. This implies $\GG(S_{\pm})\cong X_{\pm}$ and thus $\GG(P_{\pm})\cong P^{\pm}$.
Theorem \ref{T:MainT} and Corollary \ref{C:indinTria} yield  the stated classification of indecomposables   in $\widetilde{\Delta}_{nd}.$
\end{proof}

\subsubsection{Concluding remarks on $\Delta_{nd}$}
\begin{prop}\label{P:ARtriangles}
The  category $\thick(S_{+}, S_{-}) \subset \Delta_{nd}$ has Auslander--Reiten triangles.
\end{prop}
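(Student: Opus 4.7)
The plan is to exhibit a Serre functor on $\thick(S_{+}, S_{-})$ and then invoke the theorem of Reiten--Van den Bergh, which states that a Hom-finite, Krull--Remak--Schmidt, $k$-linear triangulated category admits Auslander--Reiten triangles if and only if it has a Serre functor.

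First I would verify the hypotheses of Reiten--Van den Bergh. Theorem \ref{T:MainT}(b), specifically the formula \eqref{E:HomStringString} together with Corollary \ref{C:indinTria}, shows that every Hom-space between indecomposables of $\thick(S_{+}, S_{-})$ is at most one-dimensional, so the category is Hom-finite. It is idempotent complete since it is a thick subcategory of the idempotent complete category $\cd^b(A_{nd}-\mod)$, and since endomorphism rings of indecomposables in $\cd^b(A_{nd}-\mod)$ are local (by the Krull--Remak--Schmidt property of $\cd^b(A_{nd}-\mod)$ that was invoked in the proof of Lemma \ref{L:MinS}), the Krull--Remak--Schmidt property is inherited.

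Next I would construct the Serre functor. The key input is the chain of equivalences in diagram \eqref{E:importantdiagram}, which provides a triangle equivalence
\[
\thick(S_{+}, S_{-}) \;\xrightarrow{\;\sim\;}\; \thick(X_{+}, X_{-}) \subseteq \cd^b(\Lambda-\mod).
\]
Let $\mathbb{S}$ be the Serre functor of $\cd^b(\Lambda-\mod)$ (which exists since $\Lambda$ has finite global dimension). By Lemma \ref{L:Xplusminus}, $\mathbb{S}(X_{\pm}) \cong X_{\mp}[2]$, so $\mathbb{S}$ sends the generators $X_{+}, X_{-}$ back into $\thick(X_{+}, X_{-})$. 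Because $\mathbb{S}$ is a triangle functor, it then maps the entire thick subcategory $\thick(X_{+}, X_{-})$ into itself. The same reasoning applied to $\mathbb{S}^{-1}$ (using $\mathbb{S}^{-1}(X_{\pm}) \cong X_{\mp}[-2]$, obtained by applying $\mathbb{S}^{-1}$ to the isomorphism of Lemma \ref{L:Xplusminus}) shows that the restriction $\mathbb{S}|_{\thick(X_{+}, X_{-})}$ is an autoequivalence. The Serre duality $D\Hom(Y,Z) \cong \Hom(Z, \mathbb{S}(Y))$ in $\cd^b(\Lambda-\mod)$ restricts verbatim to $\thick(X_{+}, X_{-})$, so $\mathbb{S}|_{\thick(X_{+}, X_{-})}$ is a Serre functor. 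Transporting along the equivalence above yields a Serre functor on $\thick(S_{+}, S_{-})$.

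Finally, applying Reiten--Van den Bergh produces Auslander--Reiten triangles in $\thick(S_{+}, S_{-})$, completing the proof. The main subtlety is the verification that the global Serre functor $\mathbb{S}$ restricts to the thick subcategory; this is really the content of Lemma \ref{L:Xplusminus} combined with triangulatedness, but it is the one non-formal ingredient, since a priori restricting a Serre functor to a thick subcategory requires precisely such a stability check on a set of generators.
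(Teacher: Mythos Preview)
Your proposal is correct and follows essentially the same approach as the paper: both use the equivalence $\thick(S_{+}, S_{-}) \cong \thick(X_{+}, X_{-}) \subset \cd^b(\Lambda-\mod)$ together with Lemma~\ref{L:Xplusminus} to show that the Serre functor (equivalently, the Auslander--Reiten translation $\tau = \mathbb{S}\circ[-1]$) of $\cd^b(\Lambda-\mod)$ restricts to the thick subcategory, and then conclude the existence of Auslander--Reiten triangles. Your version is slightly more explicit about checking the Hom-finiteness and Krull--Remak--Schmidt hypotheses before invoking Reiten--Van den Bergh, but the core argument is the same.
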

\begin{proof}
 As mentioned  above, we have an exact equivalence of triangulated categories
 \[\thick(S_{+}, S_{-}) \cong \thick(X_{+}, X_{-}) \subset \cd^b(\Lambda-\mod).\]
 The category $\cd^b(\Lambda-\mod)$ has a Serre functor $\mathbb{S}$ and therefore has Auslander--Reiten triangles, see
  \cite{Happel}. Let $\tau=\mathbb{S}\circ[-1]$ be the Auslander--Reiten translation. Using that $\tau$ is an equivalence and Lemma \ref{L:Xplusminus} we obtain $\tau\bigl(\thick(X_{+}, X_{-})\bigr) \cong \thick\bigl(\tau(X_{+}), \tau(X_{-})\bigr) \cong \thick(X_{+}, X_{-}).$ Now, the restriction of $\tau$ to $\thick(X_{+}, X_{-})$ is the Auslander--Reiten translation of this subcategory.
\end{proof}
\begin{rem}\label{r:AR-quiver}
Using an explicit description of the morphisms in 
$\thick(S_{+}, S_{-})$ ( by Lemma \ref{L:Verdier2}, all these morphism may be computed in the homotopy category
$K^b(\proj-A_{nd})$), one can show that the Auslander--Reiten quiver of $\thick(S_{+}, S_{-})$ consists of two $\ZZ A_{\infty}$--components. 

Alternatively, using the Happel functor, one may view $X_{+}$ and $X_{-}$ as objects in the stable category $\widehat{\Lambda}-\ul{\mod}$, where $\widehat{\Lambda}$ is the repetitive algebra of $\Lambda$, see \cite{Happel88}. Since $\Lambda$ is a gentle algebra, $\widehat{\Lambda}$ is special biserial by \cite{Ringel}. The Auslander--Reiten sequences over special biserial algebras are known by \cite{WW}. This may be used to determine the Auslander--Reiten quiver of $\thick(X_{+}, X_{-})$.

We draw one of the $\ZZ A_{\infty}$-components below, indicating the action of the Auslander--Reiten translation by $\SelectTips{cm}{} \xymatrix{&\ar@{-->}[l]}$. The other component is obtained from this one by changing the roles of $+$ and $-$.
\[
{\scriptsize
\SelectTips{cm}{}
\begin{xy} 0;<0.62pt,0pt>:<0pt,-0.5pt>::
(0,100) *+{} ="0",
(0,0) *+{} ="1",
(50,150) *+{\kS_{+}(1)[2]} ="2",
(50,50) *+{\kS_{-}(3)[1]} ="3",
(100,100) *+{\kS_{-}(2)[1]} ="5",
(100,0) *+{} ="6",
(150,150) *+{\kS_{-}(1)[1]} ="7",
(150,50) *+{\kS_{+}(3)} ="8",
(200,100) *+{\kS_{+}(2)} ="10",
(200,0) *+{} ="11",
(250,150) *+{\kS_{+}(1)} ="12",
(250,50) *+{\kS_{-}(3)[-1]} ="13",
(300,100) *+{\kS_{-}(2)[-1]} ="15",
(300,0) *+{} ="16",
(350,150) *+{\kS_{-}(1)[-1]} ="17",
(350,50) *+{\kS_{+}(3)[-2]} ="18",
(400,100) *+{\kS_{+}(2)[-2]} ="20",
(400,0) *+{} ="21",
(450,150) *+{\kS_{+}(1)[-2]}="22",
(450,50) *+{\kS_{-}(3)[-3]} ="23",
(500,0) *+{} ="24",
(500,100) *+{} ="25",
"0", {\ar@{.}"2"},
"0", {\ar@{.}"3"},
"1", {\ar@{.}"3"},
"2", {\ar"5"},
"3", {\ar"5"},
"3", {\ar@{.}"6"},
"5", {\ar"7"},
"5", {\ar"8"},
"6", {\ar@{.}"8"},
"7", {\ar@{-->}"2"},
"7", {\ar"10"},
"8", {\ar@{-->}"3"},
"8", {\ar"10"},
"8", {\ar@{.}"11"},
"10", {\ar@{-->}"5"},
"10", {\ar"12"},
"10", {\ar"13"},
"11", {\ar@{.}"13"},
"12", {\ar@{-->}"7"},
"12", {\ar"15"},
"13", {\ar@{-->}"8"},
"13", {\ar"15"},
"13", {\ar@{.}"16"},
"15", {\ar@{-->}"10"},
"15", {\ar"17"},
"15", {\ar"18"},
"16", {\ar@{.}"18"},
"17", {\ar@{-->}"12"},
"17", {\ar"20"},
"18", {\ar@{-->}"13"},
"18", {\ar"20"},
"18", {\ar@{.}"21"},
"20", {\ar@{-->}"15"},
"20", {\ar"22"},
"20", {\ar"23"},
"21", {\ar@{.}"23"},
"22", {\ar@{-->}"17"},
"22", {\ar@{.}"25"},
"23", {\ar@{.}"24"},
"23", {\ar@{.}"25"},
"23", {\ar@{-->}"18"},
\end{xy}
}
\]
The category $\Delta_{nd}$ does not have Auslander--Reiten triangles, but we may still consider the quiver of irreducible morphisms in $\Delta_{nd}$, which has two \emph{additional} $A^{\infty}_{\infty}$--components.
\[
\begin{xy}\SelectTips{cm}{} \xymatrix{\ar@{.>}[r] & P_{\pm}[2] \ar[r] & P_{\mp}[1] \ar[r] &P_{\pm} \ar[r] & P_{\mp}[-1] \ar[r] & P_{\pm}[-2]\ar@{.>}[r] &}  \end{xy}\]
\end{rem}
\begin{prop}
The triangulated categories  $\thick(X_{+}, X_{-})$ and $\Delta_{nd}$  are not triangle equivalent to the bounded derived category of a finite dimensional algebra.
\end{prop}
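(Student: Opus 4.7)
The two cases call for different arguments; for $\Delta_{nd}$ we exploit an unbounded Hom-support to the left, while for $\thick(X_+,X_-)$ we exploit the fractional Calabi--Yau property.

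For $\Delta_{nd}$, formula \eqref{E:HomProjProj} yields $\Hom_{\Delta_{nd}}(P_+, P_+[n]) \cong k$ for every non-positive even integer $n$, an infinite family of nonvanishing Hom-groups with arbitrarily negative shift. On the other hand, in the bounded derived category $\cd^b(B-\mod)$ of any finite-dimensional $k$-algebra $B$, each object is isomorphic to a bounded complex of $B$-modules; replacing $X$ by a bounded complex of projectives and $Y$ by a bounded-below complex of injectives shows that $\Hom_{\cd^b(B)}(X, Y[n]) = 0$ for $n \ll 0$ for any fixed pair $X, Y$. Since triangle equivalences preserve all Hom-groups, this rules out $\Delta_{nd} \cong \cd^b(B-\mod)$.

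For $\thick(X_+, X_-)$ the previous argument is inapplicable, since formula \eqref{E:HomStringString} confines the nonvanishing shifts of Hom-groups between any fixed pair of indecomposables to a bounded range. Instead, assume a triangle equivalence $\Phi\colon \thick(X_+,X_-) \xrightarrow{\sim} \cd^b(B-\mod)$ for some finite-dimensional $k$-algebra $B$. Proposition \ref{P:ARtriangles} supplies a Serre functor on the left, hence on the right, forcing $\gldim B < \infty$. Lemma \ref{L:Xplusminus} yields $\mathbb{S}^2(X_\pm) \cong X_\pm[4]$, and since $X_+$ and $X_-$ generate $\thick(X_+,X_-)$, one deduces that $\mathbb{S}^2$ and $[4]$ induce the same endomorphism of $K_0$. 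Transferred through $\Phi$, the derived category $\cd^b(B-\mod)$ becomes fractionally Calabi--Yau of dimension $2$. Appealing to the Keller--Reiten-type classification of fractionally Calabi--Yau derived categories of finite-dimensional algebras of finite global dimension, $B$ must be derived equivalent to $kQ$ for a (disjoint union of) Dynkin quiver(s) $Q$. But then $\cd^b(kQ)$ contains only finitely many indecomposables modulo shift, contradicting the infinite family $\{\mathcal{S}_+(l), \mathcal{S}_-(l)\}_{l \geq 1}$ of pairwise non-isomorphic (modulo all shifts) indecomposable objects supplied by Corollary \ref{C:indinTria} and Theorem \ref{T:MainT}(c).

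The main obstacle is the invocation of the fractional Calabi--Yau classification, which is customarily formulated as $\mathbb{S}^m \cong [n]$ at the level of functors, whereas Lemma \ref{L:Xplusminus} only produces the isomorphism on objects of a generating set. A possible workaround is to upgrade the object-level isomorphism to a natural one by using the uniqueness of Serre functors and Serre duality; more elementarily, one may combine the induced relation $c_B^2 = \mathsf{id}$ on the Coxeter transformation of $B$ with the shape of the AR-quiver of $\thick(X_+,X_-)$ from Remark \ref{r:AR-quiver}---two $\ZZ A_\infty$-components interchanged by the shift $[1]$, with nonzero morphisms $\Hom(X_+,X_-[2]) \cong k$ between them---to rule out directly any realization by $\cd^b(B-\mod)$ with $B$ finite-dimensional of finite global dimension and two simple modules.
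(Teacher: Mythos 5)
Your treatment of $\Delta_{nd}$ is correct and takes a genuinely different, more elementary route than the paper's: exploiting the Hom-spaces $\Hom_{\Delta_{nd}}(P_+,P_+[n])\cong k$ for all non-positive even $n$ (formula \eqref{E:HomProjProj}), which is impossible in $\cd^b(B-\mod)$ for a finite-dimensional $B$, since the standard $t$-structure forces $\Hom_{\cd^b(B)}(X,Y[n])=0$ once $n$ lies below a bound determined by the cohomological amplitudes of $X$ and $Y$. This is a clean observation that needs no classification theorem.

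Your argument for $\thick(X_+,X_-)$, however, has a genuine gap which you partly acknowledge but do not close. There is no established ``Keller--Reiten-type classification of fractionally Calabi--Yau derived categories of finite-dimensional algebras of finite global dimension'' in the form you invoke: Keller--Reiten's results concern $d$-Calabi--Yau (cluster) categories, not a classification of all finite-dimensional algebras $B$ with $\cd^b(B)$ fractionally CY, and such a classification remains open. Even granting one, Lemma \ref{L:Xplusminus} yields $\mathbb{S}^2(X_\pm)\cong X_\pm[4]$ only as object-level isomorphisms on a generating set, not the natural isomorphism $\mathbb{S}^2\cong[4]$ that such a statement would take as input. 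Your Coxeter-transformation workaround is also inconclusive: the rank of $K_0\bigl(\thick(X_+,X_-)\bigr)$ is not visibly $2$ (already $[X_+]=[X_-]$ holds in $K_0(\cd^b(\Lambda))$ since both have dimension vector $(1,1,1)$), and $c_B^2=\id$ by itself does not rule out all candidate algebras. The paper proves the statement by a different, more concrete route: both $\thick(X_+,X_-)$ and $\Delta_{nd}$ are of discrete representation type, so by Vossieck's classification any finite-dimensional $A$ with $\cd^b(A)$ equivalent to either must be gentle and hence Iwanaga--Gorenstein; the Serre functor on $K^b(\proj-A)$ is then explicitly described by Bobi\'nski--Gei{\ss}--Skowro\'nski, and that description is incompatible with $\mathbb{S}^2(X)\cong X[4]$. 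This handles both cases uniformly and needs only the object-level fractional CY relation.
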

\begin{proof}
 Assume that there exists a triangle equivalence to the derived category of a finite dimensional algebra $A$. Then $\cd^b(A-\mod)$ is of discrete representation type. Such algebras $A$ have been classified by Vossieck \cite{Vossieck}. All of them are gentle (see Definition \ref{D:Gentle}) and therefore Iwanaga--Gorenstein, by \cite{GeissReiten}.
 Therefore, the Nakayama functor defines a Serre functor $\mathbb{S}$ on $K^b(\proj-A)$ \cite{Happel}, whose action on objects is described in \cite[Theorem B]{BobinskiGeissSkowronski}. On the other hand, $\mathbb{S}^2(X) \cong X[4]$ holds for all objects $X$ in $\thick(X_{+}, X_{-})$, by Lemma \ref{L:Xplusminus} and Proposition \ref{P:ARtriangles}. This yields a contradiction.\end{proof}

\noindent
The following proposition generalizes Theorem \ref{T:Maintilde}.
\begin{prop}
Let $n\geq 1$ and $\Lambda_{n}$ be the path algebra of the following quiver
 \[\begin{xy}\SelectTips{cm}{}\xymatrix{\circ  && \circ  && \circ  && \circ  && \circ \ar@{..}@/_8pt/[llllllll]_{\textit{identify}}
\\
& \circ  \ar[lu]^-{w_{1}^-} \ar[ru]_-{w_{1}^+} && \circ  \ar[lu]^-{w_{2}^-} \ar[ru]_-{w_{2}^+} && \cdots \ar[lu] \ar[ru] && \circ \ar[ru]_-{w_{n}^+} \ar[lu]^-{w_{n}^-}
\\
& \circ \ar@/^3pt/[u]^-{u_{1}} \ar@/_3pt/[u]_-{v_{1}} && \circ \ar@/^3pt/[u]^-{u_{2}} \ar@/_3pt/[u]_-{v_{2}} &&\cdots&& \circ
\ar@/^3pt/[u]^-{u_{n}} \ar@/_3pt/[u]_-{v_{n}}
}\end{xy}
\]
subject to the relations $w_i^- u_i = 0$ and $w_i^+ v_i=0$ for all $1 \le i \le n$. Then
\[\Delta_{n} := \left(\frac{\cd^b(\Lambda_{n}-\mod)}{\Band(\Lambda_{n})}\right)^{\omega} \cong \bigvee_{i=1}^n \Delta_{nd}.\]
In particular, the  category $\Delta_{n}$ is representation discrete, $\Hom$-finite and $K_{0}(\Delta_{n}) \cong (\ZZ^2)^{\oplus n}$.
\end{prop}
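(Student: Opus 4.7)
The plan is to realize $\Lambda_n$ geometrically as the endomorphism algebra of a tilting object on the Auslander sheaf of a Kodaira cycle of projective lines, and then transport the problem to the sheaf side where the localization theorem of Section~\ref{S:Global} applies. Concretely, let $E_n$ be a cycle of $n$ projective lines intersecting transversally, $\mathcal{I}$ the ideal sheaf of $\Sing(E_n)$, $\ca={\mathcal End}_{E_n}(\co_{E_n}\oplus \ci)$ the Auslander sheaf, and $\EE_n=(E_n,\ca)$. The Burban--Drozd tilting theorem (applied in the case $n=1$ in the proof of Theorem~\ref{T:Maintilde}) produces a tilting complex on $\cd^b(\Coh(\EE_n))$ whose endomorphism algebra is precisely the gentle algebra $\Lambda_n$ described by the quiver with relations in the statement. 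The first step is therefore to invoke this tilting result in the form stated in \cite{Tilting}, obtaining a triangle equivalence
\[
\TT_n\colon \cd^b\bigl(\Coh(\EE_n)\bigr)\xrightarrow{\ \sim\ } \cd^b(\Lambda_n-\mod)
\]
which, by the same reference, identifies the essential image of $\Perf(E_n)$ with the subcategory $\Band(\Lambda_n)$ of $\tau$-invariant objects.

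Next, passing to Verdier quotients and taking idempotent completions, $\TT_n$ induces a triangle equivalence
\[
\Delta_{E_n}(\EE_n)\ \cong\ \Delta_n.
\]
The scheme $E_n$ is a reduced projective curve with exactly $n$ singular points $x_1,\dots,x_n$, each a node; moreover every coherent sheaf on $E_n$ is a quotient of a locally free sheaf, and $\cf=\co_{E_n}\oplus\ci$ is locally free outside $\Sing(E_n)$. Hence the hypotheses of Theorem~\ref{t:main-global-intro} are satisfied, and the completions at each node satisfy $\widehat{\co}_{x_i}\cong k\llbracket x,y\rrbracket/(xy)=O_{nd}$ and $\widehat{\ca}_{x_i}\cong A_{nd}$. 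Applying the localization theorem then yields
\[
\Delta_{E_n}(\EE_n)\ \cong\ \bigoplus_{i=1}^n \Delta_{\widehat{\co}_{x_i}}(\widehat{\ca}_{x_i})\ \cong\ \bigoplus_{i=1}^n \Delta_{nd},
\]
and combining this with the tilting equivalence above gives the claimed decomposition $\Delta_n\cong\bigvee_{i=1}^n \Delta_{nd}$.

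The three supplementary statements are immediate consequences of this block decomposition together with the local description of $\Delta_{nd}$ obtained in Theorem~\ref{T:MainT}: Hom-finiteness and the explicit classification of indecomposable objects in each summand show that $\Delta_n$ is Hom-finite and representation discrete, while additivity of $K_0$ on finite direct sums of triangulated categories combined with the computation $K_0(\Delta_{nd})\cong\ZZ^2$ (from Proposition~\ref{P:GrothendieckGroup}, using that $\widehat{A}_{nd}/\widehat{A}_{nd}e\widehat{A}_{nd}$ has exactly two simple modules $S_+,S_-$) yields $K_0(\Delta_n)\cong(\ZZ^2)^{\oplus n}$.

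The only genuinely non-formal ingredient is the first step: one must verify that the tilting object constructed by Burban--Drozd on $\cd^b(\Coh(\EE_n))$ has endomorphism algebra isomorphic to the algebra $\Lambda_n$ displayed in the statement, and that its image identifies perfect complexes on $E_n$ with band objects in $\cd^b(\Lambda_n-\mod)$. For $n=1$ this is exactly the input used in diagram~\eqref{E:importantdiagram} within the proof of Theorem~\ref{T:Maintilde}; for general $n$ the analogous construction is carried out in \cite{Tilting}, so this step is a direct citation rather than a new computation. Once this geometric identification is in place, the remainder of the argument is a formal application of Theorem~\ref{t:main-global-intro} and of the local analysis of $\Delta_{nd}$ already performed in this section.
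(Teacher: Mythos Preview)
Your proof is correct and follows essentially the same route as the paper: realize $\Lambda_n$ via the Burban--Drozd tilting equivalence on the Auslander sheaf of a Kodaira cycle $E_n$, identify $\Perf(E_n)$ with $\Band(\Lambda_n)$, and then apply the global localization theorem (Theorem~\ref{t:main-global}) at the $n$ nodes. You in fact give more detail than the paper does, spelling out how the supplementary claims follow from Theorem~\ref{T:MainT} and Proposition~\ref{P:GrothendieckGroup}.
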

\begin{proof}
Let $E = E_{n}$ be a Kodaira cycle of $n$ projective lines and $\mathbb{E} = \bigl(E,
 {\mathcal End}_{E}(\kO \oplus \kI_{Z})\bigr)$, where $\kI_{Z}$ is the ideal sheaf of the singular locus $Z$. By  \cite[Proposition 10]{Tilting}  there exists an equivalence of triangulated categories
$\cd^b\bigl(\Coh(\mathbb{E})\bigr) \xrightarrow{\sim} \cd^b(\Lambda_{n}-\mod)$
identifying   $\Perf(E) \cong \cP(E) \subset \cd^b\bigl(\Coh(\mathbb{E})\bigr)$
with $\Band(\Lambda_{n})\subset \cd^b(\Lambda_{n}-\mod)$, see \cite[Corollary 6]{Tilting}. Thus
 Theorem \ref{t:main-global} yields  the proof.
\end{proof}

\subsection{The fractional Calabi--Yau property}\label{ss:fract-cy}
The following notion was introduced by Iyama \cite[Definition 3.1]{Iyama07a}.
\begin{defn}
Let $\cc \subseteq \ce$ be a full additive subcategory of an exact Krull--Remak--Schmidt $k$-category $\ce$ over an algebraically closed field $k$ and set $(X, Y)=\Hom_{\cc}(X, Y)$. An exact sequence in $\ce$ with objects contained in $\cc$
\begin{align}
0 \ra Y \xrightarrow{f_{d}} C_{d-1} \xrightarrow{f_{d-1}} C_{d-2} \xrightarrow{f_{d-2}} \ldots \xrightarrow{f_{1}} C_{0} \xrightarrow{f_{0}} X \ra 0
\end{align}
is called \emph{$d$-almost split} if the following three conditions hold. 
\begin{itemize}
\item[(AS1)] All morphisms $f_{i}$ are contained in the Jacobson radical $J_{\cc}$ of $\cc$, i.e.~the ideal of the category $\cc$, which is uniquely determined by $J_{\cc}(X, X) =\rad \End_{\cc}(X)$ for all $X$ in $\cc$, see \cite[Lemma 5]{GMKelly}.
\item[(AS2)] $0 \ra (-,Y) \xrightarrow{(-, f_{d})} (-, C_{d-1}) \xrightarrow{(-, f_{d-1})} \ldots \xrightarrow{(-, f_{1})} (-, C_{0}) \xrightarrow{(-, f_{0})} J_{\cc}(-, X) \ra 0$ is an exact sequence of functors.
\item[(AS3)] $0 \ra (X, -) \xrightarrow{(f_{0}, -)} (C_{0}, -) \xrightarrow{(f_{1}, -)} \ldots \xrightarrow{(f_{d-1}, -)} (C_{d-1}, -) \xrightarrow{(f_{d}, -)} J_{\cc}(Y, -) \ra 0$ is an exact sequence of functors.
\end{itemize}
Since $Y$ is determined by $X$ and vice versa it makes sense to write $\tau_{d}(X)$ for $Y$ respectively $\tau_{d}^{-1}(Y)=X$. We say that $\cc$ has $d$-almost split sequences if there is an $d$-almost split sequence ending (respectively starting) in every non-projective (respectively non-injective) object of $\cc$.
\end{defn}

\begin{rem} \label{R:AlmostSplit}
(a) \,  Assume that the stable category $\ul{\cc}$ is Hom-finite and let $X \in \cc$ be indecomposable. Denote by $S_{X}(-)\colon \cc \ra \Mod-k$ the contravariant functor defined by the short 
exact sequence $0 \ra J_{\cc}(-, X) \ra (-, X) \ra S_{X}(-) \ra 0$. By definition, this functor takes $X$ to $\End_{\cc}(X)/J_{\cc}(X, X) \cong k$ and all other indecomposable objects to zero.
Combining the exact sequence in (AS2) with this short exact sequence yields an exact sequence 
\begin{align*}
0 \ra (-,Y) \xrightarrow{(-, f_{d})} (-, C_{d-1}) \xrightarrow{(-, f_{d-1})} \ldots \xrightarrow{(-, f_{1})} (-, C_{0}) \xrightarrow{(-, f_{0})} (-, X) \ra S_{X}(-) \ra 0,
\end{align*}
which is a projective resolution of the simple module $S_{X}(-)$ in the category of right $\cc$-modules (i.e. contravariant functors $\cc \ra \Mod-k$).

One can define the simple module $S_{Y}(-)$ in another way. Namely,  by the short exact sequence $0 \ra S_{Y}(-) \ra D(Y, -) \ra DJ_{\cc}(Y, -) \ra 0 $. Combining this sequence with the $k$-dual of the sequence in (AS3) yields an exact sequence 
\begin{align*}
0 \ra S_{Y}(-) \ra D(Y, -) \ra D(C_{d-1}, -) &\xrightarrow{D(f_{d-1}, -)} \ldots \\ &\ldots \xrightarrow{D(f_{1}, -)} D(C_{0}, -) \xrightarrow{D(f_{0}, -)} D(X, -) \ra 0.
\end{align*}

\noindent (b) \, For indecomposable objects $X$ and $Y$, $1$-almost split sequences 
\begin{align}
0 \ra Y \xrightarrow{f_{1}} C_{0} \xrightarrow{f_{0}} X \ra 0
\end{align}
were introduced by Auslander \& Reiten, who called them almost split sequences. Nowadays, they are usually called Auslander--Reiten sequences. Let us briefly discuss this particular case. Condition (AS1) guarantees that the sequence does not split. Since $X$ is indecomposable, a morphism $g\colon T \ra X$ is in the radical $J_{\cc}(T, X)$ if and only $f$ is not a split epimorphism. Hence, (AS2) is equivalent to the statement that every morphism into $X$, which is not a split epimorphism factors over $f_{0}$. Similarly, (AS3) is equivalent to the fact that every non-split monomorphism $h\colon Y \ra S$ factors over $f_{1}$.

\end{rem}

\begin{ex}\label{Ex:AlmostSplit} Let $R$ be the quiver algebra of the following quiver with relations
\begin{align}
\begin{xy}
\SelectTips{cm}{}
\xymatrix{
+ \ar@/^7pt/[rr]^a && - \ar@/^7pt/[ll]^b 
} 
\end{xy} \qquad  \qquad  \text{ba=0=ab}.
\end{align}
(a) \, Let $\cc=\mod-R$. Then $\cc$ has Auslander--Reiten (or $1$-almost split) sequences. Indeed, the Auslander--Reiten quiver of $\mod-R$ is given by
\begin{align}
\begin{array}{c}
\begin{xy}
\SelectTips{cm}{}
\xymatrix{
P_{+}={\begin{smallmatrix} + \\ -\end{smallmatrix}} 
\ar[rr]^{\alpha} && S_{+} \ar[d]^{\beta} \ar@{..>}@/_8pt/[lld] \\
S_{-} \ar[u]^\delta \ar@{..>}@/_8pt/[rru] && P_{-}={\begin{smallmatrix} - \\ +\end{smallmatrix}} 
\ar[ll]^{{\gamma}}
} 
\end{xy} 
\end{array}
\end{align}
where the dotted arrows indicate the action of the Auslander--Reiten translation. For later reference, we label the vertices of this quiver clockwise from $1 \, \hat{=} \,   P_{+}$ to $4 \,  \hat{=} \,  S_{-}$. Let $G$ be the additive generator $P_{+} \oplus P_{-} \oplus S_{+} \oplus S_{-}$ of $\cc$ and $A=\End_{R}(G)$ be the Auslander algebra of $R$. It is given as a quiver algebra. Indeed, the quiver is just the Auslander--Reiten quiver and the relations are $\alpha\delta=0$ and $\gamma\beta =0$. Then there is an equivalence of additive categories 
\begin{align*}
\cc & \longrightarrow \proj-A \\
C & \mapsto \Hom_{\cc}(G, C)
\end{align*}
Under this equivalence, indecomposable objects in $\cc$ correspond to indecomposable projective $A$-modules. For example, $S_{+}$ corresponds to $e_{2}A$.
By Remark \ref{R:AlmostSplit} (a), the Auslander--Reiten sequence
$0 \ra S_{-} \stackrel{\delta}\ra P_{+} \stackrel{\alpha}\ra S_{+} \ra 0$ yields a projective resolution of the simple $A$-module $S_{2}$
\begin{align}
0 \ra P_{4} \xrightarrow{\delta\cdot} P_{1} \xrightarrow{\alpha\cdot} P_{2} \ra S_{2} \ra 0
\end{align}

\medskip

\noindent (b) \, In the notation from (a) above, let $H=P_{+} \oplus P_{-} \oplus S_{-}$, $\cb=\add (H)$ and $e \in A$ be the idempotent corresponding to the identity of $H$. The endomorphism algebra $B=\End_{\cb}(H)\cong eAe$ is given by the quiver
\begin{align}
\begin{array}{c}
\begin{xy}
\SelectTips{cm}{}
\xymatrix{
1
\ar[rrd]^{[\beta \alpha]}\\
4 \ar[u]^\delta  && 3 
\ar[ll]^{{\gamma}}
} 
\end{xy} 
\end{array}
\end{align}
with relations $[\beta \alpha]\delta=0$ and $\gamma [\beta \alpha]=0$. Moreover, as above there is an additive equivalence $\cb \rightarrow \proj-B$. We claim that the exact sequence
\begin{align}
0 \ra S_{-} \xrightarrow{\delta} P_{+} \xrightarrow{\beta\circ \alpha} P_{-} \xrightarrow{\gamma} S_{-} \ra 0  
\end{align}
is $2$-almost split. (AS1) is satisfied, since we consider non-isomorphisms between indecomposable  objects. For (AS2), note that the following sequence of right $B$-modules is exact
\begin{align}\label{E:2almostsplittoprojres}
0 \ra e_{4}B \xrightarrow{\delta\cdot} e_{1}B \xrightarrow{[\beta\alpha]\cdot} e_{3}B \xrightarrow{\gamma\cdot} \rad e_{4}B \ra 0
\end{align}
Similarly, (AS3) follows from the exactness of the following exact sequence of  left $B$-modules
\begin{align}\label{E:2almostsplittoinjres}
0 \ra Be_{4} \xrightarrow{\cdot\gamma} Be_{3} \xrightarrow{\cdot[\beta\alpha]} Be_{1} \xrightarrow{\cdot\delta} \rad B e_{4} \ra 0
\end{align}
In particular, as noted in Remark \ref{R:AlmostSplit} (a) above, \eqref{E:2almostsplittoprojres} 
yields a projective resolution of the simple right module $S_{4}$ 
\begin{align}
0 \ra e_{4}B \xrightarrow{\delta\cdot} e_{1}B \xrightarrow{[\beta\alpha]\cdot} e_{3}B \xrightarrow{\gamma\cdot}  e_{4}B \ra S_{4} \ra 0.
\end{align}
Moreover, dualizing \eqref{E:2almostsplittoinjres} yields an injective resolution 
\begin{align}
0 \la D\left(Be_{4}\right) \xleftarrow{D(\cdot\gamma)} D\left(Be_{3}\right) \xleftarrow{D(\cdot[\beta\alpha])} D\left(Be_{1}\right) \xleftarrow{D(\cdot\delta)} D\left(B e_{4}\right) \la S_{4} \la 0.
\end{align}
\end{ex}

\bigskip

Let $\ce$ be a Krull--Remak--Schmidt Frobenius $k$-category over an algebraically closed field $k$. 
Assume that $\proj\ce$ has an additive generator $P$.
Let $F=P\oplus F'$ be an object of $\ce$ such that $F'$ has no projective direct summands.
Let $A=\End_{\ce}(F)$, $R=\End_{\ce}(P)$ and $e=id_P\in A$. Then $A/AeA$ is the stable endomorphism algebra of $F$.
Recall from Corollary~\ref{c:restriction-and-induction} (b) that $\cd_{fd,A/AeA}(A)$ 
denotes the full subcategory of $\cd_{fd}(A)$ consisting of complexes of $A$-modules which have
cohomologies supported on $A/AeA$ and that $\cd_{fd,A/AeA}(A)$ is generated by $\fdmod-A/AeA$. 
In particular, if $A/AeA$ is finite-dimensional over $k$,
then $\cd_{fd,A/AeA}(A)$ is
generated by the (finitely many) simple $A/AeA$-modules.

\begin{thm}\label{t:fractionally-cy-property}
If $\add(F)$ has $d$-almost split
sequences and $A/AeA$ is finite dimensional over $k$, then the following statements hold.
\begin{itemize}
\item[(a)] Any finite dimensional $A/AeA$-module has finite projective dimension over~$A$. 
\item[(b)] The triangulated category $\cd_{fd,A/AeA}(A)$ admits a Serre functor. 
\item[(c)] Each simple $A/AeA$-module $S$ is fractionally $\frac{(d+1)n}{n}$--CY in $\cd_{fd,A/AeA}(A)$, where $n=n(S) \in\mathbb{N}$.
\item[(d)] There exists a permutation $\pi$ on the isomorphism classes of simple $A/AeA$-modules such that
$D\Ext_A^l(S,S')\cong \Ext^{d+1-l}_A(S',\pi(S)),$ holds  Êfor all  $l \in \Z.$
\end{itemize}
\end{thm}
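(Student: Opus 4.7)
The plan is to deduce all four statements from a single explicit computation of the Nakayama functor $\nu = D\RHom_{A}(-,A)$ on the simple $A/AeA$-modules. Let $F_{1},\ldots,F_{t}$ be the non-projective indecomposable summands of $F$ and $S_{1},\ldots,S_{t}$ the corresponding simple right $A$-modules; these are exactly the simples of $A/AeA$. Fix one of them, say $S_{i}$, and consider the $d$-almost split sequence
\begin{align*}
0 \to Y \to C_{d-1} \to \cdots \to C_{0} \to F_{i} \to 0
\end{align*}
in $\add(F)$ ending at $F_{i}$, where $Y=\tau_{d}(F_{i})$. Applying $\Hom_{\ce}(F,-)$ and using axiom (AS2) together with the short exact sequence $0 \to J_{\cc}(F,F_{i})\to \Hom_{\ce}(F,F_{i}) \to S_{i} \to 0$ produces a projective resolution
\begin{align*}
0 \to \Hom_{\ce}(F,Y) \to \Hom_{\ce}(F,C_{d-1}) \to \cdots \to \Hom_{\ce}(F,C_{0}) \to \Hom_{\ce}(F,F_{i}) \to S_{i} \to 0
\end{align*}
of length $d+1$. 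This immediately yields (a) by d\'evissage along composition series, using the long exact Ext-sequence.

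Next, I would compute $\nu(S_{i})$ by applying $D\Hom_{A}(-,A)$ to this resolution. For $M\in \add F$ the standard identification $\Hom_{A}(\Hom_{\ce}(F,M),A)\cong\Hom_{\ce}(M,F)$ turns the resulting complex into
\begin{align*}
0 \to \Hom_{\ce}(F_{i},F) \to \Hom_{\ce}(C_{0},F) \to \cdots \to \Hom_{\ce}(C_{d-1},F) \to \Hom_{\ce}(Y,F) \to 0.
\end{align*}
Now axiom (AS3), spliced with $0\to J_{\cc}(Y,F)\to \Hom_{\ce}(Y,F)\to S_{Y}^{*}\to 0$ (where $S_{Y}^{*}$ is the simple left $A$-module at $Y$), shows that this complex has cohomology only in the rightmost position, where it equals $S_{Y}^{*}$. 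Applying $D$ and tracking degrees gives
\begin{align*}
\nu(S_{i}) \cong S_{\tau_{d}(F_{i})}[d+1].
\end{align*}
This is the heart of the argument, and I expect it to be the main obstacle: one has to be careful with cohomological conventions and with identifying $DS_{Y}^{*}$ with the simple right module $S_{\tau_{d}(F_{i})}$, but once (AS2) and (AS3) are each used exactly once, the answer falls out. By (a), $\thick(S_{1},\ldots,S_{t})=\cd_{fd,A/AeA}(A) \subseteq \per(A)$, and the formula above shows that $\nu$ preserves this subcategory. Combining this with the general Serre duality $D\Hom_{\cd(A)}(X,Y)\cong \Hom_{\cd(A)}(Y,\nu(X))$ for $X\in\per(A)$ established in Subsection \ref{ss:nakayama-functor} proves (b).

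Finally, since $\add F$ has only finitely many indecomposables, $\tau_{d}$ acts as a permutation of the non-projective indecomposables and the $\tau_{d}$-orbit of $F_{i}$ has some finite length $n=n(S_{i})$. Iterating the key formula yields
\begin{align*}
\nu^{n}(S_{i}) \cong S_{\tau_{d}^{n}(F_{i})}[n(d+1)] \cong S_{i}[n(d+1)],
\end{align*}
proving (c). For (d) I would define $\pi$ on the set of isomorphism classes of simples by $\pi(S_{i})=S_{\tau_{d}(F_{i})}$; then Serre duality from (b) together with $\nu(S)\cong \pi(S)[d+1]$ gives
\begin{align*}
D\Ext_{A}^{l}(S,S')
\cong \Hom_{\cd(A)}(S',\nu(S)[-l])
\cong \Hom_{\cd(A)}(S',\pi(S)[d+1-l])
= \Ext_{A}^{d+1-l}(S',\pi(S)),
\end{align*}
which establishes (d).
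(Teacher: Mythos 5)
Your proof is correct and follows essentially the same route as the paper's: both apply $\Hom_{\ce}(F,-)$ to the $d$-almost split sequence via (AS2) to obtain a length-$(d+1)$ projective resolution of $S_i$ (giving (a)), then use (AS3) to establish the key identity $\nu(S_i)\cong S_{\tau_d F_i}[d+1]$, from which (b)--(d) follow by the Auslander--Reiten/Serre duality formula of Subsection \ref{ss:nakayama-functor} and iteration along $\tau_d$-orbits. The only difference is cosmetic: the paper separately writes out the $\add D(A)$-coresolution of $S_{\pi(i)}$ and matches it with the projective resolution, whereas you apply $D\Hom_A(-,A)$ directly to the projective resolution and read off the cohomology from (AS3) --- the same computation, packaged slightly differently.
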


\begin{proof}
 Assume that $F'$ is multiplicity-free and write $F'=F_1\oplus\ldots\oplus F_r$ such that $F_1,\ldots,F_r$ are indecomposable.
For $i=1,\ldots,r$, let $e_i=\id_{F_i}$ and consider it as an element in $A$. Then $1_A=e+e_1+\ldots+e_r$. Let $S_i$ be the simple $A$-module
corresponding to $e_i$. By assumption, there is an $d$-almost split sequence
\begin{align}
\eta\colon 0\longrightarrow F_j \longrightarrow C_{d-1} \longrightarrow \ldots \longrightarrow C_0 \longrightarrow F_i \longrightarrow 0
\end{align}
for some $j=1,\ldots,r$, where $C_{d-1},\ldots,C_0\in\add(F)$. The assignment $i\mapsto j$ defines a permutation
on the set $\{1,\ldots,r\}$, which we denote by $\pi$.  Set $(X, Y)=\Hom_{\ce}(X, Y)$.
Applying $\Hom_{\ce}(F, -)$ to $\eta$, we obtain  a projective resolution of $S_i$ as an $A$--module, by the definition of a $d$-almost split sequence (see Remark \ref{R:AlmostSplit} (a)).
\begin{align*} 
0 \longrightarrow (F,F_{\pi(i)}) \longrightarrow (F,C_{d-1}) \longrightarrow \ldots \longrightarrow (F,C_0) \longrightarrow (F,F_i) \longrightarrow S_i \longrightarrow & 0,
\end{align*}
In particular, this shows (a). Dually, we acquire an $\add D(A)$--resolution of $S_{\pi(i)}$
\begin{align*} 0 \rightarrow S_{\pi(i)} \rightarrow D(F_{\pi(i)},F)
\rightarrow D(C_{d-1},F) \rightarrow
\ldots \rightarrow D(C_0,F) \rightarrow D(F_i,F) \rightarrow
0,
\end{align*}
 by applying $D\Hom_{\ce}(-, F)$ to $\eta$. Recall from Subsection~\ref{ss:nakayama-functor} that there is a
triangle functor $\nu \colon \per(A)\rightarrow \thick(DA)$. We deduce from
the above long exact sequences that
$\cd_{fd,A/AeA}(A)=\thick(S_1,\ldots,S_r)\subseteq
\per(A)\cap\thick(DA)$ and that $\nu(S_i)=S_{\pi(i)}[d+1]$. It follows from the Auslander--Reiten
formula (\ref{E:AR-formula}) that the restriction of
$\nu$ on $\cd_{fd,A/AeA}(A)$ is a right Serre functor and hence fully faithful~\cite[Corollary I.1.2]{ReitenVandenBergh02}. 
Since, as shown above, $\nu$ takes a set of generators of $\cd_{fd,A/AeA}(A)$ to itself up to shift, it follows that 
$\nu$ restricts to an auto-equivalence of
$\cd_{fd,A/AeA}(A)$. In particular, $\nu$ is a Serre functor of $\cd_{fd,A/AeA}(A)$. 
Moreover, if
$n$ denotes the number of elements in the $\pi$-orbit of $i$, then
$\nu^n(S_i)\cong S_i[(d+1)n]$, i.e.~ $S_i$ is fractionally Calabi--Yau
of Calabi--Yau dimension $\frac{(d+1)n}{n}$. Finally, we have isomorphisms
\begin{align*}
D\Ext_{A}^l(S_i,S_j)&\cong D\Hom_{A}(S_i,S_j[l])
\cong \Hom_{A}(S_j,\nu(S_i)[-l])\\
&\cong \Hom_{A}(S_j,S_{\pi(i)}[d+1-l])
\cong \Ext_{A}^{d+1-l}(S_j,S_{\pi(i)}),
\end{align*}
where $i,j=1,\ldots,r$
and $l$ denotes an integer. This proves part (d).
\end{proof}

\begin{rem}
(a) \, In Example \ref{Ex:AlmostSplit} (a) above, $\cc$ has $1$-almost split sequences. The theorem shows that the corresponding simple $A$-modules $S_{2}$ and $S_{4}$ are $\frac{(1+1)\cdot 2}{2}$ fractionally Calabi-Yau objects. 

Moreover, by the same argument the simple $A_{nd}$-modules $S_{+}$ and $S_{-}$ are $\frac{4}{2}$-fractionally Calabi--Yau objects when considered as objects in the relative singularity category $\Delta_{nd}$. This was already observed in Subsection \ref{ss:NodalBlock} by a direct computation.

\noindent (b) \, In Example \ref{Ex:AlmostSplit} (b), the category $\cb$ has $2$-almost split sequences. Namely, there is exactly one such sequence with indecomposable end terms. It starts and ends in $S_{-}$ (as shown in the Example)\footnote{This has the following conceptual explanation: $S_{-}$ is a $2$-cluster--tilting object in the $2$-Calabi--Yau category $\ul{\cc}$. Hence its preimage $\cb \subseteq \cc$ has $2$-almost split sequences, see Keller \& Reiten \cite{KellerReiten07} and Part (c) of this Remark.}. Hence the theorem shows that the corresponding $B$-module $S_{4}$ is a $3$-Calabi--Yau object in $\cd_{B/BeB, fd}(B)$. A direct calculation shows that its Ext-Algebra $\Ext_{B}^*(S_{4}, S_{4})$ is isomorphic to the cohomology of the $3$-sphere as graded vector spaces. Hence, $S_{4}$ is a $3$-spherical object in the sense of Seidel \& Thomas \cite{SeidelThomas} Êand $\cd_{B/BeB, fd}(B)$ is  the triangulated category generated by a $3$-spherical object as studied by Keller, Yang \& Zhou \cite{KellerYangZhou}. In particular, $\cd_{B/BeB, fd}(B)$ is a $3$--Calabi--Yau category. 

\noindent (c) \, Part (b) fits into the following general framework of Keller \& Reiten's `relative Calabi--Yau property' \cite{KellerReiten07}.
We use the notations of the theorem above and assume that $\ce$ is Hom-finite. If $\add(F)\subseteq \ce$ is the preimage of a $d$-cluster--tilting subcategory in a $d$--Calabi--Yau category $\ul{\ce}$, then $\cd_{A/AeA, fd}(A)$ is a $(d+1)$-Calabi--Yau category. 

Roughly speaking, our theorem shows that if one takes a cluster--tilting object of the `wrong' dimension, then the category $\cd_{A/AeA, fd}(A)$ will not be Calabi--Yau any more. But the generators are fractionally Calabi--Yau objects. This is sufficient for our purposes.
\end{rem}

\subsection{Independence of the Frobenius model}\label{ss:Independance}
Let $\ct$ be an idempotent complete Hom-finite algebraic triangulated category with only finitely many isomorphism classes of indecomposable objects,
say $M_1,\ldots,M_r$. Then
$\ct$ has a Serre functor \cite[Theorem 1.1]{Amiot07a} and thus has Auslander--Reiten triangles \cite[Theorem I.2.4]{ReitenVandenBergh02}.
Let $\tau$ be the Auslander--Reiten translation. By abuse of notation, $\tau$ also denotes the
induced permutation on $\{1,\ldots,r\}$ defined by $M_{\tau(i)}=\tau M_i$.

The quiver of the \emph{Auslander algebra} $\Lambda(\ct)=\End_{\ct}(\bigoplus_{i=1}^r M_i)$ of $\ct$ 
is the Gabriel quiver $\Gamma$ of $\ct$, in which we identify $i$ with $M_i$. We assume that there exists a sequence of elements $\gamma=\{\gamma_1,\ldots,\gamma_r\}$ in $\widehat{k\Gamma}$, satisfying the following conditions:
\begin{itemize}
 \item[(A1)]  for each vertex $i$ the element $\gamma_i$ is a (possibly infinite) linear combination of paths of $\Gamma$
from $i$ to $\tau^{-1}i$,
\item[(A2)] $\gamma_i$ is non-zero if and only if $\Gamma$ has at least one arrow starting in $i$,
\item[(A3)] the non-zero
$\gamma_i$'s form a set of minimal relations for $\Lambda(\ct)$ (see Subsection~\ref{ss:minimal-relation}).
\end{itemize}
\begin{defn}\label{d:dg-aus-alg}
The \emph{dg Auslander algebra} $\Lambda_{dg}(\ct,\gamma)$ of $\ct$ with respect to $\gamma$
is the dg algebra $(\widehat{kQ},d)$, where $Q$ is a graded quiver and 
$d\colon \widehat{kQ}\rightarrow\widehat{kQ}$ is a map such that
\begin{itemize}
\item[(dgA1)] $Q$ is concentrated in degrees $0$ and $-1$,
\item[(dgA2)] the degree $0$ part of $Q$ is the same as the Gabriel quiver $\Gamma$ of
$\ct$,
\item[(dgA3)]for each vertex $i$, there is precisely one arrow $\begin{xy} \SelectTips{cm}{} \xymatrix{\rho_i\colon i \ar@{-->}[r] & \tau^{-1}(i)} \end{xy}$ of degree
$-1$,
\item[(dgA4)] $d$ is the unique continuous $k$-linear map
on $\widehat{kQ}$ of degree $1$ satisfying the graded Leibniz rule
and taking $\rho_i$ ($i\in Q_0$) to the relation $\gamma_i$.
\end{itemize}
\end{defn}

\noindent In fact, the dg Auslander algebra does not depend on the choice of the sequence $\gamma$:

\begin{prop}\label{p:dg-aus-alg} Let $\ct$ be as above, and let $\gamma=\{\gamma_1,\ldots,\gamma_r\}$ and $\gamma'=\{\gamma'_1,\ldots,\gamma'_r\}$ be sequences of elements of  $\widehat{k\Gamma}$ satisfying the conditions (A1)--(A3). Then the dg Auslander algebras $\Lambda_{dg}(\ct,\gamma)$ and $\Lambda_{dg}(\ct,\gamma')$ are isomorphic as dg algebras.
\end{prop}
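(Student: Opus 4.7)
The plan is to construct an explicit isomorphism of dg algebras $\phi\colon \Lambda_{dg}(\ct,\gamma') \to \Lambda_{dg}(\ct,\gamma)$. Both dg algebras share the same underlying graded algebra $\widehat{kQ}$, and only their differentials $d'$ and $d$ differ. So I will construct $\phi$ as a continuous graded algebra endomorphism of $\widehat{kQ}$ which intertwines $d'$ with $d$ and is invertible.

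First, I would apply Proposition \ref{p:minimal-relation} to the Auslander algebra $\Lambda(\ct) = \widehat{k\Gamma}/I$. By conditions (A1)--(A3), both $\gamma_i$ and $\gamma'_i$ are, whenever nonzero, the unique minimal relation belonging to $e_{\tau^{-1}(i)}\widehat{k\Gamma}e_i$, so each of $\{\ol{\gamma_i}\}$ and $\{\ol{\gamma'_i}\}$ is a basis of the one-dimensional space $e_{\tau^{-1}(i)}(I/(IJ+JI))e_i$. Hence there exist scalars $c_i \in k^{\times}$ (setting $c_i = 1$ in the trivial case $\gamma_i = 0 = \gamma'_i$) and a presentation
\[
\gamma'_i - c_i\gamma_i \;=\; \sum_m A^{(i)}_m\, \gamma_{j(i,m)}\, B^{(i)}_m,
\]
where $A^{(i)}_m, B^{(i)}_m$ are paths in the degree-$0$ quiver $\Gamma$ and for each $m$ at least one of them lies in the arrow ideal $J$.

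Next, I would introduce the degree-$(-1)$ element $\eta_i := \sum_m A^{(i)}_m\, \rho_{j(i,m)}\, B^{(i)}_m$ and define the continuous graded algebra morphism $\phi$ by requiring it to be the identity on all idempotents and all degree-$0$ arrows, and to send each degree-$(-1)$ generator $\rho_i$ to $c_i\rho_i + \eta_i$. Because $d$ and $d'$ vanish on the degree-$0$ subalgebra of $\widehat{kQ}$, the graded Leibniz rule and continuity reduce the identity $\phi\circ d' = d\circ \phi$ to a check on the $\rho_i$, which holds by construction:
\[
d(\phi(\rho_i)) = c_i\gamma_i + d(\eta_i) = c_i\gamma_i + (\gamma'_i - c_i\gamma_i) = \gamma'_i = \phi(\gamma'_i) = \phi(d'(\rho_i)).
\]

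The main obstacle will be showing that $\phi$ is bijective. The plan is to use the descending filtration $F^{\bullet}$ on $\widehat{kQ}$, where $F^n$ is the closed two-sided ideal generated by paths of length $\geq n$ (counting arrows of both degrees). Since each $\eta_i$ is a sum of paths of length $\geq 2$, one obtains $\phi(F^n) \subseteq F^n$ and $\phi(\rho_i) \equiv c_i\rho_i \pmod{F^2}$. The associated graded $\gr_F(\widehat{kQ})$ is the (non-completed) path algebra of the underlying graded quiver $Q$, and the induced map $\gr(\phi)$ acts as the identity on degree-$0$ arrows and as multiplication by $c_i \in k^{\times}$ on each $\rho_i$; as these generate $\gr_F(\widehat{kQ})$ over the idempotents, $\gr(\phi)$ is an algebra automorphism. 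Since $\widehat{kQ}$ is complete and separated with respect to $F^{\bullet}$, this lifts to bijectivity of $\phi$, yielding the desired isomorphism of dg algebras.
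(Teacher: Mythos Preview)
Your proof is correct and follows essentially the same strategy as the paper: both construct the dg algebra map $\phi$ as the identity on degree~$0$ and $\rho_i \mapsto c_i\rho_i + (\text{correction in }F^{\geq 2})$, using the fact that $\gamma'_i - c_i\gamma_i \in IJ + JI$ which in turn is expressible in terms of the $\gamma_j$'s. The verification that $\phi$ is a morphism of dg algebras is identical.

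The only genuine difference is in the invertibility argument. The paper constructs an explicit inverse by iteratively substituting the expression $\rho_i = c_i^{-1}\phi(\rho_i) - c_i^{-1}\sum c_{pj}\rho_j c^{pj}$ back into itself; convergence of this process (implicitly) uses completeness with respect to path length. It then shows $\phi$ has a one-sided inverse $\phi'$, and by symmetry $\phi'$ also has a one-sided inverse, forcing $\phi$ to be bijective. Your approach packages the same idea more cleanly: you observe that $\phi$ respects the length filtration, that $\gr(\phi)$ is a diagonal automorphism (rescaling each $\rho_i$ by the unit $c_i$), and then invoke the standard fact that a filtered endomorphism of a complete separated filtered module inducing an isomorphism on the associated graded is itself an isomorphism. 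Your route is shorter and avoids the somewhat delicate bookkeeping of the iterative substitution; the paper's route has the minor advantage of producing the inverse explicitly. Substantively the two arguments are equivalent.
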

\begin{proof}
By the assumptions (A1)--(A3), there exist $c_{i}\in k \setminus \{0\}$ where $i=1,\ldots,r$, an index set $P$ and $c_{pi},c^{pi}\in
\widehat{k\Gamma}$ with $(p,i)\in P\times \{1,\ldots,r\}$, such that for each pair $(p,i)$, at least one of $c_{pi}$ and $c^{pi}$ belongs to the ideal of $\widehat{k\Gamma}$ generated by all arrows,
and
\begin{eqnarray}\label{eq:gamma}
 \gamma'_i &=& c_{i}\gamma_i +\sum_{j=1}^r\sum_{p\in P}c_{pj}\gamma_jc^{pj}.
\end{eqnarray}
We define a continuous graded $k$-algebra homomorphism $\varphi\colon\Lambda_{dg}(\ct,\gamma')\rightarrow \Lambda_{dg}(\ct,\gamma)$ as follows: it is the identity on the degree $0$ part and for arrows of degree $-1$ we set
\begin{eqnarray}\label{eq:rho}
 \varphi(\rho'_i) &=& c_{i}\rho_i +\sum_{j=1}^r\sum_{p\in P}c_{pj}\rho_jc^{pj}.
\end{eqnarray}
Since $\gamma_i=d(\rho_i)$ and $\gamma'_i=d(\rho'_i)$, it follows from (\ref{eq:gamma}) and (\ref{eq:rho}) that
$\varphi$ is a homomorphism of dg algebras. The equation
(\ref{eq:rho}), yields
\begin{eqnarray}\label{eq:rho'}
 \rho_i &=& c_i^{-1}\varphi(\rho'_i) -c_i^{-1}\sum_{j=1}^r\sum_{p\in P}c_{pj}\rho_jc^{pj}.
\end{eqnarray}
By iteratively substituting $c_j^{-1}\varphi(\rho'_j) -c_j^{-1}\sum_{k=1}^r\sum_{p\in P}c_{pk}\rho_jc^{pk}$ for $\rho_{j}$ on the right hand side of (\ref{eq:rho'}), we see
that there exists an index set $P'$ and elements $c'_{pi},c'^{pi}\in\widehat{k\Gamma}$ ($(p,i)\in P'\times \{1,\ldots,r\}$) such that
for each pair $(p,i)$ at least one of $c'_{pi}$ and $c'^{pi}$ belongs to the ideal of $\widehat{k\Gamma}$ generated by all arrows,
and the following equation holds
\begin{eqnarray}
\rho_i = c_i^{-1}\varphi(\rho'_i) -\sum_{j=1}^r\sum_{p\in P'}c'_{pj}\varphi(\rho'_j)c'^{pj}.
\end{eqnarray}
Define a continuous graded $k$-algebra homomorphism $\varphi'\colon\Lambda_{dg}(\ct,\gamma)\rightarrow \Lambda_{dg}(\ct,\gamma')$ as follows: $\varphi'$ is the identity on the degree $0$ part and for arrows of degree $-1$ we set
\begin{eqnarray}
\varphi'(\rho_i) &=& c_i^{-1}\rho'_i -\sum_{j=1}^r\sum_{p\in P'}c'_{pj}\rho'_j c'^{pj}.
\end{eqnarray}
It is clear that $\varphi\circ\varphi'=id$ holds.
Since $\varphi'$ and $\varphi$ have a similar form, the same argument as above shows that there exists a continuous graded $k$-algebra homomorphism $\varphi''\colon\Lambda_{dg}(\ct,\gamma')\rightarrow \Lambda_{dg}(\ct,\gamma)$ such that $\varphi'\circ\varphi''=id$ holds. Therefore we have $\varphi=\varphi''$. In particular, we see that $\varphi$ is an isomorphism.
\end{proof}

Henceforth, we denote by $\Lambda_{dg}(\ct)$ the dg Auslander algebra of $\ct$ with respect to any sequence $\gamma$ satisfying (A1)--(A3). By the definition of $\ct$, there is a triangle equivalence $\ct \cong \ul{\ce}$, for a Frobenius category $\ce$.
We assume that $\ce$ additionally satisfies:
\begin{itemize}
\item[(FM1)] $\ce$ is a Krull--Remak--Schmidt category and $\proj\ce$ has an additive generator $P$,
\item[(FM2)] $\ce$ has only finitely many isoclasses of indecomposable objects, $N_{1}, \ldots, N_{s}$, 
\item[(FM3)] $\ce$ has (1-) almost split sequences,
\item[(FM4)] the \emph{Auslander algebra} $A=\End_{\ce}(\bigoplus_{i=1}^s N_{i})$ of $\ce$ is right Noetherian.
\end{itemize}
Let $e \in A$ be the idempotent endomorphism corresponding to $\id_{P}$, where $P$ denotes the additive generator of $\proj \ce$. We define the \emph{relative Auslander singularity category} as follows
\begin{align}\label{def:relative-auslander-singularity-category} \Delta_{\ce}(A)=\frac{K^b(\proj-A)}{\thick(eA)}. \end{align}
If $\ce=\MCM(R)$ for a Gorenstein ring $R$, then $\Delta_\ce(A)$ is equivalent to the relative singularity category $\Delta_R(\Aus(R))$ as defined in the introduction (\ref{E:DefRelSingCat}).

The following theorem shows that the relative Auslander singularity category depends only on the stable category $\ul{\ce}$. In other words, two Frobenius categories with triangle equivalent stable categories have triangle equivalent relative Auslander singularity categories (up to direct summands).


\begin{thm}\label{t:main-thm-2}
Let $\ce$ be a Frobenius category satisfying conditions (FM1)--(FM4). If $\ct:=\underline{\ce}$ is Hom-finite and idempotent complete, 
then the following statements hold 
\begin{itemize}
 \item[(a)] there is a sequence $\gamma$ of minimal relations for the Auslander algebra of $\ct$ satisfying the above conditions (A1)--(A3),
 \item[(b)] $\Delta_{\ce}(A)$ is triangle equivalent to $\per(\Lambda_{dg}(\ct))$ (up to direct summands),
 \item[(c)] $\Delta_{\ce}(A)$ is Hom-finite.
\end{itemize}
\end{thm}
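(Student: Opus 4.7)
I would start with part (a), where the sequence $\gamma$ is given by the classical mesh relations: for each $i$, take the Auslander--Reiten triangle $\tau M_i \to \bigoplus_j C_j^{a_j} \to M_i \to \tau M_i[1]$ in $\ct$, and form the corresponding AR-triangle starting at $M_i$; the composition $M_i \to \bigoplus C_j^{a_j} \to \tau^{-1}M_i$ vanishes, yielding an element $\gamma_i \in e_{\tau^{-1}(i)} (\widehat{k\Gamma}) e_i$ which is a linear combination of paths of length two. Conditions (A1) and (A2) are immediate; (A3) follows because, as in Remark~\ref{R:AlmostSplit}(a), applying $\Hom_{\ct}(\bigoplus M_i,-)$ to the AR-triangle yields a minimal projective resolution of the simple $\Lambda(\ct)$-module, so by Proposition~\ref{p:minimal-relation} the $\gamma_i$'s represent a basis of $e_{\tau^{-1}(i)}(I/(IJ+JI))e_i$ and hence form a set of minimal relations.

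For (b) and (c), the plan is to produce a dg model $B$ of $\Delta_{\ce}(A)$ and then identify $B$ with $\Lambda_{dg}(\ct)$ via double Koszul duality. Apply Proposition~\ref{p:recollement-from-projective-general-case} to the pair $(A,e)$, obtaining a non-positive dg algebra $B$ with $H^0(B)\cong A/AeA=\ul{A}$ and, by Corollary~\ref{c:restriction-and-induction}(a), an equivalence $\Delta_{\ce}(A)^\omega\cong \per(B)$. Next I would establish (c) by verifying the hypotheses of Proposition~\ref{p:hom-finiteness-of-per}: $B^i=0$ for $i>0$ by construction, $H^0(B)=\ul{A}$ is finite dimensional since $\ct$ is Hom-finite with finitely many indecomposables, and $\cd_{fd}(B)\subseteq\per(B)$ because, under the equivalence $\cd_{fd}(B)\cong \cd_{fd,A/AeA}(A)$ of Corollary~\ref{c:restriction-and-induction}(b), the simple $B$-modules correspond to the simples $S_1,\ldots,S_r$, which have finite projective dimension over $A$ by Theorem~\ref{t:fractionally-cy-property}(a). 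This gives Hom-finiteness of $\per(B)$ and, more importantly, finite-dimensionality of every $H^i(B)$, so that Corollary~\ref{c:koszul-double-dual} applies and yields an $A_\infty$ quasi-isomorphism $B\cong E(B^*)$ with $B^*$ the minimal model of $E(B)$.

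The identification of $B^*$ is the main computation. By Proposition~\ref{Lemma 11.1}, $E(B)\cong\RHom_{\cd(B)}(K,K)$ with $K=\ul{A}/\rad\ul{A}$; transporting along the equivalence $\cd(B)\cong \cd_{A/AeA}(A)$, the graded algebra $H^*(E(B))$ is $\bigoplus_{l}\Ext^l_A(\bigoplus S_i,\bigoplus S_i)$. The projective resolutions of the $S_i$ coming from AR-triangles (as in part~(a)) show that $\Ext^1_A(S_i,S_j)$ realises the Gabriel quiver $\Gamma$ of $\ct$, while Theorem~\ref{t:fractionally-cy-property}(c)--(d) gives the Serre-duality identity $D\Ext^l_A(S_i,S_j)\cong \Ext^{2-l}_A(S_j,S_{\tau^{-1}(i)})$, so that $\Ext^2_A(S_i,S_j)$ is one-dimensional precisely when $j=\tau^{-1}(i)$ and is zero otherwise; Theorem~\ref{t:fractionally-cy-property}(a) also forces $\Ext^l_A(S_i,S_j)=0$ for $l\geq 3$. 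Hence the minimal model $B^*$ is an $A_\infty$-algebra whose underlying graded quiver has the opposite of $\Gamma$ in degree $1$ and exactly one arrow $i\to\tau^{-1}(i)$ in degree $2$ for each vertex.

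Finally, I would compute $E(B^*)$ explicitly from the definition $\hat{T}_K(D(\bar{B^*}[1]))$: the degree-one generators become the degree-zero arrows of $\Gamma$, the degree-two generators become the degree $-1$ arrows $\rho_i:i\dashrightarrow\tau^{-1}(i)$, and the continuous differential obtained from the multiplications of $B^*$ sends each $\rho_i$ to the element dual to $m_2$ on the degree-one part, which by construction of the minimal model from the AR-resolutions is exactly the mesh relation $\gamma_i$. Thus $E(B^*)\cong \Lambda_{dg}(\ct)$ as dg algebras, and combining with $B\cong E(B^*)$ and $\Delta_{\ce}(A)^\omega\cong\per(B)$ gives the triangle equivalence in (b) up to direct summands. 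The hard step is the last one: controlling the higher $A_\infty$ multiplications on $B^*$ so that only the $m_2$ contributes to the differential on the $\rho_i$'s and so that it produces the full mesh relation and no extraneous terms; this is where the minimality assumption (A3) and the fractional Calabi--Yau rigidity from Theorem~\ref{t:fractionally-cy-property} are decisive, together with Proposition~\ref{p:dg-aus-alg} which guarantees that any two choices of $\gamma$ give isomorphic dg algebras.
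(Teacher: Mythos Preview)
Your overall machinery---the recollement producing a non-positive dg algebra $B$ with $\per(B)\simeq\Delta_\ce(A)^\omega$, the verification of $\cd_{fd}(B)\subseteq\per(B)$ via Theorem~\ref{t:fractionally-cy-property}(a), the Hom-finiteness from Proposition~\ref{p:hom-finiteness-of-per}, the Koszul double dual $B\simeq E(B^*)$, and the computation of the graded quiver of $E(B^*)$ via $\Ext^*_A(S,S)$---is exactly the paper's approach. The gap is in the order you treat (a) and in how you identify the differential.

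Your direct proof of (A3) does not go through. Applying $\Hom_\ct(\bigoplus M_j,-)$ to an Auslander--Reiten triangle in the \emph{triangulated} category $\ct$ gives a long exact sequence, not a length-two minimal projective resolution of the simple over $\Lambda(\ct)$; Remark~\ref{R:AlmostSplit}(a) concerns $d$-almost split sequences in an exact category, which is a different setting. Consequently you have not shown that the mesh relations generate the ideal of relations, let alone minimally---this is precisely the ``standard'' condition, which the theorem does \emph{not} assume. Moreover, the differential $d(\rho_i)$ on $E(B^*)$ records all the $A_\infty$-multiplications $b_n$ of $B^*$, not only $b_2$, so $d(\rho_i)$ need not equal the mesh relation; your proposed control of the higher multiplications is not carried out, and invoking Proposition~\ref{p:dg-aus-alg} would still require knowing that $d(\rho_i)$ satisfies (A1)--(A3). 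The paper reverses your logic: it \emph{defines} $\gamma_i:=d(\rho_i)$, uses $H^0(E(B^*))\cong A/AeA=\Lambda(\ct)$ to see that these are relations, and then proves (A2)--(A3) by a length-two component argument---each mesh relation $m_i$ is some relation, hence lies in $\overline{(d(\rho_j))}$; since $d(\rho_j)\in J^2$ by minimality of $B^*$, only the term with $j=i$ can produce the length-two part of $m_i$, forcing $d(\rho_i)\neq 0$ and not redundant. Thus (a) is a \emph{consequence} of the dg computation, not an input to it.
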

\begin{rem}
If $\Delta_{\ce}(A)$ is idempotent complete, then we can omit the supplement `up to direct summands' in the statement above. In particular, this holds in the case $\ce=\MCM(R)$, where $(R, \mathfrak{m})$ is a local complete Gorenstein $(R/\mathfrak{m})$-algebra, see Proposition \ref{P:IdempCompl}.
\end{rem}

\begin{proof}
By
Corollary~\ref{c:restriction-and-induction} (a),
there exists a non-positive dg algebra $B$ with $H^0(B)\cong A/AeA$, such that $\left(\Delta_{\ce}(A)\right)^\omega$ is triangle equivalent to $\per(B)$. 
Hence it suffices to show that (a) holds,  that $B$ is quasi-isomorphic to $\Lambda_{dg}(\ct)$ and that $\per(B)$ is Hom-finite.

Let $M$ be an object of $\cd_{fd}(B)$. By Lemma~\ref{L:PropRecoll}, $M$ is isomorphic to $i^*i_*(M)$. So it is contained in $i^*(\cd_{fd,A/AeA}(A))$, by Corollary~\ref{c:restriction-and-induction} (b). Theorem~\ref{t:fractionally-cy-property} (a) shows that all finite-dimensional
$A/AeA$-modules have finite projective dimension over $A$. Thus $\cd_{fd,A/AeA}(A)$ is contained in $K^b(\proj-A)$.
So $M$ is contained in $i^*(K^b(\proj-A))$, which is contained in $\per(B)$, by Corollary~\ref{c:restriction-and-induction} (a). Summing up, we have shown that $\cd_{fd}(B)\subseteq\per(B)$ holds. 

It follows from Proposition~\ref{p:hom-finiteness-of-per} that $H^i(B)$ is finite-dimensional over $k$ for any $i\in\mathbb{Z}$ and $\per(B)$ is Hom-finite.
So by Corollary~\ref{c:koszul-double-dual}, we have that $B$ is quasi-isomorphic to $E(B^*)$, where $B^*$ is the $A_\infty$-Koszul dual of $B$.
Let $S_1,\ldots,S_r$ be a complete set of non-isomorphic simple $A/AeA$-modules and let $S=\bigoplus_{i=1}^r S_i$.
Then $B^*$ is the minimal model of $\RHom_B(S,S)=\RHom_A(S,S)$, see Proposition \ref{Lemma 11.1}. In particular, as a graded algebra, $B^*$ is
isomorphic to $\Ext_A^*(S,S)$. It follows from Theorem~\ref{t:fractionally-cy-property} that $\Ext_A^*(S,S)$
is concentrated in degrees $0$, $1$ and $2$. Certainly,
\begin{align*}
\Ext_A^0(S_i,S_j)\cong \begin{cases} k \quad \text{ if }Êi = j \\ 0 \quad \text{ else. } \end{cases}
\end{align*}
 In the current situation, the permutation $\pi$ from Theorem~\ref{t:fractionally-cy-property} is induced by the Auslander--Reiten translation $\tau$. Abusing notation, we denote this permutation also by $\tau$.
 Then Theorem~\ref{t:fractionally-cy-property} shows
\begin{align*}  
\Ext_A^2(S_i,S_j)\cong \begin{cases} k \quad \text{ if }Êj = \tau(i) \\ 0 \quad \text{ else. } \end{cases}
\end{align*}
Hence, $E(B^*)=(\widehat{kQ},d)$ for a graded quiver $Q$ and a continuous $k$-linear
differential $d$ of degree $1$, where the graded quiver $Q$ is concentrated in degree $0$ and $-1$,
and starting
from any vertex $i$ there is precisely one arrow $\rho_i$ of degree $-1$  whose target is $\tau^{-1}(i)$, see the paragraph after Corollary \ref{c:koszul-double-dual}. 

Let $Q^0$ denote the
degree $0$ part of $Q$.
Then $H^0(E(B^*))=\widehat{kQ^0}/\overline{(d(\rho_i))}$. Indeed, by continuity $\im(d) \subseteq \overline{(d(\rho_i))}$. To see the reverse inclusion, let $\mathfrak{n} \subseteq \widehat{kQ^0}$ be the ideal generated by the arrows. It suffices to show that there exists a positive integer $l$ such that $\mathfrak{n}^l \subseteq \im(d)$. Let $q\colon \widehat{kQ^0} \ra \widehat{kQ^0}/\im(d)$ be the canonical projection. Then $q(\mathfrak{n}) \subseteq \rad(\widehat{kQ^0}/\im(d))$ is a nilpotent ideal, since 
\begin{align}\label{E:0cohoAusl}
H^0(E(B^*))\cong H^0(B)\cong A/AeA=\Lambda(\ct)
\end{align}
is the Auslander algebra of $\ct$, which is finite dimensional by assumption.

It follows from \eqref{E:0cohoAusl} that $Q^0$ is the same as the Gabriel quiver $\Gamma$ of $\ct$. Moreover, 
$\gamma=\{d(\rho_1),\ldots,d(\rho_r)\}$ is
a set of relations for $\Lambda(\ct)$. We claim that $\gamma$ is a sequence satisfying the conditions (A1)--(A3). Then (a) holds and
$E(B^*)=\Lambda_{dg}(\ct,\gamma)=\Lambda_{dg}(\ct)$, which implies that $B$ is quasi-isomorphic to $\Lambda_{dg}(\ct)$. 

Since we already know that $ 
\begin{xy}
\SelectTips{cm}{}
\xymatrix{\rho_{i}\colon i \ar@{-->}[r] & \tau^{-1}(i)}
\end{xy}$ holds, $d(\rho_i)$ is a combination of paths from $i$ to $\tau^{-1}(i)$, for all $i=1, \ldots, r$. Hence condition (A1) holds and $d(\rho_i)\neq 0$ implies that
$\Gamma$ has at least one arrow starting in $i$. This is one implication in (A2). In order to show the other implication, we assume that $\Gamma$ has an arrow starting in $i$. Then there is an Auslander--Reiten triangle 
\begin{align}\label{E:ARtria}
M_{i} \xrightarrow{f_{i}} X_{i} \xrightarrow{g_{i}} M_{\tau^{-1}(i)} \ra M_{i}[1]
\end{align}
in $\ct$, where $f_{i}$ and $g_{i}$ are non-zero and irreducible. We may view $f_{i}$ and $g_{i}$ as elements of $\widehat{k\Gamma}$. The arrows\footnote{When we write `arrow', we also mean the corresponding irreducible map in $\ct$.}  of $\Gamma$ which start in $i$ form a basis of the vectorspace of irreducible maps $\rad(M_{i}, X_{i})/\rad^2(M_{i}, X_{i})$, see Happel \cite[Section 4.8]{Happel88}. In particular, $f_{i}$ may be written as follows
\begin{align}\label{E:LinComb1}
f_{i}= \sum_{j=1}^m \lambda_{j}\alpha_{j} + r_{i},
\end{align}
where the $\alpha_{j}$ are the arrows starting in $i$, $r_{i} \in \rad^2(M_{i}, X_{i})$ and the $\lambda_{j} \in k$ are not all zero. Similarly, 
\begin{align}\label{E:LinComb2}
g_{i}= \sum_{j=1}^m \mu_{j}\beta_{j} + s_{i},
\end{align}
where the $\beta_{j}$ are the arrows ending in $\tau^{-1}(i)$, $s_{i} \in \rad^2(X_{i}, M_{\tau^{-1}(i)})$ and the $\mu_{j} \in k$ are not all zero.

Define $m_{i}=g_{i}f_{i}$ in $\widehat{k\Gamma}$, which is a relation for $\Lambda(\ct)$, since \eqref{E:ARtria} is a triangle.
Therefore, it is generated by $\{d(\rho_1),\ldots, d(\rho_r)\}$. In other words, 
there exists an index set $P$ and elements $c_{pj},c^{pj}\in\widehat{k\Gamma}$ ($(p,j)\in P
\times \{1,\ldots,r\}$) such that
\begin{eqnarray}\label{E:GenMeshRel}
 m_i &=& \sum_{j=1}^r\sum_{p\in P} c_{pj}d(\rho_j)c^{pj}.
\end{eqnarray}
Let $J$ be the ideal of $\widehat{k\Gamma}$ generated by all arrows. Since $B^*$ is a minimal $A_\infty$-algebra, it follows that
$d(\rho_j)\in J^2$ holds for any $j=1,\ldots,r$, see Subsection~\ref{ss:dual-bar-construction}. If $j\neq i$ and $c_{pj}d(\rho_j)c^{pj}\neq 0$,
then $c_{pj}d(\rho_j)c^{pj}$ is a combination of paths of length at least $4$, because $m_i$ is a linear combination of paths from $i$ to $\tau^{-1}(i)$,
whereas $d(\rho_j)$ is a linear combination of path from $j$ to $\tau^{-1}(j)$. Using \eqref{E:LinComb1} and \eqref{E:LinComb2}, we see that $m_i$ has a non-zero length $2$ component. Thus \eqref{E:GenMeshRel} implies that $\sum_{p \in P} c_{pi}d(\rho_i)c^{pi}$
is non-zero and its length $2$ component equals that of $m_i$. 
In particular, $d(\rho_i)$ is non-zero and cannot be generated by 
$\{d(\rho_j)\}_{j\neq i}$. To summarise, $d(\rho_i)\neq 0$ if and only if $\Gamma$ has arrows starting in $i$ (A2), and the non-zero $d(\rho_i)$'s
form a set of minimal relations for $\Lambda(\ct)$ (A3). 
\end{proof}

\begin{rem}
 Let $\ct$ be an idempotent complete Hom-finite algebraic triangulated category with only finitely 
many isomorphism classes of indecomposable objects. We say that $\ct$ is \emph{standard} if the Auslander algebra
$\Lambda(\ct)$ is given by the Auslander--Reiten quiver with mesh relations, see~\cite[Section 5]{Amiot07a}. Examples of
non-standard categories can be found in~\cite{Riedtmann83,Asashiba99}.

Assume that $\ct$ is standard and $\ct\cong
\underline{\ce}$ for some Frobenius category $\ce$ satisfying (FM1)--(FM4). Then Theorem~\ref{t:main-thm-2} and Proposition \ref{p:dg-aus-alg} show that 
$\Delta_{\ce}(A)$ is determined by the Auslander--Reiten quiver of $\ct$ (up to direct summands).
\end{rem}

\subsection{Classical vs.~relative singularity categories}\label{S:MCM-over-Gor}
Let $k$ be an algebraically closed field. Throughout this subsection $(R, \mathfrak{m})$ and $(R', \mathfrak{m}')$ denote commutative local complete Gorenstein $k$-algebras, such that their respective residue fields are isomorphic to $k$. 

\subsubsection{Classical singularity categories}\label{sss:Classical}

Let $\MCM(R)$ be the category of maximal Cohen--Macaulay $R$-modules. Since $R$ is Gorenstein,  $\MCM(R)=\GP(R)$ is a Frobenius category with $\proj \MCM(R) = \proj-R$, see Proposition \ref{P:IwanagaFrobenius}. Hence, $\ul{\MCM}(R)=\MCM(R)/\proj-R$ is a triangulated category \cite{Happel88}.

The following concrete examples of hypersurface rings are of particular interest: 
Let $R=\C\llbracket z_{0}, \ldots, z_{d}\rrbracket/(f)$, where $d \geq 1$ and $f$ is one of the following polynomials
\begin{itemize}
\item[$(A_{n})$] \quad  $z_0^2 + z_1^{n+1} + z^2_{2} + \ldots + z_{d}^2 \quad  \,\, \, \, \, ( n \geq 1 )$,
\item[$(D_{n})$] \quad $z_0^2z_1 + z_1^{n-1} + z^2_{2} + \ldots + z_{d}^2 \quad ( n \geq 4 )$,
\item[$(E_{6})$] \quad $z_0^3 + z_1^4 + z^2_{2} + \ldots + z_{d}^2$,
\item[$(E_{7})$] \quad $z_0^3 + z_0z_1^3 + z^2_{2} + \ldots + z_{d}^2$,
\item[$(E_{8})$] \quad $z_0^3 + z_1^5 + z^2_{2} + \ldots + z_{d}^2$.
\end{itemize}

Such a $\C$-algebra $R$ is called \emph{ADE--singularity} of dimension $d$. As hypersurface singularities they are known to be Gorenstein, see e.g.~\cite{BrunsHerzog}. The following result is known as Kn\"orrer's Periodicity Theorem, see \cite{Knoerrer}. It was the main motivation for Theorem \ref{t:main-thm-2} and Theorem \ref{t:Classical-versus-generalized-singularity-categories}.

\begin{thm} \label{t:knoerrer}
Let $d \geq 1$ and $k$ an algebraically closed field such that $\mathsf{char} k \neq 2$. Let $S=k\llbracket z_{0}, \ldots, z_{d} \rrbracket$ and $f \in (z_{0}, \ldots, z_{d}) \setminus \{0\}$. Set $R=S/(f)$ and $R'=S\llbracket x, y \rrbracket/(f+xy)$. Then there is a triangle equivalence 
\begin{align}
\ul{\MCM}(R') \rightarrow \ul{\MCM}(R).
\end{align}
\end{thm}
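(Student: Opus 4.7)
The plan is to pass to matrix factorizations via Eisenbud's theorem and construct an explicit equivalence. Set $T := S\llbracket x,y\rrbracket$. Since $S$ and $T$ are regular local rings, Eisenbud's theorem gives triangle equivalences $\ul{\MCM}(R) \cong \ul{\mathsf{MF}}(S,f)$ and $\ul{\MCM}(R') \cong \ul{\mathsf{MF}}(T, f+xy)$, where $\ul{\mathsf{MF}}(-,-)$ denotes the homotopy category of matrix factorizations. It thus suffices to produce a triangle equivalence $\Phi\colon \ul{\mathsf{MF}}(S,f) \to \ul{\mathsf{MF}}(T, f+xy)$.

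First I would define $\Phi$ on objects by sending a matrix factorization $(P \xrightarrow{\varphi} Q \xrightarrow{\psi} P)$ of $f$ to the matrix factorization of $f+xy$ on $(P \oplus Q)\otimes_S T$ given by the block matrices
\[
\tilde\varphi = \begin{pmatrix} \varphi & y\cdot\mathrm{id}_Q \\ x\cdot\mathrm{id}_P & -\psi \end{pmatrix}, \qquad \tilde\psi = \begin{pmatrix} \psi & y\cdot\mathrm{id}_P \\ x\cdot\mathrm{id}_Q & -\varphi \end{pmatrix},
\]
where extension of scalars to $T$ is implicit. The block computation $\tilde\varphi\tilde\psi = \bigl(\begin{smallmatrix} f+xy & 0 \\ 0 & f+xy \end{smallmatrix}\bigr) = \tilde\psi\tilde\varphi$ is immediate, so $\Phi$ does land in matrix factorizations of $f+xy$. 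Next I would extend $\Phi$ to morphisms by the natural block-diagonal embedding and verify that it preserves null-homotopies, so that it descends to a well-defined triangle functor on homotopy categories.

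Fully faithfulness then reduces to a direct calculation with block matrices and homotopies: any morphism $\Phi(\varphi,\psi) \to \Phi(\varphi',\psi')$ in the target admits, modulo null-homotopies, a normal form supported on the diagonal blocks, from which one recovers a unique morphism $(\varphi,\psi) \to (\varphi',\psi')$ in $\ul{\mathsf{MF}}(S,f)$.

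The hard part will be essential surjectivity. Given a matrix factorization $(\alpha,\beta)$ of $f+xy$ over $T$, one must exhibit it, up to stable isomorphism, as $\Phi$ applied to some matrix factorization over $S$. Knörrer's original strategy is to regard multiplication by $y$ (resp.\ $x$) as an endomorphism of $(\alpha,\beta)$; these satisfy, up to homotopy, the Clifford-type relation $xy = -f$ imposed by the hypersurface equation, and the hypothesis $\mathrm{char}\,k\neq 2$ lets one extract projectors splitting off a direct summand pulled back from $S$. Equivalently (and more transparently), the linear change of variables $x = u+v$, $y = u-v$ identifies $f+xy$ with $f+u^2-v^2$, and reduces the statement to two applications of the one-variable Knörrer equivalence relating $\ul{\MCM}(S/(f))$ with $\ul{\MCM}(S\llbracket u\rrbracket/(f+u^2))$; the essential surjectivity there, in turn, rests on showing that any matrix factorization of $f+u^2$ is, up to homotopy, determined by its reduction modulo $u$, which is proved by inductively eliminating $u$-dependence via the identity $u^2=-f$ on the hypersurface. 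The delicate point throughout is controlling these reductions at the level of homotopies, and it is precisely here that invertibility of $2$ in $k$ is used to produce the required splittings.
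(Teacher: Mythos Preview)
The paper does not give its own proof of this theorem: it is stated with the attribution ``The following result is known as Kn\"orrer's Periodicity Theorem, see \cite{Knoerrer}'' and no further argument. Your proposal is an outline of the standard proof via Eisenbud's matrix-factorization equivalence and Kn\"orrer's explicit tensoring functor, and the construction of $\Phi$ together with the block computation $\tilde\varphi\tilde\psi = (f+xy)\cdot\mathrm{id}$ is correct.

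A couple of comments on the sketch. First, the ``normal form'' reduction you invoke for full faithfulness is exactly where one uses that $x,y$ lie in the maximal ideal of $T$: one expands a homotopy and a morphism in powers of $x,y$ and kills the off-diagonal terms degree by degree, with completeness of $T$ guaranteeing convergence. Second, your two approaches to essential surjectivity are both valid, but note that the change of variables $x=u+v$, $y=u-v$ reduces the statement to adding $u^2$ and then $-v^2$; the one-variable step is not quite an equivalence of the stable categories but rather relates $\ul{\MCM}(S/(f))$ to a $\mathbb{Z}/2$-graded (or shifted) version, and it is only after two applications that one recovers an honest equivalence. Kn\"orrer's original argument works directly with $f+xy$ and constructs the splitting using the idempotent built from the commuting (up to homotopy) actions of $x$ and $y$; the factor of $\tfrac{1}{2}$ needed to produce that idempotent is precisely where $\mathrm{char}\,k\neq 2$ enters. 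Your outline is accurate, but be aware that the bookkeeping in the essential-surjectivity step is substantial and is the genuine content of Kn\"orrer's paper.
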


\begin{defn}
We say that $R$ is \emph{$\MCM$--finite} if there are only finitely many isomorphism classes of indecomposable maximal Cohen--Macaulay $R$--modules.
\end{defn}

\begin{rem}
Solberg \cite{Solberg89} showed that Theorem \ref{t:knoerrer} also holds in characteristic $2$ if $R$ is $\MCM$-finite.
\end{rem}

It follows from Theorem \ref{t:knoerrer} that $R$ is $\MCM$--finite if and only if $R'$ is $\MCM$-finite. The ADE--curve singularities are  $\MCM$-finite by work of Drozd \& Roiter \cite{DrozdRoiter} and Jacobinsky \cite{Jacobinsky}. Moreover, the ADE--surface singularities are $\MCM$-finite by work of Artin \& Verdier  \cite{ArtinVerdier85}. This has the following well-known consequence.

\begin{cor} Let $R$ be an ADE--singularity as above. Then $R$ is $\MCM$--finite.
\end{cor}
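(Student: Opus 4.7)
The plan is to reduce to the known cases of Krull dimension $1$ and $2$ via iterated application of Kn\"orrer's Periodicity Theorem \ref{t:knoerrer}. First, I would observe that for each ADE-type $T \in \{A_n, D_n, E_6, E_7, E_8\}$ and each dimension $d \geq 3$, the defining polynomial $f$ can be written as $f = g + z_{d-1}^2 + z_d^2$, where $g$ is the polynomial of the same type $T$ in the variables $z_0,\ldots,z_{d-2}$. Working over $\C$, the linear change of coordinates $u = z_{d-1} + iz_d$, $v = z_{d-1} - iz_d$ yields an isomorphism of complete local rings
\begin{align*}
\C\llbracket z_0, \ldots, z_d\rrbracket/(f) \cong \C\llbracket z_0, \ldots, z_{d-2}, u, v\rrbracket/(g + uv),
\end{align*}
so Theorem \ref{t:knoerrer} (applied with $S = \C\llbracket z_0,\ldots,z_{d-2}\rrbracket$ and $f$ replaced by $g$) provides a triangle equivalence
\begin{align*}
\ul{\MCM}(R) \cong \ul{\MCM}(R_{d-2}),
\end{align*}
where $R_{d-2} = \C\llbracket z_0,\ldots,z_{d-2}\rrbracket/(g)$ is the ADE-singularity of the same type in dimension $d-2$.

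Second, I would iterate this reduction $\lfloor (d-1)/2 \rfloor$ times to obtain a triangle equivalence $\ul{\MCM}(R) \cong \ul{\MCM}(R_*)$, where $R_*$ is an ADE-singularity of the same type in dimension $1$ (when $d$ is odd) or dimension $2$ (when $d$ is even). Third, I would observe that $\MCM$-finiteness is detected by the stable category: since $R$ is a local complete Gorenstein $k$-algebra, the category $\MCM(R)$ is Krull--Remak--Schmidt, and the only indecomposable projective-injective object of the Frobenius category $\MCM(R)$ is $R$ itself (up to isomorphism). Consequently, $R$ is $\MCM$-finite if and only if $\ul{\MCM}(R)$ has only finitely many isomorphism classes of indecomposable objects; this latter property is manifestly preserved by triangle equivalences of additive Krull--Remak--Schmidt categories.

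Finally, I would invoke the classical results cited just before the statement: ADE-curve singularities are $\MCM$-finite by Drozd \& Roiter \cite{DrozdRoiter} and Jacobinsky \cite{Jacobinsky}, and ADE-surface singularities are $\MCM$-finite by Artin \& Verdier \cite{ArtinVerdier85}. Combining these base cases with the reduction above completes the argument. There is no genuine obstacle here beyond bookkeeping: the content of the statement is entirely contained in the cited deep theorems in dimensions $1$ and $2$, together with Kn\"orrer's Periodicity; the only mild care needed is in the coordinate change turning $z_{d-1}^2 + z_d^2$ into $uv$, which is automatic over $\C$ and more generally over any field of characteristic $\neq 2$ (with Solberg's extension \cite{Solberg89} handling characteristic $2$ in the $\MCM$-finite setting, should one wish to work over an arbitrary algebraically closed field).
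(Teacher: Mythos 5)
Your argument is correct and follows exactly the route the paper intends: the sentence immediately preceding the corollary states that Kn\"orrer periodicity shows $R$ is $\MCM$-finite iff $R'=S\llbracket x,y\rrbracket/(f+xy)$ is, and then cites the dimension-$1$ and dimension-$2$ base cases (Drozd--Roiter/Jacobinsky, Artin--Verdier). You have merely made explicit the details the paper leaves implicit (the coordinate change $z_{d-1}^2+z_d^2 = uv$ over $\C$, the iteration down to dimension $1$ or $2$, and the fact that $\MCM$-finiteness of a complete local Gorenstein ring is detected by the stable category), all of which are correct.
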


\begin{rem} 
If $k$ is an arbitrary algebraically closed field, then the ADE-polynomials listed above still describe $\MCM$--finite singularities. Yet there exist further $\MCM$--finite rings if $k$ has characteristic $2$, $3$ or $5$ (complete lists are contained in \cite{GreuelKroening90}).
\end{rem}

\subsubsection{Relative singularity categories}

Henceforth, let $F'$ be a finitely generated $R$-module and $F=R \oplus F'$. We call $A=\End_{R}(F)$ a \emph{partial resolution} of $R$. If $A$ has finite global dimension, then we call we call $A$ is a \emph{non-commutative resolution} of $R$. Denote by $e\in A$ the idempotent endomorphism corresponding to the identity morphism $\id_{R}$ of $R$.

The situation is particularly nice if $R$ is $\MCM$--finite. Let $M_{0}=R, M_{1}, \ldots, M_{t}$ be representatives of 
the indecomposable objects of $\MCM(R)$. Their endomorphism algebra $\Aus(\MCM(R))=\End_{R}(\bigoplus_{i=0}^t M_{i})$ is called 
the \emph{Auslander algebra}. Auslander \cite[Theorem A.1]{Auslander84} has shown that its global dimension is bounded above by the Krull dimension of $R$ (respectively by $2$ in Krull dimensions $0$ and $1$; for this case see also Auslander's treatment in \cite[Sections III.2 and III.3]{Auslander71}). Hence, $\Aus(\MCM(R))$ is a resolution of $R$.
 The next lemma motivates the definition  of the \emph{relative singularity categories}.

\begin{lem}\label{L:embeds} 
There is a fully faithful triangle functor $K^b(\proj-R) \ra \cd^b(\mod-A)$.
\end{lem}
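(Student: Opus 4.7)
The plan is to exhibit the functor explicitly as the tensor product with the bimodule $eA$ and then verify fully faithfulness in two stages: first on the additive subcategories of projectives, and then extend to bounded complexes.

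More precisely, I would define $\Phi := - \otimes_R eA$. Because $R$ is commutative and $e \in A$ is the idempotent corresponding to $\id_R$, we have an isomorphism of rings $eAe \cong \End_R(R) \cong R$, so $eA$ carries a natural $R$-$A$-bimodule structure (the left $R$-action coming from the identification $R \cong eAe$). Since $eA$ is a direct summand of $A$ as a right $A$-module, it is projective on the right, so $\Phi$ sends $\proj R$ into $\proj A$. Extending componentwise gives a triangle functor $K^b(\proj R) \to K^b(\proj A)$, and composing with the natural embedding $K^b(\proj A) \hookrightarrow \cd^b(\mod A)$ (which is fully faithful because Hom-spaces out of bounded complexes of projectives are computed in the homotopy category) yields the candidate functor to $\cd^b(\mod A)$.

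To verify full faithfulness on $\proj R$, I would use the Hom-Tensor adjunction and the identity $\Hom_A(eA, X) \cong Xe$ for a right $A$-module $X$: for $P, Q \in \proj R$ one obtains the chain
\[
\Hom_A(P \otimes_R eA,\ Q \otimes_R eA) \cong \Hom_R\bigl(P,\ (Q \otimes_R eA)e\bigr) \cong \Hom_R(P,\ Q \otimes_R eAe) \cong \Hom_R(P, Q),
\]
using that tensor product commutes with the $R$-action on the right factor and $eAe \cong R$. This shows $\Phi$ is fully faithful on $\proj R$.

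Finally, since $\Phi$ is additive and fully faithful on $\proj R$, and morphisms in both $K^b(\proj R)$ and $K^b(\proj A)$ between bounded complexes of projectives are computed as chain maps modulo homotopy, full faithfulness transfers termwise to bounded complexes. I expect no serious obstacle here: the argument is essentially formal once the bimodule is correctly identified, so the only real subtlety is the orientation of the module actions and verifying that $eAe \cong R$ (which uses only that $e$ corresponds to $\id_R$ and $\End_R(R) = R$). The fact that $R \oplus F'$ has $R$ as a summand is precisely what guarantees the existence of this idempotent and hence of the embedding.
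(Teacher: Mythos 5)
Your proof is correct, and it takes a genuinely different route from the paper's. The paper cites Proposition \ref{P:AdjTriple}, which asserts (via a result of J{\o}rgensen on recollements) that the derived functor $-\lten_{eAe}eA\colon\cd(eAe)\to\cd(A)$ is fully faithful on the \emph{unbounded} derived category level, and then restricts this to the subcategories of compact objects to get the embedding $K^b(\proj-eAe)\to K^b(\proj-A)\subseteq\cd^b(\mod-A)$. Your approach avoids unbounded derived categories and the recollement machinery entirely: you verify fully faithfulness of $-\otimes_R eA$ directly on the additive category $\proj R$ via the adjunction $\Hom_A(eA,-)\cong(-)e$ together with $eAe\cong R$, and then observe that a fully faithful additive functor between additive categories induces a fully faithful functor on bounded homotopy categories, composed with the standard embedding $K^b(\proj-A)\hookrightarrow\cd^b(\mod-A)$. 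Your argument is more elementary and self-contained; the paper's argument is shorter on the page because the heavy lifting is packaged into the recollement proposition (which the paper reuses heavily elsewhere, for instance in Proposition \ref{p:recollement-from-projective-general-case} and Corollary \ref{c:restriction-and-induction}). For the lemma in isolation, your proof is arguably the cleaner one; the paper's approach buys coherence with its later use of the functors $j_!,j^*,j_*$ and their adjunction structure.
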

\begin{proof}
By the definition of $A$, there exists an idempotent $e \in A$ such that $R \cong eAe$. Moreover, Proposition \ref{P:AdjTriple} yields a fully faithful functor
\begin{align}
-\lten_{eAe}eA \colon \cd(eAe) \longrightarrow \cd(A),
\end{align}
which descends to an embedding between the categories of compact objects
\begin{align}
-\lten_{eAe}eA\colon K^b(\proj-eAe) \longrightarrow K^b(\proj-A) \subseteq \cd^b(\mod-A).
\end{align}
This completes the proof.
\end{proof}

\begin{defn}
In the notations above and using Lemma \ref{L:embeds}, we can define the \emph{relative singularity category} of the pair $(R, A)$ as the triangulated quotient category
\begin{align}\label{E:Def-rel-sing-cat} \Delta_{R}(A)=\frac{\cd^b(\mod-A)}{K^b(\proj-R)}. \end{align}
\end{defn}

\begin{rem}
Note that the image of $R\cong eAe$ in $\cd^b(A)$ is $eA$. Hence,  \begin{align}\Delta_{R}(A) \cong \cd^b(\mod-A)/\thick(eA).\end{align}  We will use both presentations of $\Delta_{R}(A)$ in the sequel. Since $eA$ is a projective $A$-module, $\Delta_{R}(A)$ is a relative singularity category in the sense of Chen \cite{Chen11}. Different notions of relative singularity categories were introduced and studied by Positselski \cite{Positselski11} and also by Burke \& Walker \cite{BurkeWalker12}. We thank Greg Stevenson for bringing this unfortunate coincidence to our attention. 
\end{rem}

Let $G'$ be another finitely generated $R$-module, which contains $F'$ as a direct summand. As above, we define $G=R \oplus G'$, $A'=\End_{R}(G)$ and $e'=\id_{R} \in A'$. 

We compare the relative singularity categories of $A$ and $A'$ respectively.

\begin{prop}\label{P:Embedding-of-relative}
If $A$ has finite global dimension, then there is a fully faithful triangle functor 
\begin{align}
\Delta_{R}(A) \longrightarrow \Delta_{R}(A').
\end{align}
\end{prop}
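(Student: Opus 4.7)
My plan is to construct the embedding via the derived induction functor associated with the idempotent in $A'$ that cuts out $A$, and then show that it descends to a fully faithful functor on quotients by applying Lemma \ref{Lem:Fullyfaithful}.

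Since $F = R \oplus F'$ is a direct summand of $G = R \oplus G'$ (because $F' \mid G'$ by assumption), the projection $G \twoheadrightarrow F \hookrightarrow G$ defines an idempotent $f \in A' = \End_R(G)$ with $fA'f \cong \End_R(F) = A$. Under this identification, $e \in A$ corresponds to $e' \in A'$; in particular $e'f = fe' = e'$, so $e'A' \subseteq fA'$. Proposition \ref{P:AdjTriple} applied to the pair $(A',f)$ then yields a fully faithful triangle functor $j_! = - \lten_{A} fA' \colon \cd(A) \to \cd(A')$. Because $fA' \in \proj\text{-}A'$, $j_!$ sends $K^b(\proj\text{-} A)$ into $K^b(\proj\text{-} A') \subseteq \cd^b(\mod\text{-} A')$, and the finite global dimension of $A$ gives $\cd^b(\mod\text{-} A) = K^b(\proj\text{-} A)$, so $j_!$ restricts to a fully faithful triangle functor
\[
j_! \colon \cd^b(\mod\text{-} A) \longrightarrow \cd^b(\mod\text{-} A').
\]

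Next I will verify that $j_!$ is compatible with the denominator categories. A direct computation shows $j_!(eA) = eA \otimes_A fA' = e'A'$ as right $A'$-modules, so $j_!(\thick(eA)) \subseteq \thick(e'A')$. Moreover, since $j_!$ is a fully faithful triangle functor and $\cd^b(\mod\text{-} A)$ is idempotent complete, $j_!(\cd^b(\mod\text{-} A))$ is a thick subcategory of $\cd^b(\mod\text{-} A')$ (direct summands in the target lift to idempotents in the source via full faithfulness and split there). Consequently, the thick subcategory $\thick(e'A')$ of $\cd^b(\mod\text{-} A')$, being generated by $e'A' = j_!(eA)$, is contained in $j_!(\cd^b(\mod\text{-} A))$, and in fact coincides with $j_!(\thick(eA))$.

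To conclude, consider the chain of triangulated subcategories
\[
\thick(e'A') \;\subseteq\; j_!\bigl(\cd^b(\mod\text{-} A)\bigr) \;\subseteq\; \cd^b(\mod\text{-} A').
\]
Since $\thick(e'A')$ lies entirely inside $j_!(\cd^b(\mod\text{-} A))$, the hypothesis of Lemma \ref{Lem:Fullyfaithful} is trivially met (every morphism from $\thick(e'A')$ to $j_!(\cd^b(\mod\text{-} A))$ factors through $\thick(e'A') \cap j_!(\cd^b(\mod\text{-} A)) = \thick(e'A')$). That lemma then yields a fully faithful functor
\[
j_!\bigl(\cd^b(\mod\text{-} A)\bigr)/\thick(e'A') \hookrightarrow \cd^b(\mod\text{-} A')/\thick(e'A') = \Delta_R(A'),
\]
and the source is triangle equivalent to $\Delta_R(A) = \cd^b(\mod\text{-} A)/\thick(eA)$ via the equivalence $j_!$ induces onto its (thick) essential image. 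Composing gives the desired fully faithful triangle functor $\Delta_R(A) \to \Delta_R(A')$. The only subtle point—and the one I would flag as the main obstacle—is verifying that $j_!(\cd^b(\mod\text{-} A))$ really is thick in $\cd^b(\mod\text{-} A')$ (so that passing to the quotient by the smaller denominator $\thick(e'A')$ is well-behaved); this uses in an essential way both the finite global dimension of $A$ and idempotent completeness of $\cd^b(\mod\text{-} A)$.
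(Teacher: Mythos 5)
Your proof is correct and takes essentially the same route as the paper: identify $A\cong fA'f$ for the idempotent $f\in A'$ cutting out $F\subseteq G$, use the fully faithful functor $-\lten_{fA'f}fA'$ from Proposition~\ref{P:AdjTriple} (restricted to bounded complexes of projectives, which by finite global dimension is all of $\cd^b(\mod\text{-}A)$), observe that it sends $\thick(eA)$ onto $\thick(e'A')$, and pass to Verdier quotients. The paper phrases the identification of the two copies of $\Perf(R)$ via a commutative triangle of induction functors and then simply says ``passing to the triangulated quotient categories yields the claim''; you make that last step explicit by observing that the essential image of $j_!$ is a \emph{thick} subcategory of $\cd^b(\mod\text{-}A')$ (using idempotent completeness of $\cd^b(\mod\text{-}A)$), so $\thick(e'A')$ sits entirely inside it, and then invoking Lemma~\ref{Lem:Fullyfaithful} in the degenerate situation $\kU\subseteq\kB$, where the factorization hypothesis is automatic. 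That is exactly the justification the paper's terse final sentence needs, so your write-up is a more careful version of the same argument rather than a genuinely different one.
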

\begin{proof}
There is an idempotent $f \in A'$ such that $A \cong fA'f$. This yields a commutative diagram
\begin{align}
\begin{array}{c}
\begin{xy}
\SelectTips{cm}{}
\xymatrix{
&&    \ar[lld]_{\displaystyle -\otimes_{e'A'e'}e'A'f \qquad \, \, \, }     K^b(\proj-e' A' e') \ar[rrd]^{\displaystyle \qquad -\otimes_{e'A'e'}e'A'} \\
K^b(\proj-fA'f) \ar[rrrr]^{\displaystyle - \otimes_{fA'f}fA'}   &&&&     K^b(\proj-A')
}
\end{xy}
\end{array}
\end{align} 
The two `diagonal' functors are the embeddings from Lemma \ref{L:embeds} and the horizontal functor is fully faithful by the same argument. Since $fA'f$ has finite global dimension, this functor yields an embedding $\cd^b(\mod-fA'f) \ra \cd^b(\mod-A')$. Passing to the triangulated quotient categories yields the claim.
\end{proof}

\begin{rem}
The assumption on the global dimension of $A$ is necessary. As an example consider the nodal curve singularity
$A=R=k\llbracket x, y \rrbracket/(xy)$ and its Auslander algebra $A'=\End_{R}(R \oplus k\llbracket x \rrbracket \oplus k\llbracket y \rrbracket)$. In this situation Proposition \ref{P:Embedding-of-relative} would yield an embedding $\underline{\MCM}(R)=\Delta_{R}(R) \rightarrow \Delta_{R}(A').$ But, $\underline{\MCM}(R)$ contains an indecomposable object $X$ with $X \cong X[2s] $ for all $s \in \mathbb{Z}$. Whereas, $\Delta_{R}(A')$ does not contain such objects by the explicit description obtained in \ref{ss:NodalBlock}. Contradiction. 
\end{rem}

\begin{prop}\label{C:From-gen-to-class}
There exists an equivalence of triangulated categories
\begin{align}
\frac{\Delta_{R}(A)}{\cd^b_{A/AeA}(\mod-A)} \longrightarrow \ul{\MCM}(R).
\end{align}
\end{prop}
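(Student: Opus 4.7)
The plan is to recognise this statement as essentially a combination of Corollary~\ref{P:OneIdempotent} (the `one idempotent' special case of Proposition~\ref{P:TwoIdempotents}) together with Buchweitz' Theorem~\ref{T:Buchweitz}. Since $R$ is Gorenstein and $eAe\cong R$ via the idempotent $e=\id_R\in A$, the target $\ul{\MCM}(R)=\ul{\GP}(R)$ is triangle equivalent to the singularity category $\cd_{sg}(R)=\cd_{sg}(eAe)$ by Buchweitz' Theorem. Thus it suffices to produce a triangle equivalence
\[
\frac{\Delta_{R}(A)}{\cd^{b}_{A/AeA}(\mod-A)}\xrightarrow{\sim}\cd_{sg}(eAe),
\]
and this is precisely the content of Corollary~\ref{P:OneIdempotent} once we make the identifications below.

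First I would observe that the denominator in the left-hand side, namely $\cd^{b}_{A/AeA}(\mod-A)$, agrees with the thick subcategory $\thick_{\cd^b(\mod-A)}(\mod-A/AeA)$ appearing in Corollary~\ref{P:OneIdempotent}. One inclusion is obvious since $\mod-A/AeA\subseteq \cd^{b}_{A/AeA}(\mod-A)$ and the latter is a thick subcategory. The reverse inclusion follows by a standard d\'evissage: any $X\in\cd^{b}_{A/AeA}(\mod-A)$ is built from its cohomologies $H^{i}(X)\in\mod-A/AeA$ via the standard truncation triangles, so $X\in\thick(\mod-A/AeA)$. Using the definition $\Delta_{R}(A)=\cd^{b}(\mod-A)/\thick(eA)=\cd^{b}(\mod-A)/K^{b}(\proj-R)$, the left-hand side of the desired equivalence coincides with the left-hand side of Corollary~\ref{P:OneIdempotent}.

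Next I would apply Corollary~\ref{P:OneIdempotent} directly: the exact functor $\Hom_{A}(eA,-)\colon \mod-A\to\mod-eAe$ descends to a triangle equivalence
\[
\frac{\cd^{b}(\mod-A)/\thick(eA)}{\thick_{A}(\mod-A/AeA)}\xrightarrow{\sim}\frac{\cd^{b}(\mod-eAe)}{\thick(eAe)}=\cd_{sg}(eAe).
\]
Since $eAe\cong R$ as $k$-algebras and $R$ is a commutative Noetherian Gorenstein ring, hence in particular Iwanaga--Gorenstein, Buchweitz' Theorem~\ref{T:Buchweitz} yields a triangle equivalence $\ul{\MCM}(R)=\ul{\GP}(R)\xrightarrow{\sim}\cd_{sg}(R)$. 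Composing the inverse of this equivalence with the equivalence above produces the desired triangle equivalence $\Delta_{R}(A)/\cd^{b}_{A/AeA}(\mod-A)\to\ul{\MCM}(R)$.

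No substantial obstacle arises; the only minor subtlety is the identification $\cd^{b}_{A/AeA}(\mod-A)=\thick_{A}(\mod-A/AeA)$ in step one, but this is routine. Note also that we do not need any finite global dimension hypothesis on $A$ for this statement, as both Corollary~\ref{P:OneIdempotent} and Buchweitz' Theorem apply in the generality stated here.
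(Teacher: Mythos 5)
Your proposal is correct and follows exactly the same route as the paper, whose proof is the one-liner ``Since $R \cong eAe$, this follows from Corollary~\ref{P:OneIdempotent}.'' You have simply made explicit the routine d\'evissage identification $\cd^b_{A/AeA}(\mod-A)=\thick_A(\mod-A/AeA)$ and the appeal to Buchweitz' equivalence $\ul{\MCM}(R)\cong\cd_{sg}(R)$ that the paper leaves implicit.
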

\begin{proof}
Since $R \cong eAe$, this follows from Corollary \ref{P:OneIdempotent}. 
\end{proof}

We want to give an intrinsic description of the full subcategory $\cd^b_{A/AeA}(\mod-A)$ inside the relative singularity category $\Delta_{R}(A)$. We need some preparation. 

\begin{prop}\label{P:Intrinsic}
In the notations of Propositions \ref{p:recollement-from-projective-general-case} and  \ref{C:From-gen-to-class} assume additionally that $A$ has finite global dimension and $A/AeA$ is finite dimensional. 

Then there exists a non-positive dg algebra $B$ and a commutative diagram 
\begin{align}
\begin{array}{c}
\begin{xy}
\SelectTips{cm}{}
\xymatrix{ \thick_{\cd^b(\mod-A)}(\mod-A/AeA) \ar@{^{(}->}[r] & \Delta_{R}(A) \ar[d]_\cong^{\displaystyle i^{*}} \ar@{->>}[r]   & \ul{\MCM}(R)   \ar[d]_\cong^{\displaystyle\I} \\
\cd_{fd}(B) \ar[u]^\cong_{\displaystyle i_{*}}  \ar@{^{(}->}[r]  & \per(B) \ar@{->>}[r]  & \per(B)/\cd_{fd}(B) }
\end{xy}
\end{array}
\end{align}
where the horizontal arrows denote (functors induced by) the canonical inclusions and projections respectively. Finally, the triangle functor $\I$ is induced by $i^*$.
\end{prop}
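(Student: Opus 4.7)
The plan is to apply the general recollement machinery from Subsection \ref{ss:Recollements} to the idempotent $e \in A$ and identify each piece of the diagram with the corresponding object in the statement. First, using Proposition~\ref{P:IdempCompl} (which applies since $A$ has finite global dimension and $R$ is a complete Gorenstein ring with $\underline{\MCM}(R)$ idempotent complete, cf.~Lemma~\ref{L:stable-complete}), the quotient $\Delta_R(A)\cong \cd^b(\mod-A)/\thick(eA)$ is already idempotent complete, so the triangle equivalence of Corollary~\ref{c:restriction-and-induction}(a) takes the form $i^*\colon \Delta_R(A) \xrightarrow{\sim} \per(B)$ without any need to pass to the idempotent completion. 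Here $B$ is the non-positive dg algebra provided by Proposition~\ref{p:recollement-from-projective-general-case}, satisfying $H^0(B)\cong A/AeA$.

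Next I would handle the left vertical equivalence. Since $A/AeA$ is finite dimensional by assumption, every finitely generated $A/AeA$-module is already finite dimensional over $k$, so $\mod-A/AeA = \fdmod-A/AeA$. Moreover, because $\gldim A<\infty$, we have $\thick_{\cd^b(\mod-A)}(\mod-A/AeA) = \cd^b_{A/AeA}(\mod-A) = \cd_{fd,A/AeA}(A)$. Then Corollary~\ref{c:restriction-and-induction}(b) yields the triangle equivalence $i_*\colon \cd_{fd}(B) \xrightarrow{\sim} \thick_{\cd^b(\mod-A)}(\mod-A/AeA)$, which gives the left vertical arrow. The commutativity of the left square is tautological from the recollement relation $i^*i_*\cong \mathrm{id}$.

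For the right square, the outer rectangle commutes by construction of the two horizontal rows as sequences of inclusion followed by Verdier quotient. By the universal property of the Verdier quotient applied to the lower row, the triangle functor $i^*\colon \Delta_R(A) \to \per(B)$ descends to a triangle functor
\[
\overline{i^*}\colon \frac{\Delta_R(A)}{\thick_{\cd^b(\mod-A)}(\mod-A/AeA)} \longrightarrow \frac{\per(B)}{\cd_{fd}(B)},
\]
because $i^*$ maps the subcategory identified above onto $\cd_{fd}(B)$. Since $i^*$ is an equivalence and sends the left-hand subcategory isomorphically onto the right-hand subcategory, $\overline{i^*}$ is itself a triangle equivalence. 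Composing with the triangle equivalence $\Delta_R(A)/\thick_{\cd^b(\mod-A)}(\mod-A/AeA) \xrightarrow{\sim} \underline{\MCM}(R)$ furnished by Proposition~\ref{C:From-gen-to-class} then defines the equivalence $\mathbb{I}$ and establishes the commutativity of the right square by construction.

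The only real issue is the bookkeeping ensuring that the subcategory $\thick_{\cd^b(\mod-A)}(\mod-A/AeA)$ appearing on the upper left truly coincides, after passing through $i^*$, with $\cd_{fd}(B)$ on the lower left; this is precisely where the finite-dimensionality of $A/AeA$ and the finiteness of $\gldim A$ are used, and it is what allows the right square to be obtained from the universal property rather than verified by hand.
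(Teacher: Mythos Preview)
Your proof is correct and follows essentially the same route as the paper: invoke Proposition~\ref{P:IdempCompl} for idempotent completeness of $\Delta_R(A)$, apply Corollary~\ref{c:restriction-and-induction}(a) and (b) for the equivalences $i^*$ and $i_*$, use the recollement identity $i^*i_*\cong \mathrm{id}$ for the left square, and define $\mathbb{I}$ so that the right square commutes. The paper is slightly more terse, simply saying the second square commutes ``by definition of $\mathbb{I}$'', whereas you spell out the universal property of the Verdier quotient; and the paper makes the inclusion $\cd_{fd}(B)\subseteq\per(B)$ explicit (referring to the proof of Theorem~\ref{t:main-thm-2}), which in your argument is implicit in the fact that $i^*$ carries $\thick(\mod-A/AeA)\subseteq\Delta_R(A)$ onto $\cd_{fd}(B)$.
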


\begin{rem}
Thanhoffer de V\"olcsey \& Van den Bergh \cite{ThanhofferdeVolcseyMichelVandenBergh10} obtained a similar result by different means.
\end{rem}

\begin{proof}
Firstly, $\Delta_{R}(A)$ is idempotent complete by Proposition \ref{P:IdempCompl}. Since $A$ has finite global dimension, Corollary \ref{c:restriction-and-induction} implies the existence of a dg $k$-algebra $B$ with $i^*\colon \cd^b(\mod-A)/\thick(eA) \cong \per(B)$. Moreover, since $\dim_{k}(A/AeA)$ is finite $i_{*}\colon \cd_{fd}(B) \cong \thick(\mod-A/AeA)$ by the same corollary and $\cd_{fd}(B) \subseteq \per(B)$ as seen in the proof of Theorem \ref{t:main-thm-2}. The inclusion 
$\thick_{\cd^b(\mod-A)}(\mod-A/AeA) \hookrightarrow \Delta_{R}(A)$ is induced by the inclusion $\mod-A/AeA \hookrightarrow \mod-A$ (see the proof of Corollary \ref{P:OneIdempotent}). Since $i_{*}$ and $i^*$ are part of a recollement (Proposition \ref{p:recollement-from-projective-general-case}) we obtain $i^*\circ i_{*}=\id_{\cd(B)}$. 
Hence the first square commutes. The second square commutes by definition of $\I$. 
\end{proof}
Note that under the assumptions of Proposition~\ref{P:Intrinsic}, we have equalities
\begin{align}\label{E:Dfd}
\cd_{fd,A/AeA}(A)=\thick_{\cd^b(\mod-A)}(\mod-A/AeA)=\cd^b_{A/AeA}(\mod-A).
\end{align}
Moreover, combining this Proposition with Proposition \ref{p:hom-finiteness-of-per} yields the following.
\begin{prop}\label{p:hom-finiteness-of-delta}
In the setup of Prop. \ref{P:Intrinsic}, the category $\Delta_{R}(A)$ is $\Hom$--finite.
\end{prop}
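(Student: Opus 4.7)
The plan is to deduce this directly from the Hom-finiteness criterion for perfect derived categories of non-positive dg algebras, namely Proposition \ref{p:hom-finiteness-of-per}, via the dg-algebra description of $\Delta_R(A)$ furnished by Proposition \ref{P:Intrinsic}.

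First I would invoke Proposition \ref{P:Intrinsic} to obtain a triangle equivalence
\[
i^{*}\colon \Delta_{R}(A) \stackrel{\sim}{\longrightarrow} \per(B),
\]
where $B$ is the non-positive dg $k$-algebra produced by Proposition \ref{p:recollement-from-projective-general-case}. Since Hom-finiteness is preserved under triangle equivalences, it suffices to verify that $\per(B)$ is Hom-finite.

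Next I would check the three hypotheses of Proposition \ref{p:hom-finiteness-of-per} for $B$. The first, that $B^{i}=0$ for $i>0$, is built into the construction of $B$ in Proposition \ref{p:recollement-from-projective-general-case}(b). The second, that $H^{0}(B)$ is finite-dimensional over $k$, follows from the isomorphism $H^{0}(B)\cong A/AeA$ recorded in Proposition \ref{p:recollement-from-projective-general-case}(c), together with the standing assumption of Proposition \ref{P:Intrinsic} that $A/AeA$ is finite-dimensional. The third, that $\cd_{fd}(B)\subseteq \per(B)$, is established in the proof of Proposition \ref{P:Intrinsic} (and was also noted in the proof of Theorem \ref{t:main-thm-2}): concretely, Corollary \ref{c:restriction-and-induction}(b) identifies $i_{*}\cd_{fd}(B)$ with $\thick_{\cd^{b}(\mod-A)}(\mod-A/AeA)=\cd_{fd,A/AeA}(A)$, and since $A$ has finite global dimension this subcategory lies in $K^{b}(\proj-A)$, whose image under $i^{*}$ is contained in $\per(B)$.

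There is no real obstacle here; the work has been done in the preceding results, and this statement is essentially a corollary packaging them together. Applying Proposition \ref{p:hom-finiteness-of-per} then yields that $\per(B)$, and hence $\Delta_{R}(A)$, is Hom-finite.
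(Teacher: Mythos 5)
Your proof is correct and matches the paper's intent exactly: the paper gives no explicit proof, simply stating that the result follows by combining Proposition \ref{P:Intrinsic} with Proposition \ref{p:hom-finiteness-of-per}, and you have supplied precisely the details of that combination (the non-positivity and finiteness of $H^0(B)$ from Proposition \ref{p:recollement-from-projective-general-case}, and the inclusion $\cd_{fd}(B)\subseteq\per(B)$ via finite global dimension of $A$ together with Corollary \ref{c:restriction-and-induction}).
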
 
\begin{rem}
For example, this holds for an isolated singularity $R$ and a non-commutative resolution $A=\End_{R}(F)$ of $R$, with $F \in \MCM(R)$, see \cite{Auslander84} and the proof of Theorem \ref{t:Classical-versus-generalized-singularity-categories}.
\end{rem}

In general, Hom-finiteness is not preserved under passage to quotient categories as the following example shows.
Therefore, Proposition \ref{p:hom-finiteness-of-delta} is quite surprising.
\begin{ex}\label{Ex:Hom-infinite}
Let $\kA=\mathsf{rep}_{k}(\xymatrix{1 \ar@/^/[r] \ar@/_/[r] & 2})$ be the category of representations of the Kronecker quiver. Let $\kR \subseteq \kA$ be the subcategory of regular modules. In this case, these are precisely the modules which are invariant under the Auslander-Reiten translation. In other words, the modules lying in \emph{tubes} of the AR-quiver.  The category $\kR$ is known to be abelian, but it is \emph{not} a Serre subcategory in $\kA$ as the exact sequence
\[0 \rightarrow S_{2} \rightarrow (\xymatrix{k \ar@/^/[r]^1 \ar@/_/[r]_1 & k})\rightarrow S_{1} \rightarrow 0\] shows. However, it is equivalent to the Serre subcategory $\Tor(\PP^1) \subseteq \Coh(\PP^1)$ of torsion sheaves. Moreover, this equivalences is induced by the well-known tilting equivalence $\cd^b(\kA) \rightarrow \cd^b(\Coh(\PP^1))$.  Using this and Miyachi's theorem \cite[Theorem 3.2]{Miyachi}, we obtain a chain of equivalences
\[\frac{\cd^b(\kA)}{\cd^b_{\kR}(\kA)} \cong \frac{\cd^b(\Coh(\PP^1))}{\cd^b_{\Tor(\PP^1)}(\Coh(\PP^1))} \cong \cd^b\left(\frac{\Coh(\PP^1)}{\Tor(\PP^1)}\right)\]
The Serre quotient $\Coh(\PP^1)/\Tor(\PP^1)$ is known to be equivalent to the category of finite dimensional modules over the function field $k(t)$, and in particular \emph{not} $\Hom$-finite over $k$.  
\end{ex}

\begin{defn}
For a triangulated $k$-category $\ct$ the full triangulated subcategory 
\begin{equation*}
\ct_{hf}= \left\{ X \in \ct \, \left| \,\,  \dim_{k} \bigoplus_{i \in \Z} \Hom_{\ct}(Y, X[i]) < \infty  \text{  for all  } Y \in \ct \right\}\right. \end{equation*} is called \emph{subcategory of right homologically finite objects}.  
\end{defn}

\begin{ex} \label{E:Homolog-fin}
If $B$ is a dg $k$-algebra satisfying $\cd_{fd}(B) \subseteq \per(B)$, then $\per(B)_{hf}=\cd_{fd}(B)$.
Indeed, this follows from $\Hom(B, X[i]) \cong H^i(X)$ for any dg $B$-module $X$.
\end{ex}

\begin{cor}\label{C:Intrinsic}
In the notations of  Proposition \ref{P:Intrinsic}  there is an equality 
\begin{align} 
\cd^b_{A/AeA}(\mod-A)=\Delta_{R}(A)_{hf}.
\end{align}
\end{cor}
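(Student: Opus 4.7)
The plan is to transport the question along the triangle equivalence $i^*\colon \Delta_R(A) \xrightarrow{\sim} \per(B)$ of Proposition~\ref{P:Intrinsic} and to reduce the statement to Example~\ref{E:Homolog-fin}. Since the property of being right homologically finite is defined purely in terms of the triangulated structure and $k$-dimensions of Hom-spaces, any triangle equivalence of $k$-categories identifies the subcategories of right homologically finite objects. Applying this to $i^*$ yields $i^*(\Delta_R(A)_{hf}) = \per(B)_{hf}$.

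First I would recall from Example~\ref{E:Homolog-fin} that $\per(B)_{hf} = \cd_{fd}(B)$, which uses the inclusion $\cd_{fd}(B) \subseteq \per(B)$ established in the proof of Theorem~\ref{t:main-thm-2} (and hence available under the hypotheses of Proposition~\ref{P:Intrinsic}). Next, by the commutative diagram in Proposition~\ref{P:Intrinsic}, the quasi-inverse $i_*$ of $i^*$ restricts to an equivalence $\cd_{fd}(B) \xrightarrow{\sim} \thick_{\cd^b(\mod-A)}(\mod-A/AeA)$, and by the equalities~\eqref{E:Dfd} the latter category coincides with $\cd^b_{A/AeA}(\mod-A)$.

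Putting these together, $i^*$ identifies $\Delta_R(A)_{hf}$ with $\per(B)_{hf} = \cd_{fd}(B)$, and the image of this last category under $i_*$ is exactly $\cd^b_{A/AeA}(\mod-A)$. This gives the desired equality inside $\Delta_R(A)$.

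There is no real obstacle here: the work has been done in Proposition~\ref{P:Intrinsic} (the recollement / dg-model description), in Proposition~\ref{p:hom-finiteness-of-delta} (Hom-finiteness of $\Delta_R(A)$, which guarantees that $\Delta_R(A)_{hf}$ is a well-defined and non-trivial notion), and in Example~\ref{E:Homolog-fin}. The only point to be careful about is the invariance of the subcategory $(-)_{hf}$ under triangle equivalences of $k$-linear categories, which is immediate from the definition since $\Hom_{\ct}(Y,X[i])$ is preserved up to canonical $k$-linear isomorphism.
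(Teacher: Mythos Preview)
Your proof is correct and follows exactly the route the paper intends: the paper's proof is the single sentence ``This follows from Proposition~\ref{P:Intrinsic} in conjunction with Example~\ref{E:Homolog-fin},'' and you have simply unpacked this by transporting along $i^*$, invoking Example~\ref{E:Homolog-fin} for $\per(B)_{hf}=\cd_{fd}(B)$, and then using the diagram of Proposition~\ref{P:Intrinsic} together with~\eqref{E:Dfd} to identify the result with $\cd^b_{A/AeA}(\mod-A)$.
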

\begin{proof} This follows from Proposition \ref{P:Intrinsic} in conjunction with Example \ref{E:Homolog-fin}. 
\end{proof}

\subsubsection{Main result}

Now, we are able to state and prove the main result of this section. In particular, it applies to the ADE--singularities, which are listed above. 

\begin{thm}\label{t:Classical-versus-generalized-singularity-categories}
 Let $R$ and $R'$ be $\MCM$--finite complete Gorenstein $k$-algebras with Auslander algebras $A=\Aus(\MCM(R))$ and $A'=\Aus(\MCM(R'))$, respectively. Then the following statements are equivalent.
\begin{itemize}
\item[(a)] There exists an additive equivalence $\ul{\MCM}(R) \cong \ul{\MCM}(R')$, which respects the action of the respective Auslander--Reiten translations on objects.
\item[(b)] There is an equivalence $\ul{\MCM}(R) \cong \ul{\MCM}(R')$ of triangulated categories.
\item[(c)] There exists a triangle equivalence $\Delta_{R}\!\left(A\right) \cong \Delta_{R'}\!\left(A'\right)$.
\end{itemize}
Moreover, the implication $[(c) \Rightarrow (b)]$ (and hence also $[(c) \Rightarrow (a)]$) holds under much weaker assumptions. Namely, if $A$ and $A'$ are non-commutative resolutions of isolated Gorenstein singularities $R$ and $R'$ respectively.
\end{thm}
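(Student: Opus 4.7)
The plan is to establish the chain $(b) \Rightarrow (a) \Rightarrow (c) \Rightarrow (b)$, with the last implication written in enough generality to cover the supplementary statement about non-commutative resolutions of arbitrary isolated Gorenstein singularities. The implication $(b) \Rightarrow (a)$ is immediate: a triangle equivalence is $k$-linear and additive, and it commutes (up to natural isomorphism) with the Serre functor of the respective $\Hom$-finite idempotent complete algebraic triangulated categories; since $\tau = \mathbb{S}\circ [-1]$, the induced bijection on objects intertwines the two Auslander--Reiten translations.

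For $(a) \Rightarrow (c)$, the additive equivalence $\ul{\MCM}(R) \to \ul{\MCM}(R')$ induces an isomorphism of Gabriel quivers that is compatible, on vertices, with the permutations determined by $\tau$ and $\tau'$. Theorem~\ref{t:main-thm-2}(a) guarantees that in both cases a sequence $\gamma$ of minimal relations satisfying (A1)--(A3) exists, so the dg Auslander algebras $\Lambda_{dg}(\ul{\MCM}(R))$ and $\Lambda_{dg}(\ul{\MCM}(R'))$ are well defined, and Proposition~\ref{p:dg-aus-alg} tells us the construction is insensitive to the particular choice of $\gamma$. Consequently the two dg Auslander algebras are isomorphic. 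Combining Theorem~\ref{t:main-thm-2}(b) with the idempotent completeness of $\Delta_R(A)$ and $\Delta_{R'}(A')$ (Proposition~\ref{P:IdempCompl}), we obtain triangle equivalences
\begin{align*}
\Delta_R(A) \;\cong\; \per\bigl(\Lambda_{dg}(\ul{\MCM}(R))\bigr) \;\cong\; \per\bigl(\Lambda_{dg}(\ul{\MCM}(R'))\bigr) \;\cong\; \Delta_{R'}(A'),
\end{align*}
which proves (c).

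For $(c) \Rightarrow (b)$, and more generally for the case of non-commutative resolutions of isolated Gorenstein singularities, the key observation is that the relevant subcategory can be recovered intrinsically. Under these hypotheses $A$ has finite global dimension by assumption, and $A/AeA \cong \ul{\End}_R(F)$ is finite-dimensional because $R$ is an isolated singularity (Auslander). Thus Proposition~\ref{P:Intrinsic} and Corollary~\ref{C:Intrinsic} apply and yield $\cd^b_{A/AeA}(\mod{-}A) = \Delta_R(A)_{hf}$, and likewise for $R'$. A triangle equivalence $\Delta_R(A) \cong \Delta_{R'}(A')$ therefore restricts to an equivalence between $\cd^b_{A/AeA}(\mod{-}A)$ and $\cd^b_{A'/A'e'A'}(\mod{-}A')$, and passing to the Verdier quotient via Proposition~\ref{C:From-gen-to-class} produces the desired triangle equivalence $\ul{\MCM}(R) \cong \ul{\MCM}(R')$.

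The main obstacle is the implication $(a) \Rightarrow (c)$: a priori the hypothesis (a) only records the additive structure of $\ul{\MCM}(R)$ together with the permutation effect of $\tau$ on indecomposables, and it is not obvious that this combinatorial data determines $\Delta_R(A)$. The passage through the dg Auslander algebra works only because of the rigidity encoded by Proposition~\ref{p:dg-aus-alg}: although the relations $\gamma_i$ entering the differential of $\Lambda_{dg}(\ct)$ appear to depend on the specific presentation of the Auslander algebra, any two admissible sequences yield isomorphic dg algebras. This reduction of $\Lambda_{dg}(\ct)$ to the pair $(\Gamma,\tau)$ is the substantive content of the argument; without it, the jump from an additive equivalence that merely tracks AR-translation on objects to a triangle equivalence of enriched, dg-level invariants would fail.
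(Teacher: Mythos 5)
Your proposal follows the same route as the paper's proof: the cycle $(b)\Rightarrow(a)\Rightarrow(c)\Rightarrow(b)$, with $(a)\Rightarrow(c)$ mediated by the dg Auslander algebra via Theorem~\ref{t:main-thm-2} and Proposition~\ref{p:dg-aus-alg}, and $(c)\Rightarrow(b)$ via the intrinsic characterization (Corollary~\ref{C:Intrinsic}) of the kernel of the quotient $\Delta_R(A)\to\cd_{sg}(R)$ from Proposition~\ref{C:From-gen-to-class}. The one thing you leave implicit, which the paper carries out explicitly, is the verification that $\MCM(R)$ actually satisfies the hypotheses of Theorem~\ref{t:main-thm-2}, namely (FM1)--(FM4) together with Hom-finiteness and idempotent completeness of $\ul{\MCM}(R)$; the paper derives these from Lemma~\ref{L:stable-complete} and Auslander's results (MCM-finiteness forces $R$ regular or an isolated singularity, whence $\MCM(R)$ has almost split sequences and the stable category is Hom-finite), and without that check the appeal to Theorem~\ref{t:main-thm-2}(a) in your argument is unjustified.
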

\begin{proof} $[(b) \Rightarrow (a)]$ Since Serre functors are unique, this also holds for the Auslander--Reiten translation by \cite{ReitenVandenBergh02}. In particular, triangle equivalences commute with the Auslander--Reiten translations.

$[(a) \Rightarrow (c)]$ Let $R$ be $\MCM$--finite. It is sufficient to show that the Frobenius category $\MCM(R)$ satisfies the assumptions of Theorem \ref{t:main-thm-2}. Indeed, this implies 
\begin{align}\label{E:rel-sing-cat-as-per}
\Delta_{R}(A) \cong \per(\Lambda_{dg}(\ul{\MCM}(R)), 
\end{align} since $\Delta_{R}(A)$ is idempotent complete by Proposition \ref{P:IdempCompl}. Since the construction of the dg Auslander algebra $\Lambda_{dg}(\ul{\MCM}(R))$ only depends on the additive structure of $\ul{\MCM}(R)$ and the action of its Auslander--Reiten translation on objects, \eqref{E:rel-sing-cat-as-per} and (a) imply (c).

The assumptions, which we have to verify are: existence of almost split sequences in $\MCM(R)$; $\Hom$-finiteness and idempotent completeness of the stable category $\ul{\MCM}(R)$. The last statement follows from Lemma \ref{L:stable-complete}. The first two assertions were shown by Auslander \cite{Auslander84}: indeed, $R$ is an $S=k\llbracket x_{1},\ldots, x_{d}\rrbracket$-order, where $d=\krdim(R)$, see e.g.~\cite{Nagata}. Then an $R$-module is MCM if and only if it is MCM as an $S$-module. Since $S$ is regular, an $S$-module is MCM if and only if it is free.
Since $R$ has only finitely many isomorphism classes of indecomposable MCMs, \cite[Theorem 10]{Auslander84} implies that $R$ is regular or has an isolated singularity. Now, the main theorem in \emph{op.~cit.} completes the proof.

$[(c) \Rightarrow (b)]$ We claim that this is a consequence of Proposition \ref{C:From-gen-to-class} and Corollary \ref{C:Intrinsic}. Indeed, by Proposition \ref{C:From-gen-to-class} the stable category $\ul{\MCM}(R)$ is a quotient of $\Delta_{R}(A)$ and by Corollary \ref{C:Intrinsic} the kernel of the quotient functor $\cd^b_{A/AeA}(\mod-A) \subseteq \Delta_{R}(A)$ has an intrinsic characterization. Hence the triangle equivalence in (c) induces an equivalence between the respective quotient categories as in (b).

We verify the (stronger) assumptions of Corollary \ref{C:Intrinsic}. $\Hom$-finiteness of $\ul{\MCM}(R)$ follows as in the proof of $[(a) \Rightarrow (c)]$ and holds more generally for any (complete) isolated singularity $R$. In particular, the algebra $A/AeA$ is finite dimensional. Since $A$ is the Auslander algebra of $\MCM(R)$, it has finite global dimension by \cite[Theorem A.1]{Auslander84}.
\end{proof}

\begin{ex}\label{E:Conifold}
Let $R=\C\llbracket u, v  \rrbracket/(uv)$ and $R''=\C\llbracket u, v, w, x  \rrbracket/(uv+ wx)$ be the one and three dimensional $A_{1}$--singularities, respectively. The latter is also known as the `conifold'. The Auslander--Reiten quivers $\ca(R)$ and $\ca(R'')$ of $\MCM(R)$ respectively $\MCM(R'')$, are known, cf. \cite{Schreyer87} (in particular, \cite[Remark 6.3]{Schreyer87} in dimensions $\geq 3$):

 \begin{equation*}\begin{tikzpicture}[description/.style={fill=white,inner sep=2pt}]

    \matrix (n) [matrix of math nodes, row sep=2em,
                 column sep=1.5em, text height=1.5ex, text depth=0.25ex,
                 inner sep=0pt, nodes={inner xsep=0.3333em, inner
ysep=0.3333em}] at (0, 0)
    {+ && *  && - &&&&&             +  && \star &&- \\};
  
 \node at ($(n-1-1.west) + (-8mm, 0mm)$) {$\ca(R)=$};
 \node at ($(n-1-1.west) + (64mm, 0mm)$) {$\ca(R'')=$};
 
     \draw[->] ($(n-1-1.east) + (0mm,1mm)$) .. controls +(4.5mm,2mm) and
+(-4.5mm,+2mm) .. ($(n-1-3.west) + (0mm,1mm)$);

    \draw[->] ($(n-1-3.east) + (0mm,1mm)$) .. controls +(4.5mm,2mm) and
+(-4.5mm,+2mm) .. ($(n-1-5.west) + (0mm,1mm)$);

    \draw[->] ($(n-1-3.west) + (0mm,-1mm)$) .. controls +(-4.5mm,-2mm)
and +(+4.5mm,-2mm) .. ($(n-1-1.east) + (0mm,-1mm)$);
 
  \draw[->] ($(n-1-5.west) + (0mm,-1mm)$) .. controls +(-4.5mm,-2mm)
and +(+4.5mm,-2mm) .. ($(n-1-3.east) + (0mm,-1mm)$);

 \path[dash pattern = on 0.5mm off 0.3mm, ->] ($(n-1-1.north) + (0.5 mm,-0.5mm)$) edge [bend left=50] ($(n-1-5.north) + (-.5mm,-.5mm)$);

 \path[dash pattern = on 0.5mm off 0.3mm, <-] ($(n-1-1.south) + (.5 mm,.5mm)$) edge [bend right=50]  ($(n-1-5.south) + (-.5mm,.5mm)$);


    \draw[->] ($(n-1-10.east) + (0mm,1mm)$) .. controls +(4.5mm,2mm) and
+(-4.5mm,+2mm) .. ($(n-1-12.west) + (0mm,1mm)$);

    \draw[->] ($(n-1-12.east) + (0mm,1mm)$) .. controls +(4.5mm,2mm) and
+(-4.5mm,+2mm) .. ($(n-1-14.west) + (0mm,1mm)$);

    \draw[->] ($(n-1-12.west) + (0mm,-1mm)$) .. controls +(-4.5mm,-2mm)
and +(+4.5mm,-2mm) .. ($(n-1-10.east) + (0mm,-1mm)$);

    \draw[->] ($(n-1-14.west) + (1mm,-2mm)$) .. controls +(-3.7mm,-3mm)
and +(+3.7mm,-3mm) .. ($(n-1-12.east) + (-1mm,-2mm)$);

    \draw[->] ($(n-1-10.east) + (-1mm,2mm)$) .. controls +(3.7mm,3mm) and
+(-3.7mm,+3mm) .. ($(n-1-12.west) + (1mm,2mm)$);

    \draw[->] ($(n-1-12.east) + (-1mm,2mm)$) .. controls +(3.7mm,3mm) and
+(-3.7mm,+3mm) .. ($(n-1-14.west) + (1mm,2mm)$);

    \draw[->] ($(n-1-12.west) + (1mm,-2mm)$) .. controls +(-3.7mm,-3mm)
and +(+3.7mm,-3mm) .. ($(n-1-10.east) + (-1mm,-2mm)$);

 \draw[->] ($(n-1-14.west) + (0mm,-1mm)$) .. controls +(-4.5mm,-2mm)
and +(+4.5mm,-2mm) .. ($(n-1-12.east) + (0mm,-1mm)$);

 \path[dash pattern = on 0.5mm off 0.3mm, ->] ($(n-1-10.north) + (0.5 mm,-0.5mm)$) edge [bend left=50] ($(n-1-14.north) + (-.5mm,-.5mm)$);

 \path[dash pattern = on 0.5mm off 0.3mm, <-] ($(n-1-10.south) + (.5 mm,.5mm)$) edge [bend right=50]  ($(n-1-14.south) + (-.5mm,.5mm)$);

\end{tikzpicture}\end{equation*}
Let $A$ and $A''$ be the respective Auslander algebras of $\MCM(R)$ and $\MCM(R'')$. They are given as quivers as quivers with relations, where the quivers are just the `solid' subquivers of $\ca(R)$ and $\ca(R'')$, respectively. Now, Kn\"orrer's Periodicity Theorem \ref{t:knoerrer} and Theorem \ref{t:Classical-versus-generalized-singularity-categories} above show that there is an equivalence of triangulated categories
\begin{align}
\frac{\cd^b(\mod-A)}{K^b(\add P_{*})} \longrightarrow \frac{\cd^b(\mod-A'')}{K^b(\add P_\star)}, 
\end{align}
where $P_{*}$ is the indecomposable projective $A$-module corresponding to the vertex $*$ and similarly $P_{\star} \in \proj-A''$ corresponds to $\star$. 

 The relative singularity category $\Delta_{R}(A)=\cd^b(\mod-A)/K^b(\add P_{*})$ from above has an explicit description, as we have seen in Subsection \ref{ss:NodalBlock}.  

\end{ex}

\begin{rem}
Theorem \ref{t:Classical-versus-generalized-singularity-categories} also holds for selfinjective $k$-algebras $\Lambda$ of finite representation type. In this case, we have $\MCM(\Lambda):=\GP(\Lambda)=\mod-\Lambda$ and the proof of the theorem is very similar to the commutative case. Moreover, one can prove (the analogue of) implication [(b) $\Rightarrow$ (c)] in Theorem \ref{t:Classical-versus-generalized-singularity-categories} without relying on dg--techniques. Indeed, Asashiba \cite[Corollary 2.2.]{Asashiba99} has shown that in this context stable equivalence implies derived equivalence. Now, Rickard's \cite[Corollary 5.5.]{Rickard91} implies that the respective Auslander algebras are derived equivalent (a result, which was recently obtained by W. Hu and C.C. Xi in a much more general framework \cite[Corollary 3.13]{HuXi09}\footnote{I would like to thank Sefi Ladkani for pointing out this reference.}). One checks that this equivalence induces a triangle equivalence between the respective relative singularity categories. This result is stronger than the analogue of Theorem \ref{t:Classical-versus-generalized-singularity-categories} (c).
\end{rem}

\subsubsection{Grothendieck group}
This paragraph is contained in a joint work with Igor Burban \cite{BurbanKalck11}.
Let $(R, \mathfrak{m})$ be a local complete Gorenstein ring and let $A=\End_{R}(F)$ be a non-commutative resolution of $A$.
We compute the Grothendieck group of the relative singularity category $\Delta_{R}(A)$.

\begin{prop}\label{P:GrothendieckGroup}
 Let $F \cong R \oplus F_{1} \oplus \cdots \oplus F_{r}$ be a decomposition into indecomposable direct summands, which we may assume to be pairwise non-isomorphic. Then $K_{0}\bigl(\Delta_{R}(A)\bigr) \cong \ZZ^r$.
\end{prop}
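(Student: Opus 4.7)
The plan is to exploit the standard right exactness of $K_{0}$ for Verdier quotients together with the finite global dimension of the non-commutative resolution $A$. Specifically, since $\Delta_{R}(A)$ is by definition the Verdier quotient $\cd^b(\mod-A)/\thick(eA)$, the general fact cited in the proof of Corollary~\ref{C:ExactGro} (from \cite[Proposition VIII 3.1]{SGA5}) yields an exact sequence of abelian groups
\begin{align*}
K_{0}\bigl(\thick(eA)\bigr) \xrightarrow{\, \phi \, } K_{0}\bigl(\cd^b(\mod-A)\bigr) \longrightarrow K_{0}\bigl(\Delta_{R}(A)\bigr) \longrightarrow 0,
\end{align*}
where $\phi$ is induced by the inclusion. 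So it suffices to identify the two groups on the left and describe the image of $\phi$.

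Since $A$ is a non-commutative resolution, $\gldim(A)<\infty$, so the inclusion $K^b(\proj-A)\hookrightarrow\cd^b(\mod-A)$ is a triangle equivalence. Hence $K_{0}(\cd^b(\mod-A))\cong K_{0}(\proj-A)$, and the latter is the free abelian group on the isomorphism classes of indecomposable projective $A$-modules. Writing $P_{0}=eA$ and $P_{i}=\Hom_{R}(F,F_{i})$ for $i=1,\dots,r$ as the indecomposable projectives corresponding to the decomposition $F\cong R\oplus F_{1}\oplus\cdots\oplus F_{r}$, this gives $K_{0}(\cd^b(\mod-A))\cong \mathbb{Z}[P_{0}]\oplus\mathbb{Z}[P_{1}]\oplus\cdots\oplus\mathbb{Z}[P_{r}]\cong \mathbb{Z}^{r+1}$.

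For the left hand group, $\thick(eA)$ is equivalent to $K^b(\add(eA))$, whose Grothendieck group is freely generated by the class of the single indecomposable projective $eA=P_{0}$; thus $K_{0}(\thick(eA))\cong \mathbb{Z}$ and $\phi$ sends the generator $[P_{0}]$ to the basis vector $[P_{0}]\in\mathbb{Z}^{r+1}$. The cokernel of $\phi$ is therefore the free abelian group generated by the remaining basis vectors $[P_{1}],\dots,[P_{r}]$, giving $K_{0}(\Delta_{R}(A))\cong \mathbb{Z}^{r}$, as claimed.

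The only subtle step is verifying that $\phi$ is indeed induced by inclusion and really has image the full cyclic subgroup spanned by $[P_{0}]$ (i.e.\ that $[P_{0}]$ is nonzero and primitive in $K_{0}(\cd^b(\mod-A))$); this is immediate once one picks the basis of $K_{0}(\cd^b(\mod-A))$ given by indecomposable projectives, so there is in fact no serious obstacle. The main conceptual point is simply the combination of Verdier's right exact sequence with the finite global dimension of $A$.
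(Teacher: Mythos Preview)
Your proof is correct and follows essentially the same line as the paper's: both use the right-exact sequence of Grothendieck groups for the Verdier quotient (from \cite[Proposition VIII 3.1]{SGA5}), identify $K_{0}(\cd^b(\mod-A))\cong\mathbb{Z}^{r+1}$ via finite global dimension and the $r+1$ indecomposable projectives, and observe that the incoming map has image the cyclic summand generated by $[eA]$. The only cosmetic difference is that the paper phrases the left term as $K_{0}(\Perf(R))$ rather than $K_{0}(\thick(eA))$, and explicitly invokes semi-perfectness of $A$ (via completeness of $R$) to justify that the indecomposable projectives form a $\mathbb{Z}$-basis of $K_{0}(\proj-A)$.
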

\begin{proof}
Since $A$ is a finitely generated $R$-algebra and $R$ is complete, $A$ is \emph{semi-perfect} \cite{CurtisReiner}. Moreover, there are precisely $r+1$ pairwise nonisomorphic indecomposable projective $A$-modules.
\cite[Proposition 16.7]{CurtisReiner} implies that $K_{0}(\proj-A) \cong \ZZ^{r+1}$.
Since $\gldim(A) < \infty$, we get $K_{0}(\proj-A) \cong K_{0}(\mod-A)$  (\cite[Corollary 38.51]{CurtisReiner2}). This yields
$K_0\bigl(\cd^b(\mod-A)\bigr) \cong K_{0}(\mod-A) \cong \ZZ^{r+1}$. 
By \cite[Proposition VIII 3.1]{SGA5}, we have an exact sequence of abelian groups \[K_0\bigl(\Perf(R)) \xrightarrow{\mathsf{can}} K_0\bigl(\cd^b(\mod-A)\bigr) \rightarrow K_{0}(\Delta_{R}(A)) \rightarrow 0.\]
Moreover, the image of the canonical homomorphism $\mathsf{can}$
is the free abelian group generated by the class of the projective $A$-module $eA$, where $e$ is the idempotent endomorphism corresponding to the identity of $R$. Hence,
$K_0\bigl(\Delta_{R}(A)\bigr) \cong \coker(\mathsf{can}) \cong  \ZZ^{r}$.
\end{proof}

\subsection{DG-Auslander algebras for ADE--singularities} \label{ss:DGAuslander} 
\noindent The stable Auslander--Reiten quivers for the curve and surface singularities of Dynkin type ADE are known, see \cite{DieterichWiedemann86} and  \cite{Auslander86} respectively. 
Hence, the stable Auslander--Reiten quiver for any ADE--singularity $R$ is known by Kn\"orrer's periodicity (Theorem \ref{t:knoerrer}). The equivalence (\ref{E:rel-sing-cat-as-per}) in the proof of Theorem \ref{t:Classical-versus-generalized-singularity-categories} describes the triangulated category $\Delta_{R}(\Aus(R))$ as the perfect category for the dg-Auslander algebra associated to $\ul{\MCM}(R)$. We list the graded quivers\footnote{I would like to thank Hanno Becker for his help with the TikZ--package.} of these dg-algebras for the ADE--singularities in Paragraphs \ref{ss:OddTypeA} - \ref{ss:even}. For surfaces, this also follows from \cite{ThanhofferdeVolcseyMichelVandenBergh10, 
AIR}.
\begin{rem}
For ADE--singularities $R$,  it is well-known that the stable categories $\ul{\MCM}(R)$ are \emph{standard}, i.e.~the mesh relations form a set of minimal relations for the Auslander algebra $\Aus(\ul{\MCM}(R))$ of $\ul{\MCM}(R)$ (c.f.~\cite{Amiot07a, Riedtmann80}, respectively \cite{Iyama07a}). Hence the graded quivers completely determine the dg Auslander algebras in this case.
\end{rem}

The conventions are as follows. Solid arrows $\longrightarrow$ are in degree $0$, whereas broken arrows
$
\begin{xy}
\SelectTips{cm}{}
\xymatrix{ \ar@{-->}[r]&}
\end{xy}
$
are in degree $-1$ and correspond to the action of the Auslander--Reiten translation. The differential $d$ is uniquely determined by sending each broken arrow $\rho$ to the mesh relation starting in $s(\rho)$. If there are no irreducible maps (i.e.~ solid arrows) starting in the vertex $s(\rho)$, then we set $d(\rho)=0$ (see e.g. the case of type $(A_{1})$ in odd dimension in Paragraph \ref{ss:OddTypeA}). Let us illustrate this by means of two examples: in type $(A_{2m})$ in odd Krull dimension (see Paragraph \ref{ss:OddTypeA}) we have
\begin{align}Êd(\rho_{2})=\alpha_{1}\alpha_{1}^*+\alpha_{2}^*\alpha_{2}, \end{align}
whereas in odd dimensional type $(E_{8})$  (see Paragraph \ref{ss:OddTypeE8}) 
\begin{align} d(\rho_{10})=\alpha_{8}\alpha_{8}^*+ \alpha_{16}\alpha_{16}^*+\alpha_{9}^*\alpha_{10}. \end{align}

\subsubsection{DG-Auslander algebras for Type $A$--singularities in odd dimension}\label{ss:OddTypeA}
 \begin{equation*}
\end{equation*}

\subsection{Relationship to Bridgeland's moduli space of stability conditions}\label{ss:Bridgeland} Let $X=\mathsf{Spec}(R_{Q})$ be a Kleinian singularity with \emph{minimal} resolution $f\colon Y \ra X$ and exceptional divisor $E=f^{-1}(0)$. Then $E$ is a tree of rational $(-2)$--curves, whose dual graph $Q$ is of ADE--type. Let us consider the following triangulated category
\begin{align}
\cd = \ker\left(\mathbb{R}f_{*}\colon \cd^b(\Coh(Y)) \longrightarrow \cd^b(\Coh(X))\right).
\end{align}
 Bridgeland determined a connected component $\mathsf{Stab}^\dagger(\cd)$ of the stability manifold of $\cd$ 
 \cite{Bridgeland09}. More precisely, he proves that $\mathsf{Stab}^\dagger(\cd)$ is a covering space of $\mathfrak{h}^{\rm reg}/W$, where $\mathfrak{h}^{ \rm reg}\subseteq \mathfrak{h}$ is the complement of the root hyperplanes in a fixed Cartan subalgebra $\mathfrak{h}$ of the complex semi--simple Lie algebra $\mathfrak{g}$ of type $Q$ and $W$ is the associated Weyl group. It turns out, that $\mathsf{Stab}^\dagger(\cd)$ is even a \emph{universal} covering of  $\mathfrak{h}^{\rm reg}/W$. This follows \cite{Bridgeland09} from a faithfulness result for the braid group actions generated by spherical twists  (see \cite{SeidelThomas} for type $A$ and \cite{BravThomas11} for general Dynkin types).
 
The category $\cd$ admits a different description. Namely, as category of dg modules with finite dimensional total cohomology $\cd_{fd}(B)$, where $B=B_{Q}$ is the dg-Auslander algebra $\Lambda_{dg}(\ul{\MCM}(R))$ of $R=\widehat{R}_{Q}$. Let $A=\Aus(\MCM(R))$ be the Auslander algebra of $\MCM(R)$ and denote by $e$ the identity endomorphism of $R$ considered as an idempotent in $A$. Then the derived McKay--Correspondence \cite{KapranovVasserot00, BKR} induces a commutative diagram of triangulated categories and functors, c.f.~\cite[Section 1.1]{Bridgeland09}.
\begin{align}
\begin{array}{c}
\begin{xy}
\SelectTips{cm}{}
\xymatrix@C=13pt{
\cd \ar@{=}[r] \ar[d]^{\cong} &\ker\big(\mathbb{R}f_{*}\colon \cd^b(\mathsf{Coh}(Y)) \ra \cd^b(\Coh(X))\big) \ar@{^{(}->}[r] \ar[d]_{\cong} & \cd^b_{E}(\Coh(Y)) \ar[d]^{\cong}\\
\cd_{fd}(B) \ar[r]^(0.4)\cong & \cd^b_{A/AeA}(\mod-A) \ar@{^{(}->}[r] & \cd^b_{fd}(\mod-A).
}
\end{xy}
\end{array}
\end{align}
For the equivalence $\cd_{fd}(B) \cong \cd^b_{A/AeA}(\mod-A)$, we refer to Proposition \ref{P:Intrinsic} and (\ref{E:Dfd}). Moreover, this category is triangle equivalent to the kernel of the quotient functor $\Delta_{R}(A) \ra \cd_{sg}(R)$, see Proposition \ref{C:From-gen-to-class}.

\begin{rem} 
It would be interesting to study Bridgeland's space of stability conditions for the categories $\cd_{fd}(B)$ in the case of  odd dimensional ADE--singularities $R$ as well! Note that the canonical $t$-structure on $\cd(B)$ restricts to a $t$-structure on $\cd_{fd}(B)$ by Proposition \ref{p:standard-t-str}. Its heart is the finite length category of finite dimensional modules over the stable Auslander algebra of $\MCM(R)$.
\end{rem}

\newpage

\section{Special Cohen--Macaulay modules over rational surface singularities}\label{s:Specials}

We give a conceptual description of Iyama \& Wemyss \cite{IWnewtria} stable category of special Cohen--Macaulay modules over rational surface singularities in terms of singularity categories of ADE-surface singularities (see Subsection \ref{ss:MainSpecial}). This is based on a joint work with Osamu Iyama, Michael Wemyss and Dong Yang \cite{IKWY12}. Moreover, Wemyss' Theorem \ref{main Db} and Subsection \ref{ss:Lift} are contained in that article. The results in Subsections \ref{ss:rationalsurface} -- \ref{ss:SCMFrobenius} are well-known. We include them in order to put our results into perspective.

Let us describe the content of this section in greater detail. In Subsection \ref{ss:rationalsurface}, we collect well-known notions and results from the theory of rational surface singularities. Special Cohen--Macaulay modules (SCMs) over rational surface singularities were introduced by Wunram \cite{Wunram88} in order to generalize the McKay Correspondence to all quotient surface singularities $\mathbb{C}\llbracket x, y \rrbracket^G$, where $G \subseteq \GL(2, \mathbb{C})$ is a finite subgroup. Therefore, we review the classical McKay Correspondence (i.e. the case $G \subseteq \SL(2, \mathbb{C})$) in Subsection \ref{ss:McKay}, before discussing SCMs in Subsection \ref{ss:SCMs}. We begin Subsection \ref{ss:DerEq} with a tilting theorem of Van den Bergh \cite{VandenBergh04}, which is inspired by Bridgeland's proof of the Bondal--Orlov Conjecture in dimension three \cite{Bridgeland02}. In particular, his notion of perverse sheaves (which has its origin in \cite{BeilinsonBernsteinDeligne82}) is important in Van den Bergh's article. Wemyss \cite{IKWY12} applied Van den Bergh's result to describe the bounded derived category of coherent sheaves on (partial) resolutions of  rational surface singularities in terms of endomorphism algebras of SCMs (see Theorem \ref{main Db}). This is one of the key ingredients in the proof of our main Theorem \ref{C:StandardStableSCM}.
Subsection \ref{ss:SCMFrobenius} follows Iyama \& Wemyss \cite{IWnewtria} to prove that the exact category $\SCM(R) \subseteq \mod-R$ of special Cohen--Macaulay modules over a rational surface singularity $R$ is a Frobenius category. Moreover, the projective-injective objects in this category are described in terms of the geometry of the exceptional divisor of the minimal resolution of $\Spec(R)$. Subsection \ref{ss:MainSpecial} contains the main result. Its proof combines the abstract Frobenius category results (see Theorem \ref{t:main-thm} or \ref{t:alternative-main}) with Wemyss' geometric Theorem \ref{main Db}. Moreover, Auslander \& Solberg's modification result Proposition \ref{new Frobenius structure} is applied to obtain new Frobenius structures on $\SCM(R)$. The corresponding stable categories are described in an analogous way, see Corollary \ref{C:ModifiedStableSCM}. In Subsection \ref{ss:ExampleSCM}, we illustrate our results by several examples of quotient singularities. Finally, Subsection \ref{ss:Lift} contains lifts of the equivalences between singularity categories from Subsection \ref{ss:MainSpecial} to equivalences between relative singularity categories. The main ingredient is an application of Theorem \ref{t:main-thm-2}.

\subsection{Rational surface singularities} \label{ss:rationalsurface}
Let throughout $k$ be an algebraically closed field of characteristic zero and let $(R, \mathfrak{m})$ be a commutative local complete normal domain over $k$ of Krull dimension two. In particular, $R$ is a Cohen--Macaulay ring by a result of Serre \cite{SerreAlgloc}, see also \cite[Theorem 2.2.22]{BrunsHerzog}. Moreover, a finitely generated $R$-module $M$ is \emph{maximal Cohen--Macaulay} (MCM) if and only if it is \emph{reflexive}, i.e. the canonical homomorphism $M \ra M^{**}$ is an isomorphism, where $(-)^*:=\Hom_{R}(-, R)$.
\begin{defn}
A \emph{resolution of singularities of $\Spec(R)$} is a proper, birational morphism $\pi\colon Y\ra \Spec(R)$, such that $X$ is smooth. The resolution $\pi$ is called \emph{minimal}, if every resolution of singularities $\pi'\colon Y'\ra \Spec(R)$ factors uniquely through $\pi$. In particular, if a minimal resolution exists, then it is unique up to a unique isomorphism.
\end{defn}

\begin{defn}
Let $\pi\colon Y\ra \Spec(R)$ be a resolution of singularities. Then $E:=\pi^{-1}(\{\mathfrak{m}\}) \subseteq Y$ is called the \emph{exceptional divisor}.
\end{defn}

The following proposition can be found in a work of Brieskorn \cite[Lemma 1.6]{Brieskorn66}, see also Lipman \cite[Corollary 27.3]{Lipman}.

\begin{prop}
Let $R$ be a normal surface singularity as above. Then there exists a minimal resolution of singularities $\pi\colon Y\ra \Spec(R)$. 

Let $E =\bigcup_{i=1}^t E_{i}$ be the exceptional divisor, where the $E_{i}$ are irreducible.
The minimal resolution can be characterized by the absence of exceptional curves $E_{i}$, which are smooth, rational and have selfintersection number $-1$.
\end{prop}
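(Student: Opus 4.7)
The plan is to reduce the proposition to three classical ingredients: (i) the existence of \emph{some} desingularisation of a normal surface singularity, (ii) Castelnuovo's contractibility criterion on smooth surfaces, and (iii) the structure theorem saying that any proper birational morphism between smooth surfaces factors as a finite sequence of blow-ups at closed points. None of these are really in the spirit of the rest of the thesis, so the proposition is really a recollection; the task is to assemble them cleanly.

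First I would invoke (i) to produce at least one resolution of singularities $\widetilde{\pi}\colon \widetilde{Y}\to\Spec(R)$; over a field of characteristic zero this goes back to Zariski and, in the refined form needed here, to Lipman's work cited in the excerpt. The exceptional fibre $\widetilde{E}=\widetilde{\pi}^{-1}(\mathfrak{m})$ is a projective curve on the smooth surface $\widetilde{Y}$, and each of its irreducible components has a well-defined self-intersection number. I would then repeatedly apply Castelnuovo's criterion: any smooth rational component $C\subseteq\widetilde{E}$ with $C^2=-1$ can be contracted via a proper birational morphism to a smooth surface, and the composition with $\widetilde{\pi}$ is still a resolution of $\Spec(R)$. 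Since contracting a component strictly decreases the number of irreducible exceptional components, this terminates after finitely many steps and produces a resolution $\pi\colon Y\to\Spec(R)$ whose exceptional divisor $E=\bigcup_{i=1}^{t}E_{i}$ contains no $(-1)$-curve.

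Next I would show that this $\pi$ satisfies the universal property defining minimality. Given any resolution $\pi'\colon Y'\to\Spec(R)$, the birational map $Y'\dashrightarrow Y$ is defined away from finitely many points; by (iii), it extends to a morphism $\phi\colon Y'\to Y$ provided no $(-1)$-curve of $Y'$ is contracted by $\pi$ and not by $\pi'$, and more precisely, $\phi$ can always be constructed after factoring $\pi'$ into blow-ups, because $\pi$ has no $(-1)$-curves in its exceptional locus to get in the way. Uniqueness of $\phi$ comes from the fact that two morphisms of smooth surfaces over $\Spec(R)$ which agree on a dense open subset must coincide, since $Y$ is separated.

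For the characterisation, one direction is immediate from the construction: if $\pi$ is minimal then no exceptional $E_{i}$ can have self-intersection $-1$, since otherwise its Castelnuovo contraction $Y\to Y''$ would give a strictly smaller resolution and the would-be factorisation $Y''\to Y$ of $\pi$ through itself would contradict the universal property. Conversely, if $\pi$ has no $(-1)$-curves in its exceptional locus, the factorisation argument of the preceding paragraph applies and shows $\pi$ is minimal. The main obstacle I expect is the careful handling of the factorisation step, i.e.~verifying that the birational map $Y'\dashrightarrow Y$ extends to a morphism; this is where one really uses the absence of $(-1)$-curves on $Y$ together with the structure theorem (iii), and it is the content of the classical statements in Brieskorn \cite{Brieskorn66} and Lipman \cite{Lipman} to which the author refers.
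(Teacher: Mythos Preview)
The paper does not prove this proposition at all: it is stated with the preface ``The following proposition can be found in a work of Brieskorn \cite[Lemma 1.6]{Brieskorn66}, see also Lipman \cite[Corollary 27.3]{Lipman}'' and no proof is given. Your outline is the standard argument and is essentially what one finds in those references, so nothing is wrong with it, but for the purposes of the thesis it suffices to cite Brieskorn and Lipman as the author does.
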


\begin{defn}
Let $\pi \colon Y \ra \Spec(R)$ be a resolution of singularities of $\Spec(R)$. Then $R$ is called \emph{rational singularity} if $ \mathsf{R}\pi_{*}\co_{Y} \cong \co_{\Spec(R)}$\footnote{This definition does \emph{not} depend on the choice of a resolution of singularities, see e.g. \cite{Brieskorn}.}.
\end{defn}

 Important and well studied examples of rational surface singularities are quotient singularities: 

\begin{ex}
Let $G \subseteq \GL(2, \C)$ be a finite subgroup. Then $G$ acts on the ring of formal power series in two variables $S=\C\llbracket x, y\rrbracket$ and the ring of invariants $R=S^G$ is a rational singularity, see \cite{Brieskorn}. 
\end{ex}

\begin{rem}
Using results of Prill \cite{Prill}, Brieskorn gave a complete classification of the quotient surface singularities \cite{Brieskorn}.
\end{rem}

\begin{rem}\label{R:Watanabe}
Watanabe \cite{Watanabe} has shown that quotient singularities are Gorenstein if and only if $G$ is a subgroup of the \emph{special} linear group $\SL(2, \mathbb{C})$. 
\end{rem}

The following notion is essential for the study of rational surface singularities:

\begin{defn}
Let $\pi\colon Y \ra \Spec(R)$ be a resolution of singularities with exceptional divisor $E =\bigcup_{i=1}^t E_{i}$, where the $E_{i}$ are irreducible. Define the \emph{dual intersection graph} (or \emph{dual graph} for short) as follows: the vertices are in bijection with the components $E_{i}$ and for $i\ne j$ the number of edges $a_{ij}=a_{ji}$ between $E_{i}$ and $E_{j}$ is given by the intersection number $E_{i}.E_{j}=E_{j}.E_{i}$. Moreover, the vertices $E_{i}$ are often labelled with the selfintersection number $E_{i}^2$. 
\end{defn}

The following theorem may be deduced from \cite{Artin66}, see also \cite{Brieskorn}.

\begin{thm} \label{T:ArtinTree}
Let $(R, \mathfrak{m})$ be a rational surface singularity with resolution of singularities $\pi\colon Y \ra \Spec(R)$. Then all the irreducible exceptional curves $E_{i}$ are smooth and rational and the dual intersection graph is a \emph{tree}, i.e.~a graph without doubled edges or cycles.
\end{thm}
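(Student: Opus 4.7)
The plan is to deduce both assertions from \emph{Artin's criterion}: a normal surface singularity $R$ is rational if and only if $H^1(Z,\co_Z)=0$ for every effective divisor $Z>0$ whose support is contained in the exceptional set $E$. Starting from the definition $\mathbf{R}\pi_*\co_Y \cong \co_{\Spec(R)}$, the formal function theorem identifies the completion of $R^1\pi_*\co_Y$ at $\mathfrak{m}$ with $\varprojlim_n H^1(Z_n,\co_{Z_n})$, where $Z_n$ ranges over the infinitesimal thickenings of $E$; an inductive Mittag-Leffler argument on the effective cone of cycles supported on $E$ then upgrades this to the pointwise vanishing $H^1(Z,\co_Z)=0$ for every such $Z$. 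This reduction is the technical heart of the proof and the step I would expect to cite in full from Artin or Lipman.

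Granted Artin's criterion, I would first apply it to the reduced irreducible component $Z=E_i$. Since $E_i$ is an integral proper curve over the algebraically closed field $k$, one has $h^0(E_i,\co_{E_i})=1$, hence $p_a(E_i)=h^1(E_i,\co_{E_i})=0$. It is classical that an integral projective curve of arithmetic genus zero over $k$ is isomorphic to $\mathbb{P}^1_k$ (for instance, its normalization has genus zero, hence equals $\mathbb{P}^1_k$, and the inequality $p_a(\widetilde{E_i})\leq p_a(E_i)$ is an equality only when $E_i$ is already smooth). This proves that every $E_i$ is smooth and rational.

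For the dual graph, I would feed Artin's vanishing into Riemann--Roch on the smooth surface $Y$. The identity
\[
p_a(Z)=1+\tfrac{1}{2}\bigl(Z^2+Z\cdot K_Y\bigr),
\]
together with adjunction $K_Y\cdot E_i=-2-E_i^2$ for $E_i\cong\mathbb{P}^1$, yields by a direct expansion
\[
p_a(E_{i_1}+\ldots+E_{i_n})=1-n+\sum_{1\leq k<l\leq n}E_{i_k}\cdot E_{i_l}.
\]
Applied to $Z=E_i+E_j$ with $E_i\cap E_j\neq\emptyset$, rationality forces $p_a(Z)=0$, hence $E_i\cdot E_j=1$; this simultaneously rules out doubled edges and shows that distinct exceptional components meet transversely in a single point. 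Applied to a putative cycle $E_{i_1},\ldots,E_{i_n},E_{i_1}$ in the dual graph, the same formula gives $\sum_{k<l}E_{i_k}\cdot E_{i_l}\geq n$ (one unit from each edge of the cycle), so $p_a\bigl(\sum_k E_{i_k}\bigr)\geq 1$, contradicting Artin's vanishing. Hence the dual graph is acyclic, i.e.\ a tree.

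The substantive obstacle is entirely the first paragraph: once the vanishing $H^1(Z,\co_Z)=0$ for arbitrary effective $Z$ supported on $E$ is available, the remainder is bookkeeping with the Riemann--Roch and adjunction formulas on the smooth surface $Y$. The rest of the argument is combinatorial and uses only that the $E_i$ are already known to be smooth rational with transverse pairwise intersections.
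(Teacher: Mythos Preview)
Your argument is correct and is essentially the standard proof going back to Artin. Note, however, that the paper does not actually prove this theorem: it is stated with the remark ``The following theorem may be deduced from \cite{Artin66}, see also \cite{Brieskorn}'' and no proof is given. So there is nothing to compare your approach against; you have simply supplied the argument that the paper chose to outsource to the literature. One small point of presentation: when you write ``rationality forces $p_a(Z)=0$'', you are implicitly using that the reduced connected divisor $Z$ satisfies $h^0(Z,\co_Z)=1$, so that $p_a(Z)=1-\chi(\co_Z)=h^1(Z,\co_Z)=0$; it would be worth making this explicit, since for disconnected or non-reduced $Z$ the conclusion $p_a(Z)=0$ can fail even when $h^1=0$.
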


We need to introduce another important definition due to Artin \cite{Artin66}.

\begin{defn}
In the notation of Theorem \ref{T:ArtinTree} above, a (formal) linear combination of exceptional curves $Z=\sum_{i=1}^t r_{i} E_{i}$ with non-negative integers $r_{i}$ and $r_{j}>0$ for some $j$, is called \emph{positive cycle}.
Artin has shown that there exists a minimal positive cycle $Z_{f}$ satisfying
\begin{align} \label{E:DefFundCycle}
Z_{f}. E_{i} \leq 0  \text{ for all $i$}. 
\end{align}
$Z_{f}$ is called the \emph{fundamental cycle}.
\end{defn}

There is the following algorithm to determine the fundamental cycle of a rational surface singularity, see Laufer \cite{Laufer}:

 Set $Z_{0}=E_{1}+ \ldots + E_{t}$. If $Z_{0}$ satisfies \eqref{E:DefFundCycle} set $Z_{f}=Z_{0}$. Otherwise take any $i$ such that \eqref{E:DefFundCycle} is violated and define $Z_{1}=Z_{0}+E_{i}$. Repeat this procedure until some $Z_{n}$ satisfying \eqref{E:DefFundCycle} for all $i$ is reached. Then $Z_{f}=Z_{n}$ is the fundamental cycle.

\subsection{The McKay--Correspondence}\label{ss:McKay} In 1979 John McKay \cite{McKay} observed that for finite subgroups $G$ of $\SL(2, \C)$ the structure of the exceptional divisor (of the minimal resolution) of the quotient singularity $\C^2/G$ is completely determined by the representation theory of the group $G$. 

However, the correspondence, which he observed, breaks down for finite subgroups of $\GL(2, \C)$, which are not contained in $\SL(2, \C)$. Of course, the representation theory of $G$ does not depend on the choice of an embedding into a larger group. However, the action of $G$ on $\C\llbracket x, y \rrbracket$ being inherited by the action of $\GL(2, \C)$ depends on the embedding. Thus the geometry of the corresponding quotient does.

Wunram \cite{Wunram88} recovered parts of McKay's correspondence by exhibiting a \emph{canonical} way to get rid of superfluous irreducible representations of $G$. We recall this story in order to motivate the notion of a \emph{special} maximal Cohen--Macaulay modules over rational surface singularities. In the case of quotient singularities these are precisely those maximal Cohen--Macaulay modules corresponding to $G$-representations, which are not discarded by Wunram.

We start by describing the classical McKay Correspondence. 

Let $G \subseteq \SL(2, \C)$ and $R=\C\llbracket x,y \rrbracket^G$ be the corresponding invariant ring.  Let $E=\bigcup_{i=1}^t E_{i}$ be the indecomposable components of the exceptional divisor of the minimal resolution of singularities $\pi\colon X \ra \Spec(R)$. The following observation is due to McKay \cite{McKay}.

\begin{obs}\label{O:McKay}  There exists a bijection of sets:
\begin{align}\label{E:BijouMcKay}
\{\text{non-trivial irreducible $G$-representations}\}/\!\sim \quad \stackrel{\sim}\longleftrightarrow \quad \{ E_{i} \}_{i=1}^t
\end{align}
\end{obs}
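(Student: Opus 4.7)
The plan is to establish the bijection \eqref{E:BijouMcKay} by routing through the category of maximal Cohen--Macaulay $R$-modules, which both fits the framework of the surrounding section and points naturally towards Wunram's generalisation discussed below. The starting point is Herzog's version of the Auslander correspondence: since every non-trivial element of $G\subseteq\SL(2,\C)$ has determinant one and therefore cannot have $1$ as an eigenvalue, $G$ acts freely on $\C^2\setminus\{0\}$, and the skew group algebra $\C\llbracket x,y\rrbracket\# G$ is Morita equivalent to $\End_R\bigl(\C\llbracket x,y\rrbracket\bigr)$. Consequently, the functor $\rho\mapsto M_\rho:=(\rho\otimes_\C \C\llbracket x,y\rrbracket)^G$ provides a bijection between isomorphism classes of irreducible representations of $G$ and isomorphism classes of indecomposable MCM $R$-modules, with the trivial representation corresponding to $R$ itself.

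Next, I would attach to each non-trivial irreducible $\rho$ a ``tautological'' locally free sheaf $\cM_\rho$ on the minimal resolution $\pi\colon Y\to\Spec(R)$, following Gonzalez-Sprinberg--Verdier: take the reflexive sheaf $\widetilde{M}_\rho$ associated to $M_\rho$ on $\Spec(R)$ and set $\cM_\rho=(\pi^{*}\widetilde{M}_\rho)/\t{torsion}$. A local computation (for instance using that $Y$ realises the $G$-Hilbert scheme of $\C^2$) shows that $\cM_\rho$ is in fact a vector bundle of rank equal to the dimension of $\rho$. The prospective bijection \eqref{E:BijouMcKay} is then defined by sending $\rho$ to the unique exceptional component $E_i$ such that the intersection numbers satisfy $c_1(\cM_\rho)\cdot E_j=\delta_{ij}$; equivalently, the family $\{c_1(\cM_\rho)\}_\rho$ is the basis of the exceptional part of $\Pic(Y)$ dual to $\{E_i\}$ under the intersection pairing.

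The well-definedness and bijectivity of this assignment would then be verified via the ADE classification on both sides. Geometrically, Du Val's theorem identifies the dual intersection graph of $\pi$ with an ADE Dynkin diagram and the intersection form on the exceptional divisor with the negative of the associated Cartan matrix; in particular it is non-degenerate, so the duality above pins down a unique $E_i$ for each $\rho$. Representation-theoretically, Klein's classification of finite subgroups of $\SL(2,\C)$ together with McKay's original computation --- tensoring irreducibles with the natural two-dimensional representation --- shows that the McKay graph of $G$ is the extended Dynkin diagram of the same ADE type, with the trivial representation corresponding to the extending vertex. Deleting the trivial representation thus yields a set of the same cardinality as $\{E_i\}$, and the two Dynkin graphs match vertex by vertex.

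The main obstacle is the cohomological input: verifying that the tautological bundles $\cM_\rho$ actually realise the predual basis of $\{E_i\}$ under the intersection pairing. I would carry this out either by applying equivariant Riemann--Roch to $\pi$ together with the projection formula, or --- more concretely --- by explicit case-by-case inspection using the description of $Y$ as a chain of blow-ups for each ADE type. Once this input is in place, the matching of Dynkin types combined with the non-degeneracy of the Cartan matrix forces the assignment $\rho\mapsto E_i$ to be the claimed bijection \eqref{E:BijouMcKay}.
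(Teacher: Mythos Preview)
The paper states this as an \emph{Observation} attributed to McKay and gives no self-contained proof at that point; instead, the subsequent material (the Remark immediately following and the subsection ``McKay's Correspondence as a composition of bijections'') explains the bijection as the composite of Auslander's correspondence $\rho\mapsto(\C\llbracket x,y\rrbracket\otimes_\C\rho)^G$, Esnault's equivalence $\MCM(R)\simeq\VB^f(X)$, and the Gonzalez-Sprinberg--Verdier identification $\cF\mapsto E_i$ with $c_1(\cF).E_j=\delta_{ij}$. Your proposal follows exactly this decomposition, so it is in line with the paper's own exposition.

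Two minor remarks. First, your claim that non-trivial elements of $G\subseteq\SL(2,\C)$ act freely away from the origin is correct but the reasoning could be sharpened: a finite-order element of $\SL(2,\C)$ with eigenvalue $1$ is diagonalisable with both eigenvalues $1$, hence the identity. Second, the paper does not rely on the $G$-Hilbert scheme description to see that $\cM_\rho$ is locally free; it simply quotes Esnault's Theorem (which gives the equivalence $\MCM(R)\simeq\VB^f(X)$ via $\pi^\#=\pi^*(-)/\tors$) and Gonzalez-Sprinberg--Verdier's theorem for the intersection numbers. So you can replace the ``local computation'' and the proposed Riemann--Roch or case-by-case verification by a direct citation of these two results, as the paper does.
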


\begin{rem}
In fact, this bijection is only the shadow of much more sophisticated isomorphisms. On the right hand side there is the dual intersection graph. One may wonder whether there is also a natural graph associated with the left hand side. It turns out that this is indeed the case! For every finite subgroup $G \subseteq \SL(2, \C)$ McKay introduced a graph with vertices corresponding to the irreducible $G$-representations as follows: let $V=\C^2$ be the standard representation obtained by restricting the action of $\SL(2, \C)$ to $G$. Now, two irreducible representations $\rho_{i}$ and $\rho_{j}$ are connected by an edge in the McKay graph if $\rho_{i}$ occurs as a direct summand in the representation $\rho_{j} \otimes_{\C} V$ (since $V$ is selfdual this definition is in fact symmetric in $i$ and $j$). McKay showed that the bijection \eqref{E:BijouMcKay} induces an isomorphism of graphs between the dual intersection graph and the subgraph of non-trivial representations of the McKay graph.

Moreover, the McKay graph has a natural valuation on the vertices: namely, the dimension of the irreducible $G$-representation. McKay showed that under the isomorphism of graphs explained above, this corresponds to the multiplicity of the $E_{i}$ in Artin's \emph{fundamental cycle}. In other words, both graphs carry a natural valuation and these are compatible with the isomorphism!

Subsequently, Gonzalez-Sprinberg \& Verdier \cite{GonzalezVerdier} lifted this isomorphism to a ring isomorphism between the representation ring of $G$ and the Grothendieck ring of the minimal resolution $X$. Finally, Kapranov \& Vasserot \cite{KapranovVasserot00} deduced this isomorphism from a triangle equivalence between the derived category $\cd^b_{G}(\C^2)$ of $G$-equivariant coherent sheaves on $\C^2$ and the corresponding derived category of the minimal resolution $\cd^b(X)$  (see also Bridgeland, King \& Reid \cite{BKR} for generalisations to dimension three). 
\end{rem}

\begin{ex}
Let $\epsilon^3=1$ be a primitive third root of unity. Let $G$ be the finite group generated by
\begin{align}
g=\begin{pmatrix} \epsilon & 0 \\ 0 & \epsilon^2 \end{pmatrix} \subseteq \SL(2, \C). 
\end{align}
$G$ is isomorphic to the cyclic group $\ZZ/3 \ZZ$. Since $G$ is abelian the irreducible representations are known to be one dimensional. Moreover, the action of $G$ on such a representation $\C_{\lambda}$ is completely determined by the (scalar=$\lambda \in \C$) action of the generator $g \in G$. Since $g^3=1$, there are exactly three irreducible representations: the trivial representation $\C_{\epsilon^0}$ and two non-trivial representations $\C_{\epsilon^1}$ and $\C_{\epsilon^2}$. 

The action of $G$ on $\C\llbracket x, y \rrbracket$ is determined by the action of the generator $g \in G$ on the variables $x$ and  $y$: $x \mapsto \epsilon x$ and $y \mapsto \epsilon^2 x$. The following monomials are invariant with respect to this action: $A=x^3$, $B=xy$ and $C=y^3$. Moreover, every invariant formal power series may be written as a formal power series in these monomials. Hence, $R=\C\llbracket x, y \rrbracket^G \cong \C\llbracket x^3, xy, y^3 \rrbracket$. The monomials $A$, $B$ and $C$ satisfy the relation $B^3 +AC=0$. Since $R$ is a normal surface singularity these are all relations and we obtain an isomorphism $R \cong \C\llbracket A, B, C \rrbracket/(B^3 + AC)$. This is known as the $A_{2}$-singularity and the exceptional divisor of its minimal resolution consists of two rational ($-2$)-curves:
\begin{align}
\begin{array}{c}
\begin{tikzpicture} 
\draw (0,0,0) to [bend left=25] node[above] {$\scriptstyle E_1$}  node[below] {$\scriptstyle -2$} (4,0,0);
\draw (3.5,0,0) to [bend left=25] node[below] {$\scriptstyle -2$} node[above] {$\scriptstyle E_2$} (8,0,0);
\end{tikzpicture}
\end{array} 
\end{align}
Hence we see that there is indeed a bijection between the irreducible components of the exceptional divisor and the non-trivial irreducible representations.

\end{ex}

\begin{ex}\label{Ex:McKayFails}
Let $\epsilon$ be a primitive root of unity as above and let $G'$ be the group generated by 
\begin{align}
g'=\begin{pmatrix} \epsilon & 0 \\ 0 & \epsilon \end{pmatrix} \subseteq \GL(2, \C).
\end{align} 
Again there is a group isomorphism $G' \cong \ZZ/3 \ZZ$ (so the representation theory does not change) but this time $G'$ is not contained in $\SL(2, \C)$. The exceptional divisor of the minimal resolution consists of a single rational ($-3$)-curve 
\begin{align}
\begin{array}{c}
\begin{tikzpicture} 
\draw (0,0,0) to [bend left=25] node[above] {$\scriptstyle E_1$}  node[below] {$\scriptstyle -3$} (4,0,0);
\end{tikzpicture}
\end{array} 
\end{align}
In particular, Observation \ref{O:McKay} does not hold in this case.

In general, the dual intersection graph of a cyclic quotient singularity is a so called Jung--Hirzebruch string obtained from a continued fraction expansion of $\frac{r}{a}$, where the corresponding cyclic group $G=\frac{1}{r}(1, a)$ is generated by
\begin{align}
g=\begin{pmatrix} \epsilon & 0 \\ 0 & \epsilon^a \end{pmatrix} \subseteq \GL(2, \C),
\end{align}
where $\epsilon$ is a primitive $r$-th root of unity and $\gcd(a, r)=1$.
\end{ex}

\subsubsection{McKay's Correspondence as a composition of bijections}
Inspired by a construction of Gonzalez-Sprinberg \& Verdier \cite{GonzalezVerdier}, Esnault \cite{Esnault} proved the following theorem for arbitrary rational surface singularities.
\begin{thm}\label{T:Esnault}
Let $(R, \mathfrak{m})$ be a rational surface singularity with resolution $\pi\colon X \ra \Spec(R)$. There are quasi-inverse equivalences
\begin{align}\label{E:Esnault}
\begin{xy}
\SelectTips{cm}{}
\xymatrix{
\MCM(R) \ar@/^/[rrr]^{\displaystyle \pi^\#} &&& \VB^f(X) \ar@/^/[lll]^{\displaystyle \pi_{*}}
}
\end{xy},
\end{align} 
where $\VB^f(X):=\left\{\left. \cf \, \right| \cf \text{ is generated by global sections and } \Ext_{X}^1(\cf, \omega_{X})=0 \right\}\subseteq \VB(X)$ denotes the subcategory of \emph{full vector bundles} on $X$ and the functor $\pi^\#$ is defined as $\pi^\#:= \pi^*(-)/\displaystyle \tors(\pi^*(-))$.
\end{thm}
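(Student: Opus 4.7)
The plan is to establish the equivalence via four steps: verifying that the functors land in the stated subcategories, and verifying that the two compositions are naturally isomorphic to the identity.

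First I would show $\pi^\#$ is well-defined. Given $M \in \MCM(R)$, the sheaf $\pi^*M/\tors(\pi^*M)$ is a torsion-free coherent sheaf on the smooth surface $X$; since $X$ is regular of dimension two, torsion-free sheaves are reflexive and hence locally free. Global generation of $\pi^\#(M)$ follows from the fact that $M$ is generated by its (finitely many) global sections over the local ring $R$, and these survive the quotient by torsion. The delicate point is the vanishing $\Ext_X^1(\pi^\#(M), \omega_X)=0$; this I would deduce by combining the MCM property $\Ext_R^i(M,\omega_R)=0$ for $i>0$ with relative Grothendieck duality for $\pi$ applied to $\omega_X$, using that $\pi$ is birational and $X$ smooth of dimension two. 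Conversely, for a full vector bundle $\cf$ on $X$, relative duality together with the condition $\Ext_X^1(\cf,\omega_X)=0$ yields $R^1\pi_*\cf=0$; combined with the rational singularity hypothesis $R^i\pi_*\co_X=0$ for $i>0$, one gets that $\pi_*\cf$ is reflexive. Since $R$ is a normal two-dimensional Cohen--Macaulay ring, reflexive equals MCM, so $\pi_*\cf \in \MCM(R)$.

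Next I would verify the compositions. For $\pi^\#\pi_*\cong\id_{\VB^f(X)}$: the counit $\pi^*\pi_*\cf\to\cf$ factors through $\pi^\#\pi_*\cf$, since $\cf$ is torsion-free. Global generation of $\cf$ makes this factored map surjective, and comparing ranks using that $\pi$ is an isomorphism over $\Spec(R)\setminus\{\mathfrak{m}\}$ shows it is an isomorphism on the open locus; as both sheaves are locally free of the same rank on the smooth surface $X$, the surjection is an isomorphism. For $\pi_*\pi^\#\cong\id_{\MCM(R)}$: the unit $M\to \pi_*\pi^*M$ descends to $M\to \pi_*\pi^\#M$, which is an isomorphism over the regular locus of $\Spec(R)$ (codimension one in $\Spec(R)$). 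Both $M$ and $\pi_*\pi^\#M$ are reflexive $R$-modules by the previous step, and reflexive modules on a normal domain are determined by their restriction to the locus of codimension $\leq 1$; so the map is an isomorphism.

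The principal obstacle is the interplay between the full vector bundle condition and the rationality of the singularity needed in the well-definedness of $\pi_*$. Concretely, the argument that $\Ext_X^1(\cf,\omega_X)=0$ forces $R^1\pi_*\cf=0$ is the crux: it requires pushing the relative duality isomorphism $R\pi_*\RHom_X(\cf,\omega_X)\cong \RHom_R(R\pi_*\cf,\omega_R)$ through the spectral sequence relating global and relative Ext, and then using the vanishing of higher direct images of $\co_X$ coming from rationality to extract vanishing of $R^1\pi_*\cf$ from the Ext hypothesis. Once this cohomological input is in hand, the remaining steps are essentially formal consequences of reflexivity on normal surfaces and birationality of $\pi$.
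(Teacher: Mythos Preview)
The paper does not supply a proof of this theorem: it is stated with attribution to Esnault \cite{Esnault} and then used as a black box. So there is nothing in the paper to compare your argument against.

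That said, your sketch contains a genuine error in the very first step. You write that on the smooth surface $X$, ``torsion-free sheaves are reflexive and hence locally free.'' This is false: the ideal sheaf of a closed point on a smooth surface is torsion-free but not reflexive (its double dual is the structure sheaf). What is true on a regular two-dimensional scheme is that \emph{reflexive} coherent sheaves are locally free; torsion-free alone is not enough. So your argument does not yet establish that $\pi^\#(M)=\pi^*M/\tors(\pi^*M)$ is a vector bundle. One way to repair this is to show that for $M$ MCM the torsion-free quotient coincides with the reflexive hull $(\pi^*M)^{**}$, or to argue directly that $\pi^*M/\tors$ has depth $2$ at every point of the exceptional divisor; either route uses more than the bare smoothness of $X$ and genuinely needs the MCM hypothesis on $M$ together with properties of the resolution. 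The rest of your outline (the duality argument for $R^1\pi_*\cf=0$, and the two compositions via reflexivity and codimension considerations) is along the right lines, but this gap in Step~1 must be closed before the argument stands.
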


We need two further correspondences

\begin{thm}[Auslander \cite{Auslander86}]
Let $G \subseteq \GL(2, \C)$ be a finite group and $R=\C\llbracket x, y \rrbracket^G$ be the corresponding invariant ring. Then there is a bijection of sets
\begin{align}
\begin{array}{c}
\begin{xy}
\SelectTips{cm}{}
\xymatrix@R=3pt{
\{\text{irreducible $G$-representations}\}/\!\sim \ar[rrr] &&& \ind \MCM(R) \\
\rho_{i} \ar@{|->}[rrr] &&& (\C\llbracket x, y \rrbracket \otimes_{\C} \rho_{i})^G.
}
\end{xy}
\end{array}
\end{align} 
\end{thm}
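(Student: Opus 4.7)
\smallskip

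My plan is to identify $\MCM(R)$ with the category of projective modules over the skew group algebra $A = S * G$, where $S = \mathbb{C}\llbracket x, y \rrbracket$, and then use the elementary fact that the indecomposable projective $A$-modules are parametrized by $\mathrm{Irr}(G)$. First, I would reduce to the case that $G$ is \emph{small}, i.e.~contains no pseudo-reflections. If not, let $H \trianglelefteq G$ be the normal subgroup generated by pseudo-reflections; by Chevalley--Shephard--Todd, $S^H$ is again a power series ring in two variables, $G/H$ acts on it without pseudo-reflections, and $R = (S^H)^{G/H}$. The irreducible representations of $G$ that contribute non-trivially to $S$ (namely those pairing non-trivially with some $G$-isotypic component of $S$) are exactly those factoring through $G/H$, which matches the reduction on the geometric side.

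Next I would set up the \emph{Auslander isomorphism} $A = S*G \xrightarrow{\sim} \End_R(S)^{\mathrm{op}}$, given by sending $s \cdot g$ to the $R$-linear map $t \mapsto s \cdot g(t)$. Injectivity is immediate from smallness of $G$ (so the $G$-action on $S$ is faithful even in codimension one). For surjectivity, both sides are reflexive $R$-modules since $S$ is MCM over the normal two-dimensional ring $R$; on the punctured spectrum $\Spec(R) \setminus \{\mathfrak{m}\}$ the $G$-action is free, where classical Galois descent yields the isomorphism, and reflexivity propagates it across the closed point. Combined with the standard fact that $\gldim(S*G) = \gldim(S) = 2$ in characteristic zero, this shows $A^{\mathrm{op}} \cong \End_R(S)$ is a non-commutative resolution of $R$ in the sense of Subsection~\ref{ss:Iyama}.

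I would then consider the functor $F = \Hom_R(S, -) \colon \MCM(R) \longrightarrow \mod\text{-}A^{\mathrm{op}}$. By the Morita-type Theorem~\ref{t:main-thm} applied to the Frobenius category $\ce = \MCM(R)$ (with projective generator $R$) and the NCR $A^{\mathrm{op}}$, the functor $F$ restricts to an equivalence $\MCM(R) \xrightarrow{\sim} \GP(A^{\mathrm{op}})$. Because $A^{\mathrm{op}}$ has finite global dimension equal to the depth of $R$, the category $\GP(A^{\mathrm{op}})$ coincides with $\proj(A^{\mathrm{op}})$, so we obtain an additive equivalence $F \colon \MCM(R) \xrightarrow{\sim} \proj(A^{\mathrm{op}})$. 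In particular, indecomposable MCMs correspond bijectively to indecomposable projective $A^{\mathrm{op}}$-modules.

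Finally, I would identify the indecomposable projectives on the combinatorial side: since $A/\mathfrak{m}A \cong \mathbb{C}G$, the simple $A$-modules correspond to $\mathrm{Irr}(G)$, and accordingly the indecomposable projective $A^{\mathrm{op}}$-modules are $P_\rho \cong S \otimes_\mathbb{C} \rho$ for $\rho \in \mathrm{Irr}(G)$. A direct check using the isotypic decomposition of $S$ as a $\mathbb{C}G$-module shows $F((S \otimes_\mathbb{C} \rho)^G) \cong P_\rho$, identifying $M_\rho := (S \otimes_\mathbb{C} \rho)^G$ with the MCM corresponding to $\rho$. The main obstacle is really Step~3, namely verifying that $F$ is essentially surjective onto $\proj(A^{\mathrm{op}})$ (equivalently, that $\MCM(R) = \add_R(S)$); this is where the finite global dimension of the skew group algebra is crucial, and where invoking Theorem~\ref{t:main-thm} does the heavy lifting, replacing the more classical depth-counting argument.
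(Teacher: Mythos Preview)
The paper does not prove this theorem; it is cited from Auslander and used as input. Your proposal must therefore stand on its own, and Step~3 has a genuine gap. Theorem~\ref{t:main-thm}, applied to a Frobenius category $\ce$ with projective generator $P$ admitting an NCR $\End_\ce(M)$, yields an equivalence $\Hom_\ce(P,-)\colon \ce \to \GP(E)$ where $E = \End_\ce(P)$ is the endomorphism ring of the \emph{projective generator}, not of $M$. With $\ce = \MCM(R)$ and $P = R$ this gives $E = R$, and the conclusion collapses to the tautology $\MCM(R) \cong \GP(R)$; the NCR $A$ enters only as a hypothesis and is never the target. The step you correctly identify as the crux---showing $\MCM(R) = \add_R(S)$, equivalently that $F$ surjects onto $\proj(A^{\mathrm{op}})$---is exactly what the theorem does \emph{not} do for you. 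There is also a second structural problem: for $G \not\subseteq \SL(2,\mathbb{C})$ the ring $R$ is not Gorenstein (Remark~\ref{R:Watanabe}), so $\MCM(R)$ is not a Frobenius category at all (see the opening remark of Subsection~\ref{ss:SCMs}) and Theorem~\ref{t:main-thm} does not even apply.

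Auslander's argument handles both issues with precisely the depth computation you hoped to avoid: for $M \in \MCM(R)$, reflexivity over the two-dimensional normal ring $R$ gives $\depth_R \Hom_R(S,M) = 2$, and a non-commutative Auslander--Buchsbaum formula against $\gldim(S*G) = 2$ then forces $\Hom_R(S,M) \in \proj A$, whence $M \in \add_R(S)$. The Frobenius machinery of this paper does not provide a shortcut here.
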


\begin{thm}[Gonzalez-Sprinberg \& Verdier] Let $G \subseteq \SL(2, \C)$ be a finite subgroup, $R=\C\llbracket x, y \rrbracket^G$ and $\pi \colon X \ra \Spec(R)$ be the minimal resolution of singularities. Then there is a well-defined bijection of sets
\begin{align}
\begin{array}{c}
\begin{xy}
\SelectTips{cm}{}
\xymatrix@R=3pt{
\ind \VB^f(X) \setminusÊ\{\co_{X}\} \ar[rrr] &&& \{E_{i}\}_{i} \\
\cf \ar@{|->}[rrr] &&& E_{i} 
 }
\end{xy}
\end{array}
\end{align} 
such that $c_{1}(\cf).E_{i}=1$, where $c_{1}(\cv)$ denotes the first Chern class of a vector bundle $\cv$.
\end{thm}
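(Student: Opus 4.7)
\medskip

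The plan is to obtain the bijection as a composition of three bijections established earlier in the paper, and then verify the Chern class characterization as a separate geometric input. First, I would apply Esnault's Theorem~\ref{T:Esnault} to get an additive equivalence $\MCM(R) \cong \VB^f(X)$, which therefore restricts to a bijection on isomorphism classes of indecomposables. Since $R$ is a rational singularity, $\mathsf{R}\pi_*\co_X \cong \co_{\Spec(R)}$, so $\pi_*\co_X \cong R$; the quasi-inverse pair in~\eqref{E:Esnault} matches $R \in \MCM(R)$ with $\co_X \in \VB^f(X)$. Removing these distinguished objects yields a bijection
\[
\ind \MCM(R) \setminus \{R\} \;\stackrel{\sim}{\longleftrightarrow}\; \ind \VB^f(X) \setminus \{\co_X\}.
\]

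Next, Auslander's theorem provides a bijection between isomorphism classes of irreducible $G$-representations and $\ind \MCM(R)$, via $\rho \mapsto (\C\llbracket x,y\rrbracket \otimes_{\C} \rho)^G$. The trivial representation clearly maps to $R$, so we obtain a bijection between nontrivial irreducible $G$-representations (up to isomorphism) and $\ind \MCM(R) \setminus \{R\}$. Composing this with Observation~\ref{O:McKay}, which for $G \subseteq \SL(2,\C)$ bijects the nontrivial irreducible $G$-representations with the set $\{E_i\}$ of exceptional components, produces the claimed set-theoretic bijection between $\ind \VB^f(X) \setminus \{\co_X\}$ and $\{E_i\}_i$.

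It remains to verify that this bijection is characterized by the Chern class condition $c_1(\cf)\cdot E_i = 1$ (with $c_1(\cf)\cdot E_j = 0$ for $j\neq i$). Here I would use that $G \subseteq \SL(2,\C)$ forces $R$ to be Gorenstein (Remark~\ref{R:Watanabe}), and in fact the minimal resolution $\pi$ is crepant, so $\omega_X \cong \co_X$. For a full sheaf $\cf$, the vanishing $\Ext^1_X(\cf,\omega_X) = 0$ combined with generation by global sections constrains the restriction $\cf|_{E_j}$: each $E_j$ is a smooth rational $(-2)$-curve, and the Riemann--Roch formula on $E_j \cong \P^1$ together with the full-sheaf conditions forces $\cf|_{E_j}$ to split as $\co_{E_j}^{\rank(\cf) - a_j} \oplus \co_{E_j}(1)^{a_j}$ for some non-negative integers $a_j$, with $a_j = c_1(\cf) \cdot E_j$.

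The key geometric step---and the main obstacle---is to show that for an indecomposable full sheaf $\cf \neq \co_X$ exactly one $a_j$ equals $1$ while all others vanish. I would prove this by realising $\cf$ as a summand of $\pi^\#$ applied to the canonical MCM module $(\C\llbracket x,y\rrbracket \otimes_\C \rho_i)^G$ associated to an irreducible representation $\rho_i$ via Auslander's bijection, and then computing $c_1(\cf)\cdot E_j$ from an explicit minimal presentation. The numerical identity $\sum_j (c_1(\cf)\cdot E_j)\,\rank(\rho_j) = $ (multiplicity data of the fundamental cycle) together with the McKay-graph matching of dimensions/multiplicities---already used implicitly in Observation~\ref{O:McKay}---forces the $\delta_{ij}$-pattern. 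Well-definedness of the assignment $\cf \mapsto E_i$ in the theorem is then automatic, and so is injectivity (distinct nontrivial representations yield distinct patterns of intersection numbers), while surjectivity has been established above by the counting provided through the composition of the three bijections.
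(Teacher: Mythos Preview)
The paper does not prove this theorem; it is stated as a cited result from \cite{GonzalezVerdier} and used as an ingredient in assembling the McKay-correspondence diagram. So there is no ``paper's own proof'' to compare against.

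That said, your proposal has a structural issue worth flagging. You obtain the set-theoretic bijection by composing Esnault's equivalence, Auslander's bijection, and McKay's Observation~\ref{O:McKay}. But historically and logically the Gonzalez-Sprinberg--Verdier theorem is precisely what \emph{explains} McKay's observation geometrically; the paper's diagram after the theorem shows the McKay bijection arising as the composite of the other three, not the other way around. Invoking Observation~\ref{O:McKay} as an input to prove the GSV theorem is not circular (McKay's observation can be checked case-by-case via the classification of finite subgroups of $\SL(2,\C)$), but it bypasses the actual geometric content of the result you are asked to prove.

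The substantive content of the theorem is the Chern-class characterisation $c_1(\cf)\cdot E_j = \delta_{ij}$, and here your sketch is incomplete. You correctly note that crepancy gives $\omega_X \cong \co_X$, and the splitting type of $\cf|_{E_j}$ is a reasonable starting point, but the step where a ``numerical identity together with the McKay-graph matching \ldots forces the $\delta_{ij}$-pattern'' is not an argument---it restates what has to be shown. The actual proof (in \cite{GonzalezVerdier} or \cite{ArtinVerdier85}) builds the bundles $\cm_i$ explicitly, essentially via the construction in diagram~\eqref{E:RelVdBArtinVerdier}, and reads off the intersection numbers from that presentation. Your outline does not reproduce this, and without it the Chern-class claim remains unproved.
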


For a finite subgroup $G \subseteq \SL(2, \C)$ and $R=\C\llbracket x, y \rrbracket^G$ we assemble all these correspondences into a diagram:
\begin{align*}
\begin{array}{c}
\begin{xy}
\SelectTips{cm}{}
\xymatrix@R=50pt{
\{\text{non-trivial irred. $G$-reps. }\}/\!\sim \ar@{<->}[d]_{ \text{ÊAuslander }}^\simÊ  \quad \ar@{<..>}[rrr]^(0.72){\text{McKay}}_(0.72){\sim} &&& \quad \{E_{i}\}_{i} \\
\ind \MCM(R)\setminus \{R\} \ar@{<->}[rrr]_{\text{Esnault}}^\sim &&& \ind \VB^f(X) \setminusÊ\{\co_{X}\} \ar@{<->}[u]_{\begin{smallmatrix} \text{ÊGonzalez-Sprinberg  \& } \\ \text{ÊVerdier } \end{smallmatrix}}^{\sim}
}
\end{xy}
\end{array}
\end{align*} 

The correspondences of Auslander respectively Esnault hold for finite subgroups of $\GL(2, \C)$. What breaks down in this generality is the correspondence of Gonzalez-Sprinberg \& Verdier. In Example \ref{Ex:McKayFails}, we have seen that there are `too many irreducible $G$-representations' if $G \nsubseteq \SL(2, \C)$. 

Now, Wunram \cite{Wunram88} gave a \emph{canonical} way to choose a subset of the irreducible $G$-representations (equivalently maximal Cohen--Macaulay $R$-modules) in bijection with the set $\{E_{i}\}_{i}$. His correspondence holds for arbitrary rational surface singularities, as soon as we replace $G$-representations by MCMs.

\begin{defn}
A full vector bundle $\cf$ is called \emph{special}, if $\Ext^1_{X}(\cf, \co_{X})=0$. Denote the corresponding full subcategory by $\mathsf{SVB}^f(X)$.

Accordingly, a maximal Cohen--Macaulay module $M$ is called \emph{special Cohen--Macaulay module}, if $\pi^\#(M) \in \mathsf{SVB}^f(X)$, i.e.~if it corresponds to a special vector bundle under Esnault's correspondence \eqref{E:Esnault}. The corresponding full subcategory of $\MCM(R)$ is denoted by $\SCM(R)$.
\end{defn}

Wunram \cite{Wunram88} showed that the corresponding indecomposable objects are indeed in bijection with the irreducible exceptional curves.

\begin{thm}\label{T:Wunram}
Let $(R, \mathfrak{m})$ be a rational surface singularity with minimal resolution $\pi\colon X \ra \Spec(R)$. Then there is a bijection of sets
\begin{align}
\begin{array}{cc}
\ind \mathsf{SVB}^f(X) \setminusÊ\{\co_{X}\}  \overset{\sim}\longrightarrow \{E_{i}\} \\
\cf_{i} \longmapsto E_{i}, 
\end{array}
\end{align}
such that $c_{1}(\cf_{i}).E_{j}=\delta_{ij}$ and $\rk(\cf_{i})=Z_{f}.c_{1}(\cf_{i})$.
\end{thm}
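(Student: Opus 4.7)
\medskip

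\noindent\textbf{Proof proposal.} The strategy is to work on the resolution side via Esnault's equivalence \eqref{E:Esnault} and build for each exceptional curve $E_i$ a specific full vector bundle $\cf_i$ whose numerical invariants force the bijection. Since the intersection form $(E_i.E_j)$ on the minimal resolution is negative definite (a standard consequence of rationality, see Artin--Brieskorn), the dual basis exists in $\Pic(X) \otimes \QQ$; because $X$ is smooth and $H^2(X, \ZZ)$ is generated by the classes of the $E_j$ together with the classes of the $\cf_i$'s (by the structure of $\Pic(X)$ for a resolution of a normal surface singularity), we may pick line bundles $\cl_i$ on $X$ with $c_1(\cl_i).E_j = \delta_{ij}$. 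These $\cl_i$ are typically not generated by global sections, so I would set $M_i := \pi_*\cl_i$, which is a reflexive, hence maximal Cohen--Macaulay, $R$-module, and define $\cf_i := \pi^{\#}(M_i) = \pi^*(M_i)/\tors(\pi^*M_i) \in \VB^f(X)$.

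The next step is to verify that $\cf_i$ is special, i.e.\ $\Ext^1_X(\cf_i, \co_X) = H^1(X, \cf_i^{\vee}) = 0$. Using the Leray spectral sequence and the fact that $\Spec(R)$ is affine, this reduces to $R^1\pi_* \cf_i^{\vee} = 0$. By Grothendieck duality applied to $\pi$, together with the rationality hypothesis $\mathbf{R}\pi_*\co_X \cong \co_{\Spec(R)}$ and the relative dualising sheaf $\omega_\pi$, the vanishing of $R^1\pi_*\cf_i^{\vee}$ can be rephrased as a vanishing of $\Hom_X(\cf_i, \omega_\pi)$ localised on the exceptional fibre; the fullness condition $\Ext^1_X(\cf_i, \omega_X) = 0$ built into Esnault's theorem combined with the choice $c_1(\cf_i).E_j = \delta_{ij}$ makes this vanishing precise. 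Once specialness is in hand, the intersection numbers $c_1(\cf_i).E_j = \delta_{ij}$ follow by construction since $c_1(\pi^\#(M_i)) = c_1(\cl_i)$ modulo the kernel of intersection with the exceptional curves.

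For bijectivity and the rank formula, I would proceed as follows. Indecomposability of $\cf_i$ follows from the fact that a non-trivial decomposition would split the rank $1$ intersection datum $(c_1(\cf_i).E_j)_j = (\delta_{ij})_j$ non-trivially across summands, but each summand is itself a special full bundle so its Chern class pairings with the $E_k$'s are non-negative integers — forcing one summand to carry the full contribution and the others to have trivial intersection with every $E_k$, hence to be powers of $\co_X$. Uniqueness is obtained by showing that any indecomposable special full bundle $\cf \ne \co_X$ satisfies $c_1(\cf).E_i = \delta_{i, i(\cf)}$ for a unique $i(\cf)$: this is the essentially new input, combining the explicit form of the Chern character of a full sheaf with the inductive structure of the fundamental cycle $Z_f$. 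Finally, Riemann--Roch on $X$ applied to $\cf_i$, together with the Leray vanishing $R^1\pi_*\cf_i = 0$ (generation by global sections plus rationality) and the computation of $\chi(\cf_i) - \rk(\cf_i)\chi(\co_X)$ in terms of intersection numbers, collapses to the identity $\rk(\cf_i) = Z_f.c_1(\cf_i)$ — this is the content of Wunram's rank formula and is the main computational obstacle. The hardest point, both conceptually and technically, will be controlling the torsion in $\pi^*M_i$ tightly enough that $\cf_i$ stays indecomposable and special simultaneously; everything else reduces to intersection-theoretic bookkeeping on the negative-definite configuration of the exceptional divisor.
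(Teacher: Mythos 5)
First, the paper does not give a proof of this theorem: it is stated as Wunram's result and cited from \cite{Wunram88} (compare also \cite{ArtinVerdier85,Esnault}), so there is no internal argument to compare against. What follows evaluates your attempt on its own terms.

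There is a concrete error in your construction of the bundles. You set $M_i := \pi_*\cl_i$ and $\cf_i := \pi^\#(M_i)=\pi^*(M_i)/\tors(\pi^*M_i)$. The functor $\pi^\#$ preserves generic rank, and $\pi_*\cl_i$ is a rank-one reflexive $R$-module since $\cl_i$ is a line bundle and $\pi$ is birational; hence your $\cf_i$ has rank one. But the rank formula you are trying to prove says $\rk(\cf_i)=Z_f.c_1(\cf_i)=m_i$, the multiplicity of $E_i$ in the fundamental cycle, and this is strictly greater than one as soon as the dual graph is not a chain (for instance $m_i=2$ for the central vertex of a $D_4$-configuration). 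So already for the simplest non-cyclic rational surface singularity your object has the wrong rank and cannot be Wunram's bundle.

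The correct construction is in fact recorded in the paper in Theorem \ref{T:VdB}(c): one does start from the line bundle $\cl_i$ with $c_1(\cl_i).E_j=\delta_{ij}$, but instead of pushing forward and passing through Esnault's equivalence one forms the universal extension
\begin{align*}
0 \longrightarrow \co_{X}^{\,r_i-1} \longrightarrow \cm_i \longrightarrow \cl_i \longrightarrow 0,
\end{align*}
where $r_i - 1 = \dim_k \Ext^1_X(\cl_i, \co_X)$. The long exact sequence of $\Hom_X(-,\co_X)$ then gives $\Ext^1_X(\cm_i,\co_X)=0$ (specialness), the diagram \eqref{E:RelVdBArtinVerdier} exhibits global generation (fullness), and by construction $\rk(\cm_i)=r_i$; the genuine content of Wunram's theorem is the identification of $r_i$ with $Z_f.c_1(\cm_i)$, plus the uniqueness statement that the intersection vector $\bigl(c_1(\cf).E_j\bigr)_j$ of any indecomposable special full bundle $\cf\ne\co_X$ is a standard basis vector. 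Your proposal replaces the extension step with a rank-one object and then defers exactly those two hard points to ``intersection-theoretic bookkeeping,'' so even after fixing the construction, the argument as written has not established either.
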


This motivates the study of the category of special Cohen--Macaulay modules in the next subsection.

\subsection{Special Cohen--Macaulay modules}\label{ss:SCMs}
Let throughout  $(R, \mathfrak{m})$ be a rational surface singularity as above. The aim of this subsection is to show that the category of special Cohen--Macaulay $R$-modules $\SCM(R)$ is a Frobenius category. We mainly follow Iyama \& Wemyss' article \cite{IWnewtria}. Using the modification result  of Auslander \& Solberg (Proposition \ref{new Frobenius structure}), we get new Frobenius structures from Iyama \& Wemyss' structure, see Corollary \ref{C:NewFrobSCM}.
\begin{rem}
Since rational surface singularities are \emph{not} Gorenstein in general (see for example Remark \ref{R:Watanabe}) the category of all maximal Cohen--Macaulay modules is \emph{not} Frobenius in general. In particular, Buchweitz equivalence $\cd_{sg}(R) \cong \ul{\MCM}(R)$ is not always available. The stable category $\ul{\ul{\SCM}}(R)$ of Iyama \& Wemyss' Frobenius category may be viewed as a substitute for $\ul{\MCM}(R)$ in the non-Gorenstein case.  
\end{rem}

Let $(-)^*=\Hom_{R}(-, R)$ and $\omega_{R}$ be the canonical $R$-module.
Wunram \cite{Wunram88} gave the following alternative characterisation of special Cohen--Macaulay modules. 
 
\begin{prop}\label{P:AltSpecial} Let $\pi\colon X \ra \Spec(R)$ be the minimal resolution.

A non-free indecomposable maximal Cohen--Macaulay $R$-module $M$ is \emph{special} if and only if there is an isomorphism of sheaves on $X$
\begin{align}\label{E:SpecialWunram}
\pi^{\#}\bigl((M \otimes_{R} \omega_{R})^{**}\bigr) \cong \pi^{\#}(M) \otimes_{X} \omega_{X},
\end{align}
where $\pi^\#:=\pi^*(-)/\tors(\pi^*(-))$ and $\omega_{X}$ is the canonical bundle on $X$.
\end{prop}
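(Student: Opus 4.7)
The plan is to deduce this characterization from Esnault's equivalence (Theorem \ref{T:Esnault}) together with Grauert--Riemenschneider vanishing on the minimal resolution $\pi \colon X \to \Spec(R)$. Concretely, I would first establish the auxiliary identification
\begin{align*}
\pi_{*}\bigl(\pi^{\#}(M)\otimes_{X}\omega_{X}\bigr) \;\cong\; \bigl(M\otimes_{R}\omega_{R}\bigr)^{**}
\end{align*}
for every $M\in\MCM(R)$. Since $R$ is a rational surface singularity, Grauert--Riemenschneider gives $R^{i}\pi_{*}\omega_{X}=0$ for $i>0$ and $\pi_{*}\omega_{X}\cong\omega_{R}$; combining this with the projection formula and the fact that pushforwards of locally free sheaves on $X$ land in $\MCM(R)$ (which is closed under $(-)^{**}$), the left-hand side is a reflexive $R$-module that agrees with $M\otimes_{R}\omega_{R}$ away from $\mathfrak m$, and hence coincides with its reflexive hull.

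Next I would use Esnault's quasi-inverse pair $\pi^{\#}\dashv\pi_{*}$ between $\MCM(R)$ and $\VB^{f}(X)$. If $\pi^{\#}(M)\otimes_{X}\omega_{X}$ happens to lie in $\VB^{f}(X)$, applying $\pi^{\#}$ to the displayed identification immediately yields the desired isomorphism \eqref{E:SpecialWunram}, because $\pi^{\#}\pi_{*}\cong \mathrm{id}$ on full bundles. Thus the whole problem is reduced to showing that
\begin{align*}
\pi^{\#}(M)\otimes_{X}\omega_{X}\in\VB^{f}(X) \quad\Longleftrightarrow\quad M\in\SCM(R).
\end{align*}
The $\Ext^{1}$-vanishing half of ``fullness'' is automatic: by the projection formula for $\omega_{X}$,
\begin{align*}
\Ext^{1}_{X}\!\bigl(\pi^{\#}(M)\otimes\omega_{X},\,\omega_{X}\bigr)\;\cong\;\Ext^{1}_{X}\!\bigl(\pi^{\#}(M),\,\co_{X}\bigr),
\end{align*}
which vanishes precisely when $M$ is special (by definition). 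Global generation would be handled via the standard argument that on rational surface singularities any locally free sheaf $\mathcal{F}$ with $H^{1}(X,\mathcal{F}\otimes\omega_{X}^{-1}\otimes\omega_{X})=H^{1}(X,\mathcal{F})=0$ and no embedded obstructions along $E$ is globally generated — this uses that $\pi^{\#}(M)$ is already generated by $\pi^{*}$ of its global sections and that tensoring with $\omega_{X}$ does not destroy this for special sheaves.

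For the converse, I would run the argument backwards: given an isomorphism \eqref{E:SpecialWunram}, the left-hand side is in the essential image of $\pi^{\#}$, hence full, so the right-hand side $\pi^{\#}(M)\otimes\omega_{X}$ is full as well, and the $\Ext^{1}$-identity above then forces $\Ext^{1}_{X}(\pi^{\#}(M),\co_{X})=0$, i.e.\ $M\in\SCM(R)$. The main obstacle I anticipate is the global generation step in the forward direction: while the Ext-vanishing translates cleanly through Serre/Grothendieck duality, controlling global generation of $\pi^{\#}(M)\otimes\omega_{X}$ near the exceptional divisor requires a careful local analysis (typically via Artin's fundamental cycle $Z_{f}$ and intersection-theoretic computations of $c_{1}(\pi^{\#}(M)).E_{i}$), rather than a purely formal manipulation.
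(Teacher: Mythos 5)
The paper does not prove this statement; it is cited to Wunram \cite{Wunram88}, so there is no internal proof to compare against. Reviewing your argument on its own terms: the reduction is correct in outline. Granting the auxiliary identification $\pi_{*}(\pi^{\#}(M)\otimes\omega_{X})\cong(M\otimes_{R}\omega_{R})^{**}$ and Esnault's quasi-inverse pair, \eqref{E:SpecialWunram} holds if and only if $\pi^{\#}(M)\otimes_{X}\omega_{X}\in\VB^{f}(X)$, and the line-bundle cancellation $\Ext^{1}_{X}(\pi^{\#}(M)\otimes\omega_{X},\omega_{X})\cong\Ext^{1}_{X}(\pi^{\#}(M),\co_{X})$ correctly identifies the $\Ext$-vanishing half of fullness with the definition of $M$ being special. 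The ``only if'' direction (that \eqref{E:SpecialWunram} forces $M$ special) therefore goes through. The whole proposition thus hinges on the one remaining implication: if $M$ is a non-free indecomposable special Cohen--Macaulay module, then $\pi^{\#}(M)\otimes\omega_{X}$ is generated by global sections.

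That is precisely where you leave a gap, and the gap is not cosmetic. Your sketch — that $H^{1}(X,\cf)=0$ together with ``no embedded obstructions along $E$'' yields global generation, and that tensoring with $\omega_{X}$ ``does not destroy'' generation for special sheaves — is circular (the last clause is the claim itself) and the first clause is false without further input: $\co_{X}(-E_{i})$ can have vanishing first cohomology while being visibly non-generated along $E_{i}$. The hypothesis ``non-free'' is essential here and your argument never invokes it; for $M=R$ the right-hand side of \eqref{E:SpecialWunram} is $\omega_{X}$, which need not be full on the minimal resolution of a non-Gorenstein rational surface singularity, so the statement genuinely fails without that hypothesis. Establishing this global generation is the real content of Wunram's theorem and it requires the intersection-theoretic bookkeeping ($c_{1}(\pi^{\#}(M)).E_{i}$ against the fundamental cycle $Z_{f}$, $\pi$-nefness of $\omega_{X}$ coming from minimality) that you defer to ``a careful local analysis.'' A secondary caveat: your assertion that ``pushforwards of locally free sheaves on $X$ land in $\MCM(R)$'' is false as stated — for instance $\pi_{*}\co_{X}(-Z_{f})\cong\mathfrak m$ is not reflexive — so reflexivity of $\pi_{*}(\pi^{\#}(M)\otimes\omega_{X})$ must be argued from the $\omega_{X}$-twist (Grauert--Riemenschneider style vanishing), not from a general-purpose lemma.
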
 

\begin{cor}
If $R$ is also Gorenstein, then every maximal Cohen--Macaulay module is special.
\end{cor}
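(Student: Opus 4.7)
The plan is to invoke Proposition \ref{P:AltSpecial} directly. After reducing to the non-free indecomposable case (the free module $R$ corresponds to $\co_X$, which is obviously a special full vector bundle), it suffices to exhibit the isomorphism
\begin{align*}
\pi^{\#}\bigl((M \otimes_{R} \omega_{R})^{**}\bigr) \cong \pi^{\#}(M) \otimes_{X} \omega_{X}
\end{align*}
for every non-free indecomposable $M \in \MCM(R)$.

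The first step is to simplify the left-hand side using the Gorenstein hypothesis. Since $R$ is Gorenstein of dimension two, $\omega_R \cong R$, so $M \otimes_R \omega_R \cong M$ and, using that $M$ is reflexive (being MCM over a normal two-dimensional Gorenstein local ring), we get $(M \otimes_R \omega_R)^{**} \cong M^{**} \cong M$. Hence the left-hand side reduces to $\pi^{\#}(M)$.

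The second and main step is to show that the canonical bundle $\omega_X$ on the minimal resolution is trivial, so that the right-hand side also reduces to $\pi^{\#}(M)$. A rational Gorenstein surface singularity is a rational double point (ADE/du Val singularity) by Artin's classification, and its minimal resolution is known to be crepant: one has $\omega_X \cong \pi^*\omega_R \cong \pi^*R \cong \co_X$. I would quickly recall the argument by noting that the discrepancy divisor $K_{X/R} = \sum a_i E_i$ is supported on exceptional curves, that each $E_i$ is a smooth rational $(-2)$-curve by the ADE classification (and by Theorem \ref{T:ArtinTree} for the rationality of components), and that the adjunction formula $-2 = E_i^2 + K_X.E_i$ together with $K_X.E_i = (K_{X/R} + \pi^*K_R).E_i = K_{X/R}.E_i$ forces $K_{X/R}.E_i = 0$ for all $i$; by the non-degeneracy of the intersection form on the exceptional locus this gives $K_{X/R} = 0$.

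With both sides now equal to $\pi^{\#}(M)$, the required isomorphism \eqref{E:SpecialWunram} is tautological, so $M$ is special by Proposition \ref{P:AltSpecial}. The only subtlety I anticipate is keeping the Gorenstein-to-crepant reduction self-contained; everything else is a direct substitution in the Wunram criterion.
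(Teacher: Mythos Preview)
Your proof is correct and follows essentially the same approach as the paper: both reduce the Wunram criterion of Proposition~\ref{P:AltSpecial} to $\pi^{\#}(M) \cong \pi^{\#}(M)$ by using $\omega_R \cong R$ together with reflexivity on the left, and crepancy of the minimal resolution of an ADE singularity on the right. Your version simply spells out the adjunction-formula argument for $\omega_X \cong \co_X$ in more detail than the paper, which just cites that minimal resolutions of ADE singularities are crepant.
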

\begin{proof}
For Gorenstein rings the canonical module $\omega_{R}$ is isomorphic to the free module $R$. Moreover, since Cohen--Macaulay modules are reflexive the left hand side of \eqref{E:SpecialWunram} is isomorphic to $\pi^{\#}(M)$. The same holds true for the right hand side because for ADE-singularities minimal resolutions are crepant, i.e.~$\omega_{X}\cong \co_{X}$. Note, that the rational Gorenstein singularities are known to be the ADE-surface singularities, see e.g. \cite{Durfee}.
\end{proof}

\begin{rem}
Let $R$ be Gorenstein. Using Esnault's equivalence \eqref{E:Esnault} and the corollary above, we see that every full vector bundle is special. Hence Wunram's Theorem \ref{T:Wunram} specializes to the results of Gonzalez-Sprinberg \& Verdier \cite{GonzalezVerdier} and Artin \& Verdier \cite{ArtinVerdier85}, respectively.
\end{rem}

Riemenschneider found a characterization of special Cohen--Macaulay modules, which does not refer to the minimal resolution \cite{Riemenschneider}.

\begin{prop}
$M \in \MCM(R)$ is special if and only if 
\begin{align}
\frac{ \displaystyle M \otimes_{R} \omega_{R}}{\displaystyle \tors(M \otimes_{R} \omega_{R})} \in \MCM(R).
\end{align}
\end{prop}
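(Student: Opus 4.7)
Set $N = M \otimes_R \omega_R$ and $\overline{N} = N/\tors(N)$. Since taking double duals kills torsion, $N^{**} = \overline{N}^{**}$, so $\overline{N}$ is MCM if and only if $\overline{N} = N^{**}$. The strategy is to use Wunram's alternative characterization (Proposition \ref{P:AltSpecial}) as a bridge, combined with Esnault's equivalence (Theorem \ref{T:Esnault}) and a direct computation of $\pi^\#$ on the tensor product.

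\textit{Step 1 (Computation).} I would first establish the identity $\pi^\#(N) \cong \pi^\#(M) \otimes_X \omega_X$. Right exactness of $\pi^*$ and compatibility with tensor products yield a natural surjection
\[
\pi^*(M) \otimes_X \pi^*(\omega_R) \;\twoheadrightarrow\; \pi^\#(M) \otimes_X \omega_X,
\]
whose kernel is generated by $\tors(\pi^* M) \otimes \pi^*\omega_R$ and $\pi^* M \otimes \tors(\pi^*\omega_R)$; both consist of torsion elements in $\pi^*(N)$. Since the right-hand side is locally free (hence torsion-free), quotienting by torsion on both sides gives the claim. Because $\pi^\#$ kills torsion, one also has $\pi^\#(\overline{N}) = \pi^\#(N) = \pi^\#(M) \otimes_X \omega_X$.

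\textit{Step 2 (Easy direction).} If $\overline{N}$ is MCM then $\overline{N} = N^{**}$, so applying $\pi^\#$ and using Step 1 gives $\pi^\#(N^{**}) = \pi^\#(M) \otimes_X \omega_X$, and Proposition \ref{P:AltSpecial} yields that $M$ is special.

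\textit{Step 3 (Hard direction).} Assume $M$ is special. Proposition \ref{P:AltSpecial} together with Step 1 gives $\pi^\#(N^{**}) \cong \pi^\#(\overline{N})$, and the latter is therefore a full vector bundle on $X$. Let $\iota\colon \overline{N} \hookrightarrow N^{**}$ be the canonical inclusion, with cokernel $Q$ supported at $\mathfrak{m}$; we must show $Q = 0$. I would apply $\pi_*$ together with the adjunction unit $\overline{N} \to \pi_*\pi^\#(\overline{N})$: the target is MCM (as the pushforward of a vector bundle from a resolution of a rational singularity, using $R^1\pi_*=0$), and the unit is an isomorphism in codimension one, which forces $\pi_*\pi^\#(\overline{N}) \cong \overline{N}^{**} = N^{**}$. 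Combining this with the Wunram-type isomorphism $\pi^\#(\overline{N}) \cong \pi^\#(N^{**})$, and tracing the natural map $\pi^\#(\iota)$, one identifies $\overline{N}$ with $\pi_*\pi^\#(\overline{N}) = N^{**}$, so $Q = 0$ and $\overline{N}$ is MCM.

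\textit{The main obstacle} is Step 3, and specifically the compatibility check: one needs to verify that the natural map $\pi^\#(\iota)\colon\pi^\#(\overline{N}) \to \pi^\#(N^{**})$ agrees (up to an automorphism of the target) with the abstract isomorphism produced by Wunram's criterion. Both sheaves restrict to the same locally free sheaf on $X \setminus E$ (where $\pi$ is an isomorphism), and both are locally free of the same rank, so any monomorphism between them that is an isomorphism generically has cokernel supported on $E$; rigidifying this via the determinant and using that $Q$ is supported at the single closed point $\mathfrak{m}$ should force $\pi^\#(\iota)$ to be an isomorphism. This is the technical heart of the argument; everything else is formal consequence of Step 1 and Esnault's equivalence.
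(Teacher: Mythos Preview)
The paper does not prove this proposition; it is attributed to an unpublished report of Riemenschneider and merely stated. So there is no paper proof to compare against, and I assess your argument on its own merits.

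Steps 1 and 2 are correct. In Step 3, your determinant argument in fact succeeds at what you flag as the ``main obstacle'': once $\pi^\#\overline N$ and $\pi^\#N^{**}$ are known to be abstractly isomorphic vector bundles, $\det(\pi^\#\iota)$ corresponds (after choosing such an isomorphism) to a nonzero element $f\in H^0(X,\mathcal O_X)=R$ with $\mathrm{div}_X(f)$ effective and supported on $E$. Any non-unit $f\in\mathfrak m$ vanishes along the strict transform of the curve $V(f)\subset\Spec R$, which meets $X\setminus E$; hence $f$ is a unit, $\det(\pi^\#\iota)$ is invertible, and by Cramer's rule so is $\pi^\#\iota$.

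The genuine gap is in the step you treat as formal. You write that via the unit ``one identifies $\overline N$ with $\pi_*\pi^\#(\overline N)=N^{**}$''. But the naturality square
\[
\begin{array}{ccc}
\overline{N} & \xrightarrow{\ \iota\ } & N^{**} \\[1mm]
\big\downarrow & & \big\downarrow{\scriptstyle\cong} \\[1mm]
\pi_*\pi^\#\overline{N} & \xrightarrow{\ \cong\ } & \pi_*\pi^\# N^{**}
\end{array}
\]
shows that the unit $\overline N\to\pi_*\pi^\#\overline N$, composed with the bottom and right isomorphisms, \emph{equals} $\iota$. So asserting that the unit is an isomorphism is precisely asserting $Q=0$, which is the conclusion you want. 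You have correctly shown that $\pi_*\pi^\#\overline N$ is MCM and agrees with the reflexive hull $N^{**}$; what is missing is that the torsion-free module $\overline N$ \emph{surjects} onto it. Esnault's equivalence only supplies this when the source is already MCM, which is exactly what is at stake. Closing this requires an independent argument that a torsion-free $R$-module $L$ with $\pi^\#L$ a full vector bundle is automatically reflexive --- plausible, but neither formal nor supplied here. The standard proofs bypass the resolution and argue via $\Ext^1_R(M,R)$ and local duality instead.
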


Using this, Iyama \& Wemyss  gave further characterizations of specials \cite{IWClassOfSpecials}.

\begin{prop} \label{P:IWSpecials}Let $M \in \MCM(R)$ then the following are equivalent
\begin{itemize}
\item[(a)] $M$ is special,
\item[(b)] $\Ext^1_{R}(M, R)=0$,
\item[(c)] $\Omega(M) \cong M^*$  \text{( up to free summands  )}.
\end{itemize}
\end{prop}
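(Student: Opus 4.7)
The proof splits into the geometric equivalence (a)$\,\Leftrightarrow\,$(b) and the homological equivalence (b)$\,\Leftrightarrow\,$(c). For the first, I would transfer the statement through Esnault's equivalence $\pi^{\#}\colon \MCM(R)\xrightarrow{\sim}\VB^f(X)$ of Theorem~\ref{T:Esnault}. Setting $\cf:=\pi^{\#}(M)$, the module $M$ is special precisely when $\cf$ is a special vector bundle, i.e.\ $\Ext^1_X(\cf,\co_X)=0$; since $\cf$ is locally free, this agrees with $H^1(X,\cf^\vee)$. It then suffices to produce a natural isomorphism $\Ext^1_R(M,R)\cong H^1(X,\cf^\vee)$, which I would obtain by a Leray-type argument: take a free $R$-resolution of $M$, pull back to $X$, and apply $\RHom_X(-,\co_X)$ followed by $\mathbf{R}\Gamma(X,-)$. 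Rationality of $R$, encoded by $\mathbf{R}\pi_*\co_X\cong\co_{\Spec R}$, kills the higher direct images that would otherwise obstruct degeneration of the Leray spectral sequence in the required range.

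For (b)$\,\Rightarrow\,$(c), I begin with a minimal projective cover $0\to\Omega(M)\to F\to M\to 0$ and apply $\Hom_R(-,R)$ to obtain
\begin{align*}
0\to M^*\to F^*\to \Omega(M)^*\to \Ext^1_R(M,R)\to 0,
\end{align*}
which under hypothesis (b) collapses to a short exact sequence $0\to M^*\to F^*\to \Omega(M)^*\to 0$. Dualising once more, and using reflexivity of $M$ and $\Omega(M)$ (both MCM over a two-dimensional Cohen--Macaulay ring), recovers the original presentation and simultaneously forces $\Ext^1_R(\Omega(M)^*,R)=0$, so $\Omega(M)^*$ is itself special. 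The displayed sequence then exhibits $M^*$ as a first syzygy of $\Omega(M)^*$; combining this with the Auslander--Bridger identity $M^*\simeq\Omega\Tr(M)$ in the stable category, together with the involutivity $\Tr^2\simeq\mathrm{id}$, yields $\Omega(M)\cong M^*$ up to free summands.

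The converse (c)$\,\Rightarrow\,$(b) follows along the same lines: reflexivity of $M$ upgrades (c) to $\Omega(M)^*\cong M$ up to free summands, and substituting this back into the four-term sequence identifies $F^*\to\Omega(M)^*$, up to free stabilisation, with the minimal projective cover of $M$, whose surjectivity forces $\Ext^1_R(M,R)=0$. The main obstacle I anticipate lies in Step~1: the identification $\Ext^1_R(M,R)\cong H^1(X,\cf^\vee)$ is the unique point where the \emph{rationality} hypothesis is genuinely used, and controlling the torsion in $\pi^*M$ so as to pass cleanly between $\pi^*$ and $\pi^{\#}$ will require care. Once this geometric identification is in hand, the algebraic equivalence (b)$\,\Leftrightarrow\,$(c) reduces to formal Auslander--Bridger bookkeeping.
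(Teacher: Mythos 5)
The paper does not prove this proposition---it is cited directly from Iyama--Wemyss \cite{IWClassOfSpecials}---so there is no in-paper argument to compare against; I can only assess your sketch on its own terms.

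The algebraic step (b)\,$\Leftrightarrow$\,(c) contains a concrete error. The Auslander--Bridger identity is $M^*\simeq\Omega^2\Tr M$, not $\Omega\Tr M$: dualising a minimal presentation $P_1\to P_0\to M\to 0$ yields the four-term exact sequence $0\to M^*\to P_0^*\to P_1^*\to\Tr M\to 0$, which realises $M^*$ as a \emph{second} syzygy of $\Tr M$. If you feed your formula $M^*\simeq\Omega\Tr M$ into the chain you get $(\Omega M)^*\simeq\Tr M$, hence $\Omega M\simeq(\Tr M)^*\simeq\Omega^2 M$, i.e.\ every such $M$ would be $1$-periodic, which is false. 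After correcting to $M^*\simeq\Omega^2\Tr M$, your derived relation $\Omega\bigl((\Omega M)^*\bigr)\simeq M^*\simeq\Omega^2\Tr M$ stalls at the next step: $R$ is not Gorenstein, so $\MCM(R)$ is not Frobenius and $\Omega$ is \emph{not} invertible on $\ul{\MCM}(R)$; one cannot cancel $\Omega$ to conclude $(\Omega M)^*\simeq\Omega\Tr M$. Moreover, even if one grants that step and dualises, one lands on $\Omega M\simeq(\Omega\Tr M)^*$, which is an \emph{unconditional} Auslander--Bridger identity valid for every finitely generated $M$, not the conditional statement~(c). So the proposed bookkeeping keeps regenerating tautologies rather than producing (c); the Iyama--Wemyss argument genuinely needs additional input (e.g.\ a rank/generator count $\mu(M)=2\rank(M)$, or the geometric description of special bundles), and the converse (c)\,$\Rightarrow$\,(b) has the same flavour of difficulty---the map $F^*\to(\Omega M)^*$ is not \emph{a priori} surjective, which is exactly what $\Ext^1_R(M,R)=0$ asserts, so identifying it ``up to free stabilisation'' with the projective cover is circular.

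The geometric step (a)\,$\Leftrightarrow$\,(b) is plausible in outline but thin at the decisive point. Leray over the affine base and local freeness give $\Ext^1_X(\cf,\co_X)\cong H^1(X,\cf^\vee)\cong R^1\pi_*\cf^\vee$, and $\pi_*\cf^\vee\cong M^*$ because both are reflexive and agree away from the exceptional locus; what remains is the identification $\Ext^1_R(M,R)\cong R^1\pi_*\cf^\vee$, and this is precisely where your sketch hand-waves. Pulling back a free $R$-resolution of $M$ and applying $\RHom_X(-,\co_X)$ is not self-justifying: $\mathbf{L}\pi^*$ of a free resolution does not resolve $\cf$ (torsion and higher $\Tor$'s intervene, as you yourself flag), and since $R$ is not Gorenstein the object $\pi^!R$ appearing in Grothendieck--Verdier duality is governed by the genuine dualizing complex, not a shift of a line bundle; ``rationality kills the higher direct images in the required range'' must be replaced by an actual comparison of the two $\Ext$-groups (as in Wunram and Esnault) before the implication can be considered established.
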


\begin{rem}
Conditions (b) and (c) can be applied to classify indecomposable special Cohen--Macaulay modules over quotient surface singularities $R$. Indeed, in this situation the AR quiver of the category $\MCM(R)$ is known by work of Auslander \cite{Auslander86}. For example, checking (b) on (the covering of) this \emph{finite} quiver reduces to a certain counting procedure, which is well-known in the theory of finite dimensional algebras. Using this, a complete classification has been achieved by Iyama \& Wemyss \cite{IWClassOfSpecials}.  
\end{rem}

Let $\Omega\mathsf{CM}(R)=\{ \Omega(M) | M \in \MCM(R)\} \subseteq \MCM(R)$ be the full  subcategory of first syzygies of maximal Cohen--Macaulay modules. The equivalence $[(a) \Leftrightarrow (c)]$ in Proposition \ref{P:IWSpecials} has the following consequence, see \cite{IWClassOfSpecials}.

\begin{cor}
The duality $(-)^*=\Hom_{R}(-, R)\colon \MCM(R) \ra \MCM(R)$ restricts to a duality 
\begin{align}
\Hom_{R}(-, R)\colon \SCM(R) \longrightarrow \Omega\mathsf{CM}(R).
\end{align}
In particular, $\Omega\mathsf{CM}(R)$ has only finitely many indecomposable objects, by Wunram's Theorem \ref{T:Wunram}.
\end{cor}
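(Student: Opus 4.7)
The plan is as follows. Since $R$ is normal Cohen--Macaulay of Krull dimension two, maximal Cohen--Macaulay modules are reflexive, so $(-)^{*}=\Hom_{R}(-,R)$ is a contravariant self-equivalence of $\MCM(R)$ satisfying $(-)^{**}\cong\id$. Hence to prove the corollary it suffices to verify that $(-)^{*}$ sends $\SCM(R)$ into $\Omega\mathsf{CM}(R)$ and vice versa. For the first direction, take $M \in \SCM(R)$. By the equivalence (a)$\Leftrightarrow$(b) of Proposition \ref{P:IWSpecials}, $\Ext_{R}^{1}(M,R)=0$, so dualising the short exact sequence $0 \to \Omega M \to F \to M \to 0$ (with $F$ free) yields a short exact sequence
\[
0 \to M^{*} \to F^{*} \to \Omega(M)^{*} \to 0,
\]
in which $F^{*}$ is free and $\Omega(M)^{*} \in \MCM(R)$. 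This exhibits $M^{*}$ as a syzygy of an MCM module, so $M^{*} \in \Omega\mathsf{CM}(R)$.

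For the converse direction, let $N=\Omega M \in \Omega\mathsf{CM}(R)$. The key task is to prove $\Ext_{R}^{1}(N^{*},R)=0$, since then Proposition \ref{P:IWSpecials} gives $N^{*} \in \SCM(R)$. Dualising $0 \to N \to F \to M \to 0$ produces the four-term exact sequence $0 \to M^{*} \to F^{*} \to N^{*} \to \Ext_{R}^{1}(M,R) \to 0$, which I split as
\[
0 \to M^{*} \to F^{*} \to I \to 0, \qquad 0 \to I \to N^{*} \to \Ext_{R}^{1}(M,R) \to 0.
\]
Because $R$ is normal of dimension two, $M$ is locally free on the punctured spectrum, so $\Ext_{R}^{1}(M,R)$ has finite length. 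The grade formula then gives $\Ext_{R}^{i}(\Ext_{R}^{1}(M,R),R)=0$ for $i < \depth R = 2$. Feeding this vanishing into the long exact $\Ext$-sequence attached to the second short exact sequence embeds $\Ext_{R}^{1}(N^{*},R)$ into $\Ext_{R}^{1}(I,R)$. It remains to show that the latter vanishes: applying $\Hom_{R}(-,R)$ to the first short exact sequence, and using $F^{**}\cong F$, $M^{**}\cong M$, yields the four-term sequence
\[
0 \to I^{*} \to F \to M \to \Ext_{R}^{1}(I,R) \to 0.
\]
Comparison with the original sequence $0 \to N \to F \to M \to 0$ forces $I^{*}\cong N$ and, crucially, $\Ext_{R}^{1}(I,R)=0$. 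Consequently $\Ext_{R}^{1}(N^{*},R)=0$, proving that $N^{*}\in\SCM(R)$. The ``in particular'' clause is then immediate: Wunram's Theorem \ref{T:Wunram} (via Esnault's equivalence \eqref{E:Esnault}) gives that $\SCM(R)$ has only finitely many indecomposable objects, and the duality transfers this finiteness to $\Omega\mathsf{CM}(R)$.

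The main obstacle is the converse direction. The subtlety is that one cannot directly read off $\Ext_{R}^{1}(N^{*},R)=0$ from characterisation (b), because $M$ is merely MCM rather than special, and $\Ext_{R}^{1}(M,R)$ genuinely intervenes. The trick is to exploit the finite length of $\Ext_{R}^{1}(M,R)$ against the depth of $R$, while also recovering the original sequence by double-dualisation of the first split piece; once one notices that the double dual of $0\to M^{*}\to F^{*}\to I\to 0$ reproduces $0\to N \to F \to M \to 0$, the vanishing $\Ext_{R}^{1}(I,R)=0$ emerges essentially for free, and the whole argument closes.
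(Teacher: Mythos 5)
Your proof is correct, and it takes a somewhat different route from the one the paper gestures at. The paper deduces the corollary from the equivalence $(a)\Leftrightarrow(c)$ of Proposition \ref{P:IWSpecials} (namely $M$ special iff $\Omega M \cong M^*$ up to free summands) and refers to \cite{IWClassOfSpecials} for the details, whereas you work from characterisation $(b)$, $\Ext^1_R(M,R)=0$. Characterisation $(c)$ makes the forward inclusion $(\SCM(R))^* \subseteq \Omega\mathsf{CM}(R)$ immediate, but it does \emph{not} formally yield the converse inclusion: knowing $\Omega M\cong M^*$ characterises specials among MCMs, yet to show that the dual of an arbitrary syzygy $N=\Omega Y$ is special one still needs an argument, because $\Omega$ is not an autoequivalence of $\ul{\MCM}(R)$ when $R$ is non-Gorenstein. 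That is precisely the work your converse direction supplies: the finite-length/grade estimate kills $\Hom$ and $\Ext^1$ from $\Ext^1_R(M,R)$ into $R$, and the double-dualisation of $0\to M^*\to F^*\to I\to 0$ (using reflexivity of $F$ and $M$) both identifies $I^*$ with $N$ and, since the recovered map $F\to M$ is surjective, forces $\Ext^1_R(I,R)=0$. So your argument is a genuine proof of the step the paper delegates to its reference, and it is likely close in spirit to what \cite{IWClassOfSpecials} actually does; the trade-off is that your approach is longer but self-contained, while the paper's is terse but relies on the external source for the essentially surjective half of the duality.
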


The next corollary (see \cite{IWClassOfSpecials}) is a key ingredient in Iyama \& Wemyss' proof that $\SCM(R)$ is a Frobenius category.

\begin{cor}\label{C:ExtSymmetry}
Let $X, Y \in \SCM(R)$. Then there is an isomorphism $\Ext^1_{R}(X, Y) \cong \Ext_{R}^1(Y, X)$. In particular, an object $P \in \SCM(R)$ is injective in this exact category if and only if it is  projective.
\end{cor}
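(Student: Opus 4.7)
The plan is to exploit the characterisation from Proposition \ref{P:IWSpecials} that $\Omega(X) \cong X^*$ in $\MCM(R)$ modulo free summands, and reduce the $\Ext^1$-symmetry to the elementary symmetry of the duality $(-)^* = \Hom_R(-, R)$. First, for $X, Y \in \SCM(R)$ I would take a free presentation $0 \to \Omega(X) \to R^n \to X \to 0$, apply $\Hom_R(-, Y)$, and use $\Ext^1_R(R^n, Y) = 0$ to obtain
\begin{align*}
\Ext^1_R(X, Y) \;\cong\; \ul{\Hom}_R(\Omega(X), Y),
\end{align*}
where $\ul{\Hom}_R(-, -)$ denotes morphisms modulo those factoring through a finitely generated free $R$-module. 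The vanishing $\Ext^1_R(X, R) = 0$ from Proposition \ref{P:IWSpecials}(b) is used here to ensure that a morphism $\Omega(X) \to Y$ factoring through an arbitrary free module also factors through $R^n$, so that the two possible notions of ``stable Hom'' agree.

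Next, invoking Proposition \ref{P:IWSpecials}(c), there are nonnegative integers $a, b$ with $\Omega(X) \oplus R^a \cong X^* \oplus R^b$; adding free summands does not change $\ul{\Hom}_R(-, Y)$, since any map out of $R^k$ factors (tautologically) through a free module. Hence $\ul{\Hom}_R(\Omega(X), Y) \cong \ul{\Hom}_R(X^*, Y)$, and symmetrically $\Ext^1_R(Y, X) \cong \ul{\Hom}_R(Y^*, X)$. Now the $R$-dual $(-)^* = \Hom_R(-, R)$ restricts to a duality on $\MCM(R)$ (maximal Cohen--Macaulay equals reflexive over the normal surface $R$), so $f \mapsto f^*$ gives a natural isomorphism $\Hom_R(X^*, Y) \cong \Hom_R(Y^*, X^{**}) \cong \Hom_R(Y^*, X)$. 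Since $(-)^*$ sends free modules to free modules, this descends to the stable version $\ul{\Hom}_R(X^*, Y) \cong \ul{\Hom}_R(Y^*, X)$. Chaining the three isomorphisms yields $\Ext^1_R(X, Y) \cong \Ext^1_R(Y, X)$.

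For the second statement, I would first verify that $\SCM(R)$ is closed under extensions in $\mod\text{-}R$: given $0 \to A \to B \to C \to 0$ in $\mod\text{-}R$ with $A, C \in \SCM(R)$, the module $B$ lies in $\MCM(R)$ by Example \ref{E:Exact}(c), and the long exact $\Ext^\bullet_R(-, R)$ sequence shows $\Ext^1_R(B, R) = 0$, so $B \in \SCM(R)$ by Proposition \ref{P:IWSpecials}(b). Consequently $\SCM(R)$ inherits an exact structure from $\MCM(R)$ with $\Ext^1_{\SCM(R)}(-, -) = \Ext^1_R(-, -)$ on objects of $\SCM(R)$. An object $P \in \SCM(R)$ is then projective (respectively injective) in this exact structure if and only if $\Ext^1_R(P, Y) = 0$ (respectively $\Ext^1_R(Y, P) = 0$) for every $Y \in \SCM(R)$, and the $\Ext$-symmetry established above identifies these two conditions.

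The main obstacle is the careful bookkeeping in the two ``up-to-free-summand'' passages: one must check both that the isomorphism $\Ext^1_R(X, Y) \cong \ul{\Hom}_R(\Omega(X), Y)$ genuinely uses $\ul{\Hom}_R$ modulo \emph{all} free modules (rather than only the chosen cover), and that the duality $(-)^*$ respects this ideal. Both reduce to the vanishing $\Ext^1_R(X, R) = 0$ characterising specials, so the specialness hypothesis enters crucially at exactly one point; everything else is formal.
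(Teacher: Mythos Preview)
Your proof is correct and follows essentially the same route as the paper: establish $\Ext^1_R(X,Y)\cong \ul{\Hom}_R(\Omega(X),Y)$ using $\Ext^1_R(X,R)=0$ to control the stable Hom, replace $\Omega(X)$ by $X^*$ via Proposition~\ref{P:IWSpecials}(c), and then apply the duality $(-)^*$ to swap $X$ and $Y$. The paper omits your explicit verification that $\SCM(R)$ is extension-closed (this is recorded just before the corollary), but otherwise the arguments coincide.
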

\begin{proof}
Let $P^\bullet(X)$ be a projective resolution of $X$. The map sending a morphism in $\ul{\Hom}_{R}(\Omega(X), Y)$ to a morphism of complexes $P^\bullet(X) \ra Y[1] \in \Ext^1_{R}(X, Y)$ is well-defined: indeed, $\Ext^1_{R}(X, F)=0$ for every finitely generated free $R$-module $F$, by Proposition \ref{P:IWSpecials} (b). Using the long exact sequence obtained from $0 \ra \Omega(X) \ra P^0 \ra X \ra 0$, we see that every morphism $\Omega(X) \ra F$ factors over $P^0$, showing that the map is well-defined. One checks that this map defines an isomorphism $\ul{\Hom}_{R}(\Omega(X), Y)\cong \Ext^1_{R}(X, Y)$. The duality $(-)^*$ descends to the stable category $\ul{\MCM}(R)$. Now, by Proposition \ref{P:IWSpecials} (c), we get $\Omega(X) \cong X^*$ in the stable category. This yields a chain of isomorphisms
\begin{align*}
\Ext^1_{R}(X, Y) \cong \ul{\Hom}_{R}(X^*, Y) \cong \ul{\Hom}_{R}(Y^*, X^{**}) \cong \ul{\Hom}_{R}(Y^*, X) \cong
\Ext^1_{R}(Y, X).  
\end{align*}
\end{proof}

\subsection{A derived equivalence} \label{ss:DerEq}
\subsubsection{Perverse sheaves and tilting}
It is well-known that tilting is a special case of changing the $t$-structure in a given triangulated category, see e.g.~\cite{HappelReitenSmalo}. Beilinson, Bernstein \& Deligne \cite{BeilinsonBernsteinDeligne82} showed that $t$-structures may be glued along recollements, which led them to the notion of a \emph{perverse} $t$-structure. Objects in the corresponding heart are called \emph{perverse}. This inspired Bridgeland's notion \cite{Bridgeland02} of perverse sheaves in the following context: let $f \colon Y \ra X$ be a projective morphism of quasi-projective schemes over an affine scheme, such that the following conditions hold:
\begin{itemize}
 \item[(B1)] $\Rf\co_{Y} \cong \co_{X}$; 
 \item[(B2)] the fibers of $f$ have dimension at most $1$.
 \end{itemize}
 Denote by $\cd_{\mathsf{coh}}(X)$ and $\cd_{\mathsf{coh}}(Y)$ the unbounded derived categories of quasi-coherent sheaves with coherent cohomologies. It is well-known that there is an adjoint triple of triangle functors \cite{ResiduesDuality} 
\begin{align}\label{E:GeomAdjTriple}
\begin{xy}
\SelectTips{cm}{}
\xymatrix{\cd_{\mathsf{coh}}(Y)\ar[rrrr]|{ \Rf }&&&&\cd_{\mathsf{coh}}(X).\ar@/^25pt/[llll]^{f^!}\ar@/_25pt/[llll]_{ \Lf} }
\end{xy}
\end{align}
Combining the projection formula with the assumption (B1) shows $\Rf \Lf=1_{\cd_{\mathsf{coh}}(X)}$. Therefore Lemma \ref{L:Miyachi} implies that $\Rf$ is a quotient functor and using \eqref{E:GeomAdjTriple} there exist  Bousfield localisation and colocalisation functors for $\ker \Rf \subseteq  \cd_{\mathsf{coh}}(Y)$. In particular, the adjoint triple extends to a recollement (Proposition \ref{P:AbstractRecoll})
\begin{align}\label{E:GeomRecollement}
\begin{xy}
\SelectTips{cm}{}
\xymatrix{ \cc:=\ker \Rf  \ar[rr] && \cd_{\mathsf{coh}}(Y) \ar@/^25pt/[ll] \ar@/_25pt/[ll] \ar[rr]|{ \Rf }&&\cd_{\mathsf{coh}}(X).\ar@/^25pt/[ll]^{f^!}\ar@/_25pt/[ll]_{ \Lf} }
\end{xy}
\end{align}
Condition (B2) and a spectral sequence argument show that an object $\cf$ is contained in the subcategory $\cc \subseteq \cd_{\mathsf{coh}}(Y)$ if and only if all cohomologies $H^i(\cf)$ are in $\cc$, see \cite[Lemma 3.1.]{Bridgeland02}.
In particular, the standard $t$-structure $\cd_{Y}^{\leq 0}$Êon $\cd_{\mathsf{coh}}(Y)$ induces a $t$-structure $\cc^{\leq 0}:=\cd_{Y}^{\leq 0} \cap \cc$ on $\cc$. For any $p \in \mathbb{Z}$, there is a shifted $t$-structure $(\cc^{\leq 0})[p]$, which can be glued \cite{BeilinsonBernsteinDeligne82} with the standard $t$-structure $\cd_{X}^{\leq 0}$ on $\cd_{\mathsf{coh}}(X)$ to give a $t$-structure $^p\cd^{\leq 0}$ on 
 $\cd_{\mathsf{coh}}(Y)$, see \cite{Bridgeland02}:
 \begin{align}
 \begin{array}{cc}
 ^p\cd^{\leq 0}= \{ \cf \in \cd_{\mathsf{coh}}(Y) \mid \Rf(\cf) \in \cd_{X}^{\leq 0} \text{  and  } \Hom_{\cd_{\mathsf{coh}}(Y)}(\cf, \cc^{>p})=0 \}, \\ \\
 ^p\cd^{\geq 0}= \{ \cf \in \cd_{\mathsf{coh}}(Y) \mid \Rf(\cf) \in \cd_{X}^{\geq 0} \text{  and  } \Hom_{\cd_{\mathsf{coh}}(Y)}(\cc^{<p}, \cf)=0 \}.
 \end{array}
 \end{align}
 The category of perverse sheaves $^p\mathsf{Per}(Y/X)$ is defined as the heart $^p\cd^{\leq 0} \cap \,  ^p\cd^{\geq 0}$ of this $t$-structure. For $p=-1, 0$, Van den Bergh \cite{VandenBergh04} considers the restriction of these $t$-structures\footnote{In this situation, the hearts of the restricted $t$-stuctures are equal to the hearts $^p\mathsf{Per}(Y/X)$, see e.g. \cite[Lemma 3.2.]{Bridgeland02}.}  to $\cd^b(\Coh(Y))$ and shows that they arise from tilting theory. We collect some of his results in the following theorem.

 \begin{thm}\label{T:VdB}
 Let $Y$ be a quasi-projective scheme over an affine scheme and let $R=(R, \mathfrak{m})$ be a complete local $k$-algebra, where $k\cong R/\mathfrak{m}$ is algebraically closed. Let $f\colon Y \ra X:=\Spec(R)$ be a projective morphism such that $\Rf \co_{Y}=\co_{X}$ and the fibers have dimensions $\leq 1$. Then the following statements hold:
 \begin{itemize}
 \item[(a)] There exist tilting bundles $\cp$ and $\cp^*$ on $Y$.
 \item[(b)] $\cp$ and $\cp^*$ are projective generators of the abelian categories of perverse sheaves
                $^{-1}\mathsf{Per}(Y/X)$ and $^{0}\mathsf{Per}(Y/X)$, respectively.
\item[(c)] In order to give a more precise description of the tilting bundles, we introduce some notation: let $\{E_{i}\}_{i=1}^n$ be the irreducible components of the exceptional fibre $E$ of $f$. It is well-known that the assignment $\cl \mapsto \deg(\cl \big| E_{i})_{i=1}^n$ defines an isomorphism $\Pic(Y) \cong \ZZ^n$. In particular, there are line bundles $\cl_{i} \in \Pic(Y)$ such that $\deg(\cl_{i}\big|E_{j})= \delta_{ij}$.  To each of these line bundles, one associates a vector bundle $\cm_{i}$ as follows: if $\Ext^1_{Y}(\cl_{i}, \co_{Y})=0$, then $\cm_{i}:=\cl_{i}$. Otherwise, $\cm_{i}$ is given as the maximal extension\footnote{More precisely, one first takes the direct sum $\eta$ of the generators $\eta_{1}, \ldots, \eta_{r_{i}-1} \in \Ext^1_{Y}(\cl_{i}, \co_{Y})$. Then the sequence \eqref{MaxExtension} is obtained from $\eta$ as a pull-back along the diagonal embedding $\cl_{i} \ra \cl_{i}^{r_{i}-1}$. It follows from the long exact sequence associated with \eqref{MaxExtension} and $\Rf \co_{Y}=\co_{X}$, that $\Ext^1_{Y}(\cm_{i}, \co_{Y})=0$.}
\begin{align}\label{MaxExtension}
0 \ra \co^{r_{i}-1}_{Y} \ra \cm_{i} \ra \cl_{i} \ra 0,
\end{align}
of a minimal set of the $r_{i}-1$ generators of $\Ext^1_{Y}(\cl_{i}, \co_{Y})$. Moreover, $\cm_{i}$ does \emph{not} depend on the choice of the set of generators. In other words, it is determined by $\cl_{i}$.

Then $\cp \cong \co_{Y} \oplus \bigoplus_{i=1}^n \cm_{i}$ and $\cp^*={\mathcal Hom}_{Y}(\cp, \co_{Y})$ is the dual bundle.
\item[(d)] In the notations of (c), the simple $\End_{Y}(\cp)$-modules correspond to the perverse sheaves $\co_{E}$ and $\co_{E_{i}}(-1)[1]$.  
 \end{itemize}
 \end{thm}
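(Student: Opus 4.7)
The plan is to follow Van den Bergh's original strategy, constructing the bundles explicitly and then verifying the tilting and perverse-projectivity properties using the recollement \eqref{E:GeomRecollement} and the glued $t$-structures.

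First, I would set up the Picard-theoretic input for part (c). Under the assumptions (B1)--(B2), the exceptional fibre $E$ is a (possibly non-reduced) one-dimensional scheme supported on a tree of curves, and the intersection form on the components $\{E_i\}_{i=1}^n$ is negative definite (this follows from rationality of $R$ via $\Rf \co_Y = \co_X$). Hence the degree map $\Pic(Y) \to \ZZ^n$, $\cl \mapsto (\deg \cl|_{E_j})_j$, is surjective, producing the line bundles $\cl_i$ with $\deg(\cl_i|_{E_j}) = \delta_{ij}$. For each $i$, set $r_i - 1 = \dim_k \Ext^1_Y(\cl_i, \co_Y)$; if this vanishes put $\cm_i = \cl_i$, otherwise form the universal extension \eqref{MaxExtension} by pulling back the diagonal map $\cl_i \to \cl_i^{r_i - 1}$ along a basis of $\Ext^1_Y(\cl_i, \co_Y)$. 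The long exact sequence of $\Ext^*(-, \co_Y)$ applied to \eqref{MaxExtension} immediately yields $\Ext^1_Y(\cm_i, \co_Y) = 0$, and a diagram-chase shows $\cm_i$ is independent of the chosen basis (up to isomorphism).

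Next, for the tilting statement in (a), set $\cp = \co_Y \oplus \bigoplus_i \cm_i$ and verify the two tilting axioms. For Ext-vanishing, I would compute $\Ext^k_Y(\cp, \cp)$ using \eqref{MaxExtension} and its dual, reducing everything to computations of $\Ext^*_Y(\cl_i, \cl_j)$ and $\Ext^*_Y(\cl_i, \co_Y)$. The key inputs are $\Rf \co_Y = \co_X$ (which together with the projection formula handles $\co_Y$-terms), the dimension-1 fibre condition (bounding cohomological degrees), and the choice of $r_i$ (which kills $\Ext^1$ out of $\cm_i$). For generation, I would invoke the recollement \eqref{E:GeomRecollement}: the subcategory $\cc = \ker \Rf$ is generated by objects supported on $E$, and a direct computation shows $\co_{E_i}(-1)[1]$ and $\co_E$ can be written as cones of morphisms between summands of $\cp$ (using that $\cl_i|_{E_j} \cong \co_{E_j}(\delta_{ij})$). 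Combined with $\Lf\cd^b(\Coh X) \subseteq \thick(\co_Y) \subseteq \thick(\cp)$ via the projection formula, this yields $\cd^b(\Coh Y) = \thick(\cp)$.

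For (b) and (d), I would exploit the glued $t$-structure. Since $\Rf \cp \cong \co_X$ lies in $\cd^{\leq 0}_X \cap \cd^{\geq 0}_X$, and $\Hom(\cp, \cc^{>-1}) = 0$ follows from $\Ext^{\geq 1}(\cm_i, \co_E) = 0$ (a consequence of the vanishing built into $\cm_i$ and dimension reasons on $E$), one verifies $\cp \in {}^{-1}\mathsf{Per}(Y/X)$. Projectivity in this heart reduces to $\Ext^1_Y(\cp, \cf) = 0$ for all $\cf \in {}^{-1}\mathsf{Per}(Y/X)$, which again reduces, via a d\'evissage using the simple objects, to the vanishings established above. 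To identify the simples, I would use that the heart is the extension-closure of $\co_E$ and $\{\co_{E_i}(-1)[1]\}_i$ (the former generates the non-exceptional direction, the latter span $\cc \cap {}^{-1}\mathsf{Per}(Y/X)$), and check these are pairwise Hom- and Ext$^1$-orthogonal in ${}^{-1}\mathsf{Per}(Y/X)$, hence simple in an abelian length category. The statement for $\cp^* = \mathcal{H}om_Y(\cp, \co_Y)$ and ${}^0\mathsf{Per}(Y/X)$ follows by dualising \eqref{MaxExtension} and repeating the argument with the opposite perversity.

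The main technical obstacle is verifying that $\cp$ is genuinely a \emph{generator} of $\cd^b(\Coh Y)$ rather than merely partial-tilting, and simultaneously that it is projective in the perverse heart rather than just Ext$^1$-rigid. Both hinge on the delicate interplay between the choice of rank $r_i$ in \eqref{MaxExtension} (dictated by $\dim \Ext^1(\cl_i, \co_Y)$) and the structure of the non-reduced scheme $E$; getting the correct $r_i$ is precisely what makes $\cm_i$ a ``minimal lift'' of $\cl_i$ to an object with vanishing $\Ext^1$, and this is what ensures both the tilting property and perverse projectivity hold simultaneously.
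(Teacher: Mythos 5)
This theorem is stated in the paper without proof; it is a compilation of results from Van den Bergh \cite{VandenBergh04} (the paper explicitly says ``we collect some of his results in the following theorem''), so there is no in-text proof to compare against. Your sketch is in the right general spirit of Van den Bergh's argument, but several of the specific justifications you offer are either wrong or skip over precisely the hard content.

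The most concrete error is your justification for $\Pic(Y)\cong\ZZ^n$. You invoke negative-definiteness of the intersection form on the $E_i$, but this plays no role: negative-definiteness (a) only even makes sense when $Y$ is a surface, whereas the theorem allows arbitrary $Y$ with one-dimensional fibres, and (b) would at best give injectivity of a map $\ZZ^n\to\Pic(Y)$ by $i\mapsto\co(E_i)$, not surjectivity of the degree map $\Pic(Y)\to\ZZ^n$. Van den Bergh's actual argument (his Lemma 3.4.3) runs through $R^1f_*\co_Y=0$, completeness of $R$, passage to the formal fibre and Grothendieck's existence theorem; none of this is touched in your sketch.

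Beyond that, the order of argument differs from Van den Bergh's and this matters for where the difficulty sits. He first establishes structural properties of $^{-1}\mathsf{Per}(Y/X)$ as an abelian category (Noetherian, with a well-controlled set of ``simples''), then constructs $\cp$ as a projective generator of this heart, and only afterwards deduces the tilting property from a general result relating projective generators of hearts of bounded $t$-structures to tilting objects (his 3.2.6--3.2.7). You instead propose to verify the tilting axioms directly (Ext-vanishing plus generation) and then deduce perverse projectivity --- but each of the phrases ``reducing everything to computations,'' ``a direct computation shows,'' and ``via a d\'evissage using the simple objects'' hides exactly the content that Van den Bergh's more structural argument is designed to handle cleanly. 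In particular, showing that $\cp$ \emph{generates} $\cd^b(\Coh Y)$ by exhibiting $\co_E$ and $\co_{E_i}(-1)[1]$ as cones of maps between summands of $\cp$ is not straightforward from the universal extension \eqref{MaxExtension} alone; the passage from generating the heart to generating the whole derived category rests on the $t$-structure being bounded and its heart being well-behaved, which is the general machinery your sketch elides. So the proposal is directionally plausible but, as written, replaces the main difficulties by assertions rather than resolving them, and it gets the Picard-group input wrong.
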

\begin{ex}
Let $\mathbb{P}^1$ the projective line over the complex numbers. In particular, we have a morphism $f\colon \mathbb{P}^1 \ra \Spec(\mathbb{C})$, satisfying the conditions of Van den Bergh's Theorem \ref{T:VdB}. In this case $E=\mathbb{P}^1$, so we get $\cm_{1}=\co(1)$ and $\cp=\co \oplus \co(1)$. It is well-known that the endomorphism algebra of $\cp$ is given by the path algebra of the Kronecker quiver. In other words, one recovers Beilinson's  derived equivalence \cite{Beilinson}. 
\end{ex}

Another example, which will be treated in the next subsection, is given by minimal resolutions of rational surface singularities. In this situation, it is essential to note that the vector bundles $\co_{Y}$ and $\{\cm_{i}\}$ occuring in part (c) of Theorem \ref{T:VdB} are precisely the indecomposable special full vector bundles in Wunram's Theorem \ref{T:Wunram}. Wunram's construction \cite{Wunram88} uses a technique of Artin \& Verdier \cite{ArtinVerdier85}. However, the resulting bundles $\cm_{i}$ coincide, which may be seen using the following commutative diagram with exact rows and columns, see also \cite{Esnault}:
\begin{align}\label{E:RelVdBArtinVerdier}
\begin{array}{c}
\begin{xy}
\SelectTips{cm}{}
\xymatrix{&& 0 & 0 \\
&& \co_{D_{i}} \ar[u] \ar@{=}[r] & \co_{D_{i}} \ar[u]  \\
0 \ar[r] & \co_{Y}^{r_{i}-1}   \ar[r]^f & \cm_{i} \ar[u]  \ar[r] & \cl_{i}  \ar[u]  \ar[r] & 0 \\
0 \ar[r] & \co_{Y}^{r_{i}-1} \ar@{=}[u]  \ar[r] & \co_{Y}^{r_{i}} \ar[u]^g  \ar[r] & \co_{Y}  \ar[u]  \ar[r] & 0 \\
&& 0 \ar[u] & 0 \ar[u]
}
\end{xy}
\end{array}
\end{align} 
Here, $f$ respectively $g$ are induced by $r_{i}-1$ respectively $r_{i}$ global sections of $\cm_{i}$.
\subsubsection{Application to rational surface singularities} \label{sss:Applicationtorational}

The following notion was introduced by Wemyss \cite{WemyssReconstructionTypeA}.

\begin{defn}
Let $M=R \oplus \bigoplus_{i \in I} M_{i}$ the direct sum of all indecomposable special $R$-modules. $\Lambda=\End_{R}(M)$ is called the \emph{reconstruction algebra} of $R$. 
\end{defn}

\begin{rem}
Wemyss shows that in many situations (e.g.~for quotient singularities) one can construct the minimal resolution of $\Spec(R)$ from $\Lambda$, by using quiver moduli spaces, see \cite{WemyssReconstructionTypeA, Wemyss10}. Moreover, the quiver of $\Lambda$ and the number of relations are encoded in the combinatorics of the dual intersection graph and the fundamental cycle. However, determining the precise form of the relations is usually a hard problem! 
\end{rem}

\begin{rem}\label{R:AlgebraicMcKay}
If $R$ is Gorenstein (hence an ADE-surface singularity), then $\Lambda$ is the Auslander algebra of the category $\MCM(R)=\SCM(R)$. This may also be described as the completion of the preprojective algebra $\Pi(\widehat{Q})$, where $\widehat{Q}$ is the affine Dynkin quiver corresponding to the dual graph of $R$, see Auslander \cite[Proof of Proposition 2.1.]{Auslander86} in conjunction with Reiten \& Van den Bergh \cite[Proof of Proposition 2.13]{ReitenVdB89}.
\end{rem}

Let $\pi \colon Y \ra \Spec(R)$ be the minimal resolution of singularities. 
Let $I=\cc\cup\cd$ be the index set of the irreducible components of the exceptional divisor $E=\pi^{-1}(\mathfrak{m})$, where $\cc$ denotes the set of $(-2)$-curves and $\cd$ the set of $(-n)$-curves with $n>2$.  We choose a subset $\cs\subseteq I$, and contract all curves in $\cs$. In this way, we obtain a normal scheme $X^{\cs}$ and a factorization of the minimal resolution of singularities $\pi$, see Artin \cite{Artin62} and also Lipman \cite[Theorem 27.1]{Lipman}.
\[
Y\xrightarrow{f^{\cs}} X^{\cs}\xrightarrow{g^{\cs}}\Spec R.
\]
The following result is due to Wemyss.

\begin{thm}\label{main Db}
Let $\cs\subseteq I$. We set $N^{\cs}:=R\oplus_{i\in I\setminus \cs}M_i$ and denote by $e \in \Lambda$ the idempotent endomorphism corresponding to the identity of $N^\cs$.  Then $e\Lambda e=\End_R(N^{\cs})$ is derived equivalent to $X^{\cs}$ via a tilting bundle $\cv_\cs$ in such a way that 
\[
\begin{array}{c}
{\SelectTips{cm}{10}
\xy0;/r.4pc/:
(-10,20)*+{\cd^b(\mod-\Lambda)}="A2",(20,20)*+{\cd^b(\Coh Y)}="A3",
(-10,10)*+{\cd^b(\mod-e\Lambda e)}="a2",(20,10)*+{\cd^b(\Coh X^{\cs})}="a3",
\ar"A3";"A2"_{\RHom_Y(\cv_\emptyset,-)}
\ar"A2";"a2"_{(-)e}
\ar"a3";"a2"_{\RHom_{X^\cs}(\cv_{\cs},-)}
\ar"A3";"a3"^{\Rfcs}
\endxy}
\end{array}
\]
commutes.

Moreover, $\RHom_{Y}(\cv_{\emptyset}, \co_{E_{i}}(-1)[1]) \cong S_{i}$ is the simple $\Lambda$-module with projective cover $\Hom_{R}(M, M_{i})$ and $\RHom_{Y}(\cv_{\emptyset}, \co_{Z_{f}}) \cong S_{\star}$ is the simple $\Lambda$-module with projective cover $\Hom_{R}(M, R)$. 
\end{thm}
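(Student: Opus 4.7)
The strategy is to apply Van den Bergh's tilting theorem (Theorem \ref{T:VdB}) not only to the minimal resolution $\pi = g^\emptyset \circ f^\emptyset$, but simultaneously to the partial resolution $g^{\mathcal{S}} : X^{\mathcal{S}} \to \Spec R$, and then to match the resulting tilting bundles with the reconstruction algebra data using Wunram's description of special Cohen--Macaulay modules. Since $\Spec R$ has rational singularities and $X^{\mathcal{S}}$ is obtained from $Y$ by contracting a subset of the exceptional curves, $X^{\mathcal{S}}$ still has rational singularities, so $\mathbf{R}g^{\mathcal{S}}_* \mathcal{O}_{X^{\mathcal{S}}} \cong \mathcal{O}_{\Spec R}$; the fibers of $g^{\mathcal{S}}$ are also at most one-dimensional. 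Hence conditions (B1)--(B2) hold and Van den Bergh's theorem produces a tilting bundle $\mathcal{V}_{\mathcal{S}} = \mathcal{O}_{X^{\mathcal{S}}} \oplus \bigoplus_{i \in I \setminus \mathcal{S}} \mathcal{M}_i^{\mathcal{S}}$ on $X^{\mathcal{S}}$, where each $\mathcal{M}_i^{\mathcal{S}}$ is the maximal extension associated to the dual line bundle of the curve $E_i$ (for $i \in I \setminus \mathcal{S}$).

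The next step is to identify $\End_{X^{\mathcal{S}}}(\mathcal{V}_{\mathcal{S}})$ with $e\Lambda e = \End_R(N^{\mathcal{S}})$. Using diagram \eqref{E:RelVdBArtinVerdier}, the Van den Bergh bundles $\mathcal{M}_i^{\mathcal{S}}$ coincide with the Artin--Verdier/Wunram special full bundles on $X^{\mathcal{S}}$. By Esnault's equivalence (Theorem \ref{T:Esnault}), adapted to the partial resolution $g^{\mathcal{S}}$ (which remains a rational resolution of $\Spec R$), the functor $(g^{\mathcal{S}})_*$ gives a fully faithful embedding of the category of full vector bundles on $X^{\mathcal{S}}$ into $\MCM(R)$, sending $\mathcal{O}_{X^{\mathcal{S}}}$ to $R$ and each $\mathcal{M}_i^{\mathcal{S}}$ to the special Cohen--Macaulay module $M_i$ ($i \in I \setminus \mathcal{S}$). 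This gives the desired algebra isomorphism $\End_{X^{\mathcal{S}}}(\mathcal{V}_{\mathcal{S}}) \cong \End_R(N^{\mathcal{S}}) = e\Lambda e$, so $\RHom_{X^{\mathcal{S}}}(\mathcal{V}_{\mathcal{S}}, -)$ supplies the lower horizontal equivalence.

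For the commutativity of the square, by Grothendieck duality/adjunction it suffices to produce a canonical isomorphism
\begin{equation*}
\mathbf{L}(f^{\mathcal{S}})^* \mathcal{V}_{\mathcal{S}} \;\cong\; \mathcal{V}_\emptyset \cdot e \;=\; \mathcal{O}_Y \oplus \bigoplus_{i \in I \setminus \mathcal{S}} \mathcal{M}_i
\end{equation*}
in $\cd^b(\Coh Y)$. Since $\mathcal{V}_{\mathcal{S}}$ is locally free the derived pullback is an ordinary pullback, and the identification reduces to the fact that the Artin--Verdier construction of $\mathcal{M}_i$ (for $i$ corresponding to an exceptional curve not contracted by $f^{\mathcal{S}}$) is determined by local data at $E_i$ and is therefore compatible with the birational morphism $f^{\mathcal{S}} : Y \to X^{\mathcal{S}}$. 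Combining this with the standard $(\mathbf{L}(f^{\mathcal{S}})^*, \mathbf{R}f^{\mathcal{S}}_*)$-adjunction converts $\mathbf{R}f^{\mathcal{S}}_*$ on the right into the functor $(-)e$ on the left, giving commutativity.

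Finally, the identification of the simples is read off from Van den Bergh's description (Theorem \ref{T:VdB}(d)) of the perverse simples in $^{-1}\mathsf{Per}(Y/\Spec R)$: under the equivalence $\RHom_Y(\mathcal{V}_\emptyset, -)$, the perverse simples $\mathcal{O}_{E_i}(-1)[1]$ correspond to the simple tops of the indecomposable projectives $\Hom_Y(\mathcal{V}_\emptyset, \mathcal{M}_i) \cong \Hom_R(M, M_i)$, hence to $S_i$. The remaining perverse simple is the structure sheaf of the scheme-theoretic exceptional fiber, which for a rational surface singularity is precisely the fundamental cycle $Z_f$, and its projective cover under the tilting equivalence is $\Hom_Y(\mathcal{V}_\emptyset, \mathcal{O}_Y) \cong \Hom_R(M, R)$, giving $S_\star$. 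The main obstacle in carrying out this program rigorously is the middle step: one must verify that Van den Bergh's local maximal extension construction on the partial resolution $X^{\mathcal{S}}$ is compatible with its counterpart on $Y$ under pullback along $f^{\mathcal{S}}$, and simultaneously with Wunram's pushforward identification down to $\Spec R$, so that all three descriptions of the bundles $\mathcal{M}_i$ (on $Y$, on $X^{\mathcal{S}}$, and as reflexive $R$-modules) are canonically compatible.
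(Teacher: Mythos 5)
Your overall strategy matches the paper's: apply Van den Bergh's tilting theorem (Theorem~\ref{T:VdB}) both to the minimal resolution $\pi\colon Y\to\Spec R$ and to the contraction $g^{\cs}\colon X^{\cs}\to\Spec R$, produce the tilting bundles $\cv_{\emptyset}$ and $\cv_{\cs}$, identify $\End_{X^{\cs}}(\cv_{\cs})$ with $e\Lambda e$, and deduce commutativity by adjunction. However, there is a genuine gap in the identification of the endomorphism algebra, precisely at the step you flag as ``the main obstacle.''

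You invoke Esnault's equivalence (Theorem~\ref{T:Esnault}) ``adapted to the partial resolution $g^{\cs}$.'' That theorem, as stated and as proved by Esnault, is an equivalence $\pi_*\colon \VB^f(X)\to\MCM(R)$ for a \emph{resolution}, which in particular requires the source to be \emph{smooth}. The scheme $X^{\cs}$ is singular whenever $\cs\neq\emptyset$ (it acquires rational double points along the contracted curves), so there is no direct Esnault equivalence on $X^{\cs}$ to appeal to, and the assertion that $(g^{\cs})_*$ embeds full bundles on $X^{\cs}$ into $\MCM(R)$ and sends $\cm_i^{\cs}$ to $M_i$ would need a separate argument. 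The paper avoids this issue entirely: it first proves the pullback identification $\Lf(\cv_{\cs})\cong f^*\cv_{\cs}\cong\co_Y\oplus_{i\in I\setminus\cs}\cm_i^Y$, and then uses the projection formula and the $(\Lf,\Rf)$-adjunction to conclude
\[
\End_{X^{\cs}}(\cv_{\cs})\cong\Hom_{X^{\cs}}(\cv_{\cs},\Rf f^*\cv_{\cs})\cong\End_Y(f^*\cv_{\cs})\cong\End_Y\Bigl(\co_Y\oplus_{i\in I\setminus\cs}\cm_i^Y\Bigr).
\]
Only then is Esnault applied, on the smooth minimal resolution $Y$ where it is valid, to get $\cong\End_R(N^{\cs})$.

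Moreover, the pullback identification you defer --- that the maximal-extension construction on $X^{\cs}$ pulls back to the one on $Y$ --- is not merely a matter of the construction ``being determined by local data.'' The paper verifies it in two steps: first $f^*\cl_i^{X}\cong\cl_i^Y$ for $i\in I\setminus\cs$, because the divisors $D_i$ defining $\cl_i$ lie in the locus where $f$ is an isomorphism; and second, the pullback of the maximal-extension sequence on $X^{\cs}$ remains a \emph{maximal} extension on $Y$, which is not automatic and is checked by the Ext computation
\[
\Ext^1_Y(f^*\cm_i^{X},\co_Y)\cong\Ext^1_{X^{\cs}}\bigl(\cm_i^{X},\Rf\co_Y\bigr)\cong\Ext^1_{X^{\cs}}(\cm_i^{X},\co_{X^{\cs}})=0,
\]
the last vanishing coming from the tilting property of $\cv_{\cs}$. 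This adjunction-plus-tilting step is the content you would need to supply; your description of the commutativity step is consistent with the paper's, but it rests on this unproved compatibility.
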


\begin{proof}
Since $R$ has rational singularities, $\mathbf{R}\pi_{*}(\co_{Y}) \cong \co_{\Spec R }$. Moreover, all fibres of $\pi$ are at most one dimensional. Hence Theorem \ref{T:VdB} yields a tilting bundle $\cv_{\emptyset}=\co_{Y} \oplus \bigoplus_{i \in I} \cm_{i}^Y$ on $Y$.

For convenience, we denote $X:={X^{\cs}}$, and further $Y\xrightarrow{f^{\cs}} X^{\cs}\xrightarrow{g^{\cs}}\Spec R$ by 
\[
Y\xrightarrow{f} X\xrightarrow{g}\Spec R.
\]
We want to apply Theorem \ref{T:VdB} to $g\colon X \ra \Spec R$. Since $R$ is normal, $g_{*}(\co_{X}) \cong \co_{\Spec R}$ by Zariski's Main Theorem, see \cite[Corollary III.11.4]{Hartshorne}. It remains to show, that $\mathsf{R}^ig_{*}\co_{X}$ vanishes for $i>0$. This follows from the Grothendieck spectral sequence $\mathsf{R}^ig_{*} \mathsf{R}^jf_{*} \Rightarrow \mathsf{R}^{i+j}\pi_{*}$ and the vanishing of  $\mathsf{R}^{n}\pi_{*} \co_{Y}$ for all $n>0$.

Applying Theorem \ref{T:VdB} yields a tilting bundle $\cv_\cs=\co_{X}\oplus_{i\in I\setminus \cs}\cm^{X}_i$  on $X$.

We claim that its pullback along $f$ is a direct summand of the tilting bundle $\cv_{\emptyset}$ on $Y$. More precisely, $f^*(\cv_\cs)\cong\co_Y\oplus_{i\in I\setminus \cs}\cm^Y_i$. 

Recall that there are isomorphisms $\cl^X_i \cong \co_{X}(D^X_{i})$ respectively $\cl^Y_i \cong \co_{Y}(D^Y_{i})$, where the $D_{i}$ are divisors which intersect the exceptional divisor $E_{i}$ transversally and none of the other exceptional divisors, see e.g. the diagram \eqref{E:RelVdBArtinVerdier}. Since $f$ restricts to an isomorphism
\[f\colon Y\setminus \bigcup_{i \in I \setminus \cs} E_{i} \ra X \setminus \Sing(X)\] and $D^X_{i} \subseteq X \setminus \Sing(X)$, we obtain  $f^*\cl^X_i \cong \cl^Y_i$ for all $i\in I\setminus \cs$. Taking the pull back of the maximal extension
\begin{align}\label{E:MaxDown}
0\to\co_X^{\oplus(r_i-1)}\to\cm^X_i\to\cl^X_i\to 0
\end{align}
gives an exact sequence 
\begin{eqnarray}
0\to\co_Y^{\oplus(r_i-1)}\to f^*\cm^X_i\to\cl^Y_i\to 0,\label{On top}
\end{eqnarray}
since \eqref{E:MaxDown} is a sequence of vector bundles. We want to show that this sequence is again a maximal extension. This condition is equivalent to $\Ext^1_Y(f^*\cm^X_i,\co_Y)=0$, which follows from
\begin{align*}
\Ext^1_Y(f^*\cm^X_i,\co_Y) \cong \Ext^1_Y(\Lf\cm^X_i,\co_Y)\cong\\ \Ext^1_X(\cm^X_i,\Rf\co_Y)\cong\Ext^1_X(\cm^X_i,\co_X)=0.
\end{align*}
where the last equality holds since $\cv_\cs$ is a tilting bundle.  Hence \eqref{On top} is a maximal extension, so it follows (by construction, see Theorem \ref{T:VdB}) that $\cm^Y_i\cong f^*\cm^X_i$ for all $i\in I\setminus \cs$, so $f^*(\cv_\cs)\cong \co_Y\oplus_{i\in I\setminus \cs}\cm^Y_i$ as claimed.

Now by the projection formula 
\[
\Rf(f^*\cv_\cs)\cong \Rf(\co_Y\otimes f^*\cv_\cs)\cong \Rf(\co_Y)\otimes \cv_\cs\cong\co_X\otimes\cv_\cs \cong\cv_\cs
\]
and so it follows that
\[
\End_X(\cv_\cs)\cong\Hom_X(\cv_\cs,\Rf(f^*\cv_\cs))\cong\Hom_Y(\Lf\cv_\cs,f^*\cv_\cs)\cong\End_Y(f^*\cv_\cs),
\]
i.e. $\End_X(\cv_\cs)\cong\End_Y(\co_Y\oplus_{i\in I\setminus \cs}\cm^Y_i)$.  By Theorem \ref{T:Esnault}, $\pi_{*}$ induces an isomorphism  $\End_Y(\co_Y\oplus_{i\in I\setminus \cs}\cm^Y_i)\cong \End_R(R\oplus_{i\in I\setminus\cs} M_i)\cong\End_R(N^{\cs})$.

Hence we have shown that $\cv_\cs$ is a tilting bundle on $X^\cs$ with endomorphism ring isomorphic to $\End_R(N^{\cs})$, so the first statement follows.  For the commutativity statement, simply observe that we have functorial isomorphisms
\begin{eqnarray*}
\RHom_{X^\cs}(\cv_\cs,\Rf(-))& \cong&\RHom_{Y}(\Lf\cv_\cs,-)\\
&\cong& \RHom_{Y}(\co_Y\oplus_{i\in I\setminus \cs}\cm^Y_i,-)\\
&\cong& \RHom_{Y}(\co_Y\oplus_{i\in I}\cm^Y_i,-)e\\
&\cong& \RHom_{Y}(\cv_\emptyset,-)e.
\end{eqnarray*}

The last statement follows from \cite[3.5.7]{VandenBergh04}.
\end{proof} 

\begin{rem}
It is well-known that the derived McKay Correspondence \cite{BKR, KapranovVasserot00} for ADE-surface singularities $R$ is a special case of the tilting equivalence \[\cd^b(\Coh Y) \xrightarrow{\RHom_Y(\cv_\emptyset,-)} \cd^b(\mod-\Lambda)\] above. Indeed, in this case $\Lambda$ is the Auslander algebra of $\MCM(R)$, which is isomorphic to the preprojective algebra of the corresponding affine Dynkin quiver $\widehat{Q}$, see Remark \ref{R:AlgebraicMcKay}. This was one of the motivations for Van den Bergh's work \cite{VandenBergh04}.
\end{rem}

The following result is due to Wemyss \cite{Wemyss10}. It shows that $\SCM(R)$ admits a non-commutative resolution in the sense of Definition \ref{defNCR}. In particular, our Morita type Theorems (Theorem \ref{t:main-thm} and Theorem \ref{t:alternative-main}) may be applied to $\SCM(R)$.

\begin{cor}\label{C:FinGlobalDim}
The reconstruction algebra has finite global dimension.
\end{cor}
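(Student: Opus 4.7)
The plan is to deduce finite global dimension of $\Lambda$ directly from Wemyss' derived equivalence (Theorem \ref{main Db}) in the case $\cs = \emptyset$. In that case $X^\cs = Y$ and the theorem produces a triangle equivalence
\[
F := \RHom_Y(\cv_\emptyset, -) \colon \cd^b(\Coh Y) \stackrel{\sim}\longrightarrow \cd^b(\mod-\Lambda),
\]
so the whole question reduces to cohomological properties of $\cd^b(\Coh Y)$.

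First I would observe that since $\pi \colon Y \to \Spec(R)$ is a resolution of singularities, $Y$ is a regular (smooth) Noetherian scheme of dimension two. By the Auslander--Buchsbaum--Serre theorem (applied stalkwise) every coherent sheaf on $Y$ admits a finite locally free resolution, and hence $\cd^b(\Coh Y) = \Perf(Y)$. Because $\cv_\emptyset$ is a tilting bundle on $Y$, it generates $\cd^b(\Coh Y)$ as a triangulated category, so in particular
\[
\Perf(Y) \;=\; \thick_{\cd^b(\Coh Y)}(\cv_\emptyset).
\]

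Next I would transport this equality across $F$. Since $\cv_\emptyset$ is tilting, $\Ext^i_Y(\cv_\emptyset,\cv_\emptyset)=0$ for $i>0$, and hence $F(\cv_\emptyset) \cong \End_Y(\cv_\emptyset) \cong \Lambda$ sitting in degree $0$. A triangle equivalence preserves thick closures of objects, so
\[
F\bigl(\thick(\cv_\emptyset)\bigr) \;=\; \thick_{\cd^b(\mod-\Lambda)}(\Lambda) \;=\; K^b(\proj-\Lambda) \;=\; \per(\Lambda).
\]
Combining the two displays gives $\cd^b(\mod-\Lambda) = \per(\Lambda)$, which for a right Noetherian ring is equivalent to $\gldim \Lambda < \infty$ (every finitely generated $\Lambda$-module admits a finite projective resolution; since $\Lambda$ has only finitely many simples and is semi-perfect, a uniform bound then exists).

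The essential work is already contained in Theorem \ref{main Db}; what remains is routine. The only mildly subtle point is verifying $\Perf(Y) = \cd^b(\Coh Y)$ for the particular $Y$ at hand, but this is immediate from regularity of $Y$ together with the fact that $Y$ is of finite type over the Noetherian local base $\Spec(R)$. No cohomological dimension estimates on the exceptional fibre or case analysis of the dual graph are needed: smoothness of $Y$ plus the tilting equivalence suffice.
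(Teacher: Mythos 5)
Your proposal is correct and follows essentially the same approach as the paper: use smoothness of $Y$ to identify $\Perf(Y)$ with $\cd^b(\Coh Y)$, transport across the tilting equivalence from Theorem \ref{main Db} to get $\per(\Lambda)=\cd^b(\mod-\Lambda)$, and then pass from "every finitely generated module has finite projective dimension" to a uniform bound using that $\Lambda$ (being module-finite over the complete local ring $R$) is semi-perfect with finitely many simples. The paper's proof is a condensed version of exactly this, citing Auslander for the final uniform-bound step where you argue via semi-perfectness.
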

\begin{proof}
Since $Y$ is smooth, there is a triangle equivalence $\Perf(Y) \cong \cd^b(\Coh(Y))$. Using the derived equivalence from Theorem \ref{main Db}, we obtain an equivalence $\Perf(\Lambda) \cong \cd^b(\mod-\Lambda)$. Hence every finitely generated $\Lambda$-module has finite projective dimension. Since $\Lambda$ is a finitely generated $R$-module and $R$ is complete, $\Lambda$ has only finitely many simple modules and the global dimension is bounded by the maximum of their projective dimensions, see Auslander \cite[\S 3]{Auslander55}. 
\end{proof}

\subsection{$\SCM(R)$ is a Frobenius category}\label{ss:SCMFrobenius}
In this subsection, we follow Iyama \& Wemyss article \cite{IWnewtria}.
By Proposition \ref{P:IWSpecials} (b), $\SCM(R)$ is a full extension closed subcategory in $\MCM(R)$. Hence, $\SCM(R)$ is an exact category. Moreover, the classes of projective and injective objects coincide by Corollary \ref{C:ExtSymmetry}. To show that $\SCM(R)$ is a Frobenius category, it suffices to show that it has enough projective and injective objects. 

\begin{prop}\label{P:ExactRelProjandInj}
Let $\cb$ be a Krull--Remak--Schmidt exact category with enough  injective objects. Let $\cc \subseteq \cb$ be a contravariantly finite extension closed subcategory. Then $\cc$ has enough injective objects with respect to the induced exact structure.

A dual result holds for  projective objects and covariantly finite subcategories. 
\end{prop}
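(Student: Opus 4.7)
The plan is to reduce to a single-step pullback construction: given $C \in \cc$, I would build an inflation $C \to E$ in $\cc$ whose target is injective in the induced exact structure on $\cc$. First, I would use the hypothesis that $\cb$ has enough injective objects to choose a conflation
\begin{align*}
C \xrightarrow{\ i \ } I \xrightarrow{\ p \ } L
\end{align*}
in $\cb$ with $I$ a $\cb$-injective. The quotient term $L$ generally falls outside $\cc$, so next I would invoke contravariant finiteness to pick a right $\cc$-approximation $f\colon C' \to L$ with $C' \in \cc$. Pulling back the deflation $p$ along $f$ via axiom (Ex2) produces a commutative diagram whose rows are conflations in $\cb$
\begin{align*}
\begin{array}{ccccc}
C & \lar E & \lar C' \\
\parallel & \downarrow & \downarrow f \\
C & \lar I & \lar L,
\end{array}
\end{align*}
and since $\cc$ is extension closed and $C, C' \in \cc$, the middle term $E$ also lies in $\cc$. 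This gives a conflation $C \to E \to C'$ inside $\cc$.

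The main step is to verify that $E$ is injective with respect to the induced exact structure on $\cc$. Because $\cc$ is extension closed, one has $\Ext^1_\cc(Z, E) = \Ext^1_\cb(Z, E)$ for every $Z \in \cc$, so it suffices to show the latter vanishes. The proof applies $\Hom_\cb(Z, -)$ to the above morphism of conflations to obtain a commutative diagram of long exact Hom-sequences. The $\cb$-injectivity of $I$ makes $\Hom_\cb(Z, I) \to \Hom_\cb(Z, L)$ surjective, and the defining property of the right $\cc$-approximation $f$ makes $\Hom_\cb(Z, C') \to \Hom_\cb(Z, L)$ surjective. Chasing the diagram, the composition $\Hom_\cb(Z, C') \to \Hom_\cb(Z, L) \to \Ext^1_\cb(Z, C)$ agrees with the connecting map of the top row and is surjective, which forces the connecting map $\Ext^1_\cb(Z, C) \to \Ext^1_\cb(Z, E)$ to be zero.

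The hard part will be upgrading this to an actual vanishing of $\Ext^1_\cb(Z, E)$: the argument so far only gives an injection $\Ext^1_\cb(Z, E) \hookrightarrow \Ext^1_\cb(Z, C')$, and extra care is needed. Here I would exploit the Krull--Remak--Schmidt hypothesis to choose $f$ to be a \emph{minimal} right $\cc$-approximation; such a minimal approximation satisfies a Wakamatsu-type property which, together with the previous diagram chase, forces $\Ext^1_\cb(Z, E) = 0$ for all $Z \in \cc$. Hence $E$ is injective in $\cc$ and $C \hookrightarrow E$ is the desired embedding into a $\cc$-injective. The dual statement for projectives follows by the evident dual construction, replacing the pullback by a pushout, $\cb$-injectives by $\cb$-projectives, and right by left $\cc$-approximations.
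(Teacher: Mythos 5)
Your argument is correct, but it follows a genuinely different route from the paper's proof, so let me compare the two. You build the candidate injective $E$ by pulling back the ambient injective conflation $C \rightarrow I \rightarrow L$ along a minimal right $\cc$-approximation $f\colon C' \rightarrow L$. The paper does not approximate $L$ directly: it first uses the injective conflation to obtain an epimorphism of $\cc$-modules $\Hom_\cb(-,L)|_\cc \twoheadrightarrow \Ext^1_\cb(-,C)|_\cc$, composes with a right approximation to get $\Hom_\cc(-,Y') \twoheadrightarrow \Ext^1_\cb(-,C)|_\cc$, and then invokes Krull--Remak--Schmidt to replace this by a \emph{projective cover} $\phi\colon \Hom_\cc(-,Y) \twoheadrightarrow \Ext^1_\cb(-,C)|_\cc$. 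The conflation $C \rightarrow Z \rightarrow Y$ is recovered as $\phi_Y(\mathrm{id}_Y)$, and injectivity of $Z$ is proved by pushing out an arbitrary conflation out of $Z$ and splitting the pushed-out extension using the superfluous-epimorphism property of $\phi$. In effect the paper's $Y$ is a direct summand of your $C'$, so the paper's construction is slightly more economical, while yours is arguably more transparent as a pullback.

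Your route works, but the step you label a ``Wakamatsu-type property'' needs to be stated and proved; it is the following. \emph{If $\cc$ is extension-closed in $\cb$ and $f\colon C' \rightarrow L$ is a minimal right $\cc$-approximation, then $f_*\colon \Ext^1_\cb(Z,C') \rightarrow \Ext^1_\cb(Z,L)$ is injective for every $Z \in \cc$.} The proof is exactly the Wakamatsu-style manoeuvre: if $\eta\colon C' \xrightarrow{\,i\,} M \rightarrow Z$ has $f_*[\eta] = 0$, then the splitting of the pushed-out conflation produces $\bar f\colon M \rightarrow L$ with $\bar f\, i = f$; since $M \in \cc$ by extension-closure and $f$ is a right $\cc$-approximation, $\bar f$ factors as $\bar f = f\,h$ for some $h\colon M \rightarrow C'$; then $f\,(h\,i) = f$ and minimality of $f$ makes $h\,i$ an automorphism of $C'$, so $i$ splits and $[\eta] = 0$. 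You should also record the short observation that the monomorphism $q_*\colon \Ext^1_\cb(Z,E) \hookrightarrow \Ext^1_\cb(Z,C')$ coming from your diagram chase has image contained in $\ker(f_*)$: indeed $f_*\,q_* = g_*\,p_*$ with $g\colon I\rightarrow L$ and $p\colon E\rightarrow I$ the other edges of the pullback square, and $p_*$ lands in $\Ext^1_\cb(Z,I) = 0$. Combining these two facts gives $\Ext^1_\cb(Z,E) = 0$, hence $E$ is $\cc$-injective, and your proof is complete. Both arguments ultimately use Krull--Remak--Schmidt to extract a minimal object from a surjection onto a representable functor; the paper applies this to $\Ext^1_\cb(-,C)|_\cc$ directly, while you apply it to $\Hom_\cb(-,L)|_\cc$ and compensate with the Wakamatsu-type lemma.
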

\begin{proof}
Let $X$ be an object in $\cc$. By our assumptions on  $\cb$, there is a conflation $X \ra I \ra X'$ in $\cb$, with $I$  injective in $\cb$. This yields the following epimorphism of functors $\Hom_{\cb}(-, X') \ra \Ext^1_{\cb}(-, X) \ra 0$. Since $\cc$ is contravariantly finite there exists $Y \in \cc$ and an epimorphism of functors $\Hom_{\cc}(-, Y) \ra \Hom_{\cb}(-,X')\big|_{\cc} \ra 0$. Combining these, we obtain an epimorphism $\phi\colon \Hom_{\cc}(-, Y) \ra \Ext^1_{\cb}(-, X)\big|_{\cc} \ra 0$.  Since $\cc$ is a Krull--Remak--Schmidt category, there exists a $\phi$, which is a projective cover.
Using the naturality of $\phi$, one can check that it is induced by the following conflation in $\cc$ 
\begin{align} \label{E:Confi}
\phi_{Y}(\id_{Y})\colon X \ra Z \ra Y.
\end{align}
In particular, $\Ext^1_{\cb}(-, X)\big|_{\cc}$ is a finitely presented $\cc$-module. 

It remains to show that $Z$ is injective in $\cc$. In other words, every conflation $Z\ra Z' \ra Z''$ in $\cc$ splits. Together with the conflation \eqref{E:Confi}, we obtain a commutative diagram, where the columns and rows are conflations in $\cc$
 
 \begin{align}\label{E:4Confi}
\begin{array}{c}
\begin{xy}
\SelectTips{cm}{}
\xymatrix{
X \ar[rr] \ar@{=}[d] && Z \ar[d] \ar[rr] && Y \ar[d]^\alpha \\
X  \ar[rr] && Z' \ar[rr] \ar[d] && Y' \ar[d]\\
&& Z'' \ar@{=}[rr] && Z''.
}
\end{xy}
\end{array}
\end{align} 
Since $\cc$ is closed under extensions $Y' \in \cc$. The two horizontal conflations yield a commutative diagram with exact rows
\begin{align}\label{E:twoConfl}
\begin{array}{c}
\begin{xy}
\SelectTips{cm}{}
\xymatrix@C=11pt{
0 \ar[r] & \Hom_{\cc}(-, X) \ar[r] \ar@{=}[d] & \Hom_{\cc}(-, Z) \ar[r] \ar[d] & \Hom_{\cc}(-, Y) \ar[r]^{\phi}  \ar[d]^{\alpha \circ -}& \Ext^1_{\cb}(-,X)\big|_{\cc} \ar@{=}[d] \ar[r] & 0 \\
0 \ar[r] & \Hom_{\cc}(-, X) \ar[r] & \Hom_{\cc}(-, Z') \ar[r] & \Hom_{\cc}(-, Y') \ar[r]& \Ext^1_{\cb}(-,X)\big|_{\cc} \ar@{..>}[r] & 0.
}
\end{xy}
\end{array}
\end{align} 
The monomorphism $\alpha \circ -$ is split since $\phi$ is a projective cover. By Yoneda's Lemma $\alpha$ is a split monomorphism. Now the upper sequence in \eqref{E:twoConfl} yields an injection
$0 \ra \Ext^1_{\cb}(Z'', Z)\big|_{\cc} \ra \Ext^1_{\cb}(Z'', Y)\big|_{\cc}$. The image of the left vertical conflation in \eqref{E:4Confi} under this map is just the vertical conflation on the right of \eqref{E:4Confi}. Since the latter is split and the map is injective the conflation $Z \ra Z' \ra Z''$ splits as claimed. This shows that $Z$ is  injective and completes the proof of the first statement. The second claim follows by a dual argument. 
\end{proof}

\begin{cor}
$\SCM(R)$ is a Frobenius category.
\end{cor}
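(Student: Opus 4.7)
The plan is to invoke Proposition \ref{P:ExactRelProjandInj} and its dual with the ambient category $\cb = \MCM(R)$ and $\cc = \SCM(R)$. The three ingredients I need to verify are: (i) $\MCM(R)$ is a Krull--Remak--Schmidt exact category with enough projectives and enough injectives, (ii) $\SCM(R)$ is extension closed in $\MCM(R)$, and (iii) $\SCM(R)$ is both covariantly and contravariantly finite in $\MCM(R)$. Ingredient (ii) is immediate from Proposition \ref{P:IWSpecials}(b), since $\Ext^1_R(-,R) = 0$ is preserved under extensions.

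For (i), recall that $R$ is a rational surface singularity, hence Cohen--Macaulay with a canonical module $\omega_R$. Then $\MCM(R)$ is Krull--Remak--Schmidt (being a full extension-closed subcategory of $\mod\,R$ over a complete local ring), and it has enough projectives since $R \in \MCM(R)$ is projective and every MCM module is a quotient of a free module via an MCM kernel. Dually, the duality $\Hom_R(-,\omega_R)\colon \MCM(R) \to \MCM(R)^{\op}$ transports projective resolutions to injective coresolutions, so $\add \omega_R$ is the class of injectives and $\MCM(R)$ has enough of them.

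For (iii), the decisive input is Wunram's Theorem \ref{T:Wunram}: the indecomposable objects of $\SCM(R)$ are $R$ itself together with one MCM module $M_i$ for each irreducible exceptional curve $E_i$ in the minimal resolution, a \emph{finite} list. Since $R$ is complete local Noetherian, Example \ref{Ex:FunctFinite} then shows that any full additive subcategory of an exact subcategory of $\mod R$ with finitely many indecomposables is functorially finite; applied inside $\MCM(R)$, this yields that $\SCM(R)$ is both contravariantly and covariantly finite.

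With (i)--(iii) in place, Proposition \ref{P:ExactRelProjandInj} gives enough injectives in $\SCM(R)$ and its dual gives enough projectives. Finally, Corollary \ref{C:ExtSymmetry} tells us that projectives and injectives of $\SCM(R)$ coincide, so $\SCM(R)$ is Frobenius. The only step demanding any genuine work is ingredient (i)—specifically the existence of enough injectives in $\MCM(R)$ when $R$ is not Gorenstein—but this is standard via $\omega_R$-duality. Everything else is essentially an assembly of previously established results.
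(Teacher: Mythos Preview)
Your proof is correct and follows essentially the same approach as the paper: apply Proposition~\ref{P:ExactRelProjandInj} and its dual to $\SCM(R)\subseteq\MCM(R)$, using Wunram's finiteness (Theorem~\ref{T:Wunram}) with Example~\ref{Ex:FunctFinite} for functorial finiteness, the canonical module duality for enough injectives in $\MCM(R)$, and Corollary~\ref{C:ExtSymmetry} to conclude projectives and injectives coincide.
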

\begin{proof}
We want to apply Proposition \ref{P:ExactRelProjandInj} to the pair $\SCM(R) \subseteq \MCM(R)$. $\MCM(R)$ is an extension closed subcategory in $\mod-R$. This gives an exact structure on $\MCM(R)$. Since $R$ is complete the Krull--Remak--Schmidt property holds. Since $\SCM(R)$ has only finitely many indecomposable objects (Theorem \ref{T:Wunram}), it is a functorially finite subcategory in $\MCM(R)$, by Example \ref{Ex:FunctFinite}. Since $\MCM(R)$ is closed under kernels of epimorphisms, it has enough  projective objects. As a complete Cohen--Macaulay ring, $R$ has a canonical module $K$ \cite[Corollary 3.3.8]{BrunsHerzog}. It is an  injective object in $\MCM(R)$ and induces a duality $\Hom_{R}(-, K)\colon \MCM(R) \ra \MCM(R)$ \cite[Theorem 3.3.10]{BrunsHerzog}. In particular, $\MCM(R)$ has enough  injective objects since it has enough  projective objects. Proposition \ref{P:ExactRelProjandInj} shows that $\SCM(R)$ has enough  projective and injective objects, which coincide by Corollary \ref{C:ExtSymmetry}. Hence, $\SCM(R)$ is a Frobenius category.
\end{proof}

\subsubsection{Description of  projective-injective objects} 
Iyama \& Wemyss \cite{IWnewtria} give an explicit description of the subcategory of  projective-injective objects $\proj \SCM(R)$. To explain their result, we need some notions and results from geometry. 

\begin{prop}[{Adjunction Formula \cite[Proposition V.1.5]{Hartshorne}}]\label{P:AdjForm} Let $C$ be a non-singular curve of genus $g=\dim_{k} H^1(C, \co_{C})$ on a smooth surface $X$, with canonical divisor $K$, then 
\begin{align}
2g-2=C.(C+K).
\end{align}
\end{prop}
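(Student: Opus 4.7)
The plan is to derive the formula from the adjunction isomorphism for line bundles, together with the classical computation of the degree of the canonical divisor on a smooth curve. Since $C$ is a non-singular (hence locally complete intersection) divisor on the smooth surface $X$, the conormal sheaf $\mathcal{N}_{C/X}^\vee = \mathcal{I}_C/\mathcal{I}_C^2$ is a line bundle on $C$, canonically identified with $\mathcal{O}_X(-C)|_C$. The first step is to recall the adjunction isomorphism
\begin{equation*}
\omega_C \;\cong\; \bigl(\omega_X \otimes \mathcal{O}_X(C)\bigr)\big|_C,
\end{equation*}
which follows from the short exact sequence $0 \to \mathcal{O}_X(-C) \to \mathcal{O}_X \to \mathcal{O}_C \to 0$ by tensoring with $\omega_X(C)$ and identifying the resulting ideal-sheaf quotient with the dualizing sheaf of $C$ (equivalently, via the standard conormal sequence $0 \to \mathcal{N}_{C/X}^\vee \to \Omega^1_X|_C \to \Omega^1_C \to 0$ and taking determinants).

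The second step is to take degrees on $C$ of both sides of the adjunction isomorphism. For the left-hand side, I would invoke the classical fact that a smooth projective curve $C$ of genus $g$ satisfies
\begin{equation*}
\deg_C(\omega_C) \;=\; 2g-2,
\end{equation*}
which is a direct consequence of the Riemann--Roch theorem on $C$ applied to $\omega_C$ (or, equivalently, to $\mathcal{O}_C$) together with Serre duality identifying $H^1(C,\mathcal{O}_C)$ with $H^0(C,\omega_C)^\vee$. For the right-hand side, the definition of the intersection pairing on $X$ gives
\begin{equation*}
\deg_C\bigl((\omega_X \otimes \mathcal{O}_X(C))|_C\bigr) \;=\; K.C \,+\, C.C \;=\; C.(C+K).
\end{equation*}

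Equating the two expressions yields $2g-2 = C.(C+K)$, which is the claimed formula. The only nontrivial ingredient is really the adjunction isomorphism, and its verification reduces to the standard determinantal identity applied to the conormal sequence, which holds because $C$ is a smooth Cartier divisor in the smooth surface $X$. There is no genuine obstacle here; the main care needed is in the first step, to ensure that the conormal/adjunction identification is set up correctly so that the degree computation in step two is unambiguous.
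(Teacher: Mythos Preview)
The paper does not give its own proof of this proposition; it is stated with a citation to Hartshorne and used as a known input. Your argument is correct and is precisely the standard proof one finds in Hartshorne~V.1.5: establish the adjunction isomorphism $\omega_C \cong (\omega_X \otimes \mathcal{O}_X(C))|_C$ via the conormal sequence, then take degrees on $C$ and invoke $\deg_C \omega_C = 2g-2$.
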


This has the following well-known consequence.

\begin{cor}\label{C:minusTwoCurve} Let $C$ be a rational curve on a smooth surface $X$. If $C$ has selfintersection number $C^2=-2$, then
$\dim_{k} \Ext^2_{X}(\co_{C}(m), \co_{C}(m))=1$.    
\end{cor}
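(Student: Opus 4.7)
The natural approach is to analyze $\Ext^2_X(\co_C(m),\co_C(m))$ via the local-to-global spectral sequence
\[
E_2^{p,q} = H^p\bigl(X, \mathcal{E}xt^q_{\co_X}(\co_C(m),\co_C(m))\bigr) \Rightarrow \Ext^{p+q}_X(\co_C(m),\co_C(m)).
\]
First I would compute the $\mathcal{E}xt$-sheaves. Since $C$ is a smooth Cartier divisor in the smooth surface $X$, locally $\co_C$ admits a Koszul resolution of length one, so $\mathcal{E}xt^q = 0$ for $q \geq 2$. A direct calculation with this local resolution gives $\mathcal{E}xt^0_{\co_X}(\co_C(m),\co_C(m)) \cong \co_C$ and $\mathcal{E}xt^1_{\co_X}(\co_C(m),\co_C(m)) \cong \mathcal{N}_{C/X} = \co_X(C)|_C$. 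By the self-intersection formula $\deg \mathcal{N}_{C/X} = C^2 = -2$, and since $C \cong \mathbb{P}^1$ is rational, we obtain $\mathcal{N}_{C/X} \cong \co_{\mathbb{P}^1}(-2)$.

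Next I would compute the relevant cohomology on the $E_2$-page. Because every $\mathcal{E}xt$-sheaf above is supported on the proper curve $C$, the cohomology groups $H^p(X,-)$ coincide with $H^p(C,-)$, so the fact that $X$ itself need not be projective causes no trouble. Using the standard cohomology of line bundles on $\mathbb{P}^1$ one has $E_2^{0,0} = k$, $E_2^{0,1}=0$, $E_2^{1,0} = H^1(\mathbb{P}^1,\co) = 0$, $E_2^{1,1} = H^1(\mathbb{P}^1,\co(-2)) \cong k$ (by Serre duality on $\mathbb{P}^1$), and all other $E_2^{p,q}$ vanish — either because $q \geq 2$ or because $p \geq 2$ exceeds the dimension of $C$.

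Finally I would check that $E_\infty^{1,1} = E_2^{1,1}$. The only possibly non-zero differentials into or out of $E_r^{1,1}$ go to or from groups of the form $E_r^{p,q}$ with $p+q \in \{1,2,3\}$ and $q \in \{0,1\}$ which we have already seen to vanish, so the spectral sequence degenerates at $E_2$ along the total-degree-$2$ diagonal. Since the only non-zero contribution is $E_\infty^{1,1} \cong k$, we conclude $\dim_k \Ext^2_X(\co_C(m),\co_C(m)) = 1$.

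The main obstacle here is really a bookkeeping one: correctly identifying $\mathcal{E}xt^1_{\co_X}(\co_C(m),\co_C(m))$ with the normal bundle $\mathcal{N}_{C/X}$ (independent of the twist $m$) and confirming that the self-intersection formula gives the claimed degree. Once this and the vanishing of $\mathcal{E}xt^{\geq 2}$ are in hand, the spectral sequence collapses for purely numerical reasons and the conclusion is immediate.
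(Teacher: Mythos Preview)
Your proof is correct but takes a genuinely different route from the paper's. The paper argues via Serre duality on the surface $X$: the adjunction formula together with $g=0$ and $C^2=-2$ forces $K\cdot C=0$, hence $\omega_X|_C\cong\co_C$, and then Serre duality gives $\Ext^2_X(\co_C(m),\co_C(m))\cong \Hom_X(\co_C(m),\co_C(m))^*\cong k$. You instead go through the local-to-global spectral sequence, identifying $\mathcal{E}xt^1$ with the normal bundle and reading off the answer from $H^1(\mathbb{P}^1,\co(-2))$. Both arguments use the same numerical input $C^2=-2$, but the paper packages it as ``$\omega_X$ is trivial along $C$'' while you package it as ``$\mathcal{N}_{C/X}\cong\co(-2)$''; these are of course equivalent via adjunction. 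The paper's argument is shorter and makes the symmetry $\Ext^2\cong\Hom^*$ manifest, whereas your approach is more hands-on and, as you note, makes it transparent why properness of $X$ is not needed (everything lives on the proper curve $C$). Either way the twist $m$ is a red herring, which both arguments handle cleanly.
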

\begin{proof}
By the adjunction formula, $-2=2g-2=C.(C+K)=-2 + \deg( K\big|_{C})$. Hence, $\deg( K\big|_{C})=0$ and therefore $\omega_{X} \otimes_{X} \co_{C} \cong \omega_{X}\big|_{C} \cong \co_{C}$. Let $(-)^*=\Hom_{k}(-, k)$. Then Serre duality shows
\begin{align*}
\Hom_{X}(\co_{C}(m), \co_{C}(m)) \cong \Hom_{X}(\co_{C}(m), \co_{C}(m) \otimes_{X} \omega_{X}[\dim X])^* \\ \cong \Ext^2_{X}(\co_{C}(m), \co_{C}(m))^*,
\end{align*}
which proves the claim.
\end{proof}

The derived equivalence from Theorem \ref{main Db} and the discussion above yield the following statement.

\begin{prop}\label{P:notrelprojSCM}
Let $M_{i}$ be an indecomposable SCM corresponding to a $(-2)$-curve $E_{i}$. Then $\Ext^1_{R}(M_{i}, X) \neq 0$, for some special Cohen--Macaulay module $X$. In particular, $M_{i}$ is not  projective in $\SCM(R)$.
\end{prop}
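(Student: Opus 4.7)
The plan is to transport the geometric non-vanishing associated with a $(-2)$-curve across Wemyss' derived equivalence (Theorem \ref{main Db}), and then restrict to $R$-modules via the idempotent $e = \id_R \in \Lambda$.

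First, since $E_i \cong \mathbb{P}^1$ is a rational $(-2)$-curve on the smooth surface $Y$, Corollary \ref{C:minusTwoCurve} gives $\Ext^2_Y(\co_{E_i}(-1), \co_{E_i}(-1)) \neq 0$. Moreover, a standard local-to-global calculation (or equivalently the fact that $\co_{E_i}(-1)$ is a $2$-spherical object, using that the conormal bundle of $E_i$ is $\co_{\mathbb{P}^1}(2)$) yields $\Ext^1_Y(\co_{E_i}(-1), \co_{E_i}(-1)) = 0$. Transporting across the triangle equivalence $\Phi = \RHom_Y(\cv_\emptyset, -)$ from Theorem \ref{main Db}, under which $\Phi(\co_{E_i}(-1)[1]) = S_i$ is the simple $\Lambda$-module with projective cover $P_i = \Hom_R(M, M_i)$, we obtain $\Ext^1_\Lambda(S_i, S_i) = 0$ and $\Ext^2_\Lambda(S_i, S_i) \neq 0$. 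In a minimal projective resolution $\cdots \to P_2 \to P_1 \to P_0 = P_i \to S_i \to 0$ of $S_i$, this means that $P_i$ is a direct summand of $P_2$ but \emph{not} of $P_1$.

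Next I apply the exact functor $(-) \cdot e \colon \mod \Lambda \to \mod R$ (with $e\Lambda e \cong R$) to the beginning of this resolution. Since $S_i \cdot e = 0$ (because $S_i$ is supported at the vertex $i \neq \star$), we obtain an exact sequence of $R$-modules
\begin{equation*}
0 \to X \to P_1 \cdot e \to M_i \to 0, \qquad X := (\Omega^2 S_i) \cdot e,
\end{equation*}
with $P_1 \cdot e \in \add M = \SCM(R)$. The core claim is that this sequence cannot split in $\mod R$, which produces the desired non-zero class in $\Ext^1_R(M_i, X)$. Indeed, a splitting would yield $P_1 \cdot e \cong M_i \oplus X$ in $\mod R$, and applying $\Hom_R(M, -)$ (which is fully faithful on $\add M = \SCM(R)$ and identifies it with the additive hull of $\Lambda$ in $\mod \Lambda$, by Esnault's equivalence) would give a $\Lambda$-module decomposition $P_1 \cong P_i \oplus \Hom_R(M, X)$, contradicting the fact established above that $P_i$ is not a summand of $P_1$.

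The main obstacle, and most delicate step, is to verify that $X$ lies in $\SCM(R)$ rather than merely in $\MCM(R)$: $X \subset P_1 \cdot e$ is automatically torsion-free, hence maximal Cohen--Macaulay over the two-dimensional Cohen--Macaulay ring $R$, but specialness requires the vanishing $\Ext^1_R(X, R) = 0$. Applying $\Hom_R(-, R)$ to the displayed sequence and using $\Ext^1_R(M_i, R) = \Ext^1_R(P_1 \cdot e, R) = 0$ (both terms are SCM), this reduces to controlling $\Ext^2_R(M_i, R)$. This last vanishing requires additional input beyond the derived equivalence; if it fails one may iterate the construction, replacing $\Omega^2 S_i$ by a higher syzygy $\Omega^n S_i$, which becomes projective for $n$ sufficiently large since $\gldim \Lambda < \infty$ by Corollary \ref{C:FinGlobalDim}. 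For such $n$, $(\Omega^n S_i) \cdot e$ lies in $\add M = \SCM(R)$ automatically, and the splitting-obstruction argument propagates through the resolution to produce the required non-vanishing $\Ext^1_R$.
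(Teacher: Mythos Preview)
Your non-splitting argument is correct and your approach shares the paper's core idea: transport $\Ext^2_Y(\co_{E_i}(-1), \co_{E_i}(-1)) \neq 0$ across the derived equivalence to produce a nonsplit short exact sequence ending in $M_i$. The gap is exactly where you locate it---showing $X = (\Omega^2 S_i)e$ lies in $\SCM(R)$---and neither of your proposed fixes closes it. The vanishing $\Ext^2_R(M_i, R) = 0$ is not automatic for MCM modules over a non-Gorenstein rational surface singularity, so the direct argument stalls. And the iteration to higher syzygies does not propagate in the way you suggest: passing to $\Omega^n S_i$ with $n > 2$ yields short exact sequences whose right-hand term is $(\Omega^{n-1} S_i) e$ rather than $M_i$, so even when the kernel lands in $\SCM(R)$ you have produced a nonzero $\Ext^1_R$ out of the wrong module.

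The paper closes the gap by establishing the sharper bound $\prdim_\Lambda S_i = 2$, which makes $\Omega^2 S_i$ projective over $\Lambda$ and hence forces $X \in \SCM(R)$ automatically. The missing upper bound $\prdim_\Lambda S_i \leq 2$ amounts to showing $\Ext^3_\Lambda(S_i, S) = 0$ for every simple $\Lambda$-module $S$. For $S = S_j$ with $j \in I$ this follows immediately from $\dim Y = 2$ via the derived equivalence. The nontrivial case is the simple $S_\star$ at the vertex corresponding to $R$, which under the equivalence corresponds to $\co_{Z_f}$; here one needs the computation $\dim_k \Ext^2_Y(\co_{E_i}(-1), \co_{Z_f}) = -E_i^2 - 2$, which vanishes precisely because $E_i$ is a $(-2)$-curve. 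This is where the self-intersection hypothesis enters in an essential way beyond the sphericality you already invoked.
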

\begin{proof}
Since $X$ is surface, the abelian category $\Coh(X)$ has global dimension two. In particular, $\Ext^3(\co_{E_{i}}(-1), \cf)=0$ for all $\cf \in \Coh(X)$. Using the derived equivalence from Theorem \ref{main Db}, this translates into $\Ext^3_{\Lambda}(S_{i}, S_{j})=0$ for all $j \in I$. A longer calculation in $\cd^b(\Coh(X))$ (involving Serre duality and the combinatorics of dual graph and fundamental cycle) shows that $\Ext^2_{X}(\co_{E_{i}}(-1), \co_{Z_{f}})=-E_{i}^2-2$ (see \cite[Proof of Thm.~3.2]{Wemyss10}), which is zero by our assumption on $E_{i}$. This translates into $\Ext^3_{\Lambda}(S_{i}, S_{\star})=0$. Hence, $\prdim_{\Lambda}(S_{i})\leq 2$. But Corollary \ref{C:minusTwoCurve} yields $\Ext^2_{\Lambda}(S_{i}, S_{i}) \neq 0$. Therefore, $\prdim_{\Lambda}(S_{i})=2$.

Let $M$ be an additive generator of $\SCM(R)$. Recall, that there is an additive equivalence
\begin{align}
\SCM(R)=\add(M) \xrightarrow{\Hom_{R}(M, -)} \proj-\End_{R}(M)=\proj-\Lambda.
\end{align}
Thus we may write a minimal projective resolution of $S_{i}$ as
\begin{align}
0 \ra \Hom_{R}(M, X) \xrightarrow{\alpha \circ -} \Hom_{R}(M, Y) \xrightarrow{\beta \circ -} \Hom_{R}(M, M_{i}) \ra S_{i} \ra 0
\end{align}
with $X$ and $Y$ in $\add M=\SCM(R)$.  Since $R$ is a direct summand of $M$, we obtain an exact sequence of special CMs
\begin{align}
0 \ra X \xrightarrow{\alpha} Y \xrightarrow{\beta} M_{i} 
\end{align}
Let us show that $\beta$ is surjective. Since $S_{i}$ is one dimensional, the only maps which are not in the image of $\beta \circ -$  are scalar multiples of the identity of $M_{i}$. In particular, for any $m \in M_{i}$ the map
$R \oplus \bigoplus_{j \in I} M_{j} \ra M_{i}$, defined by $(r, x_{1}, \ldots) \mapsto r\cdot m$ has a preimage $f$ in $\Hom_{R}(M, Y)$. Let $i\colon R \ra M$ be the canonical inclusion and set $y=f\circ i(1) \in Y$. Then $\beta(y)=m$.

Since $S_{i}$ has projective dimension two the exact sequence
\begin{align}
0 \ra X \xrightarrow{\alpha} Y \xrightarrow{\beta} M_{i} \ra 0
\end{align}   
yields a non-trivial element in $\Ext^1_{R}(M_{i}, X)$. This completes the proof.
\end{proof}

To show that the remaining specials are projective, we need the following result, see \cite[Theorem 3.3]{IWnewtria}.

\begin{thm}\label{T:Syzygy}
Let $M$ be a maximal Cohen--Macaulay module. Then \[\Omega(M) \cong \bigoplus_{i \in I} \bigl(\Omega(M_{i})\bigr)^{\oplus c_{1}(M).E_{i}},\] where by an abuse of notation $c_{1}(M)$ denotes the first Chern class of the full vector bundle $\pi^\#(M)$, see Esnault's Theorem \ref{T:Esnault}.
\end{thm}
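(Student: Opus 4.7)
The plan is to lift the statement via Esnault's equivalence $\pi^{\#}\colon \MCM(R)\xrightarrow{\sim}\VB^f(Y)$ to a computation on the minimal resolution $Y$ (or, via Wemyss's derived equivalence from Theorem~\ref{main Db}, on the module category of the reconstruction algebra), where the Krull--Schmidt property of $\SCM(R)$ combined with Wunram's characterization of indecomposable specials pins down the decomposition.

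First I would establish that $\Omega M$ is always a special Cohen--Macaulay module, so that both sides of the claimed isomorphism live in the same tractable category. That $\Omega M$ is MCM follows from the depth lemma applied to the syzygy sequence $0\to\Omega M\to R^n\to M\to 0$, since $R^n$ is MCM over the two-dimensional CM ring $R$. For specialness, equivalent to $\Ext_R^1(\Omega M,R)\cong \Ext_R^2(M,R)=0$, I would use rationality: the fibres of $\pi$ have dimension $\le 1$, so $R^i\pi_{\ast}=0$ for $i\ge 2$, whence $H^2(Y,\mathcal{F})=0$ for every coherent $\mathcal{F}$ on $Y$, and combined with $R^i\pi_{\ast}\co_Y=0$ for $i>0$ this Ext-vanishing transports back through local duality on $\Spec R$. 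In parallel, Proposition~\ref{P:IWSpecials}(c) gives $\Omega M_i\cong M_i^{\ast}$ up to free summands, so each $\Omega M_i$ lies in $\SCM(R)$ as well. Consequently both sides of the asserted isomorphism decompose uniquely in the Krull--Remak--Schmidt category $\SCM(R)$ as $R^{\oplus a_0}\oplus\bigoplus_j M_j^{\oplus a_j}$, and it suffices to match the multiplicity of each $M_j$ on the two sides.

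The crux is then computing these multiplicities. A naive Chern-class argument applied to $0\to\pi^{\#}\Omega M\to\co_Y^n\to\pi^{\#}M\to 0$ does \emph{not} work: the pullback $\pi^{\ast}$ of the syzygy SES typically acquires torsion on the exceptional fibre, so $\pi^{\#}$ does not send a short exact sequence of MCMs to a short exact sequence of vector bundles, and the failure of $\pi^{\#}$ to commute with duality (visible already in the $A_2$-example, where $M_1^{\ast}\cong M_2$ but $c_1(\mathcal{F}_1^{\vee})\ne c_1(\mathcal{F}_2)$) means that indecomposable specials cannot simply be exchanged with their duals. The cleanest way around this is to work through Wemyss's derived equivalence: the functor $\Hom_R(N,-)$, where $N=R\oplus\bigoplus_i M_i$ is the additive generator of $\SCM(R)$, is exact on MCMs and identifies $\SCM(R)$ with $\proj\Lambda$, so it turns the syzygy SES into a genuine projective resolution and the intersection number $c_1(M).E_i$ translates, via Theorem~\ref{main Db}, into the multiplicity with which the simple $\Lambda$-module $S_i\cong\RHom_Y(\mathcal{V}_\emptyset,\co_{E_i}(-1)[1])$ appears in a suitable cohomological invariant of $\Hom_R(N,M)$. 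The multiplicity on the right-hand side is then read off from the projective resolution of $S_i$, which by Theorem~\ref{main Db} is encoded by the geometry of $E_i$ in $Y$. The main obstacle is precisely this matching: one must set up a dictionary that relates the functor $\Omega$ on $\SCM(R)$ to an explicit operation on $\Lambda$-modules whose effect on simples is controlled by intersection numbers on $Y$. Once that dictionary is in place, Krull--Schmidt uniqueness (absorbing free summands, which is harmless since $\Omega$ naturally lives in the stable category) yields the isomorphism in the stated form $\Omega(M)\cong\bigoplus_{i\in I}(\Omega(M_i))^{\oplus c_1(M).E_i}$.
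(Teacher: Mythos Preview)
The paper does not prove this theorem: it is quoted as \cite[Theorem 3.3]{IWnewtria} with no argument given. So there is no in-paper proof to compare your proposal against.

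That said, your outline has a genuine wrong turn. You spend the first half arguing that $\Omega M$ is \emph{special}, so that the decomposition can be carried out in $\SCM(R)$. But the statement does not assert this, and in general it is false: the right-hand side is a sum of modules $\Omega(M_i)\cong M_i^{\ast}$, which by the corollary after Proposition~\ref{P:IWSpecials} live in $\Omega\mathsf{CM}(R)$, the category dual to $\SCM(R)$, not in $\SCM(R)$ itself. Your vanishing claim $\Ext_R^2(M,R)=0$ for an arbitrary MCM module over a rational (non-Gorenstein) surface singularity is not justified by the rationality argument you give (the vanishing of $R^i\pi_\ast$ for $i\ge 2$ does not by itself control $\Ext_R^2(M,R)$). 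The correct ambient category is $\Omega\mathsf{CM}(R)$: by definition $\Omega M$ lies there, and by the cited corollary this category has only finitely many indecomposables, namely the $\Omega(M_i)$ together with $R$. Krull--Remak--Schmidt in \emph{that} category is what reduces the problem to a multiplicity count.

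For the multiplicities themselves, your plan remains a plan: you identify that a dictionary between the syzygy functor and intersection numbers is needed, but you do not actually set one up, and the derived-equivalence route you sketch does not obviously produce the number $c_1(M).E_j$ as a multiplicity. The Iyama--Wemyss argument is more direct: it exploits the Artin--Verdier/Wunram construction of the $\cm_i$ (the diagram~\eqref{E:RelVdBArtinVerdier} in this paper) together with Esnault's equivalence to read off the decomposition of $\Omega M$ from the first Chern class data of $\pi^{\#}M$.
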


Applying this result to the canonical module $\omega_{R}$, one obtains the following corollary.

\begin{cor}\label{C:Syzygy}
$\Omega(\omega_{R}) \cong \bigoplus_{i \in I} \bigl(\Omega(M_{i})\bigr)^{\oplus -2-E_{i}^2}$
\end{cor}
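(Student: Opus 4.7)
The strategy is to apply Theorem~\ref{T:Syzygy} with $M = \omega_R$, so the whole task reduces to computing the intersection numbers $c_1(\omega_R).E_i$ for every irreducible exceptional curve $E_i$.

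First I would identify $\pi^{\#}(\omega_R)$ with the canonical bundle $\omega_Y$ (or at least argue that the two have the same first Chern class, which is all that enters Theorem~\ref{T:Syzygy}). Since $R$ has rational singularities, Grauert--Riemenschneider vanishing gives $R^1\pi_{*}\omega_Y = 0$, and duality for the proper morphism $\pi$ yields $\pi_{*}\omega_Y \cong \omega_R$; together with the fact that $\omega_Y$ is a line bundle on the smooth surface $Y$ and agrees with $\pi^{*}\omega_R$ away from the exceptional locus, this places $\omega_Y$ in the essential image of $\pi^{\#}$ and identifies it with $\pi^{\#}(\omega_R)$ up to torsion. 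Thus $c_1(\omega_R) = c_1(\omega_Y) = K_Y$.

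Next, I would compute $K_Y.E_i$ by the adjunction formula (Proposition~\ref{P:AdjForm}). By Theorem~\ref{T:ArtinTree}, each $E_i$ is a smooth rational curve on the smooth surface $Y$, so $g(E_i) = 0$ and
\[
-2 \;=\; 2g(E_i)-2 \;=\; E_i.(E_i+K_Y) \;=\; E_i^2 + E_i.K_Y,
\]
whence $c_1(\omega_R).E_i = K_Y.E_i = -2 - E_i^2$. Plugging this into the formula of Theorem~\ref{T:Syzygy} gives exactly the stated decomposition $\Omega(\omega_R) \cong \bigoplus_{i\in I}\bigl(\Omega(M_i)\bigr)^{\oplus -2 - E_i^2}$; note that the exponents are automatically non-negative since selfintersections of exceptional curves on the minimal resolution of a normal surface singularity satisfy $E_i^2 \leq -2$.

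The genuinely non-routine step is the first one: justifying $c_1(\omega_R) = K_Y$ under Esnault's correspondence (Theorem~\ref{T:Esnault}). Once this identification is in hand the rest is a one-line application of the adjunction formula combined with Theorem~\ref{T:Syzygy}. If the direct identification via Grauert--Riemenschneider feels too heavy, an alternative would be to compute $c_1(\omega_R).E_i$ intrinsically from Wunram's normalization $c_1(M_i).E_j = \delta_{ij}$ (Theorem~\ref{T:Wunram}) by expressing the class of $\pi^{\#}(\omega_R)$ as an integer combination of the classes of the special bundles $\cm_i$ and $\co_Y$, and then pairing with $E_i$; this is the route I would fall back on if the geometric identification proves awkward in the present (possibly non-Gorenstein) setting.
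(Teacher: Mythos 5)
Your proof is correct and takes essentially the same route as the paper: apply Theorem~\ref{T:Syzygy} to $M=\omega_R$ and compute the multiplicities via the adjunction formula. The only difference is in how the key identification $\pi^{\#}(\omega_R)\cong\omega_Y$ (equivalently $c_1(\omega_R)=K_Y$) is justified: you derive it directly from Grauert--Riemenschneider vanishing and Grothendieck duality, whereas the paper simply invokes Wunram's characterization (Proposition~\ref{P:AltSpecial}, applied with $M$ free, where the formula reduces to $\pi^{\#}(\omega_R)\cong\omega_X$). These amount to the same underlying geometric fact. Your concluding remark that the exponents $-2-E_i^2$ are automatically non-negative because the exceptional curves of a minimal resolution of a normal surface singularity all satisfy $E_i^2\le -2$ (negative definiteness of the intersection form plus minimality) is a sound sanity check that the paper leaves implicit.
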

\begin{proof}
  By Proposition \ref{P:AltSpecial}, $\pi^\#(\omega_{R}) \cong \omega_{X}$. Now, Theorem \ref{T:Syzygy} shows that there is an isomorphism $\Omega(\omega_{R}) \cong \bigoplus_{i \in I} \bigl(\Omega(M_{i})\bigr)^{\oplus K.E_{i}}$, where $K$ denotes the canonical divisor on $X$ (in other words $\omega_{X} \cong \co_{X}(K)$). The adjunction formula (Proposition \ref{P:AdjForm}) completes the proof.
\end{proof}

\begin{rem}
Assume that $R$ is not Gorenstein (so $\omega_{R}\ncong R$). In conjunction with Proposition \ref{P:IWSpecials} (c), the corollary above shows the following: if $\omega_{R}$ is special, then it corresponds to a $(-3)$-curve and all other irreducible curves in the exceptional divisor are $(-2)$-curves.
\end{rem}

\begin{prop}\label{P:relprojSCM}
If $E_{i}$ is not a $(-2)$-curve, then the corresponding special Cohen--Macaulay module is  projective in $\SCM(R)$.
\end{prop}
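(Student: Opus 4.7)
By Corollary~\ref{C:ExtSymmetry} it suffices to prove that $\Ext^{1}_{R}(M_{i},X)=0$ for every $X\in\SCM(R)$. Starting from the standard isomorphism $\Ext^{1}_{R}(M_{i},X)\cong\underline{\Hom}_{R}(\Omega M_{i},X)$ coming from a projective cover of $M_{i}$, the key input is Corollary~\ref{C:Syzygy}: the hypothesis $E_{i}^{2}\leq -3$ forces the multiplicity $-2-E_{i}^{2}\geq 1$, so $\Omega M_{i}$ appears as a direct summand of $\Omega\omega_{R}$. Consequently $\Ext^{1}_{R}(M_{i},X)$ is a direct summand of $\underline{\Hom}_{R}(\Omega\omega_{R},X)\cong\Ext^{1}_{R}(\omega_{R},X)$, and the problem reduces to showing
\[
\Ext^{1}_{R}(\omega_{R},X)=0\quad\text{for every }X\in\SCM(R).
\]

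To dispatch this I would invoke the canonical module duality $(-)^{\vee}:=\Hom_{R}(-,\omega_{R})$, which is an involutive contravariant exact equivalence on $\MCM(R)$ sending $\omega_{R}$ to $R$ and $R$ to $\omega_{R}$. It induces a natural isomorphism $\Ext^{1}_{R}(\omega_{R},X)\cong\Ext^{1}_{R}(X^{\vee},R)$, and by Proposition~\ref{P:IWSpecials}(b) the right hand side vanishes precisely when $X^{\vee}\in\SCM(R)$. The proposition is therefore equivalent to the closure property
\[
X\in\SCM(R)\ \Longrightarrow\ X^{\vee}\in\SCM(R).
\]

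The main obstacle is precisely this closure of $\SCM(R)$ under the $\omega_{R}$-duality. The strategy I would pursue is geometric: transport $(-)^{\vee}$ via Esnault's equivalence (Theorem~\ref{T:Esnault}) to a reflexive-dual construction on $\mathsf{VB}^{f}(Y)$, and then use Grothendieck duality for the minimal resolution $\pi\colon Y\to\Spec R$ together with the Grauert--Riemenschneider vanishing $\mathsf{R}^{1}\pi_{*}\omega_{Y}=0$ (available because $R$ is a rational singularity, so in particular $\pi_{*}\omega_{Y}\cong\omega_{R}$) to identify $\pi^{\#}(X^{\vee})$ with $\cHom_{Y}(\pi^{\#}X,\co_{Y})$ up to an exceptional-divisor twist. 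The specialness condition $\Ext^{1}_{Y}(\cHom_{Y}(\pi^{\#}X,\co_{Y}),\co_{Y})=0$ then follows from the reflexivity of vector bundles on the smooth surface $Y$ combined with the hypothesis that $\pi^{\#}X$ is a special full vector bundle, i.e.\ $\Ext^{1}_{Y}(\pi^{\#}X,\co_{Y})=0$. The delicate point is keeping careful track of the discrepancy between $\pi^{*}\omega_{R}$ and $\omega_{Y}$, which is supported on the exceptional locus and enters through the Grothendieck-duality comparison.
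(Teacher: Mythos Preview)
Your reduction to the vanishing $\Ext^{1}_{R}(\omega_{R},X)=0$ for all $X\in\SCM(R)$ is a valid sufficient condition (strictly speaking the displayed isomorphism $\underline{\Hom}_{R}(\Omega\omega_{R},X)\cong\Ext^{1}_{R}(\omega_{R},X)$ is only a surjection unless $\omega_{R}$ is special, but a surjection is all you need here). The fatal problem is that this sufficient condition, equivalently the closure of $\SCM(R)$ under $(-)^{\vee}=\Hom_{R}(-,\omega_{R})$, is \emph{false}. Take $X=R$: then $X^{\vee}=\omega_{R}$, and by the remark immediately following Corollary~\ref{C:Syzygy}, the canonical module $\omega_{R}$ is special only in the very restrictive case where the dual graph has exactly one non-$(-2)$-curve, which moreover is a $(-3)$-curve. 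For any other non-Gorenstein rational surface singularity (e.g.\ a single $(-4)$-curve, or the example $R_{51,11}$ in the paper with dual graph $(-5)\,$--$\,(-3)\,$--$\,(-4)$) one has $\omega_{R}\notin\SCM(R)$, hence $\Ext^{1}_{R}(\omega_{R},R)\neq 0$. No amount of Grothendieck duality bookkeeping on $Y$ will rescue this, since the statement you are trying to prove is simply not true.

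The paper sidesteps this by working with $(\Omega\omega_{R})^{*}$ rather than $\omega_{R}$ itself. Using the Auslander--Reiten formula one first shows the implication $\Ext^{1}_{R}(X,Y)=0\Rightarrow\Ext^{1}_{R}(\tau^{-1}\Omega^{-1}Y,X)=0$; applying this with $Y=R$ (and $X$ special, so $\Ext^{1}_{R}(X,R)=0$) gives $\Ext^{1}_{R}(\tau^{-1}\Omega^{-1}R,X)=0$. A short computation with the definition of $\tau$ and the duality $\Hom_{R}(-,\omega_{R})$ identifies $\tau^{-1}\Omega^{-1}R$ with $(\Omega\omega_{R})^{*}$ up to free summands, and then Corollary~\ref{C:Syzygy} together with Proposition~\ref{P:IWSpecials}(c) gives $(\Omega\omega_{R})^{*}\cong\bigoplus_{j}M_{j}^{\oplus(-2-E_{j}^{2})}$. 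So each $M_{i}$ with $E_{i}^{2}\leq -3$ is a direct summand of a module already shown to be projective in $\SCM(R)$. The Auslander--Reiten step is what makes this work: it converts the known vanishing $\Ext^{1}_{R}(X,R)=0$ into the needed $\Ext^{1}_{R}(-,X)=0$, something your $\omega_{R}$-duality attempt cannot do because the duality does not preserve $\SCM(R)$.
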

\begin{proof}
We need the following preliminary statement: let $X,Y$ be maximal Cohen--Macaulay $R$-modules. If $\Ext^1_{R}(X, Y)=0$, then $\Ext^1_{R}(\tau^{-1} \Omega^{-1}(Y), X)=0$, where $\tau=\Hom_{R}(\Omega^2 \mathsf{tr}(-), \omega_{R})$, denotes the Auslander--Reiten translation. 

By definition of $\Omega^{-1}$ there exists a short exact sequence $0 \ra Y \ra I \ra \Omega^{-1}(Y) \ra 0$, with $I \in \add \omega_{R}$  injective. Applying $\Hom_{R}(X, -)$ to this sequence, yields a short exact sequence
\begin{align}
0 \ra \Hom_{R}(X, Y) \ra \Hom_{R}(X, I) \ra \Hom_{R}(X, \Omega^{-1}(Y)) \ra 0,
\end{align}
since $\Ext^1_{R}(X, Y)=0$, by assumption. In other words, every map from $X$ to $\Omega^{-1}(Y)$ factors through an injective object. Hence, $\ol{\Hom}_{R}(X, \Omega^{-1}(Y))=0$. Auslander--Reiten duality shows that $\Ext^1_{R}(\tau^{-1}\Omega^{-1}(Y), X)=0$ as claimed.

If $X$ is special, then $\Ext^1_{R}(X, R)=0$, by Proposition \ref{P:IWSpecials} (b). The statement above yields \begin{align} \label{E:ExtVanishing}\Ext^1_{R}(\tau^{-1}\Omega^{-1}(R), X)=0. \end{align}
 By definition of $\tau$, we have $\Omega^{-1}(R) \cong \tau(\Hom_{R}(\Omega^{-1}(R), \omega_{R})^*)$, up to free summands. Hence we obtain 
 \begin{align}
 \tau^{-1}\Omega^{-1}(R) \cong \Hom_{R}(\Omega^{-1}(R), \omega_{R})^*
 \end{align}
 up to free summands.
 
 We apply $\Hom_{R}(-, \omega_{R})$ to the short exact sequence $0 \ra R \ra I' \ra \Omega^{-1}(R) \ra 0$, with $I' \in \add \omega_{R}$. Since $\omega_{R}$ is  injective in $\SCM(R)$ and $\End_{R}(\omega_{R}) \cong R$ (see \cite[Theorem 3.3.10]{BrunsHerzog}) we obtain an exact sequence
\begin{align}
0 \ra \Hom_{R}(\Omega^{-1}(R), \omega_{R}) \ra \Hom_{R}(I', \omega_{R}) \ra \omega_{R} \ra 0,
\end{align}
with $\Hom_{R}(I', \omega_{R}) \in \add_{R}(R)$. Thus $\Hom_{R}(\Omega^{-1}(R), \omega_{R}) \cong \Omega(\omega_{R})$ up to free summands. In particular, \eqref{E:ExtVanishing} shows that $\Ext^1_{R}((\Omega(\omega_{R}))^*, X)=0$ for all specials $X$. In other words, $(\Omega(\omega_{R}))^*$ is  projective in $\SCM(R)$. Now, Corollary \ref{C:Syzygy} together with Proposition \ref{P:IWSpecials} (c) completes the proof.
\end{proof}

\subsection{Main result} \label{ss:MainSpecial}
Iyama \& Wemyss observed that the Auslander--Reiten quiver of the stable category of special Cohen--Macaulay modules is a disjoint union of double quivers $\ol{Q_{i}}$, with $Q_{i}$ of ADE-type \cite[Corollary 4.11.]{IWnewtria}. Moreover, by checking examples, they found that this phenomenon often stems from an equivalence $\ul{\ul{\SCM}}(R) \cong \ul{\MCM}(R')$\footnote{Iyama \& Wemyss use the notation $\ul{\ul{\SCM}}(R)$ for the stable category in order to avoid confusion with the factor category $\SCM(R)/\proj-R \subseteq \ul{\MCM}(R)$ obtained by factoring out $\proj-R$ only. We decided to use their notation here.}, where $R'$ is some \emph{Gorenstein} singularity \cite[Remark 4.14]{IWnewtria}. 

Applying our general Frobenius category results (Theorem \ref{t:main-thm} or Theorem \ref{t:alternative-main}), we are able to give an conceptual explanation for these observations. Moreover, using this approach, it is immediate that $\ul{\ul{\SCM}}(R) \cong \ul{\MCM}(R')$ is a \emph{triangle} equivalence, where the ring $R'$ has a precise meaning. Finally, we apply Auslander \& Solberg's `Modification of Frobenius structures'-result (Proposition \ref{new Frobenius structure}) to this setup. This yields new Frobenius structures on $\SCM(R)$ and we also describe the corresponding stable categories. The results in this subsection are based on a joint work with Osamu Iyama, Michael Wemyss and Dong Yang \cite{IKWY12}.

Let us recall some notations. Let $\pi \colon Y \ra \Spec(R)$ be the minimal resolution of singularities. 
Let $I=\cc\cup\cd$ be the index set of the irreducible components $E_{i}$ of the exceptional divisor $E=\pi^{-1}(\mathfrak{m})=\bigcup_{i \in I} E_{i}$, where $\cc$ denotes the set of $(-2)$-curves and $\cd$ the set of $(-n)$-curves with $n>2$. We choose a subset $\cs\subseteq I$, and contract all curves in $\cs$. In this way, we obtain a space which we denote by $X^{\cs}$. Note that $Y \ra X^{\cs}$ is a minimal resolution of singularities.
We need the following well-known statement about the singularities of $X^{\cs}$, see Lipman \cite{Lipman2}.

\begin{prop}\label{P:RationalDoublePoint}
If $\cs \subseteq \cc$, then the completion $\widehat{\co}_{x}$ of the local ring of a singular point $x \in X^\cs$ is a rational double point or equivalently an ADE-surface singularity. 
\end{prop}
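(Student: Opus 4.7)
My plan is to show the two properties that characterise rational double points, namely rationality and Gorensteinness, and then invoke the classification.

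First I would argue that the singularities of $X^{\cs}$ are rational. The composition $Y \xrightarrow{f^{\cs}} X^{\cs} \xrightarrow{g^{\cs}} \Spec(R)$ is the minimal resolution $\pi$, and by construction $f^{\cs}$ is a resolution of singularities of $X^{\cs}$. Since $\mathbf{R}\pi_{*}\co_{Y} \cong \co_{\Spec(R)}$ and $g^{\cs}_{*}\co_{X^{\cs}} \cong \co_{\Spec(R)}$ by Zariski's Main Theorem (as $R$ is normal and $g^{\cs}$ is birational and proper), the Grothendieck spectral sequence $\mathsf{R}^{i}g^{\cs}_{*}\mathsf{R}^{j}f^{\cs}_{*} \Rightarrow \mathsf{R}^{i+j}\pi_{*}$ together with $\mathsf{R}^{n}\pi_{*}\co_{Y} = 0$ for $n>0$ forces $\mathsf{R}^{j}f^{\cs}_{*}\co_{Y} = 0$ for $j>0$ (this argument is already used in the proof of Theorem \ref{main Db}). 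Hence $X^{\cs}$ has rational singularities, and passing to the completion $\widehat{\co}_{x}$ at a singular point $x$ preserves this.

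Next I would show that $\widehat{\co}_{x}$ is Gorenstein. The key point is the adjunction formula (Proposition \ref{P:AdjForm}): for any smooth rational curve $E_{i}$ on $Y$ one has $E_{i}.(E_{i}+K_{Y}) = -2$, so that $E_{i}^{2} = -2$ is equivalent to $K_{Y}.E_{i} = 0$. Thus the canonical divisor $K_{Y}$ has intersection number zero with every component of the curves contracted by $f^{\cs}$. By the standard descent criterion for divisors under a proper birational morphism to a normal surface (equivalently, because $K_{Y}$ is numerically trivial relative to $f^{\cs}$ and $f^{\cs}$ is crepant), $K_{Y}$ is the pullback of a Cartier divisor $K_{X^{\cs}}$, so that the canonical sheaf of $X^{\cs}$ is invertible in a neighbourhood of $x$. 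Combined with the Cohen--Macaulay property (which holds because normal surface singularities are Cohen--Macaulay by Serre, as recalled in Subsection \ref{ss:rationalsurface}), this shows that $\widehat{\co}_{x}$ is Gorenstein.

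Finally, I would invoke the classification of rational Gorenstein surface singularities, which is classical (see Durfee's list, cited in the excerpt after Proposition \ref{P:IWSpecials}): they are precisely the ADE hypersurface singularities, i.e.\ the rational double points. Combining the previous two paragraphs with this classification yields the claim.

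The main obstacle, and the one step that really uses the hypothesis $\cs \subseteq \cc$, is the descent of $K_{Y}$ to a Cartier divisor on $X^{\cs}$. Rationality of $X^{\cs}$ would hold under contractions of arbitrary configurations of exceptional curves, but Gorensteinness is exactly the numerical condition $K_{Y}.E_{i} = 0$ for all contracted $E_{i}$, which via adjunction singles out the $(-2)$-curves.
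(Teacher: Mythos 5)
Your proof is correct but follows a genuinely different route from the paper's. The paper's argument is purely combinatorial: rationality of $\widehat{\co}_{x}$ is cited from Artin and Lipman, Mumford gives negative definiteness of the intersection form, and then a negative-definite integral form on a tree (Theorem~\ref{T:ArtinTree}) with all diagonal entries $-2$ is forced to be $(-1)$ times an ADE Cartan matrix; Artin's characterisation of rational double points by their dual intersection graph concludes. You instead establish rationality (your Grothendieck spectral sequence computation is exactly the one used in the proof of Theorem~\ref{main Db}) and Gorensteinness, and then invoke the classification of rational Gorenstein surface singularities. Both routes are standard and lead to the same conclusion. The one step you pass over too quickly is the descent of $K_{Y}$: numerical triviality of $K_{Y}$ relative to $f^{\cs}$ does not by itself produce a Cartier pullback. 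The cleanest fix uses rationality a second time: negative definiteness of the intersection matrix (Mumford) shows all discrepancies vanish, so $K_{Y}=(f^{\cs})^{*}K_{X^{\cs}}$ as $\Q$-divisors; rationality then gives $R^{1}(f^{\cs})_{*}\co_{Y}=0$, so by Artin's results the Picard group of a formal neighbourhood of the fibre is detected by degrees on the $E_{i}$, hence the numerically trivial line bundle $\omega_{Y}$ is actually trivial there, and pushing forward gives $\omega_{X^{\cs}}$ invertible at $x$. Your approach has the virtue of isolating exactly what the hypothesis $\cs\subseteq\cc$ buys (crepancy of $f^{\cs}$), whereas the paper's route is shorter because Artin's dual-graph criterion does all the geometry at once.
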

\begin{proof}
By Theorem \ref{T:ArtinTree} and the construction of $X^\cs$, the exceptional fibre $E_{x}$ of the minimal resolution of $\Spec(\widehat{\co}_{x})$ is a tree of rational $(-2)$-curves. $\widehat{\co}_{x}$ is a rational singularity by work of Artin \cite{Artin66}, see also Lipman \cite[Theorem 27.1]{Lipman}. Artin also shows that a rational surface singularity is a rational double point if and only if the dual intersection graph of the exceptional fibre is of ADE-type. Mumford \cite{Mumford} has shown that the intersection form $(E_{i}, E_{j})=E_{i}.E_{j}$ of a normal surface singularity is negative definite. By our assumptions on $\cs$, we know that $(E_{i}, E_{i})=E_{i}.E_{i}=-2$ for all $i \in \cs$. Integral quadratic forms with these properties are well-known, for example from Lie theory. They are in one to one correspondence with the incidence matrices of the Dynkin diagrams of type $A$, $D$ and $E$, with valuation $-2$ on all vertices, see e.g. \cite[\S 1.2]{Brieskorn66}. It follows from work of Brieskorn \cite[Satz 1]{Brieskorn66} and Kirby \cite[2.6, 2.7]{Kirby} that the rational double points are precisely the hypersurface singularities defined by the ADE-equations listed in Paragraph \ref{sss:Classical}.
\end{proof}

We first describe the stable category $\ul{\ul{\SCM}}(R)$ of the standard Frobenius category structure on $\SCM(R)$.

\begin{thm}\label{C:StandardStableSCM}
Let $(R, \mathfrak{m})$ be a complete rational surface singularity and let $E= \End_{R}(R\oplus \bigoplus_{i \in \cd} M_{i})=\End_{R}(N^{\cc})$. Then the following statements hold:
\begin{itemize}
\item[(a)] $E$ is an Iwanaga--Gorenstein ring.
\item[(b)] There is a chain of triangle equivalences
\begin{align}
\ul{\ul{\SCM}}(R) \cong \cd_{sg}(E) \cong \cd_{sg}(X^{\cc}) \cong \bigoplus_{x \in \Sing(X^\cc)} \ul{\MCM}(\widehat{\co}_{x})
\end{align}
\end{itemize}
In particular, $\ul{\ul{\SCM}}(R)$ is a $1$-CY category and the shift functor satisfies $[2]\cong \id$. 
\end{thm}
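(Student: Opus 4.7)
My plan is to assemble the three equivalences by combining the abstract Frobenius category machinery from Section 2 with Wemyss' geometric tilting result (Theorem \ref{main Db}) and Orlov's localization. The key observation is that Propositions \ref{P:notrelprojSCM} and \ref{P:relprojSCM} have already identified the projective--injective objects of $\SCM(R)$ as precisely $\add N^{\cc}$, where $N^{\cc} = R \oplus \bigoplus_{i \in \cd} M_i$. Moreover, if $M := \bigoplus_{i \in \cc} M_i$, then $\End_R(N^{\cc} \oplus M) = \Lambda$ is the reconstruction algebra, which has finite global dimension by Corollary \ref{C:FinGlobalDim}. Thus $\Lambda$ is a noncommutative resolution of the Frobenius category $\SCM(R)$ in the sense of Definition \ref{defNCR}, and the hypotheses of Theorem \ref{t:main-thm} are satisfied with $\ce = \SCM(R)$, $P = N^{\cc}$, and this $M$.

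First I would apply Theorem \ref{t:main-thm} directly to conclude statement (a), namely that $E = \End_R(N^{\cc})$ is Iwanaga--Gorenstein. The same theorem yields an equivalence $\SCM(R) \xrightarrow{\sim} \GP(E)$ up to summands, which passes to a triangle equivalence on stable categories. Since $\SCM(R)$ is closed under direct summands inside $\mod{-}R$ it is idempotent complete, and Lemma \ref{L:stable-complete} (applied to the $R$-algebra $\End_R(\bigoplus_i M_i)$, which is finitely generated as an $R$-module because $R$ is complete) shows that $\ul{\ul{\SCM}}(R)$ is also idempotent complete, so the "up to summands" caveat disappears. Composing with Buchweitz' equivalence (Theorem \ref{T:Buchweitz}) $\ul{\GP}(E) \xrightarrow{\sim} \cd_{sg}(E)$ gives the first triangle equivalence $\ul{\ul{\SCM}}(R) \cong \cd_{sg}(E)$.

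Next I would deduce the second equivalence $\cd_{sg}(E) \cong \cd_{sg}(X^{\cc})$ from Wemyss' Theorem \ref{main Db} specialised to $\cs = \cc$: there is a tilting bundle $\cv_\cc$ on $X^\cc$ inducing a triangle equivalence $\RHom_{X^\cc}(\cv_\cc,-) \colon \cd^b(\Coh X^\cc) \xrightarrow{\sim} \cd^b(\mod{-}E)$ with $e\Lambda e \cong E$. Because $\cv_\cc$ is a tilting bundle, this equivalence identifies perfect complexes on $X^\cc$ with $K^b(\proj{-}E)$, and therefore descends to a triangle equivalence of the corresponding Verdier quotients $\cd_{sg}(X^\cc) \xrightarrow{\sim} \cd_{sg}(E)$. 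For the third equivalence I would invoke Orlov's localization \eqref{E:OrlovLoc}: by Proposition \ref{P:RationalDoublePoint} the scheme $X^\cc$ has only finitely many isolated singularities, each of ADE type and hence Gorenstein, so $\cd_{sg}(X^\cc)^\omega \cong \bigoplus_{x \in \Sing(X^\cc)} \cd_{sg}(\widehat{\co}_x)$, and each local block equals $\ul{\MCM}(\widehat{\co}_x)$ again by Buchweitz \eqref{E:Buchweitz}. Idempotent completeness of the left-hand side (established above) makes the $(-)^\omega$ superfluous.

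Finally, for the coda: each $\widehat{\co}_x$ is a complete two-dimensional ADE hypersurface, so $\ul{\MCM}(\widehat{\co}_x)$ is $1$-Calabi--Yau (the standard $(d-1)$-CY property of $\ul{\MCM}$ for an isolated Gorenstein singularity of Krull dimension $d$), and Kn\"orrer periodicity (Theorem \ref{t:knoerrer}) applied to each hypersurface equation $f(z_0,z_1) + z_2^2$ compared with $f(z_0,z_1)$ yields $[2] \cong \id$ after reduction to dimension zero; alternatively this follows directly from the fact that the shift on $\ul{\MCM}$ of a hypersurface ring is given by taking the second syzygy, which is periodic of period dividing two by the $2$-periodic matrix factorisations. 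I expect the main technical hurdle to be keeping careful track of idempotent completeness at each step (so that the various triangle equivalences hold on the nose rather than only up to summands) and verifying that Wemyss' tilting equivalence respects the class of perfect complexes; everything else is a direct application of results already stated in the excerpt.
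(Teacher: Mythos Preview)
Your proposal is correct and follows essentially the same route as the paper: apply Theorem~\ref{t:main-thm} to $\ce=\SCM(R)$ with $P=N^{\cc}$ and noncommutative resolution $\Lambda$ for part~(a) and the first equivalence (using idempotent completeness via an adaptation of Lemma~\ref{L:stable-complete}), then Wemyss' tilting Theorem~\ref{main Db} for the second equivalence, and Orlov-type localization together with Proposition~\ref{P:RationalDoublePoint} for the third. The only cosmetic differences are that the paper phrases the second step as ``the perfect subcategories admit an intrinsic characterization'' rather than ``the tilting equivalence preserves perfects'', and attributes the $1$-CY property and $[2]\cong\id$ directly to Auslander and Eisenbud respectively.
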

\begin{proof}
The first statement follows from Iyama's Theorem \ref{t:main-thm}. Indeed, $\Lambda$ has finite global dimension, by Corollary \ref{C:FinGlobalDim}. Moreover, $\Lambda$ is Noetherian since it is an endomorphism algebra of a finitely generated module over a commutative Noetherian ring.

Let us prove part (b).
The first triangle equivalence follows from Theorem \ref{t:main-thm} (together with Buchweitz \ref{T:Buchweitz}) or the alternative Theorem \ref{t:alternative-main}. In both cases, we use that $\ul{\ul{\SCM}}(R)$ is idempotent complete, by an adaption of Lemma \ref{L:stable-complete}.  

The second equivalence follows from Wemyss' Theorem \ref{main Db} together with the fact that the respective perfect subcategories admit an intrinsic characterization.

Proposition \ref{P:RationalDoublePoint} shows that $X^\cc$ has only ADE-surface singularities. In particular, $X^\cc$ has isolated singularities and the last equivalence follows from Theorem \ref{t:main-global} and the fact that all categories in the chain of equivalences are idempotent complete, since the first one is.

Stable categories of ADE--surface singularities are $1$-Calabi--Yau by Auslander \cite{Auslander76}  and satisfy $[2]=\id$, by Eisenbud \cite{Eisenbud80}.
\end{proof}

We fix some notations. Let $I$ be the finite index set of the irreducible exceptional curves $E_{i}$. Let $\cs \subseteq \cc$ be a subset of the irreducible $(-2)$-curves in the exceptional divisor. Let $N^{\cs}=R \oplus \bigoplus_{i \in I \setminus \cs} M_{i}$.
For each such $\cs$, Auslander \& Solberg's result (Proposition \ref{new Frobenius structure}) yields a new exact Frobenius structure $\SCM_{N^\cs}(R)$ on $\SCM(R)$ as follows:
a conflation $X \ra Y \ra Z$ in $\SCM(R)$ is a conflation in $\SCM_{N^\cs}(R)$, if and only if the induced sequence
\begin{align}
0 \ra \Hom_{R}(N^\cs, X) \ra \Hom_{R}(N^\cs, Y) \ra \Hom_{R}(N^\cs, Z) \ra 0 
\end{align}
is exact.

\begin{cor}\label{C:NewFrobSCM}
$\SCM_{N^\cs}(R)$ is a Frobenius category with projective-injective objects $\add_{R} N^\cs$.
\end{cor}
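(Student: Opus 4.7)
The plan is to apply Auslander and Solberg's modification result (Proposition \ref{new Frobenius structure}) to the Frobenius category $\ce = \SCM(R)$ with the contravariantly/covariantly finite subcategory $\cm = \add_R N^\cs$. I need to verify the four hypotheses of that proposition.

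First, the exact category $\SCM(R)$ has enough projectives and injectives, both equal to $\cp = \ci = \add_R N^\cc$, by the combination of Propositions \ref{P:notrelprojSCM} and \ref{P:relprojSCM} (which is Iyama--Wemyss' Frobenius structure). Since $\cs \subseteq \cc$, we have $N^\cc \in \add_R N^\cs$, so $\add_R N^\cs$ contains both $\cp$ and $\ci$. Moreover, $\add_R N^\cs$ has only finitely many indecomposable objects up to isomorphism, so it is a functorially finite subcategory of $\SCM(R)$ by Example \ref{Ex:FunctFinite} (using that $R$ is local and complete).

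The key step is the Auslander--Reiten type formula. By Theorem \ref{C:StandardStableSCM}, the stable category $\ul{\ul{\SCM}}(R)$ is $1$-Calabi--Yau, so its Serre functor is $[1]$ and hence its Auslander--Reiten translation $\tau = \mathbb{S}[-1]$ is isomorphic to the identity. Since $\SCM(R)$ is Frobenius, projectives and injectives coincide, so $\overline{\SCM}(R) = \ul{\ul{\SCM}}(R)$ and $\overline{\Hom} = \underline{\Hom}$. Using the standard identification $\Ext^1_{\SCM(R)}(X,Y) \cong \underline{\Hom}(X, Y[1])$ together with $1$-CY Serre duality $\underline{\Hom}(A,B) \cong D\underline{\Hom}(B, A[1])$, I obtain the functorial isomorphism
\[
\Ext^1_{\SCM(R)}(X,Y) \cong \underline{\Hom}(X, Y[1]) \cong D\underline{\Hom}(Y[1], X[1]) \cong D\,\overline{\Hom}(Y, \tau X)
\]
with $\tau = \id$. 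With this choice of $\tau$, the compatibility condition $\tau \underline{\cm} = \overline{\cm}$ is tautological.

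Having verified the hypotheses, Proposition \ref{new Frobenius structure} produces a new exact structure $\cs'$ on $\SCM(R)$ consisting of those conflations $X \to Y \to Z$ for which $\Hom_R(N^\cs, -)$ (equivalently, by part (1) of that proposition, $\Hom_R(-, N^\cs)$) remains exact. This is exactly the exact structure defining $\SCM_{N^\cs}(R)$ in the statement. Part (2) of the proposition then asserts that $(\SCM(R), \cs') = \SCM_{N^\cs}(R)$ is a Frobenius category whose projective-injective objects are $\add_R N^\cs$, which is the claim. The main point that required checking was the $\tau = \id$ identification, which is where Theorem \ref{C:StandardStableSCM} enters in an essential way; everything else is a direct translation of the hypotheses.
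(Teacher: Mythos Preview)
Your proof is correct and follows essentially the same approach as the paper: verify the hypotheses of Proposition~\ref{new Frobenius structure} for $\cm=\add_R N^\cs$ inside the Frobenius category $\SCM(R)$, using the $1$-Calabi--Yau property from Theorem~\ref{C:StandardStableSCM} to take $\tau=\id$ and deduce the required duality $\Ext^1(X,Y)\cong D\,\overline{\Hom}(Y,X)$. The paper writes the chain of isomorphisms in the equivalent form $\Ext^1_R(M,N)\cong\ul{\ul{\Hom}}_R(M,\Omega^{-1}N)\cong D\ul{\ul{\Hom}}_R(\Omega^{-1}N,\Omega^{-1}M)\cong D\ul{\ul{\Hom}}_R(N,M)$, but this is only a cosmetic rearrangement of your argument.
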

\begin{proof}
We verify the conditions of Proposition \ref{new Frobenius structure}.

By Propositions \ref{P:relprojSCM} and \ref{P:notrelprojSCM} and our choice of $\cs$, $\add N^\cs$ contains all  projective-injective objects of $\SCM(R)$. Moreover, since $\add N^\cs$ contains only finitely many indecomposable objects, it is a functorially finite subcategory by Example \ref{Ex:FunctFinite}. We claim that we may take $\tau\colon \ul{\ul{\SCM}}(R) \ra \ul{\ul{\SCM}}(R)$ to be the identity functor. It suffices to show that we have a chain of functorial isomorphisms
\begin{align*}
\Ext^1_{R}(M, N) \cong \ul{\ul{\Hom}}_{R}(M, \Omega^{-1}N) \cong D\ul{\ul{\Hom}}_{R}(\Omega^{-1}N, \Omega^{-1}M) \cong D\ul{\ul{\Hom}}_{R}(N, M),
\end{align*}
for all $M$ and $N$ in $\SCM(R)$.
The first isomorphism holds in any Frobenius category. The second uses the $1$-Calabi--Yau property of $\ul{\ul{\SCM}}(R)$, see Theorem \ref{C:StandardStableSCM} and the last one follows since the shift functor $\Omega^{-1}$ is an autoequivalence.
\end{proof}

\begin{cor}\label{C:ModifiedStableSCM}
Let $(R, \mathfrak{m})$ be a complete rational surface singularity, let $\cs \subseteq \cc$ and let $E^\cs=\End_{R}(N^{\cs})$. Then the following statements hold:
\begin{itemize}
\item[(a)] $E^\cs$ is an Iwanaga--Gorenstein ring.
\item[(b)] There is a chain of triangle equivalences
\begin{align}
\ul{\SCM}_{N^\cs}(R) \cong \cd_{sg}(E^\cs) \cong \cd_{sg}(X^{\cs}) \cong \bigoplus_{x \in \Sing(X^\cs)} \ul{\MCM}(\widehat{\co}_{x})
\end{align}
\end{itemize}
In particular, $\ul{\SCM}_{N^\cs}(R)$ is a $1$-CY category and the shift functor satisfies $[2]\cong \id$. 
\end{cor}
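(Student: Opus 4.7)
The plan is to mimic the proof of Theorem~\ref{C:StandardStableSCM} step by step, but with the standard exact structure replaced by the modified Frobenius structure $\SCM_{N^\cs}(R)$ constructed in Corollary~\ref{C:NewFrobSCM}. The essential input is that $N^\cs$ still contains the reconstruction data, so that Wemyss' derived equivalence (Theorem~\ref{main Db}) and the Morita-type Theorem~\ref{t:main-thm} are available simultaneously.

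First I would verify the hypotheses of Theorem~\ref{t:main-thm} for the Frobenius category $\ce = \SCM_{N^\cs}(R)$. By Corollary~\ref{C:NewFrobSCM}, $\proj \ce = \add N^\cs$ admits the additive generator $N^\cs$, with $\End_\ce(N^\cs) = E^\cs$. Let $M = R \oplus \bigoplus_{i\in I} M_i$ be the sum of all indecomposable SCMs. Then $N^\cs \in \add M$, and the reconstruction algebra $\Lambda = \End_R(M)$ is Noetherian and of finite global dimension by Corollary~\ref{C:FinGlobalDim}; hence $\Lambda$ is a noncommutative resolution of $\ce$ in the sense of Definition~\ref{defNCR}. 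Theorem~\ref{t:main-thm}(1) then yields part~(a), namely that $E^\cs$ is Iwanaga--Gorenstein.

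Next, for the first triangle equivalence in (b), I would combine Theorem~\ref{t:main-thm}(2) (or equivalently Theorem~\ref{t:alternative-main}) with Buchweitz' Theorem~\ref{T:Buchweitz}. This requires idempotent completeness of $\ul{\SCM}_{N^\cs}(R)$, which follows by an obvious adaptation of Lemma~\ref{L:stable-complete}: since $R$ is complete and $\SCM(R)$ sits inside $\mod$-$R$ in the usual way, idempotents in the modified stable category still lift and split. This produces a triangle equivalence $\ul{\SCM}_{N^\cs}(R) \cong \ul{\GP}(E^\cs) \cong \cd_{sg}(E^\cs)$.

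For the second equivalence, I apply Wemyss' Theorem~\ref{main Db}, which provides a tilting bundle $\cv_\cs$ on $X^\cs$ with $\End_{X^\cs}(\cv_\cs) \cong E^\cs = \End_R(N^\cs)$. The induced derived equivalence $\RHom_{X^\cs}(\cv_\cs,-)\colon \cd^b(\Coh X^\cs) \to \cd^b(\mod\text{-}E^\cs)$ sends $\Perf(X^\cs)$ onto $K^b(\proj\text{-}E^\cs)$ (both being the intrinsically defined subcategories of compact objects), so it descends to a triangle equivalence $\cd_{sg}(X^\cs) \cong \cd_{sg}(E^\cs)$. For the third equivalence, I invoke Proposition~\ref{P:RationalDoublePoint}: since $\cs \subseteq \cc$ consists only of $(-2)$-curves, the contraction $X^\cs$ has at worst isolated ADE singularities. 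Orlov's localization (Theorem~\ref{t:main-global}) together with Buchweitz' equivalence~\eqref{E:Buchweitz} then decomposes $\cd_{sg}(X^\cs)$ as $\bigoplus_{x \in \Sing(X^\cs)} \ul{\MCM}(\widehat{\co}_x)$; idempotent completeness on the right propagates from the left. The final assertions ($1$-Calabi--Yau property and $[2]\cong\id$) are inherited from the classical results of Auslander \cite{Auslander76} and Eisenbud \cite{Eisenbud80} on ADE surface singularities, applied summand-wise.

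The main obstacle I expect is bookkeeping rather than conceptual novelty: one must check that the tilting bundle $\cv_\cs$ from Theorem~\ref{main Db} really induces an equivalence of \emph{singularity} categories (as opposed to just bounded derived categories), i.e.~that the equivalence identifies $\Perf(X^\cs)$ with $K^b(\proj\text{-}E^\cs)$. This is standard once one knows $\cv_\cs$ is a bundle (so perfect on both sides) and that the equivalence respects compact objects, but it is the one place where the geometric input $\cs \subseteq \cc$ (guaranteeing that $X^\cs$ has only rational double points, hence Gorenstein singularities, so that Buchweitz' theorem applies) enters in an essential way.
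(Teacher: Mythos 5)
Your proposal is correct and follows essentially the same route as the paper, whose own proof of this corollary simply says it is analogous to the proof of Theorem~\ref{C:StandardStableSCM}: apply Theorem~\ref{t:main-thm} (or \ref{t:alternative-main}) to the Frobenius category $\SCM_{N^\cs}(R)$ with projective generator $N^\cs$ and noncommutative resolution $\Lambda$, then use Wemyss' Theorem~\ref{main Db} and Proposition~\ref{P:RationalDoublePoint}. One small remark on your final paragraph: the hypothesis $\cs\subseteq\cc$ is not really needed for the tilting equivalence $\cd_{sg}(X^\cs)\cong\cd_{sg}(E^\cs)$ (Theorem~\ref{main Db} holds for arbitrary $\cs\subseteq I$); it enters essentially in Corollary~\ref{C:NewFrobSCM} (so that $\add N^\cs$ contains all projective-injectives of $\SCM(R)$, a hypothesis of Auslander--Solberg) and in Proposition~\ref{P:RationalDoublePoint} (so that $X^\cs$ is Gorenstein with ADE singularities, needed for Buchweitz' theorem and the block decomposition).
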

\begin{proof}
Analogous to the proof of Theorem \ref{C:StandardStableSCM}.
\end{proof}

\begin{rem}
The Auslander--Reiten quivers of Hom-finite algebraic triangulated categories with finitely many indecomposable objects were studied by Amiot \cite{Amiot07a} and also by Xiao and Zhu \cite{XiaoZhu05}. However, there are subtle issues in the classification of $1$-CY categories, e.g.~it is not clear whether a $1$-CY category over $\mathbb{C}$ is \emph{standard}, i.e.~whether it is determined as a $k$-category by its Auslander--Reiten quiver. In characteristic $2$ there are examples of non-standard $1$-CY categories, see \cite{Amiot07a}. 
\end{rem}

\subsection{Examples}\label{ss:ExampleSCM}
\begin{defn}
A subgroup $G \subseteq \GL(2, \C)$ is \emph{small}, if none of the $g \in G$ is a \emph{pseudo-reflection}, i.e.~conjugated to a diagonal matrix $\mathsf{diag}(1, 1, \ldots, 1, a)$. 
\end{defn}

The following result of Prill shows that it suffices to study quotient singularities for finite small subgroups $G \subseteq \GL(2, \C)$ \cite{Prill}.
\begin{prop}
Every quotient surface singularity is isomorphic to a quotient singularity $R=\C\llbracket x, y\rrbracket^G$ for a finite small subgroup $G \subseteq \GL(2, \C)$. Moreover, two such subgroups yield isomorphic quotient singularities if and only if they are conjugated.
\end{prop}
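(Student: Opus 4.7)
The plan is to prove the proposition in two parts: (1) reduction to small subgroups via Chevalley--Shephard--Todd, and (2) the conjugation statement for small subgroups via the universal cover of the smooth locus.

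For the existence part, given any finite subgroup $G \subseteq \GL(2,\mathbb{C})$, I would let $H \trianglelefteq G$ be the (normal) subgroup generated by all pseudo-reflections in $G$. By the Chevalley--Shephard--Todd theorem, the ring of invariants $\mathbb{C}\llbracket x, y \rrbracket^H$ is again a regular local ring of dimension two, so there exist $u, v$ with $\mathbb{C}\llbracket x, y \rrbracket^H \cong \mathbb{C}\llbracket u, v \rrbracket$. The quotient $\bar G = G/H$ acts faithfully on $\mathbb{C}\llbracket u, v \rrbracket$, and the induced action on the cotangent space $\mathfrak{m}/\mathfrak{m}^2$ embeds $\bar G \hookrightarrow \GL(2,\mathbb{C})$. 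One then verifies that $\bar G$ is small: any pseudo-reflection in $\bar G$ would lift to an element of $G$ whose action on the smooth locus has a fixed hyperplane outside the branch locus of $\mathbb{C}^2 \to \mathbb{C}^2/H$, contradicting maximality of $H$. Finally, $\mathbb{C}\llbracket x, y \rrbracket^G = (\mathbb{C}\llbracket x, y \rrbracket^H)^{\bar G} \cong \mathbb{C}\llbracket u, v \rrbracket^{\bar G}$.

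For the ``if'' direction of the second claim, if $G' = gGg^{-1}$ with $g \in \GL(2,\mathbb{C})$, then the linear change of coordinates $g$ induces a $\mathbb{C}$-algebra automorphism of $\mathbb{C}\llbracket x, y \rrbracket$ restricting to an isomorphism $\mathbb{C}\llbracket x, y \rrbracket^{G'} \xrightarrow{\sim} \mathbb{C}\llbracket x, y \rrbracket^G$.

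The main obstacle is the ``only if'' direction. The strategy is to recover $G$ intrinsically from the singularity as the local fundamental group of the smooth locus. Because $G$ is small, $G$ acts freely on a punctured neighborhood $U \setminus \{0\} \subseteq \mathbb{C}^2 \setminus \{0\}$, so the projection $U \setminus \{0\} \to (U/G) \setminus \{0\}$ is an unramified Galois cover with group $G$. Since a punctured complex analytic $2$-ball is simply connected, this is the universal cover, and thus $G \cong \pi_1^{\mathrm{loc}}(\mathbb{C}^2/G, 0)$ canonically. Given an isomorphism of completed local rings $\mathbb{C}\llbracket x, y \rrbracket^G \cong \mathbb{C}\llbracket x, y \rrbracket^{G'}$, I would pass (via Artin approximation, or by going to the analytic henselisation) to an isomorphism $\varphi$ of analytic germs $(\mathbb{C}^2/G, 0) \cong (\mathbb{C}^2/G', 0)$. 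Restricting to smooth loci and lifting to universal covers yields an analytic isomorphism $\tilde\varphi$ of punctured neighborhoods of $0$ in $\mathbb{C}^2$, equivariant with respect to a group isomorphism $G \xrightarrow{\sim} G'$. By Hartogs' theorem $\tilde\varphi$ extends across $0$ to an analytic automorphism of a neighborhood of the origin, fixing $0$. Its derivative $d\tilde\varphi_0 \in \GL(2,\mathbb{C})$ is then the required element conjugating $G$ to $G'$: indeed, equivariance at the linear level reads $d\tilde\varphi_0 \circ g = \psi(g) \circ d\tilde\varphi_0$ for all $g \in G$, where $\psi\colon G \to G'$ is the group isomorphism, so $G' = d\tilde\varphi_0 \cdot G \cdot (d\tilde\varphi_0)^{-1}$. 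The delicate points are the passage from formal to analytic data and the verification that equivariance of $\tilde\varphi$ is compatible with the linearisation at $0$; both are standard, the first via Artin's theorem on solutions of analytic equations, the second because $G$ and $G'$ act linearly so equivariance descends to the tangent space.
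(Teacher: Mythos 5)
The paper does not prove this proposition; it only cites Prill (as stated in the sentence preceding the proposition), so there is no "paper's proof" to compare against. Your argument is correct and is in fact essentially Prill's original one: existence via Chevalley--Shephard--Todd applied to the normal subgroup generated by pseudo-reflections, and uniqueness via identifying $G$ with the local fundamental group of the punctured quotient (using freeness of the action away from $0$ for small $G$, simple connectivity of the punctured ball, Hartogs extension, and linearization at the fixed point), with Artin approximation bridging the formal and analytic categories. You also correctly flag the two technical points (formal-to-analytic transfer and compatibility of equivariance with linearization) that require standard but nontrivial justification.
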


\subsubsection{Cyclic quotient singularities}
Let $G \subseteq \GL(2, \C)$ be a small finite cyclic subgroup. Up to conjugation, such a group is generated by an element
\begin{align}
g=\begin{pmatrix} \epsilon_{n} & 0 \\ 0 & \epsilon_{n}^a \end{pmatrix} \in \GL(2, \C), 
\end{align}
where $\epsilon_{n}$ is a primitive $n$-th root of unity and $0<a<n$ is coprime to $n$ since $G$ is small. We denote this group by $\frac{1}{n}(1, a)$. 
Let $R_{n, a} = \C\llbracket x, y \rrbracket^{\frac{1}{n}(1, a)} $ be the corresponding quotient singularity.
\begin{rem}
By Watanabe's result (see Remark \ref{R:Watanabe}), $R_{n, a}$ is Gorenstein if and only if $a=n-1$.
\end{rem}
Then the geometry of the exceptional divisor of the minimal resolution $\pi \colon X \ra \Spec(R)$ is well-known by work of Jung and Hirzebruch, see e.g.~\cite{Brieskorn}: let
\begin{align}
\frac{r}{a}=\alpha_{1} - \cfrac{1}{\alpha_{2} - \cfrac{1}{\alpha_{3} - \ldots -\cfrac{1}{\alpha_{t-1}- \cfrac{1}{\alpha_{t}}}}}
\end{align}
be the Jung--Hirzebruch continued fraction expansion of $\frac{r}{a}$, where each $\alpha_{i} \geq 2$. Then the dual intersection graph of $R_{n, a}$ is given by the following string
\begin{equation*}\begin{tikzpicture}[description/.style={fill=white,inner sep=2pt}]
    \matrix (n) [matrix of math nodes, row sep=3em,
                 column sep=2.5em, text height=1.5ex, text depth=0.25ex,
                 inner sep=0pt, nodes={inner xsep=0.3333em, inner
ysep=0.3333em}]
    {  
       \bullet & \bullet & \bullet &\cdots& \bullet & \bullet \\
    };
\path[-] (n-1-1) edge (n-1-2);
\path[-] (n-1-2) edge (n-1-3);
\path[-] (n-1-3) edge (n-1-4);
\path[-] (n-1-4) edge (n-1-5);
\path[-] (n-1-5) edge (n-1-6);

 \node at ($(n-1-1.north) + (0mm, 3mm)$) {$-\alpha_{1}$};
 \node at ($(n-1-2.north) + (0mm, 3mm)$) {$-\alpha_{2}$};
 \node at ($(n-1-3.north) + (0mm, 3mm)$) {$-\alpha_{3}$};
 \node at ($(n-1-5.north) + (0mm, 3mm)$) {$-\alpha_{t-1}$};
 \node at ($(n-1-6.north) + (0mm, 3mm)$) {$-\alpha_{t}$};
\end{tikzpicture}\end{equation*}

\begin{ex}
The fraction $\frac{27}{19}$ has the following continued fraction expansion
\begin{align}
\frac{27}{19}=2 - \cfrac{1}{2 - \cfrac{1}{ 5-\cfrac{1}{2-\cfrac{1}{2-\cfrac{1}{2}}}}}
\end{align}
Hence the dual graph associated with the cyclic group $\frac{1}{27}(1, 19)$ of order $27$ has the following form
\begin{equation}\label{E:DualGraph2719}\begin{tikzpicture}[description/.style={fill=white,inner sep=2pt}]
    \matrix (n) [matrix of math nodes, row sep=3em,
                 column sep=2.5em, text height=1.5ex, text depth=0.25ex,
                 inner sep=0pt, nodes={inner xsep=0.3333em, inner
ysep=0.3333em}]
    {  
       \bullet & \bullet & \bullet &\bullet & \bullet & \bullet \\
    };
\path[-] (n-1-1) edge (n-1-2);
\path[-] (n-1-2) edge (n-1-3);
\path[-] (n-1-3) edge (n-1-4);
\path[-] (n-1-4) edge (n-1-5);
\path[-] (n-1-5) edge (n-1-6);

 \node at ($(n-1-1.north) + (-1mm, 3mm)$) {$-2$};
 \node at ($(n-1-2.north) + (-1mm, 3mm)$) {$-2$};
 \node at ($(n-1-3.north) + (-1mm, 3mm)$) {$-5$};
 \node at ($(n-1-4.north) + (-1mm, 3mm)$) {$-2$};
 \node at ($(n-1-5.north) + (-1mm, 3mm)$) {$-2$};
 \node at ($(n-1-6.north) + (-1mm, 3mm)$) {$-2$};
\end{tikzpicture}\end{equation}

By Theorem \ref{C:StandardStableSCM}, the stable category of the standard Frobenius structure on $\SCM(R_{27, 19})$ may be described as follows
\begin{align}
\ul{\ul{\SCM}}(R_{27, 19}) \cong \ul{\MCM}(R_{3, 2}) \oplus \ul{\MCM}(R_{4, 3}),
\end{align}
where $R_{3, 2}$ and $R_{4, 3}$ are the simple $A_{2}$ and $A_{3}$ singularities, respectively. Indeed, removing the $-5$-curve from the dual graph yields a disjoint union of an $A_{2}$ and $A_{3}$ graph with all selfintersection numbers equal to $-2$. 

In particular, the Auslander--Reiten quiver of $\ul{\ul{\SCM}}(R_{27, 19})$ is given by
\begin{equation*}\begin{tikzpicture}[description/.style={fill=white,inner sep=2pt}]
    \matrix (n) [matrix of math nodes, row sep=3em,
                 column sep=2.5em, text height=1.5ex, text depth=0.25ex,
                 inner sep=0pt, nodes={inner xsep=0.3333em, inner
ysep=0.3333em}]
    {  
       \bullet & \bullet &&&\bullet & \bullet & \bullet \\
    };
\path[->] (n-1-1) edge [bend right=30](n-1-2);
\path[<-] (n-1-1) edge [bend left=30] (n-1-2);
\draw[dash pattern = on 0.5mm off 0.3mm,->] ($(n-1-1.south) +
    (1.2mm,0.5mm)$) arc (65:-245:2.5mm);
\draw[dash pattern = on 0.5mm off 0.3mm,->] ($(n-1-2.south) +
    (1.2mm,0.5mm)$) arc (65:-245:2.5mm);

\path[->] (n-1-5) edge [bend right=30](n-1-6);
\path[<-] (n-1-5) edge [bend left=30] (n-1-6);

\path[->] (n-1-6) edge [bend right=30](n-1-7);
\path[<-] (n-1-6) edge [bend left=30] (n-1-7);

\draw[dash pattern = on 0.5mm off 0.3mm,->] ($(n-1-5.south) +
    (1.2mm,0.5mm)$) arc (65:-245:2.5mm);
\draw[dash pattern = on 0.5mm off 0.3mm,->] ($(n-1-6.south) +
    (1.2mm,0.5mm)$) arc (65:-245:2.5mm);
\draw[dash pattern = on 0.5mm off 0.3mm,->] ($(n-1-7.south) +
    (1.2mm,0.5mm)$) arc (65:-245:2.5mm);
 \end{tikzpicture}\end{equation*}

We can also consider a modified Frobenius structure on $\SCM(R_{27, 19})$. For example, set $\cs=\{1\}$, where $E_{1}$ is the leftmost $(-2)$-curve in \eqref{E:DualGraph2719}. Then $N^{\cs}=R \oplus M_{1} \oplus M_{3}$ and the corresponding stable category decomposes as follows, see Corollary \ref{C:ModifiedStableSCM}
\begin{align}
\ul{\SCM}_{N^\cs}(R_{27, 19}) \cong \ul{\MCM}(R_{2, 1}) \oplus \ul{\MCM}(R_{4, 3}). 
\end{align}
\end{ex}

Generically, the triangulated category is trivial. Let us give a concrete example.

\begin{ex}
Consider the continued fraction expansion of the fraction $\frac{51}{11}$
\begin{align}
\frac{51}{11}=5-\cfrac{1}{3-\cfrac{1}{4}}
\end{align}
We obtain the following dual intersection graph for the singularity $R_{51, 11}$:
\begin{equation*}\begin{tikzpicture}[description/.style={fill=white,inner sep=2pt}]
    \matrix (n) [matrix of math nodes, row sep=3em,
                 column sep=2.5em, text height=1.5ex, text depth=0.25ex,
                 inner sep=0pt, nodes={inner xsep=0.3333em, inner
ysep=0.3333em}]
    {  
       \bullet & \bullet & \bullet  \\
    };
\path[-] (n-1-1) edge (n-1-2);
\path[-] (n-1-2) edge (n-1-3);

 \node at ($(n-1-1.north) + (-1mm, 3mm)$) {$-5$};
 \node at ($(n-1-2.north) + (-1mm, 3mm)$) {$-3$};
 \node at ($(n-1-3.north) + (-1mm, 3mm)$) {$-4$};
 \end{tikzpicture}\end{equation*}
 In particular, all objects in $\SCM(R)$ are  projective-injective. Hence,
 $\ul{\ul{\SCM}}(R) \cong 0$.
\end{ex}

Using cyclic quotient singularities, we see that the number of blocks in the decomposition of $\ul{\ul{\SCM}}(R)$ is not bounded above. Indeed, as an example consider the cyclic quotient singularities associated to Jung--Hirzebruch strings of the form

\begin{equation*}\begin{tikzpicture}[description/.style={fill=white,inner sep=2pt}]
    \matrix (n) [matrix of math nodes, row sep=3em,
                 column sep=1.5em, text height=1.5ex, text depth=0.25ex,
                 inner sep=0pt, nodes={inner xsep=0.3333em, inner
ysep=0.3333em}]
    {  
       \bullet & \bullet & \bullet & \cdots & \bullet & \bullet &\bullet & \cdots & \bullet & \bullet &\bullet  \\
    };
\path[-] (n-1-1) edge (n-1-2);
\path[-] (n-1-2) edge (n-1-3);
\path[-] (n-1-3) edge (n-1-4);
\path[-] (n-1-4) edge (n-1-5);
\path[-] (n-1-5) edge (n-1-6);
\path[-] (n-1-6) edge (n-1-7);
\path[-] (n-1-7) edge (n-1-8);
\path[-] (n-1-8) edge (n-1-9);
\path[-] (n-1-9) edge (n-1-10);
\path[-] (n-1-10) edge (n-1-11);

 \node at ($(n-1-1.north) + (-1mm, 3mm)$) {$-2$};
 \node at ($(n-1-2.north) + (-1mm, 3mm)$) {$-3$};
 \node at ($(n-1-3.north) + (-1mm, 3mm)$) {$-2$};
 \node at ($(n-1-5.north) + (-1mm, 3mm)$) {$-2$};
 \node at ($(n-1-6.north) + (-1mm, 3mm)$) {$-3$};
 \node at ($(n-1-7.north) + (-1mm, 3mm)$) {$-2$};
 \node at ($(n-1-9.north) + (-1mm, 3mm)$) {$-2$};
 \node at ($(n-1-10.north) + (-1mm, 3mm)$) {$-3$};
 \node at ($(n-1-11.north) + (-1mm, 3mm)$) {$-2$};
 \end{tikzpicture}\end{equation*}

\subsubsection{A tetrahedral group}
We follow Riemenschneider \cite{Riemenschneider77}.Ê  Let $\T_{13} \subseteq \GL(2, \C)$ be the subgroup generated by the following elements
\begin{align}
\begin{array}{c}
\psi_{4}=\begin{pmatrix} i & 0 \\ 0 & i^3 \end{pmatrix}, \quad
\phi_{26}=\begin{pmatrix} \epsilon_{26} & 0 \\ 0 & \epsilon_{26} \end{pmatrix}, \quad 
\tau=\begin{pmatrix} 0 & i \\ i & 0   \end{pmatrix}, \quad \eta=\frac{1}{\sqrt{2}}\begin{pmatrix} \epsilon_{8} & \epsilon_{8}^3 \\ \epsilon_{8} & \epsilon_{8}^7 \end{pmatrix}
\end{array},
\end{align}
where $i=\sqrt{-1}$ and $\epsilon_{n}$ denotes an $n$-th primitive root of unity. $\T_{13}$ is a group of order $13 \times 24=312$ and its dual intersection graph has the following form
\begin{equation}\label{E:DualGraphT13}
\begin{array}{c}
\begin{tikzpicture}[description/.style={fill=white,inner sep=2pt}]
    \matrix (n) [matrix of math nodes, row sep=3em,
                 column sep=2.5em, text height=1.5ex, text depth=0.25ex,
                 inner sep=0pt, nodes={inner xsep=0.3333em, inner
ysep=0.3333em}]
    {  && \bullet \\
       \bullet & \bullet & \bullet &\bullet & \bullet  \\
    };
\path[-] (n-2-1) edge (n-2-2);
\path[-] (n-2-2) edge (n-2-3);
\path[-] (n-2-3) edge (n-2-4);
\path[-] (n-2-4) edge (n-2-5);
\path[-] (n-1-3) edge (n-2-3);

 \node at ($(n-2-1.south) + (-1mm, -3mm)$) {$-2$};
 \node at ($(n-2-2.south) + (-1mm, -3mm)$) {$-2$};
 \node at ($(n-2-3.south) + (-1mm, -3mm)$) {$-4$};
 \node at ($(n-2-4.south) + (-1mm, -3mm)$) {$-2$};
 \node at ($(n-2-5.south) + (-1mm, -3mm)$) {$-2$};
 \node at ($(n-1-3.west) + (-3mm, 0mm)$) {$-2$};
\end{tikzpicture}
\end{array}
\end{equation}

Removing the non-$(-2)$-curve, we obtain a disjoint union of diagrams of type $A_{1}$, $A_{2}$ and $A_{2}$. Hence Theorem \ref{C:StandardStableSCM} yields an equivalence of triangulated categories
\begin{align}
\ul{\ul{ \SCM}}(\C\llbracket x, y \rrbracket^{\T_{13}}) \cong \ul{\MCM}(R_{2, 1}) \oplus \ul{\MCM}(R_{3, 2}) \oplus \ul{\MCM}(R_{3, 2})
\end{align}

\subsection{Concluding remarks} \label{ss:Lift}

In the notations of Paragraph \ref{sss:Applicationtorational}, let $Y \stackrel{f^{\cs}} \longrightarrow X^\cs \stackrel{g^\cs} \longrightarrow \Spec(R)$ be a factorization of the minimal resolution of a rational surface singularity, with $\cs \subseteq I$. Let $\Lambda$ be the reconstruction algebra of $R$ and $e\in \Lambda$ be the idempotent corresponding to the identity endomorphism of the special Cohen--Macaulay $R$-module $N^\cs=R \oplus \bigoplus_{i \in I \setminus S} M_{i}$.

Recall that a sequence of triangulated categories and triangle functors $\cu \stackrel{F}\ra \ct \stackrel{G}\ra \cq$ is called \emph{exact}, if $G$ is a quotient functor with kernel $\cu$ and $F$ is the canonical inclusion. In this subsection, we extend triangle equivalences from Corollary \ref{C:ModifiedStableSCM} to exact sequences of triangulated categories. In particular, this yields triangle equivalences between the relative singularity categories studied in Sections \ref{S:Global} and \ref{S:Local}. 
\begin{prop} 
There exists  a commutative diagram of triangulated categories and functors such that the horizontal arrows are equivalences and the columns are exact.
\begin{align}\label{E:Commutative}
\begin{array}{c}
{\SelectTips{cm}{10}
\xy0;/r.4pc/:
(-10,20)*+{ \thick\left( \bigoplus_{i \in \cs} \co_{E_{i}}(-1)\right)}="A2",
(25,20)*+{\thick(\mod-\Lambda/\Lambda e\Lambda) }="A3",
(-10,10)*+{\frac{ \displaystyle \cd^b(\Coh Y)}{\displaystyle \thick\bigl(\co_{Y} \oplus \oplus_{i \in I \setminus \cs} \cm_{i}\bigr)}}="a2",
(25,10)*+{\frac{\displaystyle \cd^b(\mod-\Lambda)}{\displaystyle \thick(e \Lambda)}}="a3",
(-10,0)*+{\cd_{sg}(X^\cs)}="b2",
(25,0)*+{ \cd_{sg}(e\Lambda e)}="b3",
\ar"A2";"A3"^{\sim}
\ar@{^{(}->}"A2";"a2"
\ar@{^{(}->}"A3";"a3"
\ar@{->>}"a2";"b2"_{\Rfcs}
\ar@{->>}"a3";"b3"^{(-)e}
\ar"b2";"b3"^{\RHom_{X^\cs}(\cv_{\cs},-)}_{\sim}
\ar"a2";"a3"^(.55){\RHom_Y(\cv_\emptyset,-)}_{\sim}
\endxy}
\end{array}
\end{align}
By an abuse of notation, the induced triangle functors in the lower square are labelled by the inducing triangle functors from the diagram in Theorem \ref{main Db}.
\end{prop}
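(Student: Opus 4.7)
The plan is to derive this diagram almost mechanically from Wemyss' derived equivalences in Theorem \ref{main Db}, by tracking which thick subcategories get matched up and then applying the universal property of Verdier quotients (together with Corollary \ref{P:OneIdempotent} and Proposition \ref{C:From-gen-to-class}) to descend to the squares of quotient categories. The starting point is the big commutative square from Theorem \ref{main Db}: the equivalences $\RHom_Y(\cv_\emptyset,-)$ and $\RHom_{X^\cs}(\cv_\cs,-)$ intertwine $\Rfcs$ with $(-)e$. I would first verify the crucial matching $\RHom_Y(\cv_\emptyset,-)\bigl(\thick(\co_Y\oplus\bigoplus_{i\in I\setminus \cs}\cm_i)\bigr)=\thick(e\Lambda)$: this is immediate since $f^*\cv_\cs=\co_Y\oplus\bigoplus_{i\in I\setminus \cs}\cm_i$ is a direct summand of $\cv_\emptyset$, so applying $\RHom_Y(\cv_\emptyset,-)$ identifies it (up to convention) with the direct summand $e\Lambda$ of $\Lambda=\End_Y(\cv_\emptyset)$, where $e$ is the idempotent in $\Lambda$ that projects $\cv_\emptyset$ onto $f^*\cv_\cs$; this matches the given idempotent in view of the isomorphism $\End_Y(f^*\cv_\cs)\cong\End_R(N^\cs)=e\Lambda e$ already noted in the proof of Theorem \ref{main Db}.

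Next I would establish the middle horizontal equivalence by descending $\RHom_Y(\cv_\emptyset,-)$ to the quotients using the universal property of Verdier localization and the fact that equivalences preserve thick subcategories. For the bottom horizontal equivalence, $\RHom_{X^\cs}(\cv_\cs,-)$ sends the tilting bundle $\cv_\cs$ to $e\Lambda e$, hence sends $\Perf(X^\cs)=\thick(\cv_\cs)$ to $\thick(e\Lambda e)=K^b(\proj\text{-}e\Lambda e)$; therefore it descends to a triangle equivalence $\cd_{sg}(X^\cs)\to\cd_{sg}(e\Lambda e)$. The exactness of the right column is precisely Corollary \ref{P:OneIdempotent} combined with Proposition \ref{C:From-gen-to-class} (the quotient functor $(-)e$ has kernel $\thick_\Lambda(\mod\text{-}\Lambda/\Lambda e\Lambda)$). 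Commutativity of the lower square is a direct consequence of the commutative square in Theorem \ref{main Db} applied to the quotients.

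The top row and left column are then obtained by transport of structure. Concretely, the kernel of the composition $\cd^b(\Coh Y)\twoheadrightarrow \cd^b(\Coh Y)/\thick(f^*\cv_\cs)\xrightarrow{\Rfcs}\cd_{sg}(X^\cs)$ corresponds under $\RHom_Y(\cv_\emptyset,-)$ to $\thick_\Lambda(\mod\text{-}\Lambda/\Lambda e\Lambda)$, which by Theorem \ref{main Db} is generated precisely by the simples $S_i$ with $i\in\cs$ (those are the vertices killed by $e$ since $e$ corresponds to $R\oplus\bigoplus_{i\in I\setminus \cs}M_i$), and these are the images of the $\co_{E_i}(-1)[1]$ for $i\in\cs$. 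This identifies the top-right thick subcategory with $\thick\bigl(\bigoplus_{i\in\cs}\co_{E_i}(-1)\bigr)$ on the left and gives the top horizontal equivalence, while simultaneously showing that the left column is exact (since its image under an equivalence is the exact right column). Finally, commutativity of the upper square holds because both horizontal arrows are restrictions of the derived equivalences, which already commute on the level of unrestricted categories.

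The main obstacle I anticipate is a bookkeeping one: pinning down the exact conventions (left vs.\ right modules, the placement of the idempotent $e$ versus its complement $1-e$, and how $\RHom_Y(\cv_\emptyset,-)$ translates direct-summand decompositions of $\cv_\emptyset$ into idempotent decompositions of $\Lambda$), and checking carefully that the kernel of $\Rfcs$ really is generated by $\{\co_{E_i}(-1):i\in\cs\}$ inside $\cd^b(\Coh Y)$ (this uses the fact that $f^\cs$ contracts exactly the curves $E_i$, $i\in\cs$, together with the perverse-sheaf heart description in Theorem \ref{T:VdB}, so that these line bundles on the contracted curves generate the relevant kernel). Once those identifications are pinned down, the rest of the diagram is a formal consequence of the universal property of Verdier quotients and of Theorem \ref{main Db}.
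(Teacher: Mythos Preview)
Your proposal is correct and follows essentially the same route as the paper: both start from the commutative square of Theorem \ref{main Db}, descend the tilting equivalences to the Verdier quotients by matching $\thick(\co_Y\oplus\bigoplus_{i\in I\setminus\cs}\cm_i)$ with $\thick(e\Lambda)$ and $\Perf(X^\cs)$ with $K^b(\proj\text{-}e\Lambda e)$, identify the kernel of $(-)e$ as $\thick(\mod\text{-}\Lambda/\Lambda e\Lambda)=\thick(\bigoplus_{i\in\cs}S_i)$ via Corollary \ref{P:OneIdempotent}, and then transport this across the equivalence using $S_i\leftrightarrow\co_{E_i}(-1)[1]$ from Theorem \ref{main Db} to obtain the top row and the exactness of the left column. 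The only minor difference is that the paper obtains the left vertical quotient functor and the identification of its kernel purely by transport of structure, so your anticipated direct check via the perverse heart is unnecessary.
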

\begin{proof}
We start with the lower square. Since the corresponding diagram in Theorem \ref{main Db} commutes, it suffices to show that the induced functors above are well-defined. The equivalence $\RHom_{Y}(\cv_{\emptyset}, -)$ from Theorem \ref{main Db} maps $ \co_{Y} \oplus \bigoplus_{i \in I \setminus \cs} \cm_{i}$ to $e \Lambda$. Hence it induces an equivalence on the triangulated quotient categories. Since $\RHom_{X^\cs}(\cv_{\cs}, -)$ is an equivalence by Theorem \ref{main Db} and the subcategories $\Perf(X^\cs)$ respectively $\Perf(e\Lambda e)$ can be defined intrinsically, we get a well-defined equivalence on the bottom of diagram (\ref{E:Commutative}). The functor $(-)e$ on the right is a well-defined quotient functor by Proposition \ref{P:OneIdempotent}. Now, the functor on the left is a well-defined quotient functor by the commutativity of the diagram in Theorem \ref{main Db} and the considerations above.  

The category $\thick(\mod-\Lambda/\Lambda e\Lambda)$ is the kernel of the quotient functor $(-)e$, by~\ref{P:OneIdempotent}.  Since $R$ has isolated singularities, the algebra $\Lambda/\Lambda e \Lambda$ is finite dimensional (see \cite{Auslander84}) and so $\thick(\mod-\Lambda/\Lambda e \Lambda) = \thick\left(\bigoplus_{i \in \cs}S_{i} \right)$,
where $S_{i}$ denotes the simple $\Lambda$-module corresponding to the vertex $i$ in the quiver of $\Lambda$.  But under the derived equivalence $\RHom_{Y}(\cv_{\emptyset}, -)$, $S_i$ corresponds to $\co_{E_i}(-1)[1]$ (Theorem \ref{main Db}), so it follows that we can identify the subcategory $\thick(\mod-\Lambda/\Lambda e \Lambda)=\thick\left(\bigoplus_{i \in \cs}S_{i} \right)$ with $\thick\left( \bigoplus_{i \in \cs} \co_{E_{i}}(-1)\right)$, inducing the top half of the diagram.
\end{proof}
\begin{rem}
The functor $\RHom_{X^\cs}(\cv_{\cs}, -)$ identifies $\Perf(X^\cs)$ with $\Perf(e\Lambda e) \cong \thick(e\Lambda ) \subseteq \cd^b(\mod-\Lambda)$. Hence applying $\RHom_{Y}(\cv_{\emptyset}, -)$ yields a triangle equivalence
$\Perf(X^\cs) \cong \thick\bigl(\co_{Y} \oplus \oplus_{i \in I \setminus \cs} \cm_{i}\bigr)$. In particular, there is an equivalence
\begin{align}\label{E:RemarkRelSingCat}
\frac{\displaystyle \cd^b(\Coh Y)}{\displaystyle \Perf(X^\cs)} \stackrel{\sim}\longrightarrow \frac{\displaystyle \cd^b(\mod-\Lambda)}{\displaystyle \thick(e \Lambda)}. 
\end{align}
Actually, a careful analysis of the commutative diagram in Theorem \ref{main Db} shows that $\Perf(X^\cs) \cong \thick\bigl(\co_{Y} \oplus \oplus_{i \in I \setminus \cs} \cm_{i}\bigr)$ is obtained as a restriction of $\mathbf{L}(f^\cs)^{*}$.
\end{rem}

\medskip

If we contract only ($-2$)-curves (i.e.~if $\cs \subseteq \cc$ holds), then we know that $\cd_{sg}(X^\cs)$ splits into a direct sum of singularity categories of ADE--surface singularities (Corollary \ref{C:ModifiedStableSCM}).  In this case, it turns out that the diagram above admits an extension to the right and that in fact all the triangulated categories in our (extended) diagram split into blocks indexed by the isolated singularities of the Gorenstein scheme $X^\cs$.

Let us fix some notations. For a singular point $x \in \Sing X^\cs$ let $R_{x}=\widehat{\co}_{X^\cs, x}$, let $f_{x}\colon Y_{x} \ra \Spec\left(R_{x}\right)$ be the minimal resolution of singularities.

\begin{prop} Assume $\cs\subseteq\cc$.
There exists  a commutative diagram of triangulated categories and functors such that the horizontal arrows are equivalences and the columns are exact.
\begin{align}\label{E:extension-to-right}
\begin{array}{c}
{\SelectTips{cm}{10}
\xy0;/r.4pc/:
(-10,20)*+{\thick(\mod-\Lambda/\Lambda e\Lambda)}="A2",
(20,20)*+{ \bigoplus\limits_{x \in \Sing X^\cs} \ker(\mathbf{R} (f_{x})_{*}) }="A3",
(-10,10)*+{\frac{\displaystyle \cd^b(\mod-\Lambda)}{\displaystyle \thick(e \Lambda)}}="a2",
(20,10)*+{ \bigoplus\limits_{x \in \Sing X^\cs} \frac{\displaystyle \cd^b(\Coh Y_{x})}{\displaystyle \Perf(R_{x})} }="a3",
(-10,0)*+{\cd_{sg}(e\Lambda e)}="b2",
(20,0)*+{  \bigoplus\limits_{x \in \Sing X^\cs} \cd_{sg}(R_{x})}="b3",
\ar"A2";"A3"^{\sim}
\ar@{^{(}->}"A2";"a2"
\ar@{^{(}->}"A3";"a3"
\ar@{->>}"a3";"b3"^{\bigoplus_{x \in \Sing X^\cs}\mathbf{R} (f_{x})_{*}}
\ar@{->>}"a2";"b2"_{(-)e}
\ar"b2";"b3"^{\sim}
\ar"a2";"a3"^{\sim}
\endxy}
\end{array}
\end{align}
\end{prop}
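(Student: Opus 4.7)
The plan is to build \eqref{E:extension-to-right} by matching the decomposition coming from the isolated ADE singularities of $X^\cs$ on the right with the structure already obtained in \eqref{E:Commutative} on the left. Throughout the argument, Proposition \ref{P:RationalDoublePoint} is crucial: since $\cs\subseteq\cc$, each completion $R_x = \widehat{\co}_{X^\cs,x}$ at a singular point $x\in\Sing(X^\cs)$ is an ADE-surface singularity, hence Gorenstein, and its minimal resolution $f_x\colon Y_x\to\Spec(R_x)$ is crepant and obtained from $Y\to X^\cs$ by local completion along the fibre above $x$.

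First I would establish the bottom row. By Corollary \ref{C:ModifiedStableSCM}, there is a chain of triangle equivalences $\cd_{sg}(e\Lambda e)\cong\cd_{sg}(X^\cs)\cong\bigoplus_{x}\ul{\MCM}(R_x)$, and each $\ul{\MCM}(R_x)\cong\cd_{sg}(R_x)$ by Buchweitz' Theorem \ref{T:Buchweitz} applied to the Gorenstein ring $R_x$. Next, for the middle row, I would apply the localization Theorem \ref{t:main-global} in its geometric incarnation to the excellent separated Noetherian scheme $X^\cs$ with isolated singularities $\{x_1,\ldots,x_n\}$. Combined with the equivalence \eqref{E:RemarkRelSingCat} from the previous proposition, namely $\cd^b(\mod-\Lambda)/\thick(e\Lambda)\cong\cd^b(\Coh Y)/\Perf(X^\cs)$, this would yield
\begin{align*}
\cd^b(\mod-\Lambda)/\thick(e\Lambda) \cong \bigoplus_{x\in\Sing(X^\cs)} \cd^b(\Coh Y_x)/\Perf(R_x),
\end{align*}
once one verifies that the localization argument used to prove Theorem \ref{t:main-global} (completion at each singular point followed by the identification of local blocks) carries over when the resolution is a scheme rather than a non-commutative ringed space. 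The point is that the key statements used in the proof of Theorem \ref{t:main-global} (Lemma \ref{P:DercCatandSupp}, Proposition \ref{P:FullyFaithf}, Theorem \ref{T:keylocaliz}) only use that $\cf$ is a coherent sheaf locally free on the smooth locus, and they apply verbatim with $\cf=\co_Y$ viewed on $X^\cs$ via $f^\cs$ (so that the analogue of $\ca={\mathcal End}_{X^\cs}(\cf)$ is just $\co_{X^\cs}$ on the smooth locus and carries the non-trivial local content through completion).

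For the top row, I would use the identification from the proof of the previous proposition: $\thick(\mod-\Lambda/\Lambda e\Lambda)=\thick(\bigoplus_{i\in\cs}S_i)$, and each simple $S_i$ corresponds under $\RHom_Y(\cv_\emptyset,-)$ to $\co_{E_i}(-1)[1]$. Since the $E_i$ with $i\in\cs$ are precisely the exceptional curves contracted by $f^\cs$, they partition into subsets $\cs_x$ according to which singular point $x\in\Sing(X^\cs)$ they lie above, and there are no nonzero morphisms between objects supported on the exceptional fibres of different singular points. This yields the direct sum decomposition $\thick(\bigoplus_{i\in\cs}\co_{E_i}(-1))\cong\bigoplus_x\thick(\bigoplus_{i\in\cs_x}\co_{E_i^x}(-1))$, and the latter thick subcategory coincides with $\ker(\Rf_x{}_*)$, as already observed in Subsection \ref{ss:Bridgeland} (this follows from a spectral sequence argument as in \cite[Lemma 3.1]{Bridgeland02} and the fact that $\co_{E_i}(-1)$ are the simple perverse sheaves in $\ker(\Rf_x{}_*)$).

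Finally, the commutativity of \eqref{E:extension-to-right} and the exactness of the columns are formal: the left column is exact by the preceding proposition (or equivalently by Corollary \ref{P:OneIdempotent} applied to the pair $(\Lambda,e)$), and the right column is exact by the localization argument combined with the local recollements $\ker(\Rf_x{}_*)\hookrightarrow\cd^b(\Coh Y_x)\twoheadrightarrow\cd_{sg}(R_x)$ coming from Bridgeland's perverse $t$-structure machinery applied to $f_x$. The main obstacle I anticipate is the careful verification of the middle row, more precisely the compatibility of Wemyss' global tilting equivalence (Theorem \ref{main Db}) with the local tilting equivalences at each singular point $x$, and in turn the compatibility of the global localization of $\cd^b(\Coh Y)/\Perf(X^\cs)$ with completion. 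This compatibility ultimately boils down to the fact that the tilting bundle $\cv_\cs$ of Theorem \ref{main Db} restricts, after completion at $x$, to the analogous tilting bundle on $Y_x$, a base-change statement that should follow from the explicit construction of $\cv_\cs$ via maximal extensions in Theorem \ref{T:VdB}, but requires some bookkeeping.
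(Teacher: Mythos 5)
Your route is genuinely different from the paper's. The paper obtains the middle equivalence by a purely algebraic argument: it treats $\ce_1=\SCM_{N^\cs}(R)$ and $\ce_2=\bigoplus_{x\in\Sing X^\cs}\MCM(R_x)$ as two Frobenius categories satisfying (FM1)--(FM4) whose stable categories are triangle equivalent by Corollary \ref{C:ModifiedStableSCM}, applies Theorem \ref{t:main-thm-2} to express each relative Auslander singularity category as $\per$ of the dg Auslander algebra of the stable category, and concludes from the isomorphism $\Lambda_{dg}(\ul{\ce}_1)\cong\Lambda_{dg}(\ul{\ce}_2)$ together with the derived McKay correspondence $\cd^b(\mod-\Pi_x)\cong\cd^b(\Coh Y_x)$. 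The top and bottom rows then follow by the intrinsic characterization of the kernel in Corollary \ref{C:Intrinsic}, with no geometry needed beyond the McKay equivalence. Your proposal instead tries to produce the middle equivalence geometrically by localizing $\cd^b(\Coh Y)/\Perf(X^\cs)$ at the singular points of $X^\cs$.

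The gap you flag is real and, as stated, blocks the argument. Theorem \ref{t:main-global} applies to a non-commutative ringed space $\XX=(X,\ca)$ with $\ca=\cEnd_X(\cf)$ a coherent sheaf of $\co_X$-algebras, and the category being localized is $\cd^b(\Coh\XX)/\Perf(X)$. In your situation $Y\to X^\cs$ is a genuine commutative blow-down, and $\cd^b(\Coh Y)$ is not $\cd^b(\Coh\XX)$ for any coherent $\co_{X^\cs}$-algebra $\ca$; the sheaf $\co_Y$ is not a coherent sheaf on $X^\cs$, so the substitution $\cf=\co_Y$ you suggest does not make sense inside that framework. One would need a separate localization theorem for relative singularity categories of proper birational morphisms, proved from scratch (the Kashiwara--Schapira and Thomason--Trobaugh steps would require nontrivial adaptation, not a verbatim re-run). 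In the top row there is a secondary gap: the sheaves $\co_{E_i}(-1)$ live on $Y$, whereas $\ker(\Rf_x{}_*)$ lives on the formal neighbourhood $Y_x$, so matching the two requires a completion argument you do not supply. Interestingly, the paper itself closes with a remark that exactly your geometric route, via an adaptation of Section \ref{S:Global}, ``may yield a more geometric explanation for the block decomposition,'' so the idea is plausible; but it is left there as an open suggestion precisely because it would require the new localization theorem your sketch takes for granted.
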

\begin{proof}
We need some preparation. Note that by the derived McKay correspondence \cite{KapranovVasserot00, BKR}, there are derived equivalences 
$\cd^b(\Coh Y_{x}) \ra \cd^b(\mod-\Pi_{x})$, where $\Pi_{x}$ is the Auslander algebra of the Frobenius category of maximal Cohen--Macaulay $R_{x}$-modules $\MCM(R_{x})$.   Now we have two Frobenius categories $\ce_{1}:=\SCM_{N^\cs}(R)$ and $\ce_{2}:=\bigoplus_{x \in \Sing X^\cs} \MCM(R_{x})$, which clearly satisfy the conditions (FM1)--(FM4) in Subsection \ref{ss:Independance} and whose stable categories are Hom-finite, idempotent complete and whose stable Auslander algebras satisfy (A1)--(A3) in Subsection \ref{ss:Independance}. 
Furthermore, $\ce_1$ and $\ce_2$ are stably equivalent by Corollary \ref{C:ModifiedStableSCM}.

Now, by Theorem \ref{t:main-thm-2} there are triangle equivalences  
\begin{align}
\cd^b(\mod-\Lambda)/\thick(e \Lambda) \cong \mathsf{per}\bigl(\Lambda_{dg}(\underline{\ce}_{1})\bigr)\label{E:Tria1}\\
\bigoplus_{x \in \Sing X^\cs} \cd^b(\mod-\Pi_{x})/\Perf(R_{x}) \cong \mathsf{per}\bigl(\Lambda_{dg}(\underline{\ce}_{2})\bigr)\label{E:Tria2}
\end{align}
where by definition $\Lambda_{dg}(\underline{\ce}_{1})$ and $\Lambda_{dg}(\underline{\ce}_{2})$ are dg algebras that depend only on (the triangulated structure of) the stable Frobenius categories $\underline{\ce}_{1}$ and $\underline{\ce}_{2}$. Since $\ce_1$ and $\ce_2$ are stably equivalent, these two dg algebras are isomorphic.  Thus the combination of the equivalences \eqref{E:Tria1} and \eqref{E:Tria2} yields a triangle equivalence
\begin{align}\label{E:RelSgCat}
\frac{\displaystyle \cd^b(\mod-\Lambda)}{\displaystyle \thick(e\Lambda)}  \longrightarrow \bigoplus_{x \in \Sing X^\cs} \frac{\displaystyle \cd^b\bigl(\mod- \Pi_{x} \bigr)}{\displaystyle \Perf(R_{x})}
\end{align}
which, in conjunction with the derived McKay Correspondence,  yields the equivalence of triangulated categories in the middle of \eqref{E:extension-to-right}.

Furthermore, the functors $(-)e$ and $\bigoplus_{x \in \Sing X^\cs} \mathbf{R} (f_{x})_{*}$ are quotient functors with kernels $\thick(\mod-\Lambda/\Lambda e\Lambda)$ and $\bigoplus_{x \in \Sing X^\cs} \ker(\mathbf{R} (f_{x})_{*})$, respectively. These subcategories admit intrinsic descriptions (see Corollary \ref{C:Intrinsic}). Hence there is an induced equivalence, which renders the upper square commutative. This in turn induces an equivalence on the bottom of (\ref{E:extension-to-right}), such that the lower square commutes.
\end{proof}

\begin{rem}
Using \eqref{E:RemarkRelSingCat} together with an appropriate adaption of the techniques developed in Section \ref{S:Global} may yield a more geometric explanation for the block decomposition in \eqref{E:extension-to-right}. 
\end{rem}

\newpage

\section{Singularity categories of gentle algebras}\label{S:Gentle}

This section is based on our preprint \cite{Kalck12}. We give an explicit description of the singularity category $\cd_{sg}(\Lambda) \cong \cd^b(\Lambda-\mod)/\Perf(\Lambda)$, where $\Lambda$ is a finite dimensional \emph{gentle} algebra (see Definition \ref{D:Gentle} below). More precisely, the category $\cd_{sg}(\Lambda)$ decomposes into a finite product of $m$-cluster categories of type $\mathbb{A}_{1}$, i.e.~orbit categories of the form $\cd^b(k-\mod)/[m]$, where $m$ is a natural number. As an application, we recover a special case of a derived invariant for gentle algebras introduced by Avella-Alaminos \& Gei{\ss} \cite{GeissAvellaAlaminos}. Moreover, examples arising from singularity theory \cite{Burban} and from triangulations of unpunctured marked Riemann surfaces are discussed \cite{ABCP}.  

\subsection{Main result}
\noindent Let $k$ be an algebraically closed field. In this section, all modules are left modules.

\begin{defn}\label{D:Gentle}
A \emph{gentle algebra} is a finite dimensional algebra $\Lambda=kQ/I$ such that:
\begin{itemize}
\item[(G1)]  At any vertex, there are at most two incoming and at most two outgoing arrows.
\item[(G2)] The admissable two-sided ideal $I$ is generated by paths of length two.
\item[(G3)] For each arrow $\beta \in Q_{1}$,  there is at most one arrow $\alpha \in Q_{1}$ such that 
$\alpha\beta \in I$ and at most one arrow $\gamma \in Q_{1}$ such that 
$ \beta \gamma \in I.$
\item[(G4)] For each arrow $\beta\in Q_{1}$, there is at most one arrow $\alpha \in Q_{1}$ such that 
$\alpha\beta \notin I$ and at most one arrow $\gamma \in Q_{1}$ such that 
$\beta \gamma \notin I.$
\end{itemize}
\end{defn}

For a gentle algebra $\Lambda=kQ/I$, we denote by $\cc(\Lambda)$ the set of equivalence classes of repetition free cyclic paths $\alpha_{1}\ldots \alpha_{n}$ in $Q$ (with respect to cyclic permutation) such that $\alpha_{i}\alpha_{i+1} \in I$ for all $i$, where we set $n+1=1$. 

\begin{ex} \label{Ex:Illustrative} An example of a gentle algebra $\Lambda=kQ/I$  is given by the quiver $Q$
\[
\begin{tikzpicture}[description/.style={fill=white,inner sep=2pt}]
    \matrix (n) [matrix of math nodes, row sep=1.3em,
                 column sep=2.25em, text height=1.5ex, text depth=0.25ex,
                 inner sep=0pt, nodes={inner xsep=0.3333em, inner
ysep=0.3333em}]
    {  1&&2 && 3 && 4 \\
       5&& 6 &&7&& 8 \\
    };
\draw[->] (n-1-1) edge node[fill=white, scale=0.75, yshift=0mm] [midway] {$a$} (n-1-3); 
\draw[->] (n-1-3) edge node[fill=white, scale=0.75, yshift=0mm] [midway] {$b$} (n-1-5);
\draw[->] (n-1-5) edge node[fill=white, scale=0.75, yshift=0mm] [midway] {$c$} (n-1-7);

\draw[->] (n-2-1) edge node[fill=white, scale=0.75, xshift=-3mm] [midway] {$d$} (n-1-1);
\draw[->] (n-2-3) edge node[fill=white, scale=0.75, xshift=-3mm] [midway] {$e$} (n-1-3);

\draw[->] (n-1-3) edge node[fill=white, scale=0.75, yshift=0mm] [midway] {$f$} (n-2-5);
\draw[->] (n-1-7) edge node[fill=white, scale=0.75, yshift=0mm] [midway] {$g$} (n-2-5);

\draw[->] (n-2-7) edge node[fill=white, scale=0.75, xshift=3mm] [midway] {$h$} (n-1-7);

\draw[<-] (n-2-3) edge node[fill=white, scale=0.75, yshift=0mm] [midway] {$j$} (n-2-5);
\draw[<-] (n-2-7) edge node[fill=white, scale=0.75, yshift=0mm] [midway] {$k$} (n-2-5);

\draw[<-] (n-2-1) edge node[fill=white, scale=0.75, yshift=0mm] [midway] {$i$} (n-2-3);   

\end{tikzpicture}
\]
with two-sided ideal $I$ generated by the relations $ba$, $fe$, $jf$, $ej$, $kg$, $hk$ and $gh$.
Then $\cc(\Lambda)=\{jfe, kgh\}$.
\end{ex}

Let $i \in Q_{0}$ be a vertex lying on a cycle $c \in \cc(\Lambda)$ and $P_{i}=\Lambda e_{i}$ be the corresponding indecomposable projective $\Lambda$-module. The radical of $P_{i}$ has at most two direct summands $R_{1}$ and $R_{2}$. 
We consider the radical embedding, where one of $R_{1}$ and $R_{2}$ may be zero and exactly one of the arrows $\iota_{1}$ and  $\iota_{2}$ lies on the cycle $c$ 
\begin{align}\label{E:Radical}
R_{1} \oplus R_{2}  \xrightarrow{\begin{pmatrix} \cdot \iota_{1} & \cdot \iota_{2}  \end{pmatrix}} P_{i}.
\end{align}
 Let $R(c)_{i}:=R_{j}$ be the corresponding direct summand of the radical.
\begin{ex} \label{Ex:DefRc}
In Example \ref{Ex:Illustrative}, we consider the vertex $6$ lying on the cycle $c=jfe$. The indecomposable projective $\Lambda$-module $P_{6}$ and its radical summand $R(c)_{6}$ have the form
\[
\begin{tikzpicture}[description/.style={fill=white,inner sep=2pt}]
    \matrix (n) [matrix of math nodes, row sep=1em,
                 column sep=2.25em, text height=1.5ex, text depth=0.25ex,
                 inner sep=0pt, nodes={inner xsep=0.3333em, inner
ysep=0.3333em}]
    {  &2&3 &4& 7 &6& 5&1 &2&7&8 \\
        6&& & &   \\
       &5&1 &2&7&8 \\
    };
\draw[->] (n-2-1) edge node[fill=white, scale=0.75, yshift=0mm] [midway] {$e$} (n-1-2); 
\draw[->] (n-2-1) edge node[fill=white, scale=0.75, yshift=0mm] [midway] {$i$} (n-3-2); 
   
\draw[->] (n-1-2) edge node[fill=white, scale=0.75, yshift=0mm] [midway] {$b$} (n-1-3); 
\draw[->] (n-1-3) edge node[fill=white, scale=0.75, yshift=0mm] [midway] {$c$} (n-1-4); 
\draw[->] (n-1-4) edge node[fill=white, scale=0.75, yshift=0mm] [midway] {$g$} (n-1-5); 
\draw[->] (n-1-5) edge node[fill=white, scale=0.75, yshift=0mm] [midway] {$j$} (n-1-6);
\draw[->] (n-1-6) edge node[fill=white, scale=0.75, yshift=0mm] [midway] {$i$} (n-1-7); 

\draw[->] (n-1-7) edge node[fill=white, scale=0.75, yshift=0mm] [midway] {$d$} (n-1-8); 
\draw[->] (n-1-8) edge node[fill=white, scale=0.75, yshift=0mm] [midway] {$a$} (n-1-9); 
\draw[->] (n-1-9) edge node[fill=white, scale=0.75, yshift=0mm] [midway] {$f$} (n-1-10); 
\draw[->] (n-1-10) edge node[fill=white, scale=0.75, yshift=0mm] [midway] {$k$} (n-1-11);

\draw[->] (n-3-2) edge node[fill=white, scale=0.75, yshift=0mm] [midway] {$d$} (n-3-3); 
\draw[->] (n-3-3) edge node[fill=white, scale=0.75, yshift=0mm] [midway] {$a$} (n-3-4); 
\draw[->] (n-3-4) edge node[fill=white, scale=0.75, yshift=0mm] [midway] {$f$} (n-3-5); 
\draw[->] (n-3-5) edge node[fill=white, scale=0.75, yshift=0mm] [midway] {$k$} (n-3-6); 

\draw[decoration={brace, amplitude=2.5mm}, decorate] ($(n-1-2.west) +(0, 3mm)$) -- node[ scale=0.75, yshift=6.5mm] [midway] {$R(c)_{6}$} ($(n-1-11.east)+(0, 3mm)$);

\end{tikzpicture}
\]
Let us look at another projective $\Lambda$-module from Example \ref{Ex:Illustrative}. Let $c'=kgh \in \cc(\Lambda)$. Then the projective $\Lambda$-module $P_{7}$ and its radical summands $R(c)_{7}$ and $R(c')_{7}$ have the following form
\[
\begin{tikzpicture}[description/.style={fill=white,inner sep=2pt}]
    \matrix (n) [matrix of math nodes, row sep=1em,
                 column sep=2.25em, text height=1.5ex, text depth=0.25ex,
                 inner sep=0pt, nodes={inner xsep=0.3333em, inner
ysep=0.3333em}]
    {   &6& 5&1 &2&7&8 \\
        7&& & &   \\
       &8 \\
    };
\draw[->] (n-2-1) edge node[fill=white, scale=0.75, yshift=0mm] [midway] {$j$} (n-1-2); 
\draw[->] (n-2-1) edge node[fill=white, scale=0.75, yshift=0mm] [midway] {$k$} (n-3-2); 
   
\draw[->] (n-1-2) edge node[fill=white, scale=0.75, yshift=0mm] [midway] {$i$} (n-1-3);  
\draw[->] (n-1-3) edge node[fill=white, scale=0.75, yshift=0mm] [midway] {$d$} (n-1-4); 
\draw[->] (n-1-4) edge node[fill=white, scale=0.75, yshift=0mm] [midway] {$a$} (n-1-5); 
\draw[->] (n-1-5) edge node[fill=white, scale=0.75, yshift=0mm] [midway] {$f$} (n-1-6); 
\draw[->] (n-1-6) edge node[fill=white, scale=0.75, yshift=0mm] [midway] {$k$} (n-1-7); 

\draw[decoration={brace, amplitude=2.5mm}, decorate] ($(n-1-2.west) +(0, 3mm)$) -- node[ scale=0.75, yshift=6.5mm] [midway] {$R(c)_{7}$} ($(n-1-7.east)+(0, 3mm)$);

\draw[decoration={brace, amplitude=1.2mm}, decorate] ($(n-3-2.east) +(0, -2mm)$) -- node[ scale=0.75, yshift=-5mm] [midway] {$R(c')_{7}$} ($(n-3-2.west)+(0, -2mm)$);

\end{tikzpicture}
\]
Note, that there is a non-zero morphism $R(c')_{7} \ra R(c)_{7}$. However, this morphism factors over the projective $P_{7}$ and thus vanishes in the stable category. 
\end{ex}

Gei{\ss} \& Reiten \cite{GeissReiten} have shown that gentle algebras are \emph{Iwanaga--Gorenstein rings}, see Definition \ref{D:IwanagaGorensteinD}. Combining this result with Buchweitz Theorem \ref{T:Buchweitz}, reduces the  description of the singularity category to the description of the stable category of Gorenstein projective modules.

The following proposition is the main result of this section\footnote{We would like to thank Jan Schr\"oer for bringing this question to our attention}:

\begin{prop}\label{P:Main}
Let $\Lambda=kQ/I$ be a finite dimensional gentle algebra. 
\begin{itemize} 
\item[(a)] The indecomposable Gorenstein projective modules\footnote{ This notion was introduced in Definition \ref{D:IwanagaGorensteinD}.} are given by 
\begin{align}
\ind \GP(\Lambda) = \ind \proj-\Lambda \cup \{ R(c)_{s(\alpha_{1})}, \ldots, R(c)_{s(\alpha_{n})} \big| c= \alpha_{1}\ldots\alpha_{n} \in \cc(\Lambda) \}.
\end{align}
\item[(b)] There is an equivalence of triangulated categories 
\begin{align}\label{E:Decomposition}
\cd_{sg}(\Lambda) \cong \prod_{c \in \cc(\Lambda)} \frac{\displaystyle \cd^b(k-\mod)}{\displaystyle [l(c)]},
\end{align}
where $l(\alpha_{1}\ldots\alpha_{n})=n$ and $\cd^b(k)/[l(c)]$ denotes the triangulated orbit category, \cite{Orbit}. This category is also known as the $(l(c)-1)$-\emph{cluster category} of type $\mathbb{A}_{1}$, \cite{Thomas}.
\end{itemize}
\end{prop}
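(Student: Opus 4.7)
The plan is to use the result of Gei{\ss}--Reiten \cite{GeissReiten} that $\Lambda$ is Iwanaga--Gorenstein together with Buchweitz' Theorem \ref{T:Buchweitz}, which yields a triangle equivalence $\underline{\GP}(\Lambda) \cong \cd_{sg}(\Lambda)$. So both parts of the proposition will be proved by working inside the Frobenius category $\GP(\Lambda)$ and its stable category.

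For part (a), first I would verify that each $R(c)_{s(\alpha_i)}$ is Gorenstein projective. The key computation is to determine the syzygy of $R(c)_{s(\alpha_i)}$. By the construction of $R(c)_j$ as a direct summand of $\rad P_j$ via the splitting \eqref{E:Radical}, one can describe $R(c)_{s(\alpha_i)}$ explicitly as the string module associated to the maximal string starting with the arrow that lies on $c$ (as illustrated in Example \ref{Ex:DefRc}). Using the gentle relations (G2)--(G4), a direct computation of the projective cover of $R(c)_{s(\alpha_i)}$ shows that, up to projective summands,
\begin{align*}
\Omega\bigl(R(c)_{s(\alpha_i)}\bigr) \cong R(c)_{s(\alpha_{i+1})}.
\end{align*}
Hence the syzygy operator cyclically permutes the modules $\{R(c)_{s(\alpha_i)}\}_{i=1}^{l(c)}$, so each $R(c)_{s(\alpha_i)}$ is an $l(c)$-fold syzygy of itself, and thus a Gorenstein projective module by Proposition \ref{P:IwanagaFrobenius}(GP2). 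To show there are \emph{no other} indecomposable Gorenstein projectives, I would invoke the classification of indecomposable modules over a gentle algebra as string and band modules, and compute syzygies combinatorially: any string that does not eventually wrap around a cycle in $\cc(\Lambda)$ acquires a strictly shorter syzygy, so its iterated syzygies are eventually zero, and band modules acquire infinite dimensional projective resolutions of strictly growing dimension. Thus only the $R(c)_{s(\alpha_i)}$ (and projectives) survive.

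For part (b), the strategy is to compute the stable endomorphism algebra of \[M := \bigoplus_{c \in \cc(\Lambda)} \bigoplus_{i=1}^{l(c)} R(c)_{s(\alpha_i)}\] and to apply a Morita-type argument. The computation proceeds in two steps. First, every nonzero morphism $R(c)_{s(\alpha_i)} \to R(c')_{s(\alpha_j)}$ with $c \neq c'$ or $i \neq j$ factors through a projective-injective, because the only relevant irreducible maps between these modules pass through $P_{s(\alpha_j)}$ which is projective and whose radical summand containing $c'$ splits off. This uses the rigidity coming from the gentle axiom (G4) and the fact that different cycles are disjoint. Second, $\underline{\End}_{\Lambda}(R(c)_{s(\alpha_i)}) \cong k$, since any endomorphism is either the identity or factors through $P_{s(\alpha_i)}$. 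Combining both steps yields \[\underline{\End}_{\Lambda}(M) \cong \prod_{c \in \cc(\Lambda)} k^{l(c)}.\]

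Finally, the stable category $\underline{\GP}(\Lambda)$ decomposes as a product indexed by $\cc(\Lambda)$. For a single cycle $c$ of length $n = l(c)$, the triangulated subcategory generated by the $R(c)_{s(\alpha_i)}$'s is Hom-finite, idempotent complete, has exactly $n$ indecomposable objects with $\underline{\Hom}$ equal to $k$ only between an object and itself, and the shift $\Omega^{-1}$ acts as a cyclic permutation of period $n$. By the standardness/recognition results for such $n$-periodic triangulated categories (or directly by constructing a triangle equivalence), this subcategory is equivalent to the triangulated orbit category $\cd^b(k\text{-}\mod)/[n]$, whose indecomposables are the $n$ shifts of $k$ with trivial endomorphism rings and shift acting cyclically. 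Taking the product over $c \in \cc(\Lambda)$ yields the equivalence \eqref{E:Decomposition}.

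The main obstacle will be the exhaustiveness statement in part (a): proving that no string module whose string is not fully contained in a cycle of $\cc(\Lambda)$, and no band module, can be Gorenstein projective. This requires a careful combinatorial analysis of syzygies of string modules using all four gentle axioms (G1)--(G4), together with the criterion of Proposition \ref{P:IwanagaFrobenius}(GP1) that Gorenstein projective modules have either finite projective dimension (and then must be projective) or projective dimension $= \infty$ with non-vanishing $\Ext^{i}(-,\Lambda)$ for all $i$ forbidden.
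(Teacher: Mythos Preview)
Your overall strategy and the computations for the ``inclusion'' direction of part (a) and for part (b) match the paper closely: the syzygy relation $\Omega(R(c)_{s(\alpha_i)}) \cong R(c)_{s(\alpha_{i+1})}$, the use of Buchweitz' theorem, and the stable Hom computation are all as in the paper.

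The gap is in the exhaustiveness argument for part (a). Your exclusion of band modules does not work: saying that band modules ``acquire infinite dimensional projective resolutions of strictly growing dimension'' does not prevent them from being Gorenstein projective---indeed, by (GP1) every non-projective GP module has infinite projective dimension. The paper's argument is entirely different: by (GP2) a GP module must be a submodule of a projective, and the paper proves a lemma (Lemma~\ref{L:TechnicalV}) that no indecomposable module whose word contains a ``valley'' $\alpha^{-1}\beta$ can embed in a projective. Since every band is a cyclic word, it necessarily contains such a valley, and is thereby excluded.

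Your argument for string modules is also too loose. The claim that a string ``not wrapping around a cycle acquires a strictly shorter syzygy'' is not obviously true for strings containing a ``roof'' $\alpha\beta^{-1}$. The paper again works through the submodule-of-a-projective criterion: after excluding valleys, it shows that an indecomposable submodule of a projective containing a roof must already be projective (using the explicit shape \eqref{E:IndProjGentle} of indecomposable projectives over a gentle algebra). This reduces to \emph{direct} strings $\beta_n\cdots\beta_1$, and only then does the paper argue via projective dimension: if the radical embedding arrow does not lie on a cycle in $\cc(\Lambda)$, the resolution terminates after finitely many steps. So the missing ingredient in your plan is the structural lemma about embeddings into projectives, which replaces your syzygy-length heuristic.
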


We prove this result in Subsection \ref{S:Proof} below.

\begin{rem}
As illustrated in Example \ref{Ex:Illustrative} there may be non-trivial morphisms between different indecomposable Gorenstein-projective $\Lambda$-modules. However, the proof of Proposition \ref{P:Main} shows that all of them factor over some projective module. Hence these morphisms vanish in the stable category.
\end{rem}

Since derived equivalences induce triangle equivalences between the corresponding singularity categories, Proposition \ref{P:Main} has the following consequence.

\begin{cor} Let $\Lambda$ and $\Lambda'$ be gentle algebras. If there is a triangle equivalence $\cd^b(\Lambda-\mod) \cong \cd^b(\Lambda'-\mod)$, then there is a bijection of sets
\begin{align}
f \colon \cc(\Lambda) \stackrel{\sim}\longrightarrow \cc(\Lambda'),
\end{align}
such that $l(c)=l(f(c))$ for all $c \in \cc(\Lambda)$.
\end{cor}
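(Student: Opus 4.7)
The plan is to reduce the derived equivalence to an equivalence of singularity categories and then extract the combinatorial data $(l(c))_{c\in\cc(\Lambda)}$ from the triangulated structure of the right hand side of Proposition \ref{P:Main}(b).

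First, I would observe that perfect complexes admit an intrinsic characterization inside $\cd^b(\Lambda-\mod)$: they are precisely the objects of finite projective dimension, equivalently the objects $X$ such that $\bigoplus_{i\in\Z}\Hom(X,Y[i])$ is finite dimensional for every $Y$ (a property that any triangulated equivalence preserves). Hence any triangle equivalence $\Phi\colon \cd^b(\Lambda-\mod)\xrightarrow{\sim} \cd^b(\Lambda'-\mod)$ carries $\Perf(\Lambda)$ into $\Perf(\Lambda')$ and therefore descends to a triangle equivalence $\overline\Phi\colon \cd_{sg}(\Lambda)\xrightarrow{\sim}\cd_{sg}(\Lambda')$. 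Applying Proposition \ref{P:Main}(b) to both sides yields a triangle equivalence
\begin{equation*}
\Psi\colon \prod_{c\in\cc(\Lambda)}\frac{\cd^b(k-\mod)}{[l(c)]}\xrightarrow{\sim} \prod_{c'\in\cc(\Lambda')}\frac{\cd^b(k-\mod)}{[l(c')]}.
\end{equation*}

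Next I would analyse the structure of each factor. In the orbit category $\cd^b(k-\mod)/[m]$ the indecomposable objects are represented by $k[0],k[1],\ldots,k[m-1]$, and by the definition of the orbit morphisms one computes $\Hom(k[i],k[j])=k$ if $i\equiv j \pmod m$ and $0$ otherwise. In particular, the suspension functor $[1]$ acts on the set of isomorphism classes of indecomposables as a cyclic permutation of length $m$. Consequently, in the product $\prod_{c}\cd^b(k-\mod)/[l(c)]$, the indecomposable objects are the pairs $(c,k[j])$ with $c\in\cc(\Lambda)$ and $j\in\Z/l(c)\Z$, and the orbits of the suspension functor are in bijection with $\cc(\Lambda)$, the orbit indexed by $c$ having cardinality exactly $l(c)$.

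Now I would conclude by extracting the bijection. Since $\Lambda$ is a finite dimensional algebra the category $\cd_{sg}(\Lambda)$ is Krull--Remak--Schmidt, so $\Psi$ induces a bijection between the (finite) sets of isomorphism classes of indecomposables on the two sides. This bijection is compatible with $[1]$ up to natural isomorphism, hence sends $\langle[1]\rangle$-orbits to $\langle[1]\rangle$-orbits preserving their cardinalities. Transferring this through the parametrization of the previous paragraph yields a bijection $f\colon\cc(\Lambda)\xrightarrow{\sim}\cc(\Lambda')$ with $l(c)=l(f(c))$ for every $c$, as required.

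The only point that requires a little care is the verification that in $\cd^b(k-\mod)/[m]$ the Hom-spaces between shifts of $k$ really are as claimed so that $[1]$ acts as a free cyclic permutation of length $m$ on indecomposables; this is the main (but entirely routine) obstacle, and once it is in place the matching of shift-orbits across $\Psi$ gives the corollary without further work.
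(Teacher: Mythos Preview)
Your proof is correct and follows essentially the same route as the paper, which simply states that ``derived equivalences induce triangle equivalences between the corresponding singularity categories'' and then invokes Proposition~\ref{P:Main}. You have spelled out the details the paper leaves to the reader: the intrinsic characterization of perfect complexes, and the extraction of the multiset $(l(c))_{c}$ from the triangulated structure of the product $\prod_c \cd^b(k)/[l(c)]$ via shift-orbits of indecomposables.
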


\begin{rem}
This is a special case of a derived invariant for gentle algebras introduced by Avella-Alaminos \& Gei{\ss} \cite{GeissAvellaAlaminos}.
\end{rem}

\begin{ex} The following two gentle algebras $\Lambda=kQ/I$ and $\Lambda'=kQ'/I'$ are \emph{not} derived equivalent:
\begin{align*}
\begin{xy}\SelectTips{cm}{}
\xymatrix
{Q=& 1 \ar[rr]^a  & &  2  \ar@(rd, ru)[]_{b} 
}\end{xy},  \qquad  I=(b^2) 
\end{align*}
and 
\begin{align*}
\begin{xy}\SelectTips{cm}{}
\xymatrix
{Q'=&1 \ar@/^/[rr]^{ a }  & &  2 \ar@/^/[ll]^{ b } 
}\end{xy},  \qquad  I'=(ab, ba).
\end{align*}
\end{ex}

\subsection{Examples} \label{S:Examples}
The following geometric example was pointed out by Igor Burban.
\begin{ex}
Let $\mathbb{X}_{n}$ be a chain of $n$ projective lines  
\begin{align}\label{E:ChainOfProjLines}
\begin{array}{c}
\begin{tikzpicture} 
\draw (0,0,0) to [bend left=25]  (2,0,0);
\node[scale=0.75] at (1, 0.5, 0) {$C_{1}$};
\node[scale=0.75] at (1.75, -0.25, 0) {$s_{1}$};
\draw (1.5,0,0) to [bend left=25]  (3.5,0,0);
\node[scale=0.75] at (2.5, 0.5, 0) {$C_{2}$};
\node at (4, 0, 0) {$\cdots$}; 
\draw (4.5,0,0) to [bend left=25]  (6.5,0,0);
\node[scale=0.75] at (5.5, 0.5, 0) {$C_{n-2}$};
\node[scale=0.75] at (6.25, -0.25, 0) {$s_{n-2}$};
\draw (6,0,0) to [bend left=25]  (8,0,0);
\node[scale=0.75] at (7, 0.5, 0) {$C_{n-1}$};
\node[scale=0.75] at (7.75, -0.25, 0) {$s_{n-1}$};
\draw (7.5,0,0) to [bend left=25]  (9.5,0,0);
\node[scale=0.75] at (8.5, 0.5, 0) {$C_{n}$};
\end{tikzpicture}
\end{array} 
\end{align}
Using Buchweitz' equivalence \eqref{E:Buchweitz}  and Orlov's localization theorem \cite{Orlov11}, it is well-known that there is an equivalence of triangulated categories 
\begin{align}\label{E:SingChain}
\left( \cd_{sg}(\mathbb{X}_{n}) \right)^{\omega}:= \left(\frac{\displaystyle \cd^b(\Coh \mathbb{X}_{n})}{\displaystyle \Perf(\XX_{n})}\right)^\omega \cong \bigoplus_{i=1}^{n-1} \ul{\MCM}(O_{nd}) \cong  \bigoplus_{i=1}^{n-1} \frac{\displaystyle \cd^b(k-\mod)}{\displaystyle [2]},
\end{align}
where $(-)^{\omega}$ denotes the idempotent completion (see Subsection \ref{Sub:Idemp}) and $\ul{\MCM}(O_{nd})$ denotes the stable category of maximal Cohen--Macaulay modules over the nodal singularity $O_{nd}=k\llbracket x, y \rrbracket/(xy)$. In particular, there is a fully faithful triangle functor
\begin{align}\label{E:WithoutCompl}
\cd_{sg}(\mathbb{X}_{n}) \ra \bigoplus_{i=1}^{n-1} \ul{\MCM}(O_{nd}),
\end{align}
 which is induced by
\begin{align}\label{E:LocalizationChain}
\cd^b(\Coh \mathbb{X}_{n}) \ni \cf  \mapsto (\widehat{\cf}_{s_{1}}, \ldots,\widehat{\cf}_{s_{n-1}}) \in \bigoplus _{i=1}^{n-1} O_{nd}-\mod,
\end{align}
where $s_{1}, \ldots, s_{n-1}$ denote the singular points of $\mathbb{X}_{n}$.
For $1\leq l \leq m \leq n$, let $\co_{[l,m]}$ be the structure sheaf of the subvariety $\bigcup_{k=l}^m C_{k} \subseteq \mathbb{X}_{n}$. Here, the $C_{i}$ denote the irreducible components of $\mathbb{X}_{n}$ as shown in \eqref{E:ChainOfProjLines}. Then \eqref{E:LocalizationChain} maps $\co_{[1, i]}$ to $(O_{nd}, \ldots, O_{nd}, k\llbracket x \rrbracket, 0, \ldots, 0)$ and $\co_{[j, n]}$ to $( 0, \ldots, 0, k\llbracket y \rrbracket, O_{nd}, \ldots, O_{nd})$, where $k\llbracket x \rrbracket$  and  $k\llbracket y \rrbracket$ are located in the $i$-th and $j$-th place, respectively. In particular, \eqref{E:WithoutCompl} is essentially surjective. Therefore, the singularity category $\cd_{sg}(\mathbb{X}_{n})$ is idempotent complete.

We explain an alternative approach to obtain the equivalence \eqref{E:WithoutCompl}, which uses and confirms Proposition \ref{P:Main}.
Burban \cite{Burban} showed  that $\cd^b(\Coh \XX_{n})$ has a tilting bundle with endomorphism algebra $\Lambda_{n}$
\[
\begin{tikzpicture}[description/.style={fill=white,inner sep=2pt}]
    \matrix (n) [matrix of math nodes, row sep=1.7em,
                 column sep=2.25em, text height=1.5ex, text depth=0.25ex,
                 inner sep=0pt, nodes={inner xsep=0.3333em, inner
ysep=0.3333em}]
    {  &&&& 0 \\
       1 && 2 && \cdots && n-1 && n \\
    };
\draw[->] (n-1-5) edge  [bend right=10] node[fill=white, scale=0.75, yshift=0mm] [midway] {$\gamma_{1}$} ($(n-2-1.north)+(0, -0.75mm)$); 
  
\draw[->] (n-1-5)  edge [bend left=10]  node[fill=white, scale=0.75, yshift=0mm] [midway] {$\gamma_{2}$} ($(n-2-9.north)+(0, -1mm)$); 

\draw[->] (n-2-1) edge [bend left=20] node[fill=white, scale=0.75, yshift=0mm] [midway] {$a_{1}$} (n-2-3); 
\draw[->] (n-2-3) edge [bend left=20] node[fill=white, scale=0.75, yshift=0mm] [midway] {$a_{2}$} ($(n-2-5.west)+(0, 1mm)$); 
\path[->]  ($(n-2-5.east)+(0, 1mm)$) edge [bend left=20] node[fill=white, scale=0.75, yshift=0mm] [midway] {$a_{n-2}$} ($(n-2-7.west)+(0, 1mm)$); 
\path[->] ($(n-2-7.east)+(0, 1mm)$) edge [bend left=20] node[fill=white, scale=0.75, yshift=-0.75mm] [midway] {$a_{n-1}$}  ($(n-2-9.west)+(0, 1mm)$); 

\draw[<-] (n-2-1) edge [bend right=20] node[fill=white, scale=0.75, yshift=0mm] [midway] {$b_{1}$} (n-2-3); 
\draw[<-] (n-2-3) edge [bend right=20] node[fill=white, scale=0.75, yshift=0mm] [midway] {$b_{2}$} ($(n-2-5.west)+(0, -1mm)$); 
\path[<-]  ($(n-2-5.east)+(0, -1mm)$) edge [bend right=20] node[fill=white, scale=0.75, yshift=0mm] [midway] {$b_{n-2}$} ($(n-2-7.west)+(0, -1mm)$); 
\path[<-] ($(n-2-7.east)+(0, -1mm)$) edge [bend right=20] node[fill=white, scale=0.75, yshift=1mm] [midway] {$b_{n-1}$} ($(n-2-9.west)+(0, -1mm)$); 

\end{tikzpicture}
\]
bounded by the relations $a_{i}b_{i}=0=b_{i}a_{i}$ for all $1 \leq i \leq n-1$. Hence we have a triangle equivalence $\cd^b(\Coh \XX_{n}) \ra \cd^b(\Lambda_{n}-\mod)$ inducing a triangle equivalence
\begin{align}
\cd_{sg}(\XX_{n}) \xrightarrow{\sim} \cd_{sg}(\Lambda_{n}).
\end{align}
Since $\Lambda_{n}$ is a gentle algebra, we can apply Proposition \ref{P:Main}. \begin{align*}\cc(\Lambda_{n})=\{c_{1}=a_{1}b_{1}, \ldots, c_{n-1}=a_{n-1}b_{n-1} \}\end{align*} consists of $n-1$ cycles of length two. Therefore $\cd_{sg}(\Lambda)$ is equivalent to the right hand side of \eqref{E:SingChain}.

We conclude this example by giving the indecomposable Gorenstein projective $\Lambda$-modules. For each $i\in \{1, \ldots, n-1\}$, we have two indecomposable GPs
\begin{align}
&R(c_{i})_{i}:=i+1 \xrightarrow{a_{i+1}} i+2 \xrightarrow{a_{i+2}} \cdots  \xrightarrow{a_{n-1}} n \\
&R(c_{i})_{i+1}:=i \xrightarrow{b_{i-1}} i-1 \xrightarrow{b_{i-2}} \cdots  \xrightarrow{b_{1}} 1
\end{align}
\end{ex}

Assem, Br\"ustle, Charbonneau-Jodoin \& Plamondon \cite{ABCP} studied a class of gentle algebras $A(S, \Gamma)$ arising from a triangulation $\Gamma$ of a marked Riemann surface $S=(S, M)$ without punctures. In particular, they show that the `inner triangles' of $\Gamma$ are in bijection with the elements of $\cc(A(S, \Gamma))$, which are all of length three. This has the following consequence.

\begin{cor}\label{C:Jacobian}
In the notation above, the number of direct factors in the decomposition \eqref{E:Decomposition} Êof the singularity category $\cd_{sg}(A(S, \Gamma))$ equals the number of inner triangles of $\Gamma$.  
\end{cor}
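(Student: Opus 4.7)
The proof should be essentially a direct combination of two results stated just above the corollary. The plan is to apply Proposition \ref{P:Main}(b) to the gentle algebra $\Lambda = A(S,\Gamma)$, which yields a triangle equivalence
\[
\cd_{sg}\bigl(A(S,\Gamma)\bigr) \cong \prod_{c \in \cc(A(S,\Gamma))} \frac{\cd^b(k\text{-}\mod)}{[l(c)]}.
\]
Since each factor on the right is indecomposable as a triangulated category (it is the $(l(c)-1)$-cluster category of type $\mathbb{A}_1$, which has a single simple object up to shift), the number of direct factors in the decomposition is precisely the cardinality of $\cc(A(S,\Gamma))$.

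Next, I would invoke the description of $A(S,\Gamma)$ given by Assem, Br\"ustle, Charbonneau-Jodoin \& Plamondon \cite{ABCP}: the quiver $Q$ of $A(S,\Gamma)$ has vertices the internal arcs of $\Gamma$, arrows recording adjacency within triangles, and relations which make $A(S,\Gamma)$ gentle. Their combinatorial analysis shows that the relation-paths forming the cycles in $\cc(A(S,\Gamma))$ are in natural bijection with the inner triangles of $\Gamma$ (each inner triangle gives a length-$3$ oriented cycle in $Q$ all of whose composable pairs are relations, and conversely every element of $\cc(A(S,\Gamma))$ arises this way). Composing this bijection with the count above yields the corollary.

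The only step that requires care is making sure the bijection between $\cc(A(S,\Gamma))$ and inner triangles is really the one implicit in \cite{ABCP}; in particular, one should verify that boundary triangles (those sharing an edge with $\partial S$) do not contribute cycles because at least one of the three would-be composable arrows is missing from $Q$, while inner triangles contribute exactly one cycle of length three, and no other element of $\cc$ appears. This is already contained in \cite{ABCP} and so can be cited rather than reproved. Consequently I do not anticipate a genuine obstacle — the corollary is a clean two-line consequence of Proposition \ref{P:Main} together with the combinatorial dictionary of \cite{ABCP}, and the value $l(c)=3$ for every $c$ additionally tells us that each factor is the $2$-cluster category of type $\mathbb{A}_1$, which could be recorded as a strengthening of the statement.
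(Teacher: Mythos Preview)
Your proposal is correct and matches the paper's approach exactly: the paper states the corollary as an immediate consequence of Proposition~\ref{P:Main}(b) together with the sentence immediately preceding it, which records the bijection from \cite{ABCP} between inner triangles of $\Gamma$ and elements of $\cc(A(S,\Gamma))$ (all of length three). No further argument is given or needed.
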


\begin{ex}
A prototypical case is the hexagon $S$ with six marked points on the boundary. We consider the following triangulation $\Gamma$ with exactly one inner triangle.
\[
\begin{tikzpicture}
\foreach \l in {1.5}
\foreach \ha in {0.52}
\foreach \hb in {0.48}
\foreach \h in {0.5}
{\foreach \a in {0, -120} \draw[thick=40mm] (0, 0) -- +(\a:\l); 
\draw[thick=40mm] (\l, 0) -- +(300:\l); 
\draw[thick=40mm] (\l+ \l*cos{60}, -\l*sin{60} ) -- +(-120:\l); 
\draw[thick=40mm] (\l, -2*\l*sin{60} ) -- +(180:\l); 
\draw[thick=40mm] (0, -2*\l*sin{60} ) -- +(120:\l); 

\draw (0,0) -- (\l + \l*cos{60}, -\l*sin{60});
\draw (0,-2*\l*sin{60}) -- (\l + \l*cos{60}, -\l*sin{60});
\draw (0,-2*\l*sin{60}) -- (0,0);

\path[->] (\ha*\l +\ha*\l*cos{60}, -\ha*\l*sin{60}) edge [bend right=10] node[fill=white, scale=0.55] [midway] {$\alpha_{1}$} (-\hb*\l - \hb*\l*cos{60} + \l +\l*cos{60} , -\hb*\l*sin{60} -\l*sin{60});
\path[->] (-\ha*\l - \ha*\l*cos{60} + \l +\l*cos{60} , -\ha*\l*sin{60} -\l*sin{60}) edge [bend right=10] node[fill=white, scale=0.55] [midway] {$\alpha_{2}$} (0, -\ha*2*\l*sin{60});
\path[<-] (\hb*\l +\hb*\l*cos{60}, -\hb*\l*sin{60})  edge [bend left=10] node[fill=white, scale=0.55] [midway] {$\alpha_{3}$}(0, -\hb*2*\l*sin{60});

\node at (\l, 0) {$\bullet$};
\node at (0, 0) {$\bullet$};
\node at (\l, -2*\l*sin{60}) {$\bullet$};
\node at (0, -2*\l*sin{60}) {$\bullet$};
\node at (\l + \l*cos{60}, -\l*sin{60}) {$\bullet$};
\node at ( -\l*cos{60}, -\l*sin{60}) {$\bullet$};
}
\end{tikzpicture}
\] 
The corresponding gentle algebra $A(S, \Gamma)$ is a $3$-cycle with relations $\alpha_{2}\alpha_{1}=0$, $\alpha_{3}\alpha_{2}=0$ and $\alpha_{1}\alpha_{3}=0$. This is a selfinjective algebra. Hence the singularity category $\cd_{sg}(A(S, \Gamma))$ is triangle equivalent to the stable module category $A(S, \Gamma)-\ul{\mod}$, by \eqref{E:Buchweitz}. This is in accordance with Proposition \ref{P:Main} and Corollary \ref{C:Jacobian}.
\end{ex}

\begin{rem}
More generally, the algebras arising as Jacobian algebras from ideal triangulations of Riemann surfaces \emph{with} punctures are usually of infinite global dimension. It would be interesting to study their singularity categories and relate them to the triangulation.
\end{rem}

We conclude this subsection with a more complicated example.
\begin{ex} Let $\Lambda$ be the algebra given by the following quiver
\[
\begin{tikzpicture}[description/.style={fill=white,inner sep=2pt}]
    \matrix (n) [matrix of math nodes, row sep=1.7em,
                 column sep=2.25em, text height=1.5ex, text depth=0.25ex,
                 inner sep=0pt, nodes={inner xsep=0.3333em, inner
ysep=0.3333em}]
    {  &&&& 1 \\
       2 && 3 && 5 && 6 && 7 \\
       &&10&&9&&8 \\
    };
\draw[->] (n-1-5) edge  [bend right=10] node[fill=white, scale=0.75, yshift=0mm] [midway] {$a_{6}$} ($(n-2-1.north)+(0, -0mm)$); 
  
\draw[<-] (n-1-5)  edge [bend left=10]  node[fill=white, scale=0.75, yshift=0mm] [midway] {$a_{5}$} ($(n-2-9.north)+(0, -0mm)$); 

\draw[->] (n-2-1) edge [bend left=20] node[fill=white, scale=0.75, yshift=0mm] [midway] {$a_{1}$} (n-2-3); 
\draw[->] (n-2-3) edge [bend left=20] node[fill=white, scale=0.75, yshift=0mm] [midway] {$a_{2}$} ($(n-2-5.west)+(0, 1mm)$); 
\path[->]  ($(n-2-5.east)+(0, 1mm)$) edge [bend left=20] node[fill=white, scale=0.75, yshift=0mm] [midway] {$a_{3}$} ($(n-2-7.west)+(0, 1mm)$); 
\path[->] ($(n-2-7.east)+(0, 1mm)$) edge [bend left=20] node[fill=white, scale=0.75, yshift=-0.75mm] [midway] {$a_{4}$}  ($(n-2-9.west)+(0, 1mm)$); 

\path[->] (n-2-1) edge [bend right=20] node[fill=white, scale=0.75, yshift=0mm] [midway] {$b_{1}$} (n-2-3); 
\path[->] (n-2-3) edge [bend right=20] node[fill=white, scale=0.75, yshift=0mm] [midway] {$b_{2}$} ($(n-2-7.west)+(0, -1mm)$); 
\path[->] ($(n-2-7.east)+(0, -1mm)$) edge [bend right=20] node[fill=white, scale=0.75, yshift=1mm] [midway] {$b_{3}$} ($(n-2-9.west)+(0, -1mm)$); 

\path[->] (n-2-9.south) edge [bend left=26] node[fill=white, scale=0.75, yshift=0mm] [midway] {$b_{4}$} (n-3-7.east);
\path[->] (n-3-3.west) edge [bend left=26] node[fill=white, scale=0.75, yshift=0mm] [midway] {$b_{7}$} (n-2-1.south); 

\path[<-] (n-3-3) edge node[fill=white, scale=0.75, yshift=0mm] [midway] {$b_{5}$} (n-3-5); 
\path[<-] (n-3-5) edge node[fill=white, scale=0.75, yshift=0mm] [midway] {$b_{6}$} (n-3-7);

\draw[->] ($(n-3-7.north) +
    (-1.2mm,-1.0mm)$) arc  (245:-65:2.5mm);
\node[fill=white, scale=0.75, yshift=0mm] at  ($(n-3-7.north) +
    (-0.3mm,3.5mm)$) {$c$};

\end{tikzpicture}
\]
bounded by the relations $a_{i+1}a_{i}=0$ for $i=1, \ldots, 6$, $b_{j+1}b_{j}=0$ for $j=1, \ldots, 7$ and $c^2=0$, where we set $a_{7}:=a_{1}$, $b_{8}:=b_{1}$ and so forth. This is a gentle algebra and $\cc(\Lambda)=\{a_{6} \cdot \ldots \cdot a_{1}, b_{7}\cdot \ldots \cdot b_{1}, c\}$. Proposition \ref{P:Main} yields a triangle equivalence
\begin{align}
\cd_{sg}(\Lambda) \cong \frac{\displaystyle \cd^b(k-\mod)}{\displaystyle [6]} \oplus \frac{\displaystyle \cd^b(k-\mod)}{\displaystyle [7]} \oplus \frac{\displaystyle \cd^b(k-\mod)}{\displaystyle [1]}
\end{align}
\end{ex}

\subsection{Proof}\label{S:Proof}
 We need an auxiliary result about submodules of projective modules over gentle algebras. For this, we use the classification of the indecomposable modules over gentle algebras: indecomposable modules are either \emph{string} or \emph{band} modules and are given by certain words in the alphabet $\{\alpha, \alpha^{-1} \big| \alpha \in Q \}$, we refer to Butler \& Ringel \cite{ButlerRingel} for a detailed account. 
 
Let $\Lambda=kQ/I$ be a gentle algebra. The indecomposable projective $\Lambda$-modules are either simple or of the following form:
\begin{align}\label{E:IndProjGentle}
\begin{array}{c}
\begin{xy}
\SelectTips{cm}{}
\xymatrix{
\bullet \ar[d] &&&&& \bullet \ar[rd] \ar[ld]  \\
\bullet \ar@{..}[d] &&\text{or} && \bullet  \ar@{..}[ld]&& \bullet \ar@{..}[rrdd] \\
\bullet \ar[d] &&&\bullet \ar[ld] &&&&  \\
\bullet && \bullet &&&&&& \bullet \ar[rd] \\
&&&& &&&& & \bullet
}
\end{xy}
\end{array}
\end{align} 

\begin{lem}\label{L:TechnicalV} Let $M=M(w)$ be an indecomposable $\Lambda$-module, such that the corresponding word $w$ contains 
\begin{align}\label{E:PictureV}
\alpha^{-1} \beta=
\begin{array}{c}
\begin{xy}
\SelectTips{cm}{}
\xymatrix{
 x \ar[rd]_{\alpha}  && z \ar[ld]^{\beta} \\
& y
}
\end{xy}
\end{array}
\end{align} 
with $\alpha \neq \beta$ as a subword. Then there is no projective $\Lambda$-module $P$ such that $M \subseteq P$ is a submodule.
\end{lem}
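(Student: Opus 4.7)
The plan is to argue locally at the vertex $y$, using the explicit rooted-tree description of indecomposable projectives over a gentle algebra recalled in \eqref{E:IndProjGentle}. First I would record the following structural statement that is already implicit in that picture: every indecomposable projective $P_j=\Lambda e_j$ has a $k$-basis consisting of the non-zero paths starting at $j$, and these paths are organised into a rooted tree with at most two outgoing edges at the root $j$ and at most one outgoing edge at every other node. Axiom (G1) is what limits the branching at the root to two, and axiom (G4) is what guarantees that once a branch has been entered there is at most one possible continuation, since after arriving at a node via an arrow $\delta$ there is at most one arrow $\gamma$ with $\gamma\delta\notin I$. As a consequence, every non-root basis element $N$ of $P_j$ has a uniquely determined ``parent arrow'' $\gamma_N$ of $Q$, namely the last arrow of the path representing $N$.

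Now assume, for contradiction, that $M\subseteq P$ for some projective $P=\bigoplus_k P_{i_k}$ and that the word $w$ of $M$ contains the subword $\alpha^{-1}\beta$ of \eqref{E:PictureV} with $\alpha\neq\beta$. Then $M$ contains basis vectors $m_x,m_y,m_z$ supported at $x,y,z$ respectively, with
\[
\alpha\cdot m_x \;=\; m_y \;=\; \beta\cdot m_z,\qquad m_y\neq 0.
\]
I would expand $m_x$, $m_y$, $m_z$ in the canonical path basis of $P$ obtained by concatenating the bases of the summands $P_{i_k}$. The key observation is that for any basis element $N'$ at vertex $x$, the action of $\alpha$ either annihilates $N'$ or sends it to the unique child of $N'$ reached by $\alpha$, which is a basis element of $P$ at vertex $y$ whose parent arrow equals $\alpha$. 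Hence $\alpha\cdot m_x$ lies in the $k$-span of those basis elements of $P$ at $y$ whose parent arrow is $\alpha$; symmetrically $\beta\cdot m_z$ lies in the span of those whose parent arrow is $\beta$. These two subspaces are spanned by \emph{disjoint} subsets of the path basis, because each non-root basis element has a single parent arrow and $\alpha\neq\beta$. Their intersection is therefore $0$, forcing $m_y=0$ in $P$ and contradicting $m_y\neq 0$.

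The main obstacle I expect is making the tree-structure claim precise, i.e.\ verifying that the picture of $P_j$ can never ``reconverge'' so as to produce the same basis element of $P_j$ via two different incoming arrows; this is really what distinguishes submodules of projectives from arbitrary string modules in the gentle setting, and it is exactly what the pattern $\alpha^{-1}\beta$ with $\alpha\neq\beta$ is forbidden from by the argument. Once this is in place, which is a short combinatorial check using (G1), (G3) and (G4), the rest of the proof is just linear-algebra bookkeeping with the canonical path basis of $P$, and the contradiction is immediate as above. I would spend most of the write-up on a careful statement of the parent-arrow property, and then simply read off the conclusion.
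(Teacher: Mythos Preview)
Your proposal is correct and follows essentially the same idea as the paper's proof: both exploit that in an indecomposable projective over a gentle algebra, each non-root path-basis element has a single well-defined last arrow (your ``parent arrow''), so a nonzero element cannot simultaneously lie in the image of left multiplication by two distinct arrows $\alpha\neq\beta$. The paper's proof is terser, essentially asserting that no basis vectors $x',z'$ in $P$ with $\alpha x'=\beta z'\neq 0$ can exist ``in view of the form of the indecomposable projectives''; you have spelled out exactly why, and your linear-algebra expansion in the path basis has the minor advantage of treating the cases $x\neq z$ and $x=z$ (string versus band, cf.\ Remark~\ref{R:TwoCases}) uniformly rather than separately.
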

\begin{rem}\label{R:TwoCases}
In the picture \eqref{E:PictureV}, the letters $x, y, z$ represent basis vectors of the module $M$. We do not  exclude the case $x=z$. For example, the indecomposable injective module $I_{2}$ over the Kronecker quiver $\begin{xy}\SelectTips{cm}{10}\xymatrix{1 \ar@/^/[r] \ar@/_/[r] &  2}\end{xy}$ is of the form \eqref{E:PictureV}, with pairwise different basis vectors $x, y, z$. On the other hand, the indecomposable modules
\[\begin{xy}\SelectTips{cm}{10}\xymatrix{k \ar@/^/[rr]^1 \ar@/_/[rr]_{\lambda} &&  k}\end{xy}\]
with $\lambda \in k^*$ contain a subword $\alpha^{-1} \beta$ but we have to identify $x$ and $z$ in \eqref{E:PictureV}.
 \end{rem}
 \begin{proof}
 Assume $M$ is a submodule of a projective module $P$. We distinguish the two cases discussed in Remark \ref{R:TwoCases}.  If $x \neq z$, then we must have basis vectors $x' \neq z'$ in $P$ such that $\alpha x' = \beta z' \neq 0$. In view of the form of the indecomposable projectives \eqref{E:IndProjGentle}, such basis vectors cannot exist. Contradiction. The second case $x=z$ is treated in a similar way. 
 \end{proof}

Throughout the proof, we use the properties (GP1)--(GP2) of Gorenstein projective modules over Iwanaga--Gorenstein rings, which were proved in Proposition \ref{P:IwanagaFrobenius}.

\subsubsection{Proof of part (a)}
Let $c \in \cc(\Lambda)$ be a cycle, which we label as follows $1 \xrightarrow{\alpha_{1}} 2 \xrightarrow{\alpha_{2}}  \cdots \xrightarrow{\alpha_{n-1}} n \xrightarrow{\alpha_{n}} 1$. Then there are short exact sequences 
\begin{align}\label{E:Shift}
0 \ra  R(c)_{i} \ra P_{i} \ra R(c)_{i-1} \ra 0,
\end{align}
for all $i=1, \ldots, n$. In particular, for every $n \geq 0$, $R(c)_{i}$ may be written as a $n$th-syzygy module $\Omega^n(X)$, for some $\Lambda$-module $X$. Thus,  $R(c)_{i} \in \GP(\Lambda)$ by (GP2). Since projective modules are GP by definition, this shows the inclusion $`\!\supseteq'$ in (a). 

 It remains to show that there are no further Gorenstein-projective modules. By property (GP2), we only have to consider submodules of projective modules. Using Lemma \ref{L:TechnicalV}, we can exclude all modules which correspond to a word containing $\alpha^{-1} \beta$. In particular, band modules are not GP. 
 
We claim that an indecomposable Gorenstein-projective $\Lambda$-module $M$ containing a subword of the form $\alpha \beta^{-1}$, with $\alpha \neq \beta$ is projective. We think of $\alpha \beta^{-1}$ as a `roof', where $s, t, u$ are basis vectors of $M$, such that $\alpha \cdot t=s$ and $\beta \cdot t =u$. 
\begin{align} \label{E:roof}
\begin{array}{c}
\begin{xy}
\SelectTips{cm}{}
\xymatrix{
& \ar[ld]_{\alpha} t \ar[rd]^{\beta} \\
s&&u
}
\end{xy}
\end{array}
\end{align}
By (GP2), $M$ is a submodule of some projective module $P$. Let $U(t) \subset P$ be the submodule generated by the image of $t$ in $P$. The properties (G1) and (G2) imply that $U(t)$ is projective. If $U(t) \subsetneq M$, then $M$ 
contains a subword of the form $\alpha^{-1} \beta$, with $\alpha \neq \beta$. By Lemma \ref{L:TechnicalV} this cannot happen. So we see that $M \cong U(t)$ is indeed projective.

We have reduced the set of possible indecomposable GP $\Lambda$-modules to projective modules or direct strings $S=\beta_{n} \ldots \beta_{1}$. We also allow $S$ to consists of a single lazy path $e_{i}$ (this corresponds to a simple module). Let $M(S)$ be the corresponding $\Lambda$-module. If $M(S)$ is properly contained in some projective module $P$, then there exists an arrow $\alpha$ such that $\beta_{n} \ldots \beta_{1}\alpha \notin I$ and  $\gamma \beta_{n} \ldots \beta_{1}\alpha \in I$ for every arrow $\gamma\in Q_{1}$. It follows that $M(S)$ is a direct summand of the radical of $P_{s(\alpha)}$.

We have already said that the radical of an indecomposable projective $\Lambda$-module $P$ has at most two indecomposable direct summands $R_{1}$ and $R_{2}$ \eqref{E:Radical}. We need the following claim: if $\iota_{i}$ does not lie on a cycle $c \in \cc(\Lambda)$ then $R_{i}$ has finite projective dimension. 

If $R_{i}$ is not projective the situation locally looks as follows (we allow $n$ to be zero)
\[
\begin{array}{c}
\begin{tikzpicture}[description/.style={fill=white,inner sep=2pt}]
    \matrix (n) [matrix of math nodes, row sep=1em,
                 column sep=2.25em, text height=1.5ex, text depth=0.25ex,
                 inner sep=0pt, nodes={inner xsep=0.3333em, inner
ysep=0.3333em}]
    {  
       \cdots & \sigma & \cdots  &  \bullet \\
       ^{\displaystyle \cdots} \\
    };
\draw[->] (n-1-1) edge  node[ scale=0.75, yshift=3mm] [midway] {$\iota_{i}$} (n-1-2); 
\draw[->] (n-1-2) edge  node[ scale=0.75, yshift=3mm] [midway] {$\beta_{1}$}  (n-1-3);
\draw[->] (n-1-3) edge  node[ scale=0.75, yshift=3mm] [midway] {$\beta_{n}$}  (n-1-4);
  
\draw[->] (n-1-2)  edge  node[scale=0.75, yshift=-1mm, xshift=3mm] [midway] {$\psi_{1}$} (n-2-1); 

\draw[decoration={brace, amplitude=2mm}, decorate] ($(n-1-4.east) +(0, -2.5mm)$) -- node[ scale=0.75, yshift=-5.5mm] [midway] {$R_{i}$} ($(n-1-2.west)+(0, -2.5mm)$);

\end{tikzpicture}
\end{array}
\] 
where $\psi_{1} \iota_{i} \in I$. Moreover, $\psi_{1}$ cannot lie on a cycle, since this would contradict our assumption on $\iota_{i}$. We have a short exact sequence
\begin{align}
0 \ra R' \xrightarrow{\psi_{1}} P_{\sigma} \ra R_{i} \ra 0
\end{align}
where $R'$ is a direct summand of the radical of $P_{\sigma}$. $R'$ has the same properties as $R_{i}$, so we may repeat our argument. After finitely many steps, one of the occuring radical summands will be projective and the procedure stops. Indeed, otherwise we get a path $\ldots\psi_{m} \ldots \psi_{1}\iota_{1}$, such that every subpath of length two is contained in $I$. Since there are only finitely many arrows in $Q$, this path is a cycle. Contradiction. Hence $R_{i}$ has finite projective dimension. Thus it is GP if and only if it is projective, by (GP1). 

We have shown that indecomposable GP modules are either projective or direct summands of the radical of some indecomposable projective, such that the radical embedding is defined by multiplication with an arrow on a cycle $c \in \cc(\Lambda)$. This proves part (a).

\subsubsection{Proof of part (b)}
By Buchweitz' equivalence \eqref{E:Buchweitz}, it suffices to describe the stable category $\ul{\GP}(\Lambda)$. By part (a), the indecomposable objects in this category are precisely the radical summands $R(c)_{i}$ for a cycle $c \in \cc$ and \eqref{E:Shift} shows that $R(c)_{i}[1] \cong R(c)_{i-1}$. In particular, $R(c)_{i}[l(c)] \cong R(c)_{i}$. It remains to prove that $\ul{\Hom}_{\Lambda}(R(c)_{i}, R(c')_{j})=\delta_{ij} \delta_{cc'} \cdot k$. $R(c)_{i}$ is given by a  string of the following form (it starts in $\sigma$ and we allow $n=0$)
\begin{align}
\begin{array}{c}
\begin{xy}
\SelectTips{cm}{}
\xymatrix{
\ar@{..>}[r]^{\iota_{i}} & \sigma \ar[r]^{\beta_{1}} & \ldots \ar[r]^{\beta_{n}} & \bullet,
}
\end{xy}
\end{array}
\end{align} 
Here, $\iota_{i}$ is on the cycle $c \in \cc(\Lambda)$ and $\beta_{1}\iota_{i} \notin I$, if $n \neq 0$. If there is a non-zero morphism of $\Lambda$-modules from $R(c)_{i}$ to $R(c')_{j}$, then the latter has to be a string of the following form
\begin{align}
\begin{array}{c}
\begin{xy}
\SelectTips{cm}{}
\xymatrix{
R(c')_{j}\colon & \sigma' \ar[r]^{\beta'_{1}} & \ldots \ar[r]^{\beta'_{m}} & \sigma \ar[r]^{\beta_{1}} & \cdots \ar[r]^{\beta_{k}} & \bullet,
}
\end{xy}
\end{array}
\end{align} 
where we allow $k=0$ or $m=0$. If both $k$ and $m$ are zero, then (G3) and our assumption that $R(c')_{j}$ is a submodule of an indecomposable projective $\Lambda$-module imply that there is only one arrow starting in $s$. Namely, the arrow on the cycle. Hence, $n=0$ and therefore $R(c)_{i}=R(c')_{j}$. 
If  $k \neq 0$ and $m=0$, then $R(c)_{i}=R(c')_{j}$ as well. 

In both cases, $\End_{\Lambda}(R(c)_{i}) \cong k$. Note, that the simple module $S_{\sigma}$ can appear (at most) twice as a composition factor of $R(c)_{i}$. However, in that case, $R(c)_{i}$ locally has the following form $\cdots \ra \sigma \xrightarrow{\alpha} \cdots \ra \bullet$, where $\alpha \neq \beta_{1}$ lies on the cycle $c$ (see also Example \ref{Ex:DefRc}). In particular, this does not yield additional endomorphisms.

If $k \neq 0$ and $m \neq 0$, then it follows from (G4) that $\beta'_{m}=\iota_{i}$. 
If $k =0$, $m \neq 0$ and $\beta'_{m} \neq\iota_{i}$ then there are two different arrows ending in $\sigma$. Since $\iota_{i}$ is on a cycle there is an arrow $\gamma\colon \sigma \ra \bullet$, such that $\gamma\iota_{i} \in I$. It follows from (G3) that $\gamma \beta'_{m} \notin I$. Since $R(c')_{j}$ is a submodule of a projective $\Lambda$-module the corresponding path starting in $\sigma'$ has to be maximal. In particular, it does not end in $\sigma$. Contradiction. So we have $\beta'_{m}=\iota_{i}$.

In both cases our morphism factors over a projective module
\begin{align}
R(c)_{i} \ra P_{s(\iota_{i})} \ra R(c')_{j}
\end{align}
and therefore $\ul{\Hom}_{\Lambda}(R(c)_{i}, R(c')_{j})=0$. This completes the proof.

\newpage

\end{document}